\numberwithin{equation}{section}
\newtheorem{thm}{Theorem}[section]
\newtheorem{cor}[thm]{Corollary}
\newtheorem{lem}[thm]{Lemma}
\newtheorem{prop}[thm]{Proposition}
\newtheorem{introthm}{Theorem}
\newtheorem{introconj}[introthm]{Conjecture}
\newtheorem{introcor}[introthm]{Corollary}
\theoremstyle{definition}
\newtheorem{definition}[thm]{Definition}
\newtheorem{example}[thm]{Example}
\newtheorem{claim}[thm]{Claim}
\theoremstyle{remark}
\newtheorem{remark}[thm]{Remark}
\crefname{thm}{Theorem}{Theorems}
\crefname{cor}{Corollary}{Corollaries}
\crefname{lem}{Lemma}{Lemmas}
\crefname{prop}{Proposition}{Propositions}
\crefname{definition}{Definition}{Definitions}
\crefname{example}{Example}{Examples}
\crefname{claim}{Claim}{Claims}
\crefname{conjecture}{Conjecture}{Conjectures}
\crefname{remark}{Remark}{Remarks}
\crefname{figure}{Figure}{Figures}
\crefname{section}{\S}{\S}
\crefname{subsection}{\S}{\S}
\crefname{introthm}{Theorem}{Theorems}
\crefname{introcor}{Corollary}{Corollaries}
\crefname{introconj}{Conjecture}{Conjectures}
\def\C{{\mathbb C}}
\def\Z{{\mathbb Z}}
\def\ve{{\varepsilon}}
\newcommand\pos{{ \mathbb{R}_{>0} }}
\newcommand\R{{\mathbb{R}}}
\newcommand\trop{{\mathrm{trop}}}
\newcommand\bi{{\mathbf{s}}}%
\newcommand\bJ{{\mathbf{J}}}
\newcommand\A{{\mathcal{A} }}
\newcommand\B{{\mathcal{B} }}
\newcommand\X{{\mathcal{X} }}
\newcommand\U{{\mathcal{U} }}
\newcommand{\Stab}{\mathop{\mathrm{Stab}}\nolimits}%
\def\L{{\mathcal{L}}}%
\newcommand{\Conf}{\mathop{\mathrm{Conf}}\nolimits}%
\newcommand{\Hom}{\mathop{\mathrm{Hom}}\nolimits}
\newcommand{\Spec}{\mathop{\mathrm{Spec}}\nolimits}
\newcommand{\uf}{\mathrm{uf}}
\newcommand\Teich{{Teichm\"uller }}
\newfont{\bg}{cmr9 scaled\magstep4}
\newcommand{\bigzerol}{\smash{\lower1.0ex\hbox{\bg 0}}}
\def\hsymb#1{\mbox{\strut\rlap{\smash{\Huge$#1$}}\quad}}
\newcommand\qarrow[2]{\draw[->,shorten >=2pt,shorten <=2pt] (#1) -- (#2) [thick];} 
\newcommand\qdarrow[2]{\draw[->,dashed,shorten >=2pt,shorten <=2pt] (#1) -- (#2) [thick];} 
\newcommand\qsarrow[2]{\draw[->,shorten >=4pt,shorten <=4pt] (#1) -- (#2) [thick];} 
\newcommand\qsharrow[2]{\draw[->,shorten >=4pt,shorten <=2pt] (#1) -- (#2) [thick];} 
\newcommand\qstarrow[2]{\draw[->,shorten >=2pt,shorten <=4pt] (#1) -- (#2) [thick];} 
\newcommand\qsdarrow[2]{\draw[->,dashed,shorten >=4pt,shorten <=4pt] (#1) -- (#2) [thick];} 
\newcommand\qshdarrow[2]{\draw[->,dashed,shorten >=4pt,shorten <=2pt] (#1) -- (#2) [thick];} 
\newcommand\qstdarrow[2]{\draw[->,dashed,shorten >=2pt,shorten <=4pt] (#1) -- (#2) [thick];} 
\tikzset{
  mid arrow/.style={postaction={decorate,decoration={
        markings,
        mark=at position .5 with {\arrow[#1]{stealth}}
      }}},
}
\begin{document}
\title[Cluster realization of Weyl groups]
{Cluster realizations of Weyl groups and higher Teichm\"uller theory}

\author[Rei Inoue]{Rei Inoue}
\address{Rei Inoue, Department of Mathematics and Informatics,
   Faculty of Science, Chiba University,
   Chiba 263-8522, Japan.}
\email{reiiy@math.s.chiba-u.ac.jp}

\author[Tsukasa Ishibashi]{Tsukasa Ishibashi}
\address{Tsukasa Ishibashi, Research Institute for Mathematical Sciences, Kyoto University, Kitashirakawa Oiwake-cho, Sakyo-ku, Kyoto 606-8502, Japan.}
\email{ishiba@kurims.kyoto-u.ac.jp}

\author[Hironori Oya]{Hironori Oya}
\address{Hironori Oya, Department of Mathematical Sciences, Shibaura Institute of Technology, 307 Fukasaku, Minuma-ku, Saitama-shi, Saitama, 337-8570, Japan.}
\email{hoya@shibaura-it.ac.jp}

\date{February 11, 2019. Revised on February 2, 2021}

\maketitle


\begin{abstract}
For a symmetrizable Kac-Moody Lie algebra $\mathfrak{g}$, we construct a family of weighted quivers $Q_m(\mathfrak{g})$ ($m \geq 2$) whose cluster modular group $\Gamma_{Q_m(\mathfrak{g})}$ contains the Weyl group $W(\mathfrak{g})$ as a subgroup. We compute explicit formulae for the corresponding cluster $\A$- and $\X$-transformations. As a result, we obtain green sequences and the cluster Donaldson-Thomas transformation for $Q_m(\mathfrak{g})$ in a systematic way when $\mathfrak{g}$ is of finite type. Moreover if $\mathfrak{g}$ is of classical finite type with the Coxeter number $h$, the quiver $Q_{kh}(\mathfrak{g})$ ($k \geq 1$) is mutation-equivalent to a quiver encoding the cluster structure of the higher \Teich space of a once-punctured disk with $2k$ marked points on the boundary, up to frozen vertices. This correspondence induces the action of direct products of Weyl groups on the higher \Teich space of a general marked surface. We finally prove that this action coincides with the one constructed in \cite{GS16} from the geometrical viewpoint.
\end{abstract}

\tableofcontents

\section{Introduction}

\subsection{Backgrounds}\label{subsec:background}

A \emph{Cluster algebra}, which is introduced by Fomin-Zelevinsky \cite{FZ-CA1}, is a commutative algebra associated with a combinatorial data called a \emph{seed}. 
\emph{Seed mutations} produce new seeds from a given one, and we get generators of the cluster algebra by using this operation successively. 
One of the data forming a seed is a \emph{weighted quiver} $Q$. The \emph{cluster modular group} $\Gamma_{Q}$ is the group consisting of mutation sequences which preserve the weighted quiver $Q$. It acts on the cluster algebra as algebra automorphisms.  
The \emph{cluster ensemble} $(\A_{|Q|}, \X_{|Q|})$ associated with the mutation class $|Q|$ is a pair of positive varieties introduced by Fock-Goncharov \cite{FG09a}. Each of them has a distinguished set of birational coordinate systems parametrized by seeds such that the coordinate transformations are given by \emph{cluster transformations} induced by the seed mutations. The algebra of regular functions on $\A_{|Q|}$ is called the \emph{upper cluster algebra} where the cluster algebra sits inside. The cluster modular group acts on $\A_{|Q|}$ and $\X_{|Q|}$ as automorphisms.

The cluster algebra is successfully used for the study of \emph{total positivity problem} (cf. \cite{BFZ05}), and the cluster ensemble is introduced by aiming at describing the \emph{higher \Teich $\A$-/$\X$-spaces} \cite{FG03} as well as its quantization \cite{FG09}. Nowadays the cluster algebra/ensemble appears in many areas of mathematics and physics, for example: hyperbolic geometry, integrable systems, representation theory of quantum groups, mirror symmetry, and so on. 
The cluster modular group contains the symmetry of the corresponding theory, as well as interesting discrete dynamical systems. 

In the case of higher \Teich theory, the cluster modular group contains the \emph{mapping class group} of the marked surface on which the theory is defined \cite{FG03,Le16}. 
The other known (interesting) examples of groups which can be realized in cluster modular groups are: Thomson's group $\mathbb{T}$ \cite{FG09a}, Artin-Tits braid groups of finite type \cite{FG03}\footnote{This is first announced in \cite[\S 1.16, Example (2)]{FG03}, and an explicit construction is given in \cite[\S 10]{GS19}.}, Weyl group of $A$-type (cluster $R$-matrices) \cite{ILP16}, and so on. One of our aim in this paper is to add the \emph{Weyl groups} associated with symmetrizable Kac-Moody Lie algebras to this list.

\subsection{Realization of Weyl groups}
In this paper, for a symmetrizable Kac-Moody Lie algebra $\mathfrak{g}$ and an integer $m \geq 2$, we first define a weighted quiver $Q_m(\mathfrak{g})$ and a sequence of mutations $R(s) \in \Gamma_{|Q_m(\mathfrak{g})|}$ which corresponds to a Coxeter generator $r_s$ of the Weyl group $W(\mathfrak{g})$. 
Before stating the main results concerning these constructions, let us recall some of the geometric structures possessed by cluster ensembles. The space $\A_{|Q|}$ (resp. $\X_{|Q|}$) has a natural presymplectic (resp. Poisson) structure. The two spaces are related by a monomial morphism $p: \A_{|Q|} \to \X_{|Q|}$, called the \emph{ensemble map}. The action of the cluster modular group preserves these structures. Let $Z(\X_{|Q|})$ denote the group of monomial Poisson Casimirs on $\X_{|Q|}$. Let $P_{|Q|}$ be the normal subgroup of $\Gamma_{|Q|}$ whose elements restrict to the identity on the symplectic leaf $p(\A_{|Q|})$. Now our main theorem is the following:

\begin{introthm}[\cref{thm:injectivity}]\label{introthm:realization}
\begin{itemize}
\item[(1)]
We have an injective group homomorphism $\phi_m: W(\mathfrak{g}) \to P_{|Q_m(\mathfrak{g})|}$ which extends $r_s \mapsto R(s)$. 
\item[(2)]
We have a $W(\mathfrak{g})$-equivariant embedding $L(\mathfrak{g}) \to Z(\X_{|Q_m(\mathfrak{g})|})$, where $L(\mathfrak{g})$ denotes the root lattice. 
\end{itemize}
\end{introthm}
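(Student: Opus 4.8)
The plan is to reduce everything to the explicit cluster transformations. First I would spell out $R(s)$ as a concrete composite $\sigma_s \circ \mu_{\mathbf{i}(s)}$ of a vertex permutation $\sigma_s$ and a mutation at an explicitly listed sequence $\mathbf{i}(s)$ of vertices of $Q_m(\mathfrak{g})$ read off from the Cartan matrix of $\mathfrak{g}$, and then compute the induced birational transformations of the cluster $\X$- and $\A$-coordinates by composing the standard mutation formulas and simplifying. Along the way one must check that $\mu_{\mathbf{i}(s)}$ carries $Q_m(\mathfrak{g})$ to $\sigma_s(Q_m(\mathfrak{g}))$, so that $R(s)$ is a genuine element of $\Gamma_{|Q_m(\mathfrak{g})|}$. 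The expected outcome is a closed-form expression for $R(s)^*$ in which only factors of the shape $1 + (\text{monomial})$ attached to a bounded number of vertices occur.

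Granting these formulae, I would verify the defining relations of $W(\mathfrak{g})$. A mutation loop represents the trivial element of $\Gamma_{|Q_m(\mathfrak{g})|}$ once the composite of its mutation sequence with its permutation induces the identity transformation of the cluster $\X$-variables, so it suffices to check that $R(s)^2$ and the braid word $\underbrace{R(s)R(t)\cdots}_{m_{st}}\bigl(\underbrace{R(t)R(s)\cdots}_{m_{st}}\bigr)^{-1}$ act trivially at the level of these coordinates. After discarding the untouched vertices this collapses to a rank-$2$ computation governed by the $2\times 2$ submatrix of the Cartan matrix indexed by $\{s,t\}$, so only the cases $A_1\times A_1$, $A_2$, $B_2$, $G_2$ need to be treated (and when $m_{st}=\infty$ no braid relation is imposed, only involutivity). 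This yields the group homomorphism $\phi_m\colon W(\mathfrak{g}) \to \Gamma_{|Q_m(\mathfrak{g})|}$ extending $r_s \mapsto R(s)$.

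To see that $\phi_m$ lands in $P_{|Q_m(\mathfrak{g})|}$, I would restrict the $\X$-transformation $R(s)^*$ to the image of the ensemble map $p\colon \A_{|Q_m(\mathfrak{g})|} \to \X_{|Q_m(\mathfrak{g})|}$, i.e.\ impose the monomial relations $x_v = \prod_w a_w^{\varepsilon_{vw}}$ cutting out the symplectic leaf $p(\A_{|Q_m(\mathfrak{g})|})$, and observe that on this subvariety the $1+(\text{monomial})$ factors in $R(s)^*$ telescope and $R(s)^*$ becomes the identity. Since $P_{|Q_m(\mathfrak{g})|}$ is a subgroup, this covers all of $\phi_m(W(\mathfrak{g}))$.

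Finally, for part (2), which also settles injectivity in part (1): the group $Z(\X_{|Q_m(\mathfrak{g})|})$ of monomial Poisson Casimirs is the lattice of integer vectors $n$ with $\sum_v n_v \varepsilon_{vw} = 0$ for every unfrozen $w$, and $\Gamma_{|Q_m(\mathfrak{g})|}$ acts on it through the linear (tropical) monomial parts of the $\X$-transformations. I would attach to each simple root $\alpha_s$ the explicit monomial $z_s$ equal to the product of the $x$-variables along the $s$-th strand of $Q_m(\mathfrak{g})$, check from the combinatorics of the quiver that each $z_s$ is a Casimir and that the $z_s$ are multiplicatively independent, and then compute from the formula for $R(s)^*$ that $R(s)^*(z_t) = z_t\, z_s^{-a_{st}}$, matching $r_s(\alpha_t) = \alpha_t - a_{st}\alpha_s$. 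This gives the $W(\mathfrak{g})$-equivariant embedding $L(\mathfrak{g}) \hookrightarrow Z(\X_{|Q_m(\mathfrak{g})|})$; and since $W(\mathfrak{g})$ acts faithfully on $L(\mathfrak{g})$, any $w$ with $\phi_m(w)=\mathrm{id}$ would fix the image pointwise and hence act trivially on $L(\mathfrak{g})$, forcing $w=e$. I expect the main obstacle to be the first step, namely producing and simplifying the explicit $\X$- and $\A$-transformations for $R(s)$ uniformly in the symmetrizable Kac--Moody type and in $m\ge 2$, verifying the braid relations in the rank-$2$ cases (the $G_2$ case being the heaviest), and carrying out the bookkeeping that shows $Q_m(\mathfrak{g})$ is preserved by the mutation sequence.
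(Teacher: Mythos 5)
Your proposal follows essentially the same route as the paper's proof: explicit computation of the cluster $\A$- and $\X$-transformations induced by $R(s)$ together with the check that the mutation sequence preserves $Q_m(\mathfrak{g})$, verification of the Coxeter relations after reducing to the rank-two subdiagrams ($A_1\times A_1$, $A_2$, $B_2$, $G_2$), peripherality via the ensemble map (the common factor $f_A(s)$ along the cycle $P_s$ cancels in $p^*(X)$), and finally the assignment $\alpha_s\mapsto \mathbb{X}_s=\prod_{i}X^s_i$ giving a $W(\mathfrak{g})$-equivariant embedding of the root lattice into the monomial Casimirs, with injectivity deduced from faithfulness of $W(\mathfrak{g})$ on $L(\mathfrak{g})$. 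The only points to make explicit are that ``discarding the untouched vertices'' in the rank-two reduction and detecting triviality of a mutation loop on the $\X$-variables alone are both justified by \cref{thm:Nakanishi}, exactly as invoked in the paper.
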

More precisely, the quiver $Q_m(\mathfrak{g})$ depends on a choice of a \emph{Coxeter quiver} $Q(\mathfrak{g})$ related to $\mathfrak{g}$ (see \cref{subsec:Q_m}). In \cref{subsec:RonX}, we explicitly compute the cluster transformations induced by $R(s)$. 
Our construction is a generalization of the one constructed in \cite{ILP16} for type $A_n$ and $\tilde{A}_n$ in relation with geometric crystals. 

We further obtain \emph{green sequences} and the \emph{cluster Donaldson-Thomas transformation} for $Q_m(\mathfrak{g})$ in a systematic way.  This construction is based on an observation that the action of $R(s)$ on a particular subset of the tropical $\X$-space $\X_{|Q_m(\mathfrak{g})|}(\mathbb{P}_{\mathrm{trop}}(\mathbf{u}))$ again represents the action of $R(s)$ on the root lattice.

\begin{introthm}[\cref{cor:DT}]\label{introthm:cluster DT}
\begin{itemize}
\item[(1)]
For each reduced expression $w=r_{s_1}\dots r_{s_k} \in W(\mathfrak{g})$, the mutation sequence $R(w)=R(s_1)\dots R(s_k)$ of $Q_m(\mathfrak{g})$ is a green sequence. 
\item[(2)]
Moreover if $\mathfrak{g}$ is of finite type, then the cluster Donaldson-Thomas transformation for the quiver $Q_m(\mathfrak{g})$ is given by $\sigma \circ R(w_0)$. Here $w_0=r_{s_1}\dots r_{s_l} \in W(\mathfrak{g})$ is a fixed reduced expression of the longest element and $\sigma$ is a certain explicit seed isomorphism.
\end{itemize}
\end{introthm}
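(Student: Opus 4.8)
The plan is to deduce both statements from the explicit formulae for the cluster $\X$-transformation induced by $R(s)$, together with the tropical reformulation mentioned right before the statement. For part (1), recall that a mutation sequence is a \emph{green sequence} (more precisely, the statement should be read as: $R(w)$ applied to the initial seed produces a maximal green sequence in the sense of Keller/Br\"ustle--Dupont--P\'erotin) if and only if, tracking the $c$-vectors (equivalently, the tropical $\X$-coordinates in the basis $\mathbf{u}$) along the sequence, every exchanged vertex is \emph{green} at the moment of mutation, i.e.\ its $c$-vector is nonnegative. The key input, already established earlier in the excerpt, is that the action of each $R(s)$ on the relevant subset of $\X_{|Q_m(\mathfrak{g})|}(\mathbb{P}_{\mathrm{trop}}(\mathbf{u}))$ \emph{realizes the action of the Coxeter generator $r_s$ on the root lattice $L(\mathfrak{g})$}. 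Thus, composing along a reduced expression $w=r_{s_1}\cdots r_{s_k}$, the tropical sign-coherence of the intermediate $c$-vectors is governed by the combinatorics of the \emph{inversion set} $\mathrm{Inv}(w)=\{\alpha>0 : w^{-1}\alpha<0\}$: at the $j$-th step the vertex being mutated carries (up to the explicit frozen-vertex bookkeeping) the root $r_{s_1}\cdots r_{s_{j-1}}(\alpha_{s_j})$, which is a \emph{positive} root precisely because the expression is reduced. Hence every mutation in $R(w)$ is green, which is exactly the claim.

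For part (2), recall that the cluster Donaldson--Thomas transformation, when it exists, is the unique (up to the relevant notion of equivalence) cluster transformation whose tropical sign is $-\mathrm{id}$ on the initial $c$-vectors; by the characterization of Gross--Hacking--Keel--Kontsevich / Keller, it suffices to exhibit a mutation sequence, possibly followed by a seed isomorphism $\sigma$, whose composite acts on the initial tropical $\X$-coordinates (equivalently on $L(\mathfrak{g})$ under the embedding of Theorem~\ref{introthm:realization}(2)) by $x\mapsto -x$. Taking $w=w_0$ the longest element, part~(1) shows $R(w_0)$ is a (maximal) green sequence, and the tropical computation shows that $R(w_0)$ realizes the action of $w_0$ on $L(\mathfrak{g})$. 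Now $w_0$ acts on the root lattice as $-\tau$, where $\tau$ is the diagram automorphism $\alpha_i\mapsto -w_0(\alpha_i)$ (the ``Dynkin involution''); in finite type this $\tau$ is a genuine symmetry of the Coxeter/Dynkin data, and it is this symmetry that we realize on the quiver level by an explicit seed isomorphism $\sigma$ permuting the vertices of $Q_m(\mathfrak{g})$. Therefore $\sigma\circ R(w_0)$ acts tropically by $-\mathrm{id}$, and combined with maximal-greenness this identifies it as the cluster DT transformation.

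Concretely, the steps are: (i) import the explicit $\X$-transformation formula for $R(s)$ and its tropicalization from the earlier sections, and record precisely how it acts on the $L(\mathfrak{g})$-part of $\X_{|Q_m(\mathfrak{g})|}(\mathbb{P}_{\mathrm{trop}}(\mathbf{u}))$; (ii) prove by induction on $\ell(w)$, using the standard fact that $\ell(r_{s_1}\cdots r_{s_j})=j$ along a reduced word, that the $j$-th mutated vertex is green, establishing (1); (iii) for (2), specialize to $w_0$, compute that $R(w_0)$ realizes multiplication by $w_0$ on $L(\mathfrak{g})$, define $\sigma$ from the Dynkin involution $\tau$ and check directly from the quiver description in \cref{subsec:Q_m} that $\sigma$ is a seed isomorphism of $Q_m(\mathfrak{g})$ intertwining the $\tau$-action; (iv) conclude via the tropical characterization of the cluster DT transformation (e.g.\ the sign-coherence criterion of Gross--Hacking--Keel--Kontsevich, valid here because $Q_m(\mathfrak{g})$ admits a maximal green sequence) that $\sigma\circ R(w_0)$ is the DT transformation.

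The main obstacle I anticipate is step (iii): verifying that the explicit seed isomorphism $\sigma$ genuinely matches the residual $\tau$-ambiguity left after $R(w_0)$. Because $R(w_0)$ realizes $w_0=-\tau$ rather than $-\mathrm{id}$ on the root lattice, one cannot simply take $\sigma=\mathrm{id}$; one must pin down $\sigma$ as an automorphism of the weighted quiver $Q_m(\mathfrak{g})$ coming from the diagram automorphism, and check it is compatible with \emph{all} the frozen-vertex data and the chosen Coxeter quiver $Q(\mathfrak{g})$, not just the mutable part. A secondary subtlety is making sure the notion of green sequence used for non-finite-type $\mathfrak{g}$ in part~(1) is the intended one (greenness of each individual mutation along $R(w)$, which is what the tropical/inversion-set argument directly gives), since ``maximal'' green sequences only make sense in the finite-type setting relevant to part~(2).
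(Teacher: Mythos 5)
Your part (1) is essentially the paper's argument: the paper combines the explicit tropical computation inside each $R(s)$ (proof of \cref{lem:tropRonX}: all intermediate mutation points on the cycle $P_s$ have positive sign once the cycle variables are positive) with the $W(\mathfrak{g})$-equivariance of the root-lattice embedding and the length criterion $l(wr_s)>l(w)\Rightarrow w\alpha_s>0$ (\cref{prop: action coincide}, \cref{thm: length}), which is exactly your inversion-set induction. One small caveat: $R(w)$ for general reduced $w$ is only claimed to be a green sequence, not maximal green, so your opening reinterpretation should be dropped (you correctly hedge on this at the end).

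In part (2) there is a genuine gap, and it sits precisely at the point you flagged as the anticipated obstacle: the seed isomorphism $\sigma$ is \emph{not} just the Dynkin involution $v_i^s\mapsto v_i^{s^\ast}$. Because each application of $R(s)$ shifts the tropical data along the cycle $P_s$ (cf.\ $R^\trop(s)^\ast(x^s_j)=(x^s_{j-1})^{-1}$ in \cref{lem:tropRonX}), after $R(w_0)$ the coordinate sitting at $v_i^{s^\ast}$ is $(u_{i'}^{s})^{-1}$ for an index $i'$ displaced by $n_{s^\ast}$, the number of occurrences of $s^\ast$ in the reduced word; the correct isomorphism is $\sigma_Q^{-1}(v_i^s)=v_{i+n_{s^\ast}}^{s^\ast}$ (\cref{cor:DT}, \cref{rem:explicitshift}), and with $\sigma$ taken to be the bare diagram automorphism the composite does not send $l_i^+$ to $l_i^-$ unless $m$ divides the shifts. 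Moreover, your reduction ``acts by $-\mathrm{id}$ on $L(\mathfrak{g})$'' is strictly weaker than what the DT property requires: the embedding of $L(\mathfrak{g})$ only sees the products $\prod_i x_i^s$ along cycles, whereas one must control every individual $c$-vector. The paper achieves this by an induction showing $X(k)_{v_i^s(k)}=\prod_{s'}(u_i^{s'})^{c_{s,k}(s')}$ with $r_{s_1}\cdots r_{s_k}\alpha_s=\sum_{s'}c_{s,k}(s')\alpha_{s'}$, i.e.\ each coordinate individually tropicalizes to a root with the \emph{same} $\Z_m$-index, and this induction in turn needs the reduced word of $w_0$ to be \emph{adapted} to the Coxeter quiver $Q(\mathfrak{g})$ (\cref{prop:adapted}, via Lusztig) so that the relabelled quiver is identified with $\bigl(r_{s_k}\cdots r_{s_1}(Q)\bigr)_m$ at every step. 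Your proposal would need to incorporate both the cyclic-shift component of $\sigma$ and the adaptedness hypothesis to close the argument.
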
 

\subsection{Relation with the higher \Teich theory}
When $\mathfrak{g}$ is of finite type and $m=kh$ ($k \in \Z_{>0}$) is a multiple of the Coxeter number, our quiver $Q_{kh}(\mathfrak{g})$ is related to the higher \Teich theory on a once-punctured disk $\mathbb{D}^1_{2k}$ with $2k$ marked points on its boundary, as follows. 
For a marked surface $\Sigma$ and the simply-connected semisimple algebraic group $G$ integrating the Lie algebra $\mathfrak{g}$, let $\A_{G,\Sigma}$ be the moduli space of twisted decorated $G$-local systems on $\Sigma$ (see \cref{subsubsec:moduli} for details). 
The moduli space $\A_{G,\Sigma}$ is known to have a structure of cluster $\A$-variety. Let $\mathcal{C}_{\mathfrak{g},\Sigma}$ denote the mutation class encoding the cluster structure. 
An explicit quiver in the class $\mathcal{C}_{\mathfrak{g},\Sigma}$ is given by Fock-Goncharov \cite{FG03} for type $A_n$, and by Le \cite{Le16} for the other classical types. Le also gave a conjectural construction for exceptional types in \cite{Le16b}\footnote{Goncharov--Shen gave a full construction for any semisimple Lie algebra $\mathfrak{g}$ in \cite{GS19}.}. 

A crucial observation is that if we attach suitable frozen vertices and arrows to our quiver, then the resulting quiver $\widetilde{Q}_{kh}(\mathfrak{g})$ belongs to the mutation class $\mathcal{C}_{\mathfrak{g},\mathbb{D}^1_{2k}}$. Hence our embedding $W(\mathfrak{g}) \subset \Gamma_{\mathcal{C}_{\mathfrak{g},\mathbb{D}^1_{2k}}}$ induces an action of $W(\mathfrak{g})$ on the moduli space $\A_{G,\mathbb{D}^1_{2k}}$. 
Utilizing this correspondence, for an \emph{admissible} pair $(\Sigma,\mathfrak{g})$ (see \S~ \ref{subsubsec:moduli} for the definition), we get an action of $W(\mathfrak{g})^p$ on the moduli space $\A_{G,\Sigma}$, where $p$ denotes the number of punctures of $\Sigma$. 
We call this action the \emph{cluster action}.

On the other hand, Goncharov-Shen \cite{GS16} gave a natural action of $W(\mathfrak{g})^p$ on the moduli space $\A_{G,\Sigma}$ for an arbitrary marked surface $\Sigma$. This action only changes the decorations and keeps the underlying $G$-local systems intact. We call this action the \emph{geometric action}. Therefore it is natural to ask whether the cluster action coincides with the geometric one. See \cite[Conjectures 1.13 and 1.20]{GS16} for related conjectures. Our final goal is the following: 

\begin{introthm}[\cref{thm:general surface}]\label{introthm:geometric action}
Assume $\mathfrak{g}$ is classical finite type.
For an admissible pair $(\Sigma,\mathfrak{g})$, the cluster action of $W(\mathfrak{g})^p$ on the moduli space $\A_{G,\Sigma}$ coincides with the geometric action.
\end{introthm}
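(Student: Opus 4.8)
The plan is to reduce the statement on a general admissible surface $\Sigma$ to the local model of the once-punctured disk $\mathbb{D}^1_{2k}$, where the cluster action was constructed, and then to check the coincidence there by an explicit computation on decorated local systems. The key structural input is that both actions are "local near the punctures": the geometric action of $W(\mathfrak{g})^p$ changes only the decorations at the flags sitting over the punctures and leaves the underlying $G$-local system untouched, while the cluster action is built by gluing the disk-quivers $\widetilde{Q}_{kh}(\mathfrak{g})$ around each puncture into the global quiver in the class $\mathcal{C}_{\mathfrak{g},\Sigma}$. So the first step is to make this locality precise: show that for an admissible pair, a neighborhood of each puncture of $\Sigma$ (a once-punctured disk with some marked points, with the number of boundary marked points a multiple of $h$ after subdivision) embeds as a sub-surface, that the corresponding sub-quiver is exactly $\widetilde{Q}_{kh}(\mathfrak{g})$ up to frozen vertices, and that the mutation sequence $R(w)$ supported on that sub-quiver commutes with everything outside. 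This uses the amalgamation/gluing description of $\A_{G,\Sigma}$ from Fock--Goncharov and Le, together with \cref{introthm:realization} and the explicit cluster transformations of \cref{subsec:RonX}.

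The second step is the base case: prove \cref{introthm:geometric action} for $\Sigma = \mathbb{D}^1_{2k}$ (equivalently for a single puncture). Here I would exploit that the geometric action on $\A_{G,\mathbb{D}^1_{2k}}$ is transparent — it rescales/twists the decoration over the puncture by a simple coroot in the sense of \cite{GS16} — so its effect on the cluster $\A$-coordinates can be written down using the configuration-of-flags description of those coordinates (generalized minors along the reduced word). On the other side, the cluster action is $\phi_{kh}(r_s)$, whose effect on $\A$-coordinates is given by the formulae computed in \cref{subsec:RonX} for $R(s)$ (and by composition for $R(w)$). The task is to match these two explicit birational maps. It suffices to check the generators $r_s$, and it suffices to verify the match on a Zariski-dense subset — e.g. after restriction to a convenient cluster chart and using that a cluster transformation is determined by its values on the positive real points, or on the tropical $\X$-space as in the observation preceding \cref{introthm:cluster DT}.

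A convenient reformulation of the base case that I expect to streamline the computation: by \cref{introthm:realization}(2) the root-lattice embeds $W(\mathfrak{g})$-equivariantly into the monomial Poisson Casimirs $Z(\X_{|Q_{kh}(\mathfrak{g})|})$, and the geometric action's defining property is precisely that it shifts the decoration by an element of the (co)root lattice while fixing the $\X$-variety; so both actions induce the same map on $\X_{|Q_{kh}(\mathfrak{g})|}$ (namely the identity, since $W(\mathfrak{g}) \subset P_{|Q_{kh}(\mathfrak{g})|}$ acts trivially on $p(\A)$ and the Casimir directions carry the action), and one only needs to pin down the lift to $\A_{G,\mathbb{D}^1_{2k}}$, which is a torsor over $H = \mathrm{Spec}\,\mathbb{C}[L(\mathfrak{g})^\vee]$ worth of ambiguity. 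Thus the base case collapses to identifying two elements of $H^p$, i.e. to a finite root-theoretic bookkeeping: check that the element of the Cartan by which $R(w_0)$ (or each $R(s)$) twists the decoration is the same simple-root datum that defines the Goncharov--Shen action. I would do this by tracking the DT transformation: by \cref{introthm:cluster DT}(2) the cluster DT transformation is $\sigma \circ R(w_0)$, and its geometric counterpart is known (it is the "rotation/twist" on $\A_{G,\mathbb{D}^1_{2k}}$), and comparing these two global maps, both of which are computed, forces the generator-by-generator agreement.

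\textbf{The main obstacle} I anticipate is the base-case matching of the two explicit birational automorphisms of $\A_{G,\mathbb{D}^1_{2k}}$: it requires a careful compatibility between the combinatorial normalization of the quiver $\widetilde{Q}_{kh}(\mathfrak{g})$ and the choices (reduced words, orientation of the Coxeter element, conventions for generalized minors and decorations) implicit in the Fock--Goncharov/Le cluster structure and in the Goncharov--Shen geometric action — the frozen vertices, which are where the decoration data lives, must be handled with particular care since the correspondence between $Q_{kh}(\mathfrak{g})$ and the class $\mathcal{C}_{\mathfrak{g},\mathbb{D}^1_{2k}}$ is only "up to frozen vertices." Making the bookkeeping of frozen data precise, so that "coincidence up to frozen vertices" upgrades to an honest equality of actions on $\A_{G,\Sigma}$, is the crux; once the disk case is settled, the globalization via amalgamation should be essentially formal, using that both actions are supported near the punctures and that amalgamation is functorial for the relevant morphisms.
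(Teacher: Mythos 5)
Your globalization step (locality of both actions near punctures, reduction to a punctured-disk neighborhood via an adapted ideal triangulation) is essentially the paper's (\cref{l:triangulation} and the proof of \cref{thm:general surface}), and your plan to check the coincidence generator by generator on the disk by comparing two explicit birational maps is the right skeleton. But the mechanism you propose for the disk case has a genuine gap. You claim that since both actions cover the identity on the $\X$-side, the comparison ``collapses to identifying two elements of $H^p$,'' i.e.\ to finite root-theoretic bookkeeping. This is not so: the moduli space $\A_{G,\mathbb{D}^1_k}$ fibers over $\mathrm{Loc}^{\mathrm{un}}_{G,\mathbb{D}^1_k}$ with $H$-torsor fibers, and an automorphism covering the identity on the base is an $H$-valued \emph{rational function} on the base, not a constant. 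Indeed the geometric generator $r_s^{(a)}$ sends $(\L,h.\psi)$ to $(\L,r_s^\ast(h).\psi)$, so the element of $H$ by which it twists the decoration is $r_s^\ast(h)h^{-1}$, where $h$ is recovered from the point via the potentials $\mathcal{W}_{a,s}$; concretely, the action multiplies every cluster $\A$-coordinate on the cycle $P_s$ by the rational function $\mathcal{W}_{a,s^\ast}$ (\cref{p:simple_refl}). Matching this with the cluster action therefore amounts to proving the nontrivial identity $\mathcal{W}_{a,s^\ast}=\widetilde{f}_A(s)$ between the potential and the coefficient appearing in \eqref{A-transf}/\eqref{eq:fa-tilde}. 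This is exactly where the real work lies in the paper: the potential is decomposed triangle by triangle, and its restriction to $\Conf_3\A_G$ is expressed in the cluster $\A$-coordinates via an extension of the Chamber Ansatz formulae (twist automorphism, generalized minors, and a case-by-case verification) in \cref{t:ConfCA}, yielding \cref{thm:chamber ansatz}. Your proposal contains no substitute for this computation.

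The fallback you suggest, pinning down the disk case by comparing Donaldson--Thomas transformations, does not close the gap either. The geometric description of the cluster DT transformation on these moduli spaces is precisely the conjectural statement (\cref{introconj:GS}), and in the paper the comparison \cref{introcor:DT} is derived \emph{conditionally} on \cref{introconj:geometric action}, i.e.\ it sits downstream of, not upstream of, the theorem you are trying to prove; moreover, even an unconditional agreement of the single composites $\sigma\circ R(w_0)$ would not force generator-by-generator agreement of the two $W(\mathfrak{g})$-actions. So the crux — expressing the Goncharov--Shen potential in the Fock--Goncharov/Le cluster coordinates, with all frozen-vertex and reduced-word conventions tracked — still has to be carried out, and without it the proof is incomplete.
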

For the type $A_n$ case, Goncharov-Shen constructed mutation sequences representing the geometric action \cite[\S 8]{GS16}. 
Our proof is essentially a generalization of their computation. A key ingredient of our argument is an extension of \emph{Chamber Ansatz} formulae \cite{BFZ96,BZ97} known for unipotent cells to the configuration spaces $\Conf_3\A_G$ of triples of decorated flags. For the type $A_n$ case, we also give another combinatorial proof of this theorem based on Goncharov-Shen's result given in \cite{GS16}. 

After submitting the present paper as a preprint, \cref{introthm:geometric action} is generalized by Goncharov--Shen for any semisimple Lie algebra $\mathfrak{g}$ in \cite{GS19}.

\subsection{A further problem: $\X$-side}

Let us mention the ``$\X$-side'' of the higher \Teich theory. Let $G':=G/Z(G)$ be the adjoint group. Let $\X_{G',\Sigma}$ be the moduli space of framed $G'$-local systems on $\Sigma$. Then the pair $(\A_{G,\Sigma},\X_{G',\Sigma})$ forms the cluster ensemble associated to the mutation class $\mathcal{C}_{\mathfrak{g},\Sigma}$. In particular we have a cluster action of $W(\mathfrak{g})^p$ on $\X_{G',\Sigma}$ as well.  On the other hand, there is a geometric action of $W(\mathfrak{g})^p$ on $\X_{G',\Sigma}$ as discussed in \cite{GS16}.
 
\begin{introconj}\label{introconj:geometric action}
For an admissible pair $(\Sigma,\mathfrak{g})$, the cluster action of $W(\mathfrak{g})^p$ on the moduli space $\X_{G',\Sigma}$ coincides with the geometric action\footnote{This conjecture is confirmed in \cite{GS19} for any semisimple Lie algebra $\mathfrak{g}$.}.
\end{introconj}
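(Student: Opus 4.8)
The plan is to reduce \cref{introthm:geometric action} to the once-punctured disk case $\mathbb{D}^1_{2k}$ and then to a local computation on $\Conf_3\A_G$. I would proceed as follows.

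\medskip

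\emph{Step 1: Reduction to a local model via cutting.} For an admissible pair $(\Sigma,\mathfrak{g})$, the geometric action of $W(\mathfrak{g})^p$ acts independently at each puncture, changing only the decoration at that puncture. Likewise, by the construction sketched in \cref{subsec:background}, the cluster action is built by embedding a copy of $Q_{kh}(\mathfrak{g})$ (with frozen vertices, i.e. $\widetilde{Q}_{kh}(\mathfrak{g})$) near each puncture and letting $W(\mathfrak{g})$ act through $R(s)$ on the corresponding local coordinates. Hence it suffices to treat one puncture at a time, and since both actions are local near the puncture, it suffices to verify the coincidence on the once-punctured disk $\mathbb{D}^1_{2k}$ — in fact, by further cutting along arcs emanating from the puncture, on a single ideal triangle, i.e. on $\Conf_3\A_G$. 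More precisely, I would use the gluing/amalgamation formalism for $\A_{G,\Sigma}$ to show that an automorphism of $\A_{G,\Sigma}$ that (a) agrees with the geometric action on each triangle incident to the puncture and (b) is the identity away from a neighborhood of the puncture is determined, and that both our cluster action and the geometric action satisfy (a) and (b).

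\medskip

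\emph{Step 2: Chamber Ansatz on $\Conf_3\A_G$.} The heart of the matter is to compute the cluster transformation $R(s)$ in the coordinates on $\Conf_3\A_G$ attached to a triangle and to match it with the geometric action, which replaces a decoration $A$ (a point of $G/U$ lifting a flag $B$) by $s\cdot A$ in an appropriate sense (the twisted action via a representative of $r_s$ in $N_G(H)$). The tool is an extension of the \emph{Chamber Ansatz} formulae of \cite{BFZ96,BZ97}: these express the coordinates of a decorated configuration (generalized minors / the cluster $\A$-variables on the FG--Le quiver) in terms of the factorization parameters of the relevant unipotent/double Bruhat cell. I would set up the Chamber Ansatz identifying the cluster $\A$-coordinates on $\Conf_3\A_G$ attached to the triangle with products of generalized minors evaluated on the configuration, and then compute directly how the birational map $R(s)$ — whose explicit formula is obtained in \cref{subsec:RonX} / the $\A$-side analogue — transforms these minors. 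The target identity is that this transformation equals precomposition with the geometric change of decoration at the puncture-vertex of the triangle.

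\medskip

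\emph{Step 3: Assembling and the $A_n$ cross-check.} Once the local identity on $\Conf_3\A_G$ is established, I would glue: the full cluster action on $\widetilde{Q}_{kh}(\mathfrak{g})$ is, by \cref{introthm:realization} and the quiver amalgamation in \cref{subsec:background}, the composite of the local $R(s)$'s, and the geometric action on $\A_{G,\mathbb{D}^1_{2k}}$ likewise glues from the triangle-local decoration changes; matching the local pieces then yields the coincidence on $\mathbb{D}^1_{2k}$, and Step 1 upgrades this to an arbitrary admissible $(\Sigma,\mathfrak{g})$. For type $A_n$ I would additionally give an independent verification by quoting Goncharov--Shen's explicit mutation sequences for the geometric action from \cite[\S 8]{GS16} and checking combinatorially that they agree (up to seed isomorphism) with our $R(w)$; this serves as a consistency check and as the base of a potential induction on the rank.

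\medskip

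\emph{Main obstacle.} The principal difficulty is Step 2: the classical Chamber Ansatz is formulated for unipotent cells or double Bruhat cells in $G$, whereas here we need it on the configuration space $\Conf_3\A_G$ of triples of decorated flags, and we must track the precise normalizations — the choice of pinning, the lift of $r_s$ to $N_G(H)$, and the sign/torus ambiguities in decorations — so that the birational map $R(s)$ matches the geometric action \emph{on the nose} rather than merely up to a Cartan-torus factor. Handling these normalizations uniformly across the classical types $A_n,B_n,C_n,D_n$ (each with its own folding/minor combinatorics) is the technically demanding part; the once-punctured-disk and gluing reductions in Steps 1 and 3 are comparatively routine given the amalgamation formalism.
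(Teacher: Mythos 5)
Your proposal is aimed at the wrong statement: everything in Steps 1--3 concerns the decorated moduli space $\A_{G,\Sigma}$ --- decorations at punctures, the potentials, the Chamber Ansatz on $\Conf_3\A_G$, matching $R(s)$ with the change of decoration --- which is precisely the content of \cref{introthm:geometric action}. The statement to be proved here is \cref{introconj:geometric action}, about the $\X$-side: the cluster action of $W(\mathfrak{g})^p$ (through the $\X$-transformations \eqref{eq:RonX}) on the moduli space $\X_{G',\Sigma}$ of framed $G'$-local systems versus the geometric action of \cite{GS16} on that space, which moves the framing (the flag data at punctures), not a decoration. The $\X$-statement does not follow formally from the $\A$-statement: the two spaces are related only by the ensemble map $p:\A_{G,\Sigma}\to\X_{G',\Sigma}$, whose image is just the symplectic leaf $\U$, and by \cref{lem:peripheral} the elements $R(s)$ lie in the peripheral subgroup, i.e.\ they act \emph{trivially} on $p(\A)$; so however precisely you match the $\A$-side actions via the Chamber Ansatz, this pins down nothing about the action on the rest of $\X_{G',\Sigma}$. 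A proof of the conjecture requires a genuinely $\X$-side computation (e.g.\ expressing the geometric change of framing in the cluster $\X$-coordinates on $\Conf_3\B_{G'}$ and comparing with \eqref{eq:RonX}), which your outline never addresses.

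It is also worth noting that the paper itself does not prove this conjecture in general --- it is left open (``we plan to come back to this conjecture in another paper''), settled only for type $A_n$: there the paper identifies $R(s,i)$ with the Goncharov--Shen mutation sequence $R_D(s,i)$ inside the cluster modular group (\cref{sec:equivalence}) and invokes their result that $R_D(s)$ realizes the geometric action on \emph{both} $\X_{PSL_{n+1},\mathbb{D}^1_2}$ and $\A_{SL_{n+1},\mathbb{D}^1_2}$. Your ``$A_n$ cross-check'' in Step 3 is in fact the only mechanism available in the paper that touches the $\X$-side, but you treat it as an optional consistency check rather than as the substance of the argument; for the other classical types your plan, even if carried out perfectly, would re-prove \cref{introthm:geometric action} and leave \cref{introconj:geometric action} untouched.
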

We plan to come back to this conjecture in another paper. For the type $A_n$ case, this is proved in \cite{GS16}. 

Goncharov-Shen \cite{GS16} gave the following conjectural description of the cluster Donaldson-Thomas transformation of the mutation class $\mathcal{C}_{\mathfrak{g},\Sigma}$. Let $\mathbf{w}_0:=(w_0,\dots,w_0) \in W(\mathfrak{g})^p$ be the longest element, and $\mathbf{r}_\Sigma$ be the mapping class given by the rotation of the special points on each boundary component of $\Sigma$ by one following the orientation induced by $\Sigma$. The group $\mathrm{Out}(G)$ contains a canonical involution $*$ which corresponds to the Dynkin involution. These three geometrically act on the moduli space $\X_{G',\Sigma}$.

\begin{introconj}[{\cite[Conjecture 1.13]{GS16}}]\label{introconj:GS}
For an admissible pair $(\Sigma,\mathfrak{g})$, the cluster Donaldson-Thomas transformation of the mutation class $\mathcal{C}_{\mathfrak{g},\Sigma}$ is given by the composition $\mathbf{r}_\Sigma \circ * \circ \mathbf{w}_0$. 
\end{introconj}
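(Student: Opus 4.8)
The plan is to reduce the statement to the once-punctured disk $\mathbb{D}^1_{2k}$, where it follows by combining \cref{introthm:cluster DT}(2) with \cref{introthm:geometric action}, and then to propagate it to a general admissible pair by an amalgamation argument. Since \cref{introthm:geometric action} is available only for classical $\mathfrak{g}$, I would carry out the proof under that hypothesis; the exceptional cases would follow verbatim once the corresponding comparison of cluster and geometric $W(\mathfrak{g})$-actions is known.

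For $\mathfrak{g}$ classical with Coxeter number $h$ and $m=kh$, recall that $\widetilde{Q}_{kh}(\mathfrak{g})\in\mathcal{C}_{\mathfrak{g},\mathbb{D}^1_{2k}}$ is obtained from $Q_{kh}(\mathfrak{g})$ by adjoining boundary frozen vertices. Since the cluster Donaldson--Thomas (DT) transformation is built from mutations at mutable vertices together with a seed automorphism, I would first check that it is insensitive to the extra frozen vertices and that the seed isomorphism $\sigma$ of \cref{introthm:cluster DT}(2) extends over the boundary vertices (it is already a symmetry of the amalgamated quiver). Then \cref{introthm:cluster DT}(2) identifies the cluster DT transformation of $\mathcal{C}_{\mathfrak{g},\mathbb{D}^1_{2k}}$ with $\sigma\circ R(w_0)$. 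By the construction of \cref{subsec:Q_m} and the embedding $W(\mathfrak{g})\subset\Gamma_{\mathcal{C}_{\mathfrak{g},\mathbb{D}^1_{2k}}}$, the sequence $R(w_0)$ realizes the cluster action of $w_0\in W(\mathfrak{g})$, which by \cref{introthm:geometric action} equals its geometric action, namely the single factor $\mathbf{w}_0$ since $p=1$. What is left is to recognise $\sigma$ as the geometric map $\mathbf{r}_{\mathbb{D}^1_{2k}}\circ *$; I would do this via the dictionary between $\widetilde{Q}_{kh}(\mathfrak{g})$ and the Fock--Goncharov/Le decorated-flag model: the quiver is assembled cyclically from $2k$ isomorphic strips, $\sigma$ shifts the strips by one and applies the Dynkin-diagram automorphism inside each strip, while rotating the boundary marked points cyclically permutes the decorated flags (the strip shift) and $*$ acts on each flag by the Dynkin involution. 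Matching the two can be pinned down on generators by tracking a single cluster variable, e.g.\ a generalized minor on a consecutive pair of decorated flags, through $\sigma$ and through $\mathbf{r}\circ *$, using that generalized minors transform by the Dynkin involution under $*$.

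For a general admissible pair $(\Sigma,\mathfrak{g})$, cut $\Sigma$ along a family of arcs into once-punctured disks $\mathbb{D}^1_{2k_i}$ (one per puncture) and pieces without punctures, so that $\mathcal{C}_{\mathfrak{g},\Sigma}$ is the amalgamation of the blocks' cluster structures along the frozen vertices on the cutting arcs. Both sides of the claimed identity are assembled from the blocks: the cluster DT transformation of an amalgam is built from the blocks' DT transformations, via the tropical characterisation of the DT transformation and the functoriality of amalgamation on tropical $\X$-spaces; and $\mathbf{r}_\Sigma$, $*$, $\mathbf{w}_0$ each decompose into their block-wise restrictions (rotation acts boundary-component-wise, $*$ block-wise, and each $w_0$-factor lives on one punctured block). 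On a once-punctured block the disk case gives $\mathbf{r}\circ *\circ w_0$, and on an unpunctured block — where $\mathbf{w}_0$ contributes nothing — one uses that the DT transformation of the polygon cluster structure of \cite{FG03,Le16} is the rotation composed with the Dynkin flip. Reassembling the blocks yields the formula for $\Sigma$.

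The main obstacle is the identification $\sigma=\mathbf{r}_\Sigma\circ *$: \cref{introthm:geometric action} pins down only the $R(w_0)$-part and is silent about the residual seed isomorphism, so one needs a precise correspondence between the combinatorics of $Q_{kh}(\mathfrak{g})$ and the triangulation/decorated-flag model — in particular, which cluster variables correspond to which generalized minors, and the correct sign and orientation of the rotation. A secondary technical point is to make the amalgamation step rigorous at the level of DT transformations in the presence of frozen vertices, since the behaviour of the DT transformation under gluing and freezing must be set up carefully.
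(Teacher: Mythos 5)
There is a genuine gap, and it is exactly the point on which the paper itself stops short of the statement. \cref{introconj:GS} is a statement on the $\X$-side: the maps $\mathbf{w}_0$, $*$ and $\mathbf{r}_\Sigma$ in it are geometric automorphisms of $\X_{G',\Sigma}$, and the cluster DT transformation is pinned down by its action on the tropical $\X$-space. Your reduction replaces the geometric $\mathbf{w}_0$ by the mutation sequence $R(w_0)$ using \cref{introthm:geometric action}, but that theorem only identifies the cluster and geometric $W(\mathfrak{g})$-actions on the $\A$-space $\A_{G,\Sigma}$; it says nothing about $\X_{G',\Sigma}$. One cannot pass from the $\A$-statement to the $\X$-statement via the ensemble map (note that the cluster Weyl action lies in the peripheral subgroup, so it is invisible on $p(\A_{|Q|})$), and the $\X$-side comparison is precisely \cref{introconj:geometric action}, which in this paper is left as a conjecture for non-$A_n$ types. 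This is why the paper proves only the conditional statement \cref{introcor:DT} (``\cref{introconj:geometric action} implies \cref{introconj:GS}''), and only for $(\mathbb{D}^1_k,\mathfrak{g})$. Your disk argument, carried out correctly, can at best reproduce that conditional result; the part of your plan that identifies the residual seed isomorphism with $\mathbf{r}_{\mathbb{D}^1_k}\circ *$ is indeed how the paper argues (via \cref{p:Aast}, \cref{p:astmuteq} and \cref{thm:Nakanishi}, comparing $A_{v_i^{s,(\ell)}}$ under the rotation and the Dynkin involution), so that piece is sound in spirit.

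The second step, propagating to a general admissible pair by cutting $\Sigma$ into once-punctured disks and unpunctured pieces, does not work as stated. The cluster DT transformation is not assembled block-wise under amalgamation: after gluing, the vertices on the cutting arcs are defrosted, and the block-wise DT sequences never mutate at them, so the composite cannot satisfy the defining condition $l_i^+\mapsto l_i^-$ at those vertices; there is no general ``functoriality of DT under amalgamation'' available to you, and the paper makes no such claim (it treats only $\mathbb{D}^1_k$). Likewise $\mathbf{r}_\Sigma$ is a rotation of the special points on the actual boundary components of $\Sigma$ and does not restrict to the interior cutting arcs, so the right-hand side of the formula does not decompose into your blocks either. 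To go beyond the punctured disk one needs a genuinely global argument (as in the later work of Goncharov--Shen), not a gluing of local DT transformations.
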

This was proved for the case $\mathfrak{g}=A_n$. The following is a corollary of \cref{introthm:cluster DT}. (See \S~ \ref{subsubsec:geom}.)

\begin{introcor}\label{introcor:DT}
Assume $\mathfrak{g}$ is of classical finite type.
For an admissible pair $(\mathbb{D}^1_k,\mathfrak{g})$, \cref{introconj:geometric action} implies \cref{introconj:GS}.
\end{introcor}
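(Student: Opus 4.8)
The plan is to promote the computation of the cluster Donaldson--Thomas transformation in \cref{introthm:cluster DT}(2) to a statement about the mutation class $\mathcal{C}_{\mathfrak{g},\mathbb{D}^1_k}$, and then to translate each of its two factors into a geometric automorphism of $\X_{G',\mathbb{D}^1_k}$ using \cref{introconj:geometric action}. \emph{Step 1.} For the appropriate multiple $m$ of the Coxeter number $h$ (the one attached to $k$ boundary marked points), the quiver $\widetilde{Q}_m(\mathfrak{g})$ represents $\mathcal{C}_{\mathfrak{g},\mathbb{D}^1_k}$ up to frozen vertices; since a maximal green sequence, and the property characterizing the cluster Donaldson--Thomas transformation, only involve the mutable part of a weighted quiver, the proof of \cref{introthm:cluster DT}(2) goes through verbatim for $\widetilde{Q}_m(\mathfrak{g})$ and shows that the cluster Donaldson--Thomas transformation of $\mathcal{C}_{\mathfrak{g},\mathbb{D}^1_k}$ is $\sigma\circ R(w_0)$.

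\emph{Step 2.} By \cref{introthm:realization} the mutation sequence $R(w_0)$ realizes the cluster action of $w_0\in W(\mathfrak{g})$; as $\mathbb{D}^1_k$ has a single puncture this is the cluster action of $\mathbf{w}_0=(w_0)$, which, under the hypothesis \cref{introconj:geometric action} for $(\mathbb{D}^1_k,\mathfrak{g})$, coincides with the geometric action of $\mathbf{w}_0$ on $\X_{G',\mathbb{D}^1_k}$. Hence, once Step 3 identifies the automorphism of $\X_{G',\mathbb{D}^1_k}$ induced by $\sigma$ with $\mathbf{r}_{\mathbb{D}^1_k}\circ *$, we obtain that the cluster Donaldson--Thomas transformation equals $\sigma\circ R(w_0)=\mathbf{r}_{\mathbb{D}^1_k}\circ *\circ\mathbf{w}_0$, which is exactly \cref{introconj:GS} for $(\mathbb{D}^1_k,\mathfrak{g})$.

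\emph{Step 3,} the crux, is to show that $\sigma$ induces $\mathbf{r}_{\mathbb{D}^1_k}\circ *$. Both are combinatorial in nature: $\sigma$ is an explicit vertex permutation of $\widetilde{Q}_m(\mathfrak{g})$, built from a cyclic shift of its levels together with the Dynkin diagram involution (see \cref{subsec:RonX}), while, because $\widetilde{Q}_m(\mathfrak{g})$ is manifestly invariant under rotating its levels, the mapping class $\mathbf{r}_{\mathbb{D}^1_k}$ and the outer automorphism $*$ likewise act on it by explicit vertex permutations. So the heart of the matter is to match the permutation $\sigma$ with the composite permutation coming from $\mathbf{r}_{\mathbb{D}^1_k}\circ *$, and to verify that this match is compatible with the frozen vertices recording the decoration/framing data — which in turn requires first fixing a precise isomorphism of the cluster $\X$-variety of $\widetilde{Q}_m(\mathfrak{g})$ with $\X_{G',\mathbb{D}^1_k}$, since the containment $\widetilde{Q}_m(\mathfrak{g})\in\mathcal{C}_{\mathfrak{g},\mathbb{D}^1_k}$ is only up to frozen vertices. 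I expect this to reduce to a finite, type-by-type check in the classical cases, modeled on Goncharov--Shen's analysis of type $A_n$ in \cite{GS16} and on the explicit formulae for $R(s)$ and $\sigma$ established earlier in the paper; granting it, the corollary follows with \cref{introconj:geometric action} as the only extra hypothesis.
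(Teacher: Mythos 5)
Your overall architecture matches the paper's: the cluster DT transformation of $\mathcal{C}_{\mathfrak{g},\mathbb{D}^1_k}$ is $\sigma_Q\circ R(w_0)$, one identifies $\sigma_Q$ with $\mathbf{r}_{\mathbb{D}^1_k}\circ\ast$, and \cref{introconj:geometric action} is used only to convert $R(w_0)$ into the geometric action of $\mathbf{w}_0$. But your Step 3 — which you yourself call the crux — is only announced, not carried out ("I expect this to reduce to a finite, type-by-type check \dots granting it, the corollary follows"), and that step is where essentially all of the content of the paper's proof lives. Concretely, two things are missing. First, the identification is not a purely combinatorial matching of vertex permutations: one needs to know that the \emph{geometric} $\ast$-involution acts on the cluster $\A$-coordinates by $A_{v_i^s}\circ\ast=A_{v_i^{s^\ast}}$ (and $A_{y_j}\circ\ast=A_{y_{j^\ast}}$), which in the paper is \cref{p:Aast} and rests on the explicit generalized-minor description of the coordinates and on \cref{l:Dynkininv}; moreover in type $A_n$ the involution exchanges the two quivers $\widetilde{\bJ}(\bi_Q(n))$ and $\widetilde{\bJ}(\bi_Q^\ast(n))$, so one also needs the mutation equivalence of \cref{p:astmuteq} before $\ast$ even defines an element of the cluster modular group. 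One further needs (as the paper notes, via the argument of Goncharov--Shen's Theorem 9.11) that this cluster realization of $\ast$ acts on $\X_{G',\Sigma}$ as the geometric involution induced by $\ast:G'\to G'$; without this, showing "$\sigma$ induces $\mathbf{r}\circ\ast$" does not yet yield the statement of \cref{introconj:GS}, which is about the geometric automorphisms.

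Second, the actual identity $\sigma_Q=\mathbf{r}_{\mathbb{D}^1_k}\circ\ast$ in $\Gamma_{\mathcal{C}_{\mathfrak{g},\mathbb{D}^1_k}}$ is not established in your sketch. The paper does this not by a type-by-type check but by a one-line coordinate computation: the rotation shifts the triangle label, $(\mathbf{r}_{\mathbb{D}^1_k}^{-1})^\ast A_{v_i^{s,(\ell)}}=A_{v_i^{s,(\ell-1)}}$, and by \cref{p:Aast} this equals $(\ast\circ\sigma_Q^{-1})^\ast A_{v_i^{s,(\ell)}}$; equality of the two mutation sequences then follows from the periodicity theorem (\cref{thm:Nakanishi}(1),(2)), which also disposes of your worry about frozen vertices, since periodicity and the DT property depend only on the unfrozen part. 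In particular your implicit claim that $\mathbf{r}_{\mathbb{D}^1_k}$ and $\ast$ "act on $\widetilde{Q}_m(\mathfrak{g})$ by explicit vertex permutations" is too quick for $\ast$ in type $A_n$, and the shift part of $\sigma_Q$ (the indices $n_{s^\ast}$ from \cref{cor:DT}) has to be matched against the level shift produced by rotating one marked point — this is exactly the computation you have deferred. So the proposal is a correct plan, in the same spirit as the paper, but with its decisive step left unproven.
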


\subsection{Related topics}\label{intro:related}
Here we collect some earlier works related to our construction.
\\
 
\paragraph{\textbf{Cluster realization of quantum groups}}

When $\mathfrak{g}$ is of finite type, Ip \cite{Ip16} and Schrader-Shapiro \cite{SS16} constructed a realization of the quantum group $U_q(\mathfrak{g})$ inside the quantum higher \Teich $\X$-space of a multi-punctured disk with two marked points on the boundary. The braidings (or half-Dehn twists) of punctures, which are realized in the corresponding cluster modular groups, represent the universal $R$-matrices. For the case of $\mathfrak{g}=A_1$, this $R$-matrix structure appeared in \cite{HI15} to study the complex volume of knots.
For the once-punctured case their quiver, which is called \emph{basic quiver} in \cite{Ip16}, is mutation-equivalent to our quiver $\widetilde{Q}_h(\mathfrak{g})$. Although this fact has already appeared in \cite[Corollary 8.3]{Ip16}, we give a proof at \cref{lem:JDtoQ} for completeness, including the correspondence of frozen vertices. From this lemma and Theorem \ref{introthm:geometric action}, it follows that the half Dehn twists and our Weyl group action are `commuting' on the $\A$-space of the multi punctured disk (see \S~ \ref{subsec:D-quiver}).
We conjecture that the image of $U_q(\mathfrak{g})$ lies in the space of $W(\mathfrak{g})$-invariants. 
\\

\paragraph{\textbf{Cluster integrable systems}}
When a quiver is the dual of a bipartite graph on a torus, one obtains a cluster integrable system following \cite{GK}. An element of the cluster modular group which preserves the product of all $X$-variables gives rise to a discrete flow, and some interesting discrete integrable systems are realized in this way. In particular in \cite{BGM,OS18}, discrete $q$-Painlev\'e equations are realized by Weyl groups which appear as the symmetries of the equations. For example, their quiver for the equation $\tilde{A}_3$ (labeled by Sakai's classification) is identical to our quiver $Q_2(\tilde{A}_3)$ with the cyclically oriented Coxeter quiver, while the symmetry group is $W(\tilde{D}_5)$. It would be interesting to explore this relationship.

\subsection{Organization of the paper}
This paper is organized as follows. In \cref{sec:definitions}, we recall basic definitions related to cluster algebras and cluster ensembles. 
In \cref{subsec:sign-coh} we reinterpret some of fundamental theorems proved in Fomin-Zelevinsky's setting in terms of cluster ensembles. This interpretation is especially used in \cref{subsec:proof-injective}.
The main part of the paper is \S~ \ref{sec:realization}--\S~ \ref{sec:equivalence}. 
In \cref{sec:realization} we explain the construction of the quiver $Q_m(\mathfrak{g})$ and realization of Weyl groups. We compute the cluster transformations induced by $R(s)$ and determine the cluster Donaldson-Thomas transformation of $Q_m(\mathfrak{g})$ in \S~ \ref{subsec:proof-injective}, to prove Theorem \ref{introthm:realization} and Theorem \ref{introthm:cluster DT}.
In \cref{sec:words} we recall the construction of the quiver associated with a reduced word in the Weyl group. We define the quiver $\widetilde{Q}_{kh}(\mathfrak{g})$ with frozen vertices, on which the action of $R(s)$ on $Q_m(\mathfrak{g})$ is naturally extends.
In \cref{sec:moduli} we review the definition of the moduli space $\A_{G,\Sigma}$ as well as its cluster structure. In \S~ \ref{subsec:comparison} we compute the geometric action of $W(\mathfrak{g})^p$ in terms of the cluster $\A$-coordinates and prove \cref{introthm:geometric action}. 
In \cref{sec:equivalence}, we study the link between the quiver $\widetilde{Q}_h(\mathfrak{g})$ and the `$\mathcal{D}_{\mathfrak{g}}$-quiver' introduced in \cite{SS16, Ip16}, based on the contents of \cref{sec:words}. As an application, we give the second proof of \cref{introthm:geometric action} and Conjecture \ref{introconj:geometric action} for the type $A_n$ case.

\bigskip 

\noindent \textbf{Acknowledgement.}
The authors are grateful to Mikhail Bershtein, Pavlo Gavrylenko, Ivan Ip, Bernhard Keller, Yoshiyuki Kimura, Ian Le, Mykola Semenyakin, Gus Schrader, Sasha Shapiro and Linhui Shen for valuable discussions. T. I. would like to express his gratitude to his supervisor Nariya Kawazumi for his continuous encouragement. He also wish to thank the Universit\'e de Strasbourg, where a part of this paper was written, and Vladimir Fock for his illuminating advice and hospitality. 
The authors are also grateful to the anonymous referee for their careful reading and incisive comments. 
This work was partly done when H.O. was a postdoctoral researcher at Universit\'e Paris Diderot. He is greatly indebted to David Hernandez for his encouragement and hospitality.  
R. I. is supported by JSPS KAKENHI Grant Number 16H03927. T. I. is supported by JSPS KAKENHI Grant Number 18J13304 and the Program for Leading Graduate Schools, MEXT, Japan. H.O. was supported by the European Research Council under the European Union's Framework Programme H2020 with ERC Grant Agreement number 647353 Qaffine.


\section{Notation and definitions in cluster algebra}\label{sec:definitions}

\subsection{Seed mutation}\label{subsec:mutation}

Let $I$ be a finite set, and $I_0$ be its subset. 
Let $\ve = (\ve_{ij})_{i,j \in I}$ be a skew-symmetrizable matrix with values in $\frac{1}{2}\Z$,  
such that $\hat{\ve} := \ve \,d = (\ve_{ij} d_j)_{i,j \in I}$ 
is skew-symmetric. Here  
$d =\mathrm{diag}(d_j)_{j \in I}$ is a diagonal integral matrix such that $\gcd(d_j \mid j \in I)=1$.  
We allow $\ve_{ij}$ to be half-integral only when $i,j \in I_0$. 
The matrix $\ve$ is called the \emph{exchange matrix}. The data $(I,I_0,\ve,d)$ can be represented by a weighted quiver as follows. 
The weighted quiver $Q$ corresponding to $(I,I_0,\ve,d)$ is a quiver with the vertex set $I$, and the structure matrix $\sigma_{ij} := \#\{\text{arrows from $i$ to $j$}\}  - \#\{\text{arrows from $j$ to $i$}\}$ is determined by $\sigma_{ij} = \ve_{ij} \gcd(d_i,d_j) / d_i$. Each vertex $i \in I$ is assigned a weight $d_i$. 
In this paper we mainly have $\sigma_{ij} = \pm 1$ or $\pm 1/2$, 
and in quivers we draw a usual arrow $\longrightarrow$ for $\sigma_{ij} = \pm 1$, and a dashed arrow $\dashrightarrow$ for $\sigma_{ij} = \pm 1/2$. See \cref{Dynkin-quivers} for  examples of weighted quivers.

Let $\mathcal{F}$ be a field isomorphic to the field of rational functions over $\C$ in $n$ independent variables (here $n:=|I|$). Let $(\mathbb{P},\oplus, \cdot)$ be a semifield, that is, an abelian multiplicative group $(\mathbb{P},\cdot)$ endowed with a binary operation $\oplus$ of addition which is commutative, associative, and distributive with respect to the multiplication. Here are some examples of semifields: 

\begin{example}\label{ex:semifields}
\begin{enumerate}
\item The set $\pos$ of positive real numbers forms a semifield with the usual operations of addition and multiplication. 
\item The \emph{tropical semifield of rank $r$} is the set $\mathbb{P}_{\mathrm{trop}}(u_1,\dots,u_r):=\{\prod_{i=1}^r u_i^{a_i} \mid a_i \in \Z\}$ equipped with the addition operation $\prod_{i=1}^r u_i^{a_i} \oplus \prod_{i=1}^r u_i^{b_i}:=\prod_{i=1}^r u_i^{\min\{a_i,b_i\}}$ and the usual multiplication. 
\item The \emph{universal semifield of rank $r$} is the set $\mathbb{P}_{\mathrm{univ}}(u_1,\dots,u_r)$ of subtraction-free rational expressions over $\mathbb{Q}$ of $r$ independent variables $u_1,\dots,u_r$ equipped with the usual addition and multiplication.  
\item The semifield $\Z^{\mathrm{trop}}$ (resp. $\R^{\mathrm{trop}}$) is defined to be the set $\Z$ (resp. $\R$) equipped with the addition $a\oplus b:=\min\{a,b\}$ and the multiplication $a\cdot b:=a+b$. Note that the tropical semifield $\mathbb{P}_\trop(u)$ of rank $r$ is isomorphic to the direct product $\Z^{\mathrm{trop}}$ via the correspondence $\prod_{i=1}^r u_i^{a_i} \mapsto (a_i)_{i=1}^r$. 
\end{enumerate}
\end{example}

We reproduce the definition of the seed mutation by Fomin-Zelevinsky \cite{FZ-CA4} with the convention in \cite{FG06}. 
Let $\mathbf{X} = (X_i)_{i\in I}$ and $\mathbf{A} = (A_i)_{i\in I}$ be two tuples of algebraically independent elements in the field $\mathcal{F}$. The tuple $(Q,\mathbf{X},\mathbf{A})$ (or the tuple $(\ve,d,\mathbf{X},\mathbf{A})$) is called a \emph{seed}. The pair $(Q,\mathbf{A})$ (resp. $(Q,\mathbf{X})$) is called an \emph{$A$-seed} (resp. \emph{$X$-seed}). For $k \in I \setminus I_0$, 
the mutation $\mu_k$ of the seed $\mu_k(\ve, d,\mathbf{X},\mathbf{A})
= (\ve',d', \mathbf{X}',\mathbf{A}')$ is given by 
\begin{align}\label{eq:e-mutation}
  &\ve_{ij}' = 
  \begin{cases}
    -\ve_{ij} & i=k \text{ or } j=k,
    \\
    \displaystyle{\ve_{ij} + \frac{|\ve_{ik}| \ve_{kj} + \ve_{ik} |\ve_{kj}|}{2}}
    & \text{otherwise},
  \end{cases}
  \\ \label{eq:d-mutation}
  &d_i' = d_i,
  \\ \label{eq:X-mutation}
  &X_{i}' = 
  \begin{cases}
  X_k^{-1} & i=k,
  \\
  X_i (1 + X_k^{-\mathrm{sgn}(\ve_{ik})})^{-\ve_{ik}} & i \neq k,
  \end{cases}
  \\ \label{eq:A-mutation}
  &A_i' = 
  \begin{cases}
  \displaystyle{A_k^{-1} \left(\prod_{j:\ve_{kj}>0} A_j^{\ve_{kj}} 
       + \prod_{j:\ve_{kj}<0} A_j^{-\ve_{kj}} \right)} & i = k,
  \\ 
  A_i & i \neq k.
  \end{cases}
\end{align}  
A vertex $i \in I_0$ in the quiver $Q$ is called a {\it frozen vertex},
since we do not mutate $Q$ at this vertex. The mutation of the \emph{coefficients} is defined as follows. Let $\mathbf{x} = (x_i)_{i\in I}$ be a tuple of elements in the semifield $\mathbb{P}$. For $k \in I \setminus I_0$, the mutation $\mu_k(Q,\mathbf{x})=(Q',\mathbf{x'})$ is given by the formula
\begin{equation}\label{eq:coeff-mutation}
x_{i}' = 
  \begin{cases}
  x_k^{-1} & i=k,
  \\
  x_i \cdot (1 \oplus x_k^{-\mathrm{sgn}(\ve_{ik})})^{-\ve_{ik}} & i \neq k,
  \end{cases}
\end{equation}
 
\begin{remark}\label{rem:FGandFZ}
\begin{enumerate}
\item The definition of the exchange matrix $B=(b_{ij})$ and the mutation of
the coefficients ($X$-variables) in \cite{FZ-CA4}
is related to the above definition by $\ve_{ij} = b_{ji}$,
where $d B$ is skew-symmetric. Fomin-Zelevinsky's $x/y$-variables corresponds to Fock-Goncharov's $A/X$-variables, respectively.
\item The mutation formula for the $X$-variables \eqref{eq:X-mutation} and the formula for the coefficients \eqref{eq:coeff-mutation} in the \emph{universal semifield} $\mathbb{P}_{\mathrm{univ}}$ take the same form.
\end{enumerate}
\end{remark}

The following lemma on the rule of quiver mutation is useful:

\begin{lem}
The mutation of the weighted quiver $Q$ in terms of 
the structure matrix $(\sigma',d') = \mu_k(\sigma,d)$ is given by
\eqref{eq:d-mutation} and 
$$
  \sigma_{ij}' = 
  \begin{cases}
    -\sigma_{ij} & i=k \text{ or } j=k,
    \\
    \displaystyle{\sigma_{ij} 
    + \frac{|\sigma_{ik}| \sigma_{kj} + \sigma_{ik} |\sigma_{kj}|}{2} 
      \,\alpha_{ij}^k }
    & \text{otherwise},
  \end{cases} 
$$
where 
$$
  \alpha_{ij}^k 
  = 
  d_k \frac{\gcd(d_i, d_j)}{\gcd(d_k, d_i)\gcd(d_k, d_j)}.
$$
In particular, if $d_k \in \{d_i,d_j\}$, then we have $\alpha_{ij}^k = 1$.  
\end{lem}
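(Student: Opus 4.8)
The plan is to verify the quiver-mutation formula by relating it directly to the exchange-matrix mutation formula \eqref{eq:e-mutation}, using the defining relation $\sigma_{ij} = \ve_{ij}\gcd(d_i,d_j)/d_i$ together with the skew-symmetrizability condition $\ve_{ij}d_j = -\ve_{ji}d_i$. First I would record the inverse relation $\ve_{ij} = \sigma_{ij} d_i/\gcd(d_i,d_j)$, so that one can pass freely between the two matrices. The weight-mutation rule \eqref{eq:d-mutation} gives $d'_i = d_i$ immediately, so only the off-diagonal part of $\sigma'$ requires work.

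Next I would treat the two cases separately. For $i = k$ or $j = k$, the formula $\ve'_{ij} = -\ve_{ij}$ together with $d'_i = d_i$ gives $\sigma'_{ij} = \ve'_{ij}\gcd(d_i,d_j)/d_i = -\sigma_{ij}$, which is the first case. For the generic case, I would substitute $\ve_{ik} = \sigma_{ik}d_i/\gcd(d_i,d_k)$, $\ve_{kj} = \sigma_{kj}d_k/\gcd(d_k,d_j)$, and $\ve_{ij} = \sigma_{ij}d_i/\gcd(d_i,d_j)$ into the second line of \eqref{eq:e-mutation}, then multiply through by $\gcd(d_i,d_j)/d_i$ to convert back to $\sigma'_{ij}$. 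Since all the $d$'s are positive, the signs inside the absolute values are unaffected: $|\ve_{ik}| = |\sigma_{ik}|\,d_i/\gcd(d_i,d_k)$ and similarly for $|\ve_{kj}|$. Collecting the coefficient of $\tfrac12(|\sigma_{ik}|\sigma_{kj} + \sigma_{ik}|\sigma_{kj}|)$ that results, one reads off exactly
\[
  \alpha_{ij}^k = \frac{d_k}{\gcd(d_i,d_k)\gcd(d_k,d_j)}\cdot\frac{\gcd(d_i,d_j)}{d_i}\cdot d_i = d_k\,\frac{\gcd(d_i,d_j)}{\gcd(d_k,d_i)\gcd(d_k,d_j)},
\]
which is the claimed formula. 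The final sentence, that $\alpha_{ij}^k = 1$ when $d_k \in \{d_i, d_j\}$, follows by a one-line check: if $d_k = d_i$ then $\gcd(d_k,d_i) = d_i = d_k$, so $\alpha_{ij}^k = d_k\gcd(d_i,d_j)/(d_k\gcd(d_k,d_j)) = \gcd(d_i,d_j)/\gcd(d_k,d_j) = 1$ since $d_k = d_i$ forces $\gcd(d_k,d_j) = \gcd(d_i,d_j)$; the case $d_k = d_j$ is symmetric.

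The computation is essentially bookkeeping, so there is no serious obstacle; the one point to watch is ensuring the $\gcd$ factors combine correctly when passing between $\ve$ and $\sigma$ — in particular one must use that each term in the numerator of \eqref{eq:e-mutation} carries its own conversion factor, and that these do not collapse into a single $\gcd(d_i,d_j)/d_i$ without the corrective $\alpha_{ij}^k$. I would also note in passing that the formula is well-posed: the right-hand side lies in $\tfrac12\Z$ and reduces to an integer whenever the mutated vertex pair is not both frozen, which is consistent with the standing conventions on $\ve$ introduced in \cref{subsec:mutation}.
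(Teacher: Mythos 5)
Your proposal is correct: the paper states this lemma without proof, and your argument—translating \eqref{eq:e-mutation} through the defining relation $\sigma_{ij}=\ve_{ij}\gcd(d_i,d_j)/d_i$ (with $d'=d$) and collecting the $\gcd$ factors into $\alpha_{ij}^k$—is exactly the routine verification the authors leave implicit. The case analysis, the extraction of $\alpha_{ij}^k=d_k\gcd(d_i,d_j)/(\gcd(d_k,d_i)\gcd(d_k,d_j))$, and the final check for $d_k\in\{d_i,d_j\}$ are all accurate.
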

Let $|Q|:=\{ \mu_{k_1}\dots\mu_{k_l}(Q) \mid l\geq 0, ~k_1,\dots, k_l\in I-I_0\}$ be the mutation class containing $Q$. Similarly we consider the mutation class of a seed, which is the set of seeds obtained from the given one.
\\

\paragraph{\textbf{The cluster modular group}}
A \emph{seed permutation} is a permutation $\sigma$ of the set $I$ which preserves the frozen subset $I_0$ setwise. It acts on a seed as $\sigma(\ve,d,\mathbf{A},\mathbf{X}) = (\ve',d',\mathbf{A}', \mathbf{X}')$, where 
\[
\ve'_{ij}:=\ve_{\sigma^{-1}(i),\sigma^{-1}(j)}, ~ d'_i:=d_{\sigma^{-1}(i)},~ A'_i:=A_{\sigma^{-1}(i)},~ X'_i:=X_{\sigma^{-1}(i)}.
\]
A seed permutation is called a \emph{seed isomorphism} if it satisfies $\ve'_{ij}=\ve_{ij}$ for all $i,j \in I$. A \emph{mutation sequence} is a finite composition of seed mutations and seed permutations. A mutation sequence is said to be \emph{trivial} if it preserves the seed. 
The \emph{cluster modular group} $\Gamma_Q$ is the group of mutation sequences which preserves the quiver $Q$, modulo trivial ones. If $Q'=\mu_k(Q)$, then the conjugation by $\mu_k$ gives an isomorphism $\Gamma_Q \cong \Gamma_{Q'}$. Therefore we identify these groups via this isomorphism and denote the resulting abstract group by $\Gamma_{|Q|}$. When we fix a \lq\lq basepoint'' $Q' \in |Q|$, an element $\phi \in \Gamma_{|Q|}$ is represented by a mutation sequence.
\\

\subsection{Cluster ensemble}\label{subsec:ensemble}
Let us recall the definition of \emph{cluster ensembles}, following ~\cite{FG09a}. A \emph{seed lattice} is a data $\bi=(\Lambda, (-,-), (e_i)_{i \in I}, (d_i)_{i \in I})$, where 
\begin{itemize}
\item
$\Lambda$ is a lattice of rank $n=|I|$, 
\item
$(-,-):\Lambda \times \Lambda \to \mathbb{Q}$ is a skew-symmetric bilinear form,
\item
$(e_i)_{i \in I}$ is a basis of $\Lambda$, and 
\item
$(d_i)_{i \in I}$ is a tuple of positive integers with $\gcd (d_i\mid i \in I)=1$ such that $\ve_{ij}:=(e_i,e_j)d_j^{-1} \in \frac{1}{2}\Z$ and it is half-integral only when $i,j \in I_0$.
\end{itemize}
A weighted quiver $Q=(I,I_0,\sigma,d)$ determines a seed lattice by setting $\Lambda:=\Z[I]=\Z[e_i \mid i \in I]$ and $(e_i,e_j):=d_id_j\gcd(d_i,d_j)^{-1}\sigma_{ij}$. 
Conversely, a seed lattice determines a weighted quiver. Hence we can identify these two notions. 
We use the following equivalence of categories:
\[
\mathrm{Lattices} \xrightarrow{\sim} \mathrm{Tori}^{\mathrm{op}}, ~\Lambda \mapsto T_\Lambda:=\mathrm{Hom}(\Lambda, \C^*).
\]
Here the former category is the category of finite rank lattices and the latter is the category of algebraic tori over $\C$. The inverse functor is given by $T \mapsto X^*(T):=\mathrm{Hom}(T,\C^*)$. Indeed the isomorphism $\Lambda \to X^*(T_\Lambda),~ \lambda \to \chi_\lambda$ is natural, where $\chi_\lambda(\phi):=\phi(\lambda)$ for $\phi \in T_\Lambda$. 
In other words, the elements of $\Lambda$ provide monomial coordinates on the torus $T_\Lambda$.

Given a seed lattice $\bi$, we define two tori $\X_\Lambda:=T_\Lambda$ and $\A_\Lambda:=T_{\Lambda^\circ}$. Here $\Lambda^\circ \subset \Lambda^*$ is a sublattice of the dual lattice generated by $f_i:=d_ie_i^*$. 
Let $\Lambda_\uf \subset \Lambda$ be the sublattice generated by $e_i$ for $i \in I\setminus I_0$, and $\X_\Lambda^\uf:=T_{\Lambda_\uf}$ the corresponding torus. 
The linear map $p^*: \Lambda_\uf \to \Lambda^\circ, ~\lambda \mapsto (\lambda,-)$ induces a monomial map $p: \A_\Lambda \to \X_\Lambda^\uf$, which we call the \emph{ensemble map}\footnote{Although not needed in this paper, it is sometimes useful to pick a lift $\A_\Lambda \to \X_\Lambda$ of the ensemble map. See, for instance, \cite{GHKK14,GS19}.}. 

The basis $(e_i)_{i \in I}$ further provides an isomorphism $\psi_\bi^x: \X_{\Lambda} \xrightarrow{\sim} (\C^*)^I=:\X_\bi$, $\phi \mapsto (\chi_{e_i}(\phi))_{i \in I}$. We denote $X_i:=\chi_{e_i}$ and call it \emph{cluster $\X$-coordinate}. Similarly, $(f_i)_{i \in I}$ provides an isomorphism $\psi_\bi^a: \A_{\Lambda} \xrightarrow{\sim} (\C^*)^I=:\A_\bi$, $\phi \mapsto (\chi_{f_i}(\phi))_{i \in I}$. We call $A_i:=\chi_{f_i}$ the \emph{cluster $\A$-coordinate}. The tuple $(Q, (A_i)_{i \in I}, (X_i)_{i \in I})$ obtained from $\bi$ can be thought as a seed defined in \cref{subsec:mutation} (see \cref{remark:relation} below). In terms of cluster coordinates, the ensemble map is represented as $p^*(X_k)=\prod_{i \in I} A_i^{\ve_{ki}}$ for $k \in I\setminus I_0$. The bilinear form $(-,-)$ naturally endows $\X_\Lambda$ (resp. $\A_\Lambda$) with a Poisson structure  (resp. closed 2-form). In terms of cluster coordinates, they are expressed as
\begin{align*}
\{X_i,X_j\}_\X=(\ve_{ij}d_j)X_iX_j, && \Omega_\A:=\sum_{i,j \in I}(d_i^{-1}\ve_{ij}) \frac{\mathrm{d}A_i}{A_i}\wedge \frac{\mathrm{d}A_j}{A_j}. 
\end{align*}
Let $Z(\X_\Lambda):=\{\chi_\beta \mid \beta \in \Lambda,~(\beta,-)=0\} \subset X^*(\X_\Lambda)$ denote the group of monomial Poisson Casimirs on $\X_\Lambda$. 

For $k \in I \setminus I_0$, the \emph{mutation} is defined to be an operation $\mu_k: \bi \mapsto \bi'$ creating a new seed lattice $\bi'=(\Lambda, (-,-), (e'_i)_{i \in I}, (d_i)_{i \in I})$, where
\[
e'_i:=
\begin{cases}
-e_k & i=k, \\
e_i+[\ve_{ik}]_+ e_k & i \neq k.
\end{cases}
\]
Then $\ve'_{ij}:=(e'_i,e'_j)d_j^{-1}$ is just given by \eqref{eq:e-mutation}. 
The mutation induces the dual transformation
\[
f'_i=
\begin{cases}
-f_k+\sum_{j \in I}[-\ve_{kj}]_+f_j & i=k, \\
f_i & i \neq k.
\end{cases}
\]
By composing the monomial morphisms induced by these transformations and some birational automorphisms on $\X_s$ and $\A_s$, 
we get the \emph{cluster transformations} $\mu_k^x: \X_\bi \dashrightarrow \X_{\bi'}$ and $\mu_k^a: \A_\bi \dashrightarrow \A_{\bi'}$ such that the pull-back action $(\mu_k^x)^*X'_i$ (resp. $(\mu_k^a)^*A'_i$) is given by the right-hand side of \eqref{eq:X-mutation} (resp. \eqref{eq:A-mutation}). See \cite[Proposition 2.3]{FG09a}.

The \emph{cluster $\X$-variety} $\X_{|\bi|}$ is the (possibly non-separated) variety obtained by gluing the tori $\X_{\bi'}$ ($\bi' \in |\bi|$) using the cluster $\X$-transformations $\mu_k^x$. By construction, we have a birational map $\X_{\bi'} \dashrightarrow \X_{|\bi|}$ for each $\bi' \in |\bi|$. We call the inverse of this map the \emph{cluster chart} associated with $\bi'$. 
Similarly one gets the \emph{cluster $\A$-variety} $\A_{|\bi|}$.
It is known that $\X_{|\bi|}$ and $\A_{|\bi|}$ respectively inherit the Poisson structure $\{\ ,\ \}_\X$ and the closed 2-form $\Omega_\A$. 
The ensemble maps are compatible with cluster transformations \cite[Proposition 2.2]{FG09a}, and thus we get a regular map $p: \A_{|\bi|} \dashrightarrow \X_{|\bi|}$. 
The image $\U_{|\bi|}:=p(\A_{|\bi|})$ is called the \emph{cluster $\U$-variety}. It is a symplectic leaf of $\X_{|\bi|}$, and the ensemble map $p$ pulls-back the symplectic structure to the closed 2-form $\Omega_\A$ \cite[Lemma 1.5]{FG09a}. The triple $(\A_{|\bi|},\X_{|\bi|},p)$ is called the \emph{cluster ensemble} associated with the mutation class $|\bi|$.

The cluster modular group $\Gamma_{|\bi|}$ acts on each of the pair $(\A_{|\bi|},\X_{|\bi|})$: each mutation sequence acts as a composition of cluster transformations and permutations of coordinate functions on each coordinate torus. See also \cref{remark:relation}. Let us summarize what we have obtained, with a notation suitable for the main part of the paper:

\begin{lem}
For a mutation class $|Q|$ of weighted quivers, associated is a pair $(\A_{|Q|}, \X_{|Q|})$ of varieties on which the cluster modular group $\Gamma_{|Q|}$ acts as automorphisms. They are related by a positive regular map $p: \A_{|Q|} \to \X_{|Q|}$. The induced action of the cluster modular group on the field $\C(\A_{Q'})$ for some $Q' \in |Q|$ is represented by compositions of mutations of $A$-seeds \eqref{eq:A-mutation} and permutations of coordinates $\{A'_i\}$. A similar statement holds for $\X$.
\end{lem}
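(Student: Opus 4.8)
The plan is to observe that this lemma merely repackages the constructions reviewed in \cref{subsec:mutation} and \cref{subsec:ensemble}, so the work consists of matching each clause to the relevant construction and invoking the compatibility results of Fock--Goncharov.

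First I would unwind the definitions. Using the identification of weighted quivers with seed lattices from \cref{subsec:ensemble}, the mutation class $|Q|$ corresponds to a mutation class $|\bi|$ of seed lattices, and I set $\A_{|Q|}:=\A_{|\bi|}$ and $\X_{|Q|}:=\X_{|\bi|}$. These are by definition obtained by gluing the tori $\A_{\bi'}$ (resp. $\X_{\bi'}$), $\bi'\in|\bi|$, along the cluster transformations $\mu_k^a$ (resp. $\mu_k^x$), which by \cite[Proposition 2.3]{FG09a} pull back the coordinates $A'_i$ (resp. $X'_i$) to the right-hand side of \eqref{eq:A-mutation} (resp. \eqref{eq:X-mutation}). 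For the ensemble map I would note that the monomial morphisms $p\colon\A_\bi\to\X_\bi^\uf\hookrightarrow\X_\bi$ coming from $p^*\colon\Lambda_\uf\to\Lambda^\circ$ are compatible with cluster transformations by \cite[Proposition 2.2]{FG09a}, hence glue to a regular map $p\colon\A_{|Q|}\to\X_{|Q|}$; positivity is immediate from the local formula $p^*(X_k)=\prod_{i\in I}A_i^{\ve_{ki}}$, which is a subtraction-free monomial.

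Next I would treat the $\Gamma_{|Q|}$-action. Fixing a basepoint $Q'\in|Q|$, an element $\gamma\in\Gamma_{|Q|}$ is represented by a mutation sequence $\mathbf{m}$, a composition of seed mutations $\mu_k$ and a seed permutation; each $\mu_k$ contributes the birational isomorphism $\mu_k^a$ (resp. $\mu_k^x$) and the permutation contributes a relabelling of coordinates, so $\mathbf{m}$ induces a birational automorphism of $\A_{Q'}$ (resp. $\X_{Q'}$) whose action on $\C(\A_{Q'})$ (resp. $\C(\X_{Q'})$) is precisely the composition of the pull-backs \eqref{eq:A-mutation} (resp. \eqref{eq:X-mutation}) with a coordinate permutation --- which is the assertion to be proved. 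Because each $\mu_k^a$ (resp. $\mu_k^x$) is one of the gluing maps defining $\A_{|Q|}$ (resp. $\X_{|Q|}$), this birational automorphism promotes to a regular automorphism of the glued variety; and since $\mathbf{m}$ preserves the seed up to isomorphism, the induced automorphism differs from the one coming from that seed isomorphism only by a trivial mutation sequence, so trivial sequences act as the identity and the assignment $\gamma\mapsto(\text{automorphism})$ is well defined on $\Gamma_{|Q|}$ and is a group action. I would finish by recalling that the cluster transformations preserve $\{\ ,\ \}_\X$ and $\Omega_\A$ \cite{FG09a}, so the action is by automorphisms of the appropriate geometric structures.

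The only genuinely non-formal point --- and the one I expect to need the most care --- is the gluing step: checking that a birational automorphism assembled from the $\mu_k^a$ (resp. $\mu_k^x$) really extends to the (possibly non-separated) variety $\A_{|Q|}$ (resp. $\X_{|Q|}$) and that a trivial mutation sequence induces the identity there. This comes down to the functoriality of the equivalence $\Lambda\mapsto T_\Lambda$ between lattices and tori together with the naturality of the cluster transformations under seed isomorphisms, both established in \cite{FG09a}; once these are in hand, the remainder is bookkeeping.
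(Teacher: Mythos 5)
Your proposal is correct and follows essentially the same route as the paper: the lemma is stated there as a summary of the constructions recalled in \S 2.2, and your argument simply unpacks those recollections, invoking the same ingredients (the seed-lattice/torus dictionary, \cite[Propositions 2.2 and 2.3]{FG09a} for compatibility of cluster transformations and the ensemble map, and the definition of $\Gamma_{|Q|}$ modulo trivial sequences for well-definedness of the action). The only caveat is the same one the paper glosses over, namely that the ensemble map naturally lands in $\X^{\uf}$ and a map to $\X_{|Q|}$ strictly requires the monomial formula on all of $I$ (cf.\ the paper's footnote on lifts), but this does not affect the substance of the argument.
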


We introduce the following normal subgroup of the cluster modular group:
\begin{definition}\label{def:peripheral}
The \emph{peripheral subgroup} is the normal subgroup $P_{|Q|} \vartriangleleft \Gamma_{|Q|}$ which consists of elements acting on the $\U$-variety trivially. In other words, an element $\phi \in P_{Q'}$ satisfies $\phi^a(p^*(X_k))=p^*(X_k)$ for all $k \in I$ in the cluster chart associated with $Q' \in |Q|$. 
\end{definition}

\begin{remark}\label{remark:relation}
Consider an $A$-seed $(Q_0, \mathbf{A}_0)$. Then the \emph{cluster algebra} $CA_{|Q_0|}$ is defined to be the $\Z$-subalgebra of $\mathcal{F}$ generated by the union of $A$-variables in $\phi(Q_0,\mathbf{A}_0)$ for all mutation sequence $\phi$. Let us mention the relationship between the cluster algebra and the cluster $\A$-variety as well as the actions of the cluster modular group on them. 

The weighted quiver $Q_0$ determines a seed lattice $\bi_0$. For each mutation sequence $\phi$, the seed lattice $\bi=\phi(\bi_0) \in |\bi_0|$ determines the cluster chart $\A_{|\bi_0|} \dashrightarrow \A_\bi= \Spec\C[A_i^{\pm 1}\mid i \in I]$. Then the pair $(Q, \phi^*(\mathbf{A}))$, where $\mathbf{A}:=(A_i)_{i \in I}$, is a seed in $\mathcal{F} \cong \C (\A_{\bi_0})$. Laurent phenomenon theorem ~\cite{FZ-CA1} implies that each rational function $\phi^*A_i$ is in fact a Laurent polynomial. 

From another point of view, the seed $(Q, \phi^*(\mathbf{A}))$ provides an embedding $CA_{|Q_0|} \to \C[\A_\bi]$ by representing each coordinate in terms of $\mathbf{A}$. Then for each seed mutation $\mu_k: \bi \to \bi'$, we have the following commutative diagram:
\[
\xymatrix{
CA_{|Q_0|} \ar@{=}[r] \ar[d] & CA_{|Q_0|} \ar[d] \\
\C[\A_\bi] \ar@{.>}[d]_{\Spec} & \C[\A_{\bi'}] \ar[l]_{(\mu_k^a)^*} \ar@{.>}[d]^{\Spec} \\
\A_\bi \ar[r]_{\mu_k^a} &  \A_{\bi'}. \\
}
\]
Consider an element $\phi \in \Gamma_{\bi}$. 
It induces a homomorphism $\phi^*: \C[\A_{\bi'}] \to \C[\A_{\bi}]$, where we write $\phi(\bi)=\bi'$. Let $(e_i)_{i \in I}$ and $(e'_i)_{i \in I}$ be two basis data of $\bi$ and $\bi'$, respectively. The fact that $\phi$ preserves the underlying quiver means that, the linear isomorphism $\Lambda \xrightarrow{\sim} \Lambda$, $e_i \mapsto e'_i$ preserves the bilinear form $(-,-)$ and induces an isomorphism $\C[\A_{\bi}]\cong \C[\A_{\bi'}]$. Then we get an automorphism $\phi^*$ of $\C[\A_{\bi}]$. 
Hence the cluster chart given by $\bi$ induces a \emph{left} birational action of $\Gamma_{|\bi_0|}$ on the cluster $\A$-variety, and it does not depend on the choice of $\bi \in |\bi_0|$. 
On the other hand, the automorphism $\phi^*$ preserves the cluster algebra $CA_{|Q_0|} \subset \C[\A_{\bi'}]$. Hence we get a \emph{right} action of $\Gamma_{|\bi_0|}$ on the cluster algebra. In other words, we have an antihomomorphism from the cluster modular group to the \emph{cluster automorphism group}, studied in ~\cite{ASS}. 
\end{remark}
In view of the remark above, seed mutations are related to the geometric action as follows. For $\phi \in \Gamma_Q$, we have an expression $\phi^*A_i=f_i^\phi(\mathbf{A})$ for each $i \in I$ if and only if the mutation sequence $\phi$ induces the seed mutation of the form $\phi(Q,\mathbf{A})=(Q,\mathbf{f^\phi(A)})$, where $f_i^\phi$ is a Laurent polynomial and $\mathbf{f^\phi(A)}:=(f_i^\phi(\mathbf{A}))_{i \in I}$ denotes a cluster. We have a similar relations for $X$-variables/coefficients. These relations are tacitly used in the sequel.
\\

\paragraph{\textbf{Semifield-valued points}}
We define the semifield-valued points of cluster varieties, following \cite{FG09a}. 
In general, for an algebraic torus $T \cong (\C^*)^n$ of rank $n$ and a semifield $\mathbb{P}$, the set of $\mathbb{P}$-points is defined to be $T(\mathbb{P}):=X_*(T)\otimes_{\Z}\mathbb{P}$. Here $X_*(T):=\mathrm{Hom}(\C^*,T)$. 
Suppose we have a basis $e_1,\dots,e_n \in X^*(T)$ which gives an identification $T \cong (\C^*)^n$. 
It induces a coordinate system $x_1,\dots, x_n : T(\mathbb{P}) \to \mathbb{P}$ by setting $x_i(\psi\otimes p):=(\chi_{e_i}, \psi)\otimes p \in \Z\otimes_\Z \mathbb{P}=\mathbb{P}$ for $\psi\otimes p \in T(\mathbb{P})$. Here we used the canonical  pairing $(\ ,\ ): X^*(T) \times X_*(T) \to \mathrm{Hom}(\C^*,\C^*)\cong \Z$ given by $(\phi,\psi):=\phi\circ \psi$. 
Then a positive map $f: T \to T'$ between such tori naturally induces a map $f(\mathbb{P}): T(\mathbb{P}) \to T'(\mathbb{P})$.

We define the space of functions on $T(\mathbb{P})$ to be the sub-semifield $\mathrm{Fun}(T(\mathbb{P}))$ of the direct product $\mathbb{P}^{T(\mathbb{P})}$ generated by the coordinate functions $(x_i)_{i \in I}$. It is characterized as the image of the unique semifield homomorphism $\mathbb{P}_{\mathrm{univ}}(u_i \mid i \in I) \to \mathbb{P}^{T(\mathbb{P})}$ extending $u_i \mapsto x_i$. 
Then the pull-back action $(f(\mathbb{P}))^*: \mathrm{Fun}(T(\mathbb{P}')) \to \mathrm{Fun}(T(\mathbb{P}))$ of a positive map $f$ is given by replacing the usual addition and multiplication with the semifield operation $\oplus$ and $\cdot$ respectively, in the rational expression of $f$.

Applying the above procedures to the family of tori and positive maps defining the cluster $\X$-variety, we get a family of sets $\X_{Q'}(\mathbb{P})$ and maps $\mu_k^x(\mathbb{P})$. The set $\X_{|Q|}(\mathbb{P})$ is defined by gluing these sets $\X_{Q'}(\mathbb{P})$ by the identifications $\mu_k^x(\mathbb{P})$. The pull-back action $\mu_k^x(\mathbb{P})^*$ is given by the formula \eqref{eq:coeff-mutation}. 
Summarizing:
\begin{lem}
For a mutation class $|Q|$, associated is a natural set of $\mathbb{P}$-points $\X_{|Q|}(\mathbb{P})$ on which the cluster modular group $\Gamma_{|Q|}$ acts. The induced action on $\mathrm{Fun}(\X_{Q'}(\mathbb{P}))$ for some $Q' \in |Q|$ is represented by compositions of mutations of coefficients \eqref{eq:coeff-mutation} and permutations of coordinates $\{x'_i\}$.
\end{lem}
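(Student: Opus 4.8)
The plan is to assemble the pieces already in place in this subsection, in parallel with the facts recorded earlier for $\X_{|Q|}$ itself. First, for each seed $Q' \in |Q|$ one takes the associated seed lattice $\Lambda$ and sets $\X_{Q'}(\mathbb{P}) := X_*(\X_\Lambda)\otimes_{\Z}\mathbb{P}$; since each cluster $\X$-transformation $\mu_k^x$ is a positive rational map, it induces a map $\mu_k^x(\mathbb{P})$ on $\mathbb{P}$-points, and gluing the sets $\X_{Q'}(\mathbb{P})$ along these identifications produces $\X_{|Q|}(\mathbb{P})$ together with a distinguished bijection (the $\mathbb{P}$-chart) $\X_{Q'}(\mathbb{P}) \xrightarrow{\sim} \X_{|Q|}(\mathbb{P})$ for every $Q'$. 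This is precisely the construction recalled just above the statement, so nothing new is needed here.

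To produce the $\Gamma_{|Q|}$-action I would fix a basepoint $Q' \in |Q|$ and represent $\phi \in \Gamma_{|Q|}$ by a mutation sequence from $Q'$ to itself preserving the quiver. Each elementary seed mutation and each seed permutation occurring in this sequence acts on the relevant $\X$-tori by a positive map, hence — by the functoriality of $T \mapsto T(\mathbb{P})$, $f \mapsto f(\mathbb{P})$ on positive maps — induces a map on $\mathbb{P}$-points; composing these and conjugating by the $\mathbb{P}$-chart of $Q'$ yields a self-map $\phi^x(\mathbb{P})$ of $\X_{|Q|}(\mathbb{P})$. The one point genuinely requiring an argument is that this is independent of the chosen representative, equivalently that a trivial mutation sequence acts as the identity: a trivial mutation sequence preserves the seed and therefore induces the identity automorphism of $\X_{|Q|}$ (it fixes all cluster $\X$-coordinates), and since the $\mathbb{P}$-point functor carries identities to identities, the induced map on $\X_{|Q|}(\mathbb{P})$ is the identity as well. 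The same functoriality shows $\phi \mapsto \phi^x(\mathbb{P})$ is a group homomorphism, so $\Gamma_{|Q|}$ acts on $\X_{|Q|}(\mathbb{P})$.

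For the explicit description I would recall that the pull-back $\mu_k^x(\mathbb{P})^*$ on $\mathrm{Fun}(\X_{Q'}(\mathbb{P}))$ is, by definition, obtained from the rational expression of $\mu_k^x$ by replacing the usual $+$ and $\cdot$ with the semifield operations $\oplus$ and $\cdot$. Applying this substitution to \eqref{eq:X-mutation}, and invoking \cref{rem:FGandFZ}(2) to the effect that \eqref{eq:X-mutation} and \eqref{eq:coeff-mutation} have the same shape, one recovers exactly \eqref{eq:coeff-mutation}; a seed permutation acts on $\mathrm{Fun}(\X_{Q'}(\mathbb{P}))$ by the corresponding permutation of the coordinate functions $x'_i$. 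Hence the induced action of any $\phi \in \Gamma_{|Q|}$ on $\mathrm{Fun}(\X_{Q'}(\mathbb{P}))$ is a composition of the substitutions \eqref{eq:coeff-mutation} and coordinate permutations, which is the assertion.

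I do not expect a serious obstacle: the statement is a formal corollary of the constructions of this subsection together with the corresponding facts for $\X_{|Q|}$ established earlier. The single delicate point — well-definedness of the $\Gamma_{|Q|}$-action on the set of $\mathbb{P}$-points — is handled by the functoriality of the $\mathbb{P}$-point construction applied to the already-known triviality of trivial mutation sequences on $\X_{|Q|}$, so the remainder is bookkeeping.
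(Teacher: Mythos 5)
Your argument is correct and takes essentially the same route as the paper, which offers no separate proof: the lemma there is stated as a summary of the construction given immediately before it (functoriality of $T \mapsto T(\mathbb{P})$ on positive maps, gluing of the sets $\X_{Q'}(\mathbb{P})$ along $\mu_k^x(\mathbb{P})$, and the substitution rule identifying the pull-back with \eqref{eq:coeff-mutation}). The one point you make explicit — that trivial mutation sequences act trivially on $\mathbb{P}$-points, via the universality of $\mathbb{P}_{\mathrm{univ}}$ and functoriality of the $\mathbb{P}$-point construction — is exactly what the paper leaves implicit, and your justification of it is sound.
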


\begin{example}
\begin{enumerate}
\item If $\mathbb{P}=\pos$, the space $\X_{|Q|}(\pos)$ is a real analytic, contractible manifold called the \emph{positive real part}.
\item In the case $\mathbb{P}=\mathbb{P}_\trop(u_1,\dots,u_r)$, we denote $\mu_k^\trop:=\mu_k^x(\mathbb{P}_\trop(u_1,\dots,u_r))$ and call it the \emph{tropical mutation}. Similarly $\phi^\trop$ denotes the composition of tropical mutations and permutations corresponding to a mutation sequence $\phi$. 
\item If $\mathbb{P}=\R^{\mathrm{trop}}$, the function space $\mathrm{Fun}(\X_Q(\R^{\mathrm{trop}}))$ consists of piecewise-linear expressions on the coordinate functions. The projectivization $P \X_{|Q|}(\R^{\mathrm{trop}}):= (\X_{|Q|}(\R^{\mathrm{trop}})\setminus \{0\})/\pos$ gives the Fock-Goncharov boundary \cite{FG16} of the positive real part.
\end{enumerate}
\end{example}

\begin{remark}\label{rem:semifield-valued points}
A morphism of semifields $\mathbb{P'} \to \mathbb{P}$ induces a map $\X_{|Q|}(\mathbb{P'}) \to \X_{|Q|}(\mathbb{P})$. For example, the isomorphism $\mathbb{P}_\trop(u_1,\dots,u_r)\cong (\Z^{\mathrm{trop}})^r$ induces an isomorphism
\begin{align*}
    \X_{|Q|}(\mathbb{P}_\trop(u_1,\dots,u_r))\cong \X_{|Q|}((\Z^{\mathrm{trop}})^r) \cong (\X_{|Q|}(\Z^{\mathrm{trop}}))^{r}.
\end{align*}

\end{remark}

\subsection{Sign-coherence and periodicity theorems}\label{subsec:sign-coh}
Let us consider the tropical semifield $\mathbb{P}=\mathbb{P}_\trop(u_1,\dots,u_r)$. 
For an element $x=\prod_{i \in I}u_i^{c_i} \in \mathbb{P}$, we write $x>0$ (resp. $x<0$) if $c_i \geq 0$ (resp. $c_i \leq 0$) for all $i \in I$. 
For a point $\xi \in \X_Q(\mathbb{P})$ with a coordinate $(x_i)_{i \in I}$,
each $x_i$ is given by $x_i = \prod_{j \in I}u_j^{c_{i,j}} \in \mathbb{P}$.
The vector $(c_{i,j})_{j \in I}$ is called the {\em $c$-vector} for $x_i$.  

\begin{definition}\label{dfn:tropsign}
For a quiver $Q$ and a vertex $i \in I$, let us define $\X_Q^{+,i}(\mathbb{P})$ (resp. $\X_Q^{-,i}(\mathbb{P})$) to be a set of points in $\X_Q(\mathbb{P})$ whose $i$-th coordinate $x_i$ satisfies $x_i >0$ (resp. $x_i <0$). We call this sign of $x_i$ the \emph{tropical sign} at the vertex $i$ associated with the quiver $Q$. We denote $\X_Q^{\nu}(\mathbb{P}):=\bigcap_{i \in I} \X_Q^{\nu,i}(\mathbb{P})$ for $\nu \in \{+,-\}$.
\end{definition}

In our terminology, the \emph{sign-coherence theorem} \cite{FZ-CA4,GHKK14} is restated as follows.

\begin{thm}[sign-coherence of $c$-vectors, \cite{FZ-CA4,GHKK14}]\label{thm:sign}
Let $\mathbb{P}=\mathbb{P}_\trop(u_1,\dots,u_r)$ be the tropical semifield of rank $r$. 
Let us fix a quiver $Q$ and consider a point $\xi \in \X^+_Q(\mathbb{P})$. Then for each quiver $Q' \in |Q|$, we have $\xi \in \bigcap_{i \in I} \X_{Q'}^{\nu_i,i}(\mathbb{P})$ for some $\nu_i \in \{+,-\}$. Moreover for an element $\phi \in \Gamma_{Q}$, we have $\phi(\xi) \in \bigcap_{i \in I} \X_{Q}^{\nu_i,i}(\mathbb{P})$ for some $\nu_i \in \{+,-\}$.
\end{thm}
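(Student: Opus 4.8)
\textbf{Proof plan for \cref{thm:sign}.}

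The plan is to deduce this statement directly from the classical sign-coherence theorem of Fomin--Zelevinsky \cite{FZ-CA4} (proved for skew-symmetrizable exchange matrices over general coefficients by Gross--Hacking--Keel--Kontsevich \cite{GHKK14}), by carefully translating it into the $\X$-space/semifield language set up in \cref{subsec:ensemble}. First I would recall that a point $\xi \in \X^+_Q(\mathbb{P})$ with $\mathbb{P}=\mathbb{P}_\trop(u_1,\dots,u_r)$ is, by definition of the coordinate functions, determined by an $I \times \{1,\dots,r\}$ matrix of $c$-vectors $(c_{i,j})$ with all rows nonnegative; the condition $\xi \in \X^+_Q(\mathbb{P})$ says precisely that this matrix, viewed as a coefficient tuple in the tropical semifield, is componentwise $\geq 0$. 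Running the tropical mutation $\mu_k^\trop$ along any sequence reaching $Q' \in |Q|$ is, by \cref{rem:FGandFZ}(1)--(2) and the formula \eqref{eq:coeff-mutation}, exactly the mutation of coefficients in a tropical semifield, which is the same combinatorial recursion that governs the $c$-matrices of Fomin--Zelevinsky (up to the index conventions $\ve_{ij}=b_{ji}$ recorded in \cref{rem:FGandFZ}).

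The key step is then: the $r$ columns of the $c$-vector matrix of $\xi$ evolve independently under tropical mutation (this is the product decomposition $\X_{|Q|}(\mathbb{P}_\trop(u_1,\dots,u_r))\cong (\X_{|Q|}(\Z^\trop))^r$ of \cref{rem:semifield-valued points}), so it suffices to treat a single column, i.e.\ a point of $\X_Q(\Z^\trop)$ whose coordinate vector is nonnegative. Such a point is (the tropicalization of) a standard coefficient row in a $Y$-pattern with principal coefficients at $Q$ restricted to that column — concretely, the $j$-th standard basis vector $u_j$ corresponds to the initial $c$-vector $e_j$, and a general nonnegative column is a nonnegative integer combination of these; since tropical mutation of $c$-vectors is piecewise-linear and the sign pattern of a mutated $c$-vector depends only on the signs of the input $c$-vectors, it is enough to verify sign-coherence on each basis vector $u_j$ separately. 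For the basis vector $u_j$ at the seed $Q$, the claim ``after any mutation sequence to $Q'$, the resulting coordinate vector is either entirely $\geq 0$ or entirely $\leq 0$'' is verbatim the sign-coherence of $c$-vectors in \cite{FZ-CA4, GHKK14}. Summing over $j$ and using that a sum of vectors all lying in a common orthant (the orthants being allowed to depend on the vertex $i$, but the one-column statement pins down a single global orthant per basis vector — wait, that is not quite the structure; see the remark below) gives the first assertion.

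I should be careful here, and I expect this to be the main obstacle: the honest statement of sign-coherence is per-column and gives, for the basis vector $u_j$ and the target seed $Q'$, a \emph{single} sign $\epsilon_j \in \{+,-\}$ valid for \emph{all} coordinates of that $c$-vector simultaneously. For a general nonnegative column $\sum_j a_j u_j$ ($a_j \geq 0$) one does \emph{not} in general get a single global sign; rather one gets, coordinate by coordinate $i$, that $c_{i}=\sum_j a_j c_{i,j}^{(Q')}$ where each $c^{(Q')}_{\cdot,j}$ has a definite sign $\epsilon_j$. The correct phrasing of the theorem as stated — ``$\xi \in \bigcap_{i\in I}\X_{Q'}^{\nu_i,i}(\mathbb{P})$ for some $\nu_i$'' — is exactly the per-vertex statement, and it holds because for each fixed $i$ the $i$-th coordinate of every mutated $c$-vector is sign-coherent in the ``vertical'' direction: more precisely, I would invoke that for a fixed target seed $Q'$ the exchange matrix $\ve^{Q'}$ is fixed, and apply sign-coherence to conclude that the entire mutated coordinate tuple of $\xi$ — being a nonnegative combination of the individually sign-coherent tropicalized basis points — has, in each coordinate slot $i$, all its $u$-exponents of one sign. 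To close this cleanly I would isolate a short lemma: if $\xi, \xi' \in \X_Q(\mathbb{P})$ with $\xi, \xi' \in \X_{Q'}^{\nu,i}(\mathbb{P})$ for the \emph{same} $\nu$ and all relevant seeds, then $\xi \oplus \xi'$ (tropical addition, i.e.\ coordinatewise min of exponent vectors) and $\xi \cdot \xi'$ also lie there, so the set of ``globally nonnegative'' points is closed under the semifield operations and hence maps, under any $\mu_k^\trop$, into a union of sign-coherent pieces — this reduces everything to the generators $u_1,\dots,u_r$ as above. The second assertion (for $\phi \in \Gamma_Q$) is then immediate: $\phi$ is represented by a mutation sequence from $Q$ to $Q$ followed by a seed permutation, both of which are covered by the first assertion (the permutation only reindexes the $\nu_i$).
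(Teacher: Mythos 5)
The paper itself offers no proof of \cref{thm:sign}: it is presented as a reformulation of the sign-coherence theorem and simply cited to \cite{FZ-CA4,GHKK14}, so the only question is whether your derivation of the stated (more general) form from the classical principal-coefficient statement is sound. Your column decomposition via \cref{rem:semifield-valued points} is correct, and you rightly sense where the difficulty sits, but the bridge you build has a genuine gap. The crux is the identity $c_i=\sum_j a_j c^{(Q')}_{i,j}$ that you write for a general nonnegative column: it asserts that the tropical coordinate change $\X_Q(\Z^{\mathrm{trop}})\to\X_{Q'}(\Z^{\mathrm{trop}})$ is \emph{linear} on the positive orthant, i.e.\ that the image of a nonnegative combination of the basic positive laminations $l_j^+$ is the same combination of their images. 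Tropical mutations are only piecewise-linear, so this is precisely the nontrivial content beyond the classical theorem, and nothing in your text establishes it; the phrase ``the sign pattern of a mutated $c$-vector depends only on the signs of the input $c$-vectors'' is not a justification, because linearizing each mutation requires the sign at the mutating vertex to be uniform over the image of the whole orthant at \emph{every intermediate seed}, which is part of what must be proved. The closing ``closure lemma'' does not repair this: $\mu_k^\trop$ is not a homomorphism for the coordinatewise operations $\oplus$ and $\cdot$ taken in the chart at $Q$, and even if the set of everywhere sign-coherent points were closed under them, that would not force the \emph{common} sign $\nu_i$ at each vertex across the different columns, which is what the statement asserts. (Relatedly, your paragraph drifts between the per-lamination version of sign-coherence, ``all coordinates of $\phi(l_j^+)$ share a sign,'' and the per-vertex version, ``for fixed $i$ the $i$-th coordinates of $\phi(l_1^+),\dots,\phi(l_n^+)$ share a sign''; in this paper's conventions the latter is the Fomin--Zelevinsky $c$-vector statement and is the one needed.)

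Two standard ways to close the gap. (i) Use the separation formula for coefficients \cite[Prop.~3.13]{FZ-CA4} together with the fact that $F$-polynomials have constant term $1$ (equivalent to sign-coherence and proved in \cite{GHKK14}): for $\xi\in\X^+_Q(\mathbb{P}_\trop(u_1,\dots,u_r))$ every tropicalized $F$-polynomial factor evaluates to $1$, since the constant term realizes the componentwise minimum of exponents, so the coefficients at $Q'$ are genuine monomials in the initial ones with exponents given by the $C$-matrix; classical sign-coherence of $c$-vectors then yields the common sign $\nu_i$ at once, for arbitrary rank $r$. (ii) Alternatively, argue by induction along the mutation sequence that each coordinate at each intermediate seed has constant tropical sign on $\X^+_Q(\mathbb{P})$, so that each $\mu_k^\trop$ restricts to a single linear formula there; this identifies the transition matrix on $\X_Q^+$ with the principal-coefficient $C$-matrix, and classical sign-coherence both feeds the inductive step and gives the conclusion. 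Either ingredient must be added for your reduction to the generators to be legitimate; without it the proposal does not prove the theorem as stated.
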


Now let us consider the tropical semifield $\mathbb{P}=\mathbb{P}_\trop(\mathbf{u})$ of rank $|I|$, where $\mathbf{u}=(u_i)_{i\in I}$. (The notation $\mathbf{u}$ will only used for the case that the generators are parametrized by the set $I$).

Let us fix a quiver $Q$ and consider a point $\xi_0 \in \X_Q^{+}(\mathbb{P})$ whose coordinates are given by $\mathbf{x}=(x_i)_{i \in I}:=(u_i)_{i \in I}$. The coefficient tuple corresponding to $\xi_0$ is called the \emph{principal coefficients} in the theory of cluster algebra. Via the isomorphism $\X_{|Q|}(\mathbb{P}_\trop(\mathbf{u}))\cong \X_{|Q|}(\Z^{\mathrm{trop}})^I$, the principal coefficient $\xi_0$ corresponds to the tuple $(l_i^+)_{i \in I} \in \X_{|Q|}(\Z^{\mathrm{trop}})^I$. Here the point $l_i^+ \in \X_Q(\Z^{\mathrm{trop}})$ has the coordinates $x_j(l_i^+)=\delta_{ij}$ and called the \emph{basic positive $\X$-lamination associated to the quiver $Q$} in \cite{GS16}. Similarly we have the \emph{basic negative $\X$-laminations} $l_i^-$, defined by $x_j(l_i^-)=-\delta_{ij}$.

It is known \cite{IIKKN,GHKK14} that triviality of a mutation sequence is determined by its action on the principal coefficient. It is restated as follows.

\begin{thm}[periodicity theorem, \cite{IIKKN,GHKK14}]\label{thm:periodicity}
The orbit map $o(\xi_0): \Gamma_{Q} \to \X_{|Q|}(\mathbb{P})$ defined by $\phi \mapsto \phi(\xi_0)$ is injective. Equivalently, the orbit map $\Gamma_{Q} \to \X_{|Q|}(\Z^{\mathrm{trop}})^I$ defined by $\phi \mapsto (\phi(l_i^+))_{i \in I}$ is injective.
\end{thm}
The latter statement can also be deduced from the \emph{Duality conjecture}, see \cite{GS16} Proposition 3.3. As a consequence of \cref{thm:periodicity}, we have the following fundamental theorem. A proof is given in \cite{Nakanishi19}.

\begin{thm}\label{thm:Nakanishi}
Let $Q$ be a weighted quiver.
\begin{enumerate}
\item For any seed $(Q,\mathbf{A},\mathbf{X})$ and a mutation sequence $\phi$, we have $\phi(Q,\mathbf{A})=(Q,\mathbf{A})$ if and only if $\phi(Q,\mathbf{X})=(Q,\mathbf{X})$. In other words, the actions of $\Gamma_{|Q|}$ on $\A_{|Q|}$ and $\X_{|Q|}$ are faithful.
\item A periodicity $\phi(Q,\mathbf{A})=(Q,\mathbf{A})$ as above depends only on the unfrozen part $(\ve_{ij})_{i,j \in I\setminus I_0}$ of the exchange matrix.
\end{enumerate}
\end{thm}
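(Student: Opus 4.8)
The plan is to deduce both statements from the periodicity theorem (\cref{thm:periodicity}) together with the sign-coherence theorem (\cref{thm:sign}), reducing everything to the tropical $\X$-dynamics with principal coefficients. The key observation is that, by the general framework of separation formulas of Fomin--Zelevinsky (as reformulated in \cite{FZ-CA4,IIKKN,GHKK14}), the action of any mutation sequence $\phi$ on the $\A$-variables and on the $\X$-variables of an arbitrary seed is completely governed by two pieces of data: the $C$-matrix (equivalently the tuple of $c$-vectors, i.e.\ the image of the principal coefficients under $\phi^{\mathrm{trop}}$) and the $G$-matrix, and the latter is determined by the former via the exchange matrix. Thus the first step is to record precisely how $\phi(Q,\mathbf{A})=(Q,\mathbf{A})$ and $\phi(Q,\mathbf{X})=(Q,\mathbf{X})$ each translate into a statement about $\phi$ acting trivially on the principal-coefficient point $\xi_0 \in \X_Q(\mathbb{P}_\trop(\mathbf{u}))$.

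First I would treat part (1). Suppose $\phi(Q,\mathbf{X})=(Q,\mathbf{X})$; here $\mathbf{X}$ may be taken with coefficients in any semifield, in particular in $\mathbb{P}_\trop(\mathbf{u})$, and applied to the principal point this says $\phi(\xi_0)=\xi_0$, i.e.\ $\phi$ is in the kernel of the orbit map $o(\xi_0)$. By \cref{thm:periodicity} this forces $\phi$ to be trivial as an element of $\Gamma_Q$, hence $\phi(Q,\mathbf{A})=(Q,\mathbf{A})$ as well. Conversely, if $\phi(Q,\mathbf{A})=(Q,\mathbf{A})$, then by the separation formula the $\A$-variables $\phi^*A_i$, expressed as Laurent polynomials in $\mathbf{A}$ with principal coefficients, determine the $F$-polynomials and hence (by specializing/tropicalizing) the $c$- and $g$-vectors of $\phi$; the triviality of the cluster transformation on $\mathbf{A}$ forces all $F$-polynomials to be $1$ and all $g$-vectors to be standard, which in turn forces all $c$-vectors to be standard, i.e.\ $\phi(\xi_0)=\xi_0$. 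Again \cref{thm:periodicity} gives triviality of $\phi$, whence $\phi(Q,\mathbf{X})=(Q,\mathbf{X})$. Faithfulness of the actions on $\A_{|Q|}$ and $\X_{|Q|}$ is then immediate, since a mutation sequence acting trivially on the coordinate torus attached to $Q$ is a trivial mutation sequence, i.e.\ the identity in $\Gamma_{|Q|}$.

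For part (2), the point is that the recursion defining the tropical $\X$-dynamics with principal coefficients — equivalently, the mutation of $C$-matrices — only ever uses the entries $\ve_{ik}$ with $k$ an \emph{unfrozen} index being mutated, and the sign-coherence theorem (\cref{thm:sign}) guarantees that each $c$-vector stays sign-definite so that these recursions are the honest piecewise-linear ones. Crucially, when we mutate at an unfrozen vertex $k$, the update of the $c$-vector of a frozen row never feeds back into the unfrozen rows, and the update of an unfrozen $c$-vector only involves the unfrozen submatrix $(\ve_{ij})_{i,j\in I\setminus I_0}$ together with the signs of the already-computed $c$-vectors. Hence the orbit of the unfrozen part of $\xi_0$ under $\Gamma_Q$ depends only on $(\ve_{ij})_{i,j\in I\setminus I_0}$, and by part (1) combined with \cref{thm:periodicity} a periodicity $\phi(Q,\mathbf{A})=(Q,\mathbf{A})$ is equivalent to $\phi$ fixing this unfrozen part. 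I would spell this out by an induction on the length of the mutation sequence, tracking the block form of the $C$-matrix.

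The main obstacle I anticipate is not conceptual but bookkeeping: making the passage ``triviality on $\mathbf{A}$ $\Leftrightarrow$ triviality on principal tropical $\X$-coordinates'' fully rigorous requires invoking the separation-of-additions formula and the sign-coherence of $c$-vectors in exactly the right form, and being careful that the quiver (not just the cluster variables) is also preserved — i.e.\ that the would-be periodicity is genuinely an element of $\Gamma_Q$ and not merely a mutation sequence returning the variables to themselves up to a relabeling. Handling the weighted (skew-symmetrizable) setting and the half-integral entries allowed on $I_0$ adds a layer of care, but since frozen-frozen entries are precisely the ones that drop out in part (2), they cause no trouble there; in part (1) one simply observes that all the cited theorems (\cref{thm:sign}, \cref{thm:periodicity}) are already stated for weighted quivers. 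For this reason I would, as the excerpt indicates, defer the detailed verification to \cite{Nakanishi19} and present here only the reduction to the periodicity theorem.
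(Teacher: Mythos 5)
The paper does not actually prove this theorem: it is stated as a consequence of \cref{thm:periodicity}, with the proof deferred to \cite{Nakanishi19} and the remark that it can be obtained by combining \cite{NZ}, \cite{CHL17} and the separation formula of \cite{FZ-CA4}. Your plan follows the same hint, and your closing decision to defer the detailed verification to \cite{Nakanishi19} is consistent with what the paper does. However, read as a reduction to \cref{thm:sign}, \cref{thm:periodicity} and the separation formula alone, your sketch has genuine gaps, and they sit exactly where the paper additionally invokes \cite{CHL17}.

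First, in part (1), the step ``$\phi(Q,\mathbf{A})=(Q,\mathbf{A})$ forces all $F$-polynomials to be $1$ and all $g$-vectors to be standard'' is asserted rather than proved, and it is one of the genuinely nontrivial points: the coefficient-free variables are specializations of the principal-coefficient ones, and when the exchange matrix is degenerate (as it typically is for the quivers in this paper) one needs the constant-term-one property together with positivity of $F$-polynomials, or an equivalent input, to exclude that a nontrivial $A^{g_i}F_i(\hat{y})$ collapses to $A_i$. Moreover, the duality of \cite{NZ} only controls the unfrozen block of the $C$-matrix, whereas $\phi(\xi_0)=\xi_0$ (and $X$-periodicity) also concerns the tropical coordinates at frozen vertices, and $A$-seed periodicity also requires the frozen--unfrozen arrows of $Q$ to return; passing from triviality of the unfrozen block to these frozen data is precisely the kind of statement for which \cite{CHL17} is cited, and your argument never supplies it. Second, in part (2) you claim that $\phi(Q,\mathbf{A})=(Q,\mathbf{A})$ is equivalent to $\phi$ fixing the unfrozen part of $\xi_0$. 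The forward implication follows from your part (1), but the backward one does not: \cref{thm:periodicity} needs all of $\xi_0$ fixed, and for a second quiver $Q'$ with the same unfrozen part you do not even know a priori that $\phi$ preserves $Q'$ (its frozen--unfrozen arrows), so $\phi\in\Gamma_{Q'}$ is not available and the theorem cannot be applied. Your correct observation that the unfrozen $C$-matrix dynamics is autonomous shows that an $A$-periodicity for $Q$ forces unfrozen-$C$-triviality for any such $Q'$, but upgrading this to an $A$-periodicity for $Q'$ is again the missing ingredient. So the skeleton matches the paper's hint, but the frozen-vertex bookkeeping is not a routine detail, and the stated black boxes do not suffice by themselves.
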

Indeed, one can get this theorem by combining \cite{NZ}, \cite{CHL17} and the separation formula (\cite{FZ-CA4}). 
\\

\paragraph{\textbf{Maximal green sequences and cluster Donaldson-Thomas transformations}}
For a quiver $Q$, we again consider the tropical semifield $\mathbb{P}=\mathbb{P}_\trop(\mathbf{u})$ of rank $|I|$. 
We say that a sequence $\mathbf{i} = (i_1,i_2,\ldots,i_N)$ in $I \setminus I_0$ is {\it green}, if
in the sequence of seeds 
$$
(Q[0],\mathbf{x}[0]) := (Q,\mathbf{u}) 
\stackrel{\mu_{i_1}}{\longmapsto} (Q[1],\mathbf{x}[1])
\stackrel{\mu_{i_2}}{\longmapsto} (Q[2],\mathbf{x}[2])
\stackrel{\mu_{i_3}}{\longmapsto} \cdots
\stackrel{\mu_{i_N}}{\longmapsto} (Q[N],\mathbf{x}[N]),
$$
it holds that $x[k]_{i_{k+1}} > 0$ for $k=0,1,\ldots,N-1$.
We say that the sequence $\mathbf{i}$ is {\it maximal green} 
if $\mathbf{i}$ is green and $x[N]_i < 0$ for all $i \in I \setminus I_0$.
These notions of green and maximal green are essentially the same as  
the original ones in \cite{Keller11}.

An element $\mathbf{K} \in \Gamma_{Q}$ is called a \emph{cluster Donaldson-Thomas transformation} (cluster DT transformation for short) if it satisfies $\mathbf{K}^\trop(l_i^+)=l_i^-$ for all $i \in I \setminus I_0$, where each $l_i^\pm$ is the basic positive/negative $\X$-laminations associated with $Q$. Such an element may not exist in general. If it does, it is unique by \cref{thm:periodicity} and \cref{thm:Nakanishi}(2). It is known that the cluster DT transformation is independent of the choice of the quiver $Q$ in its mutation class (\cite[Theorem 3.6]{GS16}) and it lies in the center of the cluster modular group $\Gamma_{|Q|}$ (\cite[Corollary 3.7]{GS16} ).
In general, a maximal green sequence gives the cluster DT transformation (\cite[Proposition 2.10]{BDP}). The converse is not true, since the latter notion is mutation-invariant but the former is not (\cite{Muller16}).

\section{Weyl group action}
\label{sec:realization}

\subsection{The Weyl group associated with a Kac-Moody Lie algebra}
\label{subsec:Coxeter}
Let $S$ be a finite set and $C=(C_{st})_{s,t \in S}$ be a skew-symmetrizable generalized Cartan matrix satisfying the following conditions:
\begin{enumerate}
\item
$C_{ss}=2$ for all $s\in S$.
\item
$C_{st}\leq 0$ if $s\neq t$.
\item
$C_{st}=0$ if and only if $C_{ts}=0$.
\item
There exists an integral diagonal matrix $D=\mathrm{diag}(d_s \mid s \in S)$ such that $DC$ is skew-symmetric.
\end{enumerate}
We fix a diagonal matrix $D$ above so that $\gcd(d_s \mid s \in S)=1$. A \emph{realization} of $C$ is a tuple $(\mathfrak{h}, \Pi, \Pi^\vee)$, where $\mathfrak{h}$ is a vector space over $\C$ and $\Pi=\{\alpha_s\}_{s \in S}$ (resp. $\Pi^\vee=\{\alpha^\vee_s\}_{s \in S}$) is a linearly independent finite subset of $\mathfrak{h}^*$ (resp. $\mathfrak{h}$) satisfying $\langle \alpha^\vee_s, \alpha_t \rangle =C_{st}$ and $\dim \mathfrak{h}=2|S|-\mathrm{rank} C$. 
Such a realization determines a Lie algebra $\mathfrak{g}$ over $\C$ called the \emph{Kac-Moody Lie algebra}. Then $\mathfrak{h}$ is a Cartan subalgebra of $\mathfrak{g}$ and $\Pi$ (resp. $\Pi^\vee$) gives the set of simple roots (resp. simple coroots) of $\mathfrak{g}$. 
We often write $C=C(\mathfrak{g})$. 
The Weyl group $W(\mathfrak{g})$ associated with $\mathfrak{g}$ is the subgroup of $GL(\mathfrak{h}^*)$ generated by reflections $r_s \in GL(\mathfrak{h}^*)$ ($s \in S$) defined by
\[
r_s\mu:=\mu-\langle \alpha_s^\vee, \mu \rangle \alpha_s
\]
for $\mu \in \mathfrak{h}^*$. For example, we have $r_s\alpha_t=\alpha_t-C_{st} \alpha_s$. An important fact is that $W(\mathfrak{g})$ is a \emph{Coxeter group}.

A \emph{Coxeter system} is a pair $(W,S)$, where $S$ is a finite set and $W$ is a group with the following presentation:
\[
W=\langle r_s ~(s \in S) \mid (r_sr_t)^{m_{st}}=1 ~(s, t\in S) \rangle.
\]
Here $(m_{st})$ is a symmetric matrix with entries in $\Z \cup \{\infty\}$ satisfying $m_{ss}=1$ for all $s \in S$, called the \emph{Coxeter matrix}. The group $W$ is called the \emph{Coxeter group}. 
\begin{prop}{\rm ({\emph e.g.} \cite[Proposition 3.13]{Kac})} 
\label{prop:KacCoxeter}
The pair $(W(\mathfrak{g}),S)$ associated with a Kac-Moody Lie algebra $\mathfrak{g}$ is a Coxeter system. The corresponding Coxeter matrix is given by the following table:
\[
\begin{tabular}{rccccc}
$C_{st}C_{ts}:$ & $0$ & $1$ & $2$ & $3$ & $\geq 4$ \\
$m_{st}:$         & $2$ & $3$ & $4$ & $6$ & $\infty$
\end{tabular}
\]
\end{prop}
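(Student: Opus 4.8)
The statement to prove is \cref{prop:KacCoxeter}: that $(W(\mathfrak{g}),S)$ is a Coxeter system with the displayed Coxeter matrix. The plan is to verify directly that the reflections $r_s$ satisfy the braid relations dictated by the table, and then invoke the standard fact (which is part of the quoted result from \cite{Kac}) that no further relations hold, so that the abstract Coxeter group surjects isomorphically onto $W(\mathfrak{g})$. Since the excerpt permits me to assume results stated earlier, I would in fact simply cite \cite[Proposition 3.13]{Kac}; but to make the proof self-contained I would carry out the representation-theoretic computation below.

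\textbf{Key steps.}
First I would observe that each $r_s$ is an involution: from $r_s\mu=\mu-\langle\alpha_s^\vee,\mu\rangle\alpha_s$ and $\langle\alpha_s^\vee,\alpha_s\rangle=C_{ss}=2$ one computes $r_s^2=\mathrm{id}$, so $m_{ss}=1$. Next, fix $s\neq t$ and restrict attention to the rank-two sub-root-system: the plane $V_{st}:=\C\alpha_s\oplus\C\alpha_t\subset\mathfrak{h}^*$ is preserved by both $r_s$ and $r_t$ (since $r_s\alpha_t=\alpha_t-C_{st}\alpha_s$ and symmetrically), and on a complementary subspace both reflections act appropriately so that the order of $r_sr_t$ is determined by its action on $V_{st}$. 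On $V_{st}$ the product $r_sr_t$ has matrix $\begin{pmatrix} C_{st}C_{ts}-1 & C_{st} \\ -C_{ts} & -1 \end{pmatrix}$ in the basis $(\alpha_s,\alpha_t)$ (up to the bookkeeping of which reflection acts first), whose trace is $C_{st}C_{ts}-2$ and determinant $1$. I would then analyze the eigenvalues: the characteristic polynomial is $\lambda^2-(C_{st}C_{ts}-2)\lambda+1$, so setting $q:=C_{st}C_{ts}\in\Z_{\geq 0}$ (here using conditions (2) and (3) on $C$, which force $C_{st}C_{ts}$ to be a nonnegative integer) the eigenvalues are $\frac{(q-2)\pm\sqrt{(q-2)^2-4}}{2}$. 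For $q=0,1,2,3$ this gives $\tfrac{-2}{2}=-1$ (order $2$), a primitive $6$th root of unity together with its conjugate (order $3$), $\pm i$ (order $4$), and a primitive $6$th root (order $6$) respectively — wait, more carefully: $q=0$ gives trace $-2$, eigenvalue $-1$ doubled, so $r_sr_t$ has order $2$, i.e. $m_{st}=2$; $q=1$ gives trace $-1$, eigenvalues primitive cube roots of $-1$... in any case the order is $3$; $q=2$ gives trace $0$, eigenvalues $\pm i$, order $4$; $q=3$ gives trace $1$, eigenvalues primitive $6$th roots of unity, order $6$. For $q\geq 4$ the trace $q-2\geq 2$, the eigenvalues are real and distinct (or equal to $1$ when $q=4$), hence $r_sr_t$ has infinite order, i.e. $m_{st}=\infty$. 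This matches the table exactly. Finally I would note that verifying $(r_sr_t)^{m_{st}}=1$ shows $W(\mathfrak{g})$ is a quotient of the abstract Coxeter group on these data; the reverse — that these are \emph{all} the relations — follows from the faithfulness of the geometric (Tits) representation, which is the substantive content of the cited proposition and which I would quote rather than reprove.

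\textbf{Main obstacle.}
The genuinely nontrivial input is not the braid-relation computation above (which is elementary linear algebra once one restricts to $V_{st}$ and accounts for the possibility that $\alpha_s,\alpha_t$ fail to span a subspace on which the reflections are ``as expected'' — this is handled by the linear independence of $\Pi$ and the dimension condition $\dim\mathfrak{h}=2|S|-\mathrm{rank}\,C$ in the definition of a realization). Rather, the hard part is establishing that $W(\mathfrak{g})$ admits \emph{no} relations beyond the Coxeter ones, equivalently that the natural surjection from the abstract Coxeter group is injective; this requires the faithfulness of the reflection representation of a Coxeter group (Tits), and I would simply invoke \cite[Proposition 3.13]{Kac} for this. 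Thus in the final write-up the ``proof'' would consist of the involution check, the eigenvalue analysis producing the table, and a citation for the no-further-relations half.
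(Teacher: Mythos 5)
The paper gives no proof of this proposition at all—it is quoted directly from \cite[Proposition 3.13]{Kac}—and your proposal, which checks the braid relations by the standard rank-two eigenvalue computation on $\C\alpha_s\oplus\C\alpha_t$ and defers the ``no further relations'' half to that same citation, is correct and consistent with the paper's treatment. The only points to tighten are the degenerate values $C_{st}C_{ts}\in\{0,4\}$, where eigenvalues alone do not determine the order of $r_sr_t$: for $C_{st}C_{ts}=0$ you need condition (3) on the Cartan matrix to conclude $C_{st}=C_{ts}=0$, so the matrix is $-I$ rather than a nontrivial Jordan block (otherwise the order would be infinite, not $2$), and for $C_{st}C_{ts}=4$ you should note that $r_sr_t$ restricts to a nontrivial unipotent transformation, which is why the order is infinite despite both eigenvalues being $1$.
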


\begin{example}[Type $C_3$]
Let us consider the Cartan matrix
\begin{align}\label{Cartan-C3}
C=
  \begin{pmatrix}
  2 & -1 & 0 \\ -2 & 2 & -1 \\ 0 & -1 & 2
  \end{pmatrix}
\end{align}
of type $C_3$. 
The corresponding Kac-Moody Lie algebra is the finite-dimensional simple Lie algebra $\mathfrak{sp}_6$.
The corresponding Coxeter system is given by $S=\{1,2,3\}$ and $m_{12}=4$, $m_{23}=3$, $m_{13}=2$.   
The action of $W(\mathfrak{sp}_6)$ on $\mathfrak{h}^\ast$ is given by

\begin{align*}
r_1 \alpha_1 &=-\alpha_1, & r_1 \alpha_2 =&\alpha_1+\alpha_2,  & r_1 \alpha_3=&\alpha_3, \\
r_2 \alpha_1 &=\alpha_1+2\alpha_2, & r_2 \alpha_2 =&-\alpha_2, & r_2 \alpha_3=&\alpha_2+\alpha_3,\\
r_3 \alpha_1 &=\alpha_1, & r_3 \alpha_2 =&\alpha_2+\alpha_3,  & r_3 \alpha_3=&-\alpha_3.
\end{align*}
\end{example}
The Cartan matrix of type $B_3$ is the transpose of \eqref{Cartan-C3}, and the corresponding Coxeter system is the same.

Let $\Phi:=W(\mathfrak{g})\Pi$ be the set of real roots. Then it has the following property, which turns out to be closely related to the sign-coherence property of $c$-vectors in our construction. See \cite[\S~ 1.3]{Kac} for a proof. 

\begin{lem}
For a vector $v=\sum c_s \alpha_s \in \mathfrak{h}^*$, we write $v>0$ (resp. $v<0$) if $c_s\geq 0$ (resp. $c_s\leq 0$) for all $s \in S$. Then we have either $\alpha >0$ or $\alpha <0$ for each $\alpha \in \Phi$.
\end{lem}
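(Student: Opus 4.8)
The statement to be proved is the well-known fact that every real root $\alpha \in \Phi = W(\mathfrak{g})\Pi$ is either positive or negative with respect to the simple-root basis; this is essentially \cite[\S 1.3]{Kac}, but since the paper cites it as needed, let me sketch how I would argue it directly. The plan is to exploit the fact that each Coxeter generator $r_s$ acts in a controlled way on the coefficient of $\alpha_s$ while fixing almost all of the other coefficients: explicitly, $r_s\alpha_s = -\alpha_s$ and $r_s\alpha_t = \alpha_t - C_{st}\alpha_s$ for $t \neq s$, so $r_s$ sends $\sum_t c_t\alpha_t$ to $\bigl(-c_s - \sum_{t\neq s} C_{st}c_t\bigr)\alpha_s + \sum_{t\neq s}c_t\alpha_t$. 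The key observation is that $r_s$ changes only the $\alpha_s$-coefficient, leaving all others untouched, and since $C_{st}\leq 0$ for $t\neq s$, applying $r_s$ to a positive root whose $\alpha_s$-coefficient is zero yields another positive root.

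First I would set up the standard induction on $\ell(w)$ for $\alpha = w\alpha_t$ with $w\in W(\mathfrak{g})$ and $t\in S$, aiming to show: if $\ell(r_s w) > \ell(w)$ then $w\alpha_t > 0$, with the precise bookkeeping that $w\alpha_t > 0$ for all $t$ unless $w^{-1}$ does not send some positive simple root to a positive root. The base case $w = e$ is immediate since $\alpha_t = \alpha_t > 0$. For the inductive step, write $w = r_s w'$ with $\ell(w) = \ell(w') + 1$; by the exchange condition / standard Coxeter theory one knows $w'^{-1}(\alpha_s^\vee)$ is a positive coroot, hence $w'\alpha_t$ has a controlled relationship to $\alpha_s$. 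The cleanest formulation: prove by induction on $\ell(w)$ that for any $w$ and any $t\in S$, if $w\alpha_t \neq \pm(\text{something crossing zero})$ then in fact $w\alpha_t$ is positive precisely when no reduced word for $w$ ``uncovers'' $\alpha_t$; concretely one shows that if $\ell(r_s w) = \ell(w)+1$ then $w^{-1}\alpha_s > 0$, and then that $w\alpha_t > 0$ for all $w,t$ with $\ell(w r_t) > \ell(w)$, the general case following because either $\alpha = w\alpha_t$ with $\ell(wr_t)>\ell(w)$ (giving $\alpha>0$) or $\ell(wr_t)<\ell(w)$, in which case $w = w'r_t$ with $\ell(w')<\ell(w)$ and $\alpha = w\alpha_t = -w'\alpha_t$, so by induction $w'\alpha_t>0$ and thus $\alpha<0$.

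The mechanism that makes the induction run is the sign-change lemma: if $\ell(sw) > \ell(w)$ for a simple reflection $s = r_s$, then $w^{-1}\alpha_s > 0$, and moreover $sw\beta$ and $w\beta$ differ only in the $\alpha_s$-coefficient for any root $\beta$ — so a positive root stays positive under $s$ unless its $\alpha_s$-coefficient is the thing being flipped. Combining this with the dichotomy on $\ell(wr_t)$ versus $\ell(w)$ and an induction on length closes the argument. Throughout, the linear independence of $\Pi$ (part of the definition of a realization, recalled in \cref{subsec:Coxeter}) is what lets us speak of the coefficients $c_s$ unambiguously, and the condition $C_{st}\leq 0$ for $s\neq t$ is exactly what guarantees monotonicity of coefficients under the generators.

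The main obstacle is the careful Coxeter-combinatorial bookkeeping in the inductive step: one must correctly invoke the exchange/deletion condition to control $\ell(r_sw)$ versus $\ell(w)$ and to deduce $w^{-1}\alpha_s > 0$ from $\ell(r_sw)>\ell(w)$, and then track that applying $r_s$ genuinely preserves positivity in the relevant case rather than straddling zero. This is entirely standard — it is the content of the classical ``$w^{-1}$ permutes positive roots except for $\ell(w)$ of them'' theorem — so in the paper I would simply cite \cite[\S 1.3]{Kac} (or \cite[Ch.~VI]{Bourbaki}-style references) rather than reproduce it; the point worth emphasizing for the reader is only the shape of the formula $r_s(\sum_t c_t\alpha_t)$, which is what will later match the $c$-vector recursion.
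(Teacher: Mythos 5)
Your proposal is correct in outline, but it takes a different route from the paper's source: the paper offers no argument at all here, simply citing \cite[\S 1.3]{Kac}, and in Kac's treatment the dichotomy is obtained for \emph{all} roots (real and imaginary) at once from the triangular decomposition of the Kac--Moody algebra --- the root spaces of $\mathfrak{n}_\pm$ are spanned by brackets of the generators $e_s$ (resp.\ $f_s$), so every root is by construction a $\Z_{\geq 0}$- or $\Z_{\leq 0}$-combination of simple roots, and real roots inherit this. What you sketch instead is the purely Coxeter-theoretic argument (Humphreys \S 5.4): induct on length, using the key fact that $l(wr_t)>l(w)$ implies $w\alpha_t>0$, and then dispatch a general real root $\alpha=w\alpha_t$ by the dichotomy $l(wr_t)\gtrless l(w)$, writing $\alpha=-w'\alpha_t$ with $w'=wr_t$ shorter in the second case. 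That closing dichotomy is clean and in fact reduces the lemma to exactly the paper's \cref{thm: length}, which the paper also quotes from \cite{Kac,Humphreys90}; so your route buys a Weyl-group-internal proof that meshes well with the later $c$-vector computations, while the Lie-algebra route buys the stronger statement for imaginary roots with no combinatorics. One caution: the middle of your sketch is the only genuinely delicate part, and as phrased it flirts with circularity --- the claim that $r_s$ sends a positive root other than $\alpha_s$ to a positive root already presupposes that the image root has all coefficients of one sign, which is the statement being proved; the standard proofs avoid this either via the geometric representation of the Coxeter group or via the dihedral-subgroup induction behind \cref{thm: length}. Since you ultimately defer to the citation (as the paper does), this is not a gap in practice, but if you were to write the induction out you would need that ingredient made precise.
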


For $w \in W(\mathfrak{g})$, let $l(w)$ be the length of the reduced expression of $w$ as a product of the reflections $r_s ~(s \in S)$. The following fundamental property is useful in the sequel. See, for a proof, \cite[Lemma 3.11]{Kac} or \cite[Theorem 5.4]{Humphreys90}. 

\begin{thm}\label{thm: length}
Let $w \in W(\mathfrak{g})$ and $s \in S$. If $l(w r_s) >l(w)$, then $w\alpha_s>0$. If $l(w r_s) <l(w)$, then $w\alpha_s<0$. 
\end{thm}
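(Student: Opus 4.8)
The strategy is to reduce to a single implication and then induct on length. First I would note that since $W(\mathfrak{g})$ is a Coxeter group (\cref{prop:KacCoxeter}), the sign character $W(\mathfrak{g})\to\{\pm1\}$ forces $l(wr_s)$ and $l(w)$ to have opposite parity, while appending or deleting a generator changes length by at most one; hence $l(wr_s)=l(w)\pm1$, and in particular $l(wr_s)\neq l(w)$. Together with the dichotomy ``every real root is $>0$ or $<0$'' recorded just above, this makes the two assertions of the theorem logically complementary, so it suffices to prove: \emph{if $l(wr_s)>l(w)$, then $w\alpha_s>0$}. The other implication will then follow by applying this to $w':=wr_s$: if $l(wr_s)<l(w)$, then $l(w'r_s)=l(w)>l(w')$, so $w'\alpha_s>0$; but $w'\alpha_s=wr_s\alpha_s=-w\alpha_s$, whence $w\alpha_s<0$.

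I would prove this implication by induction on $n:=l(w)$, the case $n=0$ being clear since $w\alpha_s=\alpha_s>0$. For $n\geq1$, peel the leftmost letter off a reduced expression of $w$ to write $w=r_tw'$ with $t\in S$ and $l(w')=n-1$. \textbf{Step one:} $l(w'r_s)>l(w')$, for otherwise $l(w'r_s)=n-2$ and then $wr_s=r_t(w'r_s)$ would have length $\leq n-1<l(w)$, contradicting $l(wr_s)>l(w)$; hence the inductive hypothesis gives $w'\alpha_s>0$. \textbf{Step two:} $w'\alpha_s\neq\alpha_t$, for $w'\alpha_s=\alpha_t$ would give $w'r_s(w')^{-1}=r_{w'\alpha_s}=r_t$, i.e.\ $w'r_s=r_tw'=w$, so that $wr_s=w'$ has length $n-1<l(w)$, again impossible. \textbf{Step three:} $r_t$ carries every positive real root $\beta\neq\alpha_t$ to a positive root; this is because $r_t\beta=\beta-\langle\alpha_t^\vee,\beta\rangle\alpha_t$ alters only the $\alpha_t$-coordinate of $\beta$, and $\beta$ (being neither $\alpha_t$ nor $-\alpha_t$, and the only real multiples of $\alpha_t$ being $\pm\alpha_t$) has a strictly positive coordinate at some $u\neq t$ which survives, so $r_t\beta>0$ by the sign dichotomy. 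Applying step three to $w'\alpha_s$ — positive and $\neq\alpha_t$ by steps one and two — yields $w\alpha_s=r_t(w'\alpha_s)>0$, completing the induction.

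Beyond the statements already in the excerpt, this uses only standard Coxeter-length bookkeeping ($l(wr_s)=l(w)\pm1$, $l(r_tw')\leq l(w')+1$), the conjugation identity $r_{w\beta}=wr_\beta w^{-1}$, and the basic structure of the real root system (\cite{Kac}). The one genuinely delicate point is step three — precisely, excluding the possibility that $r_t$ flips a positive root to a negative one — and this is exactly why step two is needed; everything else is length arithmetic. A cleaner but more front-loaded alternative would be to work with the inversion set $N(w):=\{\alpha\in\Phi\mid \alpha>0,\ w^{-1}\alpha<0\}$, establish $\#N(w)=l(w)$, and observe that $N(wr_s)$ differs from $N(w)$ by exactly one root, namely $w\alpha_s$ added when $w\alpha_s>0$ (so $l$ increases) and $-w\alpha_s$ removed when $w\alpha_s<0$ (so $l$ decreases); both implications then fall out simultaneously, at the cost of first proving $\#N(w)=l(w)$.
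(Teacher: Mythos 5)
Your proof is correct. The paper does not prove this theorem itself but defers to \cite[Lemma 3.11]{Kac} and \cite[Theorem 5.4]{Humphreys90}, and your argument — reducing to one implication via $l(wr_s)=l(w)\pm 1$, then inducting on $l(w)$ by peeling the leftmost letter and using the fact that $r_t$ permutes the positive real roots other than $\alpha_t$ — is essentially the standard proof found in those references, with the one delicate point (that $r_t$ cannot flip $w'\alpha_s$ to a negative root) handled correctly by your steps two and three.
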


\subsection{The quiver $Q_m(\mathfrak{g})$}\label{subsec:Q_m}
Let $\mathfrak{g}$ be a Kac-Moody Lie algebra with a generalized Cartan matrix $C=C(\mathfrak{g})$. 
A \emph{Coxeter quiver} related to $\mathfrak{g}$ is a weighted quiver $Q$ with vertex set $S$ such that the corresponding exchange matrix $\ve=(\ve_{st})_{s,t \in S}$ satisfies $|\ve_{st}|=-C_{ts}$ for all $s \neq t$. Explicitly, the weight $d_s$ at a vertex $s\in S$ is given by the corresponding entry of the symmetrizer matrix $D$, and the number $\sigma_{st}=\# \{\text{arrows } s \to t\}- \# \{\text{arrows } t \to s\}$ satisfies $|\sigma_{st}|=\mathrm{gcd}(|C_{st}|, |C_{ts}|)$ for all $s \neq t$.

Let us fix a Coxeter quiver and write it as $Q=Q(\mathfrak{g})$. 
For an integer $m \geq 2$, we define a weighted quiver $Q_m(\mathfrak{g})$ of a vertex set 
$I:=\{v_i^s ~|~ i \in \Z_m, s \in S \}$ as follows:
\begin{itemize}
\item $v_i^s$ has weight $d_s$,

\item we have an arrow $v^s_i \to v^s_{i+1}$,

\item if $\sigma_{st} \geq 1$, we have $\sigma_{st}$ arrows $v^s_i \to v^t_{i}$
and $\sigma_{st}$ arrows $v^t_{i+1} \to v^s_i$.
Namely, $\sigma_{v^s_i,v^t_{i}}= \sigma_{v^t_{i+1},v^s_{i}} = \sigma_{st}$.
\end{itemize}
Here we write $\Z_m$ for $\Z/m\Z$.
Note that, in the exchange matrix 
$\ve = (\ve_{v_i^s, v_j^t})$ for $Q_m(\mathfrak{g})$, 
the integer $\ve_{v_i^s, v_i^t}$ is independent of $i$ and satisfies 
\begin{align}\label{eq:cartan-e}
  |\ve_{v_i^s, v_i^t}| = 
  \begin{cases}
    0 & s=t,  \\
    -C_{ts} &  \text{otherwise}. 
  \end{cases}
\end{align}

\begin{remark}\label{rem:closed-path}
When $m \geq 3$, there are $n$ non-intersecting directed and closed paths $P_s ~(s \in S)$
in $Q_m(\mathfrak{g})$. The path $P_s$ is given by
$v^s_1 \to v^s_2 \to \cdots \to v^s_m \to v^s_1$. When $m=2$, such a cycle disappears as it becomes a 2-cycle. Nevertheless, we still call the pair $\{v^s_1,v^s_2\}$ the path $P_s$.
\end{remark}

\begin{remark}\label{rem:refl}
A vertex $s$ in a quiver is called \emph{a sink} (resp.\emph{a source}) if there is no arrow exiting out of (resp. entering into) $s$. Let $s\in S$ be a sink or a source of the Coxeter quiver $Q:=Q(\mathfrak{g})$. Define $r_s(Q)$ as the quiver obtained from $Q$ by reversing the orientation of each arrows connected with $s$ in $Q$.

Let $t\in S$ be a sink (resp. source) in $Q$. Denote by $Q_m$ and $Q'_m$ the quivers $Q_m(\mathfrak{g})$ associated with $Q$ and $r_t(Q)$, respectively. Then $Q'_m$ is obtained from $Q_m$ by the relabeling $v_{i+1}^s\mapsto v_i^s$ (resp. $v_{i-1}^s\mapsto v_i^s$) of the vertices. In particular, if the underlying graph of $Q$ has no cycle, then the quiver $Q_m(\mathfrak{g})$ is independent of the choice of the orientation of arrows in $Q$ (see \cite[Theorem 1.2]{BGP}). 

On the other hand, if the underlying graph of $Q(\mathfrak{g})$ has a cycle, then the mutation class of $Q_m(\mathfrak{g})$ may depend on the choice of the orientation of arrows in $Q(\mathfrak{g})$. For example, let us consider the case $\mathfrak{g}=\tilde{A}_2$. Let $Q^{(1)}$, $Q^{(2)}$ be two Coxeter quivers related to $\tilde{A}_2$ shown in \cref{fig:affCoxeter}. Then the exchange matrices of the corresponding quivers $Q^{(1)}_3(\tilde{A}_2)$ and $Q^{(2)}_3(\tilde{A}_2)$ respectively have rank $2$ and $6$. Since the rank of an exchange matrix is invariant under mutations, these quivers are not mutation-equivalent.
\end{remark} 

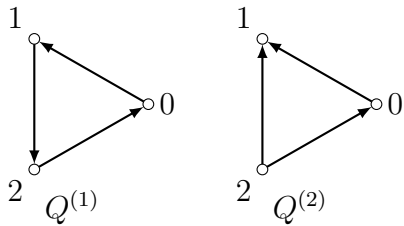
\begin{figure}
\[
\begin{tikzpicture}
\begin{scope}[>=latex]
\draw(0:1) circle(2pt) coordinate (A) node[right]{$0$};
\draw(120:1) circle(2pt) coordinate (B) node[above left]{$1$};
\draw(240:1) circle(2pt) coordinate (C) node[below left]{$2$};

\qarrow{A}{B};
\qarrow{B}{C};
\qarrow{C}{A};

\draw(0,-1) node[below]{$Q^{(1)}$};
\end{scope}

\begin{scope}[xshift=3cm, >=latex]
\draw(0:1) circle(2pt) coordinate (A) node[right]{$0$};
\draw(120:1) circle(2pt) coordinate (B) node[above left]{$1$};
\draw(240:1) circle(2pt) coordinate (C) node[below left]{$2$};

\qarrow{A}{B};
\qarrow{C}{B};
\qarrow{C}{A};

\draw(0,-1) node[below]{$Q^{(2)}$};
\end{scope}
\end{tikzpicture}
\]
\caption{Two Coxeter quivers related to $\tilde{A}_2$}
\label{fig:affCoxeter}
\end{figure}



For the later usage, let us concretely describe $Q_m(\mathfrak{g})$ 
when $\mathfrak{g}$ is classical finite type case of rank $n$. 
We write $S =\{1,2,\ldots,n\}$. 
For $\mathfrak{g} = A_n$, $B_n$, $C_n$ and $D_n$, 
we fix Coxeter quivers $Q(\mathfrak{g})$ as Figure \ref{Dynkin-quivers},
where the vertex $s$ of a circle (resp. a circle with $2$ inside) has $d_s = 1$ corresponding to
a short root (resp. $d_s=2$ corresponding to a long root).

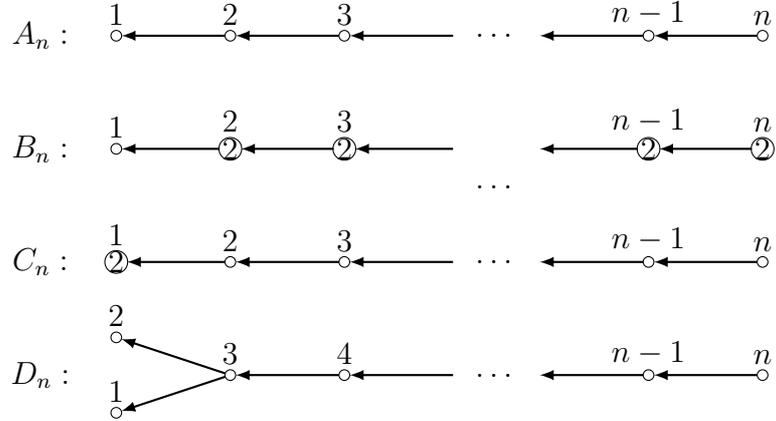
\begin{figure}[ht]
\begin{tikzpicture}
\begin{scope}[>=latex]
\draw (0,6) node{$A_n:$};
\draw (1,6) circle(2pt) coordinate(A1) node[above]{$1$};
\draw (2.5,6) circle(2pt) coordinate(A2) node[above]{$2$};
\draw (4,6) circle(2pt) coordinate(A3) node[above]{$3$};
\draw (8,6) circle(2pt) coordinate(A4) node[above]{$n-1$};
\draw (9.5,6) circle(2pt) coordinate(A5) node[above]{$n$};
\qarrow{A2}{A1}
\qarrow{A3}{A2}
\draw[->,shorten >=2pt,shorten <=2pt] (5.5,6) -- (A3) [thick];
\draw (6,6) node{$\dots$};
\draw[->,shorten >=2pt,shorten <=2pt] (A4) -- (6.5,6) [thick];
\qarrow{A5}{A4}

\draw (0,4.5) node{$B_n:$};
\draw (1,4.5) circle(2pt) coordinate(B1) node[above]{$1$};
\path (2.5,4.5) node[circle]{2} coordinate(B2) node[above=0.2em]{$2$};
\draw (2.5,4.5) circle[radius=0.15];
\path (4,4.5) node[circle]{2} coordinate(B3) node[above=0.2em]{$3$};
\draw (4,4.5) circle[radius=0.15];
\path (8,4.5) node[circle]{2} coordinate(B4) node[above=0.2em]{$n-1$};
\draw (8,4.5) circle[radius=0.15];
\path (9.5,4.5) node[circle]{2} coordinate(B5) node[above=0.2em]{$n$};
\draw (9.5,4.5) circle[radius=0.15];
\draw[->,shorten >=2pt,shorten <=4pt] (B2) -- (B1) [thick];
\draw[->,shorten >=4pt,shorten <=4pt] (B3) -- (B2) [thick];
\draw[->,shorten >=4pt,shorten <=2pt] (5.5,4.5) -- (B3) [thick];
\draw (6,4) node{$\dots$};
\draw[->,shorten >=2pt,shorten <=4pt] (B4) -- (6.5,4.5) [thick];
\draw[->,shorten >=4pt,shorten <=4pt] (B5) -- (B4) [thick];

\draw (0,3) node{$C_n:$};
\path (1,3) node[circle]{2} coordinate(C1) node[above=0.2em]{$1$};
\draw (1,3) circle[radius=0.15];
\draw (2.5,3) circle(2pt) coordinate(C2) node[above]{$2$};
\draw (4,3) circle(2pt) coordinate(C3) node[above]{$3$};
\draw (8,3) circle(2pt) coordinate(C4) node[above]{$n-1$};
\draw (9.5,3) circle(2pt) coordinate(C5) node[above]{$n$};
\draw[->,shorten >=4pt,shorten <=2pt] (C2) -- (C1) [thick];
\qarrow{C3}{C2}
\draw[->,shorten >=2pt,shorten <=2pt] (5.5,3) -- (C3) [thick];
\draw (6,3) node{$\dots$};
\draw[->,shorten >=2pt,shorten <=2pt] (C4) -- (6.5,3) [thick];
\qarrow{C5}{C4}

\draw (0,1.5) node{$D_n:$};
\draw (1,1) circle(2pt) coordinate(D0) node[above]{$1$};
\draw (1,2) circle(2pt) coordinate(D1) node[above]{$2$};
\draw (2.5,1.5) circle(2pt) coordinate(D2) node[above]{$3$};
\draw (4,1.5) circle(2pt) coordinate(D3) node[above]{$4$};
\draw (8,1.5) circle(2pt) coordinate(D4) node[above]{$n-1$};
\draw (9.5,1.5) circle(2pt) coordinate(D5) node[above]{$n$};
\qarrow{D2}{D0}
\qarrow{D2}{D1}
\qarrow{D3}{D2}
\draw[->,shorten >=2pt,shorten <=2pt] (5.5,1.5) -- (D3) [thick];
\draw (6,1.5) node{$\dots$};
\draw[->,shorten >=2pt,shorten <=2pt] (D4) -- (6.5,1.5) [thick];
\qarrow{D5}{D4}
\end{scope}
\end{tikzpicture}
\caption{Coxeter quivers $Q(\mathfrak{g})$ for $\mathfrak{g} = A_n, B_n, C_n$ and $D_n$}
\label{Dynkin-quivers}
\end{figure}

See Figure \ref{Qm-An} for the quiver $Q_m(A_n)$.

\begin{figure}[ht]
\begin{tikzpicture}
\begin{scope}[>=latex]
\draw (0,8) circle(2pt) coordinate(A1) node[above left]{$v^1_m$};
\draw (2,8) circle(2pt) coordinate(A2) node[above left]{$v^2_m$};
\draw (4,8) circle(2pt) coordinate(A3) node[above left]{$\cdots$};
\draw (6,8) circle(2pt) coordinate(A4) node[above left]{$v^n_m$};
\qarrow{A2}{A1}
\qarrow{A3}{A2}
\qarrow{A4}{A3}
\draw (0,6) circle(2pt) coordinate(B1) node[above left]{$v^1_1$};
\draw (2,6) circle(2pt) coordinate(B2) node[above left]{$v^2_1$};
\draw (4,6) circle(2pt) coordinate(B3) node[above left]{$\cdots$};
\draw (6,6) circle(2pt) coordinate(B4) node[above left]{$v^n_1$};
\qarrow{B2}{B1}
\qarrow{B3}{B2}
\qarrow{B4}{B3}
\qarrow{A1}{B1}
\qarrow{A2}{B2}
\qarrow{A3}{B3}
\qarrow{A4}{B4}
\qarrow{B1}{A2}
\qarrow{B2}{A3}
\qarrow{B3}{A4}
\draw (0,4) circle(2pt) coordinate(C1) node[above left]{$v^1_2$};
\draw (2,4) circle(2pt) coordinate(C2) node[above left]{$v^2_2$};
\draw (4,4) circle(2pt) coordinate(C3) node[above left]{$\cdots$};
\draw (6,4) circle(2pt) coordinate(C4) node[above left]{$v^n_2$};
\qarrow{C2}{C1}
\qarrow{C3}{C2}
\qarrow{C4}{C3}
\qarrow{B1}{C1}
\qarrow{B2}{C2}
\qarrow{B3}{C3}
\qarrow{B4}{C4}
\qarrow{C1}{B2}
\qarrow{C2}{B3}
\qarrow{C3}{B4}
\draw (0,2) circle(2pt) coordinate(D1) node[above left]{$\vdots$};
\draw (2,2) circle(2pt) coordinate(D2) node[above left]{$\vdots$};
\draw (4,2) circle(2pt) coordinate(D3) node[above left]{$\vdots$};
\draw (6,2) circle(2pt) coordinate(D4) node[above left]{$\vdots$};
\qarrow{D2}{D1}
\qarrow{D3}{D2}
\qarrow{D4}{D3}
\qarrow{C1}{D1}
\qarrow{C2}{D2}
\qarrow{C3}{D3}
\qarrow{C4}{D4}
\qarrow{D1}{C2}
\qarrow{D2}{C3}
\qarrow{D3}{C4}
\draw (0,0) circle(2pt) coordinate(E1) node[above left]{$v^1_m$};
\draw (2,0) circle(2pt) coordinate(E2) node[above left]{$v^2_m$};
\draw (4,0) circle(2pt) coordinate(E3) node[above left]{$\cdots$};
\draw (6,0) circle(2pt) coordinate(E4) node[above left]{$v^n_m$};
\qarrow{E2}{E1}
\qarrow{E3}{E2}
\qarrow{E4}{E3}
\qarrow{D1}{E1}
\qarrow{D2}{E2}
\qarrow{D3}{E3}
\qarrow{D4}{E4}
\qarrow{E1}{D2}
\qarrow{E2}{D3}
\qarrow{E3}{D4}
\draw (0,-2) circle(2pt) coordinate(F1) node[above left]{$v^1_1$};
\draw (2,-2) circle(2pt) coordinate(F2) node[above left]{$v^2_1$};
\draw (4,-2) circle(2pt) coordinate(F3) node[above left]{$\cdots$};
\draw (6,-2) circle(2pt) coordinate(F4) node[above left]{$v^n_1$};
\qarrow{F2}{F1}
\qarrow{F3}{F2}
\qarrow{F4}{F3}
\qarrow{E1}{F1}
\qarrow{E2}{F2}
\qarrow{E3}{F3}
\qarrow{E4}{F4}
\qarrow{F1}{E2}
\qarrow{F2}{E3}
\qarrow{F3}{E4}
{\color{red}
\coordinate (P1) at (-1,7); \coordinate (P2) at (7,7);
\coordinate (P3) at (-1,-1); \coordinate (P4) at (7,-1);
\draw[dashed] (P1) -- (P2);
\draw[dashed] (P1) -- (P3);
\draw[dashed] (P2) -- (P4);
\draw[dashed] (P3) -- (P4);
}
\end{scope}
\end{tikzpicture}
\caption{The quiver $Q_m(A_n)$. A red dashed rectangle denotes the fundamental domain of the quiver.}
\label{Qm-An}
\end{figure}
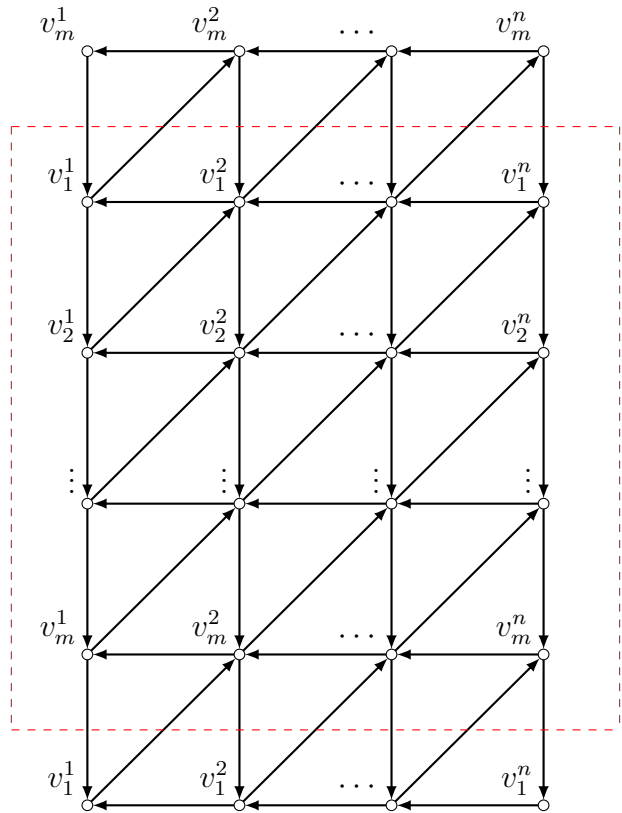

\begin{example}[Type $C_3$]
The Coxeter quiver of type $C_3$ has the structure matrix $\sigma$ and the 
weights $d$ as
$$
  \sigma = 
  \begin{pmatrix}
    0 & -1 & 0 \\ 1 & 0 & -1 \\ 0 & 1 & 0
  \end{pmatrix},
  \quad 
  d = \mathrm{diag}(2,1,1).
$$ 
Thus the corresponding exchange matrix is
$$
  \ve = 
  \begin{pmatrix}
    0 & -2 & 0 \\ 1 & 0& -1 \\ 0 & 1 & 0
  \end{pmatrix},
$$
which is related to the Cartan matrix \eqref{Cartan-C3}.
\end{example}

\subsection{Weyl group action on $(\A_{Q_m(\mathfrak{g})},\X_{Q_m(\mathfrak{g})})$}
\label{subsec:RonX}
Let $\mathfrak{g}$ be a Kac-Moody Lie algebra and choose a Coxeter quiver $Q(\mathfrak{g})$. Let $Q_m(\mathfrak{g})$ be the corresponding weighted quiver.

For $s \in S$ and $i \in \Z_m$, define a sequence of mutations and permutations $R(s,i)$ at the vertices on the path $P_s$ in $Q_m(\mathfrak{g})$ (recall Remark \ref{rem:closed-path}):
\begin{align}\label{eq:R-mu}
R(s,i)
:= (M^s_i)^{-1} \circ (v^s_{m+i-2}, v^s_{m+i-1}) \circ 
\mu^s_{m+i-1} \mu^s_{m+i-2} \circ M^s_i,
\end{align}
where $M^s_i := \mu^s_{m+i-3} \mu^s_{m+i-4} \cdots \mu^s_{i+1} \mu^s_i$. 
For a vertex $s$ in the weighted Coxeter quiver $Q(\mathfrak{g})$, 
we write $s^+$ (resp. $s^-$) for a set of all vertices $t \in S$ connected to 
the vertex $s$ as $s \leftarrow t$ (resp. $t \leftarrow s$).
Note that $\ve_{ts}$ is negative if $t \in s^-$, and positive if 
$t \in s^+$. 

\begin{remark}
The mutation sequence \eqref{eq:R-mu} first appeared in \cite{Bu14} in studying
maximal green sequence for an oriented circle like $P_s$.
In the case of $\mathfrak{g} = A_n$, the quiver $Q_m(A_n)$ 
and the operator $R(s,i) $ coincide 
with what introduced in \cite{ILP16} where $R(s,i) $ is called 
`the cluster $R$-matrix'.
\end{remark}
  
\begin{prop}\label{prop:RonQ}
We have $R(s,i) (Q_m(\mathfrak{g}))=Q_m(\mathfrak{g})$
for $s \in S$ and $i \in \Z_m$.
\end{prop}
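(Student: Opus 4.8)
The statement asserts that the composite mutation-permutation sequence $R(s,i)$ fixes the quiver $Q_m(\mathfrak{g})$. The natural strategy is to track the quiver through each elementary mutation appearing in the definition \eqref{eq:R-mu}, exploiting the fact that all the action happens along the single closed path $P_s = (v^s_i \to v^s_{i+1} \to \cdots \to v^s_{m+i-1} \to v^s_i)$ and its neighbours, while the rest of the quiver — and in particular all paths $P_t$ with $t \neq s$ — is essentially a spectator. Concretely, I would first isolate the \emph{local picture}: the only vertices whose incident arrows can change under $\mu^s_j$ for $j$ on $P_s$ are the vertices $v^s_j$ themselves together with the vertices $v^t_j$ and $v^t_{j+1}$ for $t$ adjacent to $s$ in the Coxeter quiver $Q(\mathfrak{g})$. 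So it suffices to understand how the arrows \emph{within} $P_s$ and the arrows connecting $P_s$ to each neighbouring path $P_t$ evolve.

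The core computation is the effect of the sequence $M^s_i = \mu^s_{m+i-3}\cdots\mu^s_{i+1}\mu^s_i$ followed by $\mu^s_{m+i-2}\mu^s_{m+i-1}$ on an oriented cycle (this is exactly the maximal-green-sequence-for-a-cycle computation of Buan \cite{Bu14} referenced in the Remark). I would prove by induction on the number of mutations performed that after applying $\mu^s_i, \mu^s_{i+1}, \ldots, \mu^s_{j}$ (reading along the cycle), the cycle $P_s$ has been "opened up" in a controlled way: each mutation at a source/sink-like vertex of the partially-mutated path reverses two arrows and creates/cancels the appropriate shortcut arrow, and after $\mu^s_{m+i-2}\mu^s_{m+i-1}$ the path $P_s$ is restored but with its vertices cyclically relabelled by $v^s_{j} \mapsto v^s_{j-1}$ (equivalently $i \mapsto i-1$ along the path). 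Simultaneously I would track the arrows to each neighbour path $P_t$: at the start $v^s_j \to v^t_j$ and $v^t_{j+1}\to v^s_j$ (when $\sigma_{st}\geq 1$), and I would check that each mutation $\mu^s_j$ transports these arrows consistently, so that at the end the connection pattern to $P_t$ is again the standard one but shifted along with the relabelling of $P_s$. The final permutation $(v^s_{m+i-2}, v^s_{m+i-1})$ and the conjugating inverse $(M^s_i)^{-1}$ are then precisely what is needed to undo this cyclic relabelling and return to $Q_m(\mathfrak{g})$ on the nose.

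The main obstacle I anticipate is bookkeeping the \emph{weighted/half-integer} arrows and the factors $\alpha^k_{ij}$ from the quiver-mutation lemma when $d_s \neq d_t$ for a neighbour $t$ — i.e. verifying that the mutation rule for $\sigma$ (not just for $\ve$) genuinely behaves as "reverse and shift" when a long root meets a short root. Here the observation "if $d_k \in \{d_i, d_j\}$ then $\alpha^k_{ij} = 1$" from the lemma is decisive: since all mutations in $R(s,i)$ are performed at vertices $v^s_j$ of weight $d_s$, and the affected triangles always involve at least one other vertex on $P_s$ (also of weight $d_s$) or the vertex $v^s_{j\pm 1}$, one should be able to arrange that $\alpha = 1$ throughout, reducing everything to the skew-symmetric (unweighted) case. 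A clean way to organize the whole argument is to work with the exchange matrix $\ve$ directly, using \eqref{eq:cartan-e} to record the inter-path entries, and to note that since $R(s,i)$ only mutates within $P_s$, entries $\ve_{v^s_j, v^t_k}$ are governed entirely by a rank-bounded local subquiver; one then checks the claimed periodicity on this finite local piece — possibly deferring the actual arrow-by-arrow verification to figures, as is standard for such combinatorial identities. Alternatively, one can invoke \cref{thm:Nakanishi}(2): the periodicity only depends on the unfrozen exchange submatrix, so it is enough to verify $R(s,i)(Q) = Q$ after restricting attention to $P_s$ together with its immediate neighbours, cutting down the case analysis considerably.
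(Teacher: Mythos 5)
Your overall skeleton (localize to $P_s$ and its neighbours, track the exchange matrix through the mutations) is in the right spirit, but your central inductive claim is false and your final step is not a valid argument. You assert that after the first $m$ mutations $\mu^s_{m+i-1}\mu^s_{m+i-2}M^s_i$ the cycle $P_s$ is ``restored with its vertices cyclically relabelled''. Since all mutations occur at vertices of $P_s$, the arrows inside $P_s$ evolve exactly as for a bare oriented $m$-cycle, and there the claim already fails: for $m=3$, mutating $1\to2\to3\to1$ at $1$, then $2$, then $3$ yields the linear quiver $3\to1\to2$, not a relabelled $3$-cycle (in general one obtains the ``opened-up'' $D_m$-shaped quiver of \cref{fig:mutation-equivalence} further mutated at the two fork vertices, never a cycle). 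Moreover, even if the quiver at that stage were $Q_m(\mathfrak{g})$ up to a relabelling, your conclusion would not follow: a relabelling cannot be ``undone'' by performing the $m-2$ further mutations $(M^s_i)^{-1}$; those mutations return you to $Q_m(\mathfrak{g})$ only if the quiver they are applied to is exactly $M^s_i(Q_m(\mathfrak{g}))$. This is precisely the key step you never isolate: one must show that the half-way quiver $Q[m-2]:=M^s_i(Q_m(\mathfrak{g}))$ --- including all attachment arrows to the neighbouring paths $P_t$ --- is invariant under the involution $(v^s_{m+i-2},v^s_{m+i-1})\circ\mu^s_{m+i-1}\mu^s_{m+i-2}$ coming from its $D_m$-fork symmetry, i.e.\ $Q[\overline{m-2}]=Q[m-2]$. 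That is \cref{lem:Qk=Qbark} in the paper, proved by an explicit description of the intermediate quivers (reducing to the type $A$ computation of \cite{ILP16} and listing the modifications (i)--(v) for general Cartan data); once it is known, $(M^s_i)^{-1}$ simply retraces the first $m-2$ mutations back to $Q_m(\mathfrak{g})$, and no relabelling bookkeeping is needed.

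A secondary inaccuracy: it is not true that every triangle affected by a mutation $\mu^s_j$ contains a second vertex of weight $d_s$. Composites $v^t_a\to v^s_j\to v^u_b$ with $t,u$ adjacent to $s$ and $t,u\neq s$ do occur in the intermediate quivers (these produce the arrows between distinct neighbouring paths recorded in item (v) of the paper's proof of \cref{lem:Qk=Qbark}), and for such triangles $\alpha^{v^s_j}_{v^t_a,v^u_b}=d_s\gcd(d_t,d_u)/(\gcd(d_s,d_t)\gcd(d_s,d_u))$ need not equal $1$ for a general symmetrizable Kac--Moody $\mathfrak{g}$ (e.g.\ a weight-$1$ vertex between two weight-$2$ neighbours). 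Your fallback of working with $\ve$ rather than $\sigma$ sidesteps this, so it is fixable; the fatal issue is the one above. (Also, the appeal to \cref{thm:Nakanishi}(2) is misplaced: that theorem concerns periodicity of seeds, whereas the reduction to $P_s$ and its immediate neighbours is just locality of quiver mutation --- harmless, but not the right citation.)
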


Our basic idea of the construction is the following. Let us look at an oriented cycle $P_s$ and forget about the remaining vertices and arrows. Then such an oriented cycle is known to be mutation-equivalent to a Dynkin quiver of type $D_m$, see \cref{fig:mutation-equivalence}. Indeed, the mutation sequence $M_i^s$ provides such a mutation-equivalence. In the $D_m$ quiver, one can easily find an involutive element of the cluster modular group: the mutation sequence $(v^s_{m+i-2}, v^s_{m+i-1}) \circ \mu^s_{m+i-1} \mu^s_{m+i-2}$. Conjugating this sequence by $M_i^s$, we get the mutation sequence $R(s,i)$, which preserves the oriented cycle $P_s$.  
Then a non-trivial point of \cref{prop:RonQ} is that all the remaining arrows in $Q_m(\mathfrak{g})$ are also preserved. When the weights of the vertices are all one, this is proved in \cite[Theorem 7.7]{GS16}. Our proposition above gives a slight generalization.

\begin{figure}
\[
\begin{tikzpicture}\begin{scope}[>=latex] 
\draw (0,0) coordinate;
\draw (0: 2) circle(2pt) coordinate(A) node[right]{$v_3^s$};
\draw (60: 2) circle(2pt) coordinate(B) node[above]{$v_2^s$};
\draw (120: 2) circle(2pt) coordinate(C) node[above]{$v_1^s$};
\draw (180: 2) circle(2pt) coordinate(D) node[left]{$v_m^s$};
\draw (240: 2) circle(2pt) coordinate(E) node[below]{$v_{m-1}^s$};
\draw (300: 2) circle(2pt) coordinate(F) node[below]{$v_{m-2}^s$};
\draw[dashed,shorten >=3pt,shorten <=3pt] (A) -- (F);
\draw[->,shorten >=3pt,shorten <=3pt] (F) -- (E) [thick];
\draw[->,shorten >=3pt,shorten <=3pt] (E) -- (D) [thick];
\draw[->,shorten >=3pt,shorten <=3pt] (D) -- (C) [thick];
\draw[->,shorten >=3pt,shorten <=3pt] (C) -- (B) [thick];
\draw[->,shorten >=3pt,shorten <=3pt] (B) -- (A) [thick];

\path (0,-3) node{$P_s$};

\draw (5,0) circle(2pt) coordinate(A) node[above]{$v_i^s$};
\draw (A)++(2,0) circle(2pt) coordinate(B) node[above]{$v_{i+1}^s$};
\draw (A)++(4,0) circle(2pt) coordinate(C) node[above]{$v_{i+2}^s$};
\draw (A)++(6,0) circle(2pt) coordinate(D) node[right]{$v_{m+i-3}^s$};
\draw (D)++(45:2) circle(2pt) coordinate(E) node[above]{$v_{m+i-2}^s$};
\draw (D)++(315:2) circle(2pt) coordinate(F) node[below]{$v_{m+i-1}^s$};
\draw[dashed,shorten >=2pt,shorten <=2pt] (C) -- (D);
\draw[->,shorten >=2pt,shorten <=2pt] (A) -- (B) [thick];
\draw[->,shorten >=2pt,shorten <=2pt] (B) -- (C) [thick];
\draw[->,shorten >=2pt,shorten <=2pt] (E) -- (D) [thick];
\draw[->,shorten >=2pt,shorten <=2pt] (D) -- (F) [thick];
\draw (3.6,0) node[above]{$M_i^s$};
\draw[->] (3,0) -- (4.2,0);
\end{scope}
\end{tikzpicture}
\]

\caption{The mutation-equivalence $M_i^s$.}
\label{fig:mutation-equivalence}
\end{figure}
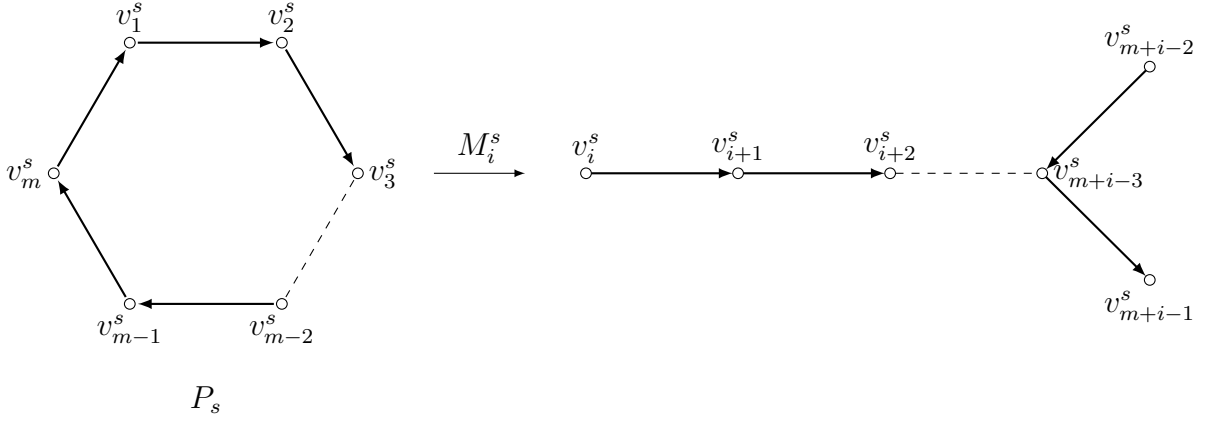

\begin{thm}\label{thm:Weyl-R}
\begin{enumerate}
\item 
The action of $R(s,i) $ on the seed is independent of $i$, and
induces the action $R(s)^\ast$ 
on $\mathcal{A}_{Q_m(\mathfrak{g})}$ as
\begin{align}\label{A-transf}
  R(s)^\ast (A^t_j) = 
  \begin{cases}
  f_A(s) A^s_j & t=s,
  \\ 
  A^t_j & \text{otherwise},
  \end{cases}
\end{align}  
where 
$$
  f_A(s) 
  = 
  \sum_{i \in \Z_m} \frac{1}{A^s_i A^s_{i+1}}
  \prod_{t \in s^+} (A^t_i)^{-\ve_{st}} \cdot \prod_{t \in s^-}(A^t_{i+1})^{\ve_{st}},
$$
and that on $\mathcal{X}_{Q_m(\mathfrak{g})}$ as

\begin{align}\label{eq:RonX}
R(s)^\ast (X^t_j) =  
  \begin{cases}
    \frac{f_X(s,j)}{X^s_{j-1} f_X(s,{j-2})}  & t=s,
    \\[2mm]
    X^{t}_j \left( \frac{X^s_{j-1} f_X(s,{j-2})}{f_X(s,{j-1})}\right)^{-\ve_{ts}} & t \in s^-
    \\[2mm]    
    X^{t}_j \left( \frac{X^s_{j} f_X(s,{j-1})}{f_X(s,{j})}\right)^{\ve_{ts}}  & t \in s^+,
    \\[1mm]
    X^t_j & \text{otherwise},
  \end{cases}
\end{align}  
where 
$$
  f_X(s,i) = 1 + \sum_{k=0}^{m-2} X^s_i X^s_{i-1} \cdots X^s_{i-k}.
$$

\item    
The operators $R(s) ~(s \in S)$ generate an action of 
the Weyl group $W(\mathfrak{g})$ on the seed, i.e., they satisfy
$(R(s) R(t))^{m_{st}} = 1$ for $s,t \in S$. 
Here $(m_{st})_{s,t \in S}$ is the Coxeter matrix given in \cref{prop:KacCoxeter}.
\end{enumerate}
\end{thm}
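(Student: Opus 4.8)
The plan is to establish Theorem~\ref{thm:Weyl-R} in three stages: first the explicit transformation formulae in part (1), then the independence of $i$, and finally the braid relations in part (2).

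\emph{Step 1: the $\A$- and $\X$-transformations.} I would start from the definition \eqref{eq:R-mu} of $R(s,i)$ and track how the composite acts on cluster variables. The key simplification is that $M_i^s$ only involves mutations at vertices of the path $P_s$, and — after the conjugation picture of \cref{fig:mutation-equivalence} — the net effect on the $\A$-side is that $A_j^s$ for $j$ \emph{outside} the window $\{i,\dots,m+i-1\}$ is untouched, while inside the window one telescopes the exchange relations \eqref{eq:A-mutation}. Concretely, I would compute $R(s,i)^*(A_j^s)$ by composing the exchange relations along $M_i^s$, applying the double mutation $\mu^s_{m+i-1}\mu^s_{m+i-2}$ and the transposition, and then conjugating back by $(M_i^s)^{-1}$. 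The heart of the computation is to check that the resulting rational function collapses to the single clean expression $f_A(s)\,A_j^s$ with the stated $f_A(s)$; the products $\prod_{t\in s^+}(A_i^t)^{-\ve_{st}}\prod_{t\in s^-}(A_{i+1}^t)^{\ve_{st}}$ appear precisely because, during the mutations at $v_i^s$, the neighboring vertices $v_i^t$ ($t\in s^+$) and $v_{i+1}^t$ ($t\in s^-$) contribute via the arrows described in \cref{subsec:Q_m}. The $\X$-side formula \eqref{eq:RonX} is obtained analogously from \eqref{eq:X-mutation}; alternatively, since $R(s)$ preserves $Q_m(\mathfrak{g})$ by \cref{prop:RonQ} and lies in the peripheral subgroup, one can derive the $\X$-formula from the $\A$-formula via the ensemble map $p^*(X_k^s)=\prod A_i^{\ve_{ki}}$, which is often less error-prone. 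The function $f_X(s,i)=1+\sum_{k=0}^{m-2}X_i^s X_{i-1}^s\cdots X_{i-k}^s$ is exactly the $\X$-analogue of the sum defining $f_A(s)$ after clearing monomials.

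\emph{Step 2: independence of $i$.} Once the formulae are in hand, independence of $i$ is visible: the expression for $R(s)^*(A_j^t)$ in \eqref{A-transf} makes no reference to $i$ at all, so $R(s,i)$ and $R(s,i')$ induce the \emph{same} seed transformation. Hence it is legitimate to write $R(s):=R(s,i)$ as an element of $\Gamma_{|Q_m(\mathfrak{g})|}$; strictly, one invokes \cref{thm:Nakanishi}(1) to see that agreement on the $\A$-seed forces agreement on the $\X$-seed and hence on the full seed, so $R(s,i)$ and $R(s,i')$ define the same element of the cluster modular group.

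\emph{Step 3: the Coxeter relations.} This is where the main work lies. By \cref{prop:KacCoxeter} the target relations are $(R(s)R(t))^{m_{st}}=1$ with $m_{st}\in\{2,3,4,6,\infty\}$ governed by $C_{st}C_{ts}\in\{0,1,2,3,\ge 4\}$. I would verify these directly on cluster variables using the formulae from Step~1. For $m_{st}=2$ (i.e.\ $C_{st}=C_{ts}=0$, so $s$ and $t$ are not connected in $Q(\mathfrak{g})$), the operators $R(s)$ and $R(t)$ act on disjoint sets of variables and obviously commute. For $m_{st}=3,4,6$ one must check that the finite composite $(R(s)R(t))^{m_{st}}$ acts as the identity on every $A_j^u$; by \eqref{A-transf} only the variables $A_j^s$ and $A_j^t$ are moved, so this reduces to a rank-two computation involving only the paths $P_s$ and $P_t$ and the arrows between them, governed by $\ve_{st},\ve_{ts}$ with $|\ve_{st}|=-C_{ts}$. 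A cleaner route, which I expect to be the decisive one, is to use the tropical/periodicity machinery of \cref{subsec:sign-coh}: by \cref{thm:periodicity} a mutation sequence is trivial iff it fixes the principal coefficient point $\xi_0\in\X_Q(\mathbb P_\trop(\mathbf u))$, equivalently fixes all basic $\X$-laminations $l_i^\pm$. The paper's stated strategy (the remark after \cref{prop:RonQ}, and the observation preceding \cref{introthm:cluster DT}) is that the action of $R(s)$ on a suitable subset of tropical $\X$-points reproduces the reflection $r_s$ on the root lattice $L(\mathfrak{g})$ — this is the content flagged in \cref{introthm:realization}(2). Granting that dictionary, $(R(s)R(t))^{m_{st}}$ acts on that subset as $(r_s r_t)^{m_{st}}=1$ in $W(\mathfrak{g})$, and since the root lattice data pins down the action on the relevant laminations, \cref{thm:periodicity} upgrades this to triviality of $(R(s)R(t))^{m_{st}}$ as a mutation sequence.

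\emph{Main obstacle.} The principal difficulty is Step~3 in the cases $m_{st}=4$ and $m_{st}=6$: there the naive approach requires composing four or six copies of the nontrivial rational transformation \eqref{A-transf}, and checking the resulting massive subtraction-free expression collapses to the identity is not something one wants to do by brute force. The right move is to set up the tropical-lamination correspondence carefully — showing that $R(s)^\trop$ restricted to the span of the laminations attached to the paths $\{P_t\}_{t\in S}$ is conjugate to the matrix of $r_s$ acting on $\Pi$ via $r_s\alpha_t=\alpha_t-C_{st}\alpha_s$ — so that the Coxeter relations become the already-known Coxeter relations in $W(\mathfrak{g})$ (\cref{prop:KacCoxeter}), and then quote \cref{thm:periodicity} to conclude. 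Establishing that correspondence rigorously, including that the relevant laminations are enough to detect triviality, is the real content; everything else is bookkeeping with \eqref{eq:A-mutation} and \eqref{eq:X-mutation}.
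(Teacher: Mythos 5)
Your Steps 1 and 2 are essentially the paper's strategy (the paper gets \eqref{eq:RonX} from the $c$-vectors of \cref{lem:tropRonX} plus $F$-polynomials and the separation formula rather than by tracking \eqref{eq:X-mutation} directly, and it proves $i$-independence first and uses it to avoid computing every variable), but the decisive Step 3 has a genuine gap. To invoke \cref{thm:periodicity} you must show that $(R(s)R(t))^{m_{st}}$ fixes the \emph{entire} principal coefficient $\xi_0$, i.e.\ all $|S|\cdot m$ tropical coordinates. The root-lattice dictionary only controls the $|S|$ Casimir directions $\prod_i x^t_i$, so your claim that ``the root lattice data pins down the action on the relevant laminations'' is false: by \eqref{eq:tropRonX} the tropical action of $R(s)$ also performs a cyclic index shift, $x^s_j\mapsto (x^s_{j-1})^{-1}$, which is invisible on the root lattice, and a composite could act trivially on $L(\mathfrak{g})$ while nontrivially permuting the basic laminations along the cycles. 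A second obstruction is that \cref{lem:tropRonX} is valid only on the positive-sign region $\X^{+,s}_{Q_m(\mathfrak{g})}(\mathbb{P})$; since $(r_sr_t)^{m_{st}}$ is not a reduced word, the iterates necessarily leave the positive region partway through (this is exactly the sign bookkeeping via \cref{thm:sign} and \cref{thm: length} that makes \cref{prop: action coincide} work for reduced words only), so you cannot simply iterate \eqref{eq:tropRonX} $2m_{st}$ times; you would need the negative-sign analogue and a full $c$-vector computation of the composite, at which point this route is no cleaner than a direct check.

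The route you set aside as brute force is the one that works, and it is short once you extract from \eqref{A-transf} that $f_A(s)$ and $f_A(t)$ transform like a rank-two root system, \eqref{f-algebra}. This is what the paper does: by \cref{thm:Nakanishi}(2) one may freeze everything outside $P_s\cup P_t$ and trivialize those $A$-variables, reducing to the cases $A_2$, $B_2$, $G_2$; then $(R(t)R(s))^{m_{st}}=1$ on the $A$-seed is a few lines of multiplicative bookkeeping with \eqref{f-algebra} (the paper writes out $B_2$ in four lines), and \cref{thm:Nakanishi}(1) upgrades equality of $A$-seeds to triviality of the mutation sequence. One further correction to Step 1: your fallback of deriving \eqref{eq:RonX} from \eqref{A-transf} via the ensemble map cannot work, precisely because $R(s)$ is peripheral: $R(s)^*p^*(X^t_j)=p^*(X^t_j)$, and $p^*$ kills the Casimir directions, so the $X$-action (which is nontrivial on the Casimirs $\mathbb{X}_t$) is underdetermined by the $A$-action.
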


For the proof of Proposition \ref{prop:RonQ} and Theorem \ref{thm:Weyl-R}, 
see \S~ \ref{subsec:Rproofs}. Note that in the case of $\mathfrak{g}=A_n$ the formula \eqref{eq:RonX} coincides with that in \cite[Theorem 7.1]{ILP16} by replacing $X^s_j$ with $1/X^s_j$, due to Remark \ref{rem:FGandFZ}.

\begin{example}\label{ex:R-actionC_3}
In the case of $Q_3(C_3)$, the actions of $R(1)$ and $R(2)$ are as follows:
\begin{align*}
  &R(1)^\ast (X^1_1) = \frac{1 + X^1_1 + X^1_1 X^1_3}
  {X^1_3 (1 + X^1_2 + X^1_2 X^1_1)},
  \\ 
  &R(1)^\ast (X^2_1) = X^2_1 X^1_1 
  \frac{1 + X^1_3 + X^1_3 X^1_2} 
       {1 + X^1_1 + X^1_1 X^1_3}, 
  \\
  &R(1)^\ast (X^3_1) = X^3_1,
  \\
  &R(1)^\ast (A^1_1) = \frac
  {A^1_3(A^2_1)^2 + A^1_1(A^2_2)^2 + A^1_2(A^2_3)^2}{A^1_2 A^1_3},
  \\
  &R(1)^\ast (A^2_1) = A^2_1, \quad R(1)^\ast (A^3_1) = A^3_1, 
\end{align*}
and 
\begin{align*}
  &R(2)^\ast (X^1_1) = X^1_1 \left( X^2_3 
  \frac{1 + X^2_2 + X^2_2 X^2_1}{1 + X^2_3 + X^2_3 X^2_2} \right)^2
  \\
  &R(2)^\ast (X^2_1) = \frac{1 + X^2_1 + X^2_1 X^2_3}
  {X^2_3 (1 + X^2_2 + X^2_2 X^2_1)},  
  \\
  &R(2)^\ast (X^3_1) = X^3_1 X^2_1 
  \frac{1 + X^2_3 + X^2_3 X^2_2}{1 + X^2_1 + X^2_1 X^2_3},
  \\
  &R(2)^\ast (A^1_1)=A^1_1, \quad R(2)^\ast (A^3_1)=A^3_1,
  \\
  &R(2)^\ast (A^2_1)= \frac{A^1_2 A^2_3 A^3_1 + A^1_3 A^2_1 A^3_2 + A^1_1 A^2_2 A^3_3}{A^2_2 A^2_3}. 
\end{align*}
\end{example}

\begin{lem}\label{lem:peripheral}
Let $p$ be the positive map from $\mathcal{A}_{Q_m(\mathfrak{g})}$ to 
$\mathcal{X}_{Q_m(\mathfrak{g})}$ given by
$$
  p^\ast(X^s_j) = \prod_{v^t_i \in Q_m(\mathfrak{g})} (A^t_i)^{\ve_{v^s_j,v^t_i}}.
$$
Then $R(s,k)$ is in the peripheral subgroup $P_{Q_m(\mathfrak{g})}$ of $\Gamma_{Q_m(\mathfrak{g})}$ (\cref{def:peripheral}),
i.e. $R(s)^\ast p^\ast(X^t_j) = p^\ast(X^t_j)$ for any $X^t_j$.
\end{lem}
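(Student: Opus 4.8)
The plan is to deduce the lemma directly from the explicit formula \eqref{A-transf} for the action $R(s)^\ast$ on the cluster $\A$-coordinates, using that $p^\ast(X^t_j)$ is a Laurent monomial in these coordinates.

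First I would observe that, by \eqref{A-transf}, $R(s)^\ast$ fixes every $A^u_i$ with $u \neq s$ and multiplies $A^s_i$ by the factor $f_A(s)$, which is crucially \emph{independent} of the index $i$. Hence for any Laurent monomial in the $\A$-coordinates of $Q_m(\mathfrak{g})$ we have $R(s)^\ast\bigl(\prod_{v^u_i}(A^u_i)^{a^u_i}\bigr) = f_A(s)^{\sum_{i \in \Z_m} a^s_i}\prod_{v^u_i}(A^u_i)^{a^u_i}$. Specialising to $p^\ast(X^t_j) = \prod_{v^u_i}(A^u_i)^{\ve_{v^t_j,v^u_i}}$ gives
\[
R(s)^\ast p^\ast(X^t_j) \;=\; f_A(s)^{\,E}\,p^\ast(X^t_j), \qquad E \;:=\; \sum_{i \in \Z_m}\ve_{v^t_j, v^s_i},
\]
so the lemma reduces to proving $E = 0$ for all $s,t \in S$ and $j \in \Z_m$.

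This I would check directly from the description of $Q_m(\mathfrak{g})$ in \cref{subsec:Q_m}. If $s = t$ and $m \geq 3$, the only non-zero terms are $\ve_{v^s_j, v^s_{j+1}} = 1$ and $\ve_{v^s_j, v^s_{j-1}} = -1$; if $s = t$ and $m = 2$, the two arrows $v^s_1 \to v^s_2$ and $v^s_2 \to v^s_1$ cancel in $Q_2(\mathfrak{g})$, so every term vanishes; either way $E = 0$. If $s \neq t$ are not adjacent in the Coxeter quiver $Q(\mathfrak{g})$, the paths $P_s$ and $P_t$ are not joined by any arrow and every term vanishes. Finally, if $s \neq t$ are adjacent, the vertex $v^t_j$ is joined to $P_s$ at exactly two levels $i$ — one through the family of arrows of the form $v^s_i \to v^t_i$ and one through the family $v^t_{i+1} \to v^s_i$ (or their analogues when $t \in s^+$) — and the two resulting terms of $E$ have the same magnitude, namely $|\sigma_{st}|\,d_t/\gcd(d_s,d_t)$ since all vertices of $P_s$ carry the common weight $d_s$, but opposite signs; hence they cancel. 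This establishes $E = 0$ and the lemma follows.

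I do not expect a genuine obstacle here: given \cref{thm:Weyl-R}(1) the statement is a short computation together with the combinatorial check above, the only delicate points being the degenerate case $m = 2$ and keeping track of the $\gcd$-normalisation relating the structure matrix $\sigma$ to the exchange matrix $\ve$ when $d_s \neq d_t$. Conceptually, $E = d_s^{-1}\bigl(e_{v^t_j}, \sum_{i \in \Z_m} e_{v^s_i}\bigr)$, so the vanishing of $E$ says precisely that $\sum_{i\in\Z_m} e_{v^s_i}$ lies in the radical of the bilinear form of the seed lattice of $Q_m(\mathfrak{g})$ — equivalently $\chi_{\sum_i e_{v^s_i}}$ is a monomial Poisson Casimir, matching the image of the simple root $\alpha_s$ under \cref{introthm:realization}(2).
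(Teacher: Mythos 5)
Your proposal is correct and follows essentially the same route as the paper's proof: both use that, by \eqref{A-transf}, $R(s)^\ast$ rescales every $A^s_i$ by the common factor $f_A(s)$ and fixes the other variables, so the claim reduces to the cancellation $\sum_{i\in\Z_m}\ve_{v^t_j,v^s_i}=0$, which the paper checks via $\ve_{v^s_j,v^s_{j+1}}=-\ve_{v^s_j,v^s_{j-1}}$ (case $t=s$) and the analogous two-term cancellation for $t\in s^\pm$. Your write-up merely organizes the same computation as a single exponent count $E$ (with the extra explicit remarks on $m=2$ and the $\gcd$-normalization), so no substantive difference.
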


\begin{proof}
If $t \notin s^- \cup \{s\} \cup s^+$, the claim is obvious. 
If $t =s$, we have 
\begin{align*}
R(s)^\ast p^\ast(X^s_j) 
&= \prod_{v^u_i \in Q_m(\mathfrak{g})} 
  \left(R(s)^\ast A^u_i \right)^{\ve_{v^s_j,v^u_i}}
\\
&= \left(f_A(s) A^s_{j+1} \right)^{\ve_{v^s_j,v^s_{j+1}}} 
  \left(f_A(s) A^s_{j-1} \right)^{\ve_{v^s_j,v^s_{j-1}}} 
  \prod_{v^u_i \sim v^s_j; u \neq s} (A^u_i)^{\ve_{v^s_j,v^u_i}}  
= p^\ast(X^s_j),
\end{align*}
where the last equality follows from 
$\ve_{v^s_j,v^s_{j+1}} = -\ve_{v^s_j,v^s_{j-1}}$.
Similarly, if $t \in s^+$, we have
$$
R(s)^\ast p^\ast(X^t_j) 
=  \left(f_A(s) A^s_{j} \right)^{\ve_{v^t_j,v^s_{j}}} 
  \left(f_A(s) A^s_{j+1} \right)^{\ve_{v^t_j,v^s_{j+1}}} 
  \prod_{v^u_i \sim v^t_j; u \neq s} (A^t_i)^{\ve_{v^t_j,v^u_i}}  
= p^\ast(X^t_j).
$$  
The case of $t \in s^-$ is proved in the same manner.
\end{proof}

Now we prove our first main theorem. 
Let $L(\mathfrak{g}):=\mathrm{span}_\Z\{\alpha_s \mid s \in S\} \subset \mathfrak{h}^*$ be the root lattice, on which $W(\mathfrak{g})$ acts faithfully. 

\begin{thm}\label{thm:injectivity}
\begin{enumerate}
\item
There exists a unique injective homomorphism $R_m : W(\mathfrak{g}) \to P_{Q_m(\mathfrak{g})}$ extending $r_s \mapsto R(s)$.
\item
We have a $W(\mathfrak{g})$-equivariant embedding $L(\mathfrak{g}) \to Z(\X_{|Q_m(\mathfrak{g})|})$.
\end{enumerate}
\end{thm}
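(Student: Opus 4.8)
The plan is to deduce everything from \cref{thm:Weyl-R} (the braid relations among the $R(s)$ and the explicit $\A$-/$\X$-formulae), from \cref{lem:peripheral} (which places each $R(s)$ in the peripheral subgroup $P_{Q_m(\mathfrak g)}$), and from the periodicity theorem \cref{thm:periodicity}. For part (1), \cref{thm:Weyl-R}(2) already gives that the assignment $r_s \mapsto R(s)$ respects the Coxeter relations $(R(s)R(t))^{m_{st}}=1$, so by the defining presentation of the Coxeter group $W(\mathfrak g)$ there is a (unique) group homomorphism $R_m\colon W(\mathfrak g)\to \Gamma_{|Q_m(\mathfrak g)|}$ extending $r_s\mapsto R(s)$; its image lands in $P_{Q_m(\mathfrak g)}$ since $P_{Q_m(\mathfrak g)}$ is a subgroup and each generator $R(s)$ lies in it by \cref{lem:peripheral}. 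The real content is \emph{injectivity}.

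For injectivity I would use the tropical action on $\X$-laminations together with part (2). The key computation — the ``basic observation'' flagged in the introduction before \cref{introthm:cluster DT} — is that the action of $R(s)$ on a suitable sublattice of $\X_{|Q_m(\mathfrak g)|}(\mathbb P_\trop(\mathbf u))$ reproduces the action of $r_s$ on the root lattice $L(\mathfrak g)$. Concretely, I would define a linear map $L(\mathfrak g)\to Z(\X_{Q_m(\mathfrak g)})$ sending $\alpha_s$ to the monomial Poisson Casimir $\prod_{i\in\Z_m} X^s_i$ (the product of $\X$-coordinates along the path $P_s$; these are Casimirs because the cycle $P_s$ contributes trivially to the Poisson bracket, as $\ve_{v^s_i,v^s_{i+1}}=-\ve_{v^s_i,v^s_{i-1}}$ and the incoming/outgoing arrows between $P_s$ and $P_t$ cancel in pairs — exactly the cancellation used in the proof of \cref{lem:peripheral}). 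Using the explicit formula \eqref{eq:RonX} one checks that $R(s)^\ast\big(\prod_i X^t_i\big)=\prod_i X^t_i$ for $t\notin s^-\cup\{s\}\cup s^+$, that $R(s)^\ast\big(\prod_i X^s_i\big)=\big(\prod_i X^s_i\big)^{-1}$, and that for $t\sim s$ the telescoping products of the $f_X(s,j)$ collapse, leaving $R(s)^\ast\big(\prod_i X^t_i\big)=\prod_i X^t_i\cdot\big(\prod_i X^s_i\big)^{-\ve_{ts}}$, i.e. $\big(\prod_i X^s_i\big)^{C_{st}}$ after matching $\ve$ with the Cartan matrix via \eqref{eq:cartan-e}. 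Thus $R(s)^\ast$ acts on $\{\prod_i X^t_i\}_{t\in S}$ exactly as $r_s$ acts on $\{\alpha_t\}_{t\in S}$ under $\alpha_t\mapsto \prod_i X^t_i$, proving the map is $W(\mathfrak g)$-equivariant; this is part (2) once one notes $\{\alpha_s\}$ are linearly independent so the map is injective.

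Injectivity of $R_m$ now follows: since $W(\mathfrak g)$ acts faithfully on $L(\mathfrak g)$ and $L(\mathfrak g)\hookrightarrow Z(\X_{|Q_m(\mathfrak g)|})$ is $W(\mathfrak g)$-equivariant for the tropical (monomial) action, if $R_m(w)$ acts trivially on the whole cluster $\X$-variety then in particular $w$ fixes every $\alpha_s$, hence $w=e$. Alternatively, and perhaps more cleanly, one invokes \cref{thm:periodicity}: a nontrivial $w$ would give a nontrivial element of $\Gamma_{Q_m(\mathfrak g)}$, which by the periodicity theorem acts nontrivially on the principal-coefficient point $\xi_0$; the computation above shows the $W$-action is already detected on the coarser invariants $\prod_i X^t_i$, so faithfulness of $W(\mathfrak g)\curvearrowright L(\mathfrak g)$ suffices. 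I expect the \textbf{main obstacle} to be the bookkeeping in the telescoping verification that $R(s)^\ast\big(\prod_i X^t_i\big)$ has the claimed form — one must carefully track the cyclic indices mod $m$ in $f_X(s,i)=1+\sum_{k=0}^{m-2}X^s_i X^s_{i-1}\cdots X^s_{i-k}$ and confirm that the products $\prod_j f_X(s,j-1)/f_X(s,j)$ and $\prod_j f_X(s,j-2)/f_X(s,j-1)$ over $j\in\Z_m$ telescope to $1$, and that the powers of $X^s_{j-1}$ accumulate to $\prod_i X^s_i$ with the correct sign. Everything else is formal once \cref{thm:Weyl-R} and \cref{lem:peripheral} are in hand.
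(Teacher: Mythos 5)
Your proposal is correct and follows essentially the same route as the paper: existence and uniqueness from the Coxeter relations in \cref{thm:Weyl-R}(2) together with \cref{lem:peripheral}, then defining the Casimirs $\prod_{i\in\Z_m}X_i^s$, checking via \eqref{eq:RonX} (with the cyclic telescoping of the $f_X$ factors and $|\ve_{ts}|=-C_{st}$) that $R(s)^\ast$ acts on them as $r_s$ acts on simple roots, and concluding injectivity from the faithfulness of the $W(\mathfrak{g})$-action on $L(\mathfrak{g})$. The optional appeal to \cref{thm:periodicity} is unnecessary, but the primary argument is exactly the paper's.
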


\begin{proof}
Thanks to \cref{prop:RonQ,thm:Weyl-R,lem:peripheral}, we have a well-defined group homomorphism $R_m : W(\mathfrak{g}) \to \Gamma_{Q_m(\mathfrak{g})}$ extending $r_s \mapsto R(s)$, whose image lies in the peripheral subgroup $P_{Q_m(\mathfrak{g})}$. 

Let us prove the injectivity of $R_m$. 
For $t\in S$, let $\mathbb{X}_t:=\prod_{i \in \Z_m}X_i^t$ be the product of $X$-variables along the cycle $P_t$. One can check that $\mathbb{X}_t$ is a Poisson Casimir by reading off the Poisson bracket from our quiver. From \eqref{eq:RonX}, we get
\[
R(s)^\ast (\mathbb{X}_t) =  
  \begin{cases}
    \mathbb{X}_s^{-1} & t=s,\\
    \mathbb{X}_t \, \mathbb{X}_s^{-C_{st}} & \text{otherwise},
  \end{cases}
\]
where we used the relation $|\ve_{ts}|=-C_{st}$. The homomorphism $R_m$ defines a right action of $W(\mathfrak{g})$ on the function field $\C(\mathbf{X})=\C(X_i^s \mid i \in \Z_m, s \in S)$ of the torus $\X_{Q_m(\mathfrak{g})}$, as well as the group $Z(\X_{Q_m(\mathfrak{g})})$ of monomial Poisson Casimirs. 

Then the formula above implies that the embedding $L(\mathfrak{g}) \to Z(\X_{Q_m(\mathfrak{g})})$ given by $\sum_{s \in S}c_s \alpha_s \mapsto \prod_{s \in S}\mathbb{X}_s^{c_s}$ is $W(\mathfrak{g})$-equivariant. If $R_m(w)=1$ for some $w \in W(\mathfrak{g})$, then $w$ acts on $L(\mathfrak{g})$ trivially. This implies $w=1$.
Thus the injectivity is proved.
\end{proof}
One can also verify from \eqref{A-transf} that the functions $f_A(t)$, $t \in S$ are transformed as roots:
\begin{align}
    R(s)^\ast (f_A(t)) =  
  \begin{cases}
    f_A(s)^{-1} & t=s,\\
    f_A(t) f_A(s)^{-C_{st}} & \text{otherwise}.
  \end{cases}
\end{align}

The following lemma will be used to prove Theorem \ref{thm:Weyl-R}
and to compute the cluster Donaldson-Thomas transformation for our quiver $Q_m(\mathfrak{g})$.

\begin{lem}\label{lem:tropRonX}
Let $\mathbb{P}:=\mathbb{P}_\trop(\mathbf{u})$ be the tropical semifield of rank $|I|$, where $\mathbf{u}:=(u_i^s)$. For $s\in S$, let $\X_{Q_m(\mathfrak{g})}^{+,s}(\mathbb{P}):=\bigcap_{i \in \Z_m}\X_{Q_m(\mathfrak{g})}^{+,v_i^s}(\mathbb{P})$. It is the set of points whose tropical sign at each vertex on the cycle $P_s$ is positive. 
Then, the tropical action $R^\trop(s,i)=:R^\trop(s)$ is independent of $i$, and its restriction to the set $\X_{Q_m(\mathfrak{g})}^{+,s}(\mathbb{P})$ is expressed as
\begin{align}\label{eq:tropRonX}
  R^\trop(s)^*(x_j^t) =  
  \begin{cases}
    (x^s_{j-1})^{-1}  & t=s,
    \\[1mm]
    x^{t}_j (x^s_{j-1})^{-\ve_{ts}} & t \in s^-
    \\[1mm]    
    x^{t}_j (x^s_{j})^{\ve_{ts}}  & t \in s^+,
    \\[1mm]
    x^t_j & \text{otherwise}.
  \end{cases}
\end{align} 
Here each $x_j^t$ denotes the restriction of the coordinate function in $\mathrm{Fun}(\X_{Q_m(\mathfrak{g})}(\mathbb{P}))$ (see \cref{subsec:ensemble}) to the subset $\X^{+,s}_{Q_m(\mathfrak{g})}(\mathbb{P})$.
\end{lem}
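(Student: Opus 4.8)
The plan is to compute the tropical mutation sequence $R^\trop(s,i)$ directly on the set $\X_{Q_m(\mathfrak{g})}^{+,s}(\mathbb{P})$, extracting from one and the same computation both the independence of $i$ and the formula \eqref{eq:tropRonX}.

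First I would reduce to the cycle $P_s$. Every mutation occurring in $R(s,i)$ is performed at a vertex of $P_s$, so by \eqref{eq:e-mutation} and \eqref{eq:coeff-mutation} the block of the exchange matrix indexed by the vertices of $P_s$, together with the $c$-vectors of the coordinates $x^s_\bullet$, evolves autonomously; hence along $P_s$ the sequence $R^\trop(s,i)$ is the tropicalization of the corresponding sequence on an oriented $m$-cycle, which $M^s_i$ straightens into a quiver of type $D_m$ (\cref{fig:mutation-equivalence}). The crucial input is that, starting from a point all of whose $c$-vectors along $P_s$ are nonnegative (i.e.\ each $x^s_\bullet>0$, as holds for every $\xi\in\X_{Q_m(\mathfrak{g})}^{+,s}(\mathbb{P})$), \emph{every} mutation step in $R(s,i)$ takes place at a tropically positive vertex; equivalently, $R(s,i)$ read on the $m$-cycle is a green sequence followed by the transposition $(v^s_{m+i-2},v^s_{m+i-1})$. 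This is exactly the mutation sequence studied in \cite{Bu14}, and the green-ness can alternatively be checked by induction on the length of the sequence, tracking the sign front as it sweeps across the straightening $M^s_i$ and through the leg-transposition in the middle. Granting this, \eqref{eq:coeff-mutation} with $x_k>0$ shows that each step acts by the simplified monomial rule $x_k\mapsto x_k^{-1}$, $x_l\mapsto x_l\,x_k^{[\ve_{lk}]_+}$ for $l\neq k$, so the restriction of $R^\trop(s,i)$ to $\X_{Q_m(\mathfrak{g})}^{+,s}(\mathbb{P})$ is just the composite of these maps.

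It then remains to do the bookkeeping. Composing the $2m-2$ simplified mutations along $P_s$ telescopes: at each stage the accumulated positive monomial is pushed one step backwards around the cycle, and the net effect is $x^s_j\mapsto (x^s_{j-1})^{-1}$, as one already sees for $m=3,4,5$. For a vertex $v^t_j$ with $t$ adjacent to $s$ in $Q(\mathfrak{g})$, the coordinate $x^t_j$ changes only at the steps which mutate a vertex $v^s_k$ adjacent to $v^t_j$ at that moment; collecting these contributions, using \cref{prop:RonQ} and the explicit evolution of the $P_s$-to-external adjacencies under the straightening, gives $x^t_j\mapsto x^t_j(x^s_{j-1})^{-\ve_{ts}}$ for $t\in s^-$ and $x^t_j\mapsto x^t_j(x^s_j)^{\ve_{ts}}$ for $t\in s^+$, while $x^u_j$ is fixed for $u$ not adjacent to $s$. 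Since this computation is structurally identical for every $i$ — it is obtained from the case $i=1$ by the index shift $\ell\mapsto\ell+i-1$ — and the resulting formula contains no reference to $i$, one concludes both that $R^\trop(s,i)|_{\X_{Q_m(\mathfrak{g})}^{+,s}(\mathbb{P})}$ is independent of $i$ and that it is given by \eqref{eq:tropRonX}.

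The main obstacle is the green-sequence claim: controlling the intermediate $c$-vectors so that the ``$+$''-branch of the tropical mutation is the one realized at every single step; once this is secured (via \cite{Bu14} or the direct inductive analysis of the $D_m$-straightening) the remainder is a routine, $i$-uniform calculation. I also record a shortcut available if the $\X$-formula \eqref{eq:RonX} of \cref{thm:Weyl-R} has been established beforehand: on $\X_{Q_m(\mathfrak{g})}^{+,s}(\mathbb{P})$ every monomial $X^s_iX^s_{i-1}\cdots X^s_{i-k}$ is tropically positive, so $f_X(s,i)=1+\sum_{k=0}^{m-2}X^s_i\cdots X^s_{i-k}$ tropicalizes to the unit $1$, and substituting $f_X(s,\cdot)\rightsquigarrow 1$ into \eqref{eq:RonX} yields \eqref{eq:tropRonX} at once.
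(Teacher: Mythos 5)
Your plan is correct and follows essentially the same route as the paper: its proof is exactly the direct tropical computation you outline, writing out the intermediate coefficient tuples along $R(s,1)$ and observing that the coefficient at each mutation vertex is a product of the original $x^s_\bullet$ (hence tropically positive), so the simplified monomial rule applies at every one of the $2m-2$ steps, the composite telescopes to \eqref{eq:tropRonX}, and independence of $i$ follows from the cyclic shift of indices. Only note that your parenthetical shortcut via \eqref{eq:RonX} would be circular in the paper's development, since there \eqref{eq:RonX} is itself derived from this lemma (the $c$-vectors computed here enter the separation formula).
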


See \S~ \ref{subsec:Rproofs} for the proof.

\subsection{Proofs of the statements in \S~ \ref{subsec:RonX}} 
\label{subsec:Rproofs}

Let $\mathfrak{g}$ be a Kac-Moody Lie algebra and fix a Coxeter quiver $Q(\mathfrak{g})$. Let $Q_m(\mathfrak{g})$ be the corresponding weighted quiver. Henceforth, we abbreviate $R=R_m$ when no confusion can occur. 
The following proofs are closely related to those in \cite{ILP16},
the quiver $Q_{n,m}$ therein corresponds to
the quiver $Q_n(A_{m+1})$ here.
We present the proofs by referring \cite{ILP16}. 

Let us fix $s \in S$ and define a sequence of seeds in $\mathbb{P}_\trop(\mathbf{u})$ by
\begin{align}\label{eq:m-sequence}
\begin{split}
&(Q[0],\mathbf{x}[0]) = (Q_m(\mathfrak{g}),\mathbf{x}),
\\
&(Q[k],\mathbf{x}[k]) = \mu^s_k(Q[k-1],\mathbf{x}[k-1]);
~k=1,2,\ldots,m,
\\
&(Q[\overline{m-2}],\mathbf{x}[\overline{m-2}]) 
= (v^s_{m+i-2}, v^s_{m+i-1})(Q[m],\mathbf{x}[m]),
\\
&(Q[\overline{k-1}],\mathbf{x}[\overline{k-1}]) = \mu^s_{k}(Q[\overline{k}],\mathbf{x}[\overline{k}]);~k=m-2,m-3,\ldots,1,
\end{split}
\end{align}
whence we have  
$R(s,1)(Q_m(\mathfrak{g}),\mathbf{x}) = (Q[\overline{0}],\mathbf{x}[\overline{0}])$.

\begin{lem}\label{lem:Qk=Qbark}
We have $Q[m-2] = Q[\overline{m-2}]$,
thus we have $Q[k] = Q[\overline{k}]$ for $k=0,\ldots,m-3$.
\end{lem}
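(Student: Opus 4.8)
The second assertion is a formal consequence of the first. The backward half of \eqref{eq:m-sequence} applies the mutations $\mu^s_{m-2},\mu^s_{m-3},\dots,\mu^s_1$ to $Q[\overline{m-2}]$ in exactly the reverse order to the way the forward half built $Q[m-2]$ out of $Q[0]$, and each $\mu^s_k$ is an involution on quivers; so from $Q[m-2]=Q[\overline{m-2}]$ one gets $Q[\overline{m-3}]=\mu^s_{m-2}(Q[\overline{m-2}])=\mu^s_{m-2}(Q[m-2])=Q[m-3]$, and then $Q[\overline{k}]=Q[k]$ for $k=m-3,m-4,\dots,0$ by the same argument. Thus everything reduces to the single equality $Q[m-2]=Q[\overline{m-2}]$. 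Unwinding \eqref{eq:m-sequence} and \eqref{eq:R-mu}, and writing $Q[m-2]=M^s_1\bigl(Q_m(\mathfrak{g})\bigr)$, this equality says precisely that the sequence $(v^s_{m-1},v^s_m)\circ\mu^s_m\circ\mu^s_{m-1}$ fixes the quiver $Q[m-2]$.

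My plan is to prove this in two steps: first describe the quiver $Q[m-2]$ explicitly, then check the claimed invariance by a local computation around $v^s_{m-1}$ and $v^s_m$. For the first step I would analyse the effect of $M^s_1=\mu^s_{m-2}\cdots\mu^s_2\mu^s_1$ on $Q_m(\mathfrak{g})$ mutation by mutation. Along the cycle $P_s$ this is the standard transformation of an oriented $m$-cycle into the type $D_m$ Dynkin quiver depicted in \cref{fig:mutation-equivalence} (and already treated, for $\mathfrak{g}$ of type $A$, in \cite{ILP16}); in particular, in $Q[m-2]$ the vertices $v^s_{m-1}$ and $v^s_m$ are joined inside $P_s$ only to $v^s_{m-2}$ and are not joined to one another. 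The extra bookkeeping concerns the arrows between $P_s$ and the rest of the quiver: the layers $\{v^t_i\}_{i\in\Z_m}$ with $t\notin s^-\cup\{s\}\cup s^+$ are disconnected from $P_s$ and hence unchanged, so only the layers $t$ adjacent to $s$ in the Coxeter quiver $Q(\mathfrak{g})$ matter, and for those one simply applies the weighted quiver mutation rule repeatedly. Here the weight-correction factors $\alpha^k_{ij}$ cannot be ignored, because a vertex $v^s_k$ of $P_s$ may have two neighbours in the same adjacent layer; in fact these factors conspire to keep each layer's internal cycle and its attachment to $P_s$ of the expected shape. The result is a closed-form description of all arrows of $Q[m-2]$, in particular those incident to $v^s_{m-1}$ and $v^s_m$.

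For the second step, note that $\mu^s_{m-1}$ and $\mu^s_m$ only modify arrows incident to $v^s_{m-1}$, $v^s_m$, and their common neighbours --- which, by Step 1, are $v^s_{m-2}$ together with finitely many vertices in the layers adjacent to $s$ --- so the invariance need only be verified on that finite sub-picture. I would compute $\mu^s_{m-1}(Q[m-2])$, then $\mu^s_m$ of the result, and finally apply the transposition $(v^s_{m-1},v^s_m)$, and compare with $Q[m-2]$; heuristically, after the two mutations the ``source-type'' leaf $v^s_{m-1}$ and the ``sink-type'' leaf $v^s_m$ of the $D_m$-fork have exchanged local shapes, so the transposition restores the quiver, but one must also check that the cross-layer arrows match up. This verification is where I expect the real difficulty to lie: it demands precise control of the orientations and multiplicities of the cross-layer arrows produced by $M^s_1$ and a careful application of the weighted mutation rule, and the degenerate small cases $m=2$ and $m=3$ (where $P_s$ collapses, cf. \cref{rem:closed-path}) have to be handled separately. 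Modulo this computation --- the weighted analogue of the one carried out in \cite{ILP16} --- the equality $Q[m-2]=Q[\overline{m-2}]$, and with it the lemma, follows.
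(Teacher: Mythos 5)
Your reduction is the right one and it is the same route the paper takes: the second assertion follows from the first because mutations are involutions, and the first is exactly the statement that $(v^s_{m-1},v^s_m)\circ\mu^s_m\circ\mu^s_{m-1}$ fixes $Q[m-2]=M^s_1(Q_m(\mathfrak{g}))$, which one checks by describing $Q[m-2]$ (the cycle $P_s$ becomes the $D_m$-shape of \cref{fig:mutation-equivalence}, only the layers $t\in s^-\cup s^+$ interact with it) and then verifying the swap-plus-two-mutations invariance locally at the two fork vertices. Your remarks that the weight factors $\alpha^k_{ij}$ matter, that the check is purely local, and that $m=2,3$ degenerate are all accurate, and your heuristic that the source-type and sink-type leaves of the fork exchange local shapes under the two mutations is precisely the ``$D_m$-symmetry'' the paper invokes.

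The gap is that the decisive content is only announced, not carried out: you never actually write down the cross-layer arrows of $Q[m-2]$ (their multiplicities and orientations, including the extra arrows inside the adjacent layers created when $|\ve_{st}\ve_{ts}|>1$), nor perform the comparison after $\mu^s_m\mu^s_{m-1}$ and the transposition, and you end with ``modulo this computation the lemma follows'' --- but that computation \emph{is} the lemma. The paper discharges it not by redoing the mutation-by-mutation analysis you propose, but by reducing to the type $A_n$ case already computed in \cite{ILP16} (which covers all $C_{st}C_{ts}\in\{0,1\}$) and then listing exactly how the weighted case differs: five explicit modifications of the cross-layer arrows, each cited to Lemmas 8.3--8.7 of \cite{ILP16}, after which the invariance follows as in Proposition 8.8 there. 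To close your argument you would either have to import those statements in the same way or genuinely carry out the induction along $M^s_1$ with the weighted mutation rule, including the cases where a vertex of $P_s$ has two neighbours in the same adjacent layer; until one of these is done, the proof is a correct plan rather than a proof.
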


\begin{proof}
For the case of $\mathfrak{g}=A_n$, see \cite[Proposition 8.8]{ILP16},
from which it follows that the cases of the Cartan matrix $C(\mathfrak{g})$ satisfying $C_{st} C_{ts} \in \{0,1\}$ for all $s \neq t$ in $S$. 
In the other cases,
the action of $R(s,1)$ on the structure matrix $\sigma$ of the 
quiver is almost same as that of the $A_n$ case.
The difference is only some vertices added to the quiver $Q[k]$ for 
$k=1,\ldots,m-2$ 
of the $A_n$ case in the following way:
For each directed path $u \leftarrow s \leftarrow t$ with $u \in s^-$ and 
$t \in s^+$ in $Q(\mathfrak{g})$, where $s^-$ or $s^+$ may be empty,
denote the vertices on the closed paths $P_{u}$, $P_s$ and $P_{t}$ 
in $Q_m(\mathfrak{g})$ (recall Remark \ref{rem:closed-path}) by
$i^-$, $i$ and $i^+$ for $i \in \Z/ m\Z$ respectively.
Add arrows as follows.
\begin{itemize}
\item[(i)] 
Each arrow between $P_u$ and $P_s$ appearing in the $A_n$ case 
is replaced with $\sigma_{su}$ arrows (cf. \cite[Lemma 8.3]{ILP16}). 

\item[(ii)]
Each arrow between $P_s$ and $P_t$ appearing in the $A_n$ case 
is replaced with $\sigma_{ts}$ arrows (cf. \cite[Lemma 8.4]{ILP16}). 

\item[(iii)]
Besides single arrows $2^- \to 3^- \to \cdots \to m^- \to 1^-$, 
add $|\ve_{us} \ve_{su}|-1$ arrows $2^{-} \to 1^{-}$ 
(cf. \cite[Lemma 8.5]{ILP16}).
\item[(iv)]
Besides single arrows $1^+ \to 2^+ \to \cdots \to (m-1)^+ \to m^+$, 
add $|\ve_{ts} \ve_{st}| -1$ arrows $1^{+} \to m^{+}$ 
(cf. \cite[Lemma 8.6]{ILP16}).  
\item[(v)]
Single arrows $1^+ \to 1^-$ and $2^- \to n^+$ appearing in the $A_n$ case  are replaced with $\ve_{su} \ve_{ts} \gcd(d_u,d_t)/d_t$ arrows (cf. \cite[Lemma 8.7]{ILP16}).  

\end{itemize}
As same as in the $A_n$ case, we have $Q[m-2] = Q[\overline{m-2}]$ (cf. \cite[Proposition 8.8]{ILP16}) owing to the $D_m$-symmetry of $Q[m-2]$ explained at Figure \ref{fig:mutation-equivalence}.
Hence the claim is proved.
\end{proof}

See Figure \ref{fig:R-Q3(C3)} for the change of the quiver $Q_4(C_3)$. 
Due to the cyclic symmetry of the quiver $Q_m(\mathfrak{g})$,
from this lemma Proposition \ref{prop:RonQ} follows.

\begin{figure}[ht]
\scalebox{0.8}{
\begin{tikzpicture}
\begin{scope}[>=latex]

\draw (2,9) node{$Q[0]$}; 
\path (0,8) node[circle]{2} coordinate(A1) node[above left]{$v^1_4$};
\draw (0,8) circle[radius=0.15];
\draw (2,8) circle(2pt) coordinate(A2) node[above left]{$v^2_4$};
\draw (4,8) circle(2pt) coordinate(A3) node[above left]{$v^3_4$};

\qsharrow{A2}{A1}
\qarrow{A3}{A2}

\path (0,6) node[circle]{2} coordinate(B1) node[above left]{$v^1_1$};
\draw (0,6) circle[radius=0.15];
\draw (2,6) circle(2pt) coordinate(B2) node[above left]{$v^2_1$};
\draw (4,6) circle(2pt) coordinate(B3) node[above left]{$v^3_1$};
\qsharrow{B2}{B1}
\qarrow{B3}{B2}
\qsarrow{A1}{B1}
\qarrow{A2}{B2}
\qarrow{A3}{B3}
\qarrow{B1}{A2}
\qarrow{B2}{A3}
\path (0,4) node[circle]{2} coordinate(C1) node[above left]{$v^1_2$};
\draw (0,4) circle[radius=0.15];
\draw (2,4) circle(2pt) coordinate(C2) node[above left]{$v^2_2$};
\draw (4,4) circle(2pt) coordinate(C3) node[above left]{$v^3_2$};
\qsharrow{C2}{C1}
\qarrow{C3}{C2}
\qstarrow{B1}{C1}
\qarrow{B2}{C2}
\qarrow{B3}{C3}
\qarrow{C1}{B2}
\qarrow{C2}{B3}
\path (0,2) node[circle]{2} coordinate(D1) node[above left]{$v^1_3$};
\draw (0,2) circle[radius=0.15];
\draw (2,2) circle(2pt) coordinate(D2) node[above left]{$v^2_3$};
\draw (4,2) circle(2pt) coordinate(D3) node[above left]{$v^3_3$};
\qsharrow{D2}{D1}
\qarrow{D3}{D2}
\qsarrow{C1}{D1}
\qarrow{C2}{D2}
\qarrow{C3}{D3}
\qarrow{D1}{C2}
\qarrow{D2}{C3}
\path (0,0) node[circle]{2} coordinate(E1) node[above left]{$v^1_4$};
\draw (0,0) circle[radius=0.15];
\draw (2,0) circle(2pt) coordinate(E2) node[above left]{$v^2_4$};
\draw (4,0) circle(2pt) coordinate(E3) node[above left]{$v^3_4$};
\qsharrow{E2}{E1}
\qarrow{E3}{E2}
\qsarrow{D1}{E1}
\qarrow{D2}{E2}
\qarrow{D3}{E3}
\qarrow{E1}{D2}
\qarrow{E2}{D3}
\path (0,-2) node[circle]{2} coordinate(F1) node[above left]{$v^1_1$};
\draw (0,-2) circle[radius=0.15];
\draw (2,-2) circle(2pt) coordinate(F2) node[above left]{$v^2_1$};
\draw (4,-2) circle(2pt) coordinate(F3) node[above left]{$v^3_1$};
\qsharrow{F2}{F1}
\qarrow{F3}{F2}
\qsarrow{E1}{F1}
\qarrow{E2}{F2}
\qarrow{E3}{F3}
\qarrow{F1}{E2}
\qarrow{F2}{E3}
{\color{red}
\coordinate (P1) at (-1,7); \coordinate (P2) at (5,7);
\coordinate (P3) at (-1,-1); \coordinate (P4) at (5,-1);
\draw[dashed] (P1) -- (P2);
\draw[dashed] (P1) -- (P3);
\draw[dashed] (P2) -- (P4);
\draw[dashed] (P3) -- (P4);
}


{\begin{scope}[xshift=7cm]
\draw (2,9) node{$Q[1]$}; 
\path (0,8) node[circle]{2} coordinate(A1) node[above left]{$v^1_4$};
\draw (0,8) circle[radius=0.15];
\draw (2,8) circle(2pt) coordinate(A2) node[above left]{$v^2_4$};
\draw (4,8) circle(2pt) coordinate(A3) node[above left]{$v^3_4$};

\qsharrow{A2}{A1}

\path (0,6) node[circle]{2} coordinate(B1) node[above left]{$v^1_1$};
\draw (0,6) circle[radius=0.15];
\draw (2,6) circle(2pt) coordinate(B2) node[above left]{$v^2_1$};
\draw (4,6) circle(2pt) coordinate(B3) node[above left]{$v^3_1$};
\qstarrow{B1}{B2}
\qarrow{B2}{B3}
\qsarrow{A1}{B1}
\qarrow{B2}{A2}
%
\qarrow{A3}{B2}
\path (0,4) node[circle]{2} coordinate(C1) node[above left]{$v^1_2$};
\draw (0,4) circle[radius=0.15];
\draw (2,4) circle(2pt) coordinate(C2) node[above left]{$v^2_2$};
\draw (4,4) circle(2pt) coordinate(C3) node[above left]{$v^3_2$};
\qarrow{C3}{C2}
{\color{red}\qsarrow{C1}{B1}}
\qarrow{C2}{B2}
\qarrow{B3}{C3}
\qsharrow{B2}{C1}
%
\path (0,2) node[circle]{2} coordinate(D1) node[above left]{$v^1_3$};
\draw (0,2) circle[radius=0.15];
\draw (2,2) circle(2pt) coordinate(D2) node[above left]{$v^2_3$};
\draw (4,2) circle(2pt) coordinate(D3) node[above left]{$v^3_3$};
\qsharrow{D2}{D1}
\qarrow{D3}{D2}
\qsarrow{C1}{D1}
\qarrow{C2}{D2}
\qarrow{C3}{D3}
\qarrow{D1}{C2}
\qarrow{D2}{C3}
\path (0,0) node[circle]{2} coordinate(E1) node[above left]{$v^1_4$};
\draw (0,0) circle[radius=0.15];
\draw (2,0) circle(2pt) coordinate(E2) node[above left]{$v^2_4$};
\draw (4,0) circle(2pt) coordinate(E3) node[above left]{$v^3_4$};
\qsharrow{E2}{E1}
%
\qsarrow{D1}{E1}
\qarrow{D2}{E2}
\qarrow{D3}{E3}
\qarrow{E1}{D2}
\qarrow{E2}{D3}
\path (0,-2) node[circle]{2} coordinate(F1) node[above left]{$v^1_1$};
\draw (0,-2) circle[radius=0.15];
\draw (2,-2) circle(2pt) coordinate(F2) node[above left]{$v^2_1$};
\draw (4,-2) circle(2pt) coordinate(F3) node[above left]{$v^3_1$};
\qstarrow{F1}{F2}
\qarrow{F2}{F3}
\qsarrow{E1}{F1}
\qarrow{F2}{E2}
%
\qarrow{E3}{F2}

\draw[->,shorten >=2pt,shorten <=2pt] (E2) to [out = 60, in = -60] (C2);
{\color{blue}
\draw[->,shorten >=4pt,shorten <=2pt] (B3) to [out = 150, in = 30] (B1);
\draw[->,shorten >=2pt,shorten <=4pt] (C1) to [out = 15, in = -105] (A3);
\draw[->,shorten >=4pt,shorten <=2pt] (F3) to [out = 150, in = 30] (F1);
\draw[->,shorten >=2pt,shorten <=0pt] (3.2,-2) to [out = 50, in = -105] (E3);
}
\end{scope}
}


{\begin{scope}[xshift=14cm]
\draw (2,9) node{$Q[2]$}; 
\path (0,8) node[circle]{2} coordinate(A1) node[above left]{$v^1_4$};
\draw (0,8) circle[radius=0.15];
\draw (2,8) circle(2pt) coordinate(A2) node[above left]{$v^2_4$};
\draw (4,8) circle(2pt) coordinate(A3) node[above left]{$v^3_4$};

\qsharrow{A2}{A1}

\path (0,6) node[circle]{2} coordinate(B1) node[above left]{$v^1_1$};
\draw (0,6) circle[radius=0.15];
\draw (2,6) circle(2pt) coordinate(B2) node[above left]{$v^2_1$};
\draw (4,6) circle(2pt) coordinate(B3) node[above left]{$v^3_1$};
\qarrow{B2}{B3}
\qsarrow{A1}{B1}
%
\qarrow{A3}{B2}
\path (0,4) node[circle]{2} coordinate(C1) node[above left]{$v^1_2$};
\draw (0,4) circle[radius=0.15];
\draw (2,4) circle(2pt) coordinate(C2) node[above left]{$v^2_2$};
\draw (4,4) circle(2pt) coordinate(C3) node[above left]{$v^3_2$};
\qarrow{C2}{C3}
{\color{red}\qsarrow{C1}{B1}}
\qarrow{B2}{C2}
\qarrow{B3}{C3}
\qsharrow{B2}{C1}
%
\path (0,2) node[circle]{2} coordinate(D1) node[above left]{$v^1_3$};
\draw (0,2) circle[radius=0.15];
\draw (2,2) circle(2pt) coordinate(D2) node[above left]{$v^2_3$};
\draw (4,2) circle(2pt) coordinate(D3) node[above left]{$v^3_3$};
\qarrow{D3}{D2}
\qsarrow{C1}{D1}
\qarrow{D2}{C2}
\qarrow{C3}{D3}
\qarrow{C2}{D1}
%
\path (0,0) node[circle]{2} coordinate(E1) node[above left]{$v^1_4$};
\draw (0,0) circle[radius=0.15];
\draw (2,0) circle(2pt) coordinate(E2) node[above left]{$v^2_4$};
\draw (4,0) circle(2pt) coordinate(E3) node[above left]{$v^3_4$};
\qsharrow{E2}{E1}
%
\qsarrow{D1}{E1}
\qarrow{D3}{E3}
\qarrow{E1}{D2}
\qarrow{E2}{D3}
\path (0,-2) node[circle]{2} coordinate(F1) node[above left]{$v^1_1$};
\draw (0,-2) circle[radius=0.15];
\draw (2,-2) circle(2pt) coordinate(F2) node[above left]{$v^2_1$};
\draw (4,-2) circle(2pt) coordinate(F3) node[above left]{$v^3_1$};
\qarrow{F2}{F3}
\qsarrow{E1}{F1}
%
\qarrow{E3}{F2}

\draw[->,shorten >=2pt,shorten <=2pt] (C2) to [out = -60, in = 60] (E2);

{\color{blue}
\draw[->,shorten >=4pt,shorten <=2pt] (B3) to [out = 150, in = 30] (B1);
\draw[->,shorten >=2pt,shorten <=4pt] (C1) to [out = 15, in = -105] (A3);

\draw[->,shorten >=4pt,shorten <=2pt] (F3) to [out = 150, in = 30] (F1);
\draw[->,shorten >=2pt,shorten <=0pt] (3.2,-2) to [out = 50, in = -105] (E3);
}
\qstarrow{D1}{B2}

\qarrow{C3}{B2}
\end{scope}
}

\end{scope}
\end{tikzpicture}
}
\caption{The action of $R(2,1)$ on $Q_4(C_3)$. Red and blue arrows respectively correspond to (iii) and (v) in the proof of Lemma \ref{lem:Qk=Qbark} (We have no arrow corresponding to (iv)). A red dashed rectangle at $Q[0]$ denotes the fundamental domain of the quiver.}
\label{fig:R-Q3(C3)}
\end{figure}
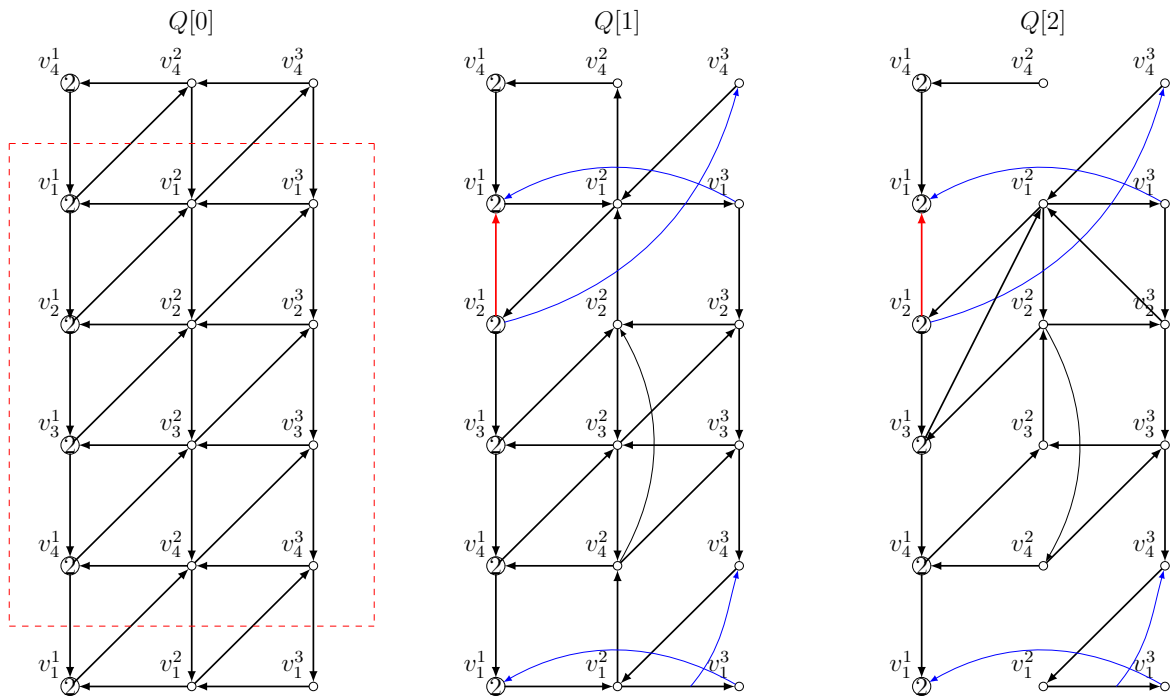

\begin{proof}[Proof of Lemma \ref{lem:tropRonX}]
It is proved in a similar way as
\cite[Lemma 7.5]{ILP16}, by taking into account a general fact that
a mutation of coefficients  
$\mu_k(Q',\mathbf{x}') = (Q'',\mathbf{x}'')$ in $\mathbb{P}_\trop(\mathbf{u})$ at a vertex $k$ in $Q'$
is determined as 
$$
  x_i'' = 
  \begin{cases}
  (x_k')^{-1} & i = k,
  \\
  x_i' (x_k')^{\ve_{ik}} & \ve_{ik} > 0, 
  \\
  x_i'& \text{otherwise},
  \end{cases}
$$
if the tropical sign of $x_k'$ is positive.
Recall the notation \eqref{eq:m-sequence}. 
For each directed path 
$u \leftarrow s \leftarrow t$ for $u \in s^-$ and 
$t \in s^+$ in $Q(\mathfrak{g})$, 
we write only the coefficients on the three paths $P_u, P_s$ and $P_t$ 
as follows:
$$
  \mathbf{x}[0] = 
  \begin{pmatrix}
  x_{1^-} & {\color{red}x_1} &x_{1^+}
  \\
  \vdots & \vdots & \vdots
  \\
  x_{m^-} & x_m & x_{m^+}
  \end{pmatrix},
$$
where the variable at the vertex mutated next is indicated in red.  
We obtain the following sequence of the $\mathbf{x}[k]$ by direct 
calculations, where the `if' part of the above fact always holds
in the sequence of mutations \eqref{eq:m-sequence} as indicated in red.
\begin{align*}
  \mathbf{x}[k] &= 
  \begin{pmatrix}
  x_{1^-} & x_1^{-1} & x_{1^+} x_1^{\ve_{s^+ s}}
  \\
  x_{2^-} x_1^{-\ve_{s^- s}} & x_2^{-1} & x_{2^+} x_2^{\ve_{s^+ s}}
  \\
  \vdots & \vdots & \vdots 
  \\
  x_{k^-} x_{k-1}^{-\ve_{s^- s}} & x_k^{-1} & x_{k^+} x_k^{\ve_{s^+ s}}
  \\
  x_{k+1^-} x_{k}^{-\ve_{s^- s}} & {\color{red}x_{k+1}} & x_{k+1^+}
  \\
  x_{k+2^-} & x_{k+2} & x_{k+2^+} 
  \\
  \vdots & \vdots & \vdots 
  \\
  x_{m-1^-} & x_{m-1} & x_{m-1^+}
  \\
  x_{m^-} & x_{m} x_{[1,k]} & x_{n^+}
  \end{pmatrix}
  \qquad \text{for $k=1,2,\ldots,m-2$}, 
  \\[3mm] \displaybreak[0]
  \mathbf{x}[m] &=
  \begin{pmatrix}
  x_{1^-} & x_1^{-1} & x_{1^+} x_1^{\ve_{s^+ s}}
  \\
  x_{2^-} x_1^{-\ve_{s^- s}} & x_2^{-1} & x_{2^+} x_2^{\ve_{s^+ s}}
  \\
  \vdots & \vdots & \vdots 
  \\
  x_{m-3^-} x_{m-4}^{-\ve_{s^- s}} & x_{m-3}^{-1} & x_{m-3^+} x_{m-3}^{\ve_{s^+ s}}
  \\
  x_{m-2^-} x_{m-3}^{-\ve_{s^- s}} & {\color{red}x_m x_{[1,m-3]}} & x_{m-2^+} x_{m-2}^{\ve_{s^+ s}}
  \\
  x_{m-1^-} x_{m-2}^{-\ve_{s^- s}}& x_{m-1}^{-1} & x_{m-1^+} x_{m-1}^{\ve_{s^+ s}}
  \\
  x_{m^-} x_{m-1}^{-\ve_{s^- s}}& (x_{m} x_{[1,m-2]})^{-1} & x_{m^+}
  \end{pmatrix}
  \\[3mm] \displaybreak[0]
  \mathbf{x}[\overline{m-k}] &= 
  \begin{pmatrix}
  x_{1^-} & x_1^{-1} & x_{1^+} x_1^{\ve_{s^+ s}}
  \\
  x_{2^-} x_1^{-\ve_{s^- s}} & x_2^{-1} & x_{2^+} x_2^{\ve_{s^+ s}}
  \\
  \vdots & \vdots & \vdots 
  \\
  x_{m-k-1^-} x_{m-k-2}^{-\ve_{s^- s}} & x_{m-k-1}^{-1} & x_{m-k-1^+} x_{m-k-1}^{\ve_{s^+ s}}
  \\
  x_{m-k^-} x_{m-k-1}^{-\ve_{s^- s}} & {\color{red}x_m x_{[1,m-k-1]}} & x_{m-k^+} x_{m-k}^{\ve_{s^+ s}}
  \\
  x_{m-k+1^-} x_{m-k}^{-\ve_{s^- s}}& (x_{m} x_{[1,m-k]})^{-1} & x_{m-k+1^+} x_{m-k+1}^{\ve_{s^+ s}}
  \\
  x_{m-k+2^-} x_{m-k+1}^{-\ve_{s^- s}}& (x_{m-k+1})^{-1} & x_{m-k+2^+} x_{m-k+2}^{\ve_{s^+ s}}
  \\
  \vdots & \vdots & \vdots 
  \\  
  x_{m-1^-} x_{m-2}^{-\ve_{s^- s}}& (x_{m-2})^{-1} & x_{m-1^+} x_{m-1}^{\ve_{s^+ s}}
  \\  
  x_{m^-} x_{m-1}^{-\ve_{s^- s}}& (x_{m-1})^{-1} & x_{m^+}
  \end{pmatrix}    
  \qquad \text{for $k=2,3,\ldots,m-1$.}
\end{align*}
Here we write $x_{[i,j]}$ for $x_i x_{i+1} \cdots x_j$.
Then we obtain $\mathbf{x}[\overline{0}]$ as \eqref{eq:tropRonX}
for the $i=1$ case, which is obviously independent of $i$.
\end{proof}

\begin{proof}[Proof of Theorem \ref{thm:Weyl-R}]
(1) 
Note first that the independence of $R(s,i)$ from $i$ follows from \cref{lem:tropRonX} and \cref{thm:periodicity}. 

Fix $s \in S$ and consider $R(s,1)$. 
Set $(Q[0],\mathbf{A}[0]):=(Q,\mathbf{A})$ and 
define $(Q[j],\mathbf{A}[j])$ for $j=1,\ldots,m$ and $(Q[\overline{j}],\mathbf{A}[\overline{j}])$ for $j=0,\dots,m-2$ in a similar way as \eqref{eq:m-sequence}. Then it is not hard to obtain inductively the formula 
\begin{align*}
A_j^s[m-2]&=\frac{1}{A_1^s\dots A_j^s}\left(
A_2^sA_3^s\dots A_{j+1}^s\prod_{u\in s^-}(A_1^u)^{\ve_{su}}\prod_{u\in s^+}(A_m^u)^{-\ve_{su}} \right. \\
&\left. +\sum_{k=3}^{j+2} A_k^sA_{k+1}^s\dots A_{j+1}^sA_m^sA_1^s\dots A_{k-3}^s\prod_{u\in s^-}(A_{k-1}^u)^{\ve_{su}}\prod_{u\in s^+}(A_{k-2}^u)^{-\ve_{su}}\right)
\end{align*}
for $j=1,\dots,m$. In particular we have
\begin{align*}
    A_{m-2}^s[m-2]
&=\sum_{k=2}^{m} \frac{A_{m-1}^sA_m^s}{A_{k-2}^sA_{k-1}^s}\prod_{u\in s^-}(A_{k-1}^u)^{\ve_{su}}\prod_{u\in s^+}(A_{k-2}^u)^{-\ve_{su}} \\
&=\sum_{i\in \Z_m\setminus \{m-1\}} \frac{A_{m-1}^sA_m^s}{A_{i}^sA_{i+1}^s}\prod_{u\in s^-}(A_{i+1}^u)^{\ve_{su}}\prod_{u\in s^+}(A_{i}^u)^{-\ve_{su}},
\end{align*}
from which we get $A_{m-1}^s[m-1]=f_A(s)A_{m}^s$ by computing just one more mutation. Since this variable is no more mutated, we get $A_m^s[\overline{0}]=A_{m-1}^s[m-1]$ and hence \eqref{A-transf} holds for $A_j^t=A_{m}^s$. Then the remaining formula for $A$-variables follows from the cyclic invariance of the mutation sequence $R(s)$: for instance, the formula for $A_j^s$ is a consequence of the same argument for $R(s,j+1)$.

In order to obtain the formulae for $X$-variables, we show that the polynomials $f_X(s,i)$ are the $F$-polynomials\footnote{We thank the referee for pointing out this relation. The proof here requires much less computation than the original version.} \cite[Definition 3.3]{FZ-CA4} of $R(s)$ by a slight modification of the above computation. 
Consider an $A$-seed $(\ve,d,\mathbf{A}; \mathbf{x})$ with tropical coefficients $x_i \in \mathbb{P}_{\trop}(\mathbf{u})$,
for which the mutation $\mu_k (\ve,d,\mathbf{A}; \mathbf{x}) = (\ve',d',\mathbf{A}'; \mathbf{x}')$ is given by \eqref{eq:e-mutation}, \eqref{eq:d-mutation}, \eqref{eq:coeff-mutation} and 
\begin{align}
  \label{eq:A-mutation_principal}
  &A_i' = 
  \begin{cases}
  \displaystyle{(A_k(1 \oplus x_k))^{-1} \left(x_k \prod_{j:\ve_{kj}>0} A_j^{\ve_{kj}} 
       + \prod_{j:\ve_{kj}<0} A_j^{-\ve_{kj}} \right)} & i = k,
  \\ 
  A_i & i \neq k.
  \end{cases}
\end{align} 
Consider the cycle $P_s$ and write $A_j:=A_j^s$ for $j \in \Z_m$. 
Set $(Q[0],\mathbf{A}[0]; \mathbf{x}[0]):=(Q,\mathbf{A}; \mathbf{u})$ and 
define $(Q[j],\mathbf{A}[j]; \mathbf{x}[j])$ for $j=1,\ldots,m$ and $(Q[\overline{j}],\mathbf{A}[\overline{j}]; \mathbf{x}[\overline{j}])$ for $j=0,\ldots,m-2$ in a similar way as \eqref{eq:m-sequence}. 
Using the fact that $x_j[j-1] = u_j$ for $j=1,\ldots,m$ (see the proof of Lemma \ref{lem:tropRonX}), we can inductively show that 
$$
  A_j[m-2] = \sum_{i=0}^j \frac{A_{j+1} A_m}{A_i A_{i+1}} \prod_{k=1}^{j-i} u_{j+1-k},
$$
for $j=1,\ldots,m-2$. 
Hence it follows that 
$$
  A_{m-1}[m-1] = \frac{u_{m-1} A_{m-2}[m-2]+1}{A_{m-1}} 
  = A_m \left(
  \frac{1}{A_m A_{m-1}} + \sum_{j=1}^{m-1} \frac{\prod_{\ell=1}^j u_{m-\ell}}{A_{m-j}A_{m-j-1}}\right), 
$$
from which we obtain the $F$-polynomial
\begin{align*}
    F_{m}(R(s)):=A_{m}[\overline{0}]|_{A_1=\dots =A_m=1}= 1 + \sum_{j=1}^{m-1} u_{m-1}\dots u_{m-j}.
\end{align*}
Notice that this gives the polynomial $f_X(s,m-1)$ under the substitution $u_i:=X_i^s$. By the cyclic invariance, we see that $f_X(s,i-1)$ is given by the $F$-polynomial $A_{i}[\overline{0}]|_{A_1=\dots =A_m=1}$ for $i \in \Z_m$. Then the formula \eqref{eq:RonX} follows from the separation formula \cite[Proposition 3.13]{FZ-CA4}, since the \emph{$c$-vectors} has been computed in \cref{lem:tropRonX}.

\noindent 
(2) By the definition of $R(s,i)$, it follows that $R(s) R(s) = 1$. To show the braid relations, let us consider two generators $s,t \in S$. 
Since only the vertices in $P_s \cup P_t$ is involved in the mutation sequences $R(s)$ and $R(t)$, the other vertices can be considered as frozen vertices. Then \cref{thm:Nakanishi} (2) implies that the relations among mutation sequences are unchanged by the trivialization $A_i^u:=1$ ($i \in \Z_m,~u \in S\backslash \{s,t\}$). Then only the components $\ve_{st},~\ve_{ts}$ of the exchange matrix are relevant to the proof.  If $\ve_{st}=\ve_{ts}=0$, then the commutativity relation $R(s)R(t)=R(t)R(s)$ follows from the distant commutativity relation among mutations. Then the remaining cases to be checked are the following three cases:
\begin{align*}
&\begin{tikzpicture}
\begin{scope}[>=latex] 
\path (-1,0) node{$(A_2)$};
\path (0,0) coordinate(A);
\path (A)++(0,0.5) node{$s$};
\path (2,0) coordinate(B);
\path (B)++(0,0.5) node{$t$};
\draw (A) circle[radius=0.17];
\draw (B) circle[radius=0.17];
\draw[->,shorten >=4pt,shorten <=4pt] (A) -- (B) [thick];
\end{scope}
\end{tikzpicture} & \quad (R(t) R(s))^3 = 1,\\
&\begin{tikzpicture}
\begin{scope}[>=latex] 
\path (-1,0) node{$(B_2)$};
\path (0,0) coordinate(A);
\path (A)++(0,0.5) node{$s$};
\path (2,0) node[circle]{2} coordinate(B);
\path (B)++(0,0.5) node{$t$};
\draw (A) circle[radius=0.17];
\draw (B) circle[radius=0.17];
\draw[->,shorten >=4pt,shorten <=4pt] (A) -- (B) [thick];
\end{scope}
\end{tikzpicture} & \quad (R(t) R(s))^4 = 1, \\
&\begin{tikzpicture}
\begin{scope}[>=latex] 
\path (-1,0) node{$(G_2)$};
\path (0,0) coordinate(A);
\path (A)++(0,0.5) node{$s$};
\path (2,0) node[circle]{3} coordinate(B);
\path (B)++(0,0.5) node{$t$};
\draw (A) circle[radius=0.17];
\draw (B) circle[radius=0.17];
\draw[->,shorten >=4pt,shorten <=4pt] (A) -- (B) [thick];
\end{scope}
\end{tikzpicture} & \quad (R(t) R(s))^6 = 1.
\end{align*}
For the other possibilities of the pair $(d_s, d_t)$, we have no conditions to be checked.
Also note that the orientation of the arrow between $s$ and $t$ does not matter, since the chiral duality $\ve \mapsto -\ve$ does not change the cluster modular group. 
For the three cases shown above, we get the following formulas from \eqref{A-transf}:
\begin{align}\label{f-algebra}
R(s)^*(f_A(s))&=f_A(s)^{-1}, & R(s)^*(f_A(t))&=f_A(s)^df_A(t), \\
R(t)^*(f_A(s))&=f_A(t)f_A(s), & R(t)^*(f_A(t))&=f_A(t)^{-1},
\end{align}
where $d:=d_t$. 
Then the desired relation in each case is proved in the similar way as \cite[Theorem 7.6]{ILP16}, using these formulas. 
Let us demonstrate the case $(B_2)$, namely the case $d=2$. Fix $i \in \mathbb{Z}/m$. We compute
\begin{align*}
&(R(t)R(s))^*\begin{pmatrix}A_i^s \\ A_i^t\end{pmatrix}=R(s)^*\begin{pmatrix}A_i^s \\ f_A(t)A_i^t\end{pmatrix}=\begin{pmatrix}f_A(s)A_i^s \\ f_A(s)^2f_A(t)A_i^t\end{pmatrix},\\
&((R(t)R(s))^2)^*\begin{pmatrix}A_i^s \\ A_i^t\end{pmatrix}=R(s)^*\begin{pmatrix}f_A(s)f_A(t)A_i^s \\ (f_A(s)f_A(t))^2A_i^t\end{pmatrix}=\begin{pmatrix}f_A(s)^2f_A(t)A_i^s \\ f_A(s)^2f_A(t)^2A_i^t\end{pmatrix},\\
&((R(t)R(s))^3)^*\begin{pmatrix}A_i^s \\ A_i^t\end{pmatrix}=R(s)^*\begin{pmatrix}f_A(s)^2f_A(t)A_i^s \\ f_A(s)^2f_A(t)A_i^t\end{pmatrix}=\begin{pmatrix}f_A(s)f_A(t)A_i^s \\ f_A(t)A_i^t\end{pmatrix},\\
&((R(t)R(s))^4)^*\begin{pmatrix}A_i^s \\ A_i^t\end{pmatrix}=R(s)^*\begin{pmatrix}f_A(s)A_i^s \\ A_i^t\end{pmatrix}=\begin{pmatrix}A_i^s\\ A_i^t\end{pmatrix}.
\end{align*}
Thus we get $(R(t)R(s))^4=1$ from \cref{thm:Nakanishi} (1).

\end{proof}

\subsection{The cluster Donaldson-Thomas transformation for $Q_m(\mathfrak{g})$}\label{subsec:proof-injective}

Recall the root lattice $L(\mathfrak{g})$ from the proof of Theorem \ref{thm:injectivity}.
Let $\mathbb{P}=\mathbb{P}_\trop(\mathbf{u})$ be the tropical semifield with generators $\mathbf{u}=(u_i^s)_{i \in \Z_m, s \in S}$. 
We consider the embedding $I: L(\mathfrak{g}) \to \mathrm{Fun}(\X^+_{Q_m(\mathfrak{g})}(\mathbb{P}))$ given by $\sum_{s \in S}c_s \alpha_s \mapsto \prod_{s \in S}\prod_{i \in \mathbb{Z}/m} (x_i^s)^{c_s}$. Here $\mathrm{Fun}(\X^+_{Q_m(\mathfrak{g})}(\mathbb{P}))$ denotes the space of the restrictions of functions in $\mathrm{Fun}(\X_{Q_m(\mathfrak{g})}(\mathbb{P}))$ to the subset $\X^+_{Q_m(\mathfrak{g})}(\mathbb{P})$.

\begin{lem}\label{lem:generator}
The image $I(L(\mathfrak{g}))$ is invariant under the action of each generator $r_s \in W(\mathfrak{g})$.
\end{lem}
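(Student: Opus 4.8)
The plan is to prove the sharper statement that $I$ intertwines the two actions, namely $R^\trop(s)^*\circ I = I\circ r_s$ as maps $L(\mathfrak{g})\to \mathrm{Fun}(\X^+_{Q_m(\mathfrak{g})}(\mathbb{P}))$, where $R^\trop(s)^*$ is the action of $r_s$ induced, via the realization $R_m$ of \cref{thm:injectivity}, by the tropical transformation $R^\trop(s)$. Since $I$ is a homomorphism out of the additive lattice $L(\mathfrak{g})$, $R^\trop(s)^*$ is multiplicative, and $r_s$ is $\Z$-linear, it suffices to check this identity on the simple roots $\alpha_t$ ($t\in S$); the asserted invariance of $I(L(\mathfrak{g}))$ then follows immediately, because $r_s$ is an automorphism of the lattice $L(\mathfrak{g})$.

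For $t\in S$ set $\mathbb{X}_t := I(\alpha_t) = \prod_{i\in\Z_m} x_i^t$. The key input is \cref{lem:tropRonX}: since $\X^+_{Q_m(\mathfrak{g})}(\mathbb{P})\subseteq\X^{+,s}_{Q_m(\mathfrak{g})}(\mathbb{P})$, the restriction of $R^\trop(s)^*(x_j^t)$ to $\X^+_{Q_m(\mathfrak{g})}(\mathbb{P})$ is given by \eqref{eq:tropRonX}. Taking the product over $i\in\Z_m$ and reindexing the cyclic products, I would obtain $R^\trop(s)^*(\mathbb{X}_s)=\mathbb{X}_s^{-1}$ and $R^\trop(s)^*(\mathbb{X}_t)=\mathbb{X}_t\,\mathbb{X}_s^{-C_{st}}$ for $t\neq s$. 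Here one uses $|\ve_{ts}|=-C_{st}$ together with the sign rule ($\ve_{ts}<0$ for $t\in s^-$, $\ve_{ts}>0$ for $t\in s^+$, and $\ve_{ts}=C_{st}=0$ otherwise), which makes the exponent of $\mathbb{X}_s$ come out to $-C_{st}$ in each of the three branches of \eqref{eq:tropRonX}. This is precisely the tropical shadow of the computation already carried out in the proof of \cref{thm:injectivity}, now using the simpler formula \eqref{eq:tropRonX} in place of \eqref{eq:RonX}.

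Comparing with $r_s\alpha_s=-\alpha_s$ and $r_s\alpha_t=\alpha_t-C_{st}\alpha_s$ for $t\neq s$, the two formulae above say exactly that $R^\trop(s)^*(I(\alpha_t))=I(r_s\alpha_t)$ for every $t\in S$, which is what was needed. The only point requiring a word of care is that \eqref{eq:tropRonX} is valid a priori only on $\X^{+,s}_{Q_m(\mathfrak{g})}(\mathbb{P})$ and that the action of $r_s$ on $\mathrm{Fun}(\X^+_{Q_m(\mathfrak{g})}(\mathbb{P}))$ must be computed through the unique monomial lift of $\mathbb{X}_t$ to $\mathrm{Fun}(\X_{Q_m(\mathfrak{g})}(\mathbb{P}))$; neither causes trouble, since $\X^+_{Q_m(\mathfrak{g})}(\mathbb{P})\subseteq\X^{+,s}_{Q_m(\mathfrak{g})}(\mathbb{P})$ and distinct Laurent monomials in the $x_i^t$ already differ at the principal-coefficient point of $\X^+_{Q_m(\mathfrak{g})}(\mathbb{P})$. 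I do not anticipate any real obstacle; the only mildly delicate book-keeping is matching the sign of $\ve_{ts}$ with the Cartan integer $C_{st}$ through the case split of \eqref{eq:tropRonX}.
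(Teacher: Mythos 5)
Your proof is correct and follows essentially the same route as the paper: apply \cref{lem:tropRonX}, multiply the formulae \eqref{eq:tropRonX} along each cycle $P_t$, and use $|\ve_{ts}|=-C_{st}$ to identify the result with $I(r_s\alpha_t)$ (the paper's equation \eqref{eq: induced}), from which invariance of $I(L(\mathfrak{g}))$ is immediate. The only difference is cosmetic: you state the generator-level equivariance explicitly, which the paper also records in its displayed formula and exploits later in \cref{prop: action coincide}.
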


\begin{proof}
From \cref{lem:tropRonX}, we get the formula
\begin{equation}\label{eq: induced}
R^{\trop}(r_s)^*(I(\alpha_t))=\begin{cases}I(\alpha_s)^{-1} & t=s,\\  I(\alpha_t) I(\alpha_s)^{-C_{st}} & \text{otherwise}.
\end{cases}
\end{equation}
Here we used the relation $|\ve_{ts}|=-C_{st}$. In particular the image $I(L(\mathfrak{g}))$ is invariant.
\end{proof}

\begin{prop}\label{prop: action coincide}
The embedding $I$ is $W(\mathfrak{g})$-equivariant. Namely, we have $R^\trop(w)^*(I(v))=I(w^{-1}v)$ for each $v \in L(\mathfrak{g})$ and $w \in W(\mathfrak{g})$.
\end{prop}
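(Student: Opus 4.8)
The plan is to reduce to the Coxeter generators $r_s$ — for which the identity is a reformulation of \cref{lem:generator} — and then bootstrap to an arbitrary $w \in W(\mathfrak{g})$ by induction on the length $\ell(w)$. First I would record that, since $I \colon (L(\mathfrak{g}),+) \to (\mathrm{Fun}(\X^+_{Q_m(\mathfrak{g})}(\mathbb{P})),\cdot)$ is a group homomorphism and every $R^\trop(w)^*$ is a semifield homomorphism (hence multiplicative), it suffices to verify $R^\trop(w)^*(I(\alpha_t)) = I(w^{-1}\alpha_t)$ for the simple roots $\alpha_t$; a general $v = \sum_t c_t\alpha_t$ then follows by multiplicativity of both sides. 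For the same reason, once $R^\trop(r_s)^*(I(\alpha_t)) = I(r_s\alpha_t)$ is known for all $t \in S$, it holds with $\alpha_t$ replaced by an arbitrary $\beta \in L(\mathfrak{g})$. Now the base case: for $w = e$ there is nothing to prove, and for $w = r_s$, formula \eqref{eq: induced} together with $r_s\alpha_s = -\alpha_s$, $r_s\alpha_t = \alpha_t - C_{st}\alpha_s$ (for $t \ne s$), and the homomorphism property of $I$ gives $R^\trop(r_s)^*(I(\alpha_t)) = I(r_s\alpha_t)$; since $r_s^{-1} = r_s$, this is the claim for $w = r_s$.

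For the inductive step, take $w$ of length $\ell \ge 1$, assume the proposition for shorter elements, and peel off the last letter of a reduced expression to write $w = w' r_s$ with $\ell(w') = \ell - 1$. Because $R_m \colon W(\mathfrak{g}) \to \Gamma_{Q_m(\mathfrak{g})}$ is a homomorphism whereas pull-back $\phi \mapsto \phi^*$ reverses composition, we have $R^\trop(w)^* = R^\trop(r_s)^* \circ R^\trop(w')^*$, so
\[
R^\trop(w)^*(I(\alpha_t)) = R^\trop(r_s)^*\bigl(R^\trop(w')^*(I(\alpha_t))\bigr) = R^\trop(r_s)^*\bigl(I((w')^{-1}\alpha_t)\bigr) = I\bigl(r_s(w')^{-1}\alpha_t\bigr),
\]
using the inductive hypothesis for $w'$ and then the base case (in the form valid for arbitrary lattice elements) applied to $\beta := (w')^{-1}\alpha_t$. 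Since $r_s(w')^{-1} = (w'r_s)^{-1} = w^{-1}$, the right-hand side equals $I(w^{-1}\alpha_t)$, which closes the induction; by the reduction above the identity then holds for all $v \in L(\mathfrak{g})$.

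I expect no serious obstacle here: the statement is essentially formal once \cref{lem:generator} supplies the generator case, and the only thing to be careful about is the left-versus-right bookkeeping (pull-back is an antihomomorphism while $R_m$ is a homomorphism) combined with the involutivity $r_s^{-1} = r_s$. If one wishes to bypass any discussion of whether $R^\trop(w)^*$ literally acts on $\mathrm{Fun}(\X^+_{Q_m(\mathfrak{g})}(\mathbb{P}))$, the same induction can be run with honest cluster $\X$-coordinates: $I(\sum_s c_s\alpha_s)$ is the restriction to $\X^+_{Q_m(\mathfrak{g})}(\mathbb{P})$ of the tropicalization of the Laurent monomial $\prod_s \mathbb{X}_s^{c_s}$, with $\mathbb{X}_s := \prod_i X_i^s$; pull-back by the positive maps $R(w)$ commutes with tropicalization, and the base case is then exactly the identity $R(s)^*(\mathbb{X}_t) = \mathbb{X}_s^{-1}$ (for $t = s$) and $\mathbb{X}_t\mathbb{X}_s^{-C_{st}}$ (for $t \ne s$) recorded in the proof of \cref{thm:injectivity}.
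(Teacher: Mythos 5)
Your main argument has a genuine gap at the composition step. The generator identity \eqref{eq: induced} that you take from \cref{lem:generator} is derived from \cref{lem:tropRonX}, and the monomial formula \eqref{eq:tropRonX} is only the restriction of $R^\trop(s)$ to the locus $\X^{+,s}_{Q_m(\mathfrak{g})}(\mathbb{P})$ of positive tropical signs along $P_s$: tropical mutations are piecewise monomial, and off that locus the formula is different. Moreover this locus is not preserved by the action (on it, $R^\trop(s)$ sends $x^s_j\mapsto (x^s_{j-1})^{-1}$, so it pushes points into the negative region for $P_s$). Hence in your inductive step you apply the "base case for arbitrary $\beta$" (and the inductive hypothesis for $w'$) at points which are images of $\X^+$ under the previously applied factors, where neither identity has been established; the step $R^\trop(r_s)^*\bigl(I((w')^{-1}\alpha_t)\bigr)=I\bigl(r_s(w')^{-1}\alpha_t\bigr)$ is exactly what is unproved. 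This is precisely why the paper's proof is not "essentially formal": it evaluates at the principal coefficient $\xi_0$ and runs a simultaneous induction proving both the coincidence of values and the sign condition $R(w_{(j)})(\xi_0)\in\X^{+,s_{j+1}}_{Q_m(\mathfrak{g})}(\mathbb{P})$ needed to invoke \cref{lem:tropRonX} at the next letter, using sign-coherence (\cref{thm:sign}) together with \cref{thm: length}; this is also where reducedness of the expression is genuinely used, whereas your induction never uses it beyond bookkeeping.

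Your closing aside, by contrast, contains the germ of a correct and genuinely different proof, but you neither develop it nor recognize that it is needed to repair the sign problem (you offer it only to sidestep questions about $\mathrm{Fun}(\X^+_{Q_m(\mathfrak{g})}(\mathbb{P}))$). The identities $R(s)^*(\mathbb{X}_s)=\mathbb{X}_s^{-1}$ and $R(s)^*(\mathbb{X}_t)=\mathbb{X}_t\mathbb{X}_s^{-C_{st}}$ from the proof of \cref{thm:injectivity} are equalities of subtraction-free rational expressions (the $f_X$-factors of \eqref{eq:RonX} cancel along each cycle), hence equalities in the universal semifield; applying the semifield homomorphism to $\mathbb{P}_\trop(\mathbf{u})$ and using functoriality of the semifield-points construction for positive maps, they tropicalize to identities valid on all of $\X_{Q_m(\mathfrak{g})}(\mathbb{P})$, with no positivity hypothesis, after which composing over the letters of any expression of $w$ is legitimate and the proposition follows. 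If you made that the main argument (with the well-definedness of tropicalization of subtraction-free identities stated explicitly), you would have a proof shorter than the paper's and independent of sign-coherence; note, however, that the paper's longer route simultaneously establishes the positivity of tropical signs along $R(w)$ for reduced $w$, which is what feeds into \cref{cor:DT}, and which the global Casimir argument does not provide.
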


\begin{proof}
Let us consider the semifield isomorphism given by the evaluation map $ev(\xi_0): \mathrm{Fun}(\X^+_{Q_m(\mathfrak{g})}(\mathbb{P})) \to \mathbb{P}$ defined by $f \mapsto f(\xi_0)$. Here $\xi_0$ is the principal coefficient. Note that for a vector $v \in L(\mathfrak{g})$, $v <0$ if and only if $I(v)(\xi_0) <0$.
Let $w \in W(\mathfrak{g})$ and take a reduced expression $w=r_{s_k}\dots r_{s_1}$. Let us write $w_{(j)}:=r_{s_j}\dots r_{s_1}$. 
We prove the following two claims by a simultaneous induction on $j \leq k$: 
\begin{enumerate}
\item $I(w_{(j)}^{-1}\alpha_s)(\xi_0)=I(\alpha_s)(R^\trop(w_{(j)})(\xi_0))$ for each $s \in S$.  
\item $R(w_{(j)})(\xi_0) \in \X^{+,s_{j+1}}_{Q_m(\mathfrak{g})}(\mathbb{P})$.
\end{enumerate}

Since $|\ve_{ts}|=-C_{st}$ when $s \neq t$,  \cref{lem:generator} tells us that $R^{\trop}(r_{s_1})^*(I(\alpha_s))=I(r_{s_1}\alpha_s)$. Evaluating both sides at $\xi_0$, we get $I(\alpha_s)(R^\trop(r_{s_1})(\xi_0))=I(r_{s_1}\alpha_s)(\xi_0)$. This equation establishes the initial step. 

 Suppose we have proved the claims (1)(2) up to the $(j-1)$-st step. Then the induction hypothesis (2) tells us that $R(w_{(j-1)})(\xi_0) \in \X^{+,s_{j}}_{Q_m(\mathfrak{g})}(\mathbb{P})$. Then the formula in \cref{lem:tropRonX} for $R(s_j)$ is valid and we see that the claim (1) holds for the $j$-th step.  
Suppose the claim (2) is false for the $j$-th step. Then \cref{thm:sign} implies that there exists a number $i \in \{1,\dots,m\}$ such that $R(w_{(j)})(\xi_0) \in \X^{-,v_i^{s_{j+1}}}_{Q_m(\mathfrak{g})}(\mathbb{P})$. Note that it implies that $R(w_{(j)})(\xi_0) \in \X^{-,s_{j+1}}_{Q_m(\mathfrak{g})}(\mathbb{P})$, since the tropical action $R^{\trop}(w_{(j)})$ is homogeneous along each cycle. 
Then from the induction hypothesis (1), we see that \[
I(w_{(j)}^{-1}\alpha_{s_{j+1}})(\xi_0)=I(\alpha_{s_{j+1}})(R^\trop(w_{(j)})(\xi_0))<0.
\]
Hence $w_{(j)}^{-1}\alpha_{s_{j+1}} <0$ as we noted in the beginning. 
Therefore \cref{thm: length} implies that $l(s_{j+1}w_{(j)})=l(w_{(j)}^{-1}s_{j+1}) < l(w_{(j)})$, which contradicts to the fact that $w=s_k\dots s_1$ is a reduced expression. Thus the claim (2) for the $j$-th step is proved. 
\end{proof}

The mutation sequences for our Weyl group actions give green sequences and the cluster Donaldson-Thomas transformations in a systematic way. To state this result explicitly, we prepare some notion. Let $\mathfrak{g}$ be a Kac-Moody Lie algebra, and $Q:=Q(\mathfrak{g})$ a Coxeter quiver. Recall the notation in Remark \ref{rem:refl}. A reduced word $(s_1,\dots, s_k)$ of an element $w$ of $W(\mathfrak{g})$ is said to be \emph{adapted to $Q$} if
\begin{center}
$s_t$ is a sink of $r_{s_{t-1}}\cdots r_{s_2}r_{s_1}(Q)$ 
\end{center}
for all $t=1,\dots, k$. 

\begin{prop}[{\cite[Proposition 4.12]{Lus:can1}}]\label{prop:adapted}
Let $\mathfrak{g}$ be a finite dimensional semisimple Lie algebra, and $Q(\mathfrak{g})$ a Coxeter quiver associated with $\mathfrak{g}$. Then there exists a reduced expression $w_0=r_{s_1}\dots r_{s_l}$ of the longest element $w_0\in W(\mathfrak{g})$ such that $(s_1,\dots, s_l)$ is adapted to $Q(\mathfrak{g})$.
\end{prop}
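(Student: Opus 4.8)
The plan is to produce a reduced word for $w_0$ adapted to $Q:=Q(\mathfrak{g})$ as a suitable prefix of an infinitely repeated \emph{Coxeter word}, exploiting the fact that $Q$ is periodic under the corresponding sequence of arrow reversals. The underlying graph of $Q$ has no cycles (it is a Dynkin diagram of finite type), so every orientation obtained from $Q$ by reversing arrows is acyclic, hence has a sink. First I would fix a \emph{sink sequence}: the reverse of a topological order on the acyclic quiver $Q$ yields an enumeration $(q_1,\dots,q_n)$ of $S$ (where $n:=|S|$) such that $q_k$ is a sink of $r_{q_{k-1}}\cdots r_{q_1}(Q)$ for every $k$; indeed, after reversing the arrows at $q_1$ the vertex $q_1$ becomes a source of $r_{q_1}(Q)$, so any sink of $r_{q_1}(Q)$ differs from $q_1$, and one iterates. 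Set $c:=r_{q_1}\cdots r_{q_n}$, a Coxeter element. The elementary but crucial observation is that, as a composite of quiver operations, $r_{q_n}\cdots r_{q_1}$ reverses each edge of $Q$ exactly twice (once at each endpoint), so $r_{q_n}\cdots r_{q_1}(Q)=Q$: the quiver $Q$ has period $n$ under the sink-sequence reflections.

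Next I would pass to roots. Let $\mathbf{s}=(s_1,s_2,\dots)$ be the infinite word with $s_{mn+k}:=q_k$ for $m\ge 0$, $1\le k\le n$, and put $\beta_t:=r_{s_1}\cdots r_{s_{t-1}}(\alpha_{s_t})$. By the standard description of reduced words, the prefix $(s_1,\dots,s_t)$ is reduced if and only if $\beta_1,\dots,\beta_t$ are pairwise distinct positive roots, in which case they form the associated inversion set. A direct computation shows $\beta_k\in\Phi$ is positive for $1\le k\le n$: expanding $r_{q_1}\cdots r_{q_{k-1}}(\alpha_{q_k})$ from the innermost reflection outward, the root obtained after $j$ reflections never involves $\alpha_{q_{k-j}}$, so applying $r_{q_{k-j}}$ only adds a multiple of $\alpha_{q_{k-j}}$ whose coefficient is a nonnegative combination of the previous (nonnegative) coefficients, because the off-diagonal entries of the Cartan matrix are $\le 0$; hence the simple-root coordinates of $\beta_k$ stay $\ge 0$. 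More generally $\beta_{mn+k}=c^m(\beta_k)$.

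The heart of the matter is the classical fact that the maximal reduced prefix of $\mathbf{s}$ has length exactly $N:=|\Phi^+|=l(w_0)$; granting this, that prefix $(s_1,\dots,s_N)$ is a reduced word whose inversion set $\{\beta_1,\dots,\beta_N\}$ consists of $N$ distinct positive roots, hence is all of $\Phi^+$, so $r_{s_1}\cdots r_{s_N}=w_0$. Conceptually the count reflects that the $\langle c\rangle$-orbits on $\Phi$ all have size equal to the Coxeter number $h$, that there are exactly $n$ of them, represented by $\beta_1,\dots,\beta_n$, and that the first $N=nh/2$ terms of $\mathbf{s}$ pick out precisely one (``positive'') half of each orbit, $\beta_{N+1}$ being the first negative root. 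This is the step carrying the genuine content: I would either invoke it directly (it is essentially the assertion of \cite[Proposition 4.12]{Lus:can1}) or derive it from the Bernstein--Gelfand--Ponomarev theory of Coxeter functors \cite{BGP}, namely that for a Dynkin quiver every indecomposable representation is obtained from a simple module by iterating the reflection functor, which on dimension vectors is exactly the enumeration of $\Phi^+$ by the $\beta_t$. I expect this to be where the real work lies; everything surrounding it is bookkeeping.

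Finally I would read off adaptedness. For $t=mn+k$ with $1\le k\le n$, the period-$n$ property gives
\[
r_{s_{t-1}}\cdots r_{s_1}(Q)
=\bigl(r_{q_{k-1}}\cdots r_{q_1}\bigr)\circ\bigl(r_{q_n}\cdots r_{q_1}\bigr)^{m}(Q)
=r_{q_{k-1}}\cdots r_{q_1}(Q),
\]
and $s_t=q_k$ is by construction a sink of $r_{q_{k-1}}\cdots r_{q_1}(Q)$. Hence $(s_1,\dots,s_N)$ is a reduced expression of $w_0$ adapted to $Q(\mathfrak{g})$, which proves the proposition.
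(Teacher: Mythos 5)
There is a genuine gap at the step you yourself identify as carrying the content: the claim that the maximal reduced prefix of the periodized sink-sequence word has length exactly $N=l(w_0)$ is false for a general orientation, and in particular for the paper's own Coxeter quiver of type $A_3$ (arrows $2\to 1$, $3\to 2$ as in Figure \ref{Dynkin-quivers}). There the sink enumeration is forced to be $(q_1,q_2,q_3)=(1,2,3)$, so your infinite word is $(1,2,3,1,2,3,\dots)$. Every prefix of it is indeed adapted, but $r_1r_2r_3r_1r_2(\alpha_3)=-\alpha_1<0$, so the prefix of length $6=N$ is not reduced: $r_1r_2r_3r_1r_2r_3=c^2$ has length $4$, the maximal reduced prefix has length $5$, and it does not represent $w_0$. (An adapted reduced word does exist here, e.g. $(1,2,1,3,2,1)=\bi_Q(3)$, but it is not a prefix of your periodic word.) Equivalently, your heuristic that the first $N$ letters pick out one "positive half" of each $\langle c\rangle$-orbit fails: already $c(\beta_3)=c(\alpha_1+\alpha_2+\alpha_3)=-\alpha_1$, so the orbit through $\beta_3$ turns negative after a single step. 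Adaptedness of all prefixes is the cheap part; reducedness is exactly what is lost when you insist on cycling through a fixed enumeration of $S$.

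The periodic-prefix picture is correct only in special cases (e.g. bipartite orientations, where all $\tau$-orbits of the Auslander--Reiten quiver have the same length $h/2$); for a general Dynkin quiver the $\tau$-orbits have different lengths, which is precisely why the BGP reflection-functor argument produces an adapted reduced word by greedily choosing sinks, repeating some vertices more often than others, rather than by repeating the full Coxeter word. So your fallback "derive it from BGP" would require replacing the periodic word by that greedy construction and then proving the process runs for exactly $N$ steps and enumerates $\Phi^+$ --- which is the actual content of Lusztig's Proposition 4.12 --- while your other fallback, invoking that proposition directly, is what the paper does: it offers no independent proof, citing Lusztig for the simply-laced case and treating the non-symmetric types in Remark \ref{rem:adapted}. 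As written, your argument establishes adaptedness of a word that need not be reduced, so it does not prove the proposition.
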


\begin{remark}\label{rem:adapted}
In \cite[Proposition 4.12]{Lus:can1}, the statement of Proposition \ref{prop:adapted} is proved under the assumption that $\mathfrak{g}$ is of type $A_n$, $D_n$ or $E_n$. In a non-symmetric case (i.e. $B_n$, $C_n$, $F_4$ or $G_2$) of rank $n$, we can take $(s_1,\dots, s_l)$ so that $\{s_1,s_2,\dots, s_{n}\}=S$, $(s_1,s_2,\dots, s_{n})$ is adapted to $Q$ and $s_{i}=s_{i+n}$ for all $i=1, 2,\dots, l-n$ (in fact, $l=n\cdot h/2$). See \cite[Proposition 3.18, Corollary 3.19]{Humphreys90}.
\end{remark}

The mutation sequences for our Weyl group actions give green sequences and the cluster Donaldson-Thomas transformation as follows:
\begin{thm}\label{cor:DT}
\begin{enumerate}
\item Let $\mathfrak{g}$ be a Kac-Moody Lie algebra and $w=r_{s_1}\dots r_{s_k}\in W(\mathfrak{g})$ be a reduced expression. Then along the mutation sequence $R(w)=R(r_{s_1})\dots R(r_{s_k})$, each mutation point has positive tropical sign. 
\item Moreover if $\mathfrak{g}$ is of finite type and $Q:=Q(\mathfrak{g})$ is a Coxeter quiver associated with $\mathfrak{g}$, then the cluster Donaldson-Thomas transformation for the quiver $Q_m(\mathfrak{g})$ is given by $\sigma_Q\circ R(w_0)$. Here $w_0=r_{s_1}\dots r_{s_l} \in W(\mathfrak{g})$ is a fixed reduced expression of the longest element adapted to $Q$ (see Proposition \ref{prop:adapted}) and $\sigma_Q$ is the seed isomorphism defined by $\sigma_Q^{-1}(v_i^s):=v_{i+n_{s^\ast}}^{s^\ast}$, where 
\begin{itemize}
\item $*: S \to S$, $s\mapsto s^\ast$ is the Dynkin involution defined by $\alpha_{s^\ast}=-w_0 \alpha_s;$
\item $n_s$ is the number of $s$ occurring in $(s_1,\dots, s_l)$ (see also Remark \ref{rem:explicitshift} below).
\end{itemize}

\end{enumerate}
\end{thm}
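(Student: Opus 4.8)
The plan is to reduce both assertions to the tropical computations already in place, namely Lemma~\ref{lem:tropRonX} and Proposition~\ref{prop: action coincide}. For part~(1), expand $R(w)=R(r_{s_1})\cdots R(r_{s_k})$ into its constituent seed mutations (carrying along the permutations built into each $R(s,i)$), so that the sequence of mutation points splits into $k$ consecutive blocks, the $j$-th block coming from $R(r_{s_j})$. With the indexing convention of Proposition~\ref{prop: action coincide}, the induction in its proof — claim~(2) there — already shows that, starting from the principal coefficients $\xi_0$, the point reached just before the $j$-th block lies in $\X^{+,s_j}_{Q_m(\mathfrak{g})}(\mathbb{P})$, i.e.\ has positive tropical sign at every vertex of the cycle $P_{s_j}$. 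Granting this, the explicit coefficient tuples $\mathbf{x}[k]$, $\mathbf{x}[m]$, $\mathbf{x}[\overline{m-k}]$ computed inside the proof of Lemma~\ref{lem:tropRonX} show that, within the $j$-th block, the coefficient at each vertex at the moment it is mutated is a product of coefficients of positive tropical sign, hence itself has positive tropical sign. Concatenating the blocks gives~(1).

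For part~(2), I would first reduce to a single identity. Since $Q_m(\mathfrak{g})$ has no frozen vertices, $\sigma_Q\circ R(w_0)$ is the cluster DT transformation precisely when its tropical action carries $\xi_0$ (equivalently the tuple $(l_i^+)_i$) to the point $\xi_0^-$ with all coordinates $(u_i^s)^{-1}$ (equivalently $(l_i^-)_i$); such an element is unique if it exists, by Theorems~\ref{thm:periodicity} and~\ref{thm:Nakanishi}(2). Unwinding the definition of $\sigma_Q$, this is equivalent to
\[
x^t_j\bigl(R^\trop(w_0)(\xi_0)\bigr)=\bigl(u^{t^{\ast}}_{\,j-n_t}\bigr)^{-1}\qquad(t\in S,\ j\in\Z_m).
\]
Two general facts constrain the right-hand side: Proposition~\ref{prop: action coincide} applied to $w_0$, together with $w_0\alpha_t=-\alpha_{t^{\ast}}$, gives $\prod_{j}x^t_j(R^\trop(w_0)\xi_0)=\prod_j(u^{t^{\ast}}_j)^{-1}$; and each $R^\trop(s)$, hence $R^\trop(w_0)$, commutes with the cyclic shift $v^u_i\mapsto v^u_{i+1}$ (visible from Lemma~\ref{lem:tropRonX}), so the coordinates $x^t_j(R^\trop(w_0)\xi_0)$ over a fixed cycle all have the same tropical sign — which must be negative, by the product identity. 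In particular $R(w_0)$ is a maximal green sequence, the cluster DT transformation therefore exists by \cite[Proposition~2.10]{BDP}, and its terminal $C$-matrix is minus a permutation matrix; combined with the two facts above this pins $x^t_j(R^\trop(w_0)\xi_0)$ down to $(u^{t^{\ast}}_{j+c_t})^{-1}$ for a single shift $c_t\in\Z_m$.

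It then remains to prove $c_t=-n_t$, and this is where adaptedness of $(s_1,\dots,s_l)$ to $Q$ is used. As $\mathfrak{g}$ is of finite type, $Q$ is a tree, so by Remark~\ref{rem:refl} replacing $Q$ by $r_s(Q)$ at a sink (or source) $s$ leaves $Q_m(\mathfrak{g})$ unchanged up to a one-step relabelling of the affected cycle(s). I would process $R(w_0)$ block by block and, at the block $R(r_{s_j})$, transport to the orientation $r_{s_{j-1}}\cdots r_{s_1}(Q)$ in which $s_j$ is a sink: there $s_j^{-}=\emptyset$, the formula of Lemma~\ref{lem:tropRonX} for $R^\trop(s_j)$ collapses, and the only effect on the cyclic indices is the one-step shift of $P_{s_j}$ recorded by Remark~\ref{rem:refl}. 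Summing over the word, the cycle $P_s$ is shifted exactly once for each occurrence of $s$ in $(s_1,\dots,s_l)$, i.e.\ $n_s$ times in total; tracing this through the definition of $\sigma_Q$ yields $c_t=-n_t$. Throughout, the positivity established in~(1) is what makes the monomial tropical formula of Lemma~\ref{lem:tropRonX} applicable at each step.

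The main obstacle will be this last step: making the ``transport to an orientation in which $s_j$ is a sink'' precise — that is, identifying $R(s_j)\in\Gamma_{Q_m(\mathfrak{g})}$ with the operator attached to the re-oriented Coxeter quiver, up to the relabelling of Remark~\ref{rem:refl} — and then carrying out the bookkeeping so that the accumulated cyclic shifts reproduce exactly the multiplicities $n_s$ and the Dynkin involution $\ast$ entering $\sigma_Q$. Everything else is a routine chaining of the formulae already proved.
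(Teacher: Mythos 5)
Part (1) of your proposal is exactly the paper's argument (the paper dispatches it with the same two references, the sign computation inside the proof of Lemma~\ref{lem:tropRonX} plus claim (2) in the proof of Proposition~\ref{prop: action coincide}). For part (2), your preliminary reduction is sound: the product identity from Proposition~\ref{prop: action coincide}, the equivariance under the simultaneous cyclic shift (which follows from the $i$-independence of $R(s,i)$, since $R(s,i+1)$ is the conjugate of $R(s,i)$ by the shift automorphism), sign-coherence, and the maximal-green fact from \cite{BDP} do pin the terminal $C$-matrix down to minus a permutation of the form $v_j^t\mapsto v_{j+c_t}^{t^\ast}$ with a single undetermined shift $c_t$ per cycle.

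The gap is the step you yourself flag: showing $c_t=-n_t$. As written, ``the only effect on the cyclic indices is the one-step shift of $P_{s_j}$'' is not yet a statement about the tropical coordinates, because within a block the coordinates on neighbouring cycles are multiplied by $x_j^{s_j}$ or $x_{j-1}^{s_j}$, so index shifts enter through the mutation formula itself and not only through the relabelling of Remark~\ref{rem:refl}. To make your block-by-block count close you need an inductive hypothesis of the form: in the relabelled indexing $v_i^s(j)$ (legitimate because adaptedness gives the isomorphism $Q_m\cong (r_{s_j}\cdots r_{s_1}(Q))_m$, observation $(*)$ in the paper), the coordinate at $v_i^s(j)$ is a Laurent monomial in the generators $u_i^{s'}$ with the \emph{same} cyclic index $i$; only then does the sink-collapse of Lemma~\ref{lem:tropRonX} preserve this property and record exactly one shift of $P_{s_j}$, and only then is ``summing the shifts gives $n_s$'' meaningful. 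This is precisely the content of the paper's induction, which proves the sharper identity $X(k)_{v_i^s(k)}=\prod_{s'}(u_i^{s'})^{c_{s,k}(s')}$ with $r_{s_1}\cdots r_{s_k}\alpha_s=\sum_{s'}c_{s,k}(s')\alpha_{s'}$ (using Theorem~\ref{thm: length} for the positivity needed at each block). Note also that once this induction is in place it already yields $X(l)_{v_i^{s^\ast}(l)}=(u_i^s)^{-1}$ directly, so your maximal-green/product-identity scaffolding, while correct, becomes redundant rather than a genuine shortcut; the essential work of the theorem is exactly the induction you left as an ``obstacle.''
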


\begin{proof}
The part (1) follows from the proof of Lemma \ref{lem:tropRonX} and Proposition \ref{prop: action coincide}. We shall show the part (2). In this proof, the quiver $Q_m(\mathfrak{g})$ associated with a Coxeter quiver $Q'$ related to $\mathfrak{g}$ is denoted by $Q'_m$. Consider the $X$-seed $(Q_m, \mathbf{X}=\{X_{v_i^s}\mid s\in S, i \in \Z_m\})$ in the tropical semifield $\mathbb{P}_{\mathrm{trop}}(\mathbf{u})$
such that $X_{v_i^s}=u_i^s$ for $s\in S, i \in \Z_m$. For $k=1,\dots, l$, $s\in S$ and $i \in \Z_m$, set 
\[
v_i^s(k):=
\begin{cases}
v_{i+1}^{s_k}(k-1)&\text{if }s=s_k,\\
v_i^s(k-1)&\text{otherwise},
\end{cases}
\]
with $v_i^s(0):=v_i^s$. Note that 

\begin{itemize}
\item[($*$)] $Q_m$ is isomorphic to  $\left(r_{s_{k}}\cdots r_{s_2}r_{s_1}(Q)\right)_m$ via $v_i^s(k)\mapsto v_i^s$ for $k=0, 1,\dots, l$, 
\end{itemize}
\noindent
because $(s_1,\dots, s_l)$ is adapted to $Q$ (see also Remark \ref{rem:refl}). For $k=1,\dots, l$, write $\mathbf{X}(k)=\{X(k)_{v_i^s}\mid s\in S, i \in \Z_m \}:=R^{\mathrm{trop}}(r_{s_k}\cdots r_{s_{1}})(\mathbf{X})$.
 
We claim that, for all $k=0, 1, \dots, l$, 
\begin{align}
X(k)_{v_i^s(k)}=\prod_{s'\in S}(u_i^{s'})^{c_{s, k}(s')}\ \text{for }s\in S\ \text{and } i \in \Z_m, \label{eq:root}
\end{align}
where $r_{s_{1}}\cdots r_{s_k}\alpha_s=\sum_{s'\in S}c_{s, k}(s')\alpha_{s'}$, $c_{s, k}(s')\in\mathbb{Z}$. We prove \eqref{eq:root} by induction on $k$. When $k=0$, it is obvious. Suppose that \eqref{eq:root} holds for some $k$. By Theorem \ref{thm: length} and induction hypothesis, the tropical sign of $X(k)_{v_i^{s_{k+1}}}$ is positive for all $i \in \Z_m$; hence, so are the tropical signs of $X(k)_{v_i^{s_{k+1}}(k)}$, $i \in \Z_m$. Therefore, by Lemma \ref{lem:tropRonX} and the observation ($*$), we have 

\[
X(k+1)_{v_i^{s}(k)}=
\begin{cases}
(X(k)_{v_{i-1}^{s_{k+1}}(k)})^{-1} &\text{if }s=s_{k+1}, \\
X(k)_{v_i^{s}(k)}(X(k)_{v_{i}^{s_{k+1}}(k)})^{|\ve_{s, s_{k+1}}|} &\text{otherwise,} 
\end{cases}
\] 
for $s\in S$ and $i \in \Z_m$, equivalently, 
\[
X(k+1)_{v_i^{s}(k+1)}=
\begin{cases}
(X(k)_{v_{i}^{s_{k+1}}(k)})^{-1} &\text{if }s=s_{k+1}, \\
X(k)_{v_i^{s}(k)}(X(k)_{v_{i}^{s_{k+1}}(k)})^{-C_{s_{k+1}, s}} &\text{otherwise.} 
\end{cases}
\] 
for $s\in S$ and $i \in \Z_m$. Therefore, by induction hypothesis and the equality 
\[
r_{s_{1}}\cdots r_{s_k}r_{s_{k+1}}\alpha_s=r_{s_{1}}\cdots r_{s_k}\alpha_s-C_{s_{k+1}, s}r_{s_{1}}\cdots r_{s_k}\alpha_{s_{k+1}}, 
\]
\eqref{eq:root} follows. In particular, since $w_0\alpha_{s^\ast}=-\alpha_{s}$ (that is, $c_{s^\ast, l}(s')=-\delta_{s's}$ for $s'\in S$), 
\begin{align*}
X(l)_{v_i^{s^\ast}(l)}=(u_i^{s})^{-1}
\end{align*}
for all $s\in S$ and $i \in \Z_m$. Therefore, $\sigma_Q^{-1}(v_{i}^s)=v_{i+n_{s^\ast}}^{s^\ast}=v_i^{s^\ast}(l)$ satisfies the desired property. 
\end{proof}

\begin{remark}
If $\mathfrak{g}$ is a classical Lie algebra of type $X_n$, $X=A, B, C, D$ (recall the labeling in Figure \ref{Dynkin-quivers}), we have the Dynkin involution as follows:
\begin{align}
\begin{cases}
s^{\ast}=n+1-s&\text{if }X=A,\\
1^{\ast}=2,\ 2^{\ast}=1,\ s^{\ast}=s\ \text{for }s\geq 3&\text{if }X=D\ \text{and $n$ is odd}, \\
s^{\ast}=s&\text{otherwise}. 
\end{cases}\label{eq:astinvcl}
\end{align}
\end{remark}

\begin{remark}\label{rem:explicitshift}
We can describe $n_s$ (hence, $\sigma_{Q}$) in Theorem \ref{cor:DT} explicitly in terms of the Coxeter quiver $Q$. For $s\in S$, define $a_s$ as the number of arrows directed toward $s$ whose underlying edges are in the path from $s$ to $s^\ast$ in the underlying graph of $Q(\mathfrak{g})$. Then, by \cite[Corollary 2.20]{Bed}, we have
\[
n_s=\frac{h+a_s-a_{s^\ast}}{2},
\]
here $h$ is the Coxeter number (see \cite[Table 2 in 3.18]{Humphreys90} or \S~ \ref{subsec:tildeQ} for the classical finite cases). 
For example, if we take $Q$ as
\begin{align*}
\begin{tikzpicture}
\begin{scope}[>=latex]
\foreach \i in {1,2,3,4,5}
\draw (\i,0) circle(2pt) node[above]{$\i$};
\foreach \j in {2,3,4}
\qarrow {\j,0}{\j-1,0};
\qarrow {4,0}{5,0};
\end{scope}
\end{tikzpicture}
\end{align*}
then $a_1=3$, $a_2=2$, $a_3=0$, $a_4=0$, $a_5=1$, hence
\begin{align*}
	n_1&=4&n_2&=4&n_3&=3&n_4&=2&n_5&=2. 
	\end{align*}
If we take $Q$ as
\begin{align*}
\begin{tikzpicture}
\begin{scope}[>=latex]
\foreach \i in {3,4,5}
\draw (\i,0) circle(2pt) node[above]{$\i$};
\draw (2,0.5) circle(2pt) node[above]{$2$};
\draw (2,-0.5) circle(2pt) node[above]{$1$};
\qarrow{5,0}{4,0};
\qarrow{4,0}{3,0};
\qarrow{3,0}{2,0.5};
\qarrow{3,0}{2,-0.5};
\end{scope}
\end{tikzpicture}
\end{align*}
then $a_1=1$, $a_2=1$, $a_3=0$, $a_4=0$, $a_5=0$, hence 
\begin{align*}
	n_1&=4&n_2&=4&n_3&=4&n_4&=4&n_5&=4. 
\end{align*}
Note that $a_s=0$ when $s^\ast=s$. In particular, if $s^{\ast}=s$ for all $s\in S$, then $n_s=h/2$ for all $s\in S$. 
\end{remark}

\section{Quivers corresponding to reduced words}\label{sec:words}

\subsection{Reduced words and weighted quivers}\label{subsec:word-quiver}

We review the construction of the quiver associated with a reduced word in 
the Weyl group of a Kac-Moody Lie algebra $\mathfrak{g}$, 
based on \cite{FG06}.
 
We use the notations in \cref{sec:realization}.
Let $W(\mathfrak{g})$ be the Weyl group of $\mathfrak{g}$
generated by $r_s ~(s \in S)$. 
Let $C:= C(\mathfrak{g})$ be the corresponding 
Cartan matrix.
We identify a reduced expression $r_{s_1} r_{s_2} \cdots r_{s_k}$ of $w \in W(\mathfrak{g})$
with a word in $S$ as $(s_1 s_2 \ldots s_k)$.

\begin{definition}[the elementary quiver]
For each $s \in S$, 
a quiver $\bJ(s):=(J(s),J_0(s),\ve(s),d(s))$ is defined as follows.
\begin{itemize}
\item $J(s)=J_0(s):=(S \setminus \{s\})\cup\{s_-,s_+\}$, where $s_\pm$ are new elements.
\item The exchange matrix $\ve(s)$ is given by
\begin{align*}
&\ve(s)_{t u}=0 \quad (t,u \neq s),\\
&\ve(s)_{s_- s_+}=-\ve(s)_{s_+ s_-}=1,\\
&\ve(s)_{s_\pm t}=\pm (-C_{t s})/2,\\
&\ve(s)_{t s_\pm}=\mp (-C_{s t})/2 \quad (t \neq s).
\end{align*}
\item $d(s)$ is given by $d(s)_{s_\pm}:=d_s$, $d(s)_{t}:=d_t$ for $t \neq s$.
\end{itemize}
Note that $\bJ(s)$ is indeed a weighted quiver, since the matrix $\widehat{\ve}_{st}:=\ve_{st}d_{t}$ is skew-symmetric. 
Half-integers are allowed since all the indices are frozen. 
We call $\bJ(s)$ the \emph{elementary quiver} associated with the generator $r_s$. 

\end{definition}

\begin{example}\label{ex:el-quivers}
\begin{enumerate}
\item Type $A_3$. We have $S=\{1,2,3\}$ and the weighted Dynkin quiver as 
Figure \ref{Dynkin-quivers}. The elementary quivers $\bJ(1)$, $\bJ(2)$ and $\bJ(3)$ are given as follows:
\[
\begin{tikzpicture}

\begin{scope}[>=latex]
\draw (1,0) circle(2pt) coordinate(B) node[below]{$1_-$};
\draw (3,0) circle(2pt) coordinate(C) node[below]{$1_+$};
\draw (2,1) circle(2pt) coordinate(D) node[above]{$2$};
\draw (2,2) circle(2pt) node[above]{$3$};
\draw[->,shorten >=2pt,shorten <=2pt] (B) -- (C) [thick];
\draw[->,dashed,shorten >=2pt,shorten <=2pt] (C) -- (D) [thick];
\draw[->,dashed,shorten >=2pt,shorten <=2pt] (D) -- (B) [thick];
\draw (2,-1) node{$\bJ(1)$};
\draw (4,1) circle(2pt) coordinate(E) node[left]{$2_-$};
\draw (6,1) circle(2pt) coordinate(F) node[right]{$2_+$};
\draw (5,0) circle(2pt) coordinate(G) node[below]{$1$};
\draw (5,2) circle(2pt) coordinate(H) node[above]{$3$};
\draw[->,dashed,shorten >=2pt,shorten <=2pt] (F) -- (G) [thick];
\draw[->,dashed,shorten >=2pt,shorten <=2pt] (G) -- (E) [thick];
\draw[->,dashed,shorten >=2pt,shorten <=2pt] (F) -- (H) [thick];
\draw[->,dashed,shorten >=2pt,shorten <=2pt] (H) -- (E) [thick];
\draw[->,shorten >=2pt,shorten <=2pt] (E) -- (F) [thick];
\draw (5,-1) node{$\bJ(2)$};
\draw (7,2) circle(2pt) coordinate(B) node[above]{$3_-$};
\draw (9,2) circle(2pt) coordinate(C) node[above]{$3_+$};
\draw (8,1) circle(2pt) coordinate(D) node[below]{$2$};
\draw (8,0) circle(2pt) node[below]{$1$};
\draw[->,shorten >=2pt,shorten <=2pt] (B) -- (C) [thick];
\draw[->,dashed,shorten >=2pt,shorten <=2pt] (C) -- (D) [thick];
\draw[->,dashed,shorten >=2pt,shorten <=2pt] (D) -- (B) [thick];
\draw (8,-1) node{$\bJ(3)$};
\end{scope}
\end{tikzpicture}
\] 
\item Type $C_2$. We have $S=\{1,2\}$ and the weighted Dynkin quiver as 
Figure \ref{Dynkin-quivers}, where $d=(2,1)$ and $C(C_2)=\begin{pmatrix}2&-1\\-2&2\end{pmatrix}$. The elementary quivers $\bJ(1)$, $\bJ(2)$ are given as follows.
\[
\begin{tikzpicture}

\begin{scope}[>=latex]
\path (1,0) node[circle]{2} coordinate(B) node[below=0.2em]{$1_-$};
\path (3,0) node[circle]{2} coordinate(C) node[below=0.2em]{$1_+$};
\draw (1,0) circle[radius=0.15];
\draw (3,0) circle[radius=0.15];
\draw (2,1) circle(2pt) coordinate(D) node[above]{$2$};
\draw[->,shorten >=4pt,shorten <=4pt] (B) -- (C) [thick];
\draw[->,dashed,shorten >=2pt,shorten <=4pt] (C) -- (D) [thick];
\draw[->,dashed,shorten >=4pt,shorten <=2pt] (D) -- (B) [thick];
\draw (2,-1) node{$\bJ(1)$};
\draw (4,1) circle(2pt) coordinate(E) node[above]{$2_-$};
\draw (6,1) circle(2pt) coordinate(F) node[above]{$2_+$};
\path (5,0) node[circle]{2} coordinate(G) node[below=0.3em]{$1$};
\draw (5,0) circle[radius=0.15];
\draw[->,shorten >=2pt,shorten <=2pt] (E) -- (F) [thick];
\draw[->,dashed,shorten >=4pt,shorten <=2pt] (F) -- (G) [thick];
\draw[->,dashed,shorten >=2pt,shorten <=4pt] (G) -- (E) [thick];
\draw (5,-1) node{$\bJ(2)$};
\end{scope}
\end{tikzpicture}
\] 
\end{enumerate}
\end{example}

\begin{remark}
The elementary quiver $\bJ(s)$ here is what written as $\bJ(\bar{s})$ in \cite{FG06}. 
We have chosen this for simplicity, due to the following reason: in the next section we mainly consider the double Bruhat cell $G^{w_0,e}$ in the Borel subgroup $B^-$ of lower triangle matrices for the classical Lie group $G$. The corresponding quiver has form as $\bJ(\bar{s}_{i_1} \cdots \bar{s}_{i_\ell})$ in \cite{FG06}.
\end{remark}

\begin{definition}[the amalgamation of quivers]
Let $Q_1=(J,J_0,\ve,d)$ and $Q_2=(I,I_0,\eta,c)$ be two quivers.
Assume there are subsets
$L \subset J_0$ and $L' \subset I_0$ with a bijection 
$\phi : L \to L'$ satisfying $d(i)=c(\phi(i))$ for all $i \in L$. 
Then the \emph{amalgamation} $Q=(K,K_0,\zeta,b)$ of $Q_1$ and $Q_2$ is the quiver given by:
\begin{itemize}
\item $K:=I \cup_\phi J$, $K_0 \subset I_0 \cup_\phi J_0$, 

\item $b(i):= 
\begin{cases}
c(i) & \text{ if $i \in J$} \\
d(i) & \text{ if $i \in I \setminus L'$}, 
\end{cases}$

\item $\zeta_{ij}:=\begin{cases}
0 & \text{if $i \in I \setminus L'$ and $j \in J \setminus L$} \\
0 & \text{if $i \in J \setminus L$ and $j \in I \setminus L'$} \\
\eta_{ij} & \text{if $i \in I \setminus L'$ or $j \in I \setminus L'$} \\
\ve_{ij} & \text{if $i \in J \setminus L$ or $j \in J \setminus L$} \\
\eta_{\phi(i) \phi(j)}+\ve_{ij} & \text{if $i, j \in L$}.
\end{cases}$
\end{itemize}
Here we can choose any subset $K_0$ in $I_0 \cup_\phi J_0$
such that no $\ve_{ij}$ is half-integral if $i$ or $j$ belong to
$I_0 \cup_\phi J_0 \setminus K_0$. 
In this paper, we consider the minimal $K_0$ given by
$$
  K_0 = I_0 \cup_\phi J_0 \setminus K_1; \quad
  K_1 := \{ i \in I_0 \cup_\phi J_0 ~|~ 
  \ve_{ij}, \ve_{ji} \in \Z \text{ for } \forall j \in K \},
$$
where the vertices in $K_1$ are said to be {\em defrosted}.
\end{definition}

For a reduced words $(s_1 \ldots s_k) = r_{s_1}\cdots r_{s_k}$, 
the weighted quiver
$\bJ(s_1 \ldots s_k)$ is constructed by amalgamating the elementary quivers 
$\bJ(s_1),\cdots,\bJ(s_k)$ \emph{in this order} in the following way.
Amalgamate neighboring $\bJ(s)$ and $\bJ(t)$ in this order 
by setting in the above definition as 
$L = \{s_+\} \cup S \setminus \{s\}$ for $Q_1 = \bJ(s)$,
$L' = \{t_-\} \cup S \setminus \{t\}$ for $Q_2 = \bJ(t)$, with
$\phi: L \to L'$ given by $s_+ \mapsto s$, $t \mapsto t_-$ and $u \mapsto u$ 
for $u \in S \setminus \{s,t\}$.

\begin{example}
Using the elementary quivers of Example \ref{ex:el-quivers},
the quivers corresponding to the longest element $w_0$ in $W(\mathfrak{g})$ is obtained as follows. 
\begin{enumerate}
\item Type $A_3$. For a reduced expression $(123121)$
of $w_0$, we obtain $\bJ(123121)$:
\[
\begin{tikzpicture}

\begin{scope}[>=latex]
\draw (1,0) circle(2pt) coordinate(A) node[below]{$v_1^1$};
\draw (3,0) circle(2pt) coordinate(B) node[below]{$v_2^1$};
\draw (5,0) circle(2pt) coordinate(C) node[below]{$v_3^1$};
\draw (7,0) circle(2pt) coordinate(C') node[below]{$v_4^1$};
\draw (2,1) circle(2pt) coordinate(D) node[left]{$v_1^2$};
\draw (4,1) circle(2pt) coordinate(E) node[above]{$v_2^2$};
\draw (6,1) circle(2pt) coordinate(F) node[right]{$v_3^2$};
\draw (3,2) circle(2pt) coordinate(G) node[above]{$v_1^3$};
\draw (5,2) circle(2pt) coordinate(H) node[above]{$v_2^3$};
\draw[->,shorten >=2pt,shorten <=2pt] (A) -- (B) [thick];
\draw[->,shorten >=2pt,shorten <=2pt] (B) -- (C) [thick];
\draw[->,shorten >=2pt,shorten <=2pt] (C) -- (C') [thick];
\draw[->,shorten >=2pt,shorten <=2pt] (D) -- (E) [thick];
\draw[->,shorten >=2pt,shorten <=2pt] (E) -- (F) [thick];
\draw[->,shorten >=2pt,shorten <=2pt] (F) -- (C) [thick];
\draw[->,shorten >=2pt,shorten <=2pt] (G) -- (H) [thick];
\draw[->,shorten >=2pt,shorten <=2pt] (B) -- (D) [thick];
\draw[->,shorten >=2pt,shorten <=2pt] (E) -- (B) [thick];
\draw[->,shorten >=2pt,shorten <=2pt] (C) -- (E) [thick];
\draw[->,shorten >=2pt,shorten <=2pt] (E) -- (G) [thick];
\draw[->,shorten >=2pt,shorten <=2pt] (H) -- (E) [thick];
\draw[->,dashed,shorten >=2pt,shorten <=2pt] (D) -- (A) [thick];
\draw[->,dashed,shorten >=2pt,shorten <=2pt] (G) -- (D) [thick];
\draw[->,dashed,shorten >=2pt,shorten <=2pt] (C') -- (F) [thick];
\draw[->,dashed,shorten >=2pt,shorten <=2pt] (F) -- (H) [thick];
\end{scope}
\end{tikzpicture}
\] 
Here the vertices $v_2^1$, $v_3^1$ and $v_2^2$ are unfrozen.

\item Type $C_2$. For a reduced expression $(1212)$ for $w_0$,
we obtain the quiver $\bJ(1212)$ as
\[
\begin{tikzpicture}

\begin{scope}[>=latex]
\path (1,0) node[circle]{2} coordinate(A) node[below=0.2em]{$v_1^1$};
\draw (1,0) circle[radius=0.15];
\path (3,0) node[circle]{2} coordinate(B) node[below=0.2em]{$v_2^1$};
\draw (3,0) circle[radius=0.15];
\path (5,0) node[circle]{2} coordinate(C) node[below=0.2em]{$v_3^1$};
\draw (5,0) circle[radius=0.15];
\draw (2,1) circle(2pt) coordinate(D) node[above]{$v_1^2$};
\draw (4,1) circle(2pt) coordinate(E) node[above]{$v_2^2$};
\draw (6,1) circle(2pt) coordinate(F) node[above]{$v_3^2$};
\draw[->,shorten >=4pt,shorten <=4pt] (A) -- (B) [thick];
\draw[->,shorten >=4pt,shorten <=4pt] (B) -- (C) [thick];
\draw[->,shorten >=2pt,shorten <=2pt] (D) -- (E) [thick];
\draw[->,shorten >=2pt,shorten <=2pt] (E) -- (F) [thick];
\draw[->,shorten >=2pt,shorten <=4pt] (B) -- (D) [thick];
\draw[->,shorten >=2pt,shorten <=4pt] (C) -- (E) [thick];
\draw[->,shorten >=4pt,shorten <=2pt] (E) -- (B) [thick];
\draw[->,dashed,shorten >=4pt,shorten <=2pt] (D) -- (A) [thick];
\draw[->,dashed,shorten >=4pt,shorten <=2pt] (F) -- (C) [thick];
\end{scope}
\end{tikzpicture}
\] 
Here the vertices $v_2^1$ and $v_2^2$ are unfrozen.
\end{enumerate}
\end{example}

\subsection{Mutation equivalence of quivers $\bJ(s_1 \ldots s_k)$}
\label{subsec:J-equivalence}
In \cite{FG06}, it is shown that if $\bi = (s_1 \ldots s_k)$ and $\bi'=(s'_1 \ldots s'_k)$ 
are reduced expressions of an element $w \in W(\mathfrak{g})$, 
then the quivers $\bJ(\bi)$ and $\bJ(\bi')$ are mutation equivalent, by constructing the sequences of mutations corresponding to the changes of words 
via the braid relations. 
In this section we study the cases that $\mathfrak{g}$ is of classical type,
so we consider the mutation sequences corresponding to the braid relation $(r_s r_t)^{m_{st}} = 1$ for $m_{st} \in \{2,3,4\}$ from \cite[\S~3.7]{FG06}.    

When $m_{st} = 2$, we have $(s t) = (t s)$. It follows that  
$\bJ(\ldots s  t \ldots ) = \bJ(\ldots t  s \ldots)$,
since the amalgamation of $\bJ(s)$ and $\bJ(t)$ is the same quiver as 
the amalgamation of $\bJ(t)$ and $\bJ(s)$.
When $m_{st} = 3$, we have $(s t s) = (t s t)$
which is realized by a single mutation of $\bJ(s t s)$ 
with $(d_s, d_t) = (1,1)$ as follows: 
\begin{align}\label{quiver:m=3}
\begin{tikzpicture}
\begin{scope}[>=latex]
\draw (1,1) circle(2pt) coordinate(B) node[above]{$v^s_1$};
\draw (3,1) circle(2pt) coordinate(C) node[above]{$v^s_2$};
\draw (5,1) circle(2pt) coordinate(D) node[above]{$v^s_3$};
\draw (2,0) circle(2pt) coordinate(E) node[below]{$v^t_1$};
\draw (4,0) circle(2pt) coordinate(F) node[below]{$v^t_2$};
\qarrow{B}{C}
\qarrow{C}{D}
\qarrow{E}{F}
\qarrow{F}{C}
\qarrow{C}{E}
\qdarrow{E}{B}
\qdarrow{D}{F}
\coordinate (P1) at (5.5,0.5);
\coordinate (P2) at (6.5,0.5);
\draw[<->] (P1) -- (P2);
\draw (6,0.5) circle(0pt) node[below]{$\mu^s_2$};
\draw (3,-1) node{$\bJ(s t s)$};  
\end{scope}
\begin{scope}[>=latex,xshift=170pt]
\draw (1,1) circle(2pt) coordinate(B) node[above]{$v^s_1$};
\draw (3,1) circle(2pt) coordinate(C) node[above]{$v^s_2$};
\draw (5,1) circle(2pt) coordinate(D) node[above]{$v^s_3$};
\draw (2,0) circle(2pt) coordinate(E) node[below]{$v^t_1$};
\draw (4,0) circle(2pt) coordinate(F) node[below]{$v^t_2$};
\qarrow{C}{B}
\qarrow{D}{C}
\qarrow{C}{F}
\qarrow{E}{C}
\qdarrow{B}{E}
\qdarrow{F}{D}
\draw[->,shorten >=2pt,shorten <=2pt] (B) to [out = 30, in = 150] (D);
\draw (5.5,0.5) node{$=$}; 
\end{scope}
\begin{scope}[>=latex,xshift=320pt]
\draw (1,0) circle(2pt) coordinate(B) node[below]{$\bar v^t_1$};
\draw (3,0) circle(2pt) coordinate(C) node[below]{$\bar v^t_2$};
\draw (5,0) circle(2pt) coordinate(F) node[below]{$\bar v^t_3$};
\draw (2,1) circle(2pt) coordinate(D) node[above]{$\bar v^s_1$};
\draw (4,1) circle(2pt) coordinate(E) node[above]{$\bar v^s_2$};
\qarrow{B}{C}
\qarrow{C}{D}
\qarrow{D}{E}
\qarrow{E}{C}
\qarrow{C}{F}
\qdarrow{D}{B}
\qdarrow{F}{E}
\draw (3,-1) node{$\bJ(t s t)$};  
\end{scope}
\end{tikzpicture}
\end{align}
Here we `move down' the vertex $v^s_2$ after mutating $\bJ(s t s)$ 
and relabel the vertices to get a new quiver $\bJ(t s t)$.
When $m_{st} = 4$, we have $(s t s t) = (t s t s)$
which is realized by a sequence of three mutations of the quiver 
$\bJ(s t s t)$ with $(d_s,d_t)=(1,2)$ as follows:
\begin{align}
\label{quiver:m=4}
\begin{tikzpicture}
\begin{scope}[>=latex]
\path (2,0) node[circle]{2} coordinate(E) node[below=0.2em]{$v_1^t$};
\draw (2,0) circle[radius=0.15];
\path (4,0) node[circle]{2} coordinate(F) node[below=0.2em]{$v_2^t$};
\draw (4,0) circle[radius=0.15];
\path (6,0) node[circle]{2} coordinate(G) node[below=0.2em]{$v_3^t$};
\draw (6,0) circle[radius=0.15];
\draw (1,1) circle(2pt) coordinate(B) node[above]{$v^s_1$};
\draw (3,1) circle(2pt) coordinate(C) node[above]{$v^s_2$};
\draw (5,1) circle(2pt) coordinate(D) node[above]{$v^s_3$};
\qarrow{B}{C}
\qarrow{C}{D}
\draw[->,shorten >=4pt,shorten <=4pt] (E) -- (F) [thick];
\draw[->,shorten >=4pt,shorten <=4pt] (F) -- (G) [thick];
\draw[->,shorten >=2pt,shorten <=4pt] (F) -- (C) [thick];
\draw[->,shorten >=4pt,shorten <=2pt] (C) -- (E) [thick];
\draw[->,shorten >=4pt,shorten <=2pt] (D) -- (F) [thick];
\qdarrow{E}{B}
\qdarrow{G}{D} 
\coordinate (P1) at (6.8,0.5);
\coordinate (P2) at (7.8,.5);
\draw[<->] (P1) -- (P2);
\draw (7.3,0) node{$\mu^t_2 \mu^s_2 \mu^t_2$};
\draw (4,-1) node{$\bJ(s t s t)$};  
\end{scope}
\begin{scope}[>=latex,xshift=205pt]
\path (2,0) node[circle]{2} coordinate(E) node[below=0.2em]{$v_1^t$};
\draw (2,0) circle[radius=0.15];
\path (4,0) node[circle]{2} coordinate(F) node[below=0.2em]{$v_2^t$};
\draw (4,0) circle[radius=0.15];
\path (6,0) node[circle]{2} coordinate(G) node[below=0.2em]{$v_3^t$};
\draw (6,0) circle[radius=0.15];
\draw (1,1) circle(2pt) coordinate(B) node[above]{$v^s_1$};
\draw (3,1) circle(2pt) coordinate(C) node[above]{$v^s_2$};
\draw (5,1) circle(2pt) coordinate(D) node[above]{$v^s_3$};
\qarrow{B}{C}
\qarrow{C}{D}
\draw[->,shorten >=4pt,shorten <=4pt] (E) -- (F) [thick];
\draw[->,shorten >=4pt,shorten <=4pt] (F) -- (G) [thick];
\draw[->,shorten >=4pt,shorten <=2pt] (C) -- (F) [thick];
\draw[->,shorten >=2pt,shorten <=4pt] (F) -- (B) [thick];
\draw[->,shorten >=2pt,shorten <=4pt] (G) -- (C) [thick];
\draw[->,dashed,shorten >=4pt,shorten <=2pt] (B) -- (E) [thick];
\draw[->,dashed,shorten >=4pt,shorten <=2pt] (D) -- (G) [thick];
\end{scope}
\begin{scope}[>=latex,xshift=205pt,yshift=-80pt]
\draw (0.3,0.5) node{$=$};
\path (1,0) node[circle]{2} coordinate(E) node[below=0.2em]{$v_1^t$};
\draw (1,0) circle[radius=0.15];
\path (3,0) node[circle]{2} coordinate(F) node[below=0.2em]{$v_2^t$};
\draw (3,0) circle[radius=0.15];
\path (5,0) node[circle]{2} coordinate(G) node[below=0.2em]{$v_3^t$};
\draw (5,0) circle[radius=0.15];
\draw (2,1) circle(2pt) coordinate(B) node[above]{$v^s_1$};
\draw (4,1) circle(2pt) coordinate(C) node[above]{$v^s_2$};
\draw (6,1) circle(2pt) coordinate(D) node[above]{$v^s_3$};
\qarrow{B}{C}
\qarrow{C}{D}
\draw[->,shorten >=4pt,shorten <=4pt] (E) -- (F) [thick];
\draw[->,shorten >=4pt,shorten <=4pt] (F) -- (G) [thick];
\draw[->,shorten >=4pt,shorten <=2pt] (C) -- (F) [thick];
\draw[->,shorten >=2pt,shorten <=4pt] (F) -- (B) [thick];
\draw[->,shorten >=2pt,shorten <=4pt] (G) -- (C) [thick];
\draw[->,dashed,shorten >=4pt,shorten <=2pt] (B) -- (E) [thick];
\draw[->,dashed,shorten >=4pt,shorten <=2pt] (D) -- (G) [thick];
\draw (3,-1) node{$\bJ(t s t s)$};  
\end{scope}
\end{tikzpicture}
\end{align}
Here no relabeling of vertices is needed.

For the later usage, we further introduce braid relation for quivers 
with `additional' frozen vertices.
By adding two frozen vertices $y_1$ and $y_2$ to the quivers in \eqref{quiver:m=3}, we obtain the decorated version of \eqref{quiver:m=3} as 
\begin{align}\label{quiver:frozen-move1}
\begin{tikzpicture}
\begin{scope}[>=latex]
\draw (1,1) circle(2pt) coordinate(B) node[above]{$v^{s}_1$};
\draw (3,1) circle(2pt) coordinate(C) node[above]{$v^s_2$};
\draw (5,1) circle(2pt) coordinate(D) node[above]{$v^{s}_3$};
\draw (2,0) circle(2pt) coordinate(E) node[below]{$v^t_1$};
\draw (4,0) circle(2pt) coordinate(F) node[below]{$v^t_1$};
\qarrow{B}{C}
\qarrow{C}{D}
\qarrow{E}{F}
\qarrow{F}{C}
\qarrow{C}{E}
\qdarrow{E}{B}
\qdarrow{D}{F}
{\color{blue}
\draw (2,1.7) circle(2pt) coordinate(G) node[above]{$y_2$};
\draw (4,1.7) circle(2pt) coordinate(H) node[above]{$y_1$};
\qarrow{C}{G} \qarrow{G}{B}
\qarrow{D}{H} \qarrow{H}{C}
\qdarrow{G}{H}
} 
\coordinate (P1) at (5.8,0.5);
\coordinate (P2) at (6.8,0.5);
\draw[<->] (P1) -- (P2);
\draw (6.3,0.5) circle(0pt) node[below]{$\mu^s_2$}; 
\end{scope}
\begin{scope}[>=latex,xshift=190pt]
\draw (1,0) circle(2pt) coordinate(B) node[above]{$v^t_1$};
\draw (3,0) circle(2pt) coordinate(C) node[above]{$v^s_2$};
\draw (5,0) circle(2pt) coordinate(F) node[above]{$v^t_2$};
\draw (2,1) circle(2pt) coordinate(D) node[above]{$v^{s}_1$};
\draw (4,1) circle(2pt) coordinate(E) node[above]{$v^{s}_3$};
\qarrow{B}{C}
\qarrow{C}{D}
\qarrow{D}{E}
\qarrow{E}{C}
\qarrow{C}{F}
\qdarrow{D}{B}
\qdarrow{F}{E}
{\color{blue}
\draw (2,-0.7) circle(2pt) coordinate(G) node[below]{$y_1$};
\draw (4,-0.7) circle(2pt) coordinate(H) node[below]{$y_2$};
\qarrow{C}{G} \qarrow{G}{B}
\qarrow{F}{H} \qarrow{H}{C}
\qdarrow{G}{H}
} 
\end{scope}
\end{tikzpicture}
\end{align}
Note that each frozen vertex $y_i$ is added along one of horizontal edges in the original quivers. 
We decorate the braid relation $(sts) = (tst)$ to include the location of the additional frozen vertices, as $(\overset{2}{s}t\overset{1}{s}) = (\overset{1}{t}s\overset{2}{t})$, where a superscript $i$ denotes the location of a frozen vertex $y_i$.  
We can work with only one of the two frozen vertices as appropriate, 
and correspondingly it holds that $(\overset{2}{s} t s) = (t s \overset{2}{t})$ and  
$(s t \overset{1}{s}) = (\overset{1}{t} s t)$. 
We also decorate \eqref{quiver:m=4} by adding frozen vertices $y_1$ and $y_2$: 
\begin{align}
\label{quiver:frozen-move2}
\begin{tikzpicture}
\begin{scope}[>=latex]
\path (2,0) node[circle]{2} coordinate(E) node[below=0.2em]{$v_1^t$};
\draw (2,0) circle[radius=0.15];
\path (4,0) node[circle]{2} coordinate(F) node[below=0.2em]{$v_2^t$};
\draw (4,0) circle[radius=0.15];
\path (6,0) node[circle]{2} coordinate(G) node[below=0.2em]{$v_3^t$};
\draw (6,0) circle[radius=0.15];
\draw (1,1) circle(2pt) coordinate(B) node[above]{$v^s_1$};
\draw (3,1) circle(2pt) coordinate(C) node[above]{$v^s_2$};
\draw (5,1) circle(2pt) coordinate(D) node[above]{$v^s_3$};
\qarrow{B}{C}
\qarrow{C}{D}
\draw[->,shorten >=4pt,shorten <=4pt] (E) -- (F) [thick];
\draw[->,shorten >=4pt,shorten <=4pt] (F) -- (G) [thick];
\draw[->,shorten >=2pt,shorten <=4pt] (F) -- (C) [thick];
\draw[->,shorten >=4pt,shorten <=2pt] (C) -- (E) [thick];
\draw[->,shorten >=4pt,shorten <=2pt] (D) -- (F) [thick];
\qdarrow{E}{B}
\qdarrow{G}{D} 
{\color{blue}
\path (5,-0.7) node[circle]{2} coordinate(H) node[below=0.2em]{$y_2$};
\draw (5,-0.7) circle[radius=0.15];
\draw (2,1.7) circle(2pt) coordinate(I) node[above]{$y_1$};
\draw[->,shorten >=4pt,shorten <=4pt] (G) -- (H) [thick];
\draw[->,shorten >=4pt,shorten <=4pt] (H) -- (F) [thick];
\qarrow{C}{I}
\qarrow{I}{B}
\draw[->, dashed,shorten >=4pt,shorten <=2pt] (I) to [out = 0, in = 90] (H);
} 
\coordinate (P1) at (6.8,0.5);
\coordinate (P2) at (7.8,0.5);
\draw[<->] (P1) -- (P2) [thick];
\draw (7.3,0) node{$\mu^t_2 \mu^s_2 \mu^t_2$};
\end{scope}
\begin{scope}[>=latex,xshift=210pt]
\path (1,0) node[circle]{2} coordinate(E) node[below=0.2em]{$v_1^t$};
\draw (1,0) circle[radius=0.15];
\path (3,0) node[circle]{2} coordinate(F) node[below=0.2em]{$v_2^t$};
\draw (3,0) circle[radius=0.15];
\path (5,0) node[circle]{2} coordinate(G) node[below=0.2em]{$v_3^t$};
\draw (5,0) circle[radius=0.15];
\draw (2,1) circle(2pt) coordinate(B) node[above]{$v^s_1$};
\draw (4,1) circle(2pt) coordinate(C) node[above]{$v^s_2$};
\draw (6,1) circle(2pt) coordinate(D) node[above]{$v^s_3$};
\qarrow{B}{C}
\qarrow{C}{D}
\draw[->,shorten >=4pt,shorten <=4pt] (E) -- (F) [thick];
\draw[->,shorten >=4pt,shorten <=4pt] (F) -- (G) [thick];
\draw[->,shorten >=4pt,shorten <=2pt] (C) -- (F) [thick];
\draw[->,shorten >=2pt,shorten <=4pt] (F) -- (B) [thick];
\draw[->,shorten >=2pt,shorten <=4pt] (G) -- (C) [thick];
\draw[->,dashed,shorten >=4pt,shorten <=2pt] (B) -- (E) [thick];
\draw[->,dashed,shorten >=4pt,shorten <=2pt] (D) -- (G) [thick];
{\color{blue}
\path (2,-0.7) node[circle]{2} coordinate(H) node[below=0.2em]{$y_2$};
\draw (2,-0.7) circle[radius=0.15];
\draw[->,shorten >=4pt,shorten <=4pt] (F) -- (H) [thick];
\draw[->,shorten >=4pt,shorten <=4pt] (H) -- (E) [thick];
\draw (5,1.7) circle(2pt) coordinate(I) node[above]{$y_1$};
\qarrow{D}{I}
\qarrow{I}{C}
\draw[->, dashed,shorten >=2pt,shorten <=4pt] (H) to [out = 90, in = 180] (I);
} 
\end{scope}
\end{tikzpicture}
\end{align}
This corresponds to a decorated braid relation
$(\overset{1}{s} t s \overset{2}{t}) = (\overset{2}{t} s t \overset{1}{s})$. 
When we do not have $y_1$ (resp. $y_2$), it corresponds to
$(s t s \overset{2}{t}) = (\overset{2}{t} s t s)$ 
(resp. $(\overset{1}{s} t s t) = (t s t \overset{1}{s})$).
\subsection{Decorated quivers }
\label{subsec:tildeQ}

For a classical finite dimensional Lie algebra $\mathfrak{g}$, we write $h$ for the Coxeter number:

\begin{table}[H]
  \begin{tabular}{|c||c|c|c|c|} \hline
    $\mathfrak{g}$ & $A_n$ & $B_n$ & $C_n$ & $D_n$ \\ \hline
    $h$ & $n+1$ & $2n$ & $2n$ & $2n-2$ \\ \hline 
  \end{tabular}
\end{table}

Let $\bi_Q(n)$ be the following reduced expression of the longest word $w_0$ in $W(\mathfrak{g})$.
\begin{table}[H]
  \begin{tabular}{|c|c|c|} \hline
    $\mathfrak{g}$ & $\bi_Q(n)$ & $|\bi_Q(n)|$\\ \hline
    $A_n$ & $(1~21~321~\ldots~n(n-1)\ldots1)$ & $n(n+1)/2$ \\
    $B_n$, $C_n$ & $((12\ldots n)^n)$ & $n^2$
    \\
    $D_n$ & $((12\ldots n)^{n-1})$ & $(n-1)n$\\ \hline
  \end{tabular}
\end{table}
\noindent
The quiver $\bJ(\bi_Q(n))$ contains vertices $v^s_i$ for $s \in S$ and 
\begin{align}\label{eq:sQ-label}
i=
\begin{cases}
1,\ldots,n+2-s & \text{ for $A_n$}, \\
1,\ldots,n+1 & \text{ for $B_n$ and $C_n$}, \\
1,\ldots,n & \text{ for $D_n$}.
\end{cases}
\end{align}
For $s \in S$, we write $i_{\max}(s)$ for the maximum number appearing in \eqref{eq:sQ-label}.
One can see $\bJ(\bi_Q(n))$ for the cases of $A_3$, $C_3$ and $D_4$,
by ignoring vertices $y_i$ of $\widetilde{\bJ}(\bi_Q(n))$ (defined after Lemma  \ref{lem:Q-J}) 
in Figures \ref{fig:tildeQ-A3}, \ref{fig:tJ-C3} and \ref{fig:tJ-D4} respectively.

In the case of $\mathfrak{g} = A_n$, we use another quiver $\bJ(\bi_Q^\ast(n))$
given by a reduced word $\bi_Q^\ast(n)$ obtained from $\bi_Q(n)$ by replacing each alphabet $s$ in $\bi_Q(n)$ with $s^\ast$. (Recall the definition of $s^\ast$ in Theorem \ref{cor:DT}.) 
The quiver $\bJ(\bi_Q^\ast(n))$ contains vertices $u^s_i$ for $s \in S$ and $i=1,\ldots,s+1$. 
One can find the case of $A_3$ in Figure \ref{fig:tildeQ-A3}, 
ignoring $y_i'$ of $\widetilde{\bJ}(\bi^\ast_Q(3))$.  

For $k \in \Z_{>0}$, prepare $k$ copies of $\bJ(\bi_Q(n))$, and for $\ell \in \Z_k$ write $v^{s,(\ell)}_i$ for the vertex $v^s_i$ in the $\ell$-th copy of $\bJ(\bi_Q(n))$.
In the case of $\mathfrak{g}=A_n$, do the same for the vertices $u^s_i$ in the $k$ copies of $\bJ(\bi_Q^\ast(n))$. 
The following lemma is easily checked.

\begin{lem}\label{lem:Q-J}
The quiver $Q_{kh}(A_n)$ is an amalgamation of $k$ copies of each   
$\bJ(\bi_Q(n))$ and $\bJ(\bi_Q^\ast(n))$, by identifying 
$v^{s,(\ell)}_1$ with $u^{s,(\ell-1)}_{s+1}$, and $v^{s,(\ell)}_{i_{\max}(s)}$ with $u^{s,(\ell)}_1$ for all $s \in S$ and $\ell \in \Z_k$. 
For the other $\mathfrak{g}$, the quiver $Q_{kh/2}(\mathfrak{g})$ is an 
amalgamation of $k$ copies of $\bJ(\bi_Q(n))$, by identifying 
$v^{s,(\ell)}_1$ with $v^{s,(\ell-1)}_{i_{\max}(s)}$
for all $s \in S$ and $\ell \in \Z_k$.

\end{lem}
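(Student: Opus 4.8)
The statement to prove is Lemma~\ref{lem:Q-J}, which identifies $Q_{kh}(A_n)$ (resp.\ $Q_{kh/2}(\mathfrak{g})$ for $\mathfrak{g}=B_n,C_n,D_n$) with an explicit iterated amalgamation of the word-quivers $\bJ(\bi_Q(n))$ and $\bJ(\bi_Q^\ast(n))$. My plan is to verify this by a direct combinatorial comparison of the two weighted quivers---vertex sets, weights, and the structure matrix entry by entry. The key point is that both sides are built from the same ``column'' data $\{v^s_i\}$ arranged around $\Z_{kh}$ (resp.\ $\Z_{kh/2}$), so the real content is bookkeeping about how the elementary quivers $\bJ(s)$ glue up.

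\smallskip

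First I would pin down the indexing dictionary. For $A_n$: each copy of $\bJ(\bi_Q(n))$ contributes, for each $s\in S$, vertices $v^s_1,\dots,v^s_{i_{\max}(s)}$ with $i_{\max}(s)=n+2-s$, and each copy of $\bJ(\bi_Q^\ast(n))$ contributes $u^s_1,\dots,u^s_{s+1}$; note $i_{\max}(s)+(s+1)-2 = n+1 = h$, so after the prescribed identifications ($v^{s,(\ell)}_1\sim u^{s,(\ell-1)}_{s+1}$ and $v^{s,(\ell)}_{i_{\max}(s)}\sim u^{s,(\ell)}_1$) the vertices along the $s$-th strand form a cycle of length exactly $k h$. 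This is exactly the vertex set $\{v^s_i\mid i\in\Z_{kh},\ s\in S\}$ of $Q_{kh}(A_n)$ from \S\ref{subsec:Q_m}. For $B_n,C_n,D_n$ the count is $i_{\max}(s)+i_{\max}(s)-2$ summing correctly to $k\cdot(h/2)$ after the single-family identification $v^{s,(\ell)}_1\sim v^{s,(\ell-1)}_{i_{\max}(s)}$, using $|\bi_Q(n)|$ as recorded in the table (e.g.\ $n^2$ for $B_n,C_n$ with $h=2n$ gives $h/2=n$ vertices per strand per copy). Weights match trivially since $d(s)_{s_\pm}=d_s$ throughout.

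\smallskip

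Next I would compare the structure matrix. Within a single $\bJ(\bi_Q(n))$ the horizontal arrows $v^s_i\to v^s_{i+1}$ come from the $s_-\to s_+$ arrow of each $\bJ(s)$; these assemble into the arrows $v^s_i\to v^s_{i+1}$ required by $Q_{kh}(A_n)$, and the amalgamation formula (adding $\ve(s)_{s_+s_-}$-type contributions at the glued vertices) produces no spurious $2$-cycles---this is the usual fact that in $\bJ(\bi)$ two consecutive copies cancel correctly. The diagonal arrows: $\ve(s)_{s_\pm t}=\pm(-C_{ts})/2$ means that in each $\bJ(s)$ there are half-arrows to/from the $t$-columns for $t$ adjacent to $s$; amalgamating a consecutive pair of elementary quivers $\bJ(s)\bJ(t)$ (resp.\ $\bJ(t)\bJ(s)$) combines two halves into a full arrow, and one reads off precisely the pattern ``$v^s_i\to v^t_i$ and $v^t_{i+1}\to v^s_i$ when $\sigma_{st}\geq 1$'' demanded in \S\ref{subsec:Q_m}. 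Here I would lean on \cite[\S3.7]{FG06} and on the braid-relation pictures \eqref{quiver:m=3}, \eqref{quiver:m=4} to see that the word $\bi_Q(n)$ (which for $A_n$ is the ``staircase'' word and for $B_n,C_n,D_n$ is the Coxeter-power word $((12\cdots n)^{\ast})$) produces exactly the cyclic column structure; the asterisked word $\bi_Q^\ast(n)$ interleaves so that the glued result closes up into the full cycle of length $kh$. Since the only remaining frozen vertices after amalgamation are the defrosted unfrozen ones and these are exactly the $v^s_i$ not lying on a gluing seam, the claim about frozen/unfrozen vertices also follows (this is what Lemma~\ref{lem:Q-J} means by ``up to frozen vertices'' in the broader discussion, though in this lemma the amalgamation is literally $Q_{kh}$).

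\smallskip

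\textbf{Main obstacle.} The genuinely fiddly part is checking the diagonal-arrow multiplicities and \emph{orientations} at the seams, i.e.\ that amalgamating $\bJ(s)$ with $\bJ(t)$ for every adjacent pair along the whole word $\bi_Q(n)$, and then amalgamating the $k$ blocks cyclically, yields neither an extra arrow nor a cancelled one---equivalently that $i_{\max}(s)$ and the staircase/Coxeter-power shape of $\bi_Q(n)$ are exactly calibrated to the cycle lengths $kh$ (resp.\ $kh/2$). I expect this to require tracing, for a general $A_n$ (and separately $B_n/C_n$, $D_n$), the sequence of elementary-quiver amalgamations column by column and invoking the $m_{st}=2,3,4$ braid-move compatibilities of \S\ref{subsec:J-equivalence} to normalize the word; the finite-type cases $A_3$, $C_3$, $D_4$ in Figures~\ref{fig:tildeQ-A3}, \ref{fig:tJ-C3}, \ref{fig:tJ-D4} serve as the template, and the general case is ``the same with more columns.'' Once the seam analysis is done the identification of structure matrices is immediate and the lemma follows.
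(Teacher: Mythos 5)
Your proposal is correct and matches the paper's intent: the paper offers no written proof (it says the lemma "is easily checked"), and the intended argument is exactly your direct bookkeeping — counting vertices per strand ($i_{\max}(s)+(s+1)-2=h$ for $A_n$, $i_{\max}(s)-1=h/2$ otherwise) and matching the structure matrix entries, with the half-arrows $\ve(s)_{s_\pm t}=\pm(-C_{ts})/2$ of consecutive elementary quivers pairing up into the diagonal arrows of $Q_m(\mathfrak{g})$. The only superfluous element is your appeal to the braid-move compatibilities of \S 4.2: the lemma concerns the literal amalgamation for the fixed words $\bi_Q(n)$ and $\bi_Q^\ast(n)$, so no word normalization is needed and the seam check is purely the vertex-identification dictated by \cref{cor:edge}-style edge matching.
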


We define a decorated quiver $\widetilde{\bJ}(\bi_Q(n))$
(also $\widetilde{\bJ}(\bi_Q^\ast(n))$ for $A_n$) 
by adding $n$ frozen 
vertices $y_i ~(i=1,\ldots,n)$ to $\bJ(\bi_Q(n))$ with 
the following arrows. In the case of $\mathfrak{g}=A_n$, to $\bJ(\bi_Q(n))$ 
we add
\begin{itemize}
\item $v^1_i \leftarrow y_i \leftarrow v^1_{i+1}$ for $i=1,\ldots,n$,  

\item $y_i \dashrightarrow y_{i+1}$ for $i=1,\ldots,n-1$, 
\end{itemize}
and to $\bJ(\bi_Q^\ast(n))$ we add
\begin{itemize}
\item
$u^n_{i} \leftarrow y_i' \leftarrow u^n_{i+1}$ for $i=1,\ldots,n$,  

\item $y_i' \dashrightarrow y_{i+1}'$ for $i=1,\ldots,n-1$.
\end{itemize} 
In the case of $B_n$ and $C_n$, to $\bJ(\bi_Q(n))$ we add
\begin{itemize}
\item $v^1_1 \leftarrow y_1 \leftarrow v^1_{2}$ 

\item $v^n_{i} \leftarrow y_i \leftarrow v^n_{i+1}$ 
for $i=2,\ldots,n$, 

\item $y_i \dashrightarrow y_{i+1}$ 
for $i=1,\ldots,n-1$,
\end{itemize}
and in the case of $D_n$ we add
\begin{itemize}
\item $v^s_1 \leftarrow y_s \leftarrow v^s_{2}$ 
for $s=1,2$,

\item $v^n_{i-1} \leftarrow y_{i} \leftarrow v^n_{i}$ 
for $i=3,\ldots,n$, 

\item $y_s \dashrightarrow y_{3}$ 
for $s=1,2$, 

\item $y_i \dashrightarrow y_{i+1}$ 
for $i=3,\ldots,n-1$. 
\end{itemize}
The weights of the frozen vertices $y_s$ and $y_s'$ are $d_s$.
See Figure \ref{fig:tildeQ-A3}, Figure \ref{fig:tJ-C3} and
Figure \ref{fig:tJ-D4} for $\widetilde{\bJ}(\bi_Q(n))$ in the cases of $A_3$, $C_3$ and $D_4$ respectively.

In the same way as Lemma \ref{lem:Q-J},
we define $\widetilde{Q}_{kh}(A_n)$ to be the amalgamation of 
$k$ copies of $\widetilde{\bJ}(\bi_Q(n))$ and $\widetilde{\bJ}(\bi_Q^\ast(n))$.
In the other cases of $\mathfrak{g}$, we define $\widetilde{Q}_{kh/2}(\mathfrak{g})$ to be the amalgamation of $k$ copies of 
$\widetilde{\bJ}(\bi_Q(n))$. 
Thus the quiver $\widetilde{Q}_{kh}(A_n)$ (resp. $\widetilde{Q}_{kh/2}(\mathfrak{g})$ for the other $\mathfrak{g}$) is a decorated quiver of $Q_{kh}(A_n)$ with $2nk$ frozen vertices (resp. $\widetilde{Q}_{kh/2}(\mathfrak{g})$ with $nk$ frozen vertices).

\begin{remark}
The quiver $\widetilde{\bJ}(\bi_Q(n))$ was introduced by Fock-Goncharov \cite{FG03} for type $A_n$ and by Le \cite{Le16} for other types, to encode the cluster structure of the configuration spaces $\Conf_3\A_G$ and $\Conf_3\B_{G'}$. 
See \ref{subsec:modulicluster}.
\end{remark}

\begin{figure}[ht]
\begin{tikzpicture}
\begin{scope}[>=latex]
\draw (0,8) circle(2pt) coordinate(A1) node[above left]{$v^1_4$};
\draw (2,8) circle(2pt) coordinate(A2) node[above left]{$v^2_4$};
\draw (4,8) circle(2pt) coordinate(A3) node[above left]{$v^3_4$};
\qarrow{A2}{A1}
\qarrow{A3}{A2}
\draw (0,6) circle(2pt) coordinate(B1) node[above left]{$v^1_1$};
\draw (2,6) circle(2pt) coordinate(B2) node[above left]{$v^2_1$};
\draw (4,6) circle(2pt) coordinate(B3) node[above left]{$v^3_1$};
\qarrow{B2}{B1}
\qarrow{B3}{B2}
%
\qarrow{A1}{B1}
\qarrow{A2}{B2}
\qarrow{A3}{B3}
%
\qarrow{B1}{A2}
\qarrow{B2}{A3}
%
\draw (0,4) circle(2pt) coordinate(C1) node[above left]{$v^1_2$};
\draw (2,4) circle(2pt) coordinate(C2) node[above left]{$v^2_2$};
\draw (4,4) circle(2pt) coordinate(C3) node[above left]{$v^3_2$};
\qarrow{C2}{C1}
\qarrow{C3}{C2}
%
\qarrow{B1}{C1}
\qarrow{B2}{C2}
\qarrow{B3}{C3}
%
\qarrow{C1}{B2}
\qarrow{C2}{B3}
%
\draw (0,2) circle(2pt) coordinate(D1) node[above left]{$v^1_3$};
\draw (2,2) circle(2pt) coordinate(D2) node[above left]{$v^2_3$};
\draw (4,2) circle(2pt) coordinate(D3) node[above left]{$v^3_3$};
\qarrow{D2}{D1}
\qarrow{D3}{D2}
%
\qarrow{C1}{D1}
\qarrow{C2}{D2}
\qarrow{C3}{D3}
%
\qarrow{D1}{C2}
\qarrow{D2}{C3}
%
\draw (0,0) circle(2pt) coordinate(E1) node[above left]{$v^1_4$};
\draw (2,0) circle(2pt) coordinate(E2) node[above left]{$v^2_4$};
\draw (4,0) circle(2pt) coordinate(E3) node[above left]{$v^3_4$};
\qarrow{E2}{E1}
\qarrow{E3}{E2}
%
\qarrow{D1}{E1}
\qarrow{D2}{E2}
\qarrow{D3}{E3}
%
\qarrow{E1}{D2}
\qarrow{E2}{D3}
%
\draw (0,-2) circle(2pt) coordinate(F1) node[above left]{$v^1_1$};
\draw (2,-2) circle(2pt) coordinate(F2) node[above left]{$v^2_1$};
\draw (4,-2) circle(2pt) coordinate(F3) node[above left]{$v^3_1$};
\qarrow{F2}{F1}
\qarrow{F3}{F2}
%
\qarrow{E1}{F1}
\qarrow{E2}{F2}
\qarrow{E3}{F3}
%
\qarrow{F1}{E2}
\qarrow{F2}{E3}
%
{\color{blue}
\draw (-1,5) circle(2pt) coordinate(Y1) node[left]{$y_1$};
\draw (-1,3) circle(2pt) coordinate(Y2) node[left]{$y_2$};
\draw (-1,1) circle(2pt) coordinate(Y3) node[left]{$y_3$};
\qarrow{C1}{Y1}
\qarrow{Y1}{B1}
\qarrow{D1}{Y2}
\qarrow{Y2}{C1}
\qarrow{E1}{Y3}
\qarrow{Y3}{D1}
\qdarrow{Y1}{Y2}
\qdarrow{Y2}{Y3}
\draw (5,3) circle(2pt) coordinate(X1) node[right]{$y_1'$};
\draw (5,1) circle(2pt) coordinate(X2) node[right]{$y_2'$};
\draw (5,-1) circle(2pt) coordinate(X3) node[right]{$y_3'$};
\draw (5,7) circle(2pt) coordinate(X4) node[right]{$y_3'$};
\qarrow{D3}{X1}
\qarrow{X1}{C3}
\qarrow{E3}{X2}
\qarrow{X2}{D3}
\qarrow{F3}{X3}
\qarrow{X3}{E3}
\qdarrow{X1}{X2}
\qdarrow{X2}{X3}
\qarrow{B3}{X4}
\qarrow{X4}{A3}
\coordinate (X5) at (5,8);
\qdarrow{X5}{X4}
}
{\color{red}
\coordinate (P1) at (-2,7); \coordinate (P2) at (6,7);
\coordinate (P3) at (-2,-1); \coordinate (P4) at (6,-1);
\draw[dashed] (P1) -- (P2);
\draw[dashed] (P1) -- (P3);
\draw[dashed] (P2) -- (P4);
\draw[dashed] (P3) -- (P4);
}
\end{scope}
\begin{scope}[>=latex,xshift=250pt,yshift=50pt]
\draw (0,6) circle(2pt) coordinate(B1) node[above left]{$v^1_1$};
\draw (2,6) circle(2pt) coordinate(B2) node[above left]{$v^2_1$};
\draw (4,6) circle(2pt) coordinate(B3) node[above left]{$v^3_1$};
\qdarrow{B2}{B1}
\qdarrow{B3}{B2}
\draw (0,4) circle(2pt) coordinate(C1) node[left]{$v^1_2$};
\draw (2,4) circle(2pt) coordinate(C2) node[above left]{$v^2_2$};
\draw (4,4) circle(2pt) coordinate(C3) node[above left]{$v^3_2$};
\qarrow{C2}{C1}
\qarrow{C3}{C2}
\qarrow{B1}{C1}
\qarrow{B2}{C2}
\qarrow{B3}{C3}
\qarrow{C1}{B2}
\qarrow{C2}{B3}
%
\draw (0,2) circle(2pt) coordinate(D1) node[left]{$v^1_3$};
\draw (2,2) circle(2pt) coordinate(D2) node[above left]{$v^2_3$};
\qarrow{D2}{D1}
%
\qarrow{C1}{D1}
\qarrow{C2}{D2}
%
\qarrow{D1}{C2}
\qdarrow{D2}{C3}
\draw (0,0) circle(2pt) coordinate(E1) node[left]{$v^1_4$};
\qarrow{D1}{E1}
\qdarrow{E1}{D2}
\qarrow{F2}{F1}
\qarrow{F3}{F2}
{\color{blue}
\draw (-1,5) circle(2pt) coordinate(Y1) node[left]{$y_1$};
\draw (-1,3) circle(2pt) coordinate(Y2) node[left]{$y_2$};
\draw (-1,1) circle(2pt) coordinate(Y3) node[left]{$y_3$};
\qarrow{C1}{Y1}
\qarrow{Y1}{B1}
\qarrow{D1}{Y2}
\qarrow{Y2}{C1}
\qarrow{E1}{Y3}
\qarrow{Y3}{D1}
\qdarrow{Y1}{Y2}
\qdarrow{Y2}{Y3}
}
\end{scope}
\begin{scope}[>=latex,xshift=250,yshift=-10]
\draw (4,4) circle(2pt) coordinate(C3) node[left]{$u^3_1$};
\draw (2,2) circle(2pt) coordinate(D2) node[left]{$u^2_1$};
\draw (4,2) circle(2pt) coordinate(D3) node[above left]{$u^3_2$};
\qarrow{D3}{D2}
\qarrow{C3}{D3}
\qdarrow{D2}{C3}
\draw (0,0) circle(2pt) coordinate(E1) node[left]{$u^1_1$};
\draw (2,0) circle(2pt) coordinate(E2) node[above left]{$u^2_2$};
\draw (4,0) circle(2pt) coordinate(E3) node[above left]{$u^3_3$};
\qarrow{E2}{E1}
\qarrow{E3}{E2}
\qarrow{D2}{E2}
\qarrow{D3}{E3}
\qdarrow{E1}{D2}
\qarrow{E2}{D3}
%
\draw (0,-2) circle(2pt) coordinate(F1) node[above left]{$u^1_2$};
\draw (2,-2) circle(2pt) coordinate(F2) node[above left]{$u^2_3$};
\draw (4,-2) circle(2pt) coordinate(F3) node[above left]{$u^3_4$};
\qdarrow{F2}{F1}
\qdarrow{F3}{F2}
\qarrow{E1}{F1}
\qarrow{E2}{F2}
\qarrow{E3}{F3}
\qarrow{F1}{E2}
\qarrow{F2}{E3}
{\color{blue}
\draw (5,3) circle(2pt) coordinate(X1) node[right]{$y_1'$};
\draw (5,1) circle(2pt) coordinate(X2) node[right]{$y_2'$};
\draw (5,-1) circle(2pt) coordinate(X3) node[right]{$y_3'$};
\qarrow{D3}{X1}
\qarrow{X1}{C3}
\qarrow{E3}{X2}
\qarrow{X2}{D3}
\qarrow{F3}{X3}
\qarrow{X3}{E3}
\qdarrow{X1}{X2}
\qdarrow{X2}{X3}
\qarrow{X4}{A3}
}
\end{scope}
\end{tikzpicture}
\caption{The quivers $\widetilde{Q}_4(A_3)$ (left), $\widetilde{\bJ}(\bi_Q(3))$ (upper right) and $\widetilde{\bJ}(\bi^\ast_Q(3))$ (lower right), with $\bi_Q(3) = (1 21 321)$. A red dashed rectangle denotes the fundamental domain of the quiver.}
\label{fig:tildeQ-A3}
\end{figure}

\begin{figure}[ht]
\[
\begin{tikzpicture}
\begin{scope}[>=latex]
\foreach \i in {1,2,3,4}
{
	\foreach \s in {2,3}
	{
		\draw (2*\s-2,10-2*\i) node[circle]{2} node[above left]{$v_\i^\s$}; 
		\draw (2*\s-2,10-2*\i) circle[radius=0.15];
	}
	\draw (0,10-2*\i) circle(2pt) node[above left]{$v_\i^1$};
}
\foreach \i in {1,2,3}
{
	\foreach \s in {2,3}
	\qsarrow{2*\s-2,10-2*\i}{2*\s-2,8-2*\i};
	\qarrow{0,10-2*\i}{0,8-2*\i};
}
\foreach \i in {2,3}
{
	\qsarrow{4,10-2*\i}{2,10-2*\i}
	\qstarrow{2,10-2*\i}{0,10-2*\i}
}
\foreach \i in {1,4}
{
	\qsdarrow{4,10-2*\i}{2,10-2*\i}
	\qstdarrow{2,10-2*\i}{0,10-2*\i}
}
\foreach \i in {2,3,4}
{
	\qsarrow{2,10-2*\i}{4,12-2*\i}
	\qsharrow{0,10-2*\i}{2,12-2*\i}
}
{\color{blue}
\draw (-1,7) circle(2pt) coordinate(Y1) node[left]{$y_1$};

\foreach \s in {2,3}
{
	\draw (5,9-2*\s) node[circle]{2} coordinate(Y\s) node[right]{$y_\s$};
	\draw (5,9-2*\s) circle[radius=0.15];
}
\qarrow{0,6}{Y1}
\qarrow{Y1}{0,8}
\qsarrow{4,4}{Y2}
\qsarrow{Y2}{4,6}
\qsdarrow{Y2}{Y3}
\qsarrow{4,2}{Y3}
\qsarrow{Y3}{4,4}
\draw[->,dashed,shorten >=4pt,shorten <=2pt] (Y1) to [out = 0, in = 100] (Y2) [thick];
}
{\begin{scope}[xshift=8cm]
\foreach \i in {1,2,3,4}
{
	\foreach \s in {2,3}
	\draw (2*\s-2,10-2*\i) circle(2pt) node[above left]{$v_\i^\s$}; 
	\path (0,10-2*\i) node[circle]{2} node[above left]{$v_\i^1$};
	\draw (0,10-2*\i) circle[radius=0.15];
}
\foreach \i in {1,2,3}
{
	\foreach \s in {2,3}
	\qarrow{2*\s-2,10-2*\i}{2*\s-2,8-2*\i};
	\qsarrow{0,10-2*\i}{0,8-2*\i};
}
\foreach \i in {2,3}
{
	\qarrow{4,10-2*\i}{2,10-2*\i}
	\qsharrow{2,10-2*\i}{0,10-2*\i}
}
\foreach \i in {1,4}
{
	\qdarrow{4,10-2*\i}{2,10-2*\i}
	\qshdarrow{2,10-2*\i}{0,10-2*\i}
}
\foreach \i in {2,3,4}
{
	\qarrow{2,10-2*\i}{4,12-2*\i}
	\qstarrow{0,10-2*\i}{2,12-2*\i}
}
{\color{blue}
\path (-1,7) node[circle]{2} coordinate(Y1) node[left]{$y_1$};
\draw (-1,7) circle[radius=0.15];
\foreach \s in {2,3}
\draw (5,9-2*\s) circle(2pt) coordinate(Y\s) node[right]{$y_\s$};
\qsarrow{0,6}{Y1}
\qsarrow{Y1}{0,8}
\qarrow{4,4}{Y2}
\qarrow{Y2}{4,6}
\qdarrow{Y2}{Y3}
\qarrow{4,2}{Y3}
\qarrow{Y3}{4,4}
\draw[->,dashed,shorten >=2pt,shorten <=4pt] (Y1) to [out = 0, in = 100] (Y2) [thick];
}
\end{scope}}
\end{scope}
\end{tikzpicture}
\]
\caption{The quivers $\widetilde{\bJ}((123)^3)$ for $\mathfrak{g}=B_3$ (left) and $\mathfrak{g}=C_3$ (right).}
\label{fig:tJ-C3}
\end{figure}
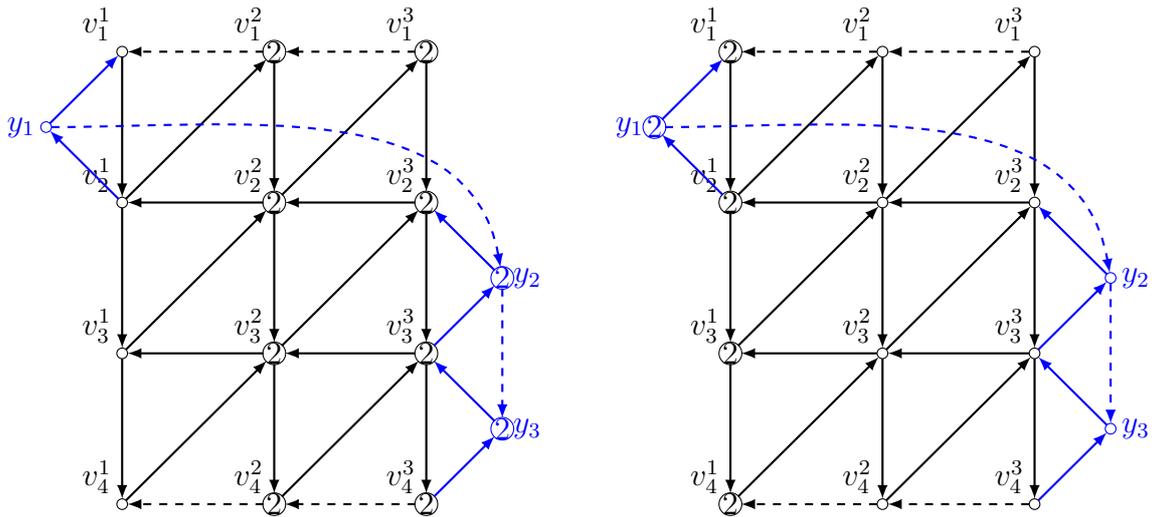

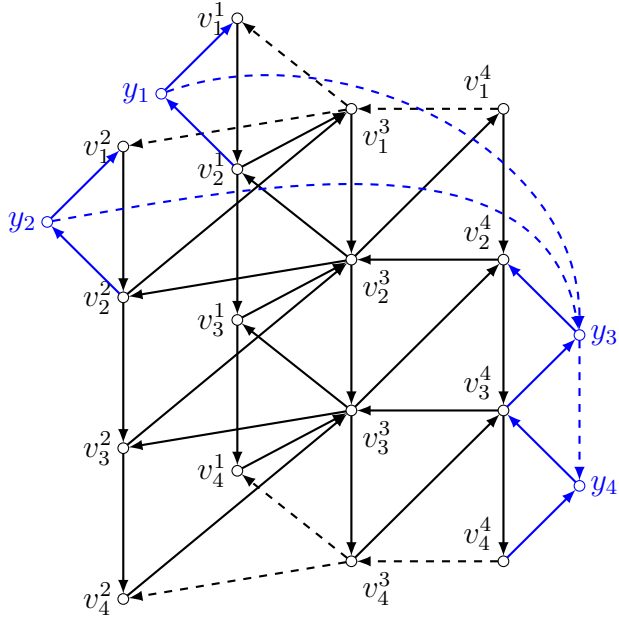
\begin{figure}[ht]
\begin{tikzpicture}
\begin{scope}[>=latex]
\draw (0.5,9.2) circle(2pt) coordinate(A0) node[left]{$v^1_1$};
\draw (-1,7.5) circle(2pt) coordinate(A1) node[left]{$v^2_1$};
\draw (2,8) circle(2pt) coordinate(A2) node[below right]{$v^3_1$};
\draw (4,8) circle(2pt) coordinate(A3) node[above left]{$v^4_1$};
\qdarrow{A2}{A0}
\qdarrow{A2}{A1}
\qdarrow{A3}{A2}
\draw (0.5,7.2) circle(2pt) coordinate(B0) node[left]{$v^1_2$};
\draw (-1,5.5) circle(2pt) coordinate(B1) node[left]{$v^2_2$};
\draw (2,6) circle(2pt) coordinate(B2) node[below right]{$v^3_2$};
\draw (4,6) circle(2pt) coordinate(B3) node[above left]{$v^4_2$};
\qarrow{B2}{B0}
\qarrow{B2}{B1}
\qarrow{B3}{B2}
%
\qarrow{A0}{B0}
\qarrow{A1}{B1}
\qarrow{A2}{B2}
\qarrow{A3}{B3}
\qarrow{B0}{A2}
\qarrow{B1}{A2}
\qarrow{B2}{A3}
%
\draw (0.5,5.2) circle(2pt) coordinate(C0) node[left]{$v^1_3$};
\draw (-1,3.5) circle(2pt) coordinate(C1) node[left]{$v^2_3$};
\draw (2,4) circle(2pt) coordinate(C2) node[below right]{$v^3_3$};
\draw (4,4) circle(2pt) coordinate(C3) node[above left]{$v^4_3$};
\qarrow{C2}{C0}
\qarrow{C2}{C1}
\qarrow{C3}{C2}
%
\qarrow{B0}{C0}
\qarrow{B1}{C1}
\qarrow{B2}{C2}
\qarrow{B3}{C3}
%
\qarrow{C0}{B2}
\qarrow{C1}{B2}
\qarrow{C2}{B3}
%
\draw (0.5,3.2) circle(2pt) coordinate(D0) node[left]{$v^1_4$};
\draw (-1,1.5) circle(2pt) coordinate(D1) node[left]{$v^2_4$};
\draw (2,2) circle(2pt) coordinate(D2) node[below right]{$v^3_4$};
\draw (4,2) circle(2pt) coordinate(D3) node[above left]{$v^4_4$};
\qdarrow{D2}{D0}
\qdarrow{D2}{D1}
\qdarrow{D3}{D2}
%
\qarrow{C0}{D0}
\qarrow{C1}{D1}
\qarrow{C2}{D2}
\qarrow{C3}{D3}
%
\qarrow{D0}{C2}
\qarrow{D1}{C2}
\qarrow{D2}{C3}
%
{\color{blue}
\draw (-0.5,8.2) circle(2pt) coordinate(Y0) node[left]{$y_1$};
\draw (-2,6.5) circle(2pt) coordinate(Y1) node[left]{$y_2$};
\draw (5,5) circle(2pt) coordinate(Y2) node[right]{$y_3$};
\draw (5,3) circle(2pt) coordinate(Y3) node[right]{$y_4$};
\qarrow{Y0}{A0}
\qarrow{B0}{Y0}
\qarrow{Y1}{A1}
\qarrow{B1}{Y1}
\qarrow{C3}{Y2}
\qarrow{Y2}{B3}
\qarrow{D3}{Y3}
\qarrow{Y3}{C3}
\qdarrow{Y2}{Y3}
\draw[->,dashed,shorten >=2pt,shorten <=2pt] (Y1) to [out = 10, in = 100] (Y2) [thick];
\draw[->,dashed,shorten >=2pt,shorten <=2pt] (Y0) to [out = 20, in = 90] (Y2) [thick];
}
\end{scope}
\end{tikzpicture}
\caption{The quiver $\widetilde{\bJ}((1234)^3)$ for $\mathfrak{g}=D_4$}
\label{fig:tJ-D4}
\end{figure}

\begin{prop}\label{prop:tildeaction}
In the case of $\mathfrak{g} = B_n, C_n, D_n$, 
the action of $R(s,i)$ on the seeds $(Q_{kh/2}(\mathfrak{g}), \mathbf{X}, \mathbf{A})$ is extended to that on the seeds $(\widetilde{Q}_{kh/2}(\mathfrak{g}),\widetilde{\mathbf{X}}, \widetilde{\mathbf{A}})$, 
where $\widetilde{\mathbf{X}}$ and $\widetilde{\mathbf{A}}$ 
include variables at frozen vertices. 
This is again independent of $i$, preserves the quiver $\widetilde{Q}_{kh/2}(\mathfrak{g})$, and induces $W(\mathfrak{g})$-action on the spaces 
$\A_{\widetilde{Q}_{kh/2}(\mathfrak{g})}$ and $\X_{\widetilde{Q}_{kh/2}(\mathfrak{g})}$.
In the case of $\mathfrak{g} = A_n$, all these hold by replacing $\widetilde{Q}_{kh/2}(\mathfrak{g})$ with $\widetilde{Q}_{kh}(A_n)$.
\end{prop}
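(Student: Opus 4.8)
The plan is to use that each $R(s,i)$ is, by \eqref{eq:R-mu}, a composition of mutations at the unfrozen vertices $v^s_j$ on the cycle $P_s$ of $Q_{kh/2}(\mathfrak{g})$; these vertices stay unfrozen after adjoining the new vertices $y_\bullet$, so $R(s,i)$ acts verbatim on $(\widetilde{Q}_{kh/2}(\mathfrak{g}),\widetilde{\mathbf{X}},\widetilde{\mathbf{A}})$, and the only point to check is that the arrows meeting the $y_\bullet$ are restored. Call $P_s$ \emph{decorated} if some $y_\bullet$ is adjacent to a vertex of $P_s$; from the recipe for $\widetilde{\bJ}(\bi_Q(n))$ (and $\widetilde{\bJ}(\bi_Q^\ast(n))$ for $A_n$) the decorated cycles are $P_1,P_n$ for $A_n,B_n,C_n$ and $P_1,P_2,P_n$ for $D_n$. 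If $P_s$ is not decorated, then no $y_\bullet$ is a neighbor of any $v^s_j$; since a mutation changes only the arrows among neighbors of the mutated vertex, an induction over the steps of $R(s,i)$ shows that no arrow incident to a $y_\bullet$ is ever modified, so $R(s,i)(\widetilde{Q}_{kh/2}(\mathfrak{g}))=\widetilde{Q}_{kh/2}(\mathfrak{g})$ follows at once from \cref{prop:RonQ}.

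The substance lies in the decorated generators $s\in\{1,n\}$ (and $s=2$ for $D_n$). Each relevant $y_\bullet$ sits on an edge of $P_s$, forming an oriented triangle with it, so the intermediate mutations of $R(s,i)$ temporarily create arrows between $y_\bullet$ and the cycles $P_t$ Coxeter-adjacent to $s$, and I must show these cancel at the end. I would carry out this bookkeeping using \cref{subsec:J-equivalence}: near $P_s$ the decorated quiver is assembled from the decorated elementary pieces, and the decorated braid relations \eqref{quiver:frozen-move1} and \eqref{quiver:frozen-move2} record exactly how the $y_\bullet$ are transported by the elementary mutations comprising $R(s,i)$, so one can propagate them along the sequence and read off that the periodicity built into the amalgamation returns them to their starting positions. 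Equivalently, one redoes the computation of \cref{lem:Qk=Qbark} with the frozen triangle-vertices inserted and checks the cancellation of the transient arrows directly; either way it is a finite check, one for each of $m_{st}\in\{2,3,4\}$.

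Granting that each $R(s,i)$ preserves $\widetilde{Q}_{kh/2}(\mathfrak{g})$, the remaining two assertions are formal. Adjoining frozen vertices leaves the exchange matrix entries among the old vertices unchanged, so the unfrozen part of the exchange matrix of $\widetilde{Q}_{kh/2}(\mathfrak{g})$ equals that of $Q_{kh/2}(\mathfrak{g})$, and $R(s,i)$ mutates only at unfrozen vertices; hence by \cref{thm:Nakanishi}(2) a mutation sequence is trivial on $\widetilde{Q}_{kh/2}(\mathfrak{g})$ as soon as it is trivial on $Q_{kh/2}(\mathfrak{g})$ (this is the same device used in the proof of \cref{thm:Weyl-R}(2), applied in the opposite direction). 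Applying it to $R(s,i)^{-1}R(s,i')$ and to $(R(s)R(t))^{m_{st}}$, both trivial on $Q_{kh/2}(\mathfrak{g})$ by \cref{thm:Weyl-R}, yields the $i$-independence and the braid relations on $\widetilde{Q}_{kh/2}(\mathfrak{g})$. Therefore $r_s\mapsto R(s)$ extends to a $W(\mathfrak{g})$-action on the seed $(\widetilde{Q}_{kh/2}(\mathfrak{g}),\widetilde{\mathbf{X}},\widetilde{\mathbf{A}})$, hence on $\A_{\widetilde{Q}_{kh/2}(\mathfrak{g})}$ and $\X_{\widetilde{Q}_{kh/2}(\mathfrak{g})}$; the $A_n$ case is identical with $\widetilde{Q}_{kh}(A_n)$ in place of $\widetilde{Q}_{kh/2}(\mathfrak{g})$.

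The hard part will be the decorated quiver-preservation step: controlling, over the whole of $R(s,i)$, the cascade of arrows between the frozen vertices on $P_s$ and the neighboring cycles, and verifying that each one is put back. Everything else I expect to be routine; the decorated braid relations \eqref{quiver:frozen-move1}--\eqref{quiver:frozen-move2} were set up precisely to reduce this to a bounded case analysis.
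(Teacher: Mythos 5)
Your overall architecture matches the paper's: the extension to the decorated seed is automatic, the only substantive point is that $R(s,i)$ returns the quiver $\widetilde{Q}_{kh/2}(\mathfrak{g})$, and once that is known you transfer the $i$-independence and the Coxeter relations from the undecorated case via \cref{thm:Nakanishi}(2), exactly the device the paper has in mind when it says the proposition ``is proved in the similar way as Theorem \ref{thm:Weyl-R}.'' Where you diverge is the key quiver-preservation step for the decorated cycles: the paper does not redo the Lemma \ref{lem:Qk=Qbark}-type computation, but observes precisely the structural fact you isolate --- each $y_\bullet$ is attached to a single cycle $P_s$ by one incoming and one outgoing arrow (a triangle on an edge of $P_s$) and carries the same weight as the vertices of $P_s$ --- and then quotes \cite[Theorem 7.7]{GS16}, which asserts invariance of such decorated cycles under the mutation sequence $R$; your direct finite verification would also work and is essentially the content of that theorem, so your route is more self-contained but longer, while the paper's buys brevity at the cost of an external citation (and it additionally records the explicit transformation \eqref{eq:fa-tilde} of the frozen $A$- and $X$-variables, which your argument does not need for the statement). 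Two small corrections to your bookkeeping plan: the finite case analysis is not indexed by $m_{st}\in\{2,3,4\}$ (that indexing belongs to the braid relations, which you already dispose of via Nakanishi), but by the local configurations of the $y_\bullet$ along the decorated cycle; and on $P_n$ (and $P_1,P_2$ for $D_n$) consecutive frozen vertices share a cycle vertex, so the transient arrows you must track include arrows among the frozen vertices themselves interacting with the dashed arrows $y_i\dashrightarrow y_{i+1}$, not only arrows between $y_\bullet$ and the Coxeter-adjacent cycles; the decorated braid moves \eqref{quiver:frozen-move1}--\eqref{quiver:frozen-move2} encode word changes rather than the cycle sequence $R(s,i)$, so they are not the natural tool here, whereas your alternative of rerunning \cref{lem:Qk=Qbark} with the frozen triangle vertices inserted is the right direct check.
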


This proposition is proved in the similar way as Theorem \ref{thm:Weyl-R}, where the existence of the frozen vertices does not affect the original quiver very much.
We remark that each frozen vertex $y_i$ is attached to one of the circles $P_s$ with two arrows; one is from $y_i$ and another is toward to $y_i$, and that $y_i$ has the same weight as the vertices on $P_s$. Thus the invariance of $\widetilde{Q}_{kh/2}(\mathfrak{g})$ follows from \cite[Theorem 7.7]{GS16}. 
When $\mathfrak{g}=B_n,C_n,D_n$, by using \cite[eq.(3.4)]{ILP16}, the induced action on $\mathcal{A}_{\widetilde{Q}_{kh/2}(\mathfrak{g})}$ 
is given by \eqref{A-transf} with a modification $\widetilde{f}_A(s)$ of 
$f_A(s)$ as
\begin{align}\label{eq:fa-tilde}
  \widetilde{f}_A(s) 
  = 
  \displaystyle{\sum_{i \in \Z_{kh/2}} \frac{A_{y(s,i)}}{A^s_i A^s_{i+1}}
  \prod_{t \in s^+} (A^t_i)^{-\ve_{s t}} \cdot \prod_{t \in s^-}
  (A^t_{i+1})^{\ve_{s t}}}, 
\end{align}
and 
$R(s)^\ast (A_{y_i}) = A_{y_i}$, $R(s)^\ast (A_{y_i'}) = A_{y_i'}$ for $i=1,\ldots,n$.
Here $y(s,i)$ is a frozen vertex connecting unfrozen vertices $v^s_i$ and $v^s_{i+1}$ as $v^s_{i+1} \to y(s,i) \to v^s_i$ in $\widetilde{Q}_{kh/2}(\mathfrak{g})$.  
If there is no such frozen vertex, we set $A_{y(s,i)} = 1$. 
The induced action on $\mathcal{X}_{\widetilde{Q}_{kh/2}(\mathfrak{g})}$ 
is same as \eqref{eq:RonX} for the original vertices in $Q_{kh/2}(\mathfrak{g})$.
For the additional frozen vertices, by using \cite[Theorem 7.7]{GS16} we obtain 
$$
  R(s)^\ast(X_{y(t,j)}) 
  =
  \begin{cases}
  X_{y(s,j)} \frac{X^s_j f_X(s,j-1)}{f_X(s,j)} & t = s,
  \\
  X_{y(t,j)} & otherwise.
  \end{cases}
$$
The case of $\mathfrak{g}=A_n$ is obtained by rewording these formulae with
$\widetilde{Q}_{kh}(A_n)$.

\section{Application to the higher Teichm\"uller theory}\label{sec:moduli}
In this section, we show that our Weyl group action on cluster $\A$-varieties realizes the geometric Weyl group action in the context of higher Teichm\"uller theory. The main theorem in this section is Theorem \ref{thm:general surface}. This is an extension of Goncharov-Shen's result \cite{GS16} for type $A_n$ to arbitrary classical types. 

\subsection{Basic notations in Lie theory}\label{subsec:Lie}
In this subsection, we briefly recall basic terminologies in Lie theory. See \cite{Jan} for the details. 

Let $\mathfrak{g}$ be a complex finite dimensional semisimple Lie algebra associated with a Cartan matrix $C(\mathfrak{g})=(C_{st})_{s, t\in S}$ (recall \S~ \ref{subsec:Coxeter}).
We can realize $\mathfrak{g}$ as the complex Lie algebra generated by $\{e_s, f_s, \alpha^{\vee}_s\mid s\in S\}$ with the following relations: 
\begin{itemize}
	\item[(i)] $[\alpha^{\vee}_s, \alpha^{\vee}_t]=0$,
	\item[(ii)] $[\alpha^{\vee}_s, e_{t}]=C_{st}e_{t}$, $[\alpha^{\vee}_s, f_{t}]=-C_{st}f_{t}$, 
	\item[(iii)] $[e_{s},f_{t}]=\delta_{st}\alpha^{\vee}_s$, 
	\item[(iv)] ${(\mathrm{ad} e_{s})^{1-C_{st}}(e_{t})=0}$ and ${(\mathrm{ad} f_{s})^{1-C_{st}}(f_{t})=0}$ for $s\neq t$. Here, $(\mathrm{ad} x)(y):=[x, y]$ for $x, y\in \mathfrak{g}$.
\end{itemize}
Set $\mathfrak{h}:=\sum_{s\in S}\mathbb{C}\alpha_s^{\vee}$, and define $\alpha_s\in \mathfrak{h}^{\ast}$ by $[\eta, e_s]=\langle \eta, \alpha_s\rangle e_s$ for $\eta\in \mathfrak{h}$ and $s\in S$. Then $(\mathfrak{h}, \{\alpha_s\}_{s\in S}, \{\alpha_s^{\vee}\}_{s\in S})$ is a realization of $C(\mathfrak{g})$, and the terminologies in \S~ \ref{subsec:Coxeter} make sense in this setting. Then $\mathfrak{g}$ has the following root space decomposition 
\begin{align*}
\mathfrak{g}=\mathfrak{h}\oplus \bigoplus_{\beta\in \Phi} \mathfrak{g}_{\beta},\ \mathfrak{g}_{\beta}:=\{x\in \mathfrak{g}\mid [\eta, x]=\langle \eta, \beta\rangle x\ \text{for }\eta\in \mathfrak{h}\},\ \dim \mathfrak{g}_{\beta}=1. 
\end{align*}

\paragraph{\textbf{Notations for algebraic groups}}
Let $G$ be a simply-connected connected algebraic group over $\mathbb{C}$ whose Lie algebra is $\mathfrak{g}$, and take a maximal torus $H$ of $G$ whose Lie algebra is $\mathfrak{h}$. Then we can consider the adjoint action of $G$ on $\mathfrak{g}$, and $\mathfrak{g}_{\beta}$, is a weight space (=simultaneous eigenspace of the action of $H$) for each $\beta\in \Phi$. Its weight (=simultaneous eigenvalue) is again denoted by $\beta \in \Hom(H, \mathbb{C}^{\ast})$. 
Recall from \cref{subsec:Coxeter} that for $\beta = \sum_{s \in S} c_s \alpha_s \in \Phi$, we write $\beta >0$ (resp. $\beta <0$) if $c_s \geq 0$ (resp. $c_s \leq 0$) for all $s \in S$. Then we have a decomposition $\Phi=\Phi_+ \sqcup \Phi_-$, where $\Phi_\pm:=\{ \beta \in \Phi \mid \pm \beta >0\}$.

\begin{lem}
For $\beta\in \Phi_+$, there exist one-parameter subgroups $x_{\beta}, y_{\beta}\colon \mathbb{C}\to G$ of $G$ such that 
\begin{align*}
hx_{\beta}(t)h^{-1}&=x_{\beta}(h^{\beta}t),& dx_{\beta}\colon \mathbb{C}&\xrightarrow{\sim}\mathfrak{g}_{\beta},\\
hy_{\beta}(t)h^{-1}&=y_{\beta}(h^{-\beta}t),& dy_{\beta}\colon \mathbb{C}&\xrightarrow{\sim}\mathfrak{g}_{-\beta}
\end{align*}
for $h\in H$ and $t\in\mathbb{C}$. Here $dx_{\beta}$ and $dy_{\beta}$ are tangent maps of $x_{\beta}$ and $y_{\beta}$, respectively. 
\end{lem}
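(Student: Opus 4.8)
The plan is to construct the one-parameter subgroups $x_\beta$ and $y_\beta$ from the structure of $\mathfrak{g}$ and the representation-theoretic properties of $G$. First I would recall that, since $G$ is simply-connected, the exponential map behaves well on the nilpotent parts: for each positive root $\beta \in \Phi_+$, the root space $\mathfrak{g}_\beta$ is one-dimensional, so we may fix a nonzero generator $e_\beta \in \mathfrak{g}_\beta$ (for simple $\beta = \alpha_s$ we take $e_\beta = e_s$, and in general $e_\beta$ is obtained from the $e_s$ by iterated brackets, which is nonzero by standard $\mathfrak{sl}_2$-theory). Since $e_\beta$ is a nilpotent element of $\mathfrak{g}$, the operator $\mathrm{ad}(e_\beta)$ is nilpotent on $\mathfrak{g}$, and more generally $e_\beta$ acts nilpotently in every finite-dimensional representation of $\mathfrak{g}$; hence $\exp(t\, \mathrm{d}\rho(e_\beta))$ is a well-defined polynomial (in $t$) automorphism for each such representation $\rho$. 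These exponentials are compatible across representations, so they assemble into a morphism of algebraic groups $x_\beta \colon \mathbb{G}_a \to G$ with $\mathrm{d}x_\beta(1) = e_\beta$; this is the standard construction of the root subgroup. Similarly, fixing $f_\beta \in \mathfrak{g}_{-\beta}$ (for simple roots $f_\beta = f_s$), we set $y_\beta := \exp(t\, f_\beta)$, giving a morphism $\mathbb{G}_a \to G$ with tangent map an isomorphism onto $\mathfrak{g}_{-\beta}$.

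Next I would verify the conjugation formulas under the maximal torus $H$. For $h \in H$ and $x \in \mathfrak{g}_\beta$, the adjoint action satisfies $\mathrm{Ad}(h)x = h^\beta x$ by the very definition of $\mathfrak{g}_\beta$ as the weight space of weight $\beta$. Since conjugation by $h$ is a group automorphism of $G$ whose differential is $\mathrm{Ad}(h)$, and since $x_\beta$ is characterized by its differential together with the requirement of being a homomorphism from $\mathbb{G}_a$, we get
\[
h\, x_\beta(t)\, h^{-1} = \exp\bigl(t\,\mathrm{Ad}(h)e_\beta\bigr) = \exp\bigl(t\, h^\beta e_\beta\bigr) = x_\beta(h^\beta t).
\]
The same computation with $e_\beta$ replaced by $f_\beta \in \mathfrak{g}_{-\beta}$ yields $h\, y_\beta(t)\, h^{-1} = y_\beta(h^{-\beta} t)$, since $\mathrm{Ad}(h)f_\beta = h^{-\beta} f_\beta$. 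The statements about $\mathrm{d}x_\beta$ and $\mathrm{d}y_\beta$ being isomorphisms onto $\mathfrak{g}_{\pm\beta}$ are then immediate from $\mathrm{d}x_\beta(1) = e_\beta \neq 0$, $\mathrm{d}y_\beta(1) = f_\beta \neq 0$, and $\dim \mathfrak{g}_{\pm\beta} = 1$.

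I expect the main subtlety to be purely foundational: justifying that the exponentials of the nilpotent elements $e_\beta$, $f_\beta$ actually exist as morphisms \emph{into the algebraic group $G$} rather than merely as formal or analytic objects. This is where simple-connectedness of $G$ is used — it guarantees that the Lie algebra homomorphism $\mathfrak{sl}_2 \to \mathfrak{g}$ sending the standard nilpotents to $e_\beta, f_\beta$ integrates to an algebraic homomorphism $\mathrm{SL}_2 \to G$, and $x_\beta, y_\beta$ are then the images of the upper/lower unipotent subgroups of $\mathrm{SL}_2$. Since all of this is entirely standard structure theory of reductive algebraic groups (see e.g. the references the paper cites, such as \cite{Jan}), in the write-up I would simply invoke it and record the conjugation formulas, rather than redeveloping the theory of root subgroups from scratch.
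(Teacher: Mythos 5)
Your proposal is correct: the paper itself offers no proof of this lemma, recording it as standard structure theory of root subgroups (citing \cite{Jan}), and your argument — exponentiating a nonzero root vector via the integrated $\mathfrak{sl}_2$-triple and reading off the $H$-conjugation from $\mathrm{Ad}(h)|_{\mathfrak{g}_{\pm\beta}}=h^{\pm\beta}$ — is exactly the standard derivation being invoked. The only quibble is that simple-connectedness of $G$ is not actually needed for the existence of root subgroups (what matters is that $\mathrm{SL}_2$ is simply connected, so the $\mathfrak{sl}_2$-homomorphism integrates regardless of the target); this does not affect the validity of the proof.
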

Define $U^+$ and $U^-$ as the closed subgroups of $G$ generated by $\{x_{\beta}(t)\mid \beta\in\Phi_+, t\in \mathbb{C}\}$ and $\{y_{\beta}(t)\mid \beta\in\Phi_+, t\in \mathbb{C}\}$, respectively. Write $B^{\pm}:=HU^{\pm}$, which are called Borel subgroups. The adjoint group $G/Z(G)$ of $G$ is denoted by $G'$, here $Z(G)$ denotes the center of $G$. 

In the following, we write $x_s:=x_{\alpha_s}$ and $y_s:=y_{\alpha_s}$, and normalize them so that $dx_{s}(1)=e_s$ and $dy_s(1)=f_s$. Then there exists a homomorphism $\varphi_s\colon SL_2(\mathbb{C})\to G$ such that 
\begin{align*}
\begin{pmatrix}
1&t\\
0&1
\end{pmatrix}&\mapsto x_s(t)&
\begin{pmatrix}
1&0\\
t&1
\end{pmatrix}&\mapsto y_s(t).
\end{align*}
For $a\in\mathbb{C}^{\ast}$, write $a^{\alpha_s^{\vee}}:=\varphi_s\left(\begin{pmatrix}
a&0\\
0&a^{-1}
\end{pmatrix}\right)$. Since $G$ is simply-connected, we have an isomorphism 
\[
(\mathbb{C}^{\ast})^S\xrightarrow{\sim} H,\ (a_s)_{s\in S}\mapsto \prod_{s\in S}a_s^{\alpha_s^{\vee}}. 
\]
Therefore we have an isomorphism of abelian groups
\begin{align}\label{eq:character}
\{\mu\in\mathfrak{h}^{\ast}\mid \langle \alpha_s^{\vee}, \mu\rangle\in \mathbb{Z}\ \text{for }s\in S \}\xrightarrow{\sim} X^*(H),\
\mu\mapsto \left(\prod_{s\in S}a_s^{\alpha_s^{\vee}}\mapsto\prod_{s\in S}a_s^{\langle \alpha_s^{\vee}, \mu\rangle}\right)
\end{align}
where recall the notation $X^*(H):=\Hom(H, \mathbb{C}^{\ast})$. 
These are $\Z$-lattices of rank $|S|$, called weight lattices. Henceforth we identify them by \eqref{eq:character} and write them as $P$ because it will cause no confusion. Note that this identification is compatible with the previous identification between $\beta\in \Phi$ and its weight $\beta\in X^*(H)$. For $\mu\in P$, the image of $\eta\in \mathfrak{h}$ under $\mu$ is denoted by $\langle \eta, \mu\rangle$, and that of $h\in H$ is written as $h^{\mu}$. For $s\in S$, define the $s$-th fundamental weight $\varpi_s\in P$ by $\langle \alpha_t^{\vee}, \varpi_s\rangle=\delta_{st}$. Obviously, we have $P=\sum_{s\in S}\mathbb{Z}\varpi_s$. 
\\

\paragraph{\textbf{Weyl groups}}
For $s\in S$ and $w\in W(\mathfrak{g})$, we set 
\begin{align*}
\overline{r}_{s}&:=\varphi_s\left(\begin{pmatrix}
0&-1\\
1&0
\end{pmatrix}\right),\ \text{and}\   
\overline{w}:=\overline{r}_{s_1}\cdots \overline{r}_{s_{\ell}},
\end{align*}
here $(s_1,\dots, s_{\ell})$ is a reduced word of $w$. It is well-known that $\overline{w}$ does not depend on the choice of reduced words, and $\overline{w}$ is an element of the normalizer $N_{G}(H)$ of $H$ in $G$. We have a left action of $N_{G}(H)/H$ on $X^*(H)$ induced from the (right) conjugation action of $N_{G}(H)$ on $H$, that is, 
the action $h^{n.\mu}=(n^{-1}hn)^{\mu}$ for $n\in N_{G}(H)$, $h \in H$ and $\mu \in P$. Then $\overline{w}.\mu=w\mu$ for $\mu\in X^*(H)$, here the right hand side is defined by the Weyl group action on $\mathfrak{h}^{\ast}$ and \eqref{eq:character}. Hence we have a group isomorphism $W(\mathfrak{g}) \xrightarrow{\sim} N_G(H)/H,~ w \mapsto \overline{w}H$, which makes \eqref{eq:character} a $W(\mathfrak{g})$-equivariant isomorphism. By this identification, we obtain the left action of $W(\mathfrak{g})$ on $H$ induced by conjugation, and it is denoted by $h\mapsto w(h)$ for $w\in W(\mathfrak{g})$.  

Let $w_0\in W(\mathfrak{g})$ be the longest element of $W(\mathfrak{g})$, and set $s_G:=\overline{w_0}^2$. It turns out that $s_G\in Z(G)$, and $s_G^2=1$ (cf.~\cite[\S~ 2]{FG03}). Recall that an involution $S\to S, s\mapsto s^{\ast}$ is defined by 
\begin{align*}
\alpha_{s^{\ast}}=-w_0\alpha_s. 
\end{align*}

\paragraph{\textbf{Irreducible modules and matrix coefficients}}
Define the set of dominant weights as $P_+:=\sum_{s\in S}\mathbb{Z}_{\geq 0}\varpi_s\subset P$.  For $\lambda\in P_{+}$, let $V(\lambda)$ (resp.~$V(-\lambda)$) be the (rational) irreducible $G$-module of highest weight $\lambda$ (resp.~lowest weight $-\lambda$). 
A fixed highest (resp.~lowest) weight vector of $V(\lambda)$ (resp.~$V(-\lambda)$) is denoted by $v_{\lambda}$ (resp.~$v_{-\lambda}$). Set 
\begin{align*}
v_{w\lambda}:=\overline{w}.v_{\lambda}&&&v_{-w\lambda}:=(\overline{w^{-1}})^{-1}.v_{-\lambda}
\end{align*}
for $w\in W(\mathfrak{g})$. Then there is an isomorphism of $G$-modules $V(\lambda)\simeq V(w_0\lambda)$ satisfying $v_{w\lambda}\mapsto v_{w\lambda}$ for all $w\in W(\mathfrak{g})$. A $G$-module $V$ carries a natural structure of $\mathfrak{g}$-module. For $s\in S$ and $v\in V$, we have 
\begin{align*}
x_s(t).v&=\sum_{k=0}^\infty \frac{t^k}{k!}e_s^k.v&
y_s(t).v&=\sum_{k=0}^\infty \frac{t^k}{k!}f_s^k.v.
\end{align*}

There exists an anti-involution $g\mapsto g^T$ of the algebraic group $G$ given by $x_s(t)^T=y_s(t)$ and $h^T=h$ for $s\in S, t\in\mathbb{C}, h\in H$. This is called the \emph{transpose} in $G$. 
\begin{prop}
	Let $\nu\in P_+\cup(-P_+)$. Then there exists a unique non-degenerate symmetric $\mathbb{C}$-bilinear form $(\ ,\ )_{\nu}$ on $V(\nu)$ such that
	\begin{align*}
	(v_{\nu}, v_{\nu})_{\nu}&=1&(g.v, v')_{\nu}&=(v, g^T.v')_{\nu}
	\end{align*}
	for $v, v'\in V(\nu)$ and $g\in G$. 
\end{prop}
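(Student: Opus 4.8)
The statement to prove is the existence and uniqueness of a non-degenerate symmetric bilinear form $(\ ,\ )_{\nu}$ on the irreducible module $V(\nu)$, $\nu \in P_+ \cup (-P_+)$, normalized by $(v_\nu, v_\nu)_\nu = 1$ and contravariant with respect to the transpose anti-involution, i.e. $(g.v, v')_\nu = (v, g^T.v')_\nu$. I will treat $\nu \in P_+$ first; the case $\nu \in -P_+$ follows by applying the highest-weight case to $-\nu$ (or by the isomorphism $V(\lambda) \simeq V(w_0\lambda)$ and $w_0(-P_+) = P_+$).

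\emph{Uniqueness.} First I would observe that $V(\nu)$ is generated as a $G$-module (equivalently as a $U(\mathfrak g)$-module) by the highest weight vector $v_\nu$. Hence any bilinear form satisfying the contravariance property is determined by its values $(g.v_\nu, g'.v_\nu)_\nu = (v_\nu, g^T g'.v_\nu)_\nu$, and these in turn are determined by the single number $(v_\nu, v_\nu)_\nu$ once we know how $v_\nu$ pairs with the whole module: writing $w \in V(\nu)$ as $w = g^Tg'. v_\nu$-combinations, one reduces $(v, w)_\nu$ to a $\mathbb{C}$-linear combination of numbers of the form $(v_\nu, u.v_\nu)_\nu$ with $u \in U(\mathfrak g)$, and the weight grading forces $(v_\nu, u.v_\nu)_\nu$ to vanish unless $u.v_\nu$ has weight $\nu$, i.e. unless $u.v_\nu \in \mathbb{C}v_\nu$. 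So the normalization $(v_\nu,v_\nu)_\nu = 1$ pins the form down completely. This gives uniqueness.

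\emph{Existence.} The cleanest construction is via the contravariant (Shapovalov-type) form. Let $V(\nu)^\vee$ denote the restricted dual $G$-module, i.e. the $G$-module structure on the dual twisted by the anti-involution $g \mapsto g^T$: explicitly $(g.\xi)(v) := \xi(g^T.v)$ for $\xi \in V(\nu)^*$, $v \in V(\nu)$. One checks this is a genuine left $G$-action because $g \mapsto g^T$ is an anti-automorphism. Since $T$ fixes $H$ pointwise, $V(\nu)^\vee$ has the same character as $V(\nu)$; in particular it has a highest weight vector of weight $\nu$ (dual to $v_\nu$), and being a quotient of a Verma-type module with the same infinitesimal character it is irreducible of highest weight $\nu$, hence $V(\nu)^\vee \simeq V(\nu)$ as $G$-modules. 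Fix such an isomorphism $\Theta: V(\nu) \xrightarrow{\sim} V(\nu)^\vee$ sending $v_\nu$ to the vector dual to $v_\nu$; then define $(v, v')_\nu := \Theta(v)(v')$. Contravariance $(g.v, v')_\nu = (v, g^T.v')_\nu$ is immediate from the definition of the twisted dual action and $G$-equivariance of $\Theta$. Non-degeneracy follows because $\Theta$ is an isomorphism. The normalization $(v_\nu, v_\nu)_\nu = 1$ holds by the choice of $\Theta$ (and is then rescaled if necessary).

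\emph{Symmetry.} This is the step I expect to require the most care. The form $(v,v')_\nu^{\mathrm{op}} := (v', v)_\nu$ also satisfies contravariance and, by uniqueness applied to the pair of forms with the same normalization, it suffices to show $(v_\nu, v_\nu)_\nu^{\mathrm{op}} = (v_\nu,v_\nu)_\nu$, which is trivially $1 = 1$; but one must be slightly careful that $(\ ,\ )_\nu^{\mathrm{op}}$ is not identically zero on $v_\nu$, which again follows from non-degeneracy and the weight argument above. Thus $(\ ,\ )_\nu^{\mathrm{op}} = (\ ,\ )_\nu$, i.e. the form is symmetric. Finally, for $\nu = -\lambda \in -P_+$: one has $V(-\lambda) \simeq V(w_0\lambda)$ with $w_0\lambda \in P_+$, and $v_{-\lambda} = v_{w_0 \cdot w_0\lambda}$ up to the identification; applying the already-proved statement to $w_0\lambda$ and transporting the form along this isomorphism, together with the fact that the fixed lowest weight vector corresponds to the correct vector under $V(-\lambda) \simeq V(w_0\lambda)$, yields the normalization $(v_{-\lambda}, v_{-\lambda})_{-\lambda} = 1$. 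The main obstacle is purely bookkeeping: making sure the twisted-dual module is genuinely isomorphic to $V(\nu)$ and not merely abstractly dual, which is where irreducibility and the coincidence of characters (using $T|_H = \mathrm{id}$) are essential.
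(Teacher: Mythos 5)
The paper does not actually prove this proposition; it is stated as a standard fact (the classical contravariant form on an irreducible highest/lowest weight module, cf.\ Jantzen), so there is no in-paper argument to compare against. Your proof is the standard one and is correct in substance: uniqueness from contravariance under $H$ (using $h^T=h$, which forces distinct weight spaces to be orthogonal to $v_\nu$) together with the one-dimensionality of the extremal weight space; existence via the $T$-twisted dual $V(\nu)^\vee$ and a normalized isomorphism $\Theta\colon V(\nu)\xrightarrow{\sim}V(\nu)^\vee$; symmetry by applying uniqueness to the opposite form, which is again contravariant because $T$ is an involution. Two small points deserve correction. First, your justification that $V(\nu)^\vee$ is irreducible (``being a quotient of a Verma-type module with the same infinitesimal character'') is not a valid argument --- having a fixed infinitesimal character does not imply irreducibility; the correct and much simpler reason is that the dual of a finite-dimensional irreducible module, with the action twisted by any anti-automorphism, is irreducible (invariant subspaces correspond to annihilators of invariant subspaces). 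Combined with the equality of characters this gives $V(\nu)^\vee\simeq V(\nu)$. Second, the detour through $V(-\lambda)\simeq V(w_0\lambda)$ for the lowest-weight case is unnecessary and contains a sign slip ($w_0\lambda\in -P_+$ for $\lambda\in P_+$; you mean $-w_0\lambda$, i.e.\ the highest weight of $V(-\lambda)$); it is cleaner to observe that the entire argument runs verbatim with the lowest weight vector in place of the highest one, since the lowest weight space is also one-dimensional and generates the module. If you do insist on transporting the form along the isomorphism, you additionally need $(v_{w_0\mu},v_{w_0\mu})_{\mu}=1$, which requires the separate observation that $\overline{w}^{\,T}=\overline{w}^{-1}$.
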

For $v\in V(\nu)$, we set $v^{\vee}:=(v'\mapsto (v, v')_{\nu})\in V(\nu)^{\ast}$, and $f_{w\nu}:=v_{w\nu}^{\vee}\in V(\nu)^{\ast}$. Note that $(v_{w\nu}, v_{w\nu})_{\nu}=1$ for all $w\in W(\mathfrak{g})$. For $\lambda\in P_+$, we have $(\ ,\ )_{\lambda}=(\ ,\ )_{w_0\lambda}$ under the identification $V(\lambda)\simeq V(w_0\lambda)$. 

For a $G$-module $V$, the dual space $V^{\ast}$ is considered as a (left) $G$-module by 
\[
\langle g.f, v\rangle:=\langle f, g^{T}.v\rangle
\]
for $g\in G$, $f\in V^{\ast}$ and $v\in V$. Note that, under this convention, the correspondence $v\mapsto v^{\vee}$ for $v\in V(\nu)$ gives a $G$-module isomorphism $V(\nu)\to V(\nu)^{\ast}$ for $\nu\in P_+\cup(-P_+)$. For $f\in V^{\ast}$ and $v\in V$, define the element $c_{f, v}^V\in \mathbb{C}[G]$ by  
\[
g\mapsto \langle f, g.v\rangle
\]
for $g\in G$. An element of this form is called a \emph{matrix coefficient}. For $\nu\in P_+\cup(-P_+)$, a matrix coefficient $c_{f, u}^{V(\nu)}$ will be briefly denoted by $c_{f, v}^{\nu}$. Moreover, for $w,w'\in W(\mathfrak{g})$, we write 
\[
\Delta_{w\nu,w'\nu}:=c_{f_{w\nu},v_{w'\nu}}^{\nu}.
\]
and call it a \emph{generalized minor}.
\\

\paragraph{\textbf{The $\ast$-involutions}}
We conclude this subsection by recalling an involution on $G$ associated with a certain Dynkin diagram automorphism (cf. \cite[(2)]{GS16}). 
\begin{lem}\label{l:Dynkininv}
	Let $G\to G, g\mapsto g^{\ast}$ be a group automorphism defined by 
	\begin{align*}
	g\mapsto \overline{w}_0(g^{-1})^T\overline{w}_0^{-1}.
	\end{align*}
	Then $(g^{\ast})^{\ast}=g$ for all $g\in G$, and 
	\begin{align*}
	x_s(t)^{\ast}&= x_{s^{\ast}}(t)&y_s(t)^{\ast}&= y_{s^{\ast}}(t)
	\end{align*}
	for $s\in S$. 
\end{lem}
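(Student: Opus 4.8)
## Proof proposal for Lemma \ref{l:Dynkininv}

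The plan is to verify the three asserted properties of the map $g\mapsto g^\ast:=\overline{w}_0(g^{-1})^T\overline{w}_0^{-1}$ essentially by direct computation, reducing everything to the generators $x_s(t)$, $y_s(t)$, $h$ and to the known behaviour of the transpose anti-involution, the conjugation action of $\overline{w}_0$, and the element $s_G=\overline{w}_0^2\in Z(G)$.

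First I would check that $\ast$ is a group automorphism. The map $g\mapsto (g^{-1})^T$ is a group automorphism of $G$ (it is the composite of inversion, an anti-automorphism, with transpose, also an anti-automorphism; each reverses the order of products, so the composite preserves it), and then conjugation by $\overline{w}_0$ is an automorphism, so $\ast$ is a composite of automorphisms.

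Next I would prove $(g^\ast)^\ast=g$. Write $\tau(g):=(g^{-1})^T$; since transpose commutes with inversion and both are involutions on the underlying variety, $\tau$ is an involutive automorphism. Then $g^\ast=\overline{w}_0\,\tau(g)\,\overline{w}_0^{-1}$, so $(g^\ast)^\ast=\overline{w}_0\,\tau\!\big(\overline{w}_0\,\tau(g)\,\overline{w}_0^{-1}\big)\,\overline{w}_0^{-1}$. Using that $\tau$ is an automorphism this equals $\overline{w}_0\,\tau(\overline{w}_0)\,g\,\tau(\overline{w}_0)^{-1}\,\overline{w}_0^{-1}$, because $\tau\circ\tau=\mathrm{id}$. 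So it suffices to show $\overline{w}_0\,\tau(\overline{w}_0)\in Z(G)$. Now $\tau(\overline{w}_0)=(\overline{w}_0^{-1})^T$, and from $\overline{r}_s^{\,T}=\overline{r}_s^{\,-1}$ (immediate from $\varphi_s\!\left(\begin{smallmatrix}0&-1\\1&0\end{smallmatrix}\right)^T=\varphi_s\!\left(\begin{smallmatrix}0&1\\-1&0\end{smallmatrix}\right)$) together with the fact that $\overline{w}_0$ is independent of the reduced word, one gets $\overline{w}_0^{\,T}=\overline{w}_0^{\,-1}$, hence $\tau(\overline{w}_0)=\overline{w}_0$. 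Therefore $\overline{w}_0\,\tau(\overline{w}_0)=\overline{w}_0^{\,2}=s_G\in Z(G)$, and $(g^\ast)^\ast=g$ follows.

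For the action on generators, I compute $x_s(t)^\ast=\overline{w}_0\,(x_s(t)^{-1})^T\,\overline{w}_0^{-1}=\overline{w}_0\,(x_s(-t))^T\,\overline{w}_0^{-1}=\overline{w}_0\,y_s(-t)\,\overline{w}_0^{-1}$. Now $\overline{w}_0$ conjugates the root subgroup $U_{\alpha_s}^-$ (the image of $y_s$) to the root subgroup attached to $w_0(-\alpha_s)=\alpha_{s^\ast}$, which is $U_{\alpha_{s^\ast}}^+$ (the image of $x_{s^\ast}$); the sign normalization of the Chevalley generators $x_\bullet$, $y_\bullet$ (via $\varphi_\bullet$) is chosen precisely so that $\overline{w}_0\,y_s(-t)\,\overline{w}_0^{-1}=x_{s^\ast}(t)$, giving $x_s(t)^\ast=x_{s^\ast}(t)$. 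The identity $y_s(t)^\ast=y_{s^\ast}(t)$ follows symmetrically, or by applying $\ast$ to the transpose relation $x_s(t)^T=y_s(t)$ and using that $\ast$ commutes with $T$ (which in turn follows from $\overline{w}_0^{\,T}=\overline{w}_0^{\,-1}$ and $s_G$ being central). The main obstacle is the bookkeeping of signs in the last step: one must confirm that with the chosen normalization $dx_s(1)=e_s$, $dy_s(1)=f_s$ and the definition $\overline{r}_s=\varphi_s\!\left(\begin{smallmatrix}0&-1\\1&0\end{smallmatrix}\right)$, no extra sign $(-1)^{\langle\alpha_s^\vee,\rho\rangle}$ or similar creeps in; this is a standard but delicate computation in $SL_2$ together with the commutation of $\overline{w}_0$ with the relevant $\varphi_{s^\ast}$, and it is the only place where something could go wrong.
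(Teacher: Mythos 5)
Your overall architecture matches the paper's: reduce $(g^{\ast})^{\ast}=g$ to the facts $(\overline{w}_0^{-1})^T=\overline{w}_0$ and $s_G=\overline{w}_0^2\in Z(G)$, and reduce the action on generators to the computation $x_s(t)^{\ast}=\overline{w}_0\,y_s(-t)\,\overline{w}_0^{-1}$ together with the standard fact $\overline{w}_0 U^-_s\overline{w}_0^{-1}=U^+_{s^{\ast}}$. Your verification that $\overline{r}_s^{\,T}=\overline{r}_s^{\,-1}$, hence $\overline{w}_0^{\,T}=\overline{w}_0^{\,-1}$, and your derivation of $y_s(t)^{\ast}=y_{s^{\ast}}(t)$ from $x_s(t)^{\ast}=x_{s^{\ast}}(t)$ via commutation of $\ast$ with the transpose are all correct.

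However, there is a genuine gap at exactly the point you flag. The fact $\overline{w}_0 U^-_s\overline{w}_0^{-1}=U^+_{s^{\ast}}$ only gives $\overline{w}_0\,y_s(-t)\,\overline{w}_0^{-1}=x_{s^{\ast}}(t')$ for \emph{some} $t'$ depending linearly on $t$; the content of the lemma is that $t'=t$ on the nose. You dispose of this by asserting that ``the sign normalization of the Chevalley generators is chosen precisely so that'' the identity holds, but in the paper's setup there is no remaining freedom to choose: the normalization is already pinned down by $dx_s(1)=e_s$, $dy_s(1)=f_s$, $\overline{r}_s=\varphi_s\bigl(\begin{smallmatrix}0&-1\\1&0\end{smallmatrix}\bigr)$ and $\overline{w}_0=\overline{r}_{s_1}\cdots\overline{r}_{s_\ell}$, so $t'=t$ is a claim that must be \emph{proved}, not a convention. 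This is where the paper does actual work: it extracts $t'$ as the matrix coefficient $t'=(v_{-r_{s^{\ast}}\varpi_{s^{\ast}}},\,x_{s^{\ast}}(t').v_{-\varpi_{s^{\ast}}})_{-\varpi_{s^{\ast}}}$ in the fundamental representation, rewrites it using $V(-\varpi_{s^\ast})\simeq V(\varpi_s)$ and the adjointness of $(\ ,\ )_{\varpi_s}$ with respect to the transpose, reduces to the relation $\overline{r}_s\,y_s(-t)=x_s(t)\,\overline{r}_s$ in $SL_2$, and evaluates the result to $t$. Without this (or an equivalent argument controlling the scalar, e.g.\ a careful Chevalley-commutation computation of $\overline{w}_0 f_s\overline{w}_0^{-1}$ in the Lie algebra), the possibility of a stray sign such as $(-1)^{\langle\alpha_s^{\vee},\,\rho\rangle}$ is not excluded, and the lemma is not established.
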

\begin{proof} 
	First, the equality $(g^{\ast})^{\ast}=g$ for $g\in G$ follows from the facts that $(\overline{w}_0^{-1})^T=\overline{w}_0$, $s_G^2=1$ and $s_G\in Z(G)$. Next, we show that $\overline{w}_0(x_s(t)^{-1})^T\overline{w}_0^{-1}=x_{s^{\ast}}(t)$. For $s\in S$, write $U^+_s:=\{x_s(t)\mid t\in \mathbb{C}\}$ and $U^-_s:=\{y_s(t)\mid t\in \mathbb{C}\}$. Then it is well-known that $\overline{w}_0U^-_s\overline{w}_0^{-1}=U^+_{s^{\ast}}$ (see, for example, \cite[II.1.4]{Jan}). Therefore, for all $t\in \mathbb{C}$, there exists $t'\in \mathbb{C}$ such that 
	\[
	\overline{w}_0(x_s(t)^{-1})^T\overline{w}_0^{-1}=\overline{w}_0y_s(-t)\overline{w}_0^{-1}=x_{s^{\ast}}(t'). 
	\]
	Moreover, we have 
	\begin{align*}
	t'&=(v_{-r_{s^{\ast}}\varpi_{s^{\ast}}}, x_{s^{\ast}}(t').v_{-\varpi_{s^{\ast}}})_{-\varpi_{s^{\ast}}}\\
	&=(\overline{r_{s^{\ast}}w_0}.v_{\varpi_s}, \overline{w_0}y_s(-t)\overline{w_0}^{-1}\overline{w_0}.v_{\varpi_s})_{\varpi_s}\\
	&=(\overline{w_0r_{s}}.v_{\varpi_s}, \overline{w_0}y_s(-t).v_{\varpi_s})_{\varpi_s}\\
	&=(v_{\varpi_s}, \overline{r_s}y_s(-t).v_{\varpi_s})_{\varpi_s}\\
	&=(v_{\varpi_s}, x_s(t)\overline{r_s}. v_{\varpi_s})_{\varpi_s}\\
	&=(v_{\varpi_s}, x_s(t).v_{r_s\varpi_s})_{\varpi_s}=t, 
	\end{align*}
	here the fifth equality follows from the calculation in $SL_2(\mathbb{C})$. Hence $\overline{w}_0(x_s(t)^{-1})^T\overline{w}_0^{-1}=x_{s^{\ast}}(t)$. In the same way, we can obtain $\overline{w}_0(y_s(t)^{-1})^T\overline{w}_0^{-1}=y_{s^{\ast}}(t)$, which completes the proof. 
\end{proof} 
\subsection{The moduli space $\A_{G,\Sigma}$}\label{subsec:moduli}
\subsubsection{Principal affine space}

The adjoint group $G'$ acts on the set $\{(U,\xi)\}$, where $U$ is a maximal unipotent subgroup of $G'$, $\xi: U \to \C$ is an additive character. In general $U \subset \Stab_{G'}(U,\xi)$. A character $\xi$ is said to be non-degenerate if $U=\Stab_{G'}(U,\xi)$. 
$\mathscr A_{G'}:=\{(U,\xi)\mid \text{$\xi$: non-degenerate}\} \cong G'/U^{-}$ is called the \emph{principal affine space}, here the image of $U^-\subset G$ under the projection $G\to G'$ is again denoted by $U^-$ since they are isomorphic via this projection. We denote the latter space by $\A_{G'}:=G'/U^-$.

We have a canonical character $\chi:  U^- \to \C$, $u_- \mapsto \sum_{s \in S} \Delta_{r_s\varpi_s, \varpi_s}(u_-)$. There exists a canonical isomorphism 
\begin{align}
\A_{G'} \xrightarrow{\sim} \mathscr A_{G'},\ g U^- \mapsto (g U^-g^{-1}, \chi\circ Ad_{g^{-1}}),\label{eq:A-isom}
\end{align}
here $Ad_{g^{-1}}\colon G\to G$ is defined as the conjugation $h\mapsto g^{-1}hg$. 
\begin{definition}
Let $\A_G:=G/ U^-$ and $\B_G:=G/B^-$. Then $G$ acts on them from the left, and there is a natural $G$-equivariant projection $\pi: \A_G\to \B_G$. For $A \in \A_G$, write $U_A:=\Stab_G(A)$. 
\end{definition}

\begin{lem}
The Cartan subgroup $H$ acts on $\A_G$ by $h. g U^- :=gh^{-1} U^-$. The map $\pi$ induces an isomorphism $H \backslash \A_G \xrightarrow{\sim} \B_G$. 
\end{lem}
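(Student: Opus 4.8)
The statement to prove is the last Lemma: $H$ acts on $\A_G = G/U^-$ by $h.gU^- := gh^{-1}U^-$, and the projection $\pi\colon \A_G \to \B_G = G/B^-$ induces an isomorphism $H\backslash \A_G \xrightarrow{\sim} \B_G$. The plan is first to verify that the proposed formula genuinely defines a (left) $H$-action on $\A_G$, then to identify the orbits of this action with the fibers of $\pi$, and finally to conclude that the induced map on the quotient is an isomorphism of varieties.

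\textbf{Step 1: the $H$-action is well-defined.} I first check that $h.gU^- := gh^{-1}U^-$ does not depend on the coset representative. If $g' = gu$ with $u \in U^-$, then $g'h^{-1}U^- = guh^{-1}U^- = g h^{-1}(h u h^{-1})U^-$, and $hu h^{-1} \in U^-$ because $H$ normalizes $U^-$ (indeed $B^- = H U^-$ is a group and $U^- \trianglelefteq B^-$). Hence $g'h^{-1}U^- = gh^{-1}U^-$, so the map is well-defined; the axioms $(1.gU^- = gU^-)$ and $(h_1.(h_2.gU^-) = (h_1h_2).gU^-)$ are then immediate since $H$ is abelian. One should also note this is a morphism of varieties, being induced by the right-translation action of $H$ on $G$, which commutes with the left $U^-$-action used to form the quotient.

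\textbf{Step 2: orbits $=$ fibers of $\pi$.} The map $\pi$ sends $gU^-$ to $gB^-$, and is clearly $H$-invariant: $\pi(h.gU^-) = \pi(gh^{-1}U^-) = gh^{-1}B^- = gB^-$ since $h^{-1}\in H \subset B^-$. So $\pi$ factors through $H\backslash\A_G \to \B_G$. For surjectivity of this induced map, note $\pi$ itself is surjective because $G \to G/B^-$ is. For injectivity I must show that if $\pi(g_1U^-) = \pi(g_2U^-)$ then $g_1U^-$ and $g_2 U^-$ lie in the same $H$-orbit: the hypothesis gives $g_2 = g_1 b$ with $b \in B^- = HU^-$, say $b = h u$ with $h\in H$, $u\in U^-$; then $g_2 U^- = g_1 h u U^- = g_1 h U^- = h^{-1}.(g_1 U^-)$ (using again that $H$ normalizes $U^-$ to move $h$ across, or simply reading off from the action formula with $h^{-1}$ in place of $h$). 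Hence the fibers of $\pi$ are exactly the $H$-orbits, so the induced map $H\backslash\A_G \to \B_G$ is a bijection.

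\textbf{Step 3: from bijection to isomorphism of varieties.} It remains to upgrade the bijection to an isomorphism. Since $U^- \subseteq B^-$ with $B^-/U^- \cong H$, the projection $\pi\colon G/U^- \to G/B^-$ is a locally trivial $H$-bundle (or at least an $H$-torsor in the fppf/étale topology; over $\mathbb{C}$ with $H$ a split torus it is Zariski-locally trivial), so the quotient $H\backslash\A_G$ exists as a variety and the induced morphism to $\B_G$ is an isomorphism. Concretely, one can invoke that $\pi$ is a smooth surjective morphism whose fibers are single $H$-orbits each isomorphic to $H$, and that $G/B^-$ is covered by the translates $gB^-w_0U^+B^-/B^-$ (big cell) over which $\pi$ trivializes as $H \times (\text{cell}) \to (\text{cell})$; this gives the inverse morphism locally and these glue. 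The main (and essentially only) obstacle here is the bookkeeping of this last descent/quotient argument — verifying that $H\backslash\A_G$ is represented by $\B_G$ as schemes rather than merely as sets — but this is standard for the quotient of a principal bundle by its structure group, so no genuine difficulty arises; the set-theoretic content is entirely in Steps 1--2.
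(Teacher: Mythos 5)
Your proof is correct and is exactly the standard argument; the paper states this lemma without proof, treating it as the routine fact that $\pi\colon G/U^-\to G/B^-$ is a principal $H$-bundle for the right-translation action (which is well defined because $H$ normalizes $U^-$ and whose orbits are the fibers because $B^-=HU^-$). The only cosmetic remark is that the action axiom $h_1.(h_2.x)=(h_1h_2).x$ follows directly from $h_2^{-1}h_1^{-1}=(h_1h_2)^{-1}$ and does not actually need $H$ to be abelian.
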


The natural projection $G\to G'$ induces a projection $\A_G \to \A_{G'}$, whose fiber is a $Z(G)$-torsor. The composite of this projection and the isomorphism \eqref{eq:A-isom} is written as 
\begin{align}
\A_G \to \A_{G'} \xrightarrow{\sim} \mathscr A_{G'},\ A\mapsto (U_A,\chi_A). \label{eq:chi_A}
\end{align}
Note that $U_A=gU^-g^{-1}$ and $\chi_A=\chi\circ Ad_{g^{-1}}$ when $A=gU^-$.

The canonical character is decomposed as $\chi=\sum_{s \in S} \chi_s$, where $\chi_s:=\Delta_{r_s\varpi_s, \varpi_s}|_{U^-}$. Accordingly, the character $\chi_A$ is decomposed into a sum of $(\chi_A)_s$, $s\in S$. Hence we have an isomorphism 
\[
\vec{\chi}_A: U_A/[U_A,U_A] \xrightarrow{\sim} \C^S,\ u \mapsto ((\chi_A)_s(u))_{s\in S}.
\]
\begin{example}[Type $A_n$]\label{example:decorated flags A_n}
Let us consider the case $G=SL_{n+1}$ and $G'=PSL_{n+1}$. 
Fix an identification $\bigwedge^{n+1} \C^{n+1} \cong \C$, $e_1\wedge \dots \wedge e_{n+1} \mapsto 1$. A \emph{decorated flag} is a pair $A=(\mathbf{A},\mathbf{a})$, 
where $\mathbf{A}=(0=A^{(0)} \subset A^{(1)} \subset \dots \subset A^{(n)} \subset A^{(n+1)}=\C^{n+1})$ is a complete flag in $\C^{n+1}$ and $\mathbf{a}=(a^{(1)},\dots a^{(n)})$ is a tuple of generators $a^{(s)}$ of $\bigwedge^s A^{(s)}$ ($s=1,\dots,n$). 
The tuple $\mathbf{a}$ is called a \emph{decoration} of the flag $\mathbf{A}$. 
We call a basis $a_1,\dots, a_{n+1}$ of $\C^{n+1}$ a \emph{flag basis} of $A$ if it satisfies $a_1\wedge\dots \wedge a_s =a^{(s)}$ ($s=1,\dots,n$) and $a_1\wedge\dots \wedge a_{n+1}=1$. 
We have an identification 
\[
\A_{SL_{n+1}} \xrightarrow{\sim} \{\text{decorated flags}\},~ gU^- \mapsto gA_0, 
\]
where $A_0$ is a decorated flag given by the flag basis $(-1)^ne_{n+1},(-1)^{n-1}e_n,\dots, -e_2,e_1$. Under this identification, the $H$-action rescales flag basis. Then the projection $\pi: \A_{SL_{n+1}} \to \B_{SL_{n+1}}$ is the forgetful map of decorations. 
We have $\chi_s(u_-)=u_{s+1,s}$ for $u_-=(u_{ij}) \in  U^-$ and $s\in S$. Another description of the canonical character can be found in Section 8.1 of \cite{GS16}. 
Observe that, for $h=\mathrm{diag}(\lambda_1,\dots,\lambda_{n+1}) \in H$, $A_0$ and $h. A_0$ have the same stabilizer $U_{A_0}=U_{h.A_0}=U^-$ but the latter corresponds to the character $\chi'_s(u)=\lambda_s^{-1} u_{s+1,s} \lambda_{s+1}=h^{-\alpha_s}\chi_s(u)$. 
\end{example}
In view of this example, we call an element of the principal affine space $\A_G$ a \emph{decorated flag}.

\subsubsection{The moduli space $\A_{G,\Sigma}$}\label{subsubsec:moduli}
We recall here the definition of the moduli space $\A_{G,\Sigma}$ related to a pair $(G,\Sigma)$. For details, see \cite{FG03}.
\\

\paragraph{\textbf{Marked surfaces}}

A \emph{marked surface} $\Sigma$ is a compact oriented surface with a fixed non-empty finite set of \emph{marked points} on it. A marked point is called a \emph{puncture} if it lies in the interior of $\Sigma$, and \emph{special point} if it lies on the boundary. 
We use the following notations.
\begin{itemize}
\item $g$: the genus of $\Sigma$,
\item $b$: the number of boundary components of $\Sigma$,
\item $\mu\geq 1$: the number of marked points,
\item $p$: the number of punctures.
\end{itemize}
We assume the following conditions:
\begin{enumerate}
\item Each boundary component has at least one marked point.
\item $n(\Sigma):=3(2g-2+p+b)+2(\mu-p)>0$ and $(g,b,p,\mu)\neq (0,1,0,2)$.
\end{enumerate}
These conditions ensure that the marked surface $\Sigma$ has an ideal triangulation and $n(\Sigma)$ is the number of edges of an ideal triangulation. Note that we allow self-folded triangles. A pair $(\Sigma,\mathfrak{g})$, where $\Sigma$ is a marked surface as above and $\mathfrak{g}$ is a complex finite dimensional semisimple Lie algebra, is \emph{admissible} if it further satisfies the conditions 
\begin{enumerate}
\item[(3)] $\Sigma$ is not a closed surface with one puncture.
\item[(4)] $\mathfrak{g}=A_n \Longrightarrow g+\mu \geq 3$. 
\end{enumerate}
The inequality in (4) ensures that the marked surface has an ideal triangulation without self-folded triangles. See, for instance, \cite{FST}. (Note that the number $n$ \emph{loc. sit.} is the number of \emph{internal} edges of an ideal triangulation.) 
Let $\mathbb{D}^1_k$ denote the once-punctured disk with $k$ special points ($g=0,~p=b=1,~\mu=k+1$) as shown in \cref{fig:D_k}. 
Then the pair $(\mathbb{D}^1_k,A_n)$ is admissible except for the case $k=1$. The following lemma will be used in order to reduce arguments for a general marked surface to the case of $\mathbb{D}^1_k$. By a map between marked surfaces, we mean a map between the underlying surfaces which sends each marked point to a marked point. 

\begin{lem}\label{l:triangulation}
Let $\Sigma$ be a marked surface satisfying the conditions $(1)-(3)$ and let $a$ be a puncture of $\Sigma$. Then we have the following.
\begin{enumerate}
\item[(i)]
There exists an ideal triangulation $\Delta_a$ of $\Sigma$ such that the star neighborhood $\mathbb{D}(a)$ of $a$ is given by an embedding of $\mathbb{D}^1_1$.
\item[(ii)]
If moreover the condition $g+\mu \geq 3$ is satisfied, then there exists an ideal triangulation $\Delta'_a$ of $\Sigma$ such that $\mathbb{D}(a)$ is given by an immersion of $\mathbb{D}^1_2$.
\end{enumerate}
\end{lem}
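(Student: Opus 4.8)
\textbf{Proof proposal for \cref{l:triangulation}.}
The plan is to construct the required triangulations by hand, starting from an arbitrary ideal triangulation and modifying it near the puncture $a$. First I would fix an arbitrary ideal triangulation $\Delta$ of $\Sigma$ (which exists by condition (2), as noted in the excerpt). Near $a$ one sees a "cone'' of triangles incident to $a$: since $a$ is a puncture, there is a cyclic sequence of edges $e_1,\dots,e_d$ emanating from $a$ (with $d\geq 1$), and the triangles between consecutive $e_i, e_{i+1}$ form the link of $a$. The star neighborhood $\mathbb D(a)$ is the union of these triangles, and combinatorially it is a disk with one puncture (namely $a$) in its interior and some number of special points on its boundary coming from the other endpoints of the $e_i$'s and from the third sides of those triangles. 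The goal of (i) is to arrange that $\mathbb D(a)$ has exactly one boundary marked point, i.e. $\mathbb D(a)\cong \mathbb D^1_1$; the goal of (ii) is to arrange exactly two, i.e. $\mathbb D(a)\cong \mathbb D^1_2$, but this time realized by an immersion rather than an embedding (the two boundary special points, and possibly the boundary arcs, may coincide in $\Sigma$).

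For part (i): I would reduce the valence of $a$ to $1$ by a sequence of flips. If $d\geq 2$, pick two consecutive edges $e_i,e_{i+1}$ at $a$ bounding a triangle $T$ whose third edge $f$ is not incident to $a$; flipping the edge $e_{i+1}$ (the diagonal of the quadrilateral formed by $T$ and the next triangle around $a$) decreases the valence of $a$ by one, provided the flip is admissible. One must check admissibility and that the process terminates at valence $1$; the standard subtlety is that a flip may be blocked when the quadrilateral degenerates, which happens precisely in small surfaces — but conditions (1)–(3), in particular $\Sigma$ not being a closed one-punctured surface and $n(\Sigma)>0$, rule out the obstructions. Once $a$ has valence $1$, the single edge $e$ at $a$ together with the self-folded triangle (or the triangle with a single edge at $a$) it bounds gives $\mathbb D(a)\cong \mathbb D^1_1$, embedded because the star of a valence-one puncture embeds.

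For part (ii): now I want valence exactly $2$ at $a$, with the two boundary special points of $\mathbb D(a)$ possibly identified in $\Sigma$, so the embedding is relaxed to an immersion. Starting from the valence-$1$ triangulation $\Delta_a$ of part (i), I would do one more controlled flip (or un-flip) to raise the valence of $a$ to $2$: replace the self-folded triangle at $a$ by two triangles sharing an edge $e_2$ at $a$, whose outer edges go to one or two special/marked points of $\Sigma$. The extra hypothesis $g+\mu\geq 3$ is exactly what guarantees there is "room'' to do this — it ensures $\Sigma$ has enough marked points or genus that the two edges leaving $\mathbb D(a)$ can be taken to hit the rest of the surface in a way that makes $\mathbb D(a)$ an immersed $\mathbb D^1_2$ rather than being forced to wrap onto itself degenerately; I would track the Euler-characteristic/edge count $n(\Sigma)$ to confirm a valid triangulation results. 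The immersion (as opposed to embedding) is unavoidable precisely when the only available pair of outgoing edges lands on the same special point or re-enters $\mathbb D(a)$; this is permitted by the statement.

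The main obstacle I anticipate is the careful bookkeeping of \emph{when flips are admissible and the termination of the valence-reduction process}, together with pinning down exactly which degenerate configurations are excluded by conditions (1)–(3) and by the extra hypothesis $g+\mu\geq 3$ in (ii). This is essentially a combinatorial-topological case analysis on ideal triangulations (in the spirit of \cite{FST}), and the delicate point is handling self-folded triangles and the small exceptional surfaces without error; the rest is routine once the right sequence of flips is identified.
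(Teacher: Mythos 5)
Your plan (start from an arbitrary ideal triangulation and reduce the valence of $a$ to $1$ by flips) has a genuine gap exactly at the step you flag as ``the main obstacle'': you never establish that the valence-reduction process is admissible and terminates, and this is not a routine verification. A flip at an edge incident to $a$ need not decrease the valence of $a$ — if the flip quadrilateral has $a$ at two of its corners (which happens readily once the triangulation has few marked points), the new diagonal is again incident to $a$; and once a self-folded triangle appears, its interior edge cannot be flipped at all in the usual conventions. So the entire content of part (i) is concentrated in the unproven claim, and the appeal to conditions (1)--(3) ``ruling out the obstructions'' is not substantiated. Your part (ii) has the same character: ``one more controlled un-flip'' is asserted rather than constructed, and the role of $g+\mu\geq 3$ is left as a vague ``room'' heuristic.

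The paper avoids flips entirely by a direct construction, which you could adopt: condition (3) guarantees a marked point $a'\neq a$; an arc from $a$ to $a'$ together with a loop $\delta$ based at $a'$ encircling $a$ cuts out an embedded punctured monogon $\mathbb{D}^1_1$ containing $a$, and the complementary marked surface admits an ideal triangulation (its edge count $n\geq 2$), giving $\Delta_a$. For (ii), the hypothesis $g+\mu\geq 3$ does one precise job: it excludes $\Sigma=\mathbb{D}^1_1$ itself, which is the only case in which $\delta$ would lie on $\partial\Sigma$. Hence $\delta$ is the side of a triangle $T$ outside the monogon, $\mathbb{D}^1_1\cup T$ is an immersed $\mathbb{D}^1_2$ centered at $a$, and flipping $\delta$ produces $\Delta'_a$. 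If you want to keep a flip-based argument, you would still need to prove reachability of the valence-one configuration, which in effect amounts to constructing it directly anyway.
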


\begin{proof}
The condition $(3)$ implies that we have another marked point $a'$. Then an arc connecting $a$ with $a'$ and a loop $\delta$ based at $a'$ which encircles $a$ form a punctured monogon, which is $\mathbb{D}^1_1$. The complement marked surface has an ideal triangulation, since $n(\Sigma) \geq 2$ in this case. For (ii), first take $\Delta_a$. If the loop $\delta$ lies on the boundary of $\Sigma$, then $\Sigma$ is a punctured monogon which is forbidden by the condition $(4)$. Hence $\delta$ is an edge of a triangle $T$ which is not contained in the embedded $\mathbb{D}^1_1$. Then $\mathbb{D}^1_1 \cup T$ forms an immersion of $\mathbb{D}^1_2$, whose central puncture is $a$. By flipping at the loop $\delta$, we get the desired triangulation $\Delta'_a$.
\end{proof}

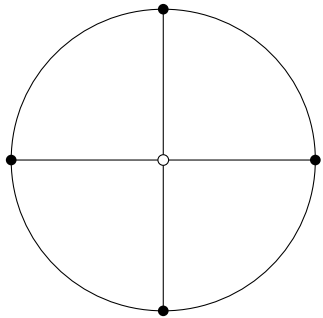
\begin{figure}
\[
\begin{tikzpicture}
\draw(0,0) circle(2pt) coordinate (A);
\draw(0,0) circle(2);

\foreach \x in {0,90,180,270}
\fill(\x:2) circle(2pt) coordinate(\x);
\foreach \y in {0,90,180,270}
\draw[shorten <=2pt] (A)--(\y);
\end{tikzpicture}
\]
\caption{The marked surface $\mathbb{D}^1_4$ with an ideal triangulation.}
\label{fig:D_k}
\end{figure}

A $G$-local system on a manifold $M$ is a principal $G$-bundle on $M$ equipped with a flat connection. Fixing a basepoint $x\in M$ and a local trivialzation at $x$, the monodromy representation of a $G$-local system gives rise to a group homomorphism $\pi_1(M,x) \to G$. Then it turns out that the gauge equivalence classes of $G$-local systems on $M$ are parametrized by the set $\mathrm{Hom}(\pi_1(M,x),G)/G$. Here $G$ acts on the homomorphisms by conjugation. 
We mainly use the latter description. 

In order to define the moduli space of decorated twisted $G$-local systems on $\overline{\Sigma}$ carefully, we fix conventions following \cite{FG03,Le16}. Let $\overline{\Sigma}$ be the compact oriented surface obtained from $\Sigma$ by removing a small open disk around each puncture. Note the fundamental group of the surface $\overline{\Sigma}$ is isomorphic to the free group of rank $2g+p+b-1$. 
Take a collar neighborhood $\mathcal{N}(\partial \overline{\Sigma}) \cong \partial \overline{\Sigma} \times [0,1)$ of the boundary. It can be lifted to the punctured tangent bundle $T'\overline{\Sigma}:=T\overline{\Sigma} \setminus (\text{$0$-section})$ by sending each point $x$ to $(x, -(\partial/\partial t)_x)$, where $t \in [0,1)$ denotes the coordinate function of the second component of $\mathcal{N}(\partial \overline{\Sigma})$. 

For a connected component $C \subset \partial \overline{\Sigma}$ with special points, let $m_1,\dots,m_k$ be the special points on $C$ in the order \emph{against} the orientation induced from $\overline{\Sigma}$. For each $i \in \Z_k$, fix a point $x_i \in C$ between $m_i$ and $m_{i+1}$. Then define  
$\mathcal{N}(C)^\times:=\mathcal{N}(C) \setminus \bigcup_{i=1}^k (\{x_i\} \times [0,1))$.  
If a connected component $C$ has no special point, then $\mathcal{N}(C)^\times:=\mathcal{N}(C)$. Finally define $\mathcal{N}(\partial \overline{\Sigma})^\times:=\bigcup_C \mathcal{N}(C)^\times$. Note that the connected components of $\mathcal{N}(\partial \overline{\Sigma})^\times$ correspond to the marked points of $\Sigma$. See \cref{f:marked surface}.

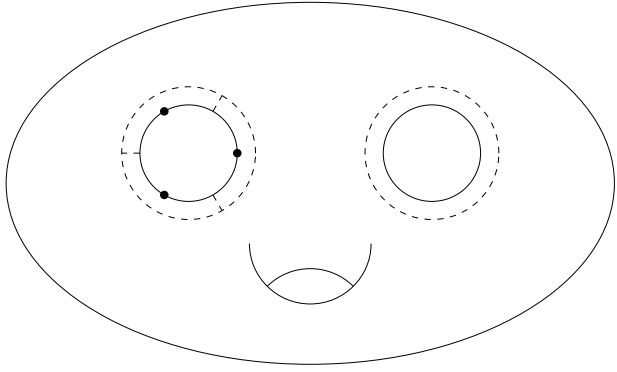
\begin{figure}
\begin{center}
\scalebox{0.8}{
\begin{tikzpicture}
\draw(0,0) circle[x radius=5cm, y radius=3cm];
\begin{scope}[yshift=-1cm]
\draw(1,0) arc[start angle=0, end angle=-180, radius=1cm];
\draw(0.707,-0.707) to[out=135, in=45] (-0.707,-0.707);
\end{scope}
\begin{scope}[xshift=-2cm, yshift=0.5cm]
\draw(0,0) circle(0.8cm);
\foreach \i in {0,120,240}
{
	\fill(\i:0.8) circle(2pt); 
	\draw[dashed](\i+60:0.8) to (\i+60:1.1);
}
\draw[dashed](0,0) circle(1.1cm);
\end{scope}
\begin{scope}[xshift=2cm, yshift=0.5cm]
\draw(0,0) circle(0.8cm);
\draw[dashed](0,0) circle(1.1cm);
\end{scope}

\end{tikzpicture}
}
\end{center}
\caption{A marked surface $\overline{\Sigma}$ with a neighborhood $\mathcal{N}(\partial \overline{\Sigma})^\times$. Special points are shown as black points.}
\label{f:marked surface}
\end{figure}

\begin{definition}
\begin{enumerate}
\item
A twisted $G$-local system on a surface $\overline{\Sigma}$ is a $G$-local system $\L$ on the punctured tangent bundle $T'\overline{\Sigma}$ such that the monodromy around each fiber is given by $s_G$. 

\item
A decoration $\psi$ of $\L$ is a flat section of $\L_\A|_{\mathcal{N}(\partial \overline{\Sigma})^\times}$, where $\L_\A:=\L\times_G \A_G$ is the associated $\A_G$-bundle. 
\end{enumerate}
\end{definition}
If $s_G=1$, a twisted local system reduces to a local system on $\overline{\Sigma}$. Indeed, we have an exact sequence $1\to \Z \to \pi_1(T'\overline{\Sigma}) \to \pi_1(\overline{\Sigma}) \to 1$ induced by the projection $T'\overline{\Sigma} \to \overline{\Sigma}$. 
Since a flat section is determined by the value at one point, a decoration can be thought of as a tuple of decorated flags assigned to each marked point. The decorated flag assigned to a puncture $a$ must be invariant under the monodromy $\gamma_a$ around $a$. In particular, $\gamma_a$ must be unipotent.
After fixing a basepoint $x_0 \in T'\overline{\Sigma}$, let $c_a \in \pi_1(T'\overline{\Sigma}, x_0)$ denote the homotopy class of a based loop which encloses the boundary component corresponding to the puncture $a$ in the positive direction.

\begin{definition}
Let $\A_{G,\Sigma}$ denote the moduli space of decorated twisted $G$-local systems $(\L,\psi)$ up to gauge equivalence. The action of the Cartan subgroup $H$ on $\A_G$ induces an action of the direct product $H^p$ on $\A_{G,\Sigma}$, and the quotient is denoted by $\mathrm{Loc}^\mathrm{un}_{G,\Sigma}$. The latter is the moduli space of $G$-local systems with unipotent monodromy around each puncture, equipped with decorations only at special points. Let $\pi: \A_{G,\Sigma} \to \mathrm{Loc}^\mathrm{un}_{G,\Sigma}$ be the natural projection.
\end{definition}
We have $\dim \A_{G,\Sigma}=-\chi(\Sigma)\dim G+(\mu-p) \dim \A_G$, where $\chi(\Sigma)=2-2g-(p+b)$ denotes the Euler characteristic. Note that the $\ast$-involution acts on $\A_{G,\Sigma}$ as an automorphism, since it acts on the defining cocyles of local systems as well as those of associated bundles. 

\begin{example}
Let us consider the marked surface $\mathbb{D}^1_k$ ($k\geq 1$). The unique puncture is written as $a$. Since $\pi_1(\overline{\mathbb{D}^1_k}) \cong \Z$, a twisted $G$-local system on $\overline{\mathbb{D}^1_k}$ 
is determined by a $G$-orbit of a data $(\gamma_a;A_a,A_1,\dots,A_k)$, where $\gamma_a \in G$ is the monodromy along the loop $c_a$, $A_a$ is a $\gamma_a$-invariant decorated flag and $A_i$ is an arbitrary decorated flag ($i=1,\dots,k$).
\end{example}

\subsubsection{The geometric action of Weyl groups on $\A_{G,\Sigma}$}\label{subsubsec:action}
Here we recall the definition of the Weyl group actions on $\A_{G,\Sigma}$, following \cite{GS16}. We have an action of one copy of $W(\mathfrak{g})$ for each puncture of $\Sigma$ such that the actions for different punctures commute. 
Let $a$ be a puncture of $\Sigma$. Without loss of generality, we can assume that there are no other punctures on $\Sigma$. 
 
Let $(\L,\psi)$ be a twisted decorated $G$-local systems on $\overline{\Sigma}$. 
Fix a basepoint $x_0=(x,v) \in T'\overline{\Sigma}$, where $x$ lies on the boundary component corresponding to the puncture $a$ and $v$ is an outward tangent vector. By choosing a local trivialization of $\L$ near $x_0$, we get a monodromy homomorphism $\rho: \pi_1(T'\overline{\Sigma}, x_0) \to G$ and an identification $\L_\A|_{x_0} \cong \A_G$.  
Then the decoration $\psi$ gives a $\gamma_a$-invariant decorated flag $A_a:=\psi(x_0) \in \A_G$, where $\gamma_a:=\rho(c_a)$. In other words, we have $\gamma_a \in U_{A_a}$. 
Then we get a regular map 
\[
\overrightarrow{\mathcal{W}}_{a}: \A_{G,\Sigma} \to \C^S,~\overrightarrow{\mathcal{W}}_{a}(\L,\psi):=\vec{\chi}_{A_a}(\gamma_a),
\] 
which does not depend on the choice of local trivialization. 
Each component $\mathcal{W}_{a,s}:=(\overrightarrow{\mathcal{W}}_{a})_s: \A_{G,\Sigma}\to \C$ ($s \in S$) is called the \emph{$s$-th partial potential}. 
The sum $\mathcal{W}_a:=\sum_{s \in S}\mathcal{W}_{a,s}$ is called the \emph{total potential} at the puncture $a$. 
Let us consider the character group $T:=\mathrm{Hom}(L(\mathfrak{g}),\C^*)$ of the root lattice, on which the Weyl group $W(\mathfrak{g})\cong N_G(T)/T$ naturally acts. The simple roots determine an identification $\iota: T\xrightarrow{\sim} (\C^*)^S \subset \C^S$. Composing with the birational inverse of this map, we get a rational map 
\[
\mu_{a}:=\iota^{-1}\circ \overrightarrow{\mathcal{W}}_{a}: \A_{G,\Sigma} \to T.
\] 
The action of $W(\mathfrak{g})$ on $\A_{G,\Sigma}$ will be defined so that the restriction of $\mu_a$ to each fiber of $\pi: \A_{G,\Sigma}\to \mathrm{Loc}^\mathrm{un}_{G,\Sigma}$ is $W(\mathfrak{g})$-equivariant.

For a generic point $\L\in \mathrm{Loc}^\mathrm{un}_{G,S}$, choose a decoration $\psi$ so that $(\L,\psi) \in \mu_a^{-1}(1)$. The $H$-action provides an isomorphism $j_\psi: H \xrightarrow{\sim} \pi^{-1}(\L)$, $h \mapsto (\L, h.\psi)$. Then we define an action of $W(\mathfrak{g})$ on $\pi^{-1}(\L)$ by 
\begin{align}\label{def:W-action}
w (\L,h.\psi):=(\L,w^\ast(h).\psi)
\end{align}
for $h \in H$ and $w \in W(\mathfrak{g})$, here $W(\mathfrak{g})\to W(\mathfrak{g}), w\mapsto w^{\ast}$ is an involution given by 
\[
w^{\ast}:=w_0ww_0.
\]
Note that $r_s^{\ast}=r_{s^{\ast}}$. 
This definition is independent of the choice of the decoration $\psi$, since the composition $\mu_a\circ j_\psi: H \to T$ coincides with the evaluation map given by $h \mapsto (\alpha\mapsto h(\alpha))$ for $h \in H$ and $\alpha \in L(\mathfrak{g})$. See \cite[Theorem 6.2]{GS16}. Applying this construction to each fiber, we get a birational action of $W(\mathfrak{g})$ on $\A_{G,S}$ such that the map $\mu_a$ is $W(\mathfrak{g})$-equivariant. 

Considering the above process for each puncture, we get a birational action of the direct product $W(\mathfrak{g})^p$ on the moduli space $\A_{G,\Sigma}$. We call this action the \emph{geometric action}. 
The component of $W(\mathfrak{g})^p$ corresponding to a puncture $a$ is written as $W(\mathfrak{g})^{(a)}$. An element of $W(\mathfrak{g})^{(a)}$ is written as $w^{(a)}$. 

\begin{remark}
The action of $W(\mathfrak{g})^p$ on $\A_{G,\Sigma}$ is the same as the one defined in \cite{GS16}. The appearance of the involution $w \mapsto w^*$ in \eqref{def:W-action} is a consequence of our convention that we use $U^-$ as the basepoint of $\A_G$ instead of $U^+$.
\end{remark}

\subsection{Cluster $\A$-charts on the moduli space $\A_{G,\Sigma}$}\label{subsec:modulicluster}

\subsubsection{Cluster $\A$-charts on the configuration space $\Conf_3 \A_G$}
First we consider the cluster structure on the space $\Conf_3\A_G$ of configurations of three points of the principal affine space. Let us recall the precise definitions of each terminology. For $k\in\mathbb{Z}_{>0}$, set 
\begin{align*}
\Conf_k\A_G:=G\backslash\overset{k\ \text{times}}{\overbrace{\A_G\times\cdots\times\A_G}}&&&\Conf_k\B_G:=G\backslash\overset{k\ \text{times}}{\overbrace{\B_G\times\cdots\times\B_G}},
\end{align*}
here we consider the diagonal left action of $G$. Note that a natural projection map $\pi\colon \A_G\to \B_G$ induces the projection map $\pi_k\colon \Conf_k\A_G\to \Conf_k\B_G$. An element of $\Conf_k\A_G$ (resp.~$\Conf_k\B_G$) whose representative is given by a $k$-tuple $(X_1,\dots, X_k)$ of elements of $\A_G$ (resp.~$\B_G$) is denoted by $[X_1,\dots, X_k]$. There exist natural bijections, called \emph{(twisted) cyclic shifts},  
\begin{align*}
\mathcal{S}_k\colon \Conf_k\A_G\to \Conf_k\A_G, [A_1,\dots, A_k]\mapsto [A_2,\dots, A_k, s_GA_1], \\
\mathcal{S}_k\colon \Conf_k\B_G\to \Conf_k\A_G, [B_1,\dots, B_k]\mapsto [B_2,\dots, B_k, B_1].
\end{align*}

The flag variety $\mathcal{B}_G$ can be identified with the set $\mathscr{B}_G$ of connected maximal solvable subgroups of $G$ via $gB^-\mapsto gB^-g^{-1}$. A pair $(B_1, B_2)\in \mathscr{B}_G^2$ is said to be \emph{generic} if there exists $g\in G$ such that $(gB_1g^{-1}, gB_2g^{-1})=(B^+, B^-)$. A $k$-tuple $(B_1,\dots, B_k)\in \mathscr{B}_G^k$ is said to be \emph{generic} if every pair $(B_i, B_j)$, $i\neq j$ is generic.

Set 
\begin{align*}
	\Conf_k^{\ast}\B_G&:=\{[B_1,\dots, B_k]\in \Conf_k\B_G\mid (B_1,\dots, B_k)\ \text{is generic}\},\\
	\Conf_k^{\ast}\A_G&:=\pi_k^{-1}(\Conf_k^{\ast}\B_G). 
\end{align*}
Note that the genericity is preserved under the diagonal action of $G$. The cyclic shifts $\mathcal{S}_k$ preserves $\Conf_k^{\ast}\A_G$ and $\Conf_k^{\ast}\B_G$. 
In the following, we mainly consider the case that $k=3$.

\begin{definition}
	Set $U^{\pm}_{\ast}:=U^{\pm}\cap B^{\mp}\overline{w_0}B^{\mp}$. Then $U^{\pm}_{\ast}$ is called the \emph{unipotent cell} (associated with $w_0$). 
	This is an affine algebraic variety (cf.~\cite[Proposition 2.8]{BFZ05}). 
\end{definition}
\begin{prop}[{\cite[\S~2.4]{Le16}, see also \cite{FG03}}]\label{p:config3}
	There exists a bijection $\beta \colon H\times H\times U^-_{\ast}\to \Conf_3^{\ast}\A_G$ given by 
	\[
	(h_1, h_2, u_-)\mapsto [U^-, h_1\overline{w_0}U^-, u_-h_2\overline{w_0}U^-]. 
	\]
	In particular, $\Conf_3^{\ast}\mathcal{A}_G$ is an affine algebraic variety. 
\end{prop}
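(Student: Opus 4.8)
The statement to prove is Proposition~\ref{p:config3}: the map $\beta\colon H\times H\times U^-_\ast\to\Conf_3^\ast\A_G$, $(h_1,h_2,u_-)\mapsto[U^-,h_1\overline{w_0}U^-,u_-h_2\overline{w_0}U^-]$, is a bijection. The plan is to produce an explicit inverse. First I would reduce to a normal form: given $[A_1,A_2,A_3]\in\Conf_3^\ast\A_G$, genericity of the underlying flags lets me act by the diagonal $G$-action to assume $A_1=U^-$ and that the flag of $A_2$ is the one fixed by $\overline{w_0}U^-$, i.e. the $B^+$-flag (since a generic pair of flags is $G$-conjugate to $(B^-,B^+)$). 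The residual stabilizer of this pair of flags, when we remember only decorations, is the maximal torus $H$ acting on each decoration; acting by $H$ on $A_1=U^-$ does nothing (it stabilizes $U^-$ up to the $H$-action built into $\A_G=G/U^-$), so after normalizing I can write $A_1=U^-$ and $A_2=h_1\overline{w_0}U^-$ for a \emph{unique} $h_1\in H$, exactly because the $H$-action on a fixed decorated flag is simply transitive on its $Z(G)$-torsor of decorations lying over a given flag—more precisely, $H$ acts freely and transitively on the fiber $\pi^{-1}(B^+B^-)$ over a point of $\B_G$.

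The next step handles $A_3$. Once $A_1,A_2$ are normalized as above, the remaining freedom is gone (the common stabilizer of the decorated flags $U^-$ and $h_1\overline{w_0}U^-$ in $G$ is trivial), so $A_3$ is a well-defined point of $\A_G$, say $A_3=g\,U^-$. Genericity says the flag of $A_3$ is generic with respect to both $B^-$ (the flag of $A_1$) and $B^+$ (the flag of $A_2$); generic position relative to $B^-$ means $g\in B^-\overline{w_0}B^-$ modulo $U^-$ on the right, i.e. $g$ can be written uniquely as $u_-\,t\,\overline{w_0}$ with $u_-\in U^-$ and $t\in H$ (Bruhat/Gauss decomposition of the big cell for $B^-$), so $A_3=u_-\,t\,\overline{w_0}\,U^-$ with $t\in H$ unique and $u_-\in U^-$ unique. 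Setting $h_2:=t$ recovers the claimed form. The genericity of the flag of $A_3$ relative to $B^+$ (equivalently to $B^-$ via $\overline{w_0}$) is precisely the condition that forces $u_-\in U^-_\ast=U^-\cap B^+\overline{w_0}B^+$: the flag $\pi(A_3)$ being in generic position with respect to the $B^+$-flag translates into $u_-$ lying in the intersection of $U^-$ with the big Bruhat cell for $B^+$, which is the unipotent cell by definition. Conversely, one checks that for any $(h_1,h_2,u_-)\in H\times H\times U^-_\ast$ the triple of flags underlying $[U^-,h_1\overline{w_0}U^-,u_-h_2\overline{w_0}U^-]$ is generic, so $\beta$ lands in $\Conf_3^\ast\A_G$; this is the same Bruhat-cell computation run backwards.

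Putting these together: $\beta$ is surjective by the normalization argument, and injective because at each stage ($G$-normalization of $A_1,A_2$; then extracting $h_1$ from the $H$-torsor; then the Gauss decomposition extracting $h_2$ and $u_-$) the data are uniquely determined. Finally, $H\times H\times U^-_\ast$ is an affine algebraic variety since $U^-_\ast$ is one by \cite[Proposition 2.8]{BFZ05} and $H$ is a torus; as $\beta$ is an isomorphism of varieties (both the map and its inverse being regular—each step above is given by polynomial formulas in matrix entries and Bruhat coordinates), $\Conf_3^\ast\A_G$ inherits the structure of an affine variety. I expect the main obstacle to be bookkeeping the two torus factors correctly: one has to be careful that the $H$-action used to pin down $A_2$ (acting on decorations of the middle flag) and the torus part $t=h_2$ emerging from the Gauss decomposition of $A_3$ are genuinely independent, and that no hidden residual symmetry identifies different triples $(h_1,h_2,u_-)$—this comes down to the fact that after fixing the first two decorated flags there is no stabilizer left in $G$, which must be stated cleanly rather than assumed.
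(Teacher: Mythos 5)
Your argument is correct and is essentially the standard proof of this fact (the paper itself gives no proof, deferring to \cite[\S~2.4]{Le16} and \cite{FG03}): normalize the first two flags to $(B^-,B^+)$ by the $G$-action, use the residual $H$-stabilizer of that pair to pin down $A_1=U^-$ uniquely, read off $h_1$ from the $H$-torsor fiber over $B^+$, and extract $(u_-,h_2)$ from the Bruhat/Gauss decomposition of $A_3$, with the two genericity conditions on $\pi(A_3)$ accounting respectively for membership in the big cell $B^-\overline{w_0}B^-$ and for $u_-\in U^-_{\ast}$. One cosmetic slip: the fiber of $\pi\colon\A_G\to\B_G$ is an $H$-torsor, not a $Z(G)$-torsor (the latter is the fiber of $\A_G\to\A_{G'}$), but your subsequent clause states the correct fact and the argument is unaffected.
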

By using the isomorphism $\beta$, we can describe the cluster $\A$-charts on $\Conf_3^{\ast}\A_G$ in terms of generalized minors, following \cite{FG03, GS16, Le16}. We shall review their description here. 

	In the rest of this section, we assume that $\mathfrak{g}$ is of type $X_n$, $X=A, B, C$ or $D$. Let $\ve=(\ve_{st})_{s,t \in S}$ be the exchange matrix of the Coxeter quiver $Q(\mathfrak{g})$ taken as Figure \ref{Dynkin-quivers}. Recall that $|\ve_{st}|=-C_{ts}$ for all $s \neq t$. 
	
	For a vertex $v_i^s$ of $\bJ(\bi_Q(n))$ defined in \S~ \ref{subsec:tildeQ}, define an element $w_i^s$ of $W(\mathfrak{g})$ by 
	\begin{align}
	w_i^s:=
	\begin{cases}
		1& \text{ if $i=1$}, \\
		r_1(r_2r_1)\cdots (r_{s+i-3}\cdots r_1) r_{s+i-2}\cdots r_{s} & \text{ if $i\geq 2$ and $X=A$}, \\
		(r_1\cdots r_n)^{i-2}r_1\cdots r_s& \text{ if $i\geq 2$ and $X=B, C$ or $D$}.
	\end{cases}\label{eq:subword}
\end{align}
Note that they are elements obtained from subwords of $\bi_Q(n)$. 
\begin{thm}[{\cite{FG03, Le16}}]\label{t:clusterstr}
The configuration space $\Conf_3\A_G$ has a structure of cluster $\mathcal{A}$-variety one of whose cluster $\A$-chart is given as follows :  
	\begin{itemize} 
		\item The weighted quiver : $\widetilde{\bJ}(\bi_Q(n))$,
		\item The function $A_{y_j}$ assigned to $y_j$ $(j=1,\dots, n)$ is determined by 
		\[
		A_{y_j}(\beta(h_1, h_2, u_-))=h_1^{\varpi_j}, 
		\]
		\item The function $A_{v^{s}_{i}}$ assigned to each vertex $v^{s}_{i}$ of $\bJ(\bi_Q(n))$ is determined by 
		\[
		A_{v^{s}_{i}}(\beta(h_1, h_2, u_-))=h_1^{\mu_{s, i}}h_2^{\varpi_s}\Delta_{w_i^s\varpi_s, \varpi_s}(u_-), 
		\]
		here $\mu_{s, i}$ is given by the following : 
	\begin{align*}
&\bullet	\text{If $X=A$}, \mu_{s, i}=\varpi_{i-1}, \text{ here we set $\varpi_{0}:=0$}, \\ 
&\bullet \text{If $X=B$}, \mu_{s, i}=
\begin{cases}
	0& \text{ if $i=1$}, \\
	\varpi_1& \text{ if $s=1$ and $i>1$}, \\
	\varpi_{i+s-(n+1)}& \text{ if $i+s-(n+1)>1$},\\
	2\varpi_1& \text{ otherwise}, 
\end{cases}
\\
&\bullet \text{If $X=C$}, \mu_{s, i}=
\begin{cases}
	0& \text{ if $i=1$}, \\
	\varpi_{i+s-(n+1)}& \text{ if $i+s-(n+1)>1$},\\
	\varpi_1& \text{ otherwise}, 
\end{cases}
\\
&\bullet \ \text{If $X=D$},  \mu_{s, i}=
\begin{cases}
	0& \text{ if $i=1$}, \\
	\varpi_{i+s-n}& \text{ if $i+s-(n+1)>1$},\\
	\varpi_1& \text{ if $[$$s=1$ and $i$ is even $]$ or $[$$s=2$ and $i>1$ is odd $]$},\\	
	\varpi_2& \text{ if $[$$s=1$ and $i>1$ is odd $]$ or $[$$s=2$ and $i$ is even $]$},\\	
	\varpi_1+\varpi_2& \text{ otherwise}.
\end{cases}
\end{align*} 
	\end{itemize}
\end{thm}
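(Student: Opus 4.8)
\textbf{Proof proposal for Theorem \ref{t:clusterstr}.}

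The plan is to reduce the statement to a known cluster structure description and then to verify the explicit formulae for the $\A$-coordinates by direct computation in the coordinates provided by the bijection $\beta$ of \cref{p:config3}. The main structural input is that $\Conf_3^{\ast}\A_G$ is, up to the torus factor $H\times H$, a (twisted) copy of the unipotent cell $U^-_{\ast}$, whose cluster structure together with the generalized-minor description of its $\A$-coordinates is due to Berenstein--Fomin--Zelevinsky \cite{BFZ05,BFZ96} and Fock--Goncharov \cite{FG03} (for type $A_n$) and Le \cite{Le16} (classical types). So first I would recall the cluster $\A$-structure on the double Bruhat cell / unipotent cell $U^-_{w_0}$ attached to the reduced word $\bi_Q(n)$, whose seed is exactly the unfrozen part of $\bJ(\bi_Q(n))$ and whose $\A$-coordinates are the generalized minors $\Delta_{w_i^s\varpi_s,\varpi_s}(u_-)$ along the word; the subword elements $w_i^s$ in \eqref{eq:subword} are precisely the truncations of $\bi_Q(n)$, so these minors are the standard cluster variables of the unipotent cell. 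Second, I would account for the two torus factors: the action of $H\times H$ on $\Conf_3^{\ast}\A_G$ (coming from rescaling the decorations of the second and third flags) adds $2n$ frozen directions, $n$ of which are recorded by the frozen vertices $y_j$ with $A_{y_j}=h_1^{\varpi_j}$, while the other $n$ (carried by $h_2$) appear only as the monomial prefactors $h_2^{\varpi_s}$ in $A_{v_i^s}$; the arrows connecting $y_j$ to the unfrozen vertices are forced by requiring the ensemble map $p$ to be $H$-equivariant, i.e. by matching the $H$-weight of each $X$-coordinate. This is the content of the passage in \S~\ref{subsec:tildeQ} describing how $\widetilde{\bJ}(\bi_Q(n))$ is obtained from $\bJ(\bi_Q(n))$ by attaching the $y_j$'s.

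Third — and this is the genuinely computational heart — I would verify that the exponents $\mu_{s,i}$ of $h_1$ in $A_{v_i^s}(\beta(h_1,h_2,u_-))$ are as claimed, and that the resulting functions really do transform correctly under the mutations of $\widetilde{\bJ}(\bi_Q(n))$. Concretely, the generalized minor $\Delta_{w_i^s\varpi_s,\varpi_s}$ is a priori a function on the group $G$; when restricted to the element $u_-h_2\overline{w_0}$ and then evaluated in the $\beta$-coordinates, the $\overline{w_0}$-factor and the rewriting $[U^-,h_1\overline{w_0}U^-,\cdot]\mapsto \beta(h_1,h_2,u_-)$ produce $h_1$-weights that are computed by tracking the weights $w_i^s\varpi_s$ and $\varpi_s$ through the standard identities for generalized minors (\cite[(2.13)--(2.25)]{BZ97}), in particular $\Delta_{u\lambda,v\lambda}(g) = \Delta_{\lambda,\lambda}(\overline{u}^{-1}g\overline{v})$ and the behavior under the $H$-action $\Delta_{u\lambda,v\lambda}(hg h') = h^{u\lambda}(h')^{v\lambda}\Delta_{u\lambda,v\lambda}(g)$. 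The case-by-case list for $\mu_{s,i}$ (type $A$, $B$, $C$, $D$) is then a matter of carrying out this weight bookkeeping for the four families of reduced words $\bi_Q(n)$; the genuinely delicate points are the ``otherwise'' cases in types $B$ and $D$ (where $\mu_{s,i}$ jumps to $2\varpi_1$ or $\varpi_1+\varpi_2$), which come from the fact that the subword $w_i^s$ wraps around past the end of the word and re-uses a simple reflection, forcing a non-minuscule weight to appear.

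The main obstacle I expect is precisely this last step: controlling the $h_1$-exponents $\mu_{s,i}$ uniformly across the classical types, since the combinatorics of which partial product $w_i^s$ one hits depends sensitively on the shape of $\bi_Q(n)$ (the ``triangular'' word in type $A$ versus the periodic word $(12\cdots n)^{\text{power}}$ in types $B$, $C$, $D$) and on the Dynkin-diagram geometry near the branch node in type $D$. One clean way to organize this is to prove it first for the once-punctured disk $\mathbb{D}^1_1$ (equivalently for $\Conf_3$) by induction on the length of the word, using the local mutation moves \eqref{quiver:m=3}--\eqref{quiver:frozen-move2} that realize the braid relations: each braid move changes a minor $\Delta_{w\varpi_s,\varpi_s}$ to another such minor by an exchange relation among generalized minors (a three-term Plücker-type identity), and one checks that the $h_1$-weight transforms accordingly, so that the assertion is propagated from one reduced word to another. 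Since by Proposition \ref{prop:adapted} (and Lemma \ref{lem:Q-J}) the word $\bi_Q(n)$ is adapted to $Q(\mathfrak{g})$, this inductive scheme terminates and pins down all the $\mu_{s,i}$. Finally, the genericity hypothesis in the definition of $\Conf_3^{\ast}\A_G$ guarantees that $u_-$ lands in $U^-_{\ast}$, so all the minors $\Delta_{w_i^s\varpi_s,\varpi_s}(u_-)$ are nonzero and the chart is a genuine torus chart, completing the identification of $\widetilde{\bJ}(\bi_Q(n))$ with a cluster $\A$-chart on $\Conf_3\A_G$.
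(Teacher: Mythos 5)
First, a point of calibration: the paper does not prove this theorem from scratch. It is imported from \cite{FG03,Le16}, and the only original content the paper supplies is the dictionary of Appendix \ref{app:conf3} (Theorem \ref{t:coord} together with Remark \ref{r:musi}): each of Le's cluster coordinates is a $\Delta G$-invariant in $(V^-(\lambda)\otimes V^-(\mu)\otimes V^-(\nu))^{\Delta G}$ with $-w_0\lambda-\mu\in W(\mathfrak{g})\cdot\nu$, and Theorem \ref{t:coord} shows that such an invariant pulls back under $\widetilde{\beta}$ to $a\,h_1^{\mu}h_2^{\nu}\Delta_{-w_0\lambda-\mu,\nu}(u_-)$ (up to $s_G$-factors). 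So $\mu_{s,i}$ is literally the weight $\mu$ of the \emph{second} tensor slot of Le's invariant, read off from \cite{Le16}, and the normalization is fixed by one evaluation at $\widetilde{\beta}(1,1,u_-)$. Your proposal instead tries to rebuild the chart from the Berenstein--Fomin--Zelevinsky cluster structure on $U^-_{\ast}$ and then ``account for the torus factors''; that is a genuinely different (and more ambitious) route.

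The gap is in your third step. You claim the exponents $\mu_{s,i}$ are ``computed by tracking the weights $w_i^s\varpi_s$ and $\varpi_s$ through the standard identities for generalized minors'' applied to $\Delta_{w_i^s\varpi_s,\varpi_s}$ at $u_-h_2\overline{w_0}$. But that minor, as a function on the group, only sees the third flag; it carries no $h_1$-dependence whatsoever, and no amount of rewriting $[U^-,h_1\overline{w_0}U^-,\cdot]$ produces one. The exponent $\mu_{s,i}$ records \emph{which} regular extension of the minor to all of $\Conf_3\A_G$ one takes: any $h_1^{\mu}$ with $-w_0(w_i^s\varpi_s+\mu)$ dominant gives a regular function restricting to the same minor on the slice $h_1=1$, so the weight datum is extra input, not a consequence of minor identities. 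It must either be taken from Le's explicit tensor-invariant construction (as the paper does via Remark \ref{r:musi}), or pinned down by your braid-move induction --- which \emph{can} work, since exchange relations are $H\times H\times H$-homogeneous and hence propagate the weight triples, but then you must correctly specify the weights of the initial seed, and your proposal never does this. Your heuristic for the ``otherwise'' cases in types $B$ and $D$ (the subword ``wrapping around'') is suggestive but is not a proof; as written, the determination of $\mu_{s,i}$ is exactly the step that is missing.
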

\begin{remark}
	Theorem \ref{t:coord} in Appendix is convenient for reading the data $\mu_{s, i}$ from \cite{Le16}. See Remark \ref{r:musi}. The examples of $\mu_{s, i}$ are described in Figures \ref{fig:mu-A3}, \ref{fig:mu-B3C3} and \ref{fig:mu-D4}.
\end{remark}
\begin{figure}[ht]
	\begin{tikzpicture}
	\begin{scope}[>=latex,xshift=250pt,yshift=50pt]
	\draw (0,6) circle(2pt) coordinate(B1) node[above left]{\scalebox{0.8}{$\mu_{1, 1}=0$}};
	\draw (2,6) circle(2pt) coordinate(B2) node[above left]{\scalebox{0.8}{$\mu_{2, 1}=0$}};
	\draw (4,6) circle(2pt) coordinate(B3) node[above left]{\scalebox{0.8}{$\mu_{3, 1}=0$}};
	\qdarrow{B2}{B1}
	\qdarrow{B3}{B2}
	\draw (0,4) circle(2pt) coordinate(C1) node[left]{\scalebox{0.8}{$\mu_{1, 2}=\varpi_1$}};
	\draw (2,4) circle(2pt) coordinate(C2) node[above left]{\scalebox{0.8}{$\mu_{2, 2}=\varpi_1$}};
	\draw (4,4) circle(2pt) coordinate(C3) node[above left]{\scalebox{0.8}{$\mu_{3, 2}=\varpi_1$}};
	\qarrow{C2}{C1}
	\qarrow{C3}{C2}
	\qarrow{B1}{C1}
	\qarrow{B2}{C2}
	\qarrow{B3}{C3}
	\qarrow{C1}{B2}
	\qarrow{C2}{B3}
	%
	\draw (0,2) circle(2pt) coordinate(D1) node[left]{\scalebox{0.8}{$\mu_{1, 3}=\varpi_2$}};
	\draw (2,2) circle(2pt) coordinate(D2) node[above left]{\scalebox{0.8}{$\mu_{2, 3}=\varpi_2$}};
	\qarrow{D2}{D1}
	%
	\qarrow{C1}{D1}
	\qarrow{C2}{D2}
	%
	\qarrow{D1}{C2}
	\qdarrow{D2}{C3}
	\draw (0,0) circle(2pt) coordinate(E1) node[left]{\scalebox{0.8}{$\mu_{1, 4}=\varpi_3$}};
	\qarrow{D1}{E1}
	\qdarrow{E1}{D2}
	\qarrow{F2}{F1}
	\qarrow{F3}{F2}
	{
		\draw (-1,5) circle(2pt) coordinate(Y1) node[left]{\scalebox{0.8}{$A_{y_1}=h_1^{\varpi_1}$}};
		\draw (-1,3) circle(2pt) coordinate(Y2) node[left]{\scalebox{0.8}{$A_{y_2}=h_1^{\varpi_2}$}};
		\draw (-1,1) circle(2pt) coordinate(Y3) node[left]{\scalebox{0.8}{$A_{y_3}=h_1^{\varpi_3}$}};
		\qarrow{C1}{Y1}
		\qarrow{Y1}{B1}
		\qarrow{D1}{Y2}
		\qarrow{Y2}{C1}
		\qarrow{E1}{Y3}
		\qarrow{Y3}{D1}
		\qdarrow{Y1}{Y2}
		\qdarrow{Y2}{Y3}
	}
	\end{scope}
	\begin{scope}[>=latex,xshift=250,yshift=-10]
	\draw (4,4) circle(2pt) coordinate(C3) node[left]{\scalebox{0.8}{$\mu'_{3, 1}=0$}};
	\draw (2,2) circle(2pt) coordinate(D2) node[left]{\scalebox{0.8}{$\mu'_{2, 1}=0$}};
	\draw (4,2) circle(2pt) coordinate(D3) node[above left]{\scalebox{0.8}{$\mu'_{3, 2}=\varpi_3$}};
	\qarrow{D3}{D2}
	\qarrow{C3}{D3}
	\qdarrow{D2}{C3}
	\draw (0,0) circle(2pt) coordinate(E1) node[left]{\scalebox{0.8}{$\mu'_{1, 1}=0$}};
	\draw (2,0) circle(2pt) coordinate(E2) node[above left]{\scalebox{0.8}{$\mu'_{2, 2}=\varpi_3$}};
	\draw (4,0) circle(2pt) coordinate(E3) node[above left]{\scalebox{0.8}{$\mu'_{3, 3}=\varpi_2$}};
	\qarrow{E2}{E1}
	\qarrow{E3}{E2}
	\qarrow{D2}{E2}
	\qarrow{D3}{E3}
	\qdarrow{E1}{D2}
	\qarrow{E2}{D3}
	%
	\draw (0,-2) circle(2pt) coordinate(F1) node[above left]{\scalebox{0.8}{$\mu'_{1, 2}=\varpi_3$}};
	\draw (2,-2) circle(2pt) coordinate(F2) node[above left]{\scalebox{0.8}{$\mu'_{2, 3}=\varpi_2$}};
	\draw (4,-2) circle(2pt) coordinate(F3) node[above left]{\scalebox{0.8}{$\mu'_{3, 4}=\varpi_1$}};
	\qdarrow{F2}{F1}
	\qdarrow{F3}{F2}
	\qarrow{E1}{F1}
	\qarrow{E2}{F2}
	\qarrow{E3}{F3}
	\qarrow{F1}{E2}
	\qarrow{F2}{E3}
	{
		\draw (5,3) circle(2pt) coordinate(X1) node[right]{\scalebox{0.8}{$A_{y'_1}=h_1^{\varpi_3}$}};
		\draw (5,1) circle(2pt) coordinate(X2) node[right]{\scalebox{0.8}{$A_{y'_2}=h_1^{\varpi_2}$}};
		\draw (5,-1) circle(2pt) coordinate(X3) node[right]{\scalebox{0.8}{$A_{y'_3}=h_1^{\varpi_1}$}};
		\qarrow{D3}{X1}
		\qarrow{X1}{C3}
		\qarrow{E3}{X2}
		\qarrow{X2}{D3}
		\qarrow{F3}{X3}
		\qarrow{X3}{E3}
		\qdarrow{X1}{X2}
		\qdarrow{X2}{X3}
	}
	\end{scope}
	\end{tikzpicture}
	\caption{$\mu_{s, i}$ and $A_{y_j}$ for $\mathfrak{g}=A_3$ (upper). $\mu'_{s, i}:=\varpi_{(i-1)^{\ast}}$ and $A_{y'_j}$ for $\mathfrak{g}=A_3$ (lower). }
	\label{fig:mu-A3}
\end{figure}
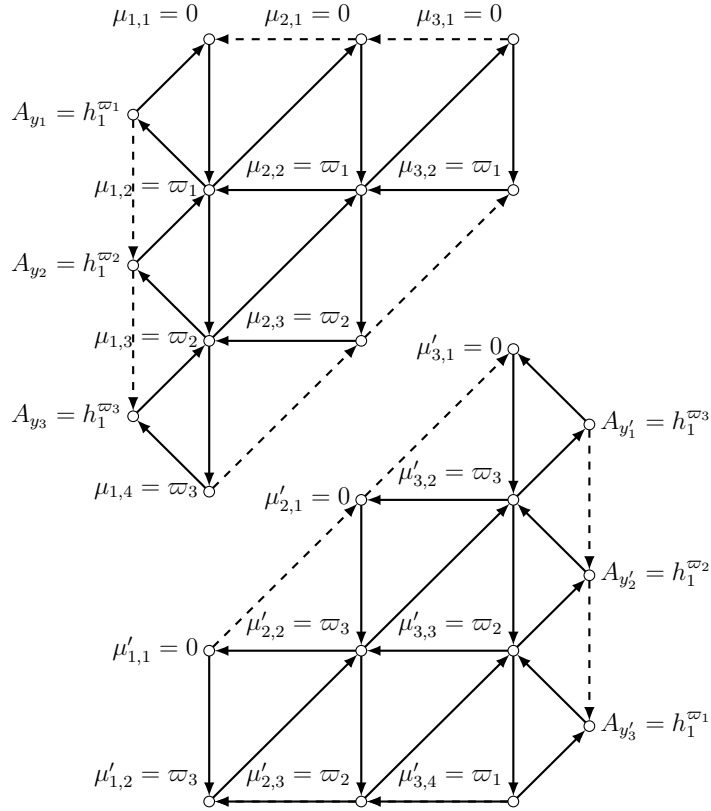

\begin{figure}[ht]
	\begin{tikzpicture}
	\begin{scope}[>=latex]
\foreach \i in {1,2,3,4}
{
	\foreach \s in {2,3}
	{
		\draw (2*\s-2,10-2*\i) node[circle]{2} node[above left]{}; 
		\draw (2*\s-2,10-2*\i) circle[radius=0.15];
	}
	\draw (0,10-2*\i) circle(2pt) node[above left]{};
}
\foreach \i in {1,2,3}
{
	\foreach \s in {2,3}
	\qsarrow{2*\s-2,10-2*\i}{2*\s-2,8-2*\i};
	\qarrow{0,10-2*\i}{0,8-2*\i};
}
\foreach \i in {2,3}
{
	\qsarrow{4,10-2*\i}{2,10-2*\i}
	\qstarrow{2,10-2*\i}{0,10-2*\i}
}
\foreach \i in {1,4}
{
	\qsdarrow{4,10-2*\i}{2,10-2*\i}
	\qstdarrow{2,10-2*\i}{0,10-2*\i}
}
\foreach \i in {2,3,4}
{
	\qsarrow{2,10-2*\i}{4,12-2*\i}
	\qsharrow{0,10-2*\i}{2,12-2*\i}
}
	\draw (0,8) node[above left]{\scalebox{0.8}{$\mu_{1, 1}=0$}};
	\draw (2,8) node[above left]{\scalebox{0.8}{$\mu_{2, 1}=0$}};
	\draw (4,8) node[above left]{\scalebox{0.8}{$\mu_{3, 1}=0$}};
	\draw (0,6) node[below left]{\scalebox{0.8}{$\mu_{1, 2}=\varpi_1$}};
	\draw (2,6) node[above left]{\scalebox{0.8}{$\mu_{2, 2}=2\varpi_1$}};
	\draw (4,6) node[above left]{\scalebox{0.8}{$\mu_{3, 2}=2\varpi_1$}};
	\draw (0,4) node[above left]{\scalebox{0.8}{$\mu_{1, 3}=\varpi_1$}};
	\draw (2,4) node[above left]{\scalebox{0.8}{$\mu_{2, 3}=2\varpi_1$}};
	\draw (4,4) node[above left]{\scalebox{0.8}{$\mu_{3, 3}=\varpi_2$}};
	\draw (0,2) node[above left]{\scalebox{0.8}{$\mu_{1, 4}=\varpi_1$}};
	\draw (2,2) node[above left]{\scalebox{0.8}{$\mu_{2, 4}=\varpi_2$}};
	\draw (4,2) node[above left]{\scalebox{0.8}{$\mu_{3, 4}=\varpi_3$}};
%
\draw (-1,7) circle(2pt) coordinate(Y1) node[left]{};
	\draw (-1,7) node[left]{\scalebox{0.8}{$A_{y_1}=h_1^{\varpi_1}$}};
\foreach \s in {2,3}
{
	\draw (5,9-2*\s) node[circle]{2} coordinate(Y\s) node[right]{};
	\draw (5,9-2*\s) circle[radius=0.15];
}
\draw (5,5) node[right]{\scalebox{0.8}{$A_{y_2}=h_1^{\varpi_2}$}};
\draw (5,3) node[right]{\scalebox{0.8}{$A_{y_3}=h_1^{\varpi_3}$}};

\qarrow{0,6}{Y1}
\qarrow{Y1}{0,8}
\qsarrow{4,4}{Y2}
\qsarrow{Y2}{4,6}
\qsdarrow{Y2}{Y3}
\qsarrow{4,2}{Y3}
\qsarrow{Y3}{4,4}
\draw[->,dashed,shorten >=4pt,shorten <=2pt] (Y1) to [out = 0, in = 100] (Y2) [thick];

	{\begin{scope}[xshift=8cm]	
	\path (0,8) node[circle]{2} coordinate(A1) node[above left]{\scalebox{0.8}{$\mu_{1, 1}=0$}};
	\draw (0,8) circle[radius=0.15];
	\draw (2,8) circle(2pt) coordinate(A2) node[above left]{\scalebox{0.8}{$\mu_{2, 1}=0$}};
	\draw (4,8) circle(2pt) coordinate(A3) node[above left]{\scalebox{0.8}{$\mu_{3, 1}=0$}};
	\draw[->,dashed,shorten >=4pt,shorten <=2pt] (A2) -- (A1) [thick];
	\qdarrow{A3}{A2}
	\path (0,6) node[circle]{2} coordinate(B1) node[below left]{\scalebox{0.8}{$\mu_{1, 2}=\varpi_1$}};
	\draw (0,6) circle[radius=0.15];
	\draw (2,6) circle(2pt) coordinate(B2) node[above left]{\scalebox{0.8}{$\mu_{2, 2}=\varpi_1$}};
	\draw (4,6) circle(2pt) coordinate(B3) node[above left]{\scalebox{0.8}{$\mu_{3, 2}=\varpi_1$}};
	\draw[->,shorten >=4pt,shorten <=2pt] (B2) -- (B1) [thick];
	\qarrow{B3}{B2}
	%
	\draw[->,shorten >=4pt,shorten <=4pt] (A1) -- (B1) [thick];
	\qarrow{A2}{B2}
	\qarrow{A3}{B3}
	%
	\qarrow{B1}{A2}
	\qarrow{B2}{A3}
	%
	\path (0,4) node[circle]{2} coordinate(C1) node[above left]{\scalebox{0.8}{$\mu_{1, 3}=\varpi_1$}};
	\draw (0,4) circle[radius=0.15];
	\draw (2,4) circle(2pt) coordinate(C2) node[above left]{\scalebox{0.8}{$\mu_{2, 3}=\varpi_1$}};
	\draw (4,4) circle(2pt) coordinate(C3) node[above left]{\scalebox{0.8}{$\mu_{3, 3}=\varpi_2$}};
	\draw[->,shorten >=4pt,shorten <=2pt] (C2) -- (C1) [thick];
	\qarrow{C3}{C2}
	%
	\draw[->,shorten >=4pt,shorten <=4pt] (B1) -- (C1) [thick];
	\qarrow{B2}{C2}
	\qarrow{B3}{C3}
	%
	\qarrow{C1}{B2}
	\qarrow{C2}{B3}
	%
	\path (0,2) node[circle]{2} coordinate(D1) node[above left]{\scalebox{0.8}{$\mu_{1, 4}=\varpi_1$}};
	\draw (0,2) circle[radius=0.15];
	\draw (2,2) circle(2pt) coordinate(D2) node[above left]{\scalebox{0.8}{$\mu_{2, 4}=\varpi_2$}};
	\draw (4,2) circle(2pt) coordinate(D3) node[above left]{\scalebox{0.8}{$\mu_{3, 4}=\varpi_3$}};
	\draw[->,dashed,shorten >=4pt,shorten <=2pt] (D2) -- (D1) [thick];
	\qdarrow{D3}{D2}
	%
	\draw[->,shorten >=4pt,shorten <=4pt] (C1) -- (D1) [thick];
	\qarrow{C2}{D2}
	\qarrow{C3}{D3}
	%
	\qarrow{D1}{C2}
	\qarrow{D2}{C3}
	%
		\path (-1,7) node[circle]{2} coordinate(Y1) node[left]{\scalebox{0.8}{$A_{y_1}=h_1^{\varpi_1}$}};
		\draw (-1,7) circle[radius=0.15];
		\draw (5,5) circle(2pt) coordinate(Y2) node[right]{\scalebox{0.8}{$A_{y_2}=h_1^{\varpi_2}$}};
		\draw (5,3) circle(2pt) coordinate(Y3) node[right]{\scalebox{0.8}{$A_{y_3}=h_1^{\varpi_3}$}};
		\draw[->,shorten >=4pt,shorten <=4pt] (Y1) -- (A1) [thick];
		\draw[->,shorten >=4pt,shorten <=4pt] (B1) -- (Y1) [thick];
		\qarrow{C3}{Y2}
		\qarrow{Y2}{B3}
		\qarrow{D3}{Y3}
		\qarrow{Y3}{C3}
		\qdarrow{Y2}{Y3}
		\draw[->,dashed,shorten >=2pt,shorten <=4pt] (Y1) to [out = 0, in = 100] (Y2) [thick];
		\end{scope}}
	\end{scope}
	\end{tikzpicture}
	\caption{$\mu_{s, i}$ and $A_{y_j}$ for $\mathfrak{g}=B_3$ (left) and $\mathfrak{g}=C_3$ (right).}
	\label{fig:mu-B3C3}
\end{figure}
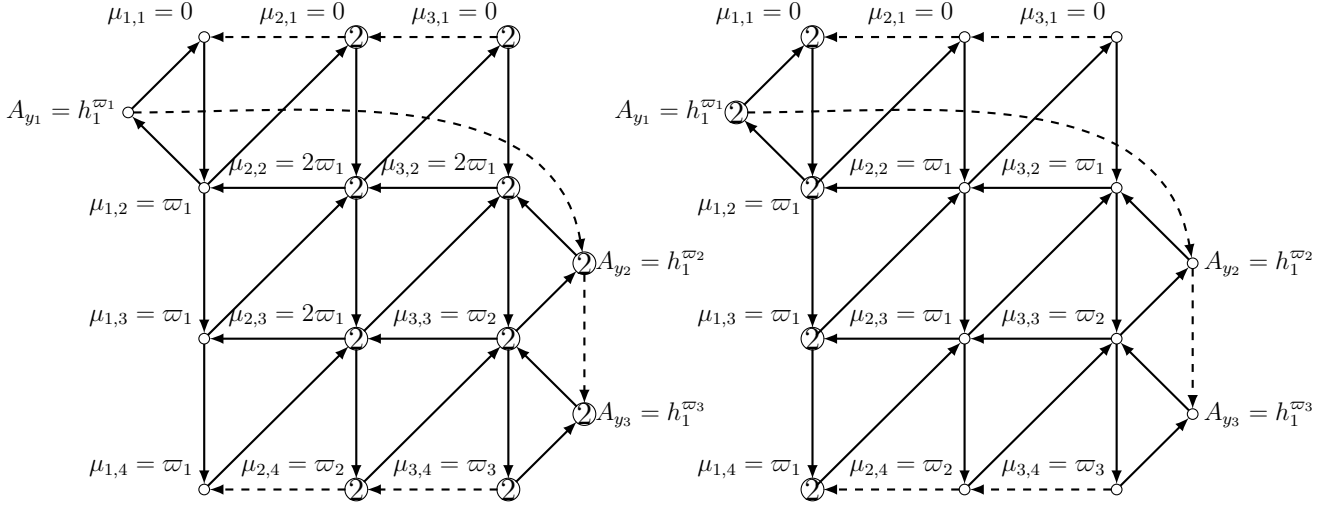
\begin{figure}[ht]
	\begin{tikzpicture}
	\begin{scope}[>=latex]
	\draw (0.5,9.2) circle(2pt) coordinate(A0) node[left]{\scalebox{0.8}{$\mu_{1, 1}=0$}};
	\draw (-1,7.5) circle(2pt) coordinate(A1) node[left]{\scalebox{0.8}{$\mu_{2, 1}=0$}};
	\draw (2,8) circle(2pt) coordinate(A2) node[above right]{\scalebox{0.8}{$\mu_{3, 1}=0$}};
	\draw (4,8) circle(2pt) coordinate(A3) node[above right]{\scalebox{0.8}{$\mu_{4, 1}=0$}};
	\qdarrow{A2}{A0}
	\qdarrow{A2}{A1}
	\qdarrow{A3}{A2}
	\draw (0.5,7.2) circle(2pt) coordinate(B0) node[left]{\scalebox{0.8}{$\mu_{1, 2}=\varpi_1$}};
	\draw (-1,5.5) circle(2pt) coordinate(B1) node[left]{\scalebox{0.8}{$\mu_{2, 2}=\varpi_2$}};
	\draw (2,6) circle(2pt) coordinate(B2) node[below right]{\scalebox{0.8}{$\mu_{3, 2}=\varpi_1+\varpi_2$}};
	\draw (4,6) circle(2pt) coordinate(B3) node[above right]{\scalebox{0.8}{$\mu_{4, 2}=\varpi_1+\varpi_2$}};
	\qarrow{B2}{B0}
	\qarrow{B2}{B1}
	\qarrow{B3}{B2}
	%
	\qarrow{A0}{B0}
	\qarrow{A1}{B1}
	\qarrow{A2}{B2}
	\qarrow{A3}{B3}
	\qarrow{B0}{A2}
	\qarrow{B1}{A2}
	\qarrow{B2}{A3}
	%
	\draw (0.5,5.2) circle(2pt) coordinate(C0) node[left]{\scalebox{0.8}{$\mu_{1, 3}=\varpi_2$}};
	\draw (-1,3.5) circle(2pt) coordinate(C1) node[left]{\scalebox{0.8}{$\mu_{2, 3}=\varpi_1$}};
	\draw (2,4) circle(2pt) coordinate(C2) node[below right]{\scalebox{0.8}{$\mu_{3, 3}=\varpi_1+\varpi_2$}};
	\draw (4,4) circle(2pt) coordinate(C3) node[right]{\scalebox{0.8}{$\mu_{4, 3}=\varpi_3$}};
	\qarrow{C2}{C0}
	\qarrow{C2}{C1}
	\qarrow{C3}{C2}
	%
	\qarrow{B0}{C0}
	\qarrow{B1}{C1}
	\qarrow{B2}{C2}
	\qarrow{B3}{C3}
	%
	\qarrow{C0}{B2}
	\qarrow{C1}{B2}
	\qarrow{C2}{B3}
	%
	\draw (0.5,3.2) circle(2pt) coordinate(D0) node[left]{\scalebox{0.8}{$\mu_{1, 4}=\varpi_1$}};
	\draw (-1,1.5) circle(2pt) coordinate(D1) node[left]{\scalebox{0.8}{$\mu_{2, 4}=\varpi_2$}};
	\draw (2,2) circle(2pt) coordinate(D2) node[below right]{\scalebox{0.8}{$\mu_{3, 4}=\varpi_3$}};
	\draw (4,2) circle(2pt) coordinate(D3) node[below right]{\scalebox{0.8}{$\mu_{4, 4}=\varpi_4$}};
	\qdarrow{D2}{D0}
	\qdarrow{D2}{D1}
	\qdarrow{D3}{D2}
	%
	\qarrow{C0}{D0}
	\qarrow{C1}{D1}
	\qarrow{C2}{D2}
	\qarrow{C3}{D3}
	%
	\qarrow{D0}{C2}
	\qarrow{D1}{C2}
	\qarrow{D2}{C3}
	%
	{
		\draw (-0.5,8.2) circle(2pt) coordinate(Y0) node[left]{\scalebox{0.8}{$A_{y_1}=h_1^{\varpi_1}$}};
		\draw (-2,6.5) circle(2pt) coordinate(Y1) node[left]{\scalebox{0.8}{$A_{y_2}=h_1^{\varpi_2}$}};
		\draw (5,5) circle(2pt) coordinate(Y2) node[right]{\scalebox{0.8}{$A_{y_3}=h_1^{\varpi_3}$}};
		\draw (5,3) circle(2pt) coordinate(Y3) node[right]{\scalebox{0.8}{$A_{y_4}=h_1^{\varpi_4}$}};
		\qarrow{Y0}{A0}
		\qarrow{B0}{Y0}
		\qarrow{Y1}{A1}
		\qarrow{B1}{Y1}
		\qarrow{C3}{Y2}
		\qarrow{Y2}{B3}
		\qarrow{D3}{Y3}
		\qarrow{Y3}{C3}
		\qdarrow{Y2}{Y3}
		\draw[->,dashed,shorten >=2pt,shorten <=2pt] (Y1) to [out = 10, in = 100] (Y2) [thick];
		\draw[->,dashed,shorten >=2pt,shorten <=2pt] (Y0) to [out = 20, in = 90] (Y2) [thick];
	}
	\end{scope}
	\end{tikzpicture}
	\caption{$\mu_{s, i}$ and $A_{y_j}$ for $\mathfrak{g}=D_4$}
	\label{fig:mu-D4}
\end{figure}
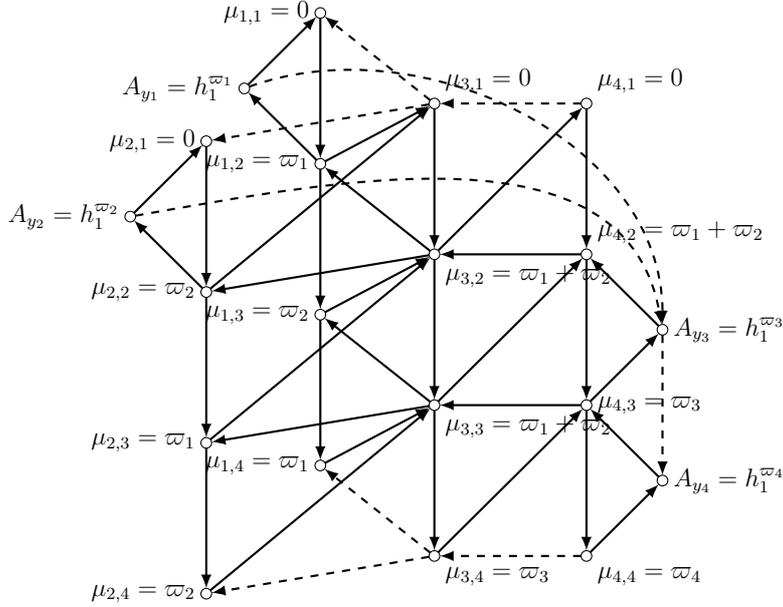
\begin{remark}\label{r:imax}
	For $s\in S$, we have $\mu_{s, i_{\mathrm{max}}(s)}=-w_0\varpi_s$ (see \S~ \ref{subsec:tildeQ} for the definition of $i_{\mathrm{max}}(s)$). Therefore, 
	\[
	A_{v^{s}_{i_{\mathrm{max}}(s)}}(\beta(h_1, h_2, u_-))=h_1^{-w_0\varpi_s}h_2^{\varpi_s}\Delta_{w_0\varpi_s, \varpi_s}(u_-)=[\overline{w_0}^{-1}h_1^{-1}u_-h_2]_0^{\varpi_s}.
	\]	
	\end{remark}
For later use, we recall another cluster $\A$-chart for type $A_n$.  
\begin{thm}[{\cite{GS16}}]\label{t:astclusterstr}
	The configuration space $\Conf_3\A_{SL_{n+1}}$ has a structure of cluster $\mathcal{A}$-variety one of whose cluster $\A$-chart is given as follows :  
	\begin{itemize} 
		\item The weighted quiver : $\widetilde{\bJ}(\bi_Q^\ast(n))$,
		\item The function $A_{y'_j}$ assigned to $y'_j$ $(j=1,\dots, n)$ is determined by 
		\[
		A_{y'_j}(\beta(h_1, h_2, u_-))=h_1^{\varpi_{j^{\ast}}}, 
		\]
		\item The function $A_{u^{s}_{i}}$ assigned to each vertex $u^{s}_{i}$ of $\bJ(\bi_Q^\ast(n))$ is determined by 
		\[
		A_{u^{s}_{i}}(\beta(h_1, h_2, u_-))=h_1^{\varpi_{(i-1)^{\ast}}}h_2^{\varpi_s}\Delta_{{w'}_i^{s}\varpi_{s}, \varpi_{s}}(u_-), 
		\]
		here we set $\varpi_{(0)^{\ast}}:=0$, and define an element $w_i^s$ of $W(\mathfrak{sl}_{n+1})$ by 
		\begin{align}
		{w'}_i^{s}:=
		\begin{cases}
		1& \text{ if $i=1$}, \\
		r_n(r_{n-1}r_{n})\cdots (r_{s-(i-3)}\cdots r_n) r_{s-(i-2)}\cdots r_{s} & \text{ if $i\geq 2$}
		\end{cases}
		\end{align}
		for $s\in S$ and $i=1,\ldots,s+1$.  
	\end{itemize} 
\end{thm}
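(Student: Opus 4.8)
The plan is to deduce \cref{t:astclusterstr} from \cref{t:clusterstr} by transporting the $\bi_Q$-chart along the $\ast$-involution of $\Conf_3\A_G$; the statement is of course due to \cite{GS16} (see also \cite{FG03}), so the argument below mainly records how the two charts are related.

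First I would introduce the involution $\ast\colon\Conf_3\A_G\to\Conf_3\A_G$ defined by $[A_1,A_2,A_3]\mapsto[A_1^\ast,A_2^\ast,A_3^\ast]$, where $A^\ast:=g^\ast U^-$ for $A=gU^-$ and $g\mapsto g^\ast$ is the group automorphism of \cref{l:Dynkininv}. This is well defined since $\ast$ is a group automorphism with $\ast(U^-)=U^-$, and it is biregular on $\Conf_3^\ast\A_G$ because $\ast$ fixes $B^+$ and $B^-$ and hence preserves genericity. Using $\overline{w_0}^\ast=\overline{w_0}$ (which follows from $(\overline{w_0}^{-1})^T=\overline{w_0}$, $s_G\in Z(G)$ and $s_G^2=1$), the formula $h^\ast=\overline{w_0}h^{-1}\overline{w_0}^{-1}$ for $h\in H$, and the fact that $\ast$ preserves the unipotent cell $U^-_\ast$, one checks directly that $\ast\bigl(\beta(h_1,h_2,u_-)\bigr)=\beta(h_1^\ast,h_2^\ast,u_-^\ast)$.

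Next I would compute the pullbacks of the coordinate functions of \cref{t:clusterstr} along $\ast$. From $h^\ast=\overline{w_0}h^{-1}\overline{w_0}^{-1}$ one gets $(h^\ast)^\mu=h^{-w_0\mu}$, and since $-w_0\varpi_j=\varpi_{j^\ast}$ (with the conventions $\varpi_0:=0$, $\varpi_{(0)^\ast}:=0$) the character factors transform as $h_1^{\mu_{s,i}}=h_1^{\varpi_{i-1}}\mapsto h_1^{\varpi_{(i-1)^\ast}}$, $h_2^{\varpi_s}\mapsto h_2^{\varpi_{s^\ast}}$, and $A_{y_j}\circ\ast=h_1^{\varpi_{j^\ast}}=A_{y'_j}$. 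For the minor factors the key input is the identity
\[
\Delta_{w\varpi_s,\varpi_s}(u_-^\ast)=\Delta_{w^\ast\varpi_{s^\ast},\varpi_{s^\ast}}(u_-),\qquad u_-\in U^-,\ w^\ast:=w_0ww_0,
\]
which follows from \cref{l:Dynkininv} by writing $g\mapsto g^\ast$ as the composition of $g\mapsto g^{-1}$, $g\mapsto g^T$ (which exchanges the two indices of a generalized minor), and conjugation by $\overline{w_0}$, together with the standard behaviour of the vectors $v_{w\nu}$ under these operations. A short check on the defining subwords gives $(w_i^s)^\ast={w'}_i^{s^\ast}$, and the index ranges agree because $i_{\mathrm{max}}(s)=n+2-s=s^\ast+1$ (use \cref{r:imax}); hence $A_{v_i^s}\circ\ast=A_{u_i^{s^\ast}}$, the function prescribed in \cref{t:astclusterstr}.

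Finally I would conclude as follows. By \cref{t:clusterstr} the collection $\{A_{v_i^s},A_{y_j}\}$ is a cluster $\A$-chart on $\widetilde{\bJ}(\bi_Q(n))$, and $\ast$, being induced by a group automorphism of $G$, is compatible with the amalgamation construction of the cluster structure on $\Conf_3\A_G$ from \cite{FG03,Le16} and therefore carries cluster $\A$-charts to cluster $\A$-charts. Hence $\{A_{v_i^s}\circ\ast,A_{y_j}\circ\ast\}=\{A_{u_i^{s^\ast}},A_{y'_j}\}$ is again a cluster $\A$-chart, on the quiver obtained from $\widetilde{\bJ}(\bi_Q(n))$ by the relabelling $v_i^s\mapsto u_i^{s^\ast}$, $y_j\mapsto y'_j$. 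Since the elementary-quiver construction and the frozen-vertex recipe of \cref{subsec:tildeQ} are equivariant under the Dynkin involution (so that $\bJ(s^\ast)$ is $\bJ(s)$ with all labels starred), that relabelled quiver is exactly $\widetilde{\bJ}(\bi_Q^\ast(n))$, and the theorem follows. I expect the main obstacle to be the generalized-minor identity above, whose verification requires careful bookkeeping of the normalizations of the representatives $\overline{w}$ and of the sign conventions in the definition of $\overline{r}_s$; a secondary point to be pinned down is the assertion that $\ast$ carries cluster $\A$-charts to cluster $\A$-charts, which is implicit in \cite{GS16} and is most cleanly seen through the amalgamation picture.
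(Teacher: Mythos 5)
The paper does not actually prove this statement: it is quoted from \cite{GS16}, and the only related material in the paper is \cref{l:confast} and \cref{p:Aast}, which appear later and, for the type $A_n$ identification with $A_{u_i^{s^\ast}}$, already presuppose \cref{t:astclusterstr}. Your computational core is correct and non-circular: the identity $\ast(\beta(h_1,h_2,u_-))=\beta(h_1^\ast,h_2^\ast,u_-^\ast)$, the character computation $(h^\ast)^\mu=h^{-w_0\mu}$, the minor identity $\Delta_{w\varpi_s,\varpi_s}(u_-^\ast)=\Delta_{w^\ast\varpi_{s^\ast},\varpi_{s^\ast}}(u_-)$, the subword identity $(w_i^s)^\ast={w'}_i^{s^\ast}$ together with the matching of index ranges $i_{\mathrm{max}}(s)=s^\ast+1$, and the identification of the relabelled quiver with $\widetilde{\bJ}(\bi_Q^\ast(n))$ are exactly the computations the paper itself carries out in \cref{l:confast} and \cref{p:Aast}, so this part of your argument reproduces (independently and correctly) what the paper does.

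The gap is in your final step, the assertion that $\ast$ "carries cluster $\A$-charts to cluster $\A$-charts." Since $\ast$ is induced by a group automorphism of $G$, it is a biregular automorphism of $\Conf_3\A_G$, and you may transport the cluster structure of \cref{t:clusterstr} along it; that produces \emph{some} cluster $\A$-variety structure having your pulled-back data as a chart, which proves the theorem only in its weakest literal reading. It does not show that this chart lies in the \emph{same} cluster structure (the same mutation class of seeds) as the chart of \cref{t:clusterstr}, which is how the statement functions in the paper — in \cref{c:clusterstr_surface}, in the construction of $\widetilde{Q}_{kh}(A_n)$, and in \cref{p:quiver-comparison} — and which is the genuinely non-trivial content in \cite{GS16}. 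That fact is precisely \cref{p:astmuteq} ([GS16, Proposition 9.8]): an explicit mutation sequence connects the two charts (at the level of quivers this is the sequence $T(1)\cdots T(n-1)$ of \cref{lem:Q-Q'}). It is not a formal consequence of $\ast$ being a group automorphism, and "compatibility with the amalgamation construction" does not supply it, because the cluster structure on $\Conf_3\A_G$ is not itself produced by amalgamation; indeed for the other classical types $\ast$ preserves the single Le chart up to relabelling, whereas in type $A_n$ it genuinely exchanges two different charts, and their mutation-equivalence must be proved. So either state explicitly that you establish only the weak form and invoke \cref{p:astmuteq} for the rest, or supply the mutation sequence; as written, the crucial claim is asserted rather than proved.
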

\begin{remark}
	Note that ${w'}_i^{s}$ is an element obtained from subwords of $\bi_Q^\ast(n)$ defined in \cref{subsec:tildeQ}, and ${w'}_i^{s}=(w_i^{s^{\ast }})^\ast$. 
	\end{remark}
\begin{remark}\label{r:u-and-v}
	We have 
	\[
	A_{u^{s}_{s+1}}(\beta(h_1, h_2, u_-))=h_1^{-w_0\varpi_{s}}h_2^{\varpi_s}\Delta_{w_0\varpi_{s}, \varpi_{s}}(u_-)=A_{v^{s}_{n+2-s}}(\beta(h_1, h_2, u_-)). 
	\]
	Therefore, 
	\begin{align*}
	A_{y_j'}&=A_{y_{n+1-j}}&
	A_{u^{s}_{1}}&=A_{v^{s}_{1}}&
	A_{u^{s}_{s+1}}&=A_{v^{s}_{n+2-s}}.
	\end{align*}
	\end{remark}
\begin{prop}[{\cite[Proposition 9.8]{GS16}}]\label{p:astmuteq}
Assume that $\mathfrak{g}$ is of type $A_n$. Then there exists a mutation sequence which transforms the cluster $\A$-chart of $\Conf_3\A_{SL_{n+1}}$ given in Theorem \ref{t:clusterstr} to the one in Theorem \ref{t:astclusterstr}. 
\end{prop}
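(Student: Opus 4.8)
The plan is to realize the required mutation sequence as the one induced by a chain of braid moves between the two underlying reduced words, and then to verify that the generalized–minor coordinates are transported correctly. First I would note that $\bi_Q^\ast(n)$ is again a reduced word of the longest element $w_0\in W(\mathfrak{sl}_{n+1})$: it is obtained from $\bi_Q(n)$ by applying the Dynkin involution $s\mapsto s^\ast=n+1-s$ to every letter, and since $w_0^\ast=w_0w_0w_0=w_0$ the image is still a reduced expression of $w_0$. Hence both $\widetilde{\bJ}(\bi_Q(n))$ and $\widetilde{\bJ}(\bi_Q^\ast(n))$ belong to the family of (decorated) amalgamated quivers attached to reduced words of $w_0$, as reviewed in \cref{subsec:word-quiver}.

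Next I would invoke the result of Fock--Goncharov recalled in \cref{subsec:J-equivalence}: any sequence of braid moves taking a reduced word $\bi$ to a reduced word $\bi'$ of the same $w\in W(\mathfrak{g})$ induces an explicit mutation sequence taking $\bJ(\bi)$ to $\bJ(\bi')$. In type $A_n$ only the moves with $m_{st}\in\{2,3\}$ occur, realized respectively by no mutation and by a single mutation as in \eqref{quiver:m=3}; using the decorated version \eqref{quiver:frozen-move1} one keeps track of the frozen vertices, so that the mutation sequence carries $\widetilde{\bJ}(\bi_Q(n))$ to $\widetilde{\bJ}(\bi_Q^\ast(n))$ together with the matching $y_j\leftrightarrow y'_{n+1-j}$ of \cref{r:u-and-v}. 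One picks any chain of braid moves from $\bi_Q(n)=(1\,21\,321\,\cdots)$ to $\bi_Q^\ast(n)$; by the symmetry of the construction the particular choice is immaterial.

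It then remains to check that along this mutation sequence the $\A$-coordinates of \cref{t:clusterstr} are sent to those of \cref{t:astclusterstr}. Here one uses that the functions of \cref{t:clusterstr} and \cref{t:astclusterstr} are genuine cluster $\A$-coordinates for one and the same cluster structure on $\Conf_3\A_{SL_{n+1}}$ coming from \cite{FG03,Le16}: each elementary braid move is realized by a single mutation, and the exchange relation \eqref{eq:A-mutation} at that mutation is precisely the three-term determinantal identity relating the generalized minors $\Delta_{w\varpi_s,\varpi_s}(u_-)$---together with their monomial prefactors in $h_1,h_2$---before and after the move, which is the computation carried out in \cite{FG03,Le16} (and in \cite{GS16} for the $\bi^\ast$-chart). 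Since the endpoints of the braid chain are $\bi_Q(n)$ and $\bi_Q^\ast(n)$, the two charts are identified; the boundary identifications $A_{u^s_1}=A_{v^s_1}$ and $A_{u^s_{s+1}}=A_{v^s_{n+2-s}}$ of \cref{r:u-and-v} ensure that no mutation at a frozen vertex is ever required.

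The main obstacle is the bookkeeping in this last step: one has to confirm that the weight part $h_1^{\mu_{s,i}}h_2^{\varpi_s}$ of each coordinate transforms consistently with the substitution of $\varpi_{(i-1)^\ast}$ for $\mu_{s,i}$ and with the replacement of the subword $w_i^s$ by $w_i'^{\,s}=(w_i^{s^\ast})^\ast$, so that the image of the $\bi_Q(n)$-chart is exactly the $\bi_Q^\ast(n)$-chart rather than merely another chart of the same combinatorial type. Since a braid move alters only a window of length $\le 3$ in the word, and the corresponding mutation alters only the coordinates attached to a few neighbouring vertices, this reduces to a finite verification in each of the cases $m_{st}\in\{2,3\}$, which is routine but must be carried out with care; alternatively, one may bypass it by first using the $\ast$-automorphism of $G$ (\cref{l:Dynkininv}) to identify the $\bi_Q^\ast(n)$-chart with the standard $\bi_Q^\ast(n)$-chart of \cite{FG03,Le16} and then applying the braid-move mutation sequence directly.
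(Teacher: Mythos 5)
Your strategy is essentially the one the paper relies on: both \cref{t:clusterstr} and \cref{t:astclusterstr} are the word-charts attached to the reduced words $\bi_Q(n)$ and $\bi_Q^\ast(n)$ of $w_0$, and the required mutation sequence is the one induced by a chain of decorated braid moves between these words, exactly as set up in \cref{subsec:J-equivalence}. The paper itself does not reprove the proposition (it is quoted from \cite[Proposition 9.8]{GS16}), but its explicit incarnation of your braid-move chain appears in \cref{lem:Q-Q'}, where the concrete sequence $T(1)T(2)\cdots T(n-1)$ of \eqref{eq:T-An} is written out via the rewriting $\widetilde{\bi}_Q(n)=\widetilde{\bi}_Q^{(1)}(n-1)\,\widetilde{\bi}_1=\cdots$; so your ``any chain works'' existence argument is consistent with, if less explicit than, what the paper records. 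Two remarks on the points you leave open. The weight-prefactor bookkeeping you defer is the only substantive content beyond the quiver-level statement (which alone is \cref{lem:Q-Q'}); it can be organized cleanly via \cref{t:coord}: each coordinate is an invariant vector in some $(V^-(\lambda)\otimes V^-(\mu)\otimes V^-(\nu))^{\Delta G}$, an exchange relation is an identity of regular functions on $\Conf_3\A_G$, and pulling back by $\widetilde{\beta}$ and comparing $(h_1,h_2)$-weights term by term reduces the check to the three-term generalized-minor identity in $u_-$ plus a linear weight-balancing condition that is equivalent to the correctness of the quiver mutation. On the other hand, your proposed shortcut via the $\ast$-automorphism does not bypass anything: $\ast$ acts as an automorphism of $\Conf_3\A_G$ interchanging the two charts (\cref{p:Aast}), which is a different statement from their being related by a mutation sequence, so it cannot replace the braid-move computation.
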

For an example of such a mutation-equivalence, see \cref{lem:Q-Q'}. 
Let us consider the identification 
\begin{equation}
\alpha : \Conf_2^{\ast} \A_G \to H \label{eq:config2}    
\end{equation}
given by $\alpha(g_1U^-, g_2\overline{w_0} U^-):=[g_1^{-1}g_2]_0$. Here for $g\in U^-HU^+$, we write the corresponding unique decomposition as 
\[
g=[g]_-[g]_0[g]_+.
\]
For each $i,j \in \{1,2,3\}$ with $i<j$, let $\mathbf{e}_{ij}: \Conf_3 \A_G \to \Conf_2 \A_G$, $[A_1,A_2,A_3] \mapsto [A_i, A_j]$ be the \emph{edge projection}. In terms of the parametrization $\beta$, these projections are given as follows:
\begin{align*}
\beta^*\mathbf{e}_{12}(h_1,h_2,u_-)&=h_1, \\
\beta^*\mathbf{e}_{13}(h_1,h_2,u_-)&=h_2, \\
\beta^*\mathbf{e}_{23}(h_1,h_2,u_-)&=[\overline{w_0}^{-1}h_1^{-1}u_-h_2]_0.
\end{align*} 
Then by Theorems \ref{t:clusterstr}, \ref{t:astclusterstr} and Remark \ref{r:imax} we obtain the following:

\begin{cor}\label{cor:edge}
For all $[A_1,A_2,A_3] \in \Conf_3 \A_G$, $j=1,\dots,n$ and $s \in S$, we have
\begin{align*}
A_{y_j}(A_1,A_2,A_3)=(\mathbf{e}_{12}(A_1,A_2,A_3))^{\varpi_j}, \\
A_{v_1^s}(A_1,A_2,A_3)=(\mathbf{e}_{13}(A_1,A_2,A_3))^{\varpi_s}, \\
A_{v_{i_{\mathrm{max}}(s)}^s}(A_1,A_2,A_3)=(\mathbf{e}_{23}(A_1,A_2,A_3))^{\varpi_s}.
\end{align*}
In particular, these functions depends only on exactly two of $A_1$, $A_2$ and $A_3$. 
\end{cor}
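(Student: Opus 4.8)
The plan is to read off \cref{cor:edge} directly from the explicit formulae in \cref{t:clusterstr,t:astclusterstr} together with the edge-projection formulae, using the parametrization $\beta\colon H\times H\times U^-_\ast \xrightarrow{\sim} \Conf_3^\ast\A_G$ of \cref{p:config3}. First I would fix a configuration $[A_1,A_2,A_3]\in\Conf_3^\ast\A_G$ and write $\beta(h_1,h_2,u_-)=[A_1,A_2,A_3]$; since $\Conf_3^\ast\A_G$ is dense in $\Conf_3\A_G$ and all functions involved are regular, it suffices to verify the identities on the generic locus. The edge-projection formulae $\be^*\mathbf{e}_{12}(h_1,h_2,u_-)=h_1$, $\be^*\mathbf{e}_{13}(h_1,h_2,u_-)=h_2$, $\be^*\mathbf{e}_{23}(h_1,h_2,u_-)=[\overline{w_0}^{-1}h_1^{-1}u_-h_2]_0$ reduce the statement to three assertions about the cluster variables evaluated at $\beta(h_1,h_2,u_-)$.

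For the first identity, \cref{t:clusterstr} gives $A_{y_j}(\beta(h_1,h_2,u_-))=h_1^{\varpi_j}$, which is exactly $(\mathbf{e}_{12}(A_1,A_2,A_3))^{\varpi_j}$ by the formula for $\mathbf{e}_{12}$. For the second, $v_1^s$ has $w_1^s=1$ and $\mu_{s,1}=0$ in all classical types (see \eqref{eq:subword} and the list in \cref{t:clusterstr}), so $A_{v_1^s}(\beta(h_1,h_2,u_-))=h_1^{0}h_2^{\varpi_s}\Delta_{\varpi_s,\varpi_s}(u_-)=h_2^{\varpi_s}$, using $\Delta_{\varpi_s,\varpi_s}(u_-)=\langle f_{\varpi_s}, u_-\cdot v_{\varpi_s}\rangle=1$ because $u_-\in U^-$ fixes the lowest-weight line and $(v_{\varpi_s},v_{\varpi_s})_{\varpi_s}=1$. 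Wait — $\varpi_s$ is a \emph{highest} weight here; but $u_-\in U^-$ acts on $v_{\varpi_s}$ by $v_{\varpi_s}+(\text{lower weight terms})$, and $f_{\varpi_s}$ kills all lower weight spaces, so indeed $\Delta_{\varpi_s,\varpi_s}(u_-)=1$. This gives $A_{v_1^s}(\beta(h_1,h_2,u_-))=h_2^{\varpi_s}=(\mathbf{e}_{13}(A_1,A_2,A_3))^{\varpi_s}$. For the third identity, \cref{r:imax} already records that $\mu_{s,i_{\max}(s)}=-w_0\varpi_s$ and hence $A_{v^s_{i_{\max}(s)}}(\beta(h_1,h_2,u_-))=h_1^{-w_0\varpi_s}h_2^{\varpi_s}\Delta_{w_0\varpi_s,\varpi_s}(u_-)=[\overline{w_0}^{-1}h_1^{-1}u_-h_2]_0^{\varpi_s}$; comparing with $\be^*\mathbf{e}_{23}$ yields $A_{v^s_{i_{\max}(s)}}(A_1,A_2,A_3)=(\mathbf{e}_{23}(A_1,A_2,A_3))^{\varpi_s}$.

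The ``in particular'' clause is then immediate: $\mathbf{e}_{12}$, $\mathbf{e}_{13}$ and $\mathbf{e}_{23}$ factor through the projections forgetting $A_3$, $A_2$ and $A_1$ respectively, so each of the three families of functions depends on only two of the three decorated flags. The only mild subtlety — which I expect to be the main (and only) obstacle — is bookkeeping: one must confirm that $w_1^s=1$ and $\mu_{s,1}=0$ hold uniformly across types $A_n,B_n,C_n,D_n$ from the data in \cref{t:clusterstr} and \eqref{eq:subword}, and that the identification \eqref{eq:config2} is compatible with the edge-projection formulae as stated. For the $A_n$ case one could alternatively invoke \cref{t:astclusterstr} and \cref{r:u-and-v} to cross-check $A_{y_j}$ and $A_{v_1^s}$, but this is not needed. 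No genuinely hard step is involved; the corollary is a direct unwinding of the already-established explicit cluster-variable formulae.
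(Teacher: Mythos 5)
Your proof is correct and follows essentially the same route as the paper, which derives the corollary directly from \cref{t:clusterstr}, \cref{r:imax} and the edge-projection formulae $\beta^*\mathbf{e}_{12}=h_1$, $\beta^*\mathbf{e}_{13}=h_2$, $\beta^*\mathbf{e}_{23}=[\overline{w_0}^{-1}h_1^{-1}u_-h_2]_0$. The bookkeeping you flag ($w_1^s=1$, $\mu_{s,1}=0$, $\Delta_{\varpi_s,\varpi_s}(u_-)=1$ for $u_-\in U^-$) is exactly the content the paper leaves implicit, and your verification of it is sound.
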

We mention the $\ast$-involution on $\Conf_3 \A_G$ here. Since the involution $\ast\colon G\to G$ defined in Lemma \ref{l:Dynkininv} preserve the unipotent subgroup $U^{-}$, it induces an involutive isomorphism $\ast\colon \Conf_3 \A_G\to\Conf_3 \A_G, [g_1U^-, g_2U^-, g_3U^-]\mapsto [g_1^{\ast}U^-, g_2^{\ast}U^-, g_3^{\ast}U^-]$. The following lemma easily follows from the fact that $\ast$ preserves $H$ and $\overline{w_0}^{\ast}=\overline{w_0}$:
\begin{lem}\label{l:confast}
For $(h_1, h_2, u_-)\in H\times H\times U^-_{\ast}$, we have 
\[
\ast(\beta(h_1, h_2, u_-))=\beta(h_1^{\ast}, h_2^{\ast}, u_-^{\ast}).
\]
\end{lem}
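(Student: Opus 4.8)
The statement to be proved is \cref{l:confast}: for $(h_1,h_2,u_-)\in H\times H\times U^-_\ast$ we have $\ast(\beta(h_1,h_2,u_-))=\beta(h_1^\ast,h_2^\ast,u_-^\ast)$. The plan is to unwind both sides using the explicit formula for $\beta$ from \cref{p:config3}, namely $\beta(h_1,h_2,u_-)=[U^-,\,h_1\overline{w_0}U^-,\,u_-h_2\overline{w_0}U^-]$, and then use the fact that the involution $\ast\colon G\to G$, $g\mapsto \overline{w}_0(g^{-1})^T\overline{w}_0^{-1}$, is a \emph{group automorphism} that descends to the configuration space. The key preliminary observations I would record are: (i) $\ast$ preserves $U^-$ (this is already invoked in the proof of \cref{l:Dynkininv}, since $\overline{w}_0 U^-_s \overline{w}_0^{-1}=U^+_{s^\ast}$ combined with the transpose gives $(U^-)^\ast=U^-$, or more directly $x_s(t)^\ast=x_{s^\ast}(t)$ from \cref{l:Dynkininv} forces $(U^+)^\ast=U^+$ and $(U^-)^\ast=U^-$); (ii) $\ast$ preserves $H$, i.e. $h^\ast\in H$ for $h\in H$, which again follows from $h^T=h$ and $\overline{w_0}H\overline{w_0}^{-1}=H$; (iii) $\overline{w_0}^{\,\ast}=\overline{w_0}$.

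First I would verify (iii): $\overline{w_0}^{\,\ast}=\overline{w}_0(\overline{w_0}^{-1})^T\overline{w}_0^{-1}$; since $(\overline{w_0}^{-1})^T=\overline{w_0}$ (stated just after the definition of $s_G$ / in the proof of \cref{l:Dynkininv}), this is $\overline{w}_0\,\overline{w_0}\,\overline{w}_0^{-1}=\overline{w_0}$. Next, since $\ast$ is a group automorphism and descends to $\Conf_3\A_G$ by $[g_1U^-,g_2U^-,g_3U^-]\mapsto[g_1^\ast U^-,g_2^\ast U^-,g_3^\ast U^-]$ (using (i) so that the cosets are well-defined), I would simply compute
\begin{align*}
\ast(\beta(h_1,h_2,u_-))&=\ast\big([U^-,\;h_1\overline{w_0}U^-,\;u_-h_2\overline{w_0}U^-]\big)\\
&=[(U^-)^\ast,\;(h_1\overline{w_0})^\ast U^-,\;(u_-h_2\overline{w_0})^\ast U^-]\\
&=[U^-,\;h_1^\ast\overline{w_0}\,U^-,\;u_-^\ast h_2^\ast\overline{w_0}\,U^-],
\end{align*}
where in the last line I used that $\ast$ is a homomorphism together with (iii) and (i). By the parametrization formula in \cref{p:config3}, the right-hand side is exactly $\beta(h_1^\ast,h_2^\ast,u_-^\ast)$, provided one checks that $(h_1^\ast,h_2^\ast,u_-^\ast)$ indeed lies in $H\times H\times U^-_\ast$ — the $H$-memberships are (ii), and $u_-^\ast\in U^-_\ast$ follows because $\ast$ preserves $U^-$ and, being conjugation-by-$\overline{w_0}$ composed with transpose-inverse, it preserves the double coset condition $U^-\cap B^+\overline{w_0}B^+$ defining the unipotent cell (one can see this directly: $\ast$ sends $B^\pm$ to $B^\pm$ since it fixes $H$ and swaps nothing at the level of positive/negative by $x_s(t)^\ast=x_{s^\ast}(t)$, and $\overline{w_0}^{\,\ast}=\overline{w_0}$).

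The argument is essentially a bookkeeping verification once the three facts (i)–(iii) are in hand, so there is no serious obstacle; the only point requiring a moment of care is confirming that $\ast$ genuinely descends to $\Conf_3\A_G$ in the stated way — this needs $\ast$ to preserve the \emph{right} $U^-$-cosets, which is exactly $(U^-)^\ast=U^-$ — and confirming that $\ast$ preserves $U^-_\ast$ so that the target $\beta(h_1^\ast,h_2^\ast,u_-^\ast)$ is well-defined. Both are short and already implicit in \cref{l:Dynkininv} and its proof. I would present the proof in three or four lines: state (i)–(iii) with one-line justifications citing \cref{l:Dynkininv}, then display the computation above, then conclude by matching with \cref{p:config3}.
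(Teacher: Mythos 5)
Your proof is correct and follows exactly the route the paper has in mind: the paper dispenses with \cref{l:confast} in one line, noting only that it ``easily follows from the fact that $\ast$ preserves $H$ and $\overline{w_0}^{\ast}=\overline{w_0}$,'' and your computation is precisely the spelled-out version of that remark, with the additional (correct and worthwhile) checks that $(U^-)^\ast=U^-$ so that $\ast$ descends to $\Conf_3\A_G$ and that $u_-^\ast\in U^-_\ast$ so the right-hand side is well-defined. No gaps.
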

The $\ast$-involution is simply described by the above cluster $\A$-charts on $\Conf_3\A_{G}$. 
\begin{prop}\label{p:Aast}
If $\mathfrak{g}$ is not of type $A_n$, then 
\begin{align*}
A_{v_i^{s}}\circ \ast&=A_{v_i^{s^{\ast}}}&A_{y_j}\circ \ast&=A_{y_{j^{\ast}}}
\end{align*}
for a vertex $v_i^s$ of $\bJ(\bi_Q(n))$ and $j\in S$. When $\mathfrak{g}$ is of type $A_n$, we have 
\begin{align*}
A_{v_i^{s}}\circ \ast&=A_{u_i^{s^{\ast}}}&A_{y_j}\circ \ast&=A_{y'_{j}}
\end{align*}
for a vertex $v_i^s$ of $\bJ(\bi_Q(n))$ and $j\in S$. 
\end{prop}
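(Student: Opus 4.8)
The plan is to pull everything back along the parametrization $\beta$ of \cref{p:config3} and compare the explicit formulas of \cref{t:clusterstr,t:astclusterstr} on the two sides. By \cref{l:confast} we have $\ast(\beta(h_1,h_2,u_-))=\beta(h_1^{\ast},h_2^{\ast},u_-^{\ast})$, so each coordinate function precomposed with $\ast$ is obtained from its defining formula by the substitution $h_1\mapsto h_1^{\ast}$, $h_2\mapsto h_2^{\ast}$, $u_-\mapsto u_-^{\ast}$. Two facts reduce the rest to bookkeeping. First, for $h\in H$ one has $h^{T}=h$, hence $h^{\ast}=\overline{w_0}\,h^{-1}\,\overline{w_0}^{-1}$, and therefore $(h^{\ast})^{\mu}=h^{-w_0\mu}$ for every $\mu\in P$; in particular $(h^{\ast})^{\varpi_s}=h^{-w_0\varpi_s}=h^{\varpi_{s^{\ast}}}$. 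Second, I will establish the transformation rule for generalized minors
\begin{equation}\label{eq:minorast}
\Delta_{w\varpi_s,\varpi_s}(g^{\ast})=\Delta_{w^{\ast}\varpi_{s^{\ast}},\varpi_{s^{\ast}}}(g)
\qquad (g\in G,\ w\in W(\mathfrak{g}),\ s\in S),
\end{equation}
where $w^{\ast}=w_0ww_0$ as in \S~\ref{subsubsec:action}.

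To prove \eqref{eq:minorast}, observe that the automorphism $\ast$ of $G$ preserves $H$ and sends $x_s(t)\mapsto x_{s^{\ast}}(t)$ by \cref{l:Dynkininv}, hence preserves $U^{+}$ and $B^{+}$; consequently the $\ast$-twist $V(\varpi_s)^{(\ast)}$ of the fundamental module — the same vector space with $g$ acting by $g^{\ast}$ — is irreducible of highest weight $-w_0\varpi_s=\varpi_{s^{\ast}}$, with $v_{\varpi_s}$ a highest weight vector. Fix the $G$-module isomorphism $\phi\colon V(\varpi_s)^{(\ast)}\xrightarrow{\ \sim\ }V(\varpi_{s^{\ast}})$ with $\phi(v_{\varpi_s})=v_{\varpi_{s^{\ast}}}$. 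From $\overline{r}_s=y_s(1)x_s(-1)y_s(1)$ and \cref{l:Dynkininv} one gets $(\overline{r}_s)^{\ast}=\overline{r}_{s^{\ast}}$, and since $\ast$ is a diagram automorphism it carries a reduced word of $w$ to one of $w^{\ast}$, so $(\overline{w})^{\ast}=\overline{w^{\ast}}$ and hence $\phi(v_{w\varpi_s})=v_{w^{\ast}\varpi_{s^{\ast}}}$ for all $w$. Moreover $(g^{\ast})^{T}=(g^{T})^{\ast}$ (a direct check from the definition of $\ast$), so $(\ ,\ )_{\varpi_s}$ satisfies the transpose identity for the twisted action as well; by the uniqueness of the $G$-invariant bilinear forms recalled in \S~\ref{subsec:Lie} together with $(v_{w\nu},v_{w\nu})_{\nu}=1$ we get $(\phi(v),\phi(v'))_{\varpi_{s^{\ast}}}=(v,v')_{\varpi_s}$. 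Therefore
\[
\Delta_{w\varpi_s,\varpi_s}(g^{\ast})=(v_{w\varpi_s},g^{\ast}v_{\varpi_s})_{\varpi_s}
=(\phi(v_{w\varpi_s}),g\,\phi(v_{\varpi_s}))_{\varpi_{s^{\ast}}}
=\Delta_{w^{\ast}\varpi_{s^{\ast}},\varpi_{s^{\ast}}}(g).
\]

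Now assemble. For the frozen coordinates, $A_{y_j}\circ\ast$ evaluated at $\beta(h_1,h_2,u_-)$ equals $(h_1^{\ast})^{\varpi_j}=h_1^{\varpi_{j^{\ast}}}$, which is $A_{y_{j^{\ast}}}(\beta(h_1,h_2,u_-))$ when $\mathfrak{g}$ is not of type $A_n$ and is $A_{y'_j}(\beta(h_1,h_2,u_-))$ when $\mathfrak{g}=A_n$ by \cref{t:astclusterstr}. For a vertex $v^s_i$ of $\bJ(\bi_Q(n))$, combining the substitution with $(h_1^{\ast})^{\mu_{s,i}}=h_1^{-w_0\mu_{s,i}}$, $(h_2^{\ast})^{\varpi_s}=h_2^{\varpi_{s^{\ast}}}$ and \eqref{eq:minorast} gives
\[
\big(A_{v^s_i}\circ\ast\big)\big(\beta(h_1,h_2,u_-)\big)
=h_1^{-w_0\mu_{s,i}}\,h_2^{\varpi_{s^{\ast}}}\,\Delta_{(w_i^s)^{\ast}\varpi_{s^{\ast}},\varpi_{s^{\ast}}}(u_-).
\]
When $\mathfrak{g}=A_n$ we have $\mu_{s,i}=\varpi_{i-1}$, so $-w_0\mu_{s,i}=\varpi_{(i-1)^{\ast}}$, and ${w'}_i^{\,s^{\ast}}=(w_i^s)^{\ast}$ by the Remark following \cref{t:astclusterstr}; hence the right-hand side equals $A_{u_i^{s^{\ast}}}(\beta(h_1,h_2,u_-))$, proving $A_{v_i^s}\circ\ast=A_{u_i^{s^{\ast}}}$, and likewise $A_{y_j}\circ\ast=A_{y'_j}$.

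When $\mathfrak{g}$ is of type $B_n$, $C_n$ or $D_n$ it remains to verify $-w_0\mu_{s,i}=\mu_{s^{\ast},i}$ and $(w_i^s)^{\ast}\varpi_{s^{\ast}}=w_i^{s^{\ast}}\varpi_{s^{\ast}}$ (the latter suffices since $\Delta_{w\varpi_t,\varpi_t}$ depends only on $w\varpi_t$, because $v_{w\varpi_t}$ depends only on the coset $w\,\Stab_{W(\mathfrak{g})}(\varpi_t)$). For $B_n$, $C_n$ and for $D_n$ with $n$ even this is immediate: $w_0=-1$ is central in $W(\mathfrak{g})$, so $\ast|_{S}=\mathrm{id}$ and $w^{\ast}=w$, and the assertion becomes $A_{v_i^s}\circ\ast=A_{v_i^s}$. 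For $D_n$ with $n$ odd, where $\ast$ interchanges the two spin nodes, it is a direct inspection of the list of $\mu_{s,i}$ in \cref{t:clusterstr} and the definition \eqref{eq:subword} of $w_i^s$, using that $r_1$ and $r_2$ commute (so, for instance, $r_1r_2\varpi_2=r_2\varpi_2$, which is what makes the two extremal weights agree even though $(w_i^1)^{\ast}\ne w_i^2$ as group elements). The two genuinely non-formal points are \eqref{eq:minorast} — where the care lies in checking that $\phi$ identifies the distinguished extremal vectors \emph{and} their pairings on the nose, which is exactly where $(\overline{r}_s)^{\ast}=\overline{r}_{s^{\ast}}$ and the normalization $(v_{w\nu},v_{w\nu})_{\nu}=1$ are used — and the $D_n$, $n$ odd matching of $\mu_{s,i}$ and of $w_i^s\varpi_s$ under the involution, which is combinatorially routine but the most laborious step.
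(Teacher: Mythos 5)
Your proof is correct and takes essentially the same route as the paper: pull back along $\beta$ via Lemma \ref{l:confast}, use $(h^{\ast})^{\mu}=h^{-w_0\mu}$ together with the minor identity $\Delta_{w\varpi_s,\varpi_s}(g^{\ast})=\Delta_{w^{\ast}\varpi_{s^{\ast}},\varpi_{s^{\ast}}}(g)$, and reduce to the case-by-case identities $-w_0\mu_{s,i}=\mu_{s^{\ast},i}$ and $(w_i^s)^{\ast}\varpi_{s^{\ast}}=w_i^{s^{\ast}}\varpi_{s^{\ast}}$. The only difference is that you spell out the twisted-module proof of the minor identity and the $w_0=-1$ shortcut for $B_n$, $C_n$ and $D_n$ with $n$ even, details the paper leaves implicit.
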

\begin{proof}
We prove the statements for $A_{v_i^{s}}$. By Theorem \ref{t:clusterstr} and Lemma \ref{l:confast}, we have 
\[
(A_{v_i^{s}}\circ \ast)(\beta(h_1, h_2, u_-))=(h_1^{\ast})^{\mu_{s, i}}(h_2^{\ast})^{\varpi_s}\Delta_{w_i^s\varpi_s, \varpi_s}(u_-^{\ast})=h_1^{-w_0\mu_{s, i}}h_2^{\varpi_{s^{\ast}}}\Delta_{(w_i^s)^\ast \varpi_{s^{\ast}}, \varpi_{s^{\ast}}}(u_-). 
\]
Hence it remains to show that $-w_0\mu_{s, i}=\mu_{s^{\ast}, i}$ and, if $\mathfrak{g}$ is not of type $A_n$, $(w_i^s)^\ast\varpi_{s^{\ast}}=w_i^{s^{\ast}}\varpi_{s^{\ast}}$. We can easily verify these equalities by \eqref{eq:astinvcl} and a case-by-case check. The details are left to the reader. We can prove the statements for $A_{y_j}$ in the same manner.  
\end{proof}
\subsubsection{Cluster $\A$-charts on the moduli space $\A_{G,T}$}
\label{subsubsec:cluster-A-charts}
Let us consider a triangle $T=(x_1,x_2,x_3)$, which is regarded as a disk with three marked points on the boundary. Here $x_1$, $x_2$ and $x_3$ are in order against the orientation of the boundary, and recall that they are lifted to the punctured boundary using an outward tangent vector. The moduli space $\A_{G,T}$ can be identified with the configuration space $\Conf_3 \A_G$ in three different ways. Let $i\in \{1,2,3\}$ and consider a decorated twisted $G$-local system $(\L,\psi) \in \A_{G,T}$. The three points $\psi(x_1)$, $\psi(x_2)$, $\psi(x_3)$ of $\L_\A$ can be parallel-transported to the point $x_i$ along the boundary, following the orientation of the boundary. In this way we get an isomorphism $f_{x_i}: \A_{G,T} \xrightarrow{\sim} \Conf_3 \L_\A|_{x_i} =\Conf_3 \A_G$, $(\L,\psi) \mapsto (\psi(x_i), \psi(x_{i+1}), \psi(x_{i+2}))$. We call the vertex $x_i$ as the \emph{distinguished vertex}. We have three choices of the distinguished vertex.
\begin{lem}[{\cite[Lemma 2.3]{FG03}}]\label{l:cycshift}
For each $i \in \{1,2,3\}$, we have
\[
f_{x_{i+1}}=\mathcal{S}_3 \circ f_{x_i}.
\]
\end{lem}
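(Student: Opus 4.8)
Write the maps $f_{x_i}$ out explicitly in terms of parallel transport along the boundary of the triangle, and then read off the identity by a short computation; the only nontrivial geometric ingredient is that transporting a decoration once around $\partial T$ reproduces the defining monodromy $s_G$ of the twisted local system.

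For each $i\in\{1,2,3\}$ I would fix the arc of $\partial T$ running from (a small neighbourhood of) $x_{i+1}$ to (a small neighbourhood of) $x_i$ along the side $x_ix_{i+1}$, lifted to the punctured tangent bundle $T'\overline{T}$ by the outward framing used to lift the marked points; this arc defines a parallel transport isomorphism $\tau_i\colon\L_\A|_{x_{i+1}}\xrightarrow{\sim}\L_\A|_{x_i}$. Unwinding the definition of $f_{x_i}$ — transport the three decorated flags to the fibre over $x_i$ following the boundary orientation, and note that the identification $\Conf_3\L_\A|_{x_i}\cong\Conf_3\A_G$ is canonical since $\Conf_3$ is a quotient by the diagonal $G$-action — one obtains, with indices read modulo $3$,
\[
f_{x_i}(\L,\psi)=\bigl[\,\psi(x_i),\ \tau_i\psi(x_{i+1}),\ \tau_i\tau_{i+1}\psi(x_{i+2})\,\bigr]\in\Conf_3\A_G .
\]

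Next I would observe that $\tau_i\tau_{i+1}\tau_{i+2}\colon\L_\A|_{x_i}\to\L_\A|_{x_i}$, the holonomy of $\L$ around the loop $\partial T$ with its outward framing, equals the action of $s_G$ on the fibre. Indeed, since $T$ is a disk, $\pi_1(T'\overline{T})$ is infinite cyclic and is generated by the class of a fibre circle; the boundary loop with the outward framing represents this generator up to inversion, which, after smoothing the three corners, is the classical statement that the outward normal field along the boundary of a disk has winding number one (the three exterior angles of the triangle contribute a total turning of $2\pi$). By the definition of a twisted $G$-local system the monodromy around a fibre circle is $s_G$, and since $s_G\in Z(G)$ with $s_G^2=1$ the holonomy equals $s_G$ regardless of the orientation conventions. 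Granting this, replace $i$ by $i+1$ in the displayed formula and apply $\tau_i$ entrywise (which realizes the canonical identification $\Conf_3\L_\A|_{x_{i+1}}\cong\Conf_3\L_\A|_{x_i}$); the result is
\[
\bigl[\,\tau_i\psi(x_{i+1}),\ \tau_i\tau_{i+1}\psi(x_{i+2}),\ \tau_i\tau_{i+1}\tau_{i+2}\psi(x_i)\,\bigr]=\bigl[\,\tau_i\psi(x_{i+1}),\ \tau_i\tau_{i+1}\psi(x_{i+2}),\ s_G\psi(x_i)\,\bigr],
\]
which is precisely $\mathcal{S}_3\bigl(f_{x_i}(\L,\psi)\bigr)$, as desired.

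The only delicate point is the bookkeeping of framings and orientations required to identify the boundary loop of $T$ with a fibre circle of $T'\overline{T}$; however, because $s_G^2=1$ even the sign of the resulting generator is immaterial, so this step reduces to the index-one computation for the tangent (equivalently, outward normal) field along the boundary of a disk, and nothing more is needed.
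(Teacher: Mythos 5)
Your proof is correct and follows essentially the same route as the cited source \cite[Lemma 2.3]{FG03}, which the paper does not reproduce: unwinding the parallel transports $\tau_i$ along the three boundary arcs, the whole content is that the composite $\tau_i\tau_{i+1}\tau_{i+2}$ is the holonomy of the outward-framed boundary loop, which in $\pi_1(T'\overline{T})\cong\Z$ is $\pm 1$ times the fibre class and hence acts by $s_G$ (the sign being irrelevant since $s_G^2=1$). The bookkeeping of orientations (indices increasing against the boundary orientation, transport following it) and the identification $\Conf_3\L_\A|_{x_{i+1}}\cong\Conf_3\L_\A|_{x_i}$ via the $G$-equivariant map $\tau_i$ are both handled correctly.
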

Moreover the twisted cyclic shift $\mathcal{S}_3$ can be realized by cluster transformations as confirmed in \cite{Le16}.

\if0
We mention the $\Z_3$-symmetry of the cluster structure on $\Conf_3 \A_G$, which is a particular case of $\mathfrak{S}_3$-symmetry confirmed in \cite{Le16}. 
In view of \cref{cor:edge}, we regard the weighted quiver $\widetilde{\bJ}(\bi_Q(n))$ as located on the triangle $(x_1,x_2,x_3)$ as shown in \cref{fig:triangle}. Let $\bi(x_1):=(\widetilde{\bJ}(\bi_Q(n)), \{A_v\}_v)$ denote the $A$-seed given in \cref{t:clusterstr}. Here $v$ runs over the set $\{v_i^s, y_j \}_{s, i, j}$ of vertices of $\widetilde{\bJ}(\bi_Q(n))$. We will write $A^{(x_1)}_v:=A_v$ in order to emphasize the relation to the location of our quiver. We define the \emph{Dynkin index} of a vertex $v$ as the number $s$ if $v=v_i^s$, or the number $j$ if $v=y_j$. Observe that for a fixed $s \in S$, the vertices $v_i^s$ lie on a common vertical line, which is shown in red in \cref{fig:triangle}. We call these lines the \emph{lines of constant Dynkin index}.

Let us consider another seed $\bi(x_2):=(\widetilde{\bJ}'(\bi_Q(n)), \{A_{\dot{v}}^{(x_2)}\}_{\dot{v}})$, where $\widetilde{\bJ}'(\bi_Q(n))$ is a weighted quiver with the set of vertices $\{{\dot{v}}_i^s, \dot{y}_j\}$ which is isomorphic to $\widetilde{\bJ}(\bi_Q(n))$ but located on the triangle $(x_1,x_2,x_3)$ in a different way: the vertices $\dot{y}_j$ are located on the edge $x_2x_3$, ${\dot{v}}_1^s$ are on $x_1x_2$, and ${\dot{v}}^s_{i_\mathrm{max}(s)}$ are on $x_3x_1$. The functions are defined by $A_{\dot{v}}^{(x_2)}:=\mathcal{S}_3^\ast A_v^{(x_1)}$. 
Similarly we consider the seed $\bi(x_3):=(\widetilde{\bJ}''(\bi_Q(n)), \{A_{\ddot{v}}^{(x_3)}\}_{\ddot{v}})$, where $\widetilde{\bJ}''(\bi_Q(n))$ is a weighted quiver with the set of vertices $\{{\ddot{v}}_i^s, \ddot{y}_j\}$ which is located on the triangle $(x_1,x_2,x_3)$ so that the vertices $\ddot{y}_j$ are located on the edge $x_3x_1$, ${\ddot{v}}_1^s$ are on $x_2x_3$, and ${\ddot{v}}^s_{i_\mathrm{max}(s)}$ are on $x_1x_2$. The functions are defined by $A_{\ddot{v}}^{(x_3)}:=(\mathcal{S}^2_3)^\ast A_v^{(x_1)}$. We depict the lines of constant Dynkin index for the quivers $\widetilde{\bJ}'(\bi_Q(n))$ and $\widetilde{\bJ}''(\bi_Q(n))$ in \cref{fig:rotation}.
\fi

\begin{figure}
\[
\begin{tikzpicture}
\begin{scope}[>=latex]
{\color{gray}
\draw (-1,-0.3) -- (10,3);
\draw (-1,6.3) -- (10,3);
\draw (-1,-0.3) -- (-1,6.3);
}
\draw (-1,-0.3) node[below left]{$x_1$};
\draw (-0.8,0) node{$\ast$}; 
\draw (-1,6.3) node[above left]{$x_2$};
\draw (10,3) node[right]{$x_3$};

\foreach \i in {1,2,3,4}
	{
	\draw (0,-2+2*\i) node[circle]{2} circle[radius=0.15] coordinate(V\i1) node[below left]	{$v^1_\i$}; 
	\draw (2, -5*0.8+2*\i*0.8+3) circle(2pt) coordinate(V\i2) node[below left]{$v^2_\i$};
	\draw (4, -5*0.6+2*\i*0.6+3) circle(2pt) coordinate(V\i3) node[below left]{$v^3_\i$};
	}

{\color{red}
\foreach \s in {2,3}
	{
	\qarrow{V1\s}{V2\s}
	\qarrow{V2\s}{V3\s}
	\qarrow{V3\s}{V4\s}
	}
\qsarrow{V11}{V21}
\qsarrow{V21}{V31}
\qsarrow{V31}{V41}
}
\foreach \i in {1,4}
	{
	\qsdarrow{V\i 2}{V\i 1}
	\qdarrow{V\i 3}{V\i 2}
	}
\foreach \i in {2,3}
	{
	\qsharrow{V\i 2}{V\i 1}
	\qarrow{V\i 3}{V\i 2}
	}
\qstarrow{V21}{V12}
\qstarrow{V31}{V22}
\qstarrow{V41}{V32}
\qarrow{V22}{V13}
\qarrow{V32}{V23}
\qarrow{V42}{V33}

{\color{blue}
\draw (-1,1) node[circle]{2}  circle[radius=0.15] coordinate(Y1) node[above left]{$y_1$};
\foreach \j in {2,3}
\draw (-1,2*\j-1) circle(2pt) coordinate(Y\j) node[below left]{$y_\j$};

\qsarrow{V21}{Y1};
\qsarrow{Y1}{V11};
\qarrow{V33}{Y2};
\qarrow{Y2}{V23};
\qarrow{V43}{Y3};
\qarrow{Y3}{V33};
\qdarrow{Y1}{Y2};
\qdarrow{Y2}{Y3};
}
\end{scope}
\end{tikzpicture}
\]
\caption{The weighted quiver $\widetilde{\bJ}((123)^3)$ for $\mathfrak{g}=C_3$ located on the triangle $(x_1,x_2,x_3)$ when the distinguished vertex is $x_1$.} 
\label{fig:triangle}
\end{figure}
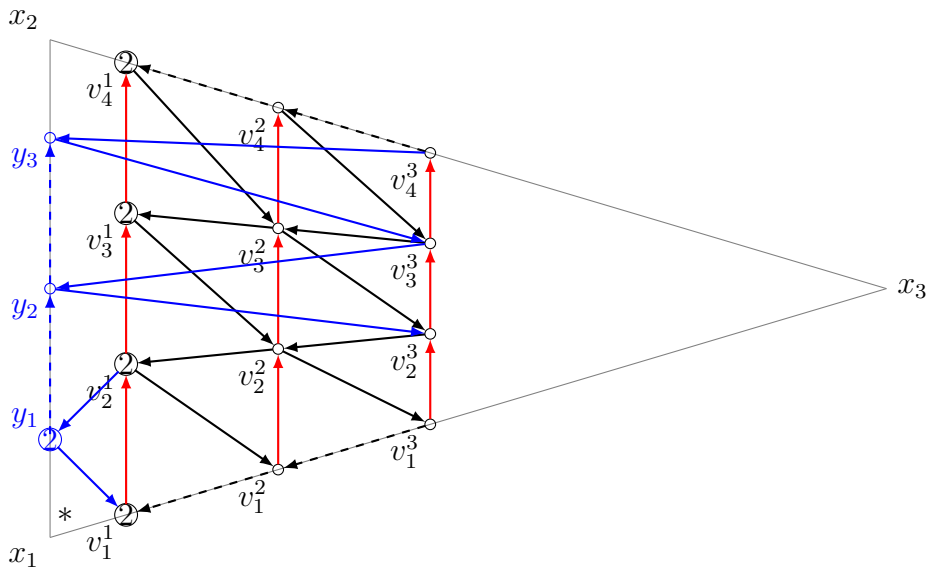

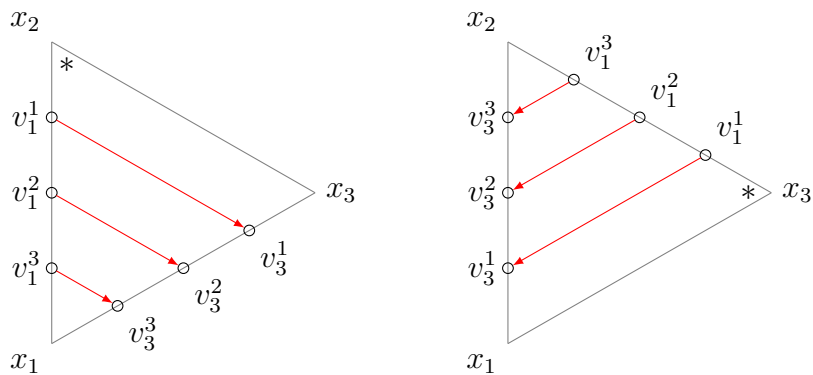
\begin{figure}
\[
\begin{tikzpicture}
\draw (0,0) coordinate(A1) node[below left]{$x_1$};
\draw (0,4) coordinate(A2) node[above left]{$x_2$};
\draw (2*1.732,2) coordinate(A3) node[right]{$x_3$};
\draw (0.2,3.7) node{$\ast$};

{\color{gray}
\draw (A1)--(A2);
\draw (A1)--(A3);
\draw (A3)--(A2);
}

\foreach \i in {1,2,3}
{
\draw ($(A2) !{0.25*\i}! (A1)$) circle(2pt) coordinate (B\i) node[left]{$v_1^\i$};
\draw ($(A3) !{0.25*\i}! (A1)$) circle(2pt) coordinate (C\i) node[below right]{$v_3^\i$};
}

\begin{scope}[>=latex]
{\color{red}
\foreach \i in {1,2,3}
\draw[->,shorten >=2pt, shorten <=2pt] (B\i) -- (C\i); 
}
\end{scope}

\begin{scope}[xshift=6cm]
\draw (0,0) coordinate(A1) node[below left]{$x_1$};
\draw (0,4) coordinate(A2) node[above left]{$x_2$};
\draw (2*1.732,2) coordinate(A3) node[right]{$x_3$};
\draw (2*1.732-0.3,2) node{$\ast$};

{\color{gray}
\draw (A1)--(A2);
\draw (A1)--(A3);
\draw (A3)--(A2);
}

\foreach \i in {1,2,3}
{
\draw ($(A3) !{0.25*\i}! (A2)$) circle(2pt) coordinate (B\i) node[above right]{$v_1^\i$};
\draw ($(A1) !{0.25*\i}! (A2)$) circle(2pt) coordinate (C\i) node[left]{$v_3^\i$};
}

	\begin{scope}[>=latex]
	{\color{red}
	\foreach \i in {1,2,3}
	\draw[->,shorten >=2pt, shorten <=2pt] (B\i) -- (C\i); 
	}
	\end{scope}
\end{scope}
\end{tikzpicture}
\]
\caption{The weighted quiver $\widetilde{\bJ}((123)^3)$ for $\mathfrak{g}=C_3$ located on the triangle $(x_1,x_2,x_3)$ when the distinguished vertex is $x_2$ (left) and $x_3$ (right). The lines of constant Dynkin index are shown in red.}
\label{fig:triangle23}
\end{figure}

\begin{thm}[\cite{Le16}]\label{t:Z3sym}
There exists a mutation sequence $\mu_{12}$ and a permutation $\sigma_{12}$ of the set $\{v_i^s, y_j\}$ which induces an isomorphism $\mu_{12}(\widetilde{\bJ}(\bi_Q(n))) \xrightarrow{\sim} \widetilde{\bJ}(\bi_Q(n))$ of weighted quivers such that $\mathcal{S}_3^*A_{\sigma_{12}(v)}=\mu_{12}^\ast A_v$. Namely, the action of $\mathcal{S}_3$ on $\Conf_3 \A_G$ is induced by the element $\sigma_{12} \circ \mu_{12}$ of the cluster modular group $\Gamma_{ \widetilde{\bJ}(\bi_Q(n)) }$.
\end{thm}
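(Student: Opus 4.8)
The plan is to realise the twisted cyclic shift $\mathcal{S}_3$ as a composition of the elementary braid-move mutation sequences of \cref{subsec:J-equivalence} and then to check its effect on the cluster $\A$-coordinates through the generalized-minor formulas of \cref{t:clusterstr}. Fix the cluster $\A$-chart of \cref{t:clusterstr} on $\Conf_3\A_G$, with quiver $\widetilde{\bJ}(\bi_Q(n))$ and coordinates $A_v$. By \cref{l:cycshift} the identification $f_{x_2}=\mathcal{S}_3\circ f_{x_1}$ shows that the seed pulled back to $\A_{G,T}$ through $f_{x_2}$ is the $\mathcal{S}_3^\ast$-pullback of the one pulled back through $f_{x_1}$; by \cref{cor:edge} the three blocks of vertices $\{y_j\}$, $\{v_1^s\}$, $\{v^s_{i_{\mathrm{max}}(s)}\}$ carry the pullbacks of the edge projections $\mathbf{e}_{12}$, $\mathbf{e}_{13}$, $\mathbf{e}_{23}$, so the pulled-back quiver is $\widetilde{\bJ}(\bi_Q(n))$ redrawn on the triangle with the three edge-blocks cyclically permuted, as in \cref{fig:triangle23}. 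It therefore suffices to exhibit a mutation sequence $\mu_{12}$ taking $\widetilde{\bJ}(\bi_Q(n))$ in its original position to this rotated copy, to read off the accompanying vertex bijection $\sigma_{12}$, and then to verify $\mathcal{S}_3^\ast A_{\sigma_{12}(v)}=\mu_{12}^\ast A_v$.

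To build $\mu_{12}$ I would use braid moves. After the amalgamation of \cref{lem:Q-J}, the quiver $\widetilde{\bJ}(\bi_Q(n))$ is read off from the reduced word $\bi_Q(n)$ of $w_0$; rotating the distinguished vertex of the triangle corresponds, at the level of reduced words, to a chain of elementary commutations and braid relations that returns $\bi_Q(n)$ to itself up to the Dynkin relabelling of its letters, exactly as in \cite{FG06, FG03}. Each $m_{st}=2$ relation is a relabelling, each $m_{st}=3$ relation is the single mutation \eqref{quiver:m=3}, and each $m_{st}=4$ relation is the three-step sequence \eqref{quiver:m=4}; the transport of the attached frozen vertices $y_i$ along these moves is precisely what the decorated relations \eqref{quiver:frozen-move1} and \eqref{quiver:frozen-move2} record. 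Composing all of these yields $\mu_{12}\in\Gamma_{\widetilde{\bJ}(\bi_Q(n))}$ together with the relabelling $\sigma_{12}$; since every elementary move is an honest mutation sequence returning the weighted quiver to itself up to the stated relabelling, the isomorphism $\mu_{12}(\widetilde{\bJ}(\bi_Q(n)))\xrightarrow{\sim}\widetilde{\bJ}(\bi_Q(n))$ holds by construction. In the $A_n$ case one may instead route through $\widetilde{\bJ}(\bi^\ast_Q(n))$ using \cref{p:astmuteq}.

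The coordinate identity would then be checked on the dense subset $\Conf_3^\ast\A_G\cong H\times H\times U^-_\ast$ of \cref{p:config3}. By \cref{t:clusterstr} both $\mathcal{S}_3^\ast A_{\sigma_{12}(v)}$ and $\mu_{12}^\ast A_v$ are explicit products of characters of $h_1,h_2$ with generalized minors $\Delta_{w\varpi_s,\varpi_s}(u_-)$; the twisted shift acts on $\beta(h_1,h_2,u_-)$ by an explicit formula, with the central twist $s_G=\overline{w_0}^2$ absorbed into the $H$-factors, and the resulting equalities among the minors are the standard identities that govern a change of reduced word — the same identities that make each move of \cref{subsec:J-equivalence} compatible with the $\A$-structure. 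Matching the two sides confirms the equality and simultaneously pins down $\sigma_{12}$ uniquely.

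The main obstacle is the combinatorial bookkeeping: producing $\mu_{12}$ and $\sigma_{12}$ explicitly and uniformly across types $A_n$, $B_n$, $C_n$, $D_n$ while tracking the $n$ frozen vertices $y_i$ through every move — in particular through the $m_{st}=4$ moves forced by the long/short-root structure of $B_n$ and $C_n$ — and handling the central twist $s_G$ and the Dynkin involution correctly. This is exactly the computation carried out by Le in \cite{Le16}; the only genuinely new ingredient needed here over \cite{Le16} is the decorated braid relations \eqref{quiver:frozen-move1}--\eqref{quiver:frozen-move2}, which guarantee that the frozen $y$-vertices are transported correctly.
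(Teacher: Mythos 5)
The paper does not prove this statement at all: it is imported verbatim from Le \cite{Le16} (the relevant mutation sequences are the ``transpositions'' $r_1,r_2$ that \cref{subsec:proof-QandD} later describes as ``very complicated'' and deliberately does not reproduce). So the only question is whether your sketch would actually work, and I think its central mechanism fails. You propose to build $\mu_{12}$ out of the braid-move sequences of \cref{subsec:J-equivalence}, i.e.\ out of ``a chain of elementary commutations and braid relations that returns $\bi_Q(n)$ to itself up to relabelling.'' But those moves realize changes of reduced word for $w_0$: each one transforms the chart $\bJ(\bi)$ into the chart $\bJ(\bi')$ for the \emph{same} point of $\Conf_3\A_G$, carrying the generalized minors attached to $\bi$ to those attached to $\bi'$, and in particular never moving a frozen vertex off the edge of the triangle it lives on. A composition of such moves that returns the reduced word to itself therefore returns every cluster coordinate to itself and represents the \emph{trivial} element of the cluster modular group; it cannot represent $\mathcal{S}_3$, which genuinely permutes the three edge-blocks of frozen coordinates (\cref{cor:edge}) and is a nontrivial automorphism of $\Conf_3\A_G$. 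This is also visible concretely: for $\mathfrak{g}\neq A_n$ the quiver in \cref{fig:triangle} has no $\Z_3$ rotational symmetry, so no amount of relabelling after a ``loop'' of braid moves can produce the rotated placement of \cref{fig:triangle23}.

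The missing ingredient is precisely what makes Le's proof hard. By \cref{l:cyclic}, $\mathcal{S}_3$ sends $\beta(h_1,h_2,u_-)$ to a configuration whose unipotent part is $w_0(h_1)^{-1}[\overline{w}_0^{-1}u_-]_-^{-1}w_0(h_1)$; expressing the generalized minors of this element back in terms of minors of $u_-$ is governed by the twist automorphism $\eta_{w_0}$ of \cref{p:twist} and Chamber-Ansatz--type identities, not by change-of-reduced-word identities. The mutation sequence $\mu_{12}$ must therefore interpolate between the minors of $u_-$ and the minors of its twist, which is what Le's transposition sequences do; they are not concatenations of the moves \eqref{quiver:m=3}--\eqref{quiver:frozen-move2}. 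Your final verification step (``the resulting equalities among the minors are the standard identities that govern a change of reduced word'') papers over exactly this point. If you want a proof along these lines you would need to (i) construct the transpositions explicitly (or at least the rotation $r_1\circ r_2$), type by type, and (ii) prove the twist identities matching $\mathcal{S}_3^\ast$ of each coordinate with the mutated coordinate --- which is the actual content of \cite{Le16}, not a corollary of \cref{subsec:J-equivalence}.
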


For a vertex $v$ of $\widetilde{\bJ}(\bi_Q(n))$, we define the \emph{Dynkin index} $v$ as the number $s$ if $v=v_i^s$, or the number $j$ if $v=y_j$. Observe that for a fixed $s \in S$, the vertices $v_i^s$ lie on a common vertical line, which is shown in red in \cref{fig:triangle}. We call these lines the \emph{lines of constant Dynkin index}.

By the isomorphism $f_{x_i}: \A_{G,T} \xrightarrow{\sim} \Conf_3 \A_G$, we transport the cluster $\A$-chart on $\Conf_3 \A_G$ to the one on $\A_{G, T}$. When the distinguished vertex is chosen to be $x_1$, place the weighted quiver $\widetilde{\bJ}(\bi_Q(n))$ as in \cref{fig:triangle}. Note that the location of frozen vertices is compatible with \cref{cor:edge}. The other two cases are shown in \cref{fig:triangle23}. Then \cref{l:cycshift,t:Z3sym} imply that the coordinate transformation induced by a change of the distinguished vertex coincide with the cluster transformation induced by the mutation sequence which rotate the placement of the weighted quiver $\widetilde{\bJ}(\bi_Q(n))$. In particular, the structure of cluster $\A$-variety on $\A_{G, T}$ does not depend on the choice of a distinguished vertex of $T$. 

\subsubsection{Cluster $\A$-charts on the moduli space $\A_{G,\Sigma}$}
Let $\Sigma$ be a marked surface and take an ideal triangulation $\Delta$. We assume that $\Delta$ has no punctured monogon $\mathbb{D}^1_1$ when $\mathfrak{g}=A_n$. 
\\

\paragraph{\textbf{Gluing the quivers $\widetilde{\bJ}(\bi_Q(n))$}}
Choose one distinguished vertex for each triangle $T$ of $\Delta$. Then we draw the quiver $\widetilde{\bJ}(\bi_Q(n))$ following the rule in the previous subsection and denote it by $Q_T$. Recall that vertices of $Q_T$ on the boundary of $T$ are frozen. We amalgamate the quivers $Q_T$ along these boundary vertices so that the corresponding edge functions coincide (cf. \cref{cor:edge}). Then we get a weighted quiver $Q_{\mathbf{\Delta}}$ drawn on our surface $\Sigma$, where we indicated the dependence on the data $\mathbf{\Delta}:=(\Delta, \bi,\mathfrak{o})$, where
\begin{itemize}
\item[(a)]
$\Delta$ is an ideal triangulation of $\Sigma$;
\item[(b)]
if $\mathfrak{g}$ is of type $A_n$, then the datum $\bi=(\bi_T)_T$ is a choice of $\bi_Q(n)$ or $\bi_Q^*(n)$ for each triangle $T$. Otherwise, $\bi$ is a trivial datum;
\item[(c)]
$\mathfrak{o}$ is a choice of one distinguished vertex for each triangle $T$. 
\end{itemize}

\paragraph{\textbf{Gluing the cluster $\A$-charts}}
Let $T$ be a triangle of $\Delta$. 
If $E$ is a boundary edge of a triangle $T$, we have a restriction map $q_{T,E}: \A_{G,T} \to \A_{G,E}$, which coincides with one of the edge projections $\mathbf{e}_{ij}$ in terms of the configuration spaces.

Restricting a twisted decorated $G$-local system on $\Sigma$ to each triangle $T$ and fattened edge $E$, we get a morphism 
\[
\phi_\Delta=(\{\phi_T\}, \{\phi_E\}): \A_{G,\Sigma} \to \prod_T \A_{G,T} \times \prod_E \A_{G,E}.
\]
Note that we have $q_{T,E} \circ \phi_T=\phi_E$ for each boundary edge $E$ of a triangle $T$. 

Let $\A_{G,T}^{\ast}:=f_{x}^{-1}(\Conf_3^{\ast}\A_G)$ for a distinguished vertex $x$ of $T$.  
Note that $\A_{G, T}^{\ast}$ 
does not depend on the choice of $x$, and get a canonical structure of affine variety by \cref{p:config3}. 
Similarly, for each edge $E$ of $\Delta$, we define $\A_{G,E}^\ast \subset \A_{G,E}$ to be the subspace corresponding to $\Conf_2^\ast \A_G$, which gets a structure of affine variety via \eqref{eq:config2}. 
Define a subvariety $\A_{G,\Delta}$ of $\prod_T \A^\ast_{G,T} \times \prod_E \A^\ast_{G,E}$ as 
\begin{align}
    \A_{G,\Delta}:=\left\{ (\{\mathcal{C}_T\},\{\mathcal{C}_E\})\ \middle|\  q_{T,E}(\mathcal{C}_T)=\mathcal{C}_E \mbox{ for all $T$ and $E\subset \partial T$} \right\}.\label{eq:A-delta}
\end{align}
Fock-Goncharov \cite{FG03} gave a regular open embedding $\nu_\Delta: \A_{G,\Delta} \to \A_{G,\Sigma}$ such that $\phi_\Delta\circ \nu_\Delta=id$. See Theorem 8.1 \emph{loc. cit.} 
\begin{cor}\label{c:clusterstr_surface}
The cluster charts on the moduli space $\A_{G,T}$ for triangles $T$ of $\Delta$ combine to give a birational chart on $\A_{G,\Delta}$. In particular it induces a birational chart on the moduli space $\A_{G,\Sigma}$. If we change the data $\mathbf{\Delta}$, the corresponding cluster charts are transformed by the corresponding cluster  $\A$-transformation. Thus we get a canonical structure of cluster $\A$-variety on the moduli space $\A_{G,\Sigma}$.
\end{cor}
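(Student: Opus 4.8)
The plan is to transfer the statement from $\A_{G,\Sigma}$ to the fibre product $\A_{G,\Delta}$ via the morphisms $\phi_\Delta$ and $\nu_\Delta$, and then to show that the amalgamation of the cluster $\A$-charts on the triangles of $\Delta$ is itself a cluster $\A$-chart. First I would record that, for a single triangle $T$, the cluster $\A$-chart on $\A_{G,T}\cong\Conf_3\A_G$ furnished by \cref{t:clusterstr} is independent of the choice of distinguished vertex: by \cref{l:cycshift} a change of distinguished vertex is the twisted cyclic shift $\mathcal S_3$, which by \cref{t:Z3sym} is induced by the element $\sigma_{12}\circ\mu_{12}$ of the cluster modular group $\Gamma_{\widetilde{\bJ}(\bi_Q(n))}$; in type $A_n$ the further choice between $\bi_Q(n)$ and $\bi_Q^\ast(n)$ is harmless by \cref{p:astmuteq}. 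Consequently, for each datum $\mathbf\Delta=(\Delta,\bi,\mathfrak o)$ one obtains a well-defined weighted quiver $Q_{\mathbf\Delta}$ drawn on $\Sigma$, together with the family of generalized-minor (and edge) functions $\{A_v\}$ indexed by its vertices.

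The heart of the argument is to match the amalgamation of quivers with the gluing that defines $\A_{G,\Delta}$. By construction $Q_{\mathbf\Delta}$ is the amalgamation of the quivers $Q_T$ along their boundary (frozen) vertices, identified so that the corresponding edge functions agree; and $\A_{G,\Delta}$ is cut out inside $\prod_T\A^\ast_{G,T}\times\prod_E\A^\ast_{G,E}$ by the equations $q_{T,E}(\mathcal C_T)=\mathcal C_E$ of \eqref{eq:A-delta}. By \cref{cor:edge} the boundary functions $A_{y_j}$, $A_{v_1^s}$, $A_{v^s_{i_{\max}(s)}}$ of $Q_T$ are precisely the pullbacks under the edge projections $q_{T,E}$ of the torus coordinates $h\mapsto h^{\varpi_s}$ on $\A^\ast_{G,E}\cong H$ (biregular since $G$ is simply connected, so the fundamental weights generate $X^*(H)$). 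Hence the gluing equations say exactly that each $\A^\ast_{G,E}$-factor is determined by, and equal to, the boundary frozen coordinates of the two adjacent triangles — the vertices identified in the amalgamation. Combining the cluster charts on the factors $\Conf_3^\ast\A_G$ from \cref{t:clusterstr} (resp.\ \cref{t:astclusterstr}), I would conclude that $\{A_v\}_{v\in Q_{\mathbf\Delta}}$ is a birational coordinate system on $\A_{G,\Delta}$; a count of $\dim\A_{G,\Sigma}$ against the number of vertices of $Q_{\mathbf\Delta}$, together with generic injectivity over each triangle, rules out any relation beyond the amalgamation identifications. Since $\nu_\Delta\colon\A_{G,\Delta}\to\A_{G,\Sigma}$ is a regular open embedding with $\phi_\Delta\circ\nu_\Delta=\mathrm{id}$, this birational chart on $\A_{G,\Delta}$ is in particular one on $\A_{G,\Sigma}$, and it inherits the presymplectic structure automatically from the general cluster formalism.

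It then remains to check that a change of $\mathbf\Delta$ transforms the chart by a cluster $\A$-transformation. A change of the orientation datum $\mathfrak o$ at one triangle is the element $\sigma_{12}\circ\mu_{12}$ by \cref{t:Z3sym}, and a change of $\bi$ at one triangle (type $A_n$) is the mutation sequence of \cref{p:astmuteq}; in both cases the boundary functions are preserved (for the latter by \cref{r:u-and-v}), so these moves extend compatibly to the amalgamated chart. A flip of the triangulation $\Delta$ is realized by an explicit finite mutation sequence of $Q_{\mathbf\Delta}$ supported on the vertices lying on the two triangles adjacent to the flipped edge — this is the content of the constructions of Fock--Goncharov \cite{FG03} for type $A_n$ and Le \cite{Le16} for the other classical types. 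Since any two ideal triangulations are connected by a sequence of flips, and for $(\Sigma,\mathfrak g)$ admissible one may in the type $A_n$ case stay among triangulations without self-folded triangles (cf.\ \cite{FST}, using condition (4)), the birational charts attached to all data $\mathbf\Delta$ lie in a single mutation class, giving the asserted canonical cluster $\A$-variety structure on $\A_{G,\Sigma}$ with mutation class $\mathcal C_{\mathfrak g,\Sigma}$.

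I expect the main obstacle to be the second paragraph: carefully identifying the amalgamation of the decorated quivers $\widetilde{\bJ}(\bi_Q(n))$ with the fibre-product presentation \eqref{eq:A-delta} of $\A_{G,\Delta}$, and verifying that the glued generalized minors are genuinely algebraically independent birational coordinates, so that no relation beyond the amalgamation identifications appears. Within the present paper the flip-compatibility is the single ingredient imported wholesale from \cite{FG03,Le16} rather than reproved here.
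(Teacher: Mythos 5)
Your proposal is correct and follows essentially the same route as the paper's proof: the gluing of the triangle charts via \cref{cor:edge} matching the fibre-product equations \eqref{eq:A-delta}, and the change-of-data statement handled by \cref{t:Z3sym}, \cref{p:astmuteq}, and the flip mutation sequences imported from \cite{FG03,Le16}. The extra details you supply (the dimension count and the explicit independence-of-distinguished-vertex discussion) are elaborations of steps the paper asserts more briefly, not a different argument.
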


\begin{proof}
By \cref{t:clusterstr}, we have a bunch of regular functions on $\prod_T \A^\ast_{G,T}$ as well as the regular functions on $\prod_E \A^\ast_{G,E}$ given by the fundamental weights on $\Conf_2^\ast \A_G \cong H$. On the subvariety $\A_{G,\Delta}$, the regular function on $\A^\ast_{G,T}$ assigned to a vertex on a boundary edge $E$ coincides with one of regular functions on $\A^\ast_{G,E}$ by \cref{cor:edge}. The third statement follows from \cref{p:astmuteq}, \cref{t:Z3sym} and the fact that the coordinate transformatios induced by a flip of triangulation is realized by a composition of cluster transformations \cite{FG03,Le16}.
\end{proof}
Note that the morphism $\phi_{\mathbf{\Delta}}$ is $\ast$-equivariant, since the $\ast$-action on $\A_{G,\Sigma}$ is local. Let $\mathcal{C}_{\mathfrak{g},\Sigma}$ denote the mutation class encoding the cluster structure on $\A_{G,\Sigma}$.

\subsection{Comparison of the geometric action with the cluster action}\label{subsec:comparison}

\subsubsection{Computation of the geometric action in terms of cluster coordinates}\label{subsubsec:geom}

Let us compute the geometric action in terms of the cluster $\A$-charts on $\A_{G,\Sigma}$. First we consider the case $\Sigma=\mathbb{D}^1_k$. The unique puncture is denoted by $a$, and $k$ special points are written as $m_1, m_2,\dots, m_k$ in the order against the orientation of the boundary (we set $m_i=m_j$ if $i\equiv j$ modulo $k$). We choose the data $\mathbf{\Delta}$ as follows:
\begin{itemize}
\item[(a)]
Take an ideal triangulation $\Delta$ so that each edge connects the puncture $a$ with a marked point. For $\ell=1,\dots,k$, the triangle $(a,m_\ell,m_{\ell+1})$ is denoted by $T_\ell$.
\item[(b)]
In the case of type $A_n$, we always assume that $k$ is even, and we take $\bi_Q(n)$ for the triangle $T_{2\ell -1}$, and $\bi^*_Q(n)$ for the triangle $T_{2\ell}$ for $\ell=1,\dots, k/2$. 
\item[(c)]
For each triangle $T_\ell$, choose the distinguished vertex to be the special point $m_\ell$.
\end{itemize}
The weighted quiver associated to the resulting cluster $\A$-chart for $\A_{G,\mathbb{D}^1_k}$ is given by an amalgamation of the copies of $\widetilde{\bJ}(\bi_Q(n))$ or $\widetilde{\bJ}(\bi_Q^\ast(n))$. The vertex $v$ of the quiver $\widetilde{\bJ}(\bi_Q(n))$ or $\widetilde{\bJ}(\bi_Q^\ast(n))$ located on the triangle $T_{\ell}$ is denoted by $v^{(\ell)}$. 

By Corollary \ref{cor:edge} and the choice of distinguished vertices, the vertex $v_{i_{\mathrm{max}}(s)}^{s, (\ell-1)}$ is amalgamated with the vertex $v_{1}^{s, (\ell)}$ if $\mathfrak{g}$ is not of type $A_n$. In the case of type $A_n$, the vertex $v_{n+2-s}^{s, (2\ell -1)}$ is amalgamated with the vertex $u_{1}^{s, (2\ell)}$, and the vertex $v_{1}^{s, (2\ell-1)}$ is amalgamated with the vertex $u_{s+1}^{s, (2\ell-2)}$ (see also Remark \ref{r:u-and-v}). Hence, for a fixed $s \in S$, the vertical arrows connecting vertices with Dynkin index $s$ in \cref{fig:triangle} are combined to give an oriented cycle $P_s$ centered at $a$. See \cref{f:cycle}. Moreover, we obtain the following as in Proposition \ref{prop:tildeaction}: 
\begin{prop}\label{p:quiver-comparison}
The weighted quiver associated to the cluster $\A$-chart for $\A_{G,\mathbb{D}^1_k}$ chosen above is isomorphic to the quiver $\widetilde{Q}_{kh/2}(\mathfrak{g})$ defined in \cref{subsec:tildeQ}. Here $h$ is the Coxeter number of $\mathfrak{g}$, and the oriented cycle $P_s$ corresponds to the full subquiver of $Q_{kh/2}(\mathfrak{g})$ consisting of the vertices $\{v_i^s\mid i\in \mathbb{Z}_{kh/2} \}$ under this isomorphism. 
\end{prop}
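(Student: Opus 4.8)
The plan is to identify the weighted quiver obtained by amalgamating $k$ copies of $\widetilde{\bJ}(\bi_Q(n))$ (and, in type $A_n$, alternating copies of $\widetilde{\bJ}(\bi_Q^\ast(n))$) along the prescribed boundary vertices, and to compare it vertex-by-vertex and arrow-by-arrow with the explicit description of $\widetilde{Q}_{kh/2}(\mathfrak{g})$ given in \cref{subsec:tildeQ}. The combinatorial heart of this is already isolated in \cref{lem:Q-J}: that lemma says exactly that gluing $k$ copies of $\bJ(\bi_Q(n))$ by identifying $v^{s,(\ell)}_1$ with $v^{s,(\ell-1)}_{i_{\max}(s)}$ (respectively, in type $A_n$, gluing the copies of $\bJ(\bi_Q(n))$ and $\bJ(\bi_Q^\ast(n))$ via $v^{s,(\ell)}_1 \leftrightarrow u^{s,(\ell-1)}_{s+1}$ and $v^{s,(\ell)}_{i_{\max}(s)} \leftrightarrow u^{s,(\ell)}_1$) yields $Q_{kh/2}(\mathfrak{g})$ (resp. $Q_{kh}(A_n)$). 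So the first step is to verify that the amalgamation pattern dictated by the choices (a)--(c) of $\mathbf{\Delta}$ is precisely the one in \cref{lem:Q-J}: by \cref{cor:edge}, the vertex $v^{s}_{i_{\max}(s)}$ of the quiver on a triangle $T_{\ell-1}$ computes the edge function on the edge $E = (a,m_{\ell})$ (the edge $\mathbf{e}_{23}$ relative to the distinguished vertex $m_{\ell-1}$), while the vertex $v^s_1$ of the quiver on $T_\ell$ computes the same edge function (the edge $\mathbf{e}_{13}$ relative to the distinguished vertex $m_\ell$), and amalgamation is performed exactly along vertices carrying the same edge function; likewise in type $A_n$ using \cref{r:u-and-v}. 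Hence the underlying unfrozen quiver of the cluster $\A$-chart is $Q_{kh/2}(\mathfrak{g})$ (resp. $Q_{kh}(A_n)$), and the oriented cycle $P_s$ centered at $a$ is assembled from the vertical ``lines of constant Dynkin index'' $s$ in each triangle.

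Next I would handle the frozen vertices $y_j$ (and $y_j'$ in type $A_n$). These are attached to $\widetilde{\bJ}(\bi_Q(n))$ along horizontal edges of the line of Dynkin index $s$ with a single incoming and a single outgoing arrow and the same weight $d_s$ (this is visible in Figures~\ref{fig:tildeQ-A3}, \ref{fig:tJ-C3}, \ref{fig:tJ-D4}); by \cref{cor:edge} these are precisely the vertices computing the edge function on the edge $E=(a,m_\ell)$ for the ``left'' edge of the triangle, so in the amalgamation they survive as frozen vertices of $\widetilde{Q}_{kh/2}(\mathfrak{g})$ (resp. $\widetilde{Q}_{kh}(A_n)$) placed along the cycle $P_s$. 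Counting: each copy contributes $n$ such frozen vertices, there are $k$ copies (resp. $2k$ counting the $A_n$ alternation), giving $nk$ (resp. $2nk$) frozen vertices, which matches the definition of $\widetilde{Q}_{kh/2}(\mathfrak{g})$ (resp. $\widetilde{Q}_{kh}(A_n)$) in \cref{subsec:tildeQ}. One then checks that the arrows among the $y_j$ and between $y_j$ and the $v^s_i$ agree with the list in \cref{subsec:tildeQ}; since amalgamation along frozen boundary vertices does not introduce new arrows among the $y_j$ (they are not boundary vertices of the triangles), this is a direct translation of the elementary-quiver data.

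The second assertion of the proposition — that the full subquiver on $\{v^s_i \mid i \in \mathbb{Z}_{kh/2}\}$ is the cycle $P_s$ of $Q_{kh/2}(\mathfrak{g})$ — then follows immediately from the first part together with \cref{rem:closed-path} and the description in \cref{subsec:Q_m} of $Q_m(\mathfrak{g})$: the only arrows among vertices of a fixed Dynkin index $s$ are the ones realizing $v^s_i \to v^s_{i+1}$, and we have exactly $kh/2$ of them after gluing the $k$ segments, so we recover $P_s$ with $m = kh/2$. I would also record, as in \cref{prop:tildeaction}, that this isomorphism of quivers intertwines the relevant data; but that is the content of the following statements rather than this proposition, so here it suffices to exhibit the quiver isomorphism.

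The main obstacle I anticipate is purely bookkeeping: the amalgamation formula for $\zeta_{ij}$ adds up the contributions $\eta_{\phi(i)\phi(j)} + \varepsilon_{ij}$ on the identified vertices, so one must check that the arrows between a line of constant Dynkin index $s$ and an adjacent line of Dynkin index $t$ in two consecutive copies glue to produce exactly the pattern ``$\sigma_{st}$ arrows $v^s_i \to v^t_i$ and $\sigma_{st}$ arrows $v^t_{i+1} \to v^s_i$'' prescribed in \cref{subsec:Q_m}, and similarly that the boundary arrows do not over-count. For the non-simply-laced cases one must be careful with the half-integer entries of the elementary quivers $\bJ(s)$ and with the defrosting convention in the amalgamation; but \cref{lem:Q-J} already asserts the outcome for the unfrozen part, so the remaining work is to track the frozen vertices $y_j$, which are added along horizontal edges and are not affected by the amalgamation gluing, hence transparently carried along. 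In short, once \cref{lem:Q-J}, \cref{cor:edge} and \cref{r:u-and-v} are in hand, the proof is a finite diagram-chase comparing two explicitly drawn quivers, and I expect no conceptual difficulty.
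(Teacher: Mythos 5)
Your proposal follows essentially the same route as the paper: the paper's argument is precisely the combination of \cref{cor:edge} (showing that the gluing of the triangle quivers dictated by matching edge functions reproduces the identifications of \cref{lem:Q-J}, resp.\ \cref{r:u-and-v} in type $A_n$) with the definition of $\widetilde{Q}_{kh/2}(\mathfrak{g})$ as the amalgamation of the decorated quivers $\widetilde{\bJ}(\bi_Q(n))$, so the vertical lines of constant Dynkin index close up into the cycles $P_s$ around the puncture. One local correction: by \cref{cor:edge} the frozen vertices $y_j$ compute the edge function of $\mathbf{e}_{12}$, i.e.\ of the \emph{boundary} arc $(m_\ell,m_{\ell+1})$ of $\mathbb{D}^1_k$ with respect to the distinguished vertex $m_\ell$ — not of the internal edge $(a,m_\ell)$ as you write — and it is exactly because they sit on an unshared boundary arc that they are never amalgamated and survive with the correct count, so your conclusion stands even though the stated reason is off.
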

\begin{remark}
Our slightly involved conditions in the case of type $A_n$ is imposed in order to state Proposition \ref{p:quiver-comparison}. 
\end{remark}
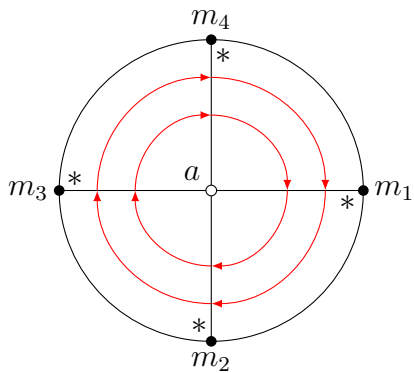
\begin{figure}
\[
\begin{tikzpicture}
\draw(0,0) circle(2pt) coordinate (A) node[above left]{$a$};
\draw(0,0) circle(2);
\draw(0:2) node[right]{$m_1$};
\draw(-90:2) node[below]{$m_2$};
\draw(-180:2) node[left]{$m_3$};
\draw(-270:2) node[above]{$m_4$};

\foreach \x in {0,90,180,270}
{
	\fill(\x:2) circle(2pt) coordinate(\x);
	\draw[shorten <=2pt] (A)--(\x);
	\draw(\x-5:1.8) node{$\ast$};
}

\begin{scope}[>=latex]
\foreach \x in {0,90,180,270}
{
	\foreach \r in {1,1.5}
		\draw[->,red] (\x:\r) arc(\x:\x-90:\r);
}	
\end{scope}

\end{tikzpicture}
\]
\caption{The oriented cycles $P_s$ on $\mathbb{D}^1_4$}
\label{f:cycle}
\end{figure}
To digress slightly, we prove Corollary \ref{introcor:DT} in the introduction here: 
\begin{proof}[Proof of \cref{introcor:DT}]
From \cref{p:Aast,p:astmuteq}, the action of the $\ast$-involution on $\Conf_3 \A_G$ is given by an element of the cluster modular group. Hence the action on the moduli space $\A_{G,\Sigma}$ is also given by an element of $\Gamma_{\mathcal{C}_{\mathfrak{g},\Sigma}}$, since the action is local. Then note that the proof of \cite[Theorem 9.11]{GS16} can be applied for any type so that the cluster action of the $\ast$-involution on the corresponding $\X$-variety $\X_{G',\Sigma}$ coincides with the one induced from the involution $\ast:G' \to G'$ (see \cite[Section 9]{GS16}). 

Now recall the seed isomorphism $\sigma_Q$ of $Q_{kh/2}(\mathfrak{g})$ from \cref{cor:DT}. It can be uniquely extended to an seed isomorphism of $\tilde{Q}_{kh/2}(\mathfrak{g})$, which is again written as $\sigma_Q$. Since the mapping class $\mathbf{r}_{\mathbb{D}^1_k}^{-1}$ sends the marked point $m_{k}$ to $m_{k+1}$, we have $(\mathbf{r}_{\mathbb{D}^1_k}^{-1})^\ast A_{v_i^{s,(\ell)}}=A_{v_i^{s,(\ell-1)}}=(\sigma_Q^{-1})^\ast A_{v_i^{s^\ast,(\ell)}}=(\sigma_Q^{-1})^\ast \ast^\ast A_{v_i^{s,(\ell)}}=(\ast \circ \sigma_Q^{-1})^\ast A_{v_i^{s,(\ell)}}$. In particular we get an equality $\sigma_Q=\mathbf{r}_{\mathbb{D}^1_k}\circ\ast $ in $\Gamma_{\mathcal{C}_{\mathfrak{g},\mathbb{D}^1_k}}$ by using \cref{thm:Nakanishi} (1) and (2). Hence from \cref{cor:DT}, the cluster Donaldson-Thomas transformation of the mutation class $\mathcal{C}_{\mathfrak{g},\mathbb{D}^1_k}$ is given by $\sigma_Q \circ R(w_0)=\mathbf{r}_{\mathbb{D}^1_k} \circ \ast \circ R(w_0)$ as expected.
\end{proof}

\begin{prop}\label{p:simple_refl}
The action of the $s$-th generator $r_s^{(a)} \in W(\mathfrak{g})^{(a)}$ is represented by 
\[
(r_s^{(a)})^*A_v=
\begin{cases}
\mathcal{W}_{a,s^*}\cdot A_v & (v\in P_s) \\
A_v & (\text{otherwise}).
\end{cases}
\]
\end{prop}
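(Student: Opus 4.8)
The plan is to trace the geometric action of $r_s^{(a)}$ through the explicit cluster $\A$-chart on $\A_{G,\mathbb{D}^1_k}$ assembled from Theorem~\ref{t:clusterstr} and, in type $A_n$, Theorem~\ref{t:astclusterstr}, using that by the very definition \eqref{def:W-action} the transformation $r_s^{(a)}$ leaves the underlying local system and every decoration except the one at $a$ untouched. Since $r_s^{(a)}$ is a birational automorphism and the coordinates $\{A_v\}$ parametrize the dense torus chart, it suffices to compute $(r_s^{(a)})^{\ast}A_v$ on each vertex $v$ over that chart.

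First I would record the bookkeeping of the geometric action. For a generic $(\L,\psi')\in\A_{G,\mathbb{D}^1_k}$, write $\psi'=h\cdot\psi$, where $\psi$ is a decoration with $\mu_a(\L,\psi)=1$ and $h\in H=H^{(a)}$; by the identity $\mu_a\circ j_\psi=\mathrm{ev}$ quoted after \eqref{def:W-action}, this $h$ is determined by $h^{\alpha_t}=\mathcal{W}_{a,t}(\L,\psi')$ for all $t\in S$. By definition $r_s^{(a)}(\L,\psi')=(\L,r_{s^\ast}(h)\cdot\psi)$, where we used $r_s^{\ast}=r_{s^\ast}$. Thus everything reduces to (i) the effect of rescaling the decoration at $a$ by an element of $H$ on the coordinates $A_v$, and (ii) the arithmetic of the substitution $h\mapsto r_{s^\ast}(h)$.

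For (i) I would argue locally on the chosen triangulation. On a triangle $T_\ell=(a,m_\ell,m_{\ell+1})$ with distinguished vertex $m_\ell$ the decoration $\psi(a)$ occupies the third slot of $f_{m_\ell}(\L,\psi)=(\psi(m_\ell),\psi(m_{\ell+1}),\psi(a))$; in the parametrization $\beta(h_1,h_2,u_-)$ of Proposition~\ref{p:config3} the datum $h_1$ depends only on $(\psi(m_\ell),\psi(m_{\ell+1}))$, and rescaling the third flag by $g\in H$ leaves $h_1$ and $u_-$ unchanged, absorbing $g$ (after passing it through $\overline{w_0}$) entirely into the factor $h_2$. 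Plugging into $A_{v_i^s}(\beta(h_1,h_2,u_-))=h_1^{\mu_{s,i}}h_2^{\varpi_s}\Delta_{w_i^s\varpi_s,\varpi_s}(u_-)$, and into the identically-shaped $\bi_Q^\ast(n)$-formula, one reads off that every coordinate attached to a vertex of the cycle $P_s$ is multiplied by the monomial $g^{-\varpi_{s^\ast}}$, with the \emph{same} power for all vertices of $P_s$, while the frozen coordinates $A_{y_j}$, depending only on the boundary-edge projection $\mathbf{e}_{12}$, are fixed. Checking that this law is consistent with the amalgamation (the identified vertices $v_1^{s,(\ell)}=v_{i_{\mathrm{max}}(s)}^{s,(\ell-1)}$ scale the same way, and it matches the computation of $\mu_a\circ j_\psi$) upgrades it to the global statement on $\A_{G,\mathbb{D}^1_k}$: rescaling $\psi(a)$ by $g$ multiplies every coordinate on a cycle $P_t$ by $g^{-\varpi_{t^\ast}}$ and fixes all others.

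Finally I would combine the two ingredients. For $v$ on a cycle $P_t$ and $(\L,\psi')=(\L,h\cdot\psi)$ generic,
\[
(r_s^{(a)})^{\ast}A_v\big|_{(\L,\psi')}=A_v(\L,r_{s^\ast}(h)\cdot\psi)=r_{s^\ast}(h)^{-\varpi_{t^\ast}}A_v(\L,\psi)=r_{s^\ast}(h)^{-\varpi_{t^\ast}}h^{\varpi_{t^\ast}}\,A_v(\L,\psi').
\]
Using $r_{s^\ast}(h)^{\varpi_{t^\ast}}=h^{r_{s^\ast}\varpi_{t^\ast}}=h^{\varpi_{t^\ast}-\delta_{st}\alpha_{s^\ast}}$, the scalar factor collapses to $h^{\delta_{st}\alpha_{s^\ast}}$: it is $1$ when $t\neq s$, and it is $h^{\alpha_{s^\ast}}=\mathcal{W}_{a,s^\ast}(\L,\psi')$ when $t=s$. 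Together with $(r_s^{(a)})^{\ast}A_{y_j}=A_{y_j}$ this yields the asserted formula on the chart, hence everywhere. The genuine obstacle is step (i): one must fix, with the conventions of \S\ref{subsec:moduli}, the $w_0$-twist $\varpi_s\leftrightarrow-\varpi_{s^\ast}$ produced by conjugating $g$ past $\overline{w_0}$ in $\beta$, treat the $\bi_Q(n)$- and $\bi_Q^\ast(n)$-charts uniformly in type $A_n$, and verify that the local rescaling rules glue consistently so as to descend to the cycles $P_s$ of $\widetilde{Q}_{kh/2}(\mathfrak{g})$ (resp.\ $\widetilde{Q}_{kh}(A_n)$).
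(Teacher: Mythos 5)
Your proposal is correct and follows essentially the same route as the paper: reduce to the effect of rescaling the decoration at $a$ by $h\in H$, observe that this multiplies $A_v$ for $v\in P_t$ by a character of $H$ attached to $\varpi_{t^\ast}$ while leaving the remaining coordinates fixed, and conclude via $r_{s^\ast}\varpi_{t^\ast}-\varpi_{t^\ast}=-\delta_{st}\alpha_{s^\ast}$ together with the identification of the resulting character with $\mathcal{W}_{a,s^\ast}$. The only discrepancy is one of sign conventions: under the paper's fixed $H$-action $h.gU^-=gh^{-1}U^-$ the multiplier on $P_t$ is $h^{+\varpi_{t^\ast}}$ and $\mathcal{W}_{a,t}(\L,h.\psi)=h^{-\alpha_t}$, whereas you take the opposite sign in both places; these two flips are mutually consistent (they amount to the inverse action convention, and reflect a sign ambiguity already present between \S 5.2.3 and the paper's own computation) and cancel, so the final formula is unaffected.
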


\begin{proof}Since the Weyl group action changes only the decoration at the puncture, we fix a generic twisted $G$-local system $\L$ and decorations at special points. 
Also we fix a basepoint $x_0$ of the punctured tangent bundle as in \cref{subsubsec:action} and a local trivialization of $\L$ near $x_0$. Then we get a monodromy homomorphism $\rho: \pi_1(\overline{\mathbb{D}^1_k},x_0) \to G$ and an identification $\L_\A|_{x_0} \cong \A_G$. Let $\gamma_a:=\rho(c_a)$ denote the monodromy around $a$. 

Then a decoration at the puncture $a$ gives a pair $(\gamma_a; A_a)$, where $A_a \in \L_\A|_{x_0}=\A_G$ and $\gamma_a \in U_{A_a}$. Choose a decoration so that $\mu_a(\gamma_a;A_a)=1 \in T$. Another choice of a decoration corresponds to a pair of the form $(\gamma_a;h. A_a)$ for some $h \in H$. 

We claim that $\mathcal{W}_{a,s}(\gamma_a;h. A_a)=h^{-\alpha_s}$, where $\alpha_s$ is the $s$-th simple root. Indeed, if we write $A_a=gU^-$, then we have $h.A_a=gh^{-1}U^-$, and 
\begin{align*}
(\chi_{A_a})_s&=\chi_s\circ Ad_{g^{-1}}&
(\chi_{h.A_a})_s&=\chi_s\circ Ad_{(gh^{-1})^{-1}}=\chi_s\circ Ad_h\circ Ad_{g^{-1}}. 
\end{align*}
Since $\chi_s\circ Ad_h=h^{r_s\varpi_s-\varpi_s}\chi_s=h^{-\alpha_s}\chi_s$, we have $\chi_{h.A_a}=h^{-\alpha_s}\chi_{A_a}$, which proves the claim.

On the other hand, consider a vertex $v\in P_t$ for $t \in S$. Then $v$ has the Dynkin index $t$, hence the associated $\A$-coordinate $A_v$ is a tensor invariant of the form $(V^-(\lambda_1)\otimes V^-(\lambda_2)\otimes V^-(\varpi_t))^{\Delta G}$ in the notation of Appendix \ref{app:conf3}. Since $A_a$ always corresponds to the third component of $\Conf_3\A_G$ in our choice of cluster $\A$-charts, 
it follows that
\[
\frac{(r_s^{(a)})^*A_v(\gamma_a;h. A_a)}{A_v(\gamma_a;h. A_a)}=
\frac{A_v(\gamma_a;r_s^\ast(h). A_a)}{A_v(\gamma_a;h. A_a)}=\frac{r_s^\ast(h)^{-w_0\varpi_t}}{h^{-w_0\varpi_t}}=\frac{r_{s^\ast}(h)^{\varpi_{t^\ast}}}{h^{\varpi_{t^\ast}}}.
\]
We claim that the right-hand side equals to $h^{-\alpha_{s^*}}$ if $t=s$, and $1$ otherwise. We have the relation $h^{w(\lambda)}=(w^{-1}(h))^\lambda$ for $h\in H$, $w \in W(\mathfrak{g})$ and a weight $\lambda\in P$. In particular we have 
\[
r_{s^\ast}(h)^{\varpi_{t^\ast}}/h^{\varpi_{t^\ast}}=h^{r_{s^\ast}(\varpi_{t^\ast})}/h^{\varpi_{t^\ast}}=h^{r_{s^\ast}(\varpi_{t^\ast})-\varpi_{t^\ast}}=h^{-\delta_{st}\alpha_{s^*}},
\]
as desired. Thus the proposition is proved.
\end{proof}

\begin{example}
Consider the case $G=SL_{n+1}$. Let us consider a generic $\L\in \mathrm{Loc}^\mathrm{un}_{G,S}$ and choose a local trivialization so that the monodromy $\gamma_a$ is represented by the matrix of the form 
\[
\begin{pmatrix} 
1 & & & \bigzerol \\
1 & \ddots & & \\
  & \ddots & \ddots & \\
\hsymb{*}& & 1 & 1
\end{pmatrix}.
\]
Choose the decoration at the puncture to be the basepoint $A_0 \in \A_G$, which is invariant under $\gamma_a$. Then we have $\mathcal{W}_{a,s}(\L;A_0)=1$ for all $s \in S$. Each element of $\pi^{-1}(\L)$ corresponds to $(\L;h.A_0)$ for some $h=\mathrm{diag}(\lambda_1,\dots,\lambda_{n+1})$. We get $\mathcal{W}_{a,s}(\L;h.A_0)=h^{-\alpha_s}$ from the calculation given in \cref{example:decorated flags A_n}.
\end{example}

\subsubsection{The explicit form of the potential functions $\mathcal{W}_{a,s}$ via Chamber Ansatz}

\cref{p:simple_refl} tells us that, once we introduce the cluster $\A$-charts, the geometric action of $r_s$ on $\A_{G,\mathbb{D}^1_k}$ at the puncture $a$ is described by using the potential $\mathcal{W}_{a,s^{\ast}}$. Our next aim is to get an explicit form of the potentials $\mathcal{W}_{a,s}$ in terms of the cluster $\A$-coordinates. 
\\

\paragraph{\textbf{Decomposition of the potentials $\mathcal{W}_{a,s}$}}First we decompose the potential into a sum so that the argument reduces to a computation in $\Conf_3 \A_G$.
\begin{lem}\label{l:triangle}
Let $(A_1, A_2, A_3)\in \A_G\times\A_G\times\A_G$ be a triple such that $\mathcal{C}=[A_1, A_2, A_3]\in \Conf_3^{\ast}\A_G$. Then there uniquely exists $u_{\mathcal{C}}\in G$ such that 
\begin{align*}
u_{\mathcal{C}}A_1&=A_1&
u_{\mathcal{C}}\pi(A_2)&=\pi(A_3),
\end{align*}
here recall that $\pi\colon \A_G\to \B_G$ is the natural projection. Moreover, if $\mathcal{C}=[A_1, A_2, A_3]=\beta(h, h', u_-)$ for some $h, h'\in H$ and $u_-\in U^-_{\ast}$, then 
\[
(\chi_{A_1})_s(u_{\mathcal{C}})=\chi_s(u_-)
\]
for $s\in S$. In particular, 
\[
W_{s}\colon \Conf_3^{\ast}\A_G\to \mathbb{C}, ~\mathcal{C}=[A_1, A_2, A_3]\mapsto (\chi_{A_1})_s(u_{\mathcal{C}})
\]
is a well-defined regular map.  
\end{lem}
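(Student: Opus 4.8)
The plan is to transport everything to the standard representative of $\mathcal{C}$ furnished by the parametrization $\beta$ of \cref{p:config3}; once there, $u_\mathcal{C}$ is visibly the unipotent $u_-$ itself, and the stated formula is immediate.

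Concretely, I would first write $\mathcal{C}=\beta(h_1,h_2,u_-)$ for the unique triple $(h_1,h_2,u_-)\in H\times H\times U^{-}_{\ast}$, and choose $g\in G$ with $g\cdot(A_1,A_2,A_3)=(U^-,\,h_1\overline{w_0}U^-,\,u_-h_2\overline{w_0}U^-)$. Applying $\pi\colon\A_G\to\B_G=G/B^-$ and using $h\overline{w_0}B^-=\overline{w_0}B^-$ for every $h\in H$ (because $\overline{w_0}^{-1}H\overline{w_0}=H\subseteq B^-$), I get $\pi(gA_2)=\overline{w_0}B^-$ and $\pi(gA_3)=u_-\overline{w_0}B^-$. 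Then $u_\mathcal{C}:=g^{-1}u_-g$ does the job: since $u_-\in U^-$ one has $u_\mathcal{C}A_1=g^{-1}u_-(gA_1)=g^{-1}u_-U^-=g^{-1}U^-=A_1$, and $u_\mathcal{C}\pi(A_2)=g^{-1}u_-\overline{w_0}B^-=\pi(A_3)$.

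For uniqueness, suppose $u\in G$ satisfies both conditions. Then $gug^{-1}$ fixes $gA_1=U^-$, hence $gug^{-1}\in U^-$; moreover $gug^{-1}$ sends $\overline{w_0}B^-$ to $u_-\overline{w_0}B^-$, so $u_-^{-1}gug^{-1}$ stabilizes $\overline{w_0}B^-$, i.e. $\overline{w_0}^{-1}(u_-^{-1}gug^{-1})\overline{w_0}\in B^-$. Since conjugation by $\overline{w_0}$ carries $U^-$ onto $U^+$, this element lies in $U^+\cap B^-=\{e\}$ (the unipotent radical of a Borel meets an opposite Borel trivially), whence $u=g^{-1}u_-g=u_\mathcal{C}$. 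The formula $(\chi_{A_1})_s(u_\mathcal{C})=\chi_s(u_-)$ then follows from the rule $\chi_{gU^-}=\chi\circ\mathrm{Ad}_{g^{-1}}$ recorded after \eqref{eq:chi_A}: since $A_1=g^{-1}U^-$ we have $(\chi_{A_1})_s=\chi_s\circ\mathrm{Ad}_g$, so $(\chi_{A_1})_s(u_\mathcal{C})=\chi_s\bigl(g(g^{-1}u_-g)g^{-1}\bigr)=\chi_s(u_-)$. Finally $W_s$ is well defined on $\Conf_3^{\ast}\A_G$ because $u_-$ depends only on $\mathcal{C}$ (indeed the value $(\chi_{A_1})_s(u_\mathcal{C})$ is manifestly invariant under replacing $(A_1,A_2,A_3)$ by a $G$-translate), and it is regular because $\beta^{-1}$ is a morphism of varieties and $\chi_s=\Delta_{r_s\varpi_s,\varpi_s}|_{U^-}$ is regular, so $W_s=\chi_s\circ\mathrm{pr}_3\circ\beta^{-1}$.

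I do not expect a genuine obstacle here: the only care needed is standard Lie-theoretic bookkeeping, namely that $\pi$ lands in $G/B^-$, that the relevant transversality is $U^+\cap B^-=\{e\}$ (equivalently $\overline{w_0}^{-1}U^-\overline{w_0}\cap B^-=\{e\}$), and that the character transformation rule is applied with the correct group element; the substance is entirely carried by \cref{p:config3}.
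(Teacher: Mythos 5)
Your proof is correct. The paper states this lemma without proof, but your argument is exactly the intended one: the way the lemma is used later (in the proof of Theorem~\ref{t:ConfCA}, where $W_{s}$ is identified with $\chi_{s}|_{U^-_{\ast}}\circ \mathrm{pr}_{U^-_{\ast}}$) shows that the authors also reduce to the standard representative $\beta(h_1,h_2,u_-)$, where $u_{\mathcal{C}}$ is the conjugate of $u_-$, uniqueness follows from $U^+\cap B^-=\{e\}$, and the character identity follows from $\chi_{gU^-}=\chi\circ \mathrm{Ad}_{g^{-1}}$.
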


Consider the variety $\A_{G, \Delta}\subset \A_{G,\mathbb{D}^1_k}$ in \eqref{eq:A-delta}, associated with the triangulation $\Delta$ chosen in the beginning of \cref{subsec:comparison}. By their construction, the cluster $\A$-coordinates given in Theorems \ref{t:clusterstr}, \ref{t:astclusterstr} and \cref{c:clusterstr_surface} are algebraically independent regular functions on $\A_{G, \Delta}$. Therefore we may restrict ourselves to $\A_{G, \Delta}$ in order to obtain the expression of $\mathcal{W}_{a,s}$ in terms of the cluster $\A$-coordinates. For $(\L,\psi)\in \A_{G, \Delta}$, we can take $h_{\ell}, h_{\ell}'\in H$ and $u_{\ell}\in U^-_{\ast}$ such that 
%
\[
(f_a\circ \phi_{T_\ell})((\L,\psi))=\beta(h_{\ell}, h_{\ell}', u_{\ell}).
\]
Then, by the argument in the proof of \cite[Theorem 8.1]{FG03}, the monodromy of the twisted $G$-local system $\L$ along the loop $c_a$ is represented as $u_k u_{k-1}\dots u_1$. In particular, 
\begin{align}
\mathcal{W}_{a,s}((\L,\psi))=\chi_s(u_k u_{k-1} \dots u_1)=\sum_{\ell=1}^k\chi_s(u_{\ell})=\sum_{\ell=1}^k(W_s\circ f_a\circ \phi_{T_\ell})((\L,\psi)).\label{eq:sum}
\end{align}
Therefore it suffices to describe $W_s\circ f_a\circ \phi_{T_\ell}$ in terms of the cluster $\A$-coordinates.
\\
\paragraph{\textbf{The Chamber Ansatz formulae for unipotent cells}}\label{sss:uCA}
A key ingredient of the computation of $W_s$ is the Chamber Ansatz formulae \cite{BFZ96,BZ97}, which we will briefly recall in the following.

For a reduced word $\mathbf{s}=(s_1,\dots, s_{N})$ of $w_0$, define $\psi_{\mathbf{s}}\colon (\mathbb{C}^{\ast})^{N}\to U^-_{\ast}$ by 
\[
(t_1,\dots, t_{N})\mapsto y_{s_1}(t_1)\cdots y_{s_k}(t_k)\cdots y_{s_{N}}(t_{N}).
\]
\begin{prop}[{cf.~\cite[Proposition 2.18]{FZ-Double}}]
	The map $\psi_{\mathbf{s}}$ is an injective morphism of algebraic varieties and its image is a Zariski open subset of $U^-_{\ast}$.
\end{prop}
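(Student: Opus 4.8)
The statement to prove is the Chamber Ansatz parametrization: the map $\psi_{\mathbf{s}}\colon (\mathbb{C}^{\ast})^{N}\to U^-_{\ast}$, $(t_1,\dots,t_N)\mapsto y_{s_1}(t_1)\cdots y_{s_N}(t_N)$, is an injective morphism of varieties whose image is a Zariski open subset of $U^-_{\ast}$. The plan is to deduce this from the analogous well-known statement on the opposite side and the transpose anti-involution $g\mapsto g^{T}$ introduced in \cref{subsec:Lie}. Recall from that subsection that $x_s(t)^{T}=y_s(t)$, so that the transpose restricts to an isomorphism of varieties $U^{+}\xrightarrow{\sim}U^{-}$, and since $B^{-}\overline{w_0}B^{-}$ is transpose-stable (because $H$ and $\overline{w_0}$ are fixed by $T$, up to a harmless adjustment), it carries the unipotent cell $U^{+}_{\ast}=U^{+}\cap B^{-}\overline{w_0}B^{-}$ isomorphically onto $U^{-}_{\ast}=U^{-}\cap B^{+}\overline{w_0}B^{+}$. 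Conjugating the factorization map on the $U^+$-side by the transpose turns a product $x_{s_1}(t_1)\cdots x_{s_N}(t_N)$ into $y_{s_N}(t_N)\cdots y_{s_1}(t_1)$; reversing the reduced word (which is again a reduced word of $w_0$) then matches $\psi_{\mathbf{s}}$ exactly.

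Concretely, first I would record the classical fact — this is \cite[Proposition 2.18]{FZ-Double} in the form cited, or one can quote it directly — that for any reduced word $\mathbf{s}'=(s_1',\dots,s_N')$ of $w_0$ the map $(\mathbb{C}^{\ast})^{N}\to U^{+}$, $(t_1,\dots,t_N)\mapsto x_{s_1'}(t_1)\cdots x_{s_N'}(t_N)$, is an injective morphism of varieties onto a Zariski-open subset of $U^{+}_{\ast}$. Second, I would set $\mathbf{s}'$ to be the reverse word $(s_N,s_{N-1},\dots,s_1)$, which is a reduced word of $w_0^{-1}=w_0$, and apply the transpose: since $\bigl(x_{s_N}(t_N)\cdots x_{s_1}(t_1)\bigr)^{T}=y_{s_1}(t_1)\cdots y_{s_N}(t_N)=\psi_{\mathbf{s}}(t_1,\dots,t_N)$, the map $\psi_{\mathbf{s}}$ is the composite of the injective open morphism for $\mathbf{s}'$ with the algebraic isomorphism $(-)^{T}\colon U^{+}_{\ast}\xrightarrow{\sim}U^{-}_{\ast}$. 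Composites of morphisms are morphisms, composites of injections are injections, and the image of a Zariski-open set under an isomorphism is Zariski-open, so all three assertions follow at once.

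The only point that needs genuine care — and I expect it to be the main (minor) obstacle — is checking that the transpose anti-involution really does send $U^{+}_{\ast}$ onto $U^{-}_{\ast}$, i.e. that $B^{-}\overline{w_0}B^{-}$ is stable under $g\mapsto g^{T}$. One has $(b_1\overline{w_0}b_2)^{T}=b_2^{T}\,\overline{w_0}^{\,T}\,b_1^{T}$ with $b_i^{T}\in (B^{-})^{T}=B^{+}$, so the subtle input is the relation between $\overline{w_0}^{\,T}$ and $\overline{w_0}$. From the $SL_2$ computation $\overline{r}_s^{\,T}=\varphi_s\!\left(\left(\begin{smallmatrix}0&1\\-1&0\end{smallmatrix}\right)\right)=\overline{r}_s^{\,-1}$ and the length-additive factorization one gets $\overline{w_0}^{\,T}=\overline{w_0^{-1}}=\overline{w_0}$ when $w_0=w_0^{-1}$; in general the discrepancy lies in $H$ and is absorbed after noting $B^{+}\overline{w_0}B^{+}=B^{+}h\overline{w_0}B^{+}$ for any $h\in H$. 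I would dispatch this with a one-line lemma (or a forward reference to the relevant fact in \cite{FZ-Double, BFZ96, BZ97}) rather than a computation. With that in hand the proof is a two-line formal argument; no delicate estimates or inductions are required.
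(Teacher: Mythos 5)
Your proof is correct. Note first that the paper does not actually prove this proposition: it is stated with the citation ``cf.~\cite[Proposition 2.18]{FZ-Double}'' and no argument, so there is no in-paper proof to compare against. Your reduction to the $U^+$-side via the transpose anti-involution is sound, and you correctly identify and resolve the only delicate point: $\overline{r}_s^{\,T}=\overline{r}_s^{\,-1}$ gives $\overline{w_0}^{\,T}=\overline{w_0^{-1}}=\overline{w_0}\,s_G^{-1}$ with $s_G=\overline{w_0}^{2}\in Z(G)\subset H$, so $B^{+}\overline{w_0}^{\,T}B^{+}=B^{+}\overline{w_0}B^{+}$ and the transpose carries $U^{+}_{\ast}=U^{+}\cap B^{-}\overline{w_0}B^{-}$ biregularly onto $U^{-}_{\ast}=U^{-}\cap B^{+}\overline{w_0}B^{+}$; combined with the word reversal $(x_{s_N}(t_N)\cdots x_{s_1}(t_1))^{T}=y_{s_1}(t_1)\cdots y_{s_N}(t_N)$ this does yield all three assertions. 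One small remark: the result of Fomin--Zelevinsky being cited is usually stated for arbitrary double reduced words, i.e.\ for products of the $y_{i}$'s as well as the $x_{i}$'s, so the statement for $\psi_{\mathbf{s}}$ is already a direct special case and the transpose detour is not strictly necessary. Your version is nevertheless a clean, self-contained derivation if one only wants to quote the $U^{+}$-side factorization, and it has the side benefit of making explicit the transpose-stability of the big cell, which the paper uses elsewhere (e.g.\ in the twist automorphism of \cref{p:twist}).
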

The Chamber Ansatz formulae provide a solution of the problem of finding explicit formulae for the inverse birational map $\psi_{\mathbf{s}}^{-1}$, which is called \emph{the factorization problem}. This problem is also formulated as follows: the map $\psi_{\mathbf{s}}$ induces an embedding of the coordinate algebra 
\begin{align}
\psi_{\mathbf{s}}^{\ast}\colon\mathbb{C}[U^-_{\ast}]\to \mathbb{C}[ (\mathbb{C}^{\ast})^{N}]\simeq \mathbb{C}[t_1^{\pm 1},\dots, t_{N}^{\pm1}].\label{clFeigin}
\end{align}
The problem is to describe each $t_{k}$ ($k=1,\dots, N$) as a rational function on $U^-_{\ast}$ explicitly.

Recall that for $g\in U^-HU^+$, we write the corresponding unique decomposition as $g=[g]_-[g]_0[g]_+$. The following automorphism, called the \emph{twist automorphism}, is crucial for the Chamber Ansatz formulae. 
\begin{prop}[{\cite[Lemma 1.3]{BFZ96},\cite[Theorem 1.2]{BZ97}}]\label{p:twist}
	There exists a biregular automorphism $\eta_{w_0}\colon U^-_{\ast}\to U^-_{\ast}$ given by 
	\[
	u_-\mapsto [u_-^{T}\overline{w_0}]_-.
	\]
	Moreover its inverse $\eta_{w_0}^{-1}\colon U^-_{\ast}\to U^-_{\ast}$ is given by 
	\[
	u_-\mapsto \overline{w_0}[\overline{w_0}^{-1}u_-]_-^T\overline{w_0}^{-1}. 
	\] 
\end{prop}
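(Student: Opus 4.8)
The plan is to deduce Proposition~\ref{p:twist} from the standard description of the unipotent cell $U^-_{\ast}$ in terms of the open ``big cell'' $G_0:=U^-HU^+$, and then to carry out the factorization bookkeeping, essentially reproducing the arguments of \cite{BFZ96,BZ97}. First I would establish the characterization: for $u_-\in U^-$ one has $u_-\in U^-_{\ast}$ if and only if $\overline{w_0}^{-1}u_-\in G_0$. This follows from the computation $\overline{w_0}G_0=\overline{w_0}U^-HU^+=U^+\overline{w_0}HU^+=U^+H\overline{w_0}U^+=B^+\overline{w_0}B^+$, which uses only that $\overline{w_0}$ conjugates $U^+$ onto $U^-$ and normalizes $H$. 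Applying the transpose anti-automorphism, which preserves $G_0$ (it reverses the three factors but interchanges $U^+$ and $U^-$ and fixes $H$) and satisfies $\overline{w_0}^T=\overline{w_0}^{-1}$, to the element $u_-^T\overline{w_0}$ turns it into $\overline{w_0}^{-1}u_-$; hence $u_-^T\overline{w_0}\in G_0$ exactly when $u_-\in U^-_{\ast}$. This shows in one stroke that the component $[u_-^T\overline{w_0}]_-$ is defined precisely on $U^-_{\ast}$, and that $\eta_{w_0}$ is a morphism there, since transpose, right translation by $\overline{w_0}$, and the Gaussian components $g\mapsto[g]_-,[g]_0,[g]_+$ are all regular on their natural domains.

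Next I would check that $\eta_{w_0}$ maps $U^-_{\ast}$ into itself. Writing $u_-^T\overline{w_0}=n_-an_+$ with $n_-\in U^-$, $a\in H$, $n_+\in U^+$, so that $\eta_{w_0}(u_-)=n_-$, one has $\overline{w_0}^{-1}n_-=(\overline{w_0}^{-1}u_-^T\overline{w_0})\,n_+^{-1}a^{-1}$; since $\overline{w_0}^{-1}u_-^T\overline{w_0}\in U^-$, the right-hand side can be rearranged into an element of $U^-HU^+=G_0$, so $n_-\in U^-_{\ast}$ by the characterization above. Thus $\eta_{w_0}\colon U^-_{\ast}\to U^-_{\ast}$ is a morphism.

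For the inverse I would set $\theta(u_-):=\overline{w_0}\,[\overline{w_0}^{-1}u_-]_-^T\,\overline{w_0}^{-1}$, which is defined on $U^-_{\ast}$ by the characterization, lies in $\overline{w_0}U^+\overline{w_0}^{-1}=U^-$, and lands in $U^-_{\ast}$ by the same rearrangement trick. To see $\eta_{w_0}\circ\theta=\mathrm{id}$, I would compute $\theta(u_-)^T\overline{w_0}=\overline{w_0}\,[\overline{w_0}^{-1}u_-]_-$ using $\overline{w_0}^T=\overline{w_0}^{-1}$, then substitute the Gaussian factorization $\overline{w_0}^{-1}u_-=[\,\cdot\,]_-[\,\cdot\,]_0[\,\cdot\,]_+$ to obtain $\theta(u_-)^T\overline{w_0}=u_-\cdot([\overline{w_0}^{-1}u_-]_+)^{-1}([\overline{w_0}^{-1}u_-]_0)^{-1}$, and finally rewrite this product in $U^-HU^+$ order; uniqueness of the Gaussian decomposition then yields $[\theta(u_-)^T\overline{w_0}]_-=u_-$. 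The opposite composition $\theta\circ\eta_{w_0}=\mathrm{id}$ would follow by the symmetric computation, or more cheaply from injectivity of $\eta_{w_0}$ (the value $\eta_{w_0}(u_-)=n_-$ together with the decomposition $u_-^T\overline{w_0}=n_-an_+$ recovers $a$, $n_+$, hence $u_-^T\overline{w_0}$, hence $u_-$) combined with surjectivity coming from $\eta_{w_0}\circ\theta=\mathrm{id}$. Biregularity is then immediate from the regularity already noted for $\eta_{w_0}$ and for $\theta$.

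I expect the only real obstacle to be the Bruhat/Gaussian bookkeeping itself. The point needing genuine care is that $g\mapsto[g]_-$ is defined only on the open cell $G_0$, so every occurrence of such a bracket must be checked to lie in $G_0$ — and that is exactly what the characterization in the first step guarantees, which is why it is worth isolating it at the outset. A secondary, purely conventional hazard is keeping signs and orders straight for $\overline{w_0}$, the element $s_G=\overline{w_0}^{\,2}\in Z(G)$, and the transpose anti-automorphism; once $\overline{w_0}^T=\overline{w_0}^{-1}$ and $(gh)^T=h^Tg^T$ are fixed, the remaining manipulations are routine. Finally, I would be careful to record \emph{regularity} (not merely rationality) of $\eta_{w_0}$, which reduces to the standard fact that the factorization $g=[g]_-[g]_0[g]_+$ depends morphically on $g\in G_0$.
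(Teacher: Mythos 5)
Your proof is correct: the characterization $u_-\in U^-_{\ast}\Leftrightarrow \overline{w_0}^{-1}u_-\in G_0=U^-HU^+$, the transpose/Gaussian bookkeeping, and the verification that $\theta$ is a two-sided inverse (the ``symmetric computation'' in fact gives $[\overline{w_0}^{-1}\eta_{w_0}(u_-)]_-=\overline{w_0}^{-1}u_-^T\overline{w_0}$ directly, which is the cleanest route to $\theta\circ\eta_{w_0}=\mathrm{id}$) all go through. The paper itself offers no proof of this proposition --- it is quoted from \cite{BFZ96} and \cite{BZ97} --- and your argument is essentially the standard one from those references.
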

The automorphism of the coordinate algebra induced by $\eta_{w_0}$ is denoted by $\eta_{w_0}^{\ast}\colon \mathbb{C}[U^-_{\ast}]\to \mathbb{C}[U^-_{\ast}]$. 
\begin{remark}
	It is easy to check that the definition of the twist automorphism does not depend on the choice of a lift of $w_0$ to $N_{G}(H)$. 
\end{remark}
\begin{thm}[{The Chamber Ansatz formulae \cite[Theorem 1.4]{BFZ96},\cite[Theorem 1.4]{BZ97}}]\label{t:Chamb}
	Let ${\mathbf s}=(s_1,\dots, s_{N})$ be a reduced word of the longest element $w_0$. For $j\in \{1,\dots, N\}$, set $w_{\leq j}:=r_{s_1}\cdots r_{s_j}$. Then, for $k\in \{1,\dots, N\}$, 
	\[
	t_k=(\psi_{\mathbf{s}}^{\ast}\circ (\eta_{w_0}^{\ast})^{-1})\left(\frac{\prod_{t\in S\setminus\{s_k\}}\Delta_{w_{\le k}\varpi_{t}, \varpi_{t}}^{-C_{t, s_k}}}{\Delta_{w_{\leq k-1}\varpi_{s_k}, \varpi_{s_k}}\Delta_{w_{\leq k}\varpi_{s_k}, \varpi_{s_k}}}\right). 
	\]
\end{thm}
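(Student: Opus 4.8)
The plan is to reduce the identity to a statement about Laurent monomials and then verify it by a telescoping computation. As written, both sides are rational functions on $(\mathbb{C}^{\ast})^N$, so it suffices to check equality at a generic point $\mathbf t=(t_1,\dots,t_N)$. Unwinding the pull-backs, if we set $u_-:=\psi_{\mathbf s}(\mathbf t)\in U^-_{\ast}$ and $z:=\eta_{w_0}^{-1}(u_-)$, the claim becomes $t_k=R_k(z)$ for every $k$, where $R_k$ denotes the displayed ratio of chamber minors $\Delta_{w_{\le j}\varpi_s,\varpi_s}$. (Here I use that the image of $\psi_{\mathbf s}$ is Zariski dense in $U^-_{\ast}$, from the Proposition preceding the statement.)

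The heart of the argument is the following assertion: \emph{for generic $\mathbf t$, each chamber minor $\Delta_{w_{\le k}\varpi_s,\varpi_s}(z)$ is a Laurent monomial in $t_1,\dots,t_k$ whose exponent vector is read off the pseudoline arrangement (wiring diagram) of the word $\mathbf s$}. To prove this I would work not only with $w_0$ but with all unipotent Bruhat cells $U^-_w=U^-\cap B^+\overline w B^+$ simultaneously, inducting on $\ell(w)$. Peeling the leftmost factor, $\psi_{\mathbf s}(\mathbf t)=y_{s_1}(t_1)\,\psi_{\mathbf s'}(\mathbf t')$ with $\mathbf s'$ a reduced word for $r_{s_1}w$ of shorter length, and then (i) using the multiplication formulas for generalized minors under left translation by a one-parameter subgroup $y_t(c)$ — unchanged when the corresponding reflection is length-increasing, and otherwise acquiring one additional term, cf. \cite{FZ-Double} — together with (ii) the way the twist map, defined through the Gauss decomposition of $u_-^T\overline w$ (resp.\ $\overline w^{-1}u_-$), transforms under this peeling. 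Independence of the monomial description from the chosen reduced word of $w_0$ would come from the sequence of braid moves relating any two such words, each move multiplying the relevant chamber minors by a predictable transition monomial; the three-term relation for generalized minors \cite{FZ-Double} is the identity that governs a single braid move.

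Granting the monomial description, the last step is pure bookkeeping: substituting $\Delta_{w_{\le j}\varpi_s,\varpi_s}(z)$ into $R_k$ and using $\langle\alpha_s^{\vee},\varpi_t\rangle=\delta_{st}$ and $r_s\varpi_t=\varpi_t-\delta_{st}\alpha_s$, the exponent contributions of the numerator $\prod_{t\ne s_k}\Delta_{w_{\le k}\varpi_t,\varpi_t}^{-C_{t,s_k}}$ and of the denominator $\Delta_{w_{\le k-1}\varpi_{s_k},\varpi_{s_k}}\Delta_{w_{\le k}\varpi_{s_k},\varpi_{s_k}}$ cancel in every slot except the $t_k$-slot, where exactly one copy survives — this is precisely the statement that passing the $k$-th crossing of the wiring diagram alters the chamber exponents by the $k$-th standard basis vector, weighted by the Cartan data. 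Hence $R_k(z)=t_k$, as desired.

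The main obstacle is the monomial claim of the second paragraph: over $\mathbb{C}$ the relevant minor identities are only additive, so one genuinely needs the twist map to collapse them to monomials, and matching the exponents with the combinatorics of the wiring diagram requires careful case analysis (with, for non-simply-laced $\mathfrak g$, an extra rank-one reduction at each node). I would address this by first establishing the monomial statement in type $A$ for the staircase reduced word by a direct matrix computation — which one can verify by hand in low rank — and then transporting it to an arbitrary $\mathbf s$ and an arbitrary $\mathfrak g$ via the braid-move and folding arguments indicated above; the references \cite{BFZ96,BZ97} carry out exactly this program.
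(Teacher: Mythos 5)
The paper does not prove this statement at all --- it is imported verbatim from \cite[Theorem 1.4]{BFZ96} and \cite[Theorem 1.4]{BZ97} --- so the only meaningful comparison is with those references, and your outline does follow their strategy: show that the twist automorphism turns the chamber minors $\Delta_{w_{\le k}\varpi_s,\varpi_s}$, precomposed with the factorization map, into explicit Laurent monomials in $t_1,\dots,t_N$, and then telescope the displayed ratio. Be aware, however, that as written essentially all of the content sits in your second paragraph: the concluding ``bookkeeping'' is equivalent to the Chamber Ansatz itself unless the exponent vectors of those monomials are established \emph{independently} of the formula being proved (this is the part your sketch defers to ``a direct matrix computation plus braid moves,'' i.e.\ precisely the technical core of \cite{BFZ96,BZ97}), so the proposal is an accurate roadmap of the cited proof rather than a self-contained argument.
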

\begin{remark}
	It is known that the coordinate algebra $\mathbb{C}[U^-_{\ast}]$ has a cluster algebra structure \cite{BFZ05}\footnote{To be more precise, Berenstein-Fomin-Zelevinsky proved the existence of upper cluster algebra structures on the coordinate algebras of \emph{double Bruhat cells} in \cite{BFZ05}. See \cite[Appendix B]{FO} for a procedure of obtaining a result concerning $\mathbb{C}[U^-_{\ast}]$ from \cite{BFZ05}.}. From this viewpoint, we can say that $\{\Delta_{w_{\leq k}\varpi_{i_k}, \varpi_{i_k}}\mid k=1,\dots, N\}$ forms a cluster in $\mathbb{C}[U^-_{\ast}]$. We remark that the cluster $\A$-coordinates on $\Conf_3\A_G$ given in \cref{t:clusterstr,t:astclusterstr} can be considered as an extension of this kind of cluster in $\mathbb{C}[U^-_{\ast}]$. 
\end{remark}
Recall the additive characters $\chi_s:=\Delta_{r_s\varpi_s, \varpi_s}\colon U^-\to \mathbb{C}$ for $s \in S$. For a reduced word ${\mathbf s}=(s_1,\dots, s_{N})$ of the longest element $w_0$, we have 
\[
\psi_{\mathbf{s}}^{\ast}(\chi_s|_{U^-_{\ast}})=\sum_{k: s_k=s}t_k. 
\]
As a corollary of Theorem \ref{t:Chamb}, we can describe $\chi_s$ by using the twist automorphism and generalized minors \emph{associated with the reduced word $\mathbf{s}$} in the sense of Theorem \ref{t:Chamb}.
\begin{cor}\label{c:CA}
	Let ${\mathbf s}=(s_1,\dots, s_{N})$ be a reduced word of the longest element $w_0$. Then, for $s\in S$, 
	\[
	\chi_s|_{U^-_{\ast}}=(\eta_{w_0}^{\ast})^{-1}\left(\sum_{k: s_k=s} \frac{\prod_{t\in S\setminus\{s\}}\Delta_{w_{\le k}\varpi_{t}, \varpi_{t}}^{-C_{ts}}}{\Delta_{w_{\leq k-1}\varpi_{s}, \varpi_{s}}\Delta_{w_{\leq k}\varpi_{s}, \varpi_{s}}}\right).
	\]
\end{cor}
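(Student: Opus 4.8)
The plan is to obtain Corollary~\ref{c:CA} by summing the Chamber Ansatz formulae of Theorem~\ref{t:Chamb} over the occurrences of a fixed letter in $\mathbf{s}$ and comparing with the known factorization of the additive character $\chi_s$. The only input beyond Theorem~\ref{t:Chamb} and the identity $\psi_{\mathbf{s}}^{\ast}(\chi_s|_{U^-_{\ast}})=\sum_{k:s_k=s}t_k$ recorded just above the statement is that $\psi_{\mathbf{s}}$ is dominant --- its image is a Zariski open, hence dense, subset of $U^-_{\ast}$ (cf.~\cite[Proposition 2.18]{FZ-Double}) --- so that the induced homomorphism $\psi_{\mathbf{s}}^{\ast}$ is injective and extends to an embedding $\mathbb{C}(U^-_{\ast})\hookrightarrow\mathbb{C}(t_1,\dots,t_N)$.

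Concretely, I would fix a reduced word $\mathbf{s}=(s_1,\dots,s_N)$ of $w_0$ and $s\in S$, and apply $\psi_{\mathbf{s}}^{\ast}$ to both sides of the claimed identity. The left-hand side becomes $\sum_{k:s_k=s}t_k$ by the recalled identity (which in turn follows from the additivity of $\chi_s|_{U^-}$ together with $\chi_s(y_t(a))=\delta_{st}a$). For the right-hand side, since $\psi_{\mathbf{s}}^{\ast}$ and $(\eta_{w_0}^{\ast})^{-1}$ are $\mathbb{C}$-algebra homomorphisms they are in particular additive, so the finite sum $\sum_{k:s_k=s}$ may be pulled outside both maps; applying Theorem~\ref{t:Chamb} term by term,
\[
(\psi_{\mathbf{s}}^{\ast}\circ(\eta_{w_0}^{\ast})^{-1})\!\left(\frac{\prod_{t\in S\setminus\{s\}}\Delta_{w_{\le k}\varpi_{t},\varpi_{t}}^{-C_{ts}}}{\Delta_{w_{\le k-1}\varpi_{s},\varpi_{s}}\,\Delta_{w_{\le k}\varpi_{s},\varpi_{s}}}\right)=t_k
\]
for every $k$ with $s_k=s$, so the right-hand side also pulls back to $\sum_{k:s_k=s}t_k$. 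Thus $\psi_{\mathbf{s}}^{\ast}$ sends both sides of the asserted equality to the same Laurent polynomial in $t_1,\dots,t_N$.

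Finally, injectivity of $\psi_{\mathbf{s}}^{\ast}$ on $\mathbb{C}(U^-_{\ast})$ forces the two sides to coincide as rational functions on $U^-_{\ast}$; since $\chi_s$ is regular, this incidentally shows that the ratio of generalized minors on the right is in fact a regular function. I do not anticipate a substantive obstacle here: the whole content lies in Theorem~\ref{t:Chamb}, and the argument reduces to the formal manipulations above. The only points deserving a line of care are the legitimacy of commuting the finite sum past $(\eta_{w_0}^{\ast})^{-1}$ and $\psi_{\mathbf{s}}^{\ast}$ (immediate, as both are ring homomorphisms) and the injectivity of $\psi_{\mathbf{s}}^{\ast}$ (immediate from density of the image of $\psi_{\mathbf{s}}$).
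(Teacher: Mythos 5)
Your proposal is correct and follows essentially the same route as the paper: the paper records the identity $\psi_{\mathbf{s}}^{\ast}(\chi_s|_{U^-_{\ast}})=\sum_{k:s_k=s}t_k$ immediately before the statement and derives the corollary by summing the Chamber Ansatz formulae of Theorem~\ref{t:Chamb} over the positions $k$ with $s_k=s$. Your added remarks on additivity of the homomorphisms and on injectivity of $\psi_{\mathbf{s}}^{\ast}$ (from dominance of $\psi_{\mathbf{s}}$) just make explicit what the paper leaves implicit.
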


\paragraph{\textbf{Computation of $W_s$}}
We shall describe $W_{s}$ in terms of the cluster $\mathcal{A}$-coordinates on $\Conf_3\A_G$. 
\begin{lem}\label{l:cyclic}
	For $h_1, h_2\in H$ and $u_-\in U^-_{\ast}$, we have 
	\[
	\mathcal{S}_3(\beta(h_1, h_2, u_-))=\beta(w_0(h_1)^{-1}h_2[\overline{w}_0^{-1}u_-]_0, w_0(h_1)^{-1}, w_0(h_1)^{-1}[\overline{w}_0^{-1}u_-]_-^{-1}w_0(h_1)).
	\]
	\end{lem}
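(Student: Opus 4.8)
The plan is to evaluate $\mathcal{S}_3$ on the canonical representative of $\beta(h_1,h_2,u_-)$ and then renormalise by a single left translation. Writing $\beta(h_1,h_2,u_-)=[U^-,\,h_1\overline{w_0}U^-,\,u_-h_2\overline{w_0}U^-]$ and using $s_G=\overline{w_0}^{2}$, the twisted cyclic shift gives
\[
\mathcal{S}_3(\beta(h_1,h_2,u_-))=[\,h_1\overline{w_0}U^-,\ u_-h_2\overline{w_0}U^-,\ \overline{w_0}^{2}U^-\,].
\]
Before normalising I would record the two ingredients that make the bookkeeping clean: first, $\overline{w_0}$ conjugates $U^-$ onto $U^+$; second, since $\overline{w_0}^{-1}=s_G\overline{w_0}$ with $s_G$ central, conjugation by $\overline{w_0}^{-1}$ and by $\overline{w_0}$ induce the same automorphism of $H$, namely $h\mapsto w_0(h)$. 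I would also invoke the genericity of a point of $\Conf_3^{\ast}\A_G$ — equivalently the definition of $U^-_{\ast}$, in the form used for $\beta^{\ast}\mathbf{e}_{23}(h_1,h_2,u_-)=[\overline{w_0}^{-1}h_1^{-1}u_-h_2]_0$ — to write $\overline{w_0}^{-1}h_1^{-1}u_-h_2=a_-a_0a_+$ with $a_{\pm}\in U^{\pm}$, $a_0\in H$, and likewise $\overline{w_0}^{-1}u_-=[\overline{w_0}^{-1}u_-]_-[\overline{w_0}^{-1}u_-]_0[\overline{w_0}^{-1}u_-]_+$.

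The core step is to act on the left by $g:=a_-^{-1}\overline{w_0}^{-1}h_1^{-1}$. Then $g\cdot h_1\overline{w_0}U^-=U^-$; for the middle slot,
\[
g\cdot u_-h_2\overline{w_0}U^-=a_-^{-1}\bigl(\overline{w_0}^{-1}h_1^{-1}u_-h_2\bigr)\overline{w_0}U^-=a_0a_+\overline{w_0}U^-=a_0\overline{w_0}\bigl(\overline{w_0}^{-1}a_+\overline{w_0}\bigr)U^-=a_0\overline{w_0}U^-,
\]
using $\overline{w_0}^{-1}a_+\overline{w_0}\in U^-$; and for the last slot, $g\cdot\overline{w_0}^{2}U^-=a_-^{-1}\bigl(\overline{w_0}^{-1}h_1^{-1}\overline{w_0}\bigr)\overline{w_0}U^-=a_-^{-1}w_0(h_1)^{-1}\overline{w_0}U^-$. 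Since $a_-^{-1}\in U^-$ and $w_0(h_1)^{-1}\in H$, and the map $U^-\times H\to\A_G$, $(u,h)\mapsto uh\overline{w_0}U^-$, is injective (as $B^-\cap\overline{w_0}U^-\overline{w_0}^{-1}=B^-\cap U^+=\{e\}$), we obtain
\[
\mathcal{S}_3(\beta(h_1,h_2,u_-))=\beta\bigl(a_0,\ w_0(h_1)^{-1},\ a_-^{-1}\bigr),
\]
and in particular $a_-^{-1}\in U^-_{\ast}$ because $\mathcal{S}_3$ preserves $\Conf_3^{\ast}\A_G$ and $\beta$ is a bijection onto it.

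Finally I would make $a_0$ and $a_-$ explicit. Pulling $h_1^{-1}$ through $\overline{w_0}^{-1}$ gives $\overline{w_0}^{-1}h_1^{-1}u_-h_2=w_0(h_1)^{-1}\,\overline{w_0}^{-1}u_-\,h_2$; inserting the decomposition of $\overline{w_0}^{-1}u_-$ and sliding $w_0(h_1)^{-1}\in H$ rightward past the $U^-$–factor and $h_2\in H$ leftward past the $U^+$–factor (legitimate since $H$ normalises $U^{\pm}$ and is abelian) yields, by uniqueness of the Gauss decomposition,
\[
a_-=w_0(h_1)^{-1}[\overline{w_0}^{-1}u_-]_-\,w_0(h_1),\qquad a_0=w_0(h_1)^{-1}h_2\,[\overline{w_0}^{-1}u_-]_0.
\]
Substituting into $\beta(a_0,w_0(h_1)^{-1},a_-^{-1})$ and using $a_-^{-1}=w_0(h_1)^{-1}[\overline{w_0}^{-1}u_-]_-^{-1}w_0(h_1)$ reproduces the asserted formula. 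The one genuinely non-automatic point is guessing the normalising element $g$: one must observe that after the shift the three flags occur in the rotated order $(B^+,\text{generic},B^-)$ and that the unipotent correction $a_-^{-1}$, composed with the obvious $\overline{w_0}^{-1}h_1^{-1}$, simultaneously restores all three entries to the image of $\beta$; keeping the central factor $s_G$ straight while commuting $\overline{w_0}^{\pm1}$ past $H$ is the only other place needing care, and it is dispatched once via $\overline{w_0}^{-1}h\overline{w_0}=w_0(h)$.
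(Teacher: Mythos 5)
Your argument is correct and is essentially the paper's proof reorganized: the authors also normalize $[\,h_1\overline{w_0}U^-,\ u_-h_2\overline{w_0}U^-,\ s_GU^-\,]$ by left translation, first by $\overline{w_0}^{-1}h_1^{-1}$ and then by $[\overline{w_0}^{-1}h_1^{-1}u_-]_-^{-1}$, which is exactly your single translation by $g=a_-^{-1}\overline{w_0}^{-1}h_1^{-1}$ since $a_-=[\overline{w_0}^{-1}h_1^{-1}u_-h_2]_-=[\overline{w_0}^{-1}h_1^{-1}u_-]_-$ (right multiplication by $h_2\in H$ does not affect the $U^-$-factor). Your identification of $a_-$ and $a_0$ via sliding $w_0(h_1)^{-1}$ and $h_2$ through the Gauss factors reproduces verbatim the two identities the paper invokes for its fourth and fifth equalities, so there is nothing to correct.
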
 
\begin{proof}
We have 
\begin{align*}
	&\mathcal{S}_3(\beta(h_1, h_2, u_-))\\
	&=[h_1\overline{w_0} U^-, u_-h_2\overline{w_0}U^-, s_G U^-]\\
	&=[U^-, \overline{w_0}^{-1}h_1^{-1}u_-h_2\overline{w_0} U^-, \overline{w_0}h_1^{-1} U^-]\\
	&=[U^-, [\overline{w_0}^{-1}h_1^{-1}u_-]_-[\overline{w_0}^{-1}h_1^{-1}u_-]_0h_2\overline{w_0} U^-, w_0(h_1)^{-1}\overline{w_0} U^-]\\
	&=[U^-, w_0(h_1)^{-1}[\overline{w_0}^{-1}u_-]_0h_2\overline{w_0} U^-, [\overline{w_0}^{-1}h_1^{-1}u_-]_-^{-1}w_0(h_1)^{-1}\overline{w_0} U^-]\\
	&=[U^-, w_0(h_1)^{-1}[\overline{w_0}^{-1}u_-]_0h_2\overline{w_0} U^-, w_0(h_1)^{-1}[\overline{w}_0^{-1}u_-]_-^{-1}w_0(h_1)w_0(h_1)^{-1}\overline{w_0} U^-].
\end{align*}
The third equality follows from the fact that $\overline{w_0}^{-1}h_1^{-1}u_-\in w_0^{-1}HB^+w_0B^+\subset U^-HU^+$ and $U^+h_2\overline{w_0} =h_2\overline{w_0}U^-$. The fourth and fifth equalities follow from the calculation
\begin{align*}
[\overline{w_0}^{-1}h_1^{-1}u_-]_0&=w_0(h_1)^{-1}[\overline{w_0}^{-1}u_-]_0,&
[\overline{w_0}^{-1}h_1^{-1}u_-]_-&=w_0(h_1)^{-1}[\overline{w_0}^{-1}u_-]_-w_0(h_1). 
\end{align*} 
	\end{proof}
\begin{thm}\label{t:ConfCA}
Assume that $G$ is of type $X_n$, $X=A, B, C$ or $D$. Then we have  
\[
 W_{s^{\ast}}=\displaystyle{\sum_{\substack{i\\ v_i^s, v^s_{i+1}\ \text{are vertices of $\bJ(\bi_Q(n))$}}} \mathcal{S}_3^{\ast}\left(\frac{A_{y(s,i)}}{A_{v^s_i} A_{v^s_{i+1}}}
		\prod_{t \in s^+} A_{v^t_i}^{-\ve_{st}} \cdot \prod_{t \in s^-}
		A_{v^t_{i+1}}^{\ve_{st}}\right)}
\]	
for $s\in S$, here $y(s,i)$ is a frozen vertex connecting unfrozen vertices $v^s_i$ and $v^s_{i+1}$ as $v^s_{i+1} \to y(s,i) \to v^s_i$ in $\widetilde{\bJ}(\bi_Q(n))$. 

Moreover, in the case that $G$ is of type $A_n$, we also have 
\[
W_{s^{\ast}}=\displaystyle{\sum_{\substack{i\\ u_i^s, u^s_{i+1}\ \text{are vertices of $\bJ(\bi_Q^{\ast}(n))$}}} \mathcal{S}_3^{\ast}\left(\frac{A_{y'(s,i)}A_{u^{s-1}_i}A_{u^{s+1}_{i+1}}}{A_{u^s_i} A_{u^s_{i+1}}}
	\right)}
\]	
for $s\in S$, here $y'(s,i)$ is a frozen vertex connecting unfrozen vertices $u^s_i$ and $u^s_{i+1}$ as $u^s_{i+1} \to y'(s,i) \to u^s_i$ in $\widetilde{\bJ}(\bi_Q^{\ast}(n))$. 
\end{thm}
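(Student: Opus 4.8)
The strategy is to combine the Chamber Ansatz description of the canonical character $\chi_s$ (Corollary \ref{c:CA}) with the explicit formulae for the cluster $\A$-coordinates on $\Conf_3\A_G$ (Theorems \ref{t:clusterstr} and \ref{t:astclusterstr}), via the twist automorphism $\eta_{w_0}$ of Proposition \ref{p:twist}. Recall from Lemma \ref{l:triangle} that $W_s(\mathcal{C})=\chi_s(u_-)$ when $\mathcal{C}=\beta(h_1,h_2,u_-)$, so it suffices to express $\chi_s|_{U^-_\ast}$ in terms of the generalized minors $\Delta_{w_i^t\varpi_t,\varpi_t}$ attached to the subwords \eqref{eq:subword} of $\bi_Q(n)$, and then to recognize the resulting expression as the $\mathcal{S}_3^\ast$-pullback of the displayed combination of cluster variables. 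The appearance of $W_{s^\ast}$ rather than $W_s$, and of the cyclic shift $\mathcal{S}_3$, will come from the bookkeeping relating our choice of basepoint/distinguished vertex to the reduced word in Corollary \ref{c:CA}.

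First I would fix the reduced word $\bi_Q(n)=(s_1,\dots,s_N)$ of $w_0$ from \S\ref{subsec:tildeQ} and apply Corollary \ref{c:CA}: for each $s\in S$,
\[
\chi_s|_{U^-_\ast}=(\eta_{w_0}^\ast)^{-1}\left(\sum_{k:\, s_k=s}\frac{\prod_{t\in S\setminus\{s\}}\Delta_{w_{\le k}\varpi_t,\varpi_t}^{-C_{ts}}}{\Delta_{w_{\le k-1}\varpi_s,\varpi_s}\,\Delta_{w_{\le k}\varpi_s,\varpi_s}}\right).
\]
The indexing set $\{k:\, s_k=s\}$ is precisely the set of $i$ such that $v_i^s$ and $v_{i+1}^s$ are both vertices of $\bJ(\bi_Q(n))$: the occurrences of the letter $s$ in $\bi_Q(n)$ are enumerated by $i=1,\dots,i_{\max}(s)-1$, and for the $i$-th occurrence one has $w_{\le k-1}=w_i^s$ and $w_{\le k}=w_{i+1}^s$ by \eqref{eq:subword}. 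Moreover $-C_{ts}=|\ve_{st}|$ equals $\ve_{st}$ when $t\in s^-$ (neighbours with $s\to t$ reversed in the Coxeter quiver) and $-\ve_{st}$ when $t\in s^+$, which reproduces exactly the two products $\prod_{t\in s^+}A_{v_i^t}^{-\ve_{st}}$ and $\prod_{t\in s^-}A_{v_{i+1}^t}^{\ve_{st}}$ after identifying $\Delta_{w_i^t\varpi_t,\varpi_t}$ with $A_{v_i^t}$ up to the Cartan factors $h_1^{\mu_{t,i}}h_2^{\varpi_t}$. The next step is therefore to verify that these Cartan prefactors cancel in the ratio: using the explicit $\mu_{t,i}$ of Theorem \ref{t:clusterstr} one checks $\mu_{s,i}+\mu_{s,i+1}=\sum_{t\in s^-}\ve_{st}\mu_{t,i+1}-\sum_{t\in s^+}\ve_{st}\mu_{t,i}$ together with the corresponding identity for the $h_2^{\varpi}$ exponents and the $h_1^{\varpi_j}$ factor coming from the frozen vertex $A_{y(s,i)}$. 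This is a finite weight computation, case-by-case in the classical types, using \eqref{eq:astinvcl}; it is routine but must be done.

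The remaining point is to absorb the twist $(\eta_{w_0}^\ast)^{-1}$ and identify it with $\mathcal{S}_3^\ast$. Here I would use Lemma \ref{l:cyclic}, which shows that $\mathcal{S}_3$ on $\Conf_3^\ast\A_G$ involves exactly the operation $u_-\mapsto [\overline{w_0}^{-1}u_-]_-^{-1}$-type Gauss decomposition that, by the second formula in Proposition \ref{p:twist}, is the twist $\eta_{w_0}^{-1}$ up to conjugation by $w_0(h_1)$ and multiplication by the diagonal part — and the diagonal/conjugation parts are precisely what the $h_1,h_2$-prefactors of the cluster variables record. Concretely, $W_{s^\ast}=W_{s^\ast}\circ\mathcal{S}_3^{-1}\circ\mathcal{S}_3$, and applying $\mathcal{S}_3$ to the argument turns the bare minors $\Delta_{w\varpi,\varpi}(u_-)$ into the full cluster variables $A_v$ evaluated after the cyclic shift, i.e. $\mathcal{S}_3^\ast A_v$; the switch $s\mapsto s^\ast$ enters because $\mathcal{S}_3$ composed with the identification $\A_{G,T}\cong\Conf_3\A_G$ permutes the three marked points and interchanges the rôles governed by the Dynkin involution (compare Remark \ref{r:imax} and Proposition \ref{p:Aast}). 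I expect this last identification — pinning down the exact shift and the precise appearance of $\mathcal{S}_3^\ast$ together with $s^\ast$, as opposed to $s$ and no shift — to be the main obstacle, since it requires carefully tracking the basepoint conventions of \S\ref{subsubsec:action} and Lemma \ref{l:cyclic} against the Chamber Ansatz conventions of Theorem \ref{t:Chamb}. For the type $A_n$ statement I would run the identical argument with $\bi_Q^\ast(n)$ in place of $\bi_Q(n)$, using Theorem \ref{t:astclusterstr} and the subwords ${w'}_i^s=(w_i^{s^\ast})^\ast$; since the Cartan matrix of $A_n$ is symmetric with $-C_{ts}=1$ for each edge, the products $\prod_{t\in s^\pm}$ collapse to the single neighbouring factors $A_{u_i^{s-1}}A_{u_{i+1}^{s+1}}$, giving the second displayed formula. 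Finally, summing \eqref{eq:sum} over the $k$ triangles $T_\ell$ and invoking \eqref{eq:sum} itself promotes the $\Conf_3$-identity to the formula for $\mathcal{W}_{a,s}$ on $\A_{G,\mathbb{D}^1_k}$, which is what feeds into Proposition \ref{p:simple_refl}.
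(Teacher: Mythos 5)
Your proposal is correct and follows essentially the same route as the paper's proof: Corollary \ref{c:CA} (Chamber Ansatz for $\chi_s|_{U^-_\ast}$ along $\bi_Q(n)$), the identification of the twist $(\eta_{w_0}^\ast)^{-1}$ with $\mathcal{S}_3^\ast$ via Proposition \ref{p:twist}, Lemma \ref{l:cyclic} and Lemma \ref{l:Dynkininv} (which is exactly where the Dynkin involution $s\mapsto s^\ast$ enters, as in the paper's Claim \ref{cl:description}), and finally the case-by-case weight computation showing that the residual Cartan prefactors assemble into $A_{y(s,i)}$. The paper's proof carries out the two steps you flag as "routine but must be done" and "the main obstacle" in precisely the order you propose, so there is nothing further to add.
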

\begin{proof}
The proof of the latter equality for type $A_n$ is exactly parallel to that of the former. Hence we only prove the first equality here. By \cref{l:triangle}, $ W_{s^{\ast}}$ is defined as $\chi_{s^{\ast}}|_{U^-_{\ast}}\circ \mathrm{pr}_{U^-_{\ast}}$ where $\mathrm{pr}_{U^-_{\ast}}$ is defined as a composite 
\[
\mathrm{pr}_{U^-_\ast}:\Conf_3^{\ast} \A_G\xrightarrow{\overset{\beta^{-1}}{\sim}} H\times H\times U^-_{\ast}\xrightarrow{\text{projection}}U^-_{\ast}. 
\]

For each vertex $v^s_i$ of $\bJ(\bi_Q(n))$, define a regular function $\widetilde{A}_{v^s_i}\colon \Conf_3^{\ast}\A_G\to \mathbb{C}$ by 
\[
\widetilde{A}_{v^s_i}:=(\eta_{w_0}^{-1})^{\ast}(\Delta_{(w_{i}^s)^{\ast}\varpi_{s^{\ast}}, \varpi_{s^{\ast}}})\circ \mathrm{pr}_{U_{\ast}^-}.
\]
Here recall \eqref{eq:subword}. Then, by Corollary \ref{c:CA} and the shape of $\bJ(\bi_Q(n))$, we have 
\begin{align}
W_{s^{\ast}}=\sum_{\substack{i\\ v_i^s, v^s_{i+1}\ \text{are vertices of $\bJ(\bi_Q(n))$}}} \frac{\prod_{t \in s^+} \widetilde{A}_{v^t_i}^{-\ve_{st}} \cdot \prod_{t \in s^-}
	\widetilde{A}_{v^t_{i+1}}^{\ve_{st}}}{\widetilde{A}_{v^s_{i}}\widetilde{A}_{v^s_{i+1}}}\label{eq:tildeeq}
\end{align}
for $s\in S$. Here recall the notation defined after \eqref{eq:R-mu}. Hence our strategy of the proof is the following:
\begin{itemize}
	\item We first describe $\widetilde{A}_{v^s_i}$ by the shifted cluster $\mathcal{A}$-coordinates $\mathcal{S}_3^{\ast}(A_{v^t_j})$ and $\mathcal{S}_3^{\ast}(A_{y_t})$ on $\Conf_3\A_{G}$ (\cref{cl:description}).
	\item Next we check that the description above makes \eqref{eq:tildeeq} into the equality in the statement of the theorem.
\end{itemize}
Indeed, the second part is accomplished by case-by-case calculation, while the first one can be checked in a uniform way. 
\begin{claim}\label{cl:description}
Recall the notation in Theorem \ref{t:clusterstr}. Then, for every vertex $v^s_i$ of $\bJ(\bi_Q(n))$, we have 
\begin{align}
\widetilde{A}_{v^s_i}=\mathcal{S}_3^{\ast}\left((\prod_{t\in S} A_{y_t}^{-(\mu_{s, i} : t)}A_{v^t_1}^{-(w_{i}^s\varpi_{s}: t)}) A_{v^s_i}\right),\label{eq:keyclaim}
\end{align}
here we write $\mu_{s, i}=\sum_{t\in S}(\mu_{s, i} : t)\varpi_{t}$ and $w_{i}^s\varpi_{s}=\sum_{t\in S}(w_{i}^s\varpi_{s}: t)\varpi_t$. 
\end{claim}
\begin{proof}[{Proof of Claim \ref{cl:description}}]
By Theorem \ref{t:clusterstr}, we have 
\[
\left((\prod_{t\in S} A_{y_t}^{-(\mu_{s, i} : t)}A_{v^t_1}^{-(w_{i}^s\varpi_{s}: t)}) A_{v^s_i}\right)(\beta(h_1, h_2, u_-))=h_2^{-w_{i}^s\varpi_{s}+\varpi_{s}}\Delta_{w_{i}^s\varpi_{s}, \varpi_{s}}(u_-). 
\]
Hence, by \cref{l:Dynkininv}, \cref{p:twist} and \cref{l:cyclic}, 
\begin{align*}
	&\left((\prod_{t\in S} A_{y_t}^{-(\mu_{s, i} : t)}A_{v^t_1}^{-(w_{i}^s\varpi_{s}: t)}) A_{v^s_i}\right)(\mathcal{S}_3(\beta(h_1, h_2, u_-)))\\
	&=w_0(h_1)^{w_{i}^s\varpi_{s}-\varpi_{s}}\Delta_{w_{i}^s\varpi_{s}, \varpi_{s}}(w_0(h_1)^{-1}[\overline{w}_0^{-1}u_-]_-^{-1}w_0(h_1))\\
	&=\Delta_{w_{i}^s\varpi_{s}, \varpi_{s}}([\overline{w}_0^{-1}u_-]_-^{-1})\\
	&=\Delta_{(w_{i}^s)^{\ast}\varpi_{s^{\ast}}, \varpi_{s^{\ast}}}(\overline{w}_0[\overline{w}_0^{-1}u_-]_-^{T}\overline{w}_0^{-1})\\
	&=\Delta_{(w_{i}^s)^{\ast}\varpi_{s^{\ast}}, \varpi_{s^{\ast}}}(\eta_{w_0}^{-1}(u_-))=\widetilde{A}_{v^s_i}(\beta(h_1, h_2, u_-)). 
\end{align*}
\end{proof}
By substituting \eqref{eq:keyclaim} for $\widetilde{A}_{v^s_i}$ in \eqref{eq:tildeeq}, we obtain 
\begin{align*}
&W_{s^{\ast}}=\sum_{\substack{i\\ v_i^s, v^s_{i+1}\ \text{are vertices of $\bJ(\bi_Q(n))$}}}\mathcal{S}_3^{\ast}\left(\widehat{A}_{s, i}\frac{\prod_{t \in s^+} A_{v^t_i}^{-\ve_{st}} \cdot \prod_{t \in s^-}
	 A_{v^t_{i+1}}^{\ve_{st}}}{A_{v^s_i} A_{v^s_{i+1}}}\right),
\end{align*}
here 
\begin{align*}
&\widehat{A}_{s, i}=\\
&\frac{\prod_{t \in s^+} (\prod_{t'\in S} A_{y_{t'}}^{(\mu_{t, i} : t')\ve_{st}}A_{v^{t'}_1}^{(w_{i}^t\varpi_{t}: t')\ve_{st}})\prod_{t \in s^-}(\prod_{t'\in S} A_{y_{t'}}^{-(\mu_{t, i+1}: t')\ve_{st}}A_{v^{t'}_1}^{-(w_{i+1}^t\varpi_{t}: t')\ve_{st}}) }{\prod_{t'\in S} A_{y_{t'}}^{-(\mu_{s, i} : t')-(\mu_{s, i+1} : t')}A_{v^{t'}_1}^{-(w_{i}^s\varpi_{s}: t')-(w_{i+1}^s\varpi_{s}: t')}}.
\end{align*}
It remains to show that $\widehat{A}_{s, i}=A_{y(s, i)}$. 
If $v_i^s, v^s_{i+1}$ are vertices of $\bJ(\bi_Q(n))$, then 
\[
w_{i+1}^s\varpi_{s}
=w_{i+1}^sr_{s}(-\varpi_{s}+\sum_{t\in S}|\varepsilon_{st}|\varpi_{t})=-w_{i}^s\varpi_{s}-\sum_{t \in s^+}\varepsilon_{st}w_{i}^t\varpi_{t}+\sum_{t \in s^-}\varepsilon_{st}w_{i+1}^t\varpi_{t}.
\]
Therefore we have
\begin{align}
\widehat{A}_{s, i}=\prod_{t'\in S}A_{y_{t'}}^{(\mu_{s, i} : t')+(\mu_{s, i+1} : t')+\sum_{t \in s^{\scalebox{0.4}{$+$}}}(\mu_{t, i} : t')\ve_{st}-\sum_{t \in s^{\scalebox{0.4}{$-$}}}(\mu_{t, i+1} : t')\ve_{st}}.\label{eq:ahat}
\end{align}
We calculate the right-hand side of \eqref{eq:ahat} by a case-by-case argument, using Theorem \ref{t:clusterstr}. We shall demonstrate this calculation in the case of type $A_n$ ($n\geq 1$) and $B_n$ ($n\geq 3$). The argument for type $B_2$, $C_n$ and $D_n$ is similar to the one for type $B_n$, $n\geq 3$, and the details are left to the reader. Figures \ref{fig:mu-A3}, \ref{fig:mu-B3C3} and \ref{fig:mu-D4} are helpful for considering general situations. 

Assume that $G=SL_{n+1}$ (type $A_n$). For $s\neq 1$, we have 
\begin{align*}
\widehat{A}_{s, i}&=\prod_{t'\in S}A_{y_{t'}}^{(\mu_{s, i} : t')+(\mu_{s, i+1}: t')+(\mu_{s+1, i} : t')\ve_{s,s+1}-(\mu_{s-1, i+1} : t')\ve_{s,s-1}}\\
&=\prod_{t'\in S}A_{y_{t'}}^{(\varpi_{i-1} : t')+(\varpi_{i} : t')-(\varpi_{i-1} : t')-(\varpi_{i} : t')}=1=A_{y(s, i)}, 
\end{align*}
here we set $\mu_{n+1, i}:=0$ when $s=n$. Moreover, 
\begin{align*}
\widehat{A}_{1, i}&=\prod_{t'\in S}A_{y_{t'}}^{(\mu_{1, i} : t')+(\mu_{1, i+1} : t')+(\mu_{2, i} : t')\ve_{12}}\\
&=\prod_{t'\in S}A_{y_{t'}}^{(\varpi_{i-1} : t')+(\varpi_{i} : t')-(\varpi_{i-1} : t')}=A_{y_i}=A_{y(1, i)}.  
\end{align*}
Therefore $\widehat{A}_{s, i}=A_{y(s, i)}$ when $G$ is of type $A_n$.

Assume that $G=Spin_{2n+1}$ (type $B_n$). Suppose that $2<s<n$. We have 
\begin{align*}
\widehat{A}_{s, i}&=\prod_{t'\in S}A_{y_{t'}}^{(\mu_{s, i} : t')+(\mu_{s, i+1} : t')+(\mu_{s+1, i} : t')\ve_{s,s+1}-(\mu_{s-1, i+1} : t')\ve_{s,s-1}}\\ 
&=\prod_{t'\in S}A_{y_{t'}}^{(\mu_{s, i} : t')+(\mu_{s, i+1} : t')-(\mu_{s+1, i} : t')-(\mu_{s-1, i+1} : t')}.
\end{align*}
Moreover if $i>1$, then $\mu_{s, i}=\mu_{s-1, i+1}$ and $\mu_{s, i+1}=\mu_{s+1, i}$. Hence $\widehat{A}_{s, i}=1=A_{y(s, i)}$. If $i=1$, then $\mu_{s, 1}=\mu_{s+1, 1}=0$ and $\mu_{s, 2}=\mu_{s-1, 2}=\varpi_1$, here note that $2+s-(n+1)\leq 0$. Therefore $\widehat{A}_{s, 1}=1=A_{y(s, 1)}$. 

Next we consider the case that $s=2$. We have 
\begin{align*}
\widehat{A}_{2, i}&=\prod_{t'\in S}A_{y_{t'}}^{(\mu_{2, i} : t')+(\mu_{2, i+1} : t')+(\mu_{3, i} : t')\ve_{23}-(\mu_{1, i+1} : t')\ve_{21}}\\ 
&=\prod_{t'\in S}A_{y_{t'}}^{(\mu_{2, i}: t')+(\mu_{2, i+1} : t')-(\mu_{3, i} : t')-2(\mu_{1, i+1} : t')}.
\end{align*}
Moreover if $i>1$, then $\mu_{2, i}=2\mu_{1, i+1}=2\varpi_1$ and $\mu_{2, i+1}=\mu_{3, i}$, here note that $2+i-(n+1)\leq 1$. Hence $\widehat{A}_{2, i}=1=A_{y(2, i)}$. If $i=1$, then $\mu_{2, 1}=\mu_{3, 1}=0$ and $\mu_{2, 2}=2\mu_{1, 2}=2\varpi_1$. Therefore $\widehat{A}_{2, 1}=1=A_{y(2, 1)}$. 

Next we consider the case that $s=1$. 
\begin{align*}
\widehat{A}_{1, i}=\prod_{t'\in S}A_{y_{t'}}^{(\mu_{1, i} : t')+(\mu_{1, i+1} : t')+(\mu_{2, i} : t')\ve_{12}}=\prod_{t'\in S}A_{y_{t'}}^{(\mu_{1, i} : t')+(\mu_{1, i+1} : t')-(\mu_{2, i} : t')}. 
\end{align*}
Moreover if $i>1$, then $\mu_{1, i}=\mu_{1, i+1}=\varpi_1$ and $\mu_{2, i}=2\varpi_1$. Hence $\widehat{A}_{1, i}=1=A_{y(1, i)}$. If $i=1$, then $\mu_{1,1}=\mu_{2,1}=0$ and $\mu_{1, 2}=\varpi_1$. Therefore $\widehat{A}_{1, 1}=A_{y_1}=A_{y(1, 1)}$. 

Finally we consider the case that $s=n$. 
\begin{align*}
\widehat{A}_{n, i}=\prod_{t'\in S}A_{y_{t'}}^{(\mu_{n, i} : t')+(\mu_{n, i+1} : t')-(\mu_{n-1, i+1} : t')\ve_{n,n-1}}=\prod_{t'\in S}A_{y_{t'}}^{(\mu_{n, i} : t')+(\mu_{n, i+1} : t')-(\mu_{n-1, i+1} : t')}. 
\end{align*}
Moreover if $i>1$, then $\mu_{n, i}=\mu_{n-1, i+1}$ and $\mu_{n, i+1}=\varpi_i$, here note that $i+1+n-(n+1)=i>1$. Hence $\widehat{A}_{n, i}=A_{y_i}=A_{y(n, i)}$. If $i=1$, then $\mu_{n, 1}=0$, $\mu_{n, 2}=\mu_{n-1, 2}=2\varpi_1$. Therefore $\widehat{A}_{n, 1}=1=A_{y(n, 1)}$. 

Therefore $\widehat{A}_{s, i}=A_{y(s, i)}$ when $G$ is of type $B_n$, $n\geq 3$.
\end{proof}
\begin{remark}
When $v_i^s$ and $v^s_{i+1}$ are vertices of $\bJ(\bi_Q(n))$, a summand of the sum in the theorem 
\[
\mathcal{S}_3^{\ast}\left(\frac{A_{y(s,i)}}{A_{v^s_i} A_{v^s_{i+1}}}
	\prod_{t \in s^+} A_{v^t_i}^{-\ve_{st}} \cdot \prod_{t \in s^-}
	A_{v^t_{i+1}}^{\ve_{st}}\right)
\]	
is equal to a composite of rational maps 
\[
\Conf_3\A_G\xrightarrow{\beta^{-1}}H\times H\times U^-_{\ast}\xrightarrow{\text{projection}}U^-_{\ast}\xrightarrow{\psi_{\mathbf{s}}^{-1}}(\mathbb{C}^{\ast})^{N}\xrightarrow{v^s_{i+1}\text{-th component}}\mathbb{C}^{\ast}.
\]
Here $v^s_{i+1}$-th component is defined in an obvious way. By Theorem \ref{t:Chamb}, we can obtain this statement immediately from the proof of Theorem \ref{t:ConfCA}. This function is sometimes called the \emph{Lusztig coordinate} of $\Conf_3\A_G$. 
\end{remark}
\begin{example}
We provide some examples of the formulae in Theorem \ref{t:ConfCA}. See also Figures \ref{fig:tildeQ-A3} and \ref{fig:tJ-C3}. When $G=SL_4$ (type $A_3$), we have  
	\begin{align*}
	W_{1}&=\mathcal{S}_3^{\ast}\left(\frac{A_{v^2_{2}}}{A_{v^3_1} A_{v^3_{2}}}\right)=\mathcal{S}_3^{\ast}\left(\frac{A_{y'_{1}}A_{u^2_{1}}}{A_{u^3_1} A_{u^3_{2}}}+\frac{A_{y'_{2}}A_{u^2_{2}}}{A_{u^3_2} A_{u^3_{3}}}+\frac{A_{y'_{3}}A_{u^2_{3}}}{A_{u^3_3} A_{u^3_{4}}}\right),\\
	W_{2}&=\mathcal{S}_3^{\ast}\left(\frac{A_{v^3_{1}}A_{v^1_{2}}}{A_{v^2_1} A_{v^2_{2}}}+\frac{A_{v^3_{2}}A_{v^1_{3}}}{A_{v^2_2} A_{v^2_{3}}}\right)=\mathcal{S}_3^{\ast}\left(\frac{A_{u^1_{1}}A_{u^3_{2}}}{A_{u^2_1} A_{u^2_{2}}}+\frac{A_{u^1_{2}}A_{u^3_{3}}}{A_{u^2_2} A_{u^2_{3}}}\right),\\
	W_{3}&=\mathcal{S}_3^{\ast}\left(\frac{A_{y_{1}}A_{v^2_{1}}}{A_{v^1_1} A_{v^1_{2}}}+\frac{A_{y_{2}}A_{v^2_{2}}}{A_{v^1_2} A_{v^1_{3}}}+\frac{A_{y_{3}}A_{v^2_{3}}}{A_{v^1_3} A_{v^1_{4}}}\right)=\mathcal{S}_3^{\ast}\left(\frac{A_{u^2_{2}}}{A_{u^1_1} A_{u^1_{2}}}\right). 
\end{align*}	
	When $G=Sp_{6}$ (type $C_3$), we have  
		\begin{align*}
	W_{1}&=\mathcal{S}_3^{\ast}\left(\frac{A_{y_{1}}A_{v^2_{1}}^2}{A_{v^1_1} A_{v^1_{2}}}+\frac{A_{v^2_{2}}^2}{A_{v^1_2} A_{v^1_{3}}}+\frac{A_{v^2_{3}}^2}{A_{v^1_3} A_{v^1_{4}}}\right),\\
	W_{2}&=\mathcal{S}_3^{\ast}\left(\frac{A_{v^3_{1}}A_{v^1_{2}}}{A_{v^2_1} A_{v^2_{2}}}+\frac{A_{v^3_{2}}A_{v^1_{3}}}{A_{v^2_2} A_{v^2_{3}}}+\frac{A_{v^3_{3}}A_{v^1_{4}}}{A_{v^2_3} A_{v^2_{4}}}\right),\\
	W_{3}&=\mathcal{S}_3^{\ast}\left(\frac{A_{v^2_{2}}}{A_{v^3_1} A_{v^3_{2}}}+\frac{A_{y_{2}}A_{v^2_{3}}}{A_{v^3_2} A_{v^3_{3}}}+\frac{A_{y_{3}}A_{v^2_{4}}}{A_{v^3_3} A_{v^3_{4}}}\right). 
	\end{align*}
Compare $W_1$ (resp. $W_2$) with the action of $R(1)$ (resp. $R(2)$) on the $A$-variables given in \cref{ex:R-actionC_3}.
\end{example}

\subsubsection{The comparison of the two actions}
We prove main theorems of this section. Recall \eqref{eq:fa-tilde} in \cref{subsec:tildeQ}. 
\begin{thm}\label{thm:chamber ansatz}
Assume that $G$ is of type $X_n$, $X=A, B, C$ or $D$, and $\Sigma=\mathbb{D}^1_k$. Recall the convention in the beginning of \ref{subsec:comparison}. Then, via the isomorphism in Proposition \ref{p:quiver-comparison}, we have $\mathcal{W}_{a,s^{\ast}}=\widetilde{f}_A(s)$. 
\end{thm}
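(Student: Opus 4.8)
The plan is to assemble the identity $\mathcal{W}_{a,s^\ast}=\widetilde{f}_A(s)$ from the three computational inputs that have already been prepared in the excerpt: the decomposition \eqref{eq:sum} of the potential as a sum of triangle-contributions $W_{s^\ast}\circ f_a\circ\phi_{T_\ell}$, the explicit Chamber--Ansatz formula for $W_{s^\ast}$ on $\Conf_3\A_G$ given in Theorem \ref{t:ConfCA}, and the cluster-theoretic description of $\widetilde{f}_A(s)$ via \eqref{eq:fa-tilde} together with the quiver identification of Proposition \ref{p:quiver-comparison}. So the first step is to fix the triangulation $\Delta=\{T_1,\dots,T_k\}$ of $\mathbb{D}^1_k$ and the data $\mathbf{\Delta}$ chosen at the start of \S\ref{subsec:comparison}, recall from \eqref{eq:sum} that on $\A_{G,\Delta}$ one has
\[
\mathcal{W}_{a,s^\ast}=\sum_{\ell=1}^k (W_{s^\ast}\circ f_a\circ\phi_{T_\ell}),
\]
and then apply Theorem \ref{t:ConfCA} to each summand. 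The key point is bookkeeping: for the triangle $T_\ell$ with distinguished vertex $m_\ell$, the map $f_a\circ\phi_{T_\ell}$ realizes $\A_{G,T_\ell}\xrightarrow{\sim}\Conf_3\A_G$ with the puncture $a$ playing the role of the \emph{third} entry, so $f_a=\mathcal{S}_3\circ f_{m_\ell}$ and the $\mathcal{S}_3^\ast$ appearing in Theorem \ref{t:ConfCA} is exactly absorbed: the $\mathcal{S}_3^\ast\big(\tfrac{A_{y(s,i)}}{A_{v_i^s}A_{v_{i+1}^s}}\prod_{t\in s^+}A_{v_i^t}^{-\ve_{st}}\prod_{t\in s^-}A_{v_{i+1}^t}^{\ve_{st}}\big)$ pulled back along $\phi_{T_\ell}$ becomes precisely $\tfrac{A_{y(s,i)}^{(\ell)}}{A_{v_i^{s,(\ell)}}A_{v_{i+1}^{s,(\ell)}}}\prod_{t\in s^+}(A_{v_i^{t,(\ell)}})^{-\ve_{st}}\prod_{t\in s^-}(A_{v_{i+1}^{t,(\ell)}})^{\ve_{st}}$, where the superscript $(\ell)$ denotes the copy of $\widetilde{\bJ}(\bi_Q(n))$ (or $\widetilde{\bJ}(\bi_Q^\ast(n))$) attached to $T_\ell$.

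The second step is to observe that summing these local contributions over $\ell=1,\dots,k$ \emph{and} over the index $i$ ranging over the unfrozen edges of each copy is the same as summing over the full oriented cycle $P_s$ in $\widetilde{Q}_{kh/2}(\mathfrak{g})$. This is exactly the content of Proposition \ref{p:quiver-comparison}: the amalgamation identifies $v_{i_{\max}(s)}^{s,(\ell-1)}$ with $v_1^{s,(\ell)}$ (with the analogous $\bi_Q^\ast$-twisted gluing in type $A_n$), so that the concatenation of the $k$ chains of vertices $\{v_i^s\}$ becomes a single cycle of length $kh/2$ (resp.\ $kh$ for $A_n$), and the frozen vertices $y(s,i)$ of the various $\widetilde{\bJ}$'s glue to the frozen vertices $y_i$ of $\widetilde{Q}_{kh/2}(\mathfrak{g})$ attached to $P_s$. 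Under this bijection of summands, the sum $\sum_\ell\sum_i$ of the Chamber--Ansatz terms becomes term-by-term equal to $\sum_{i\in\Z_{kh/2}}\tfrac{A_{y(s,i)}}{A_i^sA_{i+1}^s}\prod_{t\in s^+}(A_i^t)^{-\ve_{st}}\prod_{t\in s^-}(A_{i+1}^t)^{\ve_{st}}$, which is exactly $\widetilde{f}_A(s)$ by \eqref{eq:fa-tilde}. For type $A_n$ one uses the alternative Chamber--Ansatz formula (the second displayed equation in Theorem \ref{t:ConfCA}, in terms of the $\widetilde{\bJ}(\bi_Q^\ast(n))$-vertices $u_i^s$) on the even-indexed triangles and the first formula on the odd-indexed ones, and checks via Remark \ref{r:u-and-v} that the two expressions glue consistently across a shared edge; this is where the slightly awkward parity conventions of \S\ref{subsec:comparison}(b) pay off.

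The remaining step is to justify passing between $\A_{G,\Delta}$ and $\A_{G,\mathbb{D}^1_k}$, i.e.\ to note that since $\nu_\Delta\colon\A_{G,\Delta}\to\A_{G,\mathbb{D}^1_k}$ is a regular open embedding with $\phi_\Delta\circ\nu_\Delta=\mathrm{id}$ and the cluster $\A$-coordinates are algebraically independent regular functions pulled back along $\phi_{T_\ell}$, an identity of rational functions valid on the open dense $\A_{G,\Delta}$ is an identity on $\A_{G,\mathbb{D}^1_k}$; both $\mathcal{W}_{a,s^\ast}$ and $\widetilde{f}_A(s)$ are genuine regular functions there so the identity is unambiguous. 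I expect the main obstacle to be purely combinatorial rather than conceptual: making the gluing bijection between \{triangle-indexed Chamber--Ansatz summands\} and \{cycle-indexed $\widetilde{f}_A$-summands\} fully precise, in particular tracking the weights $\ve_{st}$, the placement of the frozen vertices $y(s,i)$ versus $y_i$ around $P_s$, and — in type $A_n$ — the alternation between $\bi_Q(n)$ and $\bi_Q^\ast(n)$ copies together with the identifications of Remark \ref{r:u-and-v}. Since both sides have already been written in closely parallel forms (compare the formula in Theorem \ref{t:ConfCA} with \eqref{eq:fa-tilde}), once the indexing is matched the equality is immediate, so no hard estimate or new structural input is needed beyond what Theorem \ref{t:ConfCA} and Proposition \ref{p:quiver-comparison} already supply.
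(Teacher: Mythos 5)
Your proposal follows essentially the same route as the paper's proof: decompose $\mathcal{W}_{a,s^{\ast}}$ via \eqref{eq:sum}, apply Theorem \ref{t:ConfCA} triangle by triangle, absorb the twisted cyclic shift so that one lands on the initial cluster coordinates of the $\ell$-th copy, and reassemble the local sums into $\widetilde{f}_A(s)$ using Proposition \ref{p:quiver-comparison} (with the $\bi_Q/\bi_Q^{\ast}$ alternation and Remark \ref{r:u-and-v} handling type $A_n$). One bookkeeping correction: the identity you need from Lemma \ref{l:cycshift} is $\mathcal{S}_3\circ f_a=f_{m_\ell}$, not $f_a=\mathcal{S}_3\circ f_{m_\ell}$ as you wrote — with your version the $\mathcal{S}_3^{\ast}$ in Theorem \ref{t:ConfCA} would compose to $\mathcal{S}_3^2$ rather than cancel, whereas with the correct orientation it is absorbed exactly as you intend and the rest of the argument matches the paper.
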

\begin{proof}
Recall the notation in \eqref{eq:sum}. Then we have $\mathcal{S}_3\circ f_a=f_{m_{\ell}}$ from \cref{l:cycshift}. Since $A_v\circ f_{m_\ell}\circ \phi_{T_\ell}$ is our initial cluster coordinates of $\A_{G,\mathbb{D}^1_k}$ for each vertex of $\widetilde{\bJ}(\bi_Q(n))$ or $\widetilde{\bJ}(\bi_Q^\ast(n))$, the theorem follows from Proposition \ref{p:quiver-comparison}, the equality \eqref{eq:sum}, Theorem \ref{t:ConfCA} and the explicit form of $\widetilde{f}_A(s)$. 
\end{proof}

\paragraph{\textbf{General marked surface}}

\begin{thm}\label{thm:general surface}
Let $(\Sigma,\mathfrak{g})$ be an admissible pair. Then there exists a canonical embedding $\phi: W(\mathfrak{g})^p \to \Gamma_{\mathcal{C}_{\mathfrak{g},\Sigma}}$ such that the induced action on $\A_{G,\Sigma}$ via the birational isomorphism $\A_{G,\Sigma}\cong \A_{\mathcal{C}_{\mathfrak{g},\Sigma}}$ coincides with the geometric action.
\end{thm}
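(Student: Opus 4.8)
The plan is to reduce the statement for a general admissible surface $\Sigma$ to the case $\Sigma = \mathbb{D}^1_k$ already treated in \cref{thm:chamber ansatz}, using the locality of the geometric Weyl group action and the fact that the cluster structure on $\A_{G,\Sigma}$ is assembled by amalgamating the triangle charts. First I would fix a puncture $a$ of $\Sigma$ and recall that, by \cref{l:triangulation}, we may choose an ideal triangulation $\Delta$ of $\Sigma$ (without self-folded triangles when $\mathfrak{g}=A_n$, using the admissibility condition $g+\mu\ge 3$) in which the star neighborhood $\mathbb{D}(a)$ of $a$ is given by an immersion of $\mathbb{D}^1_2$. Restricting a twisted decorated $G$-local system to $\mathbb{D}(a)$ gives a map $\A_{G,\Sigma}\dashrightarrow \A_{G,\mathbb{D}^1_2}$, and the geometric action of $W(\mathfrak{g})^{(a)}$ is, by construction (see \cref{subsubsec:action}), compatible with this restriction: it only modifies the decoration at $a$ and leaves the underlying local system and all other decorations intact, so it is the pullback of the geometric $W(\mathfrak{g})$-action on $\A_{G,\mathbb{D}^1_2}$.

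Next I would observe that, on the cluster side, the amalgamation procedure of \cref{subsubsec:cluster-A-charts} places a copy of $Q_{\mathbb{D}^1_2} = \widetilde{Q}_{h}(\mathfrak{g})$ (for classical $\mathfrak{g}$, $kh/2 = h$ with $k=2$) inside $Q_{\mathbf{\Delta}}$ as a full subquiver whose boundary vertices are the frozen edge vertices shared with the rest of $\Sigma$; in particular the two oriented cycles' worth of unfrozen vertices forming $P_s$ around $a$ survive in $Q_{\mathbf{\Delta}}$. By \cref{prop:tildeaction} and \cref{prop:RonQ}, the mutation sequence $R(s)$ acts only on the vertices of $P_s$ and their immediate neighbors, all of which lie in the subquiver $\widetilde{Q}_h(\mathfrak{g})$, and it preserves $\widetilde{Q}_h(\mathfrak{g})$; since the remaining arrows of $Q_{\mathbf{\Delta}}$ attach only to the frozen boundary vertices of this subquiver and $R(s)$ fixes the associated $A$-variables (it fixes the edge functions $A_{v_1^s}$, $A_{v_{i_{\max}(s)}^s}$ and $A_{y_i}$ by \eqref{eq:fa-tilde} and the formulae after it), the sequence $R(s)$ preserves the whole quiver $Q_{\mathbf{\Delta}}$. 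This defines the homomorphism $\phi : W(\mathfrak{g})^p \to \Gamma_{\mathcal{C}_{\mathfrak{g},\Sigma}}$ by sending $r_s^{(a)}$ to $R(s)$ performed on the cycle $P_s$ around $a$; the braid relations and the commutativity of the factors for distinct punctures follow from \cref{thm:Weyl-R}(2) (the cycles around distinct punctures are disjoint and non-adjacent, so the mutation sequences commute) together with \cref{thm:Nakanishi}.

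Then I would match the cluster action with the geometric one. By \cref{p:simple_refl} the geometric action of $r_s^{(a)}$ multiplies $A_v$ by $\mathcal{W}_{a,s^*}$ for $v\in P_s$ and fixes the other $A_v$; by \cref{thm:Weyl-R}(1), extended as in \cref{prop:tildeaction} to the decorated quiver, $R(s)$ multiplies $A_v^s$ by $\widetilde f_A(s)$ for $v\in P_s$ and fixes the rest. The content of \cref{thm:chamber ansatz} is precisely the identity $\mathcal{W}_{a,s^*} = \widetilde f_A(s)$ under the identification $Q_{\mathbb{D}^1_2}\cong \widetilde Q_h(\mathfrak{g})$ of \cref{p:quiver-comparison}; since both the geometric action and the cluster transformation $R(s)$ are local to $\mathbb{D}(a)$ and the restriction map $\A_{G,\Sigma}\to\A_{G,\mathbb{D}^1_2}$ is a pullback on the relevant cluster coordinates, the equality on $\A_{G,\mathbb{D}^1_k}$ (with $k=2$) propagates to $\A_{G,\Sigma}$. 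Finally, injectivity of $\phi$ follows from \cref{thm:injectivity}: the restriction of $\phi$ to each factor $W(\mathfrak{g})^{(a)}$ is injective because the monomial Poisson Casimirs $\mathbb{X}_t$ along the cycles around $a$ transform as the root lattice, and the images of different factors involve disjoint sets of cycles, hence generate together a group isomorphic to $W(\mathfrak{g})^p$.

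The main obstacle I anticipate is the bookkeeping in the reduction step: one must verify carefully that the immersed copy of $\mathbb{D}^1_2$ from \cref{l:triangulation} really induces, at the level of amalgamated quivers, an embedding of $\widetilde Q_h(\mathfrak{g})$ as a full subquiver on which $R(s)$ acts as the \emph{unmodified} decorated sequence of \cref{prop:tildeaction} — i.e., that the extra arrows of $Q_{\mathbf{\Delta}}$ outside $\mathbb{D}(a)$ genuinely attach only to frozen vertices that $R(s)$ leaves alone, so that the periodicity and quiver-invariance statements (\cref{thm:Nakanishi}, \cref{prop:RonQ}) apply verbatim with those extra vertices treated as frozen. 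Once this locality is nailed down, everything else is an application of \cref{thm:chamber ansatz} together with the already-established equality of the two actions in the local model.
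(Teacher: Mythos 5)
Your proposal follows essentially the same route as the paper: reduce to the local model around each puncture via \cref{l:triangulation}, identify the local quiver with $\widetilde{Q}_{kh/2}(\mathfrak{g})$ (\cref{p:quiver-comparison}, \cref{prop:tildeaction}), and match the cluster action with the geometric one through \cref{p:simple_refl} and \cref{thm:chamber ansatz}. The one adjustment needed is in the reduction step: for $\mathfrak{g}\neq A_n$ you cannot always find an immersed $\mathbb{D}^1_2$ around a puncture, because \cref{l:triangulation}(ii) requires $g+\mu\ge 3$, which admissibility guarantees only in type $A_n$ (for instance $(\mathbb{D}^1_1,C_n)$ is admissible with $g+\mu=2$); for those types one should instead use \cref{l:triangulation}(i) and the embedded $\mathbb{D}^1_1$, whose local quiver is $\widetilde{Q}_{h/2}(\mathfrak{g})$ with $h/2$ an integer, exactly as the paper does. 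With that correction your argument goes through, and your explicit treatment of why $R(s)$ extends to the full surface quiver (only frozen boundary vertices meet the rest of $Q_{\mathbf{\Delta}}$) and of the injectivity of $\phi$ supplies details the paper leaves implicit.
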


\begin{proof}
Suppose $\mathfrak{g}\neq A_n$. 
For each puncture $a$ of $\Sigma$, take an ideal triangulation $\Delta_a$ of $\Sigma$ as in \cref{l:triangulation} (i). 
Choose the data $\mathbf{\Delta}$ for $\Sigma$ so that its restriction to $\mathbb{D}(a)$ is as in the beginning of \cref{subsec:comparison}. Then the restriction of the resulting quiver $Q_a$ to $\mathbb{D}(a)$ coincides with $\tilde{Q}_{h/2}(\mathfrak{g})$. Here recall that $h/2$ is an integer except for type $A_n$. 

Then the embedding $R_{h/2}: W(\mathfrak{g})=W(\mathfrak{g})^{(a)} \to \Gamma_{\tilde{Q}_{h/2}(\mathfrak{g})}$ naturally extends to an embedding $R_a: W(\mathfrak{g})^{(a)} \to \Gamma_{Q_a}$ by \cref{prop:tildeaction}. 
By \cref{thm:chamber ansatz}, the action of $R_a(W(\mathfrak{g})^{(a)})$ coincides with the geometric action. Combining these embeddings, we get a desired embedding $\phi:=\prod_a R_a: W(\mathfrak{g})^p \to \Gamma_{\mathcal{C}_{\mathfrak{g},\Sigma}}$, here we used the fact that the quivers $Q_a$ for different punctures are mutation-equivalent to each other. 

If $\mathfrak{g}=A_n$, take $\Delta'_a$ as in \cref{l:triangulation} (ii) and choose the data $\mathbf{\Delta}$ so that its restriction to $\mathbb{D}(a)$ is as in the beginning of \cref{subsec:comparison} on $\mathbb{D}(a)$. Then the assertion follows from a similar argument.
\end{proof}

\section{Relation with the $\mathcal{D}_\mathfrak{g}$-quiver}
\label{sec:equivalence}

In this section, for classical and finite $\mathfrak{g}$ we study the mutation equivalence of 
the quiver $Q_h(\mathfrak{g})$ and the `$\mathcal{D}_\mathfrak{g}$-quiver' introduced in \cite{Ip16}. 
As an application, for $\mathfrak{g} = A_n$ we give an alternative proof of \cref{introthm:geometric action} (\cref{thm:general surface}) and \cref{introconj:geometric action}.

\subsection{Quiver $D(\mathfrak{g})$ on a punctured disk}
\label{subsec:D-quiver}

Consider $\mathbb{D}_2^1$, a once-punctured disk with two special points, and triangulate it as Figure \ref{fig:puctured-disk}.

\begin{figure}[H]
\begin{tikzpicture}
\begin{scope}[>=latex]
\draw (2,4.5) circle(43pt);
\fill (2,6) circle(2pt) coordinate(A); 
\draw (2,4.5) circle(2pt) coordinate(B); 
\fill (2,3) circle(2pt) coordinate(C);
\draw[shorten <=2pt] (B) -- (A);
\draw[shorten <=2pt] (B) -- (C);
\end{scope}
\end{tikzpicture}
\caption{The triangulation of $\mathbb{D}^1_2$}
\label{fig:puctured-disk}
\end{figure}
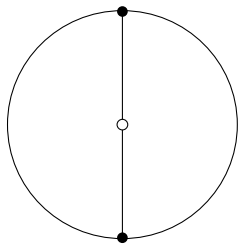

In \cite{Ip16}, a quiver on this triangulated disk, called the `$\mathcal{D}_{\mathfrak{g}}$-quiver' is defined for each reduced expression of the longest element $w_0$ in the Weyl group $W(\mathfrak{g})$ for a finite semi-simple Lie algebra $\mathfrak{g}$.
We consider classical cases of $\mathfrak{g}$, and write $D(\mathfrak{g})$ for the $\mathcal{D}_{\mathfrak{g}}$-quiver
for the following reduced expression $\bi_D(n)$ of $w_0$.

\begin{table}[h]
  \begin{tabular}{|c|c|c|} \hline
    $\mathfrak{g}$ & $\bi_D(n)$ & $|\bi_D(n)|$\\ \hline
    $A_n$ & $\bi_Q(n)$ & $n(n+1)/2$\\
    $B_n$, $C_n$ & $(1~212~32123~\ldots~n(n-1)\ldots 1 \ldots(n-1)n)$ & $n^2$ 
    \\
    $D_n$ & $(12~3123~431234~54312345~\ldots~n(n-1)\dots 3123 \ldots(n-1)n)$ &
    $(n-1)n$ \\ \hline
  \end{tabular}
\end{table}

\noindent
Here these expressions are inductively defined by
$$
  \bi_D(n) 
  = 
  \begin{cases}
  (\bi_D(n-1)~ n (n-1)\ldots 1) & \text{ $n \geq 2$ for $A_n$; $\bi_D(1)=(1)$},
  \\ 
  (\bi_D(n-1)~ n \, \bi_D(n-1) \, n) & \text{ $n \geq 2$ for $B_n$ 
  and $C_n$; $\bi_D(1)=(1)$},
  \\
  (\bi_D(n-1)~ n \, \bi_D(n-1) \, n) & \text{ $n \geq 3$ for $D_n$; 
  $\bi_D(2)=(12)$}.
  \end{cases}
$$

We reconstruct the quiver $D(\mathfrak{g})$ using our terminology.
In the case of $\mathfrak{g}=A_n$, 
the quiver $\widetilde{\bJ}({\bi}_D(n))$ is same as 
$\widetilde{\bJ}({\bi}_Q(n))$. See Figure \ref{fig:tildeQ-A3} for the case of $A_3$. 
In the other cases, the quiver $\bJ(\bi_D(n))$ contains vertices as
\begin{itemize}
\item $v^1_{i};~i=1,\ldots,n+1$,
\item $v^s_{i};~s=2,3,\ldots,n,~i=1,2,\ldots,2(n-s)+3$,
\end{itemize}
for $\mathfrak{g}=B_n, C_n$, and 
\begin{itemize}
\item $v^s_{i}; ~s=1,2,~i=1,\ldots,n$
\item $v^s_{i}; ~s=3,\ldots,n, ~i=1,2,\ldots,2(n-s)+3$
\end{itemize}
for $\mathfrak{g}=D_n$.
Define a quiver $\widetilde{\bJ}(\bi_D(n))$ by adding frozen vertices $y_i ~(i \in S)$ to $\bJ(\bi_D(n))$ as follows: 
for $\mathfrak{g}=B_n$ and $C_n$, we add
\begin{itemize}
\item $v^1_1 \leftarrow y_1 \leftarrow v^1_{2}$,

\item $v^s_{2} \leftarrow y_s \leftarrow v^s_{3}$ for $s=2,\ldots,n$.

\item $y_s \dashrightarrow y_{s+1}$ for $s=1,\ldots,n-1$,
\end{itemize}
for $\mathfrak{g} = D_n$ we add
\begin{itemize}
\item $v^1_1 \leftarrow y_1 \leftarrow v^1_{2}$,
and $v^2_1 \leftarrow y_2 \leftarrow v^2_{2}$,

\item $v^s_{2} \leftarrow y_i \leftarrow v^s_{3}$ 
for $s=3,\ldots,n$.

\item $y_s \dashrightarrow y_{3}$ for $s=1,2$, 

\item $y_s \dashrightarrow y_{s+1}$ for $s=3,\ldots,n-1$. 
\end{itemize}
See Figure \ref{fig:tJD-C3} for the case of $C_3$ and
Figure \ref{fig:tJD-D4} for the case of $D_4$.
For each $s \in S$, we write $v^s_R$ for the `rightmost' vertex in $\widetilde{\bJ}(\bi_D(n))$, for example, $v^1_R := v^1_{n+1}$ in the case of $C_n$.
In the case of $A_n$, we have $v^s_R = v^s_{i_{\max}(s)}$.
 
Let $\bar{\bi}_D$ be the reverse sequence of $\bi_D$, and 
define a quiver $\widetilde{\bJ}(\overline{\bi}_D(n))$ by adding frozen vertices $y'_i ~(i \in S)$ to ${\bJ}(\overline{\bi}_D(n))$, so that the quiver $\widetilde{\bJ}(\overline{\bi}_D(n))$ is a mirror image of
$\widetilde{\bJ}(\bi_D(n))$ along the `vertical' boundary 
(from $v^1_1$ to $v^n_1$) but all the arrows flipped.
We write $u^s_i$ for the vertices in the $s$-th row of $\widetilde{\bJ}(\overline{\bi}_D(n))$ from left to right, in the same manner as $v^s_i$ in 
$\widetilde{\bJ}({\bi}_D(n))$, and write $u^s_R$ for the rightmost vertex in each row (see Figure \ref{fig:tJD-C3} for the case of $C_3$).

\begin{remark}\label{rem:JforA}
For a reduced expression of $w_0$, the notion of {\it basic quiver} is defined in \cite[Definition 8.1]{Ip16}, so that 
\begin{enumerate}
\item
the amalgamation of the basic quiver $Q$ and 
its `mirror image' $\overline{Q}$ gives the $\mathcal{D}_{\mathfrak{g}}$-quiver,\item
the quivers $Q$ and $\overline{Q}$ are mutation equivalent,
\item
the quiver $Q$ and its `Dynkin involution image' $Q^\ast$ are mutation equivalent.
\end{enumerate}

When we choose the reduced expression $\bi_D(n)$ of $w_0$, the quivers $Q$ and $\overline{Q}$ respectively coincide with $\widetilde{\bJ}(\bi_D(n))$ and $\widetilde{\bJ}(\overline{\bi}_D(n))$. 
In the case of $\mathfrak{g}=A_n$ (where $\bJ(\bi_Q(n)) = \bJ(\bi_D(n))$), 
the quiver $\bJ(\bi_Q(n))$ coincides with $\bJ(\overline{\bi}_Q(n))$
since ${\bi}_Q(n)$ and $\overline{\bi}_Q(n)$ are related only by commuting relations (without non-trivial braid relations), as noted in \cite[\S~ 8]{Ip16}. 
Also, the quiver $Q^\ast$ coincides with $\widetilde{\bJ}(\bi_D^\ast(n))$.
In the other cases of $\mathfrak{g}$, $\bi_D(n)$ and $\overline{\bi}_D(n)$ are related by braid relations, hence $\bJ(\bi_D(n))$ and $\bJ(\overline{\bi}_D(n))$ are 
mutation equivalent due to \eqref{quiver:m=3} and \eqref{quiver:m=4}.
A non-trivial issue here is the mutation equivalence of 
$\widetilde{\bJ}({\bi}_D(n))$ and $\widetilde{\bJ}(\overline{\bi}_D(n))$,
which is included in the above definition of the basic quiver.  
\end{remark}

\begin{figure}[ht]
\begin{tikzpicture}
\begin{scope}[>=latex]
\path (0,1) node[circle]{2} coordinate(A1) node[left]{$v^1_1$};
\draw (0,1) circle[radius=0.15];
\path (2.7,1) node[circle]{2} coordinate(A2) node[below]{$v^1_2$};
\draw (2.7,1) circle[radius=0.15];
\path (5.3,1) node[circle]{2} coordinate(A3) node[below]{$v^1_3$};
\draw (5.3,1) circle[radius=0.15];
\path (8,1) node[circle]{2} coordinate(A4) node[below]{$v^1_4$};
\draw (8,1) circle[radius=0.15];
\draw[->,shorten >=4pt,shorten <=4pt] (A1) -- (A2) [thick];
\draw[->,shorten >=4pt,shorten <=4pt] (A2) -- (A3) [thick];
\draw[->,shorten >=4pt,shorten <=4pt] (A3) -- (A4) [thick];
\draw (1,2) circle(2pt) coordinate(B1) node[left]{$v^2_1$};
\draw (3,2) circle(2pt) coordinate(B2) node[above]{$v^2_2$};
\draw (5,2) circle(2pt) coordinate(B3) node[above]{$v^2_3$};
\draw (7,2) circle(2pt) coordinate(B4) node[above]{$v^2_4$};
\draw (9,2) circle(2pt) coordinate(B5) node[above]{$v^2_5$};
\qarrow{B1}{B2}
\qarrow{B2}{B3}
\qarrow{B3}{B4}
\qarrow{B4}{B5}
\draw (2,3) circle(2pt) coordinate(C1) node[left]{$v^3_1$};
\draw (6,3) circle(2pt) coordinate(C2) node[above]{$v^3_2$};
\draw (10,3) circle(2pt) coordinate(C3) node[above]{$v^3_3$};
\qarrow{C1}{C2}
\qarrow{C2}{C3}
\draw[->,dashed, shorten >=2pt,shorten <=4pt] (B1) -- (A1) [thick];
\draw[->,dashed, shorten >=2pt,shorten <=2pt] (C1) -- (B1) [thick];
\draw[->,dashed, shorten >=2pt,shorten <=4pt] (B5) -- (A4) [thick];
\draw[->,dashed, shorten >=2pt,shorten <=2pt] (C3) -- (B5) [thick];
\draw[->, shorten >=2pt,shorten <=4pt] (A4) -- (B4) [thick];
\draw[->, shorten >=4pt,shorten <=2pt] (B4) -- (A3) [thick];
\draw[->, shorten >=2pt,shorten <=4pt] (A3) -- (B2) [thick];
\draw[->, shorten >=4pt,shorten <=2pt] (B2) -- (A2) [thick];
\draw[->, shorten >=2pt,shorten <=4pt] (A2) -- (B1) [thick];
\qarrow{B5}{C2}
\qarrow{C2}{B3}
\qarrow{B3}{C1}
{\color{blue}
\path (1.35,0) node[circle]{2} coordinate(Y1) node[below]{$y_1$};
\draw (1.35,0) circle[radius=0.15];
\draw (4,0) circle(2pt) coordinate(Y2) node[below]{$y_2$};
\draw (8,0) circle(2pt) coordinate(Y3) node[below]{$y_3$};
\draw[->,shorten >=4pt,shorten <=4pt] (A2) -- (Y1) [thick];
\draw[->,shorten >=4pt,shorten <=4pt] (Y1) -- (A1) [thick];
\qarrow{B3}{Y2}
\qarrow{Y2}{B2}
\qarrow{C3}{Y3}
\qarrow{Y3}{C2}
\draw[->,dashed,shorten >=2pt,shorten <=4pt] (Y1) -- (Y2) [thick];
\qdarrow{Y2}{Y3}
}
\end{scope}
\begin{scope}[>=latex,yshift=-125pt]
\path (2,1) node[circle]{2} coordinate(A1) node[left]{$u^1_1$};
\draw (2,1) circle[radius=0.15];
\path (4.7,1) node[circle]{2} coordinate(A2) node[below]{$u^1_2$};
\draw (4.7,1) circle[radius=0.15];
\path (7.3,1) node[circle]{2} coordinate(A3) node[below]{$u^1_3$};
\draw (7.3,1) circle[radius=0.15];
\path (10,1) node[circle]{2} coordinate(A4) node[below]{$u^1_4$};
\draw (10,1) circle[radius=0.15];
\draw[->,shorten >=4pt,shorten <=4pt] (A1) -- (A2) [thick];
\draw[->,shorten >=4pt,shorten <=4pt] (A2) -- (A3) [thick];
\draw[->,shorten >=4pt,shorten <=4pt] (A3) -- (A4) [thick];
\draw (1,2) circle(2pt) coordinate(B1) node[above]{$u^2_1$};
\draw (3,2) circle(2pt) coordinate(B2) node[above]{$u^2_2$};
\draw (5,2) circle(2pt) coordinate(B3) node[above]{$u^2_3$};
\draw (7,2) circle(2pt) coordinate(B4) node[above]{$u^2_4$};
\draw (9,2) circle(2pt) coordinate(B5) node[above]{$u^2_5$};
\qarrow{B1}{B2}
\qarrow{B2}{B3}
\qarrow{B3}{B4}
\qarrow{B4}{B5}
\draw (0,3) circle(2pt) coordinate(C1) node[left]{$u^3_1$};
\draw (4,3) circle(2pt) coordinate(C2) node[above]{$u^3_2$};
\draw (8,3) circle(2pt) coordinate(C3) node[above]{$u^3_3$};
\qarrow{C1}{C2}
\qarrow{C2}{C3}
\draw[->,dashed, shorten >=2pt,shorten <=4pt] (A1) -- (B1) [thick];
\draw[->,dashed, shorten >=2pt,shorten <=2pt] (B1) -- (C1) [thick];
\draw[->,dashed, shorten >=2pt,shorten <=4pt] (A4) -- (B5) [thick];
\draw[->,dashed, shorten >=2pt,shorten <=2pt] (B5) -- (C3) [thick];
\draw[->, shorten >=4pt,shorten <=2pt] (B5) -- (A3) [thick];
\draw[->, shorten >=2pt,shorten <=4pt] (A3) -- (B4) [thick];
\draw[->, shorten >=4pt,shorten <=2pt] (B4) -- (A2) [thick];
\draw[->, shorten >=2pt,shorten <=4pt] (A2) -- (B2) [thick];
\draw[->, shorten >=4pt,shorten <=2pt] (B2) -- (A1) [thick];
\qarrow{C3}{B3}
\qarrow{B3}{C2}
\qarrow{C2}{B1}
{\color{blue}
\path (8.65,0) node[circle]{2} coordinate(Y1) node[below]{$y_1'$};
\draw (8.65,0) circle[radius=0.15];
\draw (6,0) circle(2pt) coordinate(Y2) node[below]{$y_2'$};
\draw (2,0) circle(2pt) coordinate(Y3) node[below]{$y_3'$};
\draw[->,shorten >=4pt,shorten <=4pt] (A4) -- (Y1) [thick];
\draw[->,shorten >=4pt,shorten <=4pt] (Y1) -- (A3) [thick];
\qarrow{B4}{Y2}
\qarrow{Y2}{B3}
\qarrow{C2}{Y3}
\qarrow{Y3}{C1}
\draw[->,dashed,shorten >=4pt,shorten <=2pt] (Y2) -- (Y1) [thick];
\qdarrow{Y3}{Y2}
}
\end{scope}
\end{tikzpicture}
\caption{The quivers $\widetilde{\bJ}(1\,212\,32123)$ (upper) and 
$\widetilde{\bJ}(32123\,212\,1)$ (lower) for $\mathfrak{g}=C_3$.
The case of $\mathfrak{g}=B_3$ is obtained by replacing weight $2 \leftrightarrow 1$.}
\label{fig:tJD-C3}
\end{figure}
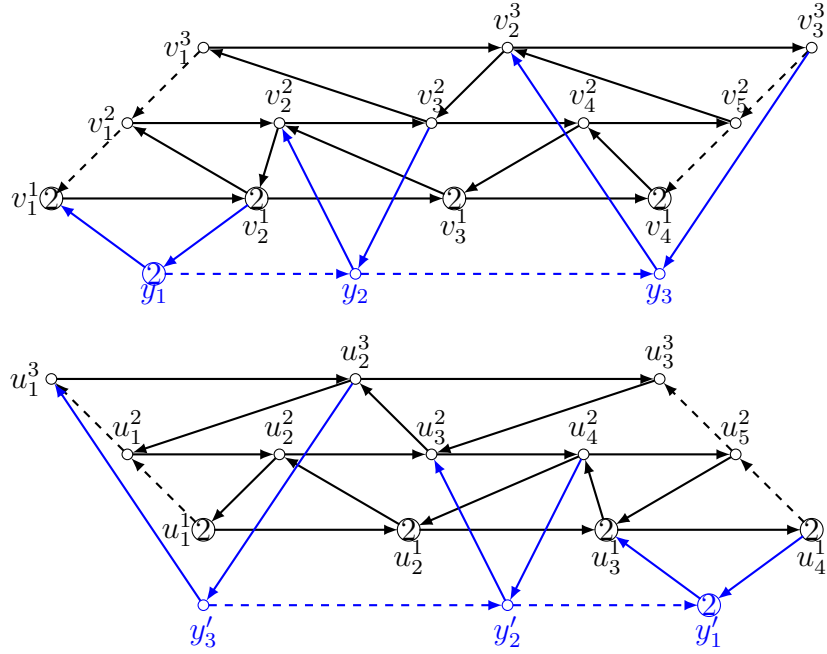

\begin{figure}[ht]
\begin{tikzpicture}
\begin{scope}[>=latex]
\draw (0,1) circle(2pt) coordinate(A1) node[left]{$v^1_1$};
\draw (2.7,1) circle(2pt) coordinate(A2) node[below]{$v^1_2$};
\draw (5.3,1) circle(2pt) coordinate(A3) node[below]{$v^1_3$};
\draw (8,1) circle(2pt) coordinate(A4) node[above]{$v^1_4$};
\draw[->,shorten >=2pt,shorten <=2pt] (A1) -- (A2) [thick];
\draw[->,shorten >=2pt,shorten <=2pt] (A2) -- (A3) [thick];
\draw[->,shorten >=2pt,shorten <=2pt] (A3) -- (A4) [thick];
\draw (1,2) circle(2pt) coordinate(B1) node[above]{$v^3_1$};
\draw (3,2) circle(2pt) coordinate(B2) node[above]{$v^3_2$};
\draw (5,2) circle(2pt) coordinate(B3) node[above]{$v^3_3$};
\draw (7,2) circle(2pt) coordinate(B4) node[above]{$v^3_4$};
\draw (9,2) circle(2pt) coordinate(B5) node[above]{$v^3_5$};
\qarrow{B1}{B2}
\qarrow{B2}{B3}
\qarrow{B3}{B4}
\qarrow{B4}{B5}
\draw (2,3) circle(2pt) coordinate(C1) node[above]{$v^4_1$};
\draw (6,3) circle(2pt) coordinate(C2) node[above]{$v^4_2$};
\draw (10,3) circle(2pt) coordinate(C3) node[above]{$v^4_3$};
\qarrow{C1}{C2}
\qarrow{C2}{C3}
\draw[->,dashed, shorten >=2pt,shorten <=2pt] (B1) -- (A1) [thick];
\draw[->,dashed, shorten >=2pt,shorten <=2pt] (C1) -- (B1) [thick];
\draw[->,dashed, shorten >=2pt,shorten <=2pt] (B5) -- (A4) [thick];
\draw[->,dashed, shorten >=2pt,shorten <=2pt] (C3) -- (B5) [thick];
\draw[->, shorten >=2pt,shorten <=2pt] (A4) -- (B4) [thick];
\draw[->, shorten >=2pt,shorten <=2pt] (B4) -- (A3) [thick];
\draw[->, shorten >=2pt,shorten <=2pt] (A3) -- (B2) [thick];
\draw[->, shorten >=2pt,shorten <=2pt] (B2) -- (A2) [thick];
\draw[->, shorten >=2pt,shorten <=2pt] (A2) -- (B1) [thick];
\qarrow{B5}{C2}
\qarrow{C2}{B3}
\qarrow{B3}{C1}
{\color{blue}
\draw (1.35,0) circle(2pt) coordinate(Y1) node[below]{$y_1$};
\draw (4,0) circle(2pt) coordinate(Y2) node[below]{$y_3$};
\draw (8,0) circle(2pt) coordinate(Y3) node[below]{$y_4$};
\draw[->,shorten >=2pt,shorten <=2pt] (A2) -- (Y1) [thick];
\draw[->,shorten >=2pt,shorten <=2pt] (Y1) -- (A1) [thick];
\qarrow{B3}{Y2}
\qarrow{Y2}{B2}
\qarrow{C3}{Y3}
\qarrow{Y3}{C2}
\draw[->,dashed,shorten >=2pt,shorten <=2pt] (Y1) -- (Y2) [thick];
\qdarrow{Y2}{Y3}
}
%
\draw (1,-1.0) circle(2pt) coordinate(D1) node[left]{$v^2_1$};
\draw (3.7,-1.0) circle(2pt) coordinate(D2) node[below]{$v^2_2$};
\draw (6.3,-1.0) circle(2pt) coordinate(D3) node[below]{$v^2_3$};
\draw (9,-1.0) circle(2pt) coordinate(D4) node[below]{$v^2_4$};
\draw[->,shorten >=2pt,shorten <=2pt] (D1) -- (D2) [thick];
\draw[->,shorten >=2pt,shorten <=2pt] (D2) -- (D3) [thick];
\draw[->,shorten >=2pt,shorten <=2pt] (D3) -- (D4) [thick];
\draw[->, shorten >=2pt,shorten <=2pt] (D4) -- (B4) [thick];
\draw[->, shorten >=2pt,shorten <=2pt] (B4) -- (D3) [thick];
\draw[->, shorten >=2pt,shorten <=2pt] (D3) -- (B2) [thick];
\draw[->, shorten >=2pt,shorten <=2pt] (B2) -- (D2) [thick];
\draw[->, shorten >=2pt,shorten <=2pt] (D2) -- (B1) [thick];
\draw[->, dashed, shorten >=2pt,shorten <=2pt] (B5) -- (D4) [thick];
\qdarrow{B1}{D1};
{\color{blue}
\draw (2.35,-2.0) circle(2pt) coordinate(Y4) node[below]{$y_2$};
\draw[->,shorten >=2pt,shorten <=2pt] (D2) -- (Y4) [thick];
\draw[->,shorten >=2pt,shorten <=2pt] (Y4) -- (D1) [thick];
\draw[->,dashed,shorten >=2pt,shorten <=2pt] (Y4) -- (Y2) [thick];
}
\end{scope}
\end{tikzpicture}
\caption{The quiver $\widetilde{\bJ}(12\,3123\,431234)$ for $\mathfrak{g}=D_4$}
\label{fig:tJD-D4}
\end{figure}

Now we define the quiver $D(\mathfrak{g})$.
Set $\widetilde{\bJ}(\bi_D)$ in a triangle $ABC$, by assigning vertices $v_1^1,\ldots,v_1^n$ on the edge $AC$,
vertices $v_R^1,\ldots,v_R^n$ on the edge $BC$, and 
vertices $y_1,\ldots,y_n$ on the edge $AB$ in these orders as Figure \ref{fig:2triangles}.
Similarly, set $\widetilde{\bJ}(\bar{\bi}_D)$ in a triangle $A'B'C'$, by assigning vertices $u_1^1,\ldots,u_1^n$ on the edge $C'A'$,
vertices $u_R^1,\ldots,u_R^n$ on the edge $B'C'$, and 
vertices $y_1',\ldots,y_n'$ on the edge $B'A'$ in these orders.
Amalgamate $\widetilde{\bJ}(\bi_D)$ and $\widetilde{\bJ}(\overline{\bi}_D)$ along
edges $BC$ and $A'C'$ by identifying $v_R^s$ with $u_1^s$ for $s \in S$, 
and edges $BA$ and $A'B'$ by identifying $y_s$ with $y'_s$ for $s \in S$. 
Only in the case of $\mathfrak{g}=A_n$,
set vertices $y_1',\ldots,y_n'$ on the edge $A'B'$ in this order,
and identify $y_i$ with $y'_{n+1-i}$ in amalgamating $BA$ and $A'B'$.
Then we obtain the quiver $D(\mathfrak{g})$ on the triangulation $T$,
where $B$ is the puncture, and $A$ and $C$ are the two special points of $\mathbb{D}^1_2$. 
Note that $D(\mathfrak{g})$ is not planar except for the case of $\mathfrak{g}=A_n$.

\begin{figure}[H]
\begin{tikzpicture}
\begin{scope}[>=latex]
\fill (1,6) circle(3pt) coordinate(C) node[left]{$C'$}; 
\fill (6.19,3) circle(3pt) coordinate(A) node[right]{$A'$};  
\fill (1,0) circle(3pt) coordinate(B) node[left]{$B'$}; 
\draw (A) -- (C);
\draw (A) -- (B);
\draw (B) -- (C);
\fill (13,6) circle(3pt) coordinate(C1) node[right]{$C$}; 
\fill (7.81,3) circle(3pt) coordinate(B1) node[left]{$B$};  
\fill (13,0) circle(3pt) coordinate(A1) node[right]{$A$}; 
\draw (A1) -- (C1);
\draw (A1) -- (B1);
\draw (B1) -- (C1);
\draw (9.54,4) circle(2pt) node[above]{$v^1_R$};
\fill (10.12,4.63) circle(1pt); 
\fill (10.69,4.96) circle(1pt); 
\draw (11.27,5) circle(2pt) node[above]{$v^n_R$};
\draw (9.54,2) circle(2pt) node[below]{$y_n$};
\fill (10.12,1.36) circle(1pt); 
\fill (10.69,1.03) circle(1pt); 
\draw (11.27,1) circle(2pt) node[below]{$y_1$};
\draw (13,4) circle(2pt) node[right]{$v^n_1$};
\fill (13.3,3.33) circle(1pt); 
\fill (13.3,2.66) circle(1pt); 
\draw (13,2) circle(2pt) node[right]{$v^1_1$};
\draw (11,3) node{$\widetilde{\bJ}(\bi_D(n))$};
\draw (2.73,5) circle(2pt) node[above]{$u^n_1$};
\fill (3.31,5.03) circle(1pt); 
\fill (3.89,4.70) circle(1pt); 
\draw (4.46,4) circle(2pt) node[above]{$u^1_1$};
\draw (2.73,1) circle(2pt) node[below]{$y_1'$};
\fill (3.31,1.03) circle(1pt); 
\fill (3.89,1.36) circle(1pt); 
\draw (4.46,2) circle(2pt) node[below]{$y_n'$};
\draw (1,4) circle(2pt) node[left]{$u^n_R$};
\fill (0.7,3.33) circle(1pt); 
\fill (0.7,2.66) circle(1pt); 
\draw (1,2) circle(2pt) node[left]{$u^1_R$};
\draw (2.7,3) node{$\widetilde{\bJ}(\bar{\bi}_D(n))$};
\end{scope}
\end{tikzpicture}
\caption{Amalgamation of $\widetilde{\bJ}(\bi_D(n))$ and 
$\widetilde{\bJ}(\bar{\bi}_D(n))$}
\label{fig:2triangles}
\end{figure}

\begin{lem}[Cf. \cite{Ip16}, Corollary 8.3]\label{lem:JDtoQ}
The quivers $\widetilde{\bJ}(\bi_D(n))$ and  $\widetilde{\bJ}(\bi_Q(n))$ 
are mutation equivalent. 
\end{lem}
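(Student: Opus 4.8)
The plan is to exhibit an explicit mutation sequence carrying $\widetilde{\bJ}(\bi_D(n))$ to $\widetilde{\bJ}(\bi_Q(n))$, using the fact that $\bi_D(n)$ and $\bi_Q(n)$ are two reduced expressions of the same longest element $w_0$. For $\mathfrak{g} = A_n$ there is nothing to prove, since $\bi_D(n) = \bi_Q(n)$ by definition and hence the decorated quivers coincide on the nose. So assume $\mathfrak{g}$ is of type $B_n$, $C_n$, or $D_n$. The starting point is the general principle recalled in \cref{subsec:J-equivalence}, following \cite{FG06}: any sequence of braid moves relating two reduced words of $w_0$ is realized by a corresponding composition of quiver mutations (and relabelings), and, crucially, the decorated versions \eqref{quiver:frozen-move1} and \eqref{quiver:frozen-move2} show that these braid-move mutations extend to the quivers with the extra frozen vertices $y_i$ attached along horizontal edges. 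Thus it suffices to (i) choose a concrete chain of braid moves from $\bi_D(n)$ to $\bi_Q(n)$, (ii) check that throughout this chain the relevant frozen vertices $y_i$ sit on horizontal edges, so that the decorated braid moves of \S\ref{subsec:J-equivalence} apply, and (iii) verify that the induced relabeling of vertices at the end matches the labeling of $\widetilde{\bJ}(\bi_Q(n))$, in particular that the frozen vertices of the source land on the frozen vertices of the target.

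First I would set up the combinatorics of the two words. Recall $\bi_Q(n)$ is the ``staircase'' word: $(1\,21\,321\,\cdots\,n(n-1)\cdots 1)$ for $A_n$ and $((12\cdots n)^{n})$ (resp.\ $((12\cdots n)^{n-1})$) for $B_n,C_n$ (resp.\ $D_n$), while $\bi_D(n)$ is built by the recursion $\bi_D(n) = (\bi_D(n-1)\, n\, \bi_D(n-1)\, n)$. I would argue by induction on $n$: assuming $\widetilde{\bJ}(\bi_D(n-1))$ and $\widetilde{\bJ}(\bi_Q(n-1))$ are mutation equivalent via a sequence $\mathbf{m}_{n-1}$ not touching the index-$n$ letters, I need a sequence of braid moves inside the full word $\bi_D(n)$ bringing the two occurrences of the ``$\bi_D(n-1)$-block'' into the staircase shape and moving the isolated $n$'s into their final positions. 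The braid moves used are exactly the elementary ones $(st)=(ts)$ for $m_{st}=2$, $(sts)=(tst)$ for $m_{st}=3$, and $(stst)=(tsts)$ for $m_{st}=4$, corresponding to the mutation recipes \eqref{quiver:m=3} and \eqref{quiver:m=4}; only consecutive indices $s,t$ with $|s-t|=1$ participate in nontrivial moves (and the single length-four move at the short/long end for $B_n,C_n$).

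The main obstacle, I expect, is bookkeeping rather than conceptual: one must track the positions of the frozen vertices $y_i$ through the entire mutation sequence and confirm that each braid move that is applied is of the ``decorated'' type, i.e.\ the $y_i$ in question is attached along a horizontal edge of the subquiver being mutated, so that \eqref{quiver:frozen-move1}/\eqref{quiver:frozen-move2} (or the one-sided versions $(\overset{2}{s}ts)=(ts\overset{2}{t})$, etc.) govern its transformation. Since the $y_i$'s in both $\widetilde{\bJ}(\bi_D(n))$ and $\widetilde{\bJ}(\bi_Q(n))$ are attached precisely to the horizontal arrows along the ``$1$-row'' and the ``$n$-row'' (by the definitions in \S\ref{subsec:tildeQ} and \S\ref{subsec:D-quiver}), and the braid moves only rearrange letters within a fixed Dynkin index locally, one can arrange the sequence so that all moves near the top and bottom rows are decorated moves; the moves in the interior rows never see a frozen vertex at all. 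A clean way to organize this is to invoke the basic-quiver formalism of \cite[Definition~8.1, Corollary~8.3]{Ip16} directly: Ip shows that for \emph{any} reduced expression of $w_0$ the corresponding basic quiver is mutation equivalent to the basic quiver of any other, including the decorated data; since $\widetilde{\bJ}(\bi_D(n))$ is by construction the basic quiver for $\bi_D(n)$ and one checks that $\widetilde{\bJ}(\bi_Q(n))$ is the basic quiver for $\bi_Q(n)$ (the horizontal-edge placement of the $y_i$ matches in both cases), the lemma follows. I would present both: the self-contained inductive mutation sequence for completeness, and the citation to \cite{Ip16} as the conceptual reason, exactly as the statement of the lemma suggests.
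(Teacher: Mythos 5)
Your overall strategy matches the paper's: for $A_n$ the two words coincide, and for the other classical types one realizes a chain of braid moves from $\bi_D(n)$ to $\bi_Q(n)$ by the mutation recipes \eqref{quiver:m=3}--\eqref{quiver:m=4}, using the decorated versions \eqref{quiver:frozen-move1}--\eqref{quiver:frozen-move2} to track the frozen vertices $y_i$. The one place where your sketch diverges, and where I would warn you, is the inductive organization. You propose to induct via the recursion $\bi_D(n)=(\bi_D(n-1)\,n\,\bi_D(n-1)\,n)$ and to apply an inductively given mutation sequence $\mathbf{m}_{n-1}$ to the embedded copies of $\bJ(\bi_D(n-1))$; but those embedded copies are \emph{not} isomorphic to the standalone decorated quiver $\widetilde{\bJ}(\bi_D(n-1))$ --- amalgamation with the surrounding letters defrosts vertices that were frozen in the standalone quiver, and the extra vertices $y_i$ sit in different positions --- so the inductive hypothesis does not directly apply to them. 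The paper instead organizes the reduction at the level of decorated words: one first applies the braid moves $2\,3\cdots(k{-}1)\,k\,(k{-}1)\cdots 3\,2=k\,(k{-}1)\cdots 3\,2\,3\cdots(k{-}1)\,k$ for $k=3,\dots,n$ so as to split off a trailing staircase block $(1\,2\cdots n)$, after which the remaining prefix is \emph{literally} a relabeled $\widetilde{\bi}_D(n-1)$, decorations included, and the peeling iterates cleanly (this is the sequence \eqref{eq:DtoQ-Cn}). Your fallback of citing Ip's basic-quiver result \cite[Corollary 8.3]{Ip16} is legitimate in spirit, but note the paper deliberately supplies its own proof precisely to pin down the correspondence of frozen vertices, which is what is needed later; so if you go that route you must still verify that $\widetilde{\bJ}(\bi_Q(n))$ is the basic quiver for $\bi_Q(n)$ with the $y_i$ in the stated positions rather than taking it for granted.
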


\begin{proof}
The case of $\mathfrak{g}=A_n$ is trivial. 
We demonstrate the case of $\mathfrak{g} = C_n$.
Both $\bi_D(n)$ and $\bi_Q(n)$ are reduced expressions of $w_0$, and they are related by the braid relations which induce mutation equivalence of 
$\bJ(\bi_D(n))$ and $\bJ(\bi_Q(n))$. 
For the decorated quiver $\widetilde{\bJ}(\bi_D(n))$,
we define a decorated word $\widetilde{\bi}_D(n)$ of $\bi_D(n)$
in the way as \S~ \ref{subsec:J-equivalence} as
$$
  \widetilde{\bi}_D(n)
  =
  \overset{1}{1} \, (21\overset{2}{2}) \, (3212\overset{3}{3})\cdots
  (n-1 \, n-2 \cdots 212\cdots n-2 \, \overset{n-1}{n-1})(n \, n-1 \cdots 212 \cdots n-1 \overset{n}{n}).
$$
Here $\widetilde{\bi}_D(n)$ knows the location of the additional frozen vertices $y_i$ in $\widetilde{\bJ}(\bi_D(n))$, but the information of half arrows among them.   
Note that in the quiver $\widetilde{\bJ}(\bi_D(n))$ there are half arrows 
$y_i \dashrightarrow y_{i+1}$ for $i=1,\ldots,n-1$.

Applying the decorated braid relations, we change subsequences 
$2 3 \cdots \overset{k-1}{k-1} \, k \,k-1 \cdots 3 2 = k \,k-1 \cdots 3 2 3 \cdots k-1 \,\overset{k-1}{k}$ of $\bi_D(n)$
for $k=3,4,\ldots,n$, 
and get 
\begin{align*}
  \widetilde{\bi}_D(n) 
  &=
  \overset{1}{1} 2 \, 1 (32\overset{2}{3})1 (4323\overset{3}{4})\cdots 1
  (n \,n-1 \cdots 323 \cdots n-1 \, \overset{n-1}{n})(12 \cdots n-1 \overset{n}{n})
  \\
  &= 
  \underline{\overset{1}{1} 2 (3 1 2 \overset{2}{3})(43 12 3\overset{3}{4})\cdots (n \,n-1 \cdots 3123 \cdots n-1 \, \overset{n-1}{n})}(12 \cdots n-1 \overset{n}{n}). 
\end{align*}
It is observed that an underlined part in the above is identified with the original form of $\widetilde{\bi}_D(n-1)$, via $12 \mapsto 1$ and $s \mapsto s-1$ for $s=2,3,\ldots,n$.
Correspondingly, we apply a mutation sequence
\begin{align}\label{eq:DtoQ-Cn}
(\mu^2_{2n-3}\cdots \mu^{n-2}_5 \mu^{n-1}_3)
(\mu^2_{2n-5} \cdots \mu^{n-3}_5 \mu^{n-2}_3)
\cdots (\mu^2_5 \mu^3_3) \mu^2_3
\end{align}
to the quiver $\widetilde{\bJ}({\bi}_D(n))$,
and it turns out  that the resulted quiver contains $\widetilde{\bJ}(\bi_D(n-1))$. See Figure \ref{fig:DtoQ-C5} for the $n=5$ case.
Note that the arrows among $y_i$ are not affected by this procedure.
Next we change the underlined `$\widetilde{\bi}_D(n-1)$-part' similarly
to obtain 
$$
  \widetilde{\bi}_D(n) 
  =
  \underline{\overset{1}{1} 2 3(4 1 2 3 \overset{2}{4})\cdots 
  (n \, n-1 \cdots 41234 \cdots n-1 \, \overset{n-2}{n})}
  (12 \cdots n-1 \, \overset{n-1}{n})(12 \cdots n-1 \overset{n}{n}), 
$$
which contains an underlined `$\widetilde{\bi}_D(n-2)$ part', via $123 \mapsto 1$ and $s \mapsto s-2$ for $s=3,4,\ldots,n$.
By continuing this procedure, we finally obtain 
$$
\widetilde{\bi}_D(n) = (\overset{1}{1}2\cdots n)(12\cdots \overset{2}{n})
(12\cdots \overset{3}{n})\cdots(12\cdots \overset{n}{n}),
$$
which is the decorated word $\widetilde{\bi}_Q(n)$.
Correspondingly, the resulted quiver obtained from $\widetilde{\bJ}({\bi}_D(n))$ is $\widetilde{\bJ}({\bi}_Q(n))$. 
The case of $\mathfrak{g}=B_n$ is proved in the same way, and 
the case of $\mathfrak{g}=D_n$ is essentially same as the $C_{n-1}$ case. 
\end{proof}

\begin{figure}[ht]
\begin{tikzpicture}
\begin{scope}[>=latex]
\path (0,1) node[circle]{2} coordinate(A1) node[left]{$v^1_1$};
\draw (0,1) circle[radius=0.15];
\path (1,1) node[circle]{2} coordinate(A2);
\draw (1,1) circle[radius=0.15];
\path (2,1) node[circle]{2} coordinate(A3);
\draw (2,1) circle[radius=0.15];
\path (4,1) node[circle]{2} coordinate(A4);
\draw (4,1) circle[radius=0.15];
\path (6,1) node[circle]{2} coordinate(A5);
\draw (6,1) circle[radius=0.15];
\path (8,1) node[circle]{2} coordinate(A6) node[right]{$v^1_6$};
\draw (8,1) circle[radius=0.15];
\draw[->,shorten >=4pt,shorten <=4pt] (A1) -- (A2) [thick];
\draw[->,shorten >=4pt,shorten <=4pt] (A2) -- (A3) [thick];
\draw[->,shorten >=4pt,shorten <=4pt] (A3) -- (A4) [thick];
\draw[->,shorten >=4pt,shorten <=4pt] (A4) -- (A5) [thick];
\draw[->,shorten >=4pt,shorten <=4pt] (A5) -- (A6) [thick];
\draw (1,2) circle(2pt) coordinate(B1) node[left]{$v^2_1$};
\draw (2,2) circle(2pt) coordinate(B2);
\draw (3,2) circle(2pt) coordinate(B3);
\draw (4,2) circle(2pt) coordinate(B4);
\draw (5,2) circle(2pt) coordinate(B5);
\draw (6,2) circle(2pt) coordinate(B6);
\draw (7,2) circle(2pt) coordinate(B7);
\draw (8,2) circle(2pt) coordinate(B8);
\draw (9,2) circle(2pt) coordinate(B9) node[right]{$v^2_9$};
\qarrow{B1}{B2}
\qarrow{B2}{B3}
\qarrow{B3}{B4}
\qarrow{B4}{B5}
\qarrow{B5}{B6}
\qarrow{B6}{B7}
\qarrow{B7}{B8}
\qarrow{B8}{B9}
\draw (2,3) circle(2pt) coordinate(C1) node[left]{$v^3_1$};
\draw (5,3) circle(2pt) coordinate(C2);
\draw (6,3) circle(2pt) coordinate(C3);
\draw (7,3) circle(2pt) coordinate(C4);
\draw (8,3) circle(2pt) coordinate(C5);
\draw (9,3) circle(2pt) coordinate(C6);
\draw (10,3) circle(2pt) coordinate(C7) node[right]{$v^3_7$};
\qarrow{C1}{C2}
\qarrow{C2}{C3}
\qarrow{C3}{C4}
\qarrow{C4}{C5}
\qarrow{C5}{C6}
\qarrow{C6}{C7}
\draw[->, shorten >=2pt,shorten <=4pt] (A6) -- (B8) [thick];
\draw[->, shorten >=4pt,shorten <=2pt] (B8) -- (A5) [thick];
\draw[->, shorten >=2pt,shorten <=4pt] (A5) -- (B6) [thick];
\draw[->, shorten >=4pt,shorten <=2pt] (B6) -- (A4) [thick];
\draw[->, shorten >=2pt,shorten <=4pt] (A4) -- (B4) [thick];
\draw[->, shorten >=4pt,shorten <=2pt] (B4) -- (A3) [thick];
\draw[->, shorten >=2pt,shorten <=4pt] (A3) -- (B2) [thick];
\draw[->, shorten >=4pt,shorten <=2pt] (B2) -- (A2) [thick];
\draw[->, shorten >=2pt,shorten <=4pt] (A2) -- (B1) [thick];
\qarrow{B9}{C6}
\qarrow{C6}{B7}
\qarrow{B7}{C4}
\qarrow{C4}{B5}
\qarrow{B5}{C2}
\qarrow{C2}{B3}
\qarrow{B3}{C1}
\draw (3,4) circle(2pt) coordinate(D1) node[left]{$v^4_1$};
\draw (8,4) circle(2pt) coordinate(D2);
\draw (9,4) circle(2pt) coordinate(D3);
\draw (10,4) circle(2pt) coordinate(D4);
\draw (11,4) circle(2pt) coordinate(D5) node[right]{$v^4_5$};
\qarrow{D1}{D2}
\qarrow{D2}{D3}
\qarrow{D3}{D4}
\qarrow{D4}{D5}
\qarrow{C7}{D4}
\qarrow{D4}{C5}
\qarrow{C5}{D2}
\qarrow{D2}{C3}
\qarrow{C3}{D1}
\draw (4,5) circle(2pt) coordinate(E1) node[left]{$v^5_1$};
\draw (11,5) circle(2pt) coordinate(E2);
\draw (12,5) circle(2pt) coordinate(E3) node[right]{$v^5_3$};
\qarrow{E1}{E2}
\qarrow{E2}{E3}
\qarrow{D5}{E2}
\qarrow{E2}{D3}
\qarrow{D3}{E1}
\draw[->,dashed, shorten >=2pt,shorten <=4pt] (B1) -- (A1) [thick];
\qdarrow{E1}{D1}
\qdarrow{D1}{C1}
\qdarrow{C1}{B1}
\draw[->,dashed, shorten >=2pt,shorten <=4pt] (B9) -- (A6) [thick];
\qdarrow{E3}{D5}
\qdarrow{D5}{C7}
\qdarrow{C7}{B9}
{\color{blue}
\path (0.5,0.7) node[circle]{2} coordinate(Y1) node[below]{$y_1$};
\draw (0.5,0.7) circle[radius=0.15];
\draw (2.5,1.7) circle(2pt) coordinate(Y2) node[below]{$y_2$};
\draw (5.5,2.7) circle(2pt) coordinate(Y3) node[below]{$y_3$};
\draw (8.5,3.7) circle(2pt) coordinate(Y4) node[below]{$y_4$};
\draw (11.5,4.7) circle(2pt) coordinate(Y5) node[below]{$y_5$};
\draw[->,shorten >=4pt,shorten <=4pt] (A2) -- (Y1) [thick];
\draw[->,shorten >=4pt,shorten <=4pt] (Y1) -- (A1) [thick];
\qarrow{B3}{Y2}
\qarrow{Y2}{B2}
\qarrow{C3}{Y3}
\qarrow{Y3}{C2}
\qarrow{D3}{Y4}
\qarrow{Y4}{D2}
\qarrow{E3}{Y5}
\qarrow{Y5}{E2}
\draw[->,dashed,shorten >=2pt,shorten <=4pt] (Y1) -- (Y2) [thick];
\qdarrow{Y2}{Y3}
\qdarrow{Y3}{Y4}
\qdarrow{Y4}{Y5}
}
\coordinate (P1) at (5,0.3);
\coordinate (P2) at (5,-0.7);
\draw[->] (P1) -- (P2) [thick];
\draw (5.2,-0.2) node[right]{$(\mu^2_7 \mu^3_5 \mu^4_3) (\mu^2_5 \mu^3_3) \mu^2_3$};  
{\color{red}
\draw(3,2) circle[radius=0.15];
\draw(5,2) circle[radius=0.15];
\draw(7,2) circle[radius=0.15];
\draw(6,3) circle[radius=0.15];
\draw(8,3) circle[radius=0.15];
\draw(9,4) circle[radius=0.15];
}
\end{scope}
\begin{scope}[>=latex,yshift=-180]
\path (0,1) node[circle]{2} coordinate(A1) node[left]{$v^1_1$};
\draw (0,1) circle[radius=0.15];
\path (1,1) node[circle]{2} coordinate(A2);
\draw (1,1) circle[radius=0.15];
\path (2,1) node[circle]{2} coordinate(A3);
\draw (2,1) circle[radius=0.15];
\path (4,1) node[circle]{2} coordinate(A4);
\draw (4,1) circle[radius=0.15];
\path (6,1) node[circle]{2} coordinate(A5);
\draw (6,1) circle[radius=0.15];
\path (8,1) node[circle]{2} coordinate(A6) node[right]{$v^1_6$};
\draw (8,1) circle[radius=0.15];
\draw[->,shorten >=4pt,shorten <=4pt] (A1) -- (A2) [thick];
\draw[->,shorten >=4pt,shorten <=4pt] (A2) -- (A3) [thick];
\draw[->,shorten >=4pt,shorten <=4pt] (A3) -- (A4) [thick];
\draw[->,shorten >=4pt,shorten <=4pt] (A4) -- (A5) [thick];
\draw[->,shorten >=4pt,shorten <=4pt] (A5) -- (A6) [thick];
\draw (1,2) circle(2pt) coordinate(B1) node[left]{$v^2_1$};
\draw (2,2) circle(2pt) coordinate(B2);
\draw (4,2) circle(2pt) coordinate(B4);
\draw (6,2) circle(2pt) coordinate(B6);
\draw (8,2) circle(2pt) coordinate(B8);
\draw (9,2) circle(2pt) coordinate(B9) node[right]{$v^2_9$};
\qarrow{B1}{B2}
\qarrow{B2}{B4}
\qarrow{B4}{B6}
\qarrow{B6}{B8}
\qarrow{B8}{B9}
\draw (2,3) circle(2pt) coordinate(C1) node[left]{$v^3_1$};
\draw (4,3) circle(2pt) coordinate(C8);
\draw (5,3) circle(2pt) coordinate(C2);
\draw (6,3) circle(2pt) coordinate(C3);
\draw (7,3) circle(2pt) coordinate(C4);
\draw (8,3) circle(2pt) coordinate(C5);
\draw (9,3) circle(2pt) coordinate(C6);
\draw (10,3) circle(2pt) coordinate(C7) node[right]{$v^3_7$};
\qarrow{C1}{C8}
\qarrow{C8}{C2}
\qarrow{C2}{C3}
\qarrow{C3}{C4}
\qarrow{C4}{C5}
\qarrow{C5}{C6}
\qarrow{C6}{C7}
\draw[->, shorten >=2pt,shorten <=4pt] (A6) -- (B8) [thick];
\draw[->, shorten >=4pt,shorten <=2pt] (B8) -- (A5) [thick];
\draw[->, shorten >=2pt,shorten <=4pt] (A5) -- (B6) [thick];
\draw[->, shorten >=4pt,shorten <=2pt] (B6) -- (A4) [thick];
\draw[->, shorten >=2pt,shorten <=4pt] (A4) -- (B4) [thick];
\draw[->, shorten >=4pt,shorten <=2pt] (B4) -- (A3) [thick];
\draw[->, shorten >=2pt,shorten <=4pt] (A3) -- (B2) [thick];
\draw[->, shorten >=4pt,shorten <=2pt] (B2) -- (A2) [thick];
\draw[->, shorten >=2pt,shorten <=4pt] (A2) -- (B1) [thick];
\qarrow{B9}{C6}
\qarrow{C6}{B8}
\qarrow{B8}{C5}
\qarrow{C5}{B6}
\qarrow{B6}{C3}
\qarrow{C3}{B4}
\qarrow{B4}{C8}
\qarrow{C8}{B2}
\qarrow{B2}{C1}
\draw (3,4) circle(2pt) coordinate(D1) node[left]{$v^4_1$};
\draw (7,4) circle(2pt) coordinate(D6);
\draw (8,4) circle(2pt) coordinate(D2);
\draw (9,4) circle(2pt) coordinate(D3);
\draw (10,4) circle(2pt) coordinate(D4);
\draw (11,4) circle(2pt) coordinate(D5) node[right]{$v^4_5$};
\qarrow{D1}{D6}
\qarrow{D6}{D2}
\qarrow{D2}{D3}
\qarrow{D3}{D4}
\qarrow{D4}{D5}
\qarrow{C7}{D4}
\qarrow{D4}{C6}
\qarrow{C6}{D3}
\qarrow{D3}{C4}
\qarrow{C4}{D6}
\qarrow{D6}{C2}
\qarrow{C2}{D1}
\draw (4,5) circle(2pt) coordinate(E1) node[left]{$v^5_1$};
\draw (10,5) circle(2pt) coordinate(E4);
\draw (11,5) circle(2pt) coordinate(E2);
\draw (12,5) circle(2pt) coordinate(E3) node[right]{$v^5_3$};
\qarrow{E1}{E4}
\qarrow{E4}{E2}
\qarrow{E2}{E3}
\qarrow{D5}{E2}
\qarrow{E2}{D4}
\qarrow{D4}{E4}
\qarrow{E4}{D2}
\qarrow{D2}{E1}
\draw[->,dashed, shorten >=2pt,shorten <=4pt] (B1) -- (A1) [thick];
\qdarrow{E1}{D1}
\qdarrow{D1}{C1}
\qdarrow{C1}{B1}
\draw[->,dashed, shorten >=2pt,shorten <=4pt] (B9) -- (A6) [thick];
\qdarrow{E3}{D5}
\qdarrow{D5}{C7}
\qdarrow{C7}{B9}
{\color{blue}
\path (0.5,0.7) node[circle]{2} coordinate(Y1) node[below]{$y_1$};
\draw (0.5,0.7) circle[radius=0.15];
\draw (4.5,2.7) circle(2pt) coordinate(Y2) node[below]{$y_2$};
\draw (7.5,3.7) circle(2pt) coordinate(Y3) node[below]{$y_3$};
\draw (10.5,4.7) circle(2pt) coordinate(Y4) node[below]{$y_4$};
\draw (11.5,4.7) circle(2pt) coordinate(Y5) node[below]{$y_5$};
\draw[->,shorten >=4pt,shorten <=4pt] (A2) -- (Y1) [thick];
\draw[->,shorten >=4pt,shorten <=4pt] (Y1) -- (A1) [thick];
\qarrow{C2}{Y2}
\qarrow{Y2}{C8}
\qarrow{D2}{Y3}
\qarrow{Y3}{D6}
\qarrow{E2}{Y4}
\qarrow{Y4}{E4}
\qarrow{E3}{Y5}
\qarrow{Y5}{E2}
\draw[->,dashed,shorten >=2pt,shorten <=4pt] (Y1) -- (Y2) [thick];
\qdarrow{Y2}{Y3}
\qdarrow{Y3}{Y4}
\qdarrow{Y4}{Y5}
}
{\color{red}
\coordinate (P1) at (3.7,5.3); \coordinate (P2) at (11.7,5.3);
\coordinate (P3) at (0.2,1.7); \coordinate (P4) at (8.2,1.7);
\draw[dashed] (P1) -- (P2);
\draw[dashed] (P1) -- (P3);
\draw[dashed] (P2) -- (P4);
\draw[dashed] (P3) -- (P4);
}
\end{scope}
\end{tikzpicture}
\caption{The case of $n=5$:
the action of the mutation sequence \eqref{eq:DtoQ-Cn} on 
$\widetilde\bJ(\bi_D(5))$.
The resulted quiver contains $\widetilde{\bJ}(\bi_D(4))$ in a red dashed rectangle. The mutation points in $\widetilde\bJ(\bi_D(5))$ are the vertices with red circles.}
\label{fig:DtoQ-C5}
\end{figure}

We write $M_{D \to Q}$ for the mutation sequence which transforms 
$\widetilde{\bJ}({\bi}_D(n))$ to $\widetilde{\bJ}({\bi}_Q(n))$.
Though the following lemma follows from Proposition \ref{p:astmuteq} or the definition of the basic quiver (Remark \ref{rem:JforA}),
we give a proof for a later use.  

\begin{lem}\label{lem:Q-Q'} 
When $\mathfrak{g}= A_n$,
the quiver $\widetilde{\bJ}(\bi_Q(n))$ is mutation equivalent to $\widetilde{\bJ}(\bi^\ast_Q(n))$.
\end{lem}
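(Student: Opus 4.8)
The plan is to prove Lemma~\ref{lem:Q-Q'} by exhibiting an explicit mutation sequence, following the same strategy as the proof of Lemma~\ref{lem:JDtoQ} but now relating the two different reduced words $\bi_Q(n)$ and $\bi^\ast_Q(n)$ of $w_0 \in W(A_n)$. Recall that $\bi_Q(n) = (1\,21\,321\,\ldots\,n(n-1)\cdots 1)$ while $\bi^\ast_Q(n)$ is obtained by replacing each letter $s$ by $s^\ast = n+1-s$; explicitly $\bi^\ast_Q(n) = (n\,(n{-}1)n\,(n{-}2)(n{-}1)n\,\ldots\,1\,2\cdots n)$. These are two reduced expressions of the same element $w_0$, so by the theory of \cite{FG06} recalled in \S~\ref{subsec:J-equivalence} (specifically the braid-relation quiver moves \eqref{quiver:m=3}, since in type $A_n$ all $m_{st} \in \{2,3\}$), the quivers $\bJ(\bi_Q(n))$ and $\bJ(\bi^\ast_Q(n))$ are mutation equivalent. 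The content of the lemma is that this mutation equivalence extends to the \emph{decorated} quivers $\widetilde{\bJ}(\bi_Q(n))$ and $\widetilde{\bJ}(\bi^\ast_Q(n))$, including the correct matching of the frozen vertices $y_i \leftrightarrow y'_{n+1-i}$ as recorded in Remark~\ref{r:u-and-v}.

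First I would set up the decorated words in the notation of \S~\ref{subsec:J-equivalence}: on the $\bi_Q(n)$ side the frozen vertices $y_1,\dots,y_n$ sit along the horizontal edges of the top row (Dynkin index $1$), while on the $\bi^\ast_Q(n)$ side the frozen vertices $y'_1,\dots,y'_n$ sit along the horizontal edges of the bottom row (Dynkin index $n$), reversed in order. Concretely, one passes from $\bi_Q(n)$ to $\bi^\ast_Q(n)$ by a chain of decorated braid moves $(\overset{i}{s}ts)=(\overset{i}{t}st)$ and $(sts)=(tst)$ applied to commuting-adjacent triples; I would organize this inductively, peeling off the outermost block, exactly mirroring the inductive scheme in the proof of Lemma~\ref{lem:JDtoQ}. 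At each stage the decorated braid move is realized by a single mutation $\mu^s_2$ as in \eqref{quiver:frozen-move1}, and one checks that the frozen vertex carried along the horizontal edge is transported correctly. Because the paper asks for a plan and not the full grind, I would present the sequence schematically and verify the base cases $n=1,2$ and one representative inductive step (say $n=3$, matching Figure~\ref{fig:tildeQ-A3}) by hand, then invoke the inductive pattern.

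An alternative, cleaner route — which I would mention as a remark — is to deduce the lemma from Lemma~\ref{lem:JDtoQ} together with $\bi_Q(n)=\bi_D(n)$ for type $A_n$ and the identification ${w'}_i^s = (w_i^{s^\ast})^\ast$ from the remark after Theorem~\ref{t:astclusterstr}: the $\ast$-involution on $\Conf_3\A_G$ of Proposition~\ref{p:Aast} intertwines the cluster $\A$-chart of Theorem~\ref{t:clusterstr} with that of Theorem~\ref{t:astclusterstr}, and since both charts sit on the same variety $\Conf_3\A_{SL_{n+1}}$, they must be related by a mutation sequence; this is essentially Proposition~\ref{p:astmuteq}. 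However, since the point of the present section is to be self-contained and the explicit mutation sequence $M_{D\to Q}$ is wanted for later use, the hands-on approach is preferable.

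The main obstacle will be bookkeeping: tracking precisely which frozen vertex lies on which horizontal edge as one performs the chain of braid moves, and confirming at the end that the surviving frozen vertices land in the configuration dictated by $\widetilde{\bJ}(\bi^\ast_Q(n))$ with the order reversal $y_i \mapsto y'_{n+1-i}$. The braid moves $(\overset{2}{s}ts)=(ts\overset{2}{t})$ move a decoration from one side of a length-three block to the other, and because $\bi_Q(n)$ has its decorations clustered at one end while $\bi^\ast_Q(n)$ has them clustered at the opposite end, each decoration must migrate across essentially the whole word — so the combinatorics of the migration is where care is needed. The half-arrows among the $y_i$'s (the dashed arrows $y_i \dashrightarrow y_{i+1}$) are, as in the proof of Lemma~\ref{lem:JDtoQ}, unaffected by the mutations involved, which simplifies matters; I would note this explicitly and check it on the representative case.
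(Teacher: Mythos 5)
Your main plan---pass from the decorated word $\widetilde{\bi}_Q(n)$ to $\widetilde{\bi}_Q^\ast(n)$ by a chain of decorated braid moves as in \eqref{quiver:frozen-move1}, organized inductively by peeling off blocks in the style of the proof of \cref{lem:JDtoQ}---is essentially the paper's proof: it rewrites $\widetilde{\bi}_Q(n)=\widetilde{\bi}_Q^{(1)}(n-1)\,\widetilde{\bi}_1=\cdots=\widetilde{\bi}_Q^{(n-1)}(1)\,\widetilde{\bi}_{n-1}\cdots\widetilde{\bi}_1$ and realizes each rewriting by an explicit mutation sequence $T(k)$, so that $T(1)\cdots T(n-1)$ does the job. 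Your suggested shortcut via \cref{p:astmuteq} is also the route the paper acknowledges (it is exactly \cite[Proposition 9.8]{GS16}), but, as you say, the explicit sequence is what is wanted later.

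One concrete assertion in your plan is false, however: the dashed arrows among the frozen vertices $y_i$ are \emph{not} unaffected by these mutations. Mutation at an unfrozen vertex adjacent to two frozen vertices changes the arrows between them, and here it must: under the identification $y'_j=y_{n+1-j}$ forced by \cref{r:u-and-v}, the chain $y_1\dashrightarrow y_2\dashrightarrow\cdots\dashrightarrow y_n$ of $\widetilde{\bJ}(\bi_Q(n))$ has to emerge entirely reversed in $\widetilde{\bJ}(\bi_Q^\ast(n))$, and the paper records that already after $T(n-1)$ the arrow between $y_1$ and $y_2$ has flipped. This is bookkeeping rather than a structural obstruction, so your argument survives, but the simplification you invoke (and propose to lean on, by analogy with \cref{lem:JDtoQ}, where the arrows among the $y_i$ genuinely stay put) is not available and would be contradicted by the representative case you intend to check.
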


\begin{proof}
For a reduced word $\bi = (s_1 s_2 \cdots s_N)$ in $W(A_n)$ 
such that 
$$
\max(\bi) := \max\{s_1,s_2,\ldots,s_N\} = p < n,
$$ 
define a shifted word 
$\bi^{(n-p)} = (s_1' s_2' \cdots s_N')$ in $W(A_n)$ by $s_k' = s_k+(n-p)$.
Hence it holds that $\max(\bi^{(n-p)}) = n$.
For $0<p<n$, we can regard $\bi_Q(p)$ as a reduced word in $W(A_n)$
satisfying $\max(\bi_Q(p)) = p$.
For $0<p<n$, define a word $\bi_p := (p~p+1~\cdots ~n)$ in $W(A_n)$,
and its decoration with a frozen vertex $y_p$ as $\widetilde{\bi}_p := (p~p+1~\cdots ~\overset{p}{n})$.
For $k=1,2,\ldots,n-1$, define mutation sequences 
\begin{align}\label{eq:T-An}
  &T(k) = \mu(k) \mu(k-1) \cdots \mu(1);
  ~~\mu(k) = \mu^1_{k+1} \mu^2_k \cdots \mu^{k-1}_3 \mu^k_{2}.
\end{align}

For the reduced word $\bi_Q(n)$, define a decorated word 
$$
  \widetilde{\bi}_Q(n) := 
  \overset{1}{1} (2\overset{2}{1}) (32\overset{3}{1}) \cdots 
  (n-1 \, n-2 \, \cdots 2 \overset{n-1}{1})
  (n \, n-1 \, \cdots 2 \overset{n}{1}).
$$
Using a decorated braid relation, we obtain 
$$ 
  \widetilde{\bi}_Q(n)
  = \overset{2}{2}(3\overset{3}{2})(43\overset{4}{2})\cdots
    (n \, n-1 \, \cdots 3 \overset{n}{2})(12\cdots n-1 \, \overset{1}{n})
  = \widetilde{\bi}_Q^{(1)}(n-1) \widetilde{\bi}_1,
$$
where the superscripts $i$ in $\widetilde{\bi}_Q(n-1)$ are shifted to be $i+1$ in $\widetilde{\bi}_Q^{(1)}(n-1)$.  
Correspondingly, we apply a mutation sequence $T(n-1)$ \eqref{eq:T-An} to the quiver $\widetilde{\bJ}(\bi_Q(n))$. 
The resulted quiver contains $\widetilde{\bJ}(\bi_Q(n-1))$ as seen at Figure 
\ref{fig:wJ-hJn=4}.
Note that the arrows among frozen vertices $y_i$ are changed from
$y_1 \dashrightarrow y_2 \dashrightarrow \cdots \dashrightarrow y_n$
to $y_1 \dashleftarrow y_2 \dashrightarrow \cdots \dashrightarrow y_n$.
Similarly, corresponding to the change of expression of word $w_0$ as 
\begin{align*}
  \widetilde{\bi}_Q(n) &= \widetilde{\bi}_Q^{(1)}(n-1) \widetilde{\bi}_1
  = \widetilde{\bi}_Q^{(2)}(n-2) \widetilde{\bi}_2 \widetilde{\bi}_1
  = \cdots
  \\
  &= \widetilde{\bi}_Q^{(n-2)}(2) \widetilde{\bi}_{n-2} \cdots \widetilde{\bi}_2 \widetilde{\bi}_1 
  = \widetilde{\bi}_Q^{(n-1)}(1) \widetilde{\bi}_{n-1} 
    \widetilde{\bi}_{n-2} \cdots \widetilde{\bi}_2 \widetilde{\bi}_1, 
\end{align*}
we further apply mutation sequences $T(n-2)$, $T(n-3),\ldots,T(1)$ to the 
quiver, and obtain 
\begin{align}\label{eq:mutation-DtoQ}
  T(1) T(2) \cdots T(n-1)(\widetilde{\bJ}(\bi_Q(n))) 
  = \widetilde{\bJ}(\bi^\ast_Q(n)). 
\end{align}
\end{proof}

\begin{figure}[ht]
\begin{tikzpicture}
\begin{scope}[>=latex]
\draw (1,1) circle(2pt) coordinate(A1) node[above]{$v^1_1$};
\draw (3,1) circle(2pt) coordinate(A2) node[above]{$v^1_2$};
\draw (5,1) circle(2pt) coordinate(A3) node[above]{$v^1_3$};
\draw (7,1) circle(2pt) coordinate(A4) node[above]{$v^1_4$};
\draw (9,1) circle(2pt) coordinate(A5) node[above]{$v^1_5$};
\draw (2,2) circle(2pt) coordinate(B1) node[above]{$v^2_1$};
\draw (4,2) circle(2pt) coordinate(B2) node[above]{$v^2_2$};
\draw (6,2) circle(2pt) coordinate(B3) node[above]{$v^2_3$};
\draw (8,2) circle(2pt) coordinate(B4) node[above]{$v^2_4$};
\draw (3,3) circle(2pt) coordinate(C1) node[above]{$v^3_1$};
\draw (5,3) circle(2pt) coordinate(C2) node[above]{$v^3_2$};
\draw (7,3) circle(2pt) coordinate(C3) node[above]{$v^3_3$};
\draw (4,4) circle(2pt) coordinate(D1) node[above]{$v^4_1$};
\draw (6,4) circle(2pt) coordinate(D2) node[above]{$v^4_2$};
\qarrow{A1}{A2}
\qarrow{A2}{A3}
\qarrow{A3}{A4}
\qarrow{A4}{A5}
\qarrow{B1}{B2}
\qarrow{B2}{B3}
\qarrow{B3}{B4}
\qarrow{C1}{C2}
\qarrow{C2}{C3}
\qarrow{D1}{D2}
\qdarrow{B1}{A1}
\qdarrow{C1}{B1}
\qdarrow{D1}{C1}
\qdarrow{A5}{B4}
\qdarrow{B4}{C3}
\qdarrow{C3}{D2}
\qarrow{B4}{A4}
\qarrow{A4}{B3}
\qarrow{B3}{A3}
\qarrow{A3}{B2}
\qarrow{B2}{A2}
\qarrow{A2}{B1}
\qarrow{C3}{B3}
\qarrow{B3}{C2}
\qarrow{C2}{B2}
\qarrow{B2}{C1}
\qarrow{D2}{C2}
\qarrow{C2}{D1}
{\color{blue}
\draw (2,0.3) circle(2pt) coordinate(Y1) node[above]{$y_1$};
\draw (4,0.3) circle(2pt) coordinate(Y2) node[above]{$y_2$};
\draw (6,0.3) circle(2pt) coordinate(Y3) node[above]{$y_3$};
\draw (8,0.3) circle(2pt) coordinate(Y4) node[above]{$y_4$};
\qarrow{A5}{Y4}
\qarrow{Y4}{A4}
\qarrow{A4}{Y3}
\qarrow{Y3}{A3}
\qarrow{A3}{Y2}
\qarrow{Y2}{A2}
\qarrow{A2}{Y1}
\qarrow{Y1}{A1}
\qdarrow{Y1}{Y2}
\qdarrow{Y2}{Y3}
\qdarrow{Y3}{Y4}
}
\coordinate (P1) at (5,-0.2);
\coordinate (P2) at (5,-0.9);
\draw[->,thick] (P1) -- (P2);
\draw (5.6,-0.6) node{$T(3)$};
\end{scope}
\begin{scope}[>=latex,yshift=-170pt,xshift=-27pt]
\draw (1,1) circle(2pt) coordinate(A1) node[above]{$v^1_1$};
\draw (4,2) circle(2pt) coordinate(A2) node[above]{$v^1_2$};
\draw (6,2) circle(2pt) coordinate(A3) node[above]{$v^1_3$};
\draw (8,2) circle(2pt) coordinate(A4) node[above]{$v^1_4$};
\draw (11,1) circle(2pt) coordinate(A5) node[above]{$v^1_5$};
\draw (2,2) circle(2pt) coordinate(B1) node[above]{$v^2_1$};
\draw (5,3) circle(2pt) coordinate(B2) node[above]{$v^2_2$};
\draw (7,3) circle(2pt) coordinate(B3) node[above]{$v^2_3$};
\draw (10,2) circle(2pt) coordinate(B4) node[above]{$v^2_4$};
\draw (3,3) circle(2pt) coordinate(C1) node[above]{$v^3_1$};
\draw (6,4) circle(2pt) coordinate(C2) node[above]{$v^3_2$};
\draw (9,3) circle(2pt) coordinate(C3) node[above]{$v^3_3$};
\draw (4,4) circle(2pt) coordinate(D1) node[above]{$v^4_1$};
\draw (8,4) circle(2pt) coordinate(D2) node[above]{$v^4_2$};
\qarrow{A1}{A5}
\qarrow{B1}{A2}
\qarrow{A2}{A3}
\qarrow{A3}{A4}
\qarrow{A4}{B4}
\qarrow{C1}{B2}
\qarrow{B2}{B3}
\qarrow{B3}{C3}
\qarrow{C3}{C3}
\qarrow{D1}{C2}
\qarrow{C2}{D2}
\qdarrow{A1}{B1}
\qdarrow{C1}{B1}
\qdarrow{D1}{C1}
\qdarrow{B4}{A5}
\qdarrow{C3}{B4}
\qdarrow{D2}{C3}
\qarrow{A5}{A4}
\qarrow{A4}{A1}
\qarrow{B4}{B3}
\qarrow{B3}{A3}
\qarrow{A3}{B2}
\qarrow{B2}{A2}
\qarrow{A2}{C1}
\qarrow{C3}{C2}
\qarrow{C2}{B2}
\qarrow{B2}{D1}
{\color{blue}
\draw (7,4.7) circle(2pt) coordinate(Y1) node[above]{$y_1$};
\draw (3,1.3) circle(2pt) coordinate(Y2) node[above]{$y_2$};
\draw (5,1.3) circle(2pt) coordinate(Y3) node[above]{$y_3$};
\draw (7,1.3) circle(2pt) coordinate(Y4) node[above]{$y_4$};
\qarrow{D2}{Y1}
\qarrow{Y1}{C2}
\qarrow{A4}{Y4}
\qarrow{Y4}{A3}
\qarrow{A3}{Y3}
\qarrow{Y3}{A2}
\qarrow{A2}{Y2}
\qarrow{Y2}{B1}
\draw[->,dashed,shorten >=2pt,shorten <=2pt] (Y2) to [out = 60, in = 180] (Y1);
\qdarrow{Y2}{Y3}
\qdarrow{Y3}{Y4}
}
{\color{red}
\coordinate (P1) at (3.8,4.3); \coordinate (P2) at (6.3,4.3);
\coordinate (P3) at (1.3,1.7); \coordinate (P4) at (8.7,1.7);
\draw[dashed] (P1) -- (P2);
\draw[dashed] (P1) -- (P3);
\draw[dashed] (P2) -- (P4);
\draw[dashed] (P3) -- (P4);
}
\end{scope}
\end{tikzpicture}
\caption{The case of $n=4$:
the action of $T(3)$ \eqref{eq:T-An} on $\widetilde\bJ(\bi_Q(4))$.
The resulted quiver contains $\widetilde{\bJ}(\bi_Q(3))$ 
in a red dashed rectangle.}
\label{fig:wJ-hJn=4}
\end{figure}

\begin{prop}\label{prop:QandD}
The quivers $\widetilde{Q}_h(\mathfrak{g})$ and $D(\mathfrak{g})$ are mutation equivalent. 
\end{prop}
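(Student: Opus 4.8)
The plan is to build a chain of mutation equivalences, starting from $\widetilde{Q}_h(\mathfrak{g})$ and ending at $D(\mathfrak{g})$, using the machinery already developed in this section together with Lemma~\ref{lem:Q-J} and Lemma~\ref{lem:JDtoQ}. The key observation is that both quivers are obtained by amalgamating a ``left half'' and a ``right half'' along the vertical boundary (vertices with Dynkin index data $\{v_1^s\}$) and the other boundary (the frozen vertices $\{y_s\}$), so it suffices to identify the two halves up to mutation, being careful about the frozen boundary. More precisely, recall from Lemma~\ref{lem:Q-J} that for $\mathfrak{g}\neq A_n$ the quiver $Q_{h/2}(\mathfrak{g})$ (taking $k=2$, so $Q_{kh/2}=Q_h$) is the amalgamation of two copies of $\bJ(\bi_Q(n))$ by identifying $v^{s,(2)}_1$ with $v^{s,(1)}_{i_{\max}(s)}$, and correspondingly $\widetilde{Q}_h(\mathfrak{g})$ is the amalgamation of two copies of $\widetilde{\bJ}(\bi_Q(n))$; for $\mathfrak{g}=A_n$ the two building blocks are $\widetilde{\bJ}(\bi_Q(n))$ and $\widetilde{\bJ}(\bi_Q^\ast(n))$. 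On the other hand, $D(\mathfrak{g})$ is by construction the amalgamation of $\widetilde{\bJ}(\bi_D(n))$ and $\widetilde{\bJ}(\overline{\bi}_D(n))$ along the analogous boundaries (with the $A_n$ case again treated separately, where $\bi_D(n)=\bi_Q(n)$ and the second block is $\widetilde{\bJ}(\bi_D^\ast(n))=\widetilde{\bJ}(\bi_Q^\ast(n))$).

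First I would treat the case $\mathfrak{g}=A_n$, which is essentially immediate: here $\widetilde{\bJ}(\bi_D(n))=\widetilde{\bJ}(\bi_Q(n))$, so the left halves already agree, and the right half of $D(A_n)$ is $\widetilde{\bJ}(\bi_Q^\ast(n))$, which by Lemma~\ref{lem:Q-Q'} is mutation equivalent to $\widetilde{\bJ}(\bi_Q(n))$ via the mutation sequence $T(1)T(2)\cdots T(n-1)$ of \eqref{eq:mutation-DtoQ}. Applying this mutation sequence inside the right copy of $D(A_n)$, while leaving the left copy untouched, transforms $D(A_n)$ into the amalgamation of two copies of $\widetilde{\bJ}(\bi_Q(n))$, which is $\widetilde{Q}_h(A_n)$ by Lemma~\ref{lem:Q-J} — provided the mutation sequence respects the boundary identifications. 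This is where one must check: $T(1)\cdots T(n-1)$ involves only mutations $\mu_i^s$ at unfrozen vertices $v^s_i$ with $i\geq 2$ (cf.\ \eqref{eq:T-An}), hence fixes all vertices $v_1^s$ and all $y_s$, so it is compatible with amalgamation. One also has to match the frozen vertices $y_i$ vs.\ $y_i'$ correctly, using the relabeling $y_i\leftrightarrow y'_{n+1-i}$ built into the definition of $D(A_n)$ together with Remark~\ref{r:u-and-v}.

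Next, for $\mathfrak{g}=B_n,C_n,D_n$, I would use Lemma~\ref{lem:JDtoQ}: there is a mutation sequence $M_{D\to Q}$ transforming $\widetilde{\bJ}(\bi_D(n))$ into $\widetilde{\bJ}(\bi_Q(n))$. Inspecting the proof of Lemma~\ref{lem:JDtoQ}, the sequence \eqref{eq:DtoQ-Cn} (and its $B_n$/$D_n$ analogues) consists entirely of mutations at unfrozen vertices $v^s_i$ with $i\geq 2$ — indeed the proof explicitly remarks ``the arrows among $y_i$ are not affected by this procedure'' and ``the mutation points are the vertices with red circles,'' which are all interior — so $M_{D\to Q}$ fixes the boundary vertices $v_1^s$ and $y_s$. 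Therefore $M_{D\to Q}$ can be applied in each of the two copies forming $D(\mathfrak{g})$: applying it to the left copy turns $\widetilde{\bJ}(\bi_D(n))$ into $\widetilde{\bJ}(\bi_Q(n))$, and applying the mirror-image sequence to the right copy turns $\widetilde{\bJ}(\overline{\bi}_D(n))$ into (the mirror image of) $\widetilde{\bJ}(\bi_Q(n))$; since amalgamation identifies the copies along the boundary and the boundary is preserved, the result is exactly the amalgamation of two copies of $\widetilde{\bJ}(\bi_Q(n))$, i.e.\ $\widetilde{Q}_h(\mathfrak{g})$ by Lemma~\ref{lem:Q-J}.

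The main obstacle I expect is bookkeeping the boundary identifications through the mutation sequences: one must verify that $M_{D\to Q}$ (resp.\ $T(1)\cdots T(n-1)$), when performed inside an amalgamated quiver rather than the standalone $\widetilde{\bJ}(\bi_D(n))$, still produces the correct quiver — that is, that no mutation touches a vertex lying on the shared boundary, and that the extra arrows coming from the other copy do not alter the local mutation pattern. The cleanest way to handle this is to invoke the general principle (used implicitly throughout \cref{sec:words}) that mutations at vertices away from the gluing locus commute with amalgamation, combined with the explicit verification — already essentially contained in the proofs of Lemmas~\ref{lem:JDtoQ} and~\ref{lem:Q-Q'} — that every mutation in these sequences is at an interior vertex $v^s_i$, $i\geq 2$. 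A secondary, purely combinatorial point is to match the two conventions for labeling frozen vertices ($y_i$ for $\widetilde{Q}_h$ vs.\ the $y_i,y_i'$ of $D(\mathfrak{g})$, with the $n+1-i$ flip in type $A$), which follows from comparing the defining rules in \cref{subsec:tildeQ} with those in \cref{subsec:D-quiver} and using Remark~\ref{r:u-and-v}.
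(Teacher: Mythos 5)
Your overall strategy (convert each triangle's quiver separately by interior mutations and then reassemble) is in the right spirit, but it rests on a structural claim that is set up incorrectly at the outset: the two quivers are \emph{not} amalgamated along the same pairs of triangle edges. By \cref{lem:Q-J} and the construction in \cref{subsec:tildeQ}, $\widetilde{Q}_h(\mathfrak{g})$ is obtained by gluing the triangle quivers along their two $v$-edges (identifying each $v_1^{s,(\ell)}$ with a rightmost vertex of the other copy), while the $2n$ frozen vertices $y_s$ remain pairwise distinct. By contrast, $D(\mathfrak{g})$ is glued along the edge carrying the $y_s$ (identifying $y_s$ with $y_s'$) and only \emph{one} $v$-edge, leaving the $v_1^s$ and $u_R^s$ free. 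Because the gluing loci differ, converting each half from $\widetilde{\bJ}(\bi_D(n))$ to $\widetilde{\bJ}(\bi_Q(n))$ cannot by itself yield $\widetilde{Q}_h(\mathfrak{g})$: the lines of constant Dynkin index would still run between the wrong pair of edges. This is precisely why the paper's proof inserts, for $B_n,C_n,D_n$, the rotations $r(\mp 2\pi/3)$ built from Le's transposition mutation sequences $r_1,r_2$ after applying $M_{D\to Q}$ and $M_{\overline{D}\to Q}$; your argument omits this step entirely. (Also, the left half requires $M_{\overline{D}\to Q}=M_{D\to Q}\circ M_{\overline{D}\to D}$, using the nontrivial equivalence $\widetilde{\bJ}(\overline{\bi}_D(n))\simeq\widetilde{\bJ}(\bi_D(n))$ of \cref{rem:JforA}, not merely a ``mirror-image sequence''.)

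The type $A_n$ case has a second, independent error. The second block of $D(A_n)$ is $\widetilde{\bJ}(\overline{\bi}_D(n))$, which by \cref{rem:JforA} coincides with $\widetilde{\bJ}(\bi_Q(n))$ itself (the two words differ only by commutations), not with $\widetilde{\bJ}(\bi_Q^\ast(n))$. Accordingly the cycles $\rho_s$ of $D(A_n)$ have lengths $2(n+1-s)$, whereas every cycle $P_s$ of $Q_{n+1}(A_n)$ has the uniform length $n+1$; the ``amalgamation of two copies of $\widetilde{\bJ}(\bi_Q(n))$'' that you aim for is therefore \emph{not} $\widetilde{Q}_h(A_n)$ --- by \cref{lem:Q-J} the latter is the amalgamation of one copy of $\widetilde{\bJ}(\bi_Q(n))$ with one copy of $\widetilde{\bJ}(\bi_Q^\ast(n))$. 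The mutation sequence $T=T(1)\cdots T(n-1)$ must be applied in the direction opposite to the one you propose: one converts one of the two identical $\widetilde{\bJ}(\bi_Q(n))$-blocks sitting inside $D(A_n)$ \emph{into} $\widetilde{\bJ}(\bi_Q^\ast(n))$ (this is what redistributes vertices among the cycles and equalizes their lengths), rather than converting a (nonexistent) $\widetilde{\bJ}(\bi_Q^\ast(n))$-block into $\widetilde{\bJ}(\bi_Q(n))$, which would leave you essentially where you started.
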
 

For the proof, see \S~ \ref{subsec:proof-QandD}. 
Write $\mu_{D \to Q}$ for the mutation sequence
which transforms $D(\mathfrak{g})$ to $\widetilde{Q}(\mathfrak{g})$. 
\\

\paragraph{\textbf{Half Dehn twist and Weyl group action}}
We make a remark on the half Dehn twist mentioned in \S~ \ref{intro:related}. 
For a punctured surface $\Sigma$ with $p$ punctures, consider an admissible pair $(\Sigma, \mathfrak{g})$ admitting the mutation class $\mathcal{C}_{\mathfrak{g},\Sigma}$.
The cluster modular group $\Gamma_{\mathcal{C}_{\mathfrak{g},\Sigma}}$ contains the mapping class group MCG$(\Sigma)$, as mentioned in \S~ \ref{subsec:background}, and now we have more: from the definition of the Weyl group action on $\A_{\mathcal{C}_{\mathfrak{g},\Sigma}}$ and Theorem \ref{thm:general surface}, it follows that MCG$(\Sigma) \ltimes W(\mathfrak{g})^p \subset \Gamma_{\mathcal{C}_{\mathfrak{g},\Sigma}}$, i.e.,
for any $\phi \in \mathrm{MCG}(\Sigma)$ and any $w \in W(\mathfrak{g})$ 
it holds that $\phi w \phi^{-1} \in W(\mathfrak{g})$.

Let $\mathbb{D}_2^{p}$ be a $p$-punctured disk with two special points, and triangulate it as shown at Figure \ref{fig:D_2p}.
Note that this triangulation is obtained by gluing $p$ copies of the triangulation of $\mathbb{D}_2^1$ as in Figure \ref{fig:puctured-disk}.
Define a quiver $D(\mathfrak{g})^p$ as an amalgamation of $p$ copies of $D(\mathfrak{g})$ following the gluing of $\mathbb{D}_2^1$. 
Via the mutation sequence $\mu_{D \to Q}$, the action of the Weyl group $W(\mathfrak{g})$ on
$\mathcal{A}_{\widetilde{Q}_h(\mathfrak{g})}$ generated by $R(s) ~(s \in S)$ induces that on $\mathcal{A}_{D(\mathfrak{g})}$, and further the action of $W(\mathfrak{g})^p$ on $\mathcal{A}_{D(\mathfrak{g})^p}$ is obtained.

\begin{figure}[ht]
\unitlength=0.9mm
\begin{picture}(120,40)(-20,40)
 
\multiput(40,70)(0,-30){2}{\circle*{1.5}} 
\multiput(0,55)(20,0){5}{\circle{1.5}} 

\put(40,70){\line(0,-1){30}}
\put(40,55){\ellipse{20}{30}}
\put(40,55){\ellipse{40}{30}}
\put(40,55){\ellipse{60}{30}}
\put(40,55){\ellipse{80}{30}}
\put(40,55){\ellipse{100}{30}}

\put(-3,54){\scriptsize{$1$}}
\put(17,54){\scriptsize{$2$}}
\put(37,54){\scriptsize{$3$}}
\put(53,54){\scriptsize{$\cdots$}}
\put(63,54){\scriptsize{$\cdots$}}
\put(77,54){\scriptsize{$p$}}
\end{picture}
\caption{A triangulation of $\mathbb{D}_2^p$}
\label{fig:D_2p}
\end{figure}
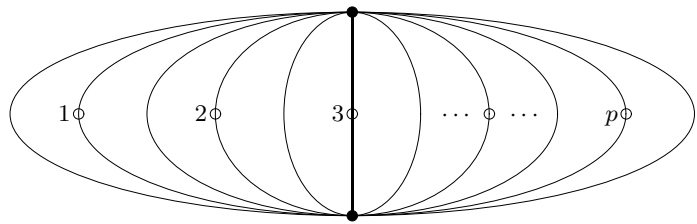

On the space $\A_{G,\mathbb{D}_2^p} \simeq \A_{D(\mathfrak{g})^p}$, besides the action of $W(\mathfrak{g})^p$ we have an interesting braid group action induced by the half Dehn twist studied in \cite{SS16,Ip16} from the view point of the positive representation of the quantum group $U_q(\mathfrak{g})$ (recall \S~ \ref{intro:related}). See Figure \ref{fig:halfDehn} for the half Dehn twist,
where two punctures are twisted counter-clockwise. 
For $a=1,2,\ldots,p-1$, let $b_a \in \Gamma_{D(\mathfrak{g})^p}$ be the half Dehn twist of the $a$-th and $(a+1)$-st punctures, constructed in \cite{SS16} for $\mathfrak{g} = A_n$ and in \cite{Ip16} for $\mathfrak{g} = B_n, C_n$ and $D_n$. We do not present the mutation sequences of the $b_a$, as we do not use them here.

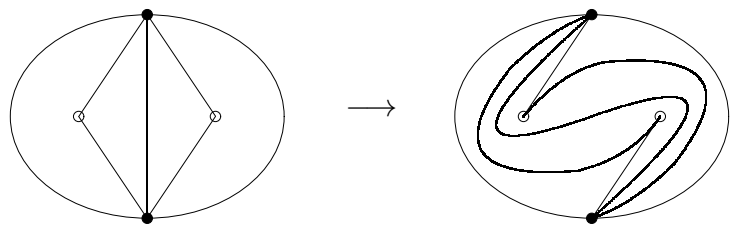
\begin{figure}[ht]
\unitlength=0.9mm
\begin{picture}(100,40)(-10,0)

\multiput(10,35)(0,-30){2}{\circle*{1.5}} 
\multiput(0,20)(20,0){2}{\circle{1.5}} 

\put(10,35){\line(0,-1){30}}

\multiput(10,35)(10,-15){2}{\line(-2,-3){10}}
\multiput(10,35)(-10,-15){2}{\line(2,-3){10}}
\put(10,20){\ellipse{40}{30}}

\put(39,20){${\longrightarrow}$}

\multiput(75,35)(0,-30){2}{\circle*{1.5}} 
\multiput(65,20)(20,0){2}{\circle{1.5}} 

\multiput(75,35)(10,-15){2}{\line(-2,-3){10}}

\qbezier(75,35)(69,33)(63,27)
\qbezier(63,27)(50,10)(73,12)
\qbezier(85,20)(81,14)(73,12)

\qbezier(75,5)(81,7)(87,13)
\qbezier(87,13)(100,30)(77,28)
\qbezier(65,20)(71,27)(77,28)

\qbezier(75,35)(47,10)(75,20)
\qbezier(75,5)(103,30)(75,20)
\put(75,20){\ellipse{40}{30}}
\end{picture}
\caption{A half Dehn twist on $\mathbb{D}_2^2$}
\label{fig:halfDehn}
\end{figure}

Write $B_p(\mathfrak{g})$ for the $p$-braid group generated by the $b_a$ with relations: 
$$
  b_a b_{a+1} b_a = b_{a+1} b_a b_{a+1}, \qquad
  b_a b_{a'} = b_{a'} b_a; ~~ a' \neq a, a \pm 1.
$$
By the definition, it holds that $B_p(\mathfrak{g}) \subset \mathrm{MCG}(\mathbb{D}_2^p) \subset \Gamma_{D(\mathfrak{g})^p}$. Moreover, 
we have $B_p(\mathfrak{g}) \ltimes W(\mathfrak{g})^p \subset \Gamma_{D(\mathfrak{g})^p}$; precisely, for $s \in S$ on the space $\A_{D(\mathfrak{g})^p}$ it holds that
\begin{align}
  \begin{split}\label{eq:B-W}
  &b_a r_s^{(a)} = r_s^{(a+1)} b_a, ~~ b_a r_s^{(a+1)} = r_s^{(a)} b_a; \quad a=1,2,\ldots,p-1,
  \\
  &b_a r_s^{(a')} = r_s^{(a')} b_a; \quad a' \neq a, a+1. 
  \end{split}
\end{align} 
Here recall the notations $W(\mathfrak{g})^{(a)}$ and $r_s^{(a)} \in W(\mathfrak{g})^{(a)}$ in \S~ \ref{subsubsec:action}.
Once Conjecture \ref{introconj:geometric action} is verified, the same claim holds on the space $\X_{D(\mathfrak{g})^p}$.

\begin{example}
In the case of $\mathfrak{g}=A_1$, the quiver $D(A_1)^2$ on $\mathbb{D}_2^2$ is depicted at Figure \ref{fig:D-A1}. The group $W(A_1)^2$ is generated by $r_1^{(1)}= (2,3)\circ \mu_3 \mu_2$ and $r_1^{(2)}= (5,6)\circ \mu_6 \mu_5$, and $B_2(A_1)$ is generated by a unique generator $b_1 = (3,5)(2,5)(3,6) \circ \mu_4 \mu_6 \mu_2 \mu_4$. It is easy to check the relations \eqref{eq:B-W}. 
\end{example}

\begin{figure}[ht]
\scalebox{0.85}
{
\begin{tikzpicture}
\begin{scope}[>=latex]
\draw (0,1.5) circle(2pt) coordinate(A1) node[left]{$1$};
\draw (1.5,3) circle(2pt) coordinate(A2) node[above]{$3$};
\draw (3,1.5) circle(2pt) coordinate(A3) node[above]{$4$}; 
\draw (1.5,0) circle(2pt) coordinate(A4) node[below]{$2$};
\draw (4.5,3) circle(2pt) coordinate(A5) node[above]{$6$};
\draw (6,1.5) circle(2pt) coordinate(A6) node[right]{$7$}; 
\draw (4.5,0) circle(2pt) coordinate(A7) node[below]{$5$};

\qarrow{A2}{A1}
\qarrow{A1}{A4}
\qarrow{A4}{A3}
\qarrow{A3}{A2}

\qarrow{A5}{A3}
\qarrow{A3}{A7}
\qarrow{A7}{A6}
\qarrow{A6}{A5}

\end{scope}
\end{tikzpicture}
}
\caption{The quiver $D(A_1)^2$}
\label{fig:D-A1}
\end{figure}
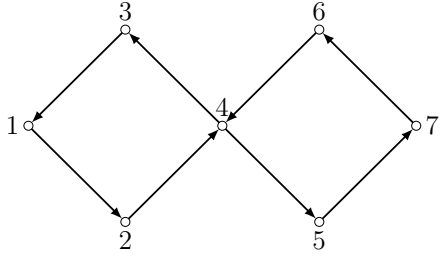

\subsection{An alternative proof of \cref{introthm:geometric action} and 
Conjecture \ref{introconj:geometric action} for $(\Sigma,\mathfrak{g}) = (\mathbb{D}_{2k}^1, A_n$)}

Let $D^{(k)}(A_n)$ be a quiver obtained by assigning $k$ copies of $\widetilde{\mathbf{J}}(\mathbf{s}_D(n))$ in the triangulation of $\mathbb{D}_k^1$ as Figure \ref{f:cycle}. 
In \cite{GS16}, the Weyl group actions on the moduli spaces $\mathcal{X}_{PSL_{n+1},\mathbb{D}_{k}^1}$ and $\mathcal{A}_{SL_{n+1},\mathbb{D}_{k}^1}$ are shown to be cluster transformations realized in the cluster modular group $\Gamma_{D^{(k)}(A_n)}$. By making use of this result, in this subsection we present an alternative (naive) proof of \cref{introthm:geometric action} and 
Conjecture \ref{introconj:geometric action} for $(\Sigma,\mathfrak{g}) = (\mathbb{D}_{2k}^1, A_n)$.



We note that it is satisfactory to prove the case of $\mathbb{D}_{2}^1$, where the quiver $D^{(2)}(A_n)$ is nothing but $D(A_n)$.
Let us explain the result in  \cite{GS16} in the case of $D(A_n)$. 
For the convenience we use the quiver obtained by amalgamating the two triangles in Figure \ref{fig:2triangles} along $BC$ and $A'C'$ identifying $v^s_R$ with $u^s_1$, and along $AC$ and $B'C'$ identifying $v^s_1$ with $u^s_R$, 
which coincides with $D(A_n)$ due to the $\Z_3$-symmetry of $\widetilde{\bJ}(\bi_D(n))$. (Now $C$ is the inner puncture of $\mathbb{D}_2^1$.)
See Figure \ref{fig:D-A3} for $D(A_3)$.
On the quiver $D(A_n)$ there are non-intersecting oriented 
circles $\rho_s ~(s \in S)$ as 
$$
  v^s_1 \to v^s_2 \to \cdots \to v^s_{i_{\max}(s)} = u^s_1 \to u^s_2 \to \cdots \to u^s_{i_{\max}(s)} = v^s_1.
$$
consisting of $2 \,i_{\max}(s) = 2(n+1-s)$ vertices. 
The circle $\rho_n$ consists of only two vertices $v^n_1$ and $v^n_2$, 
since two arrows are canceled. 
Write $v^s_{n+1-s+i}$ for $u^s_i$ for $i=1,\ldots,n+1-s$,
and assume that the subscript $i$ of $v^s_i$ is modulo $2 \,i_{\max}(s)$.
Choose one vertex $v^s_i$ in $\rho_s$, and let $R_D(s,i)$ be a sequence of mutations given by 
\begin{align}\label{eq:R_D}
  R_D(s,i) = (N^s_i)^{-1} \circ (v^s_{2(n-s)+i}, v^s_{2(n-s)+1+i}) \circ 
\mu^s_{2(n-s)+i+1} \mu^s_{2(n-s)+i} N^s_i,
\end{align}
where $N^s_i := \mu^s_{2(n-s)-1+i} \mu^s_{2(n-s)-2+i} \cdots \mu^s_{i+1} \mu^s_i$.   

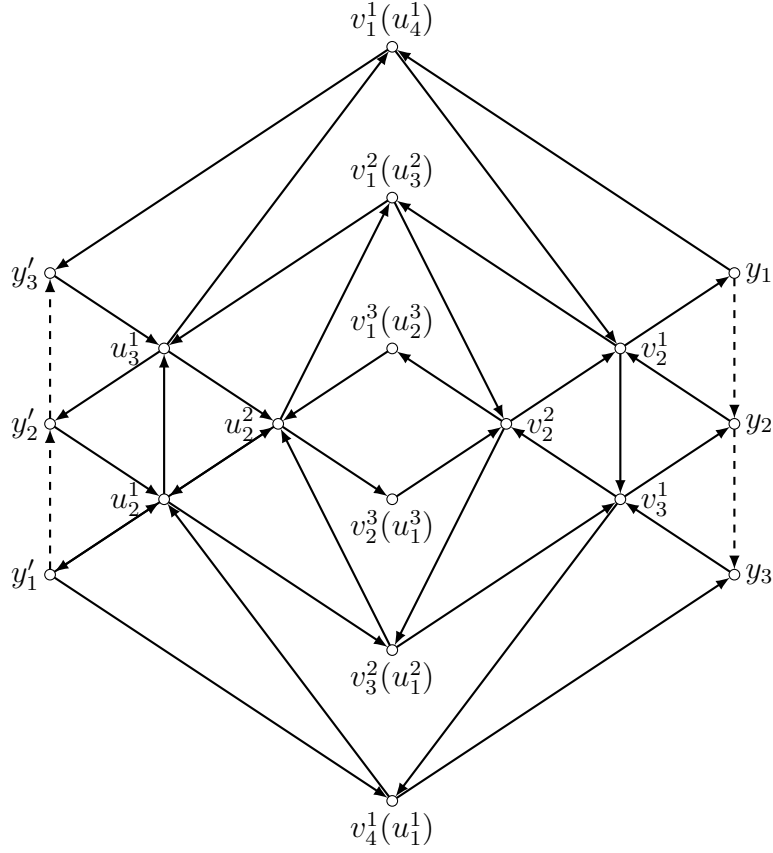
\begin{figure}
\begin{tikzpicture}
\begin{scope}[>=latex]
\draw (4,6) circle(2pt) coordinate(B1) node[above]{$v^3_1(u^3_2)$};
\draw (5.5,5) circle(2pt) coordinate(C1) node[right=0.3em]{$v^2_2$};
\draw (7,4) circle(2pt) coordinate(D1) node[right=0.3em]{$v^1_3$};
\draw (8.5,3) circle(2pt) coordinate(E1) node[right]{$y_3$};
\draw (4,8) circle(2pt) coordinate(B2) node[above]{$v^2_1(u^2_3)$};
\draw (7,6) circle(2pt) coordinate(C2) node[right=0.3em]{$v^1_2$};
\draw (8.5,5) circle(2pt) coordinate(D2) node[right]{$y_2$};
\draw (4,10) circle(2pt) coordinate(B3) node[above]{$v^1_1(u^1_4)$};
\draw (8.5,7) circle(2pt) coordinate(C3) node[right]{$y_1$};
\draw (-0.5,3) circle(2pt) coordinate(F1) node[left]{$y_1'$};
\draw (1,4) circle(2pt) coordinate(F2) node[left=0.4em]{$u^1_2$};
\draw (2.5,5) circle(2pt) coordinate(F3) node[left=0.4em]{$u^2_2$};
\draw (-0.5,5) circle(2pt) coordinate(G1) node[left]{$y_2'$};
\draw (1,6) circle(2pt) coordinate(G2) node[left=0.4em]{$u^1_3$};
\draw (-0.5,7) circle(2pt) coordinate(G3) node[left]{$y_3'$};
\draw (4,4) circle(2pt) coordinate(Y1) node[below]{$v^3_2(u^3_1)$};
\draw (4,2) circle(2pt) coordinate(Y2) node[below]{$v^2_3(u^2_1)$};
\draw (4,0) circle(2pt) coordinate(Y3) node[below]{$v^1_4(u^1_1)$};
\qarrow{F3}{Y1} \qarrow{G2}{F3} \qarrow{G3}{G2}
\qarrow{F2}{Y2} \qarrow{G1}{F2}
\qarrow{F1}{Y3}
\qarrow{F2}{F1} \qarrow{F3}{F2} \qarrow{B1}{F3}
\qarrow{G2}{G1} \qarrow{B2}{G2}
\qarrow{B3}{G3} \qarrow{G2}{B3} \qarrow{F2}{G2} \qarrow{Y3}{F2}
\qarrow{F3}{B2} \qarrow{Y2}{F3}
\qdarrow{G1}{G3} \qdarrow{F1}{G1}
%
%
\qarrow{C1}{C2}
\qarrow{C2}{C3}
\qarrow{C1}{B1}
\qarrow{C2}{B2}
\qarrow{C3}{B3}
\qarrow{B2}{C1}
\qarrow{B3}{C2}
\qarrow{D1}{D2}
\qarrow{D1}{C1}
\qarrow{D2}{C2}
\qarrow{C2}{D1}
\qdarrow{C3}{D2}
\qarrow{E1}{D1}
\qdarrow{D2}{E1}
\qarrow{F1}{F2}
\qarrow{F2}{F3}
\qarrow{Y1}{C1}
\qarrow{Y2}{D1}
\qarrow{C1}{Y2}
\qarrow{Y3}{E1}
\qarrow{D1}{Y3}
\end{scope}
\end{tikzpicture}
\caption{The quiver $D(A_3)$ on $\mathbb{D}^1_2$}
\label{fig:D-A3}
\end{figure}

In \cite{GS16}, it is proved that the following theorem for the operator $R_D(s,i)$.

\begin{thm}\cite[\S~7 and \S~8]{GS16}
\label{thm:GS16}
\begin{enumerate}
\item The action of $R_D(s,i)$ on the seed $(D(A_n),\mathbf{X},\mathbf{A})$ does not depend on $i$, and preserves the quiver $D(A_n)$. 
\item Let $R_D(s)^\ast$ be the induced action by $R_D(s,i)$ on $\X_{D(A_n)}$ and  $\A_{D(A_n)}$. Then $R_D(s)^\ast ~(s \in S)$ generate the 
$W(A_n)$-action on the moduli spaces 
$\mathcal{X}_{PSL_{n+1},\mathbb{D}_2^1}$ and $\mathcal{A}_{SL_{n+1},\mathbb{D}_2^1}$.
\end{enumerate}
\end{thm}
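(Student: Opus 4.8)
\textbf{Proof plan for Theorem \ref{thm:GS16}.}

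The plan is to reduce the statement about $R_D(s,i)$ on the quiver $D(A_n)$ to the already-established facts about $R(s,i)$ on $Q_m(A_n)$ (and its decorated version $\widetilde{Q}_{kh}(A_n)$) via the mutation-equivalence developed in \cref{subsec:D-quiver}. First I would invoke Proposition \ref{prop:QandD} together with Lemma \ref{lem:JDtoQ}: in the case $\mathfrak{g}=A_n$ with $k=2$, the quiver $D(A_n)$ is mutation-equivalent to $\widetilde{Q}_h(A_n)$ via the explicit sequence $\mu_{D\to Q}$, and moreover the oriented circles $\rho_s$ of $D(A_n)$ are carried (up to relabeling of the cyclic index) to the cycles $P_s$ of $Q_h(A_n)$. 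Concretely, the mutation sequence $N_i^s$ in \eqref{eq:R_D} provides, for the $2(n+1-s)$-cycle $\rho_s$, exactly the same kind of mutation-equivalence to a $D_m$-type quiver ($m=2(n+1-s)$) that $M_i^s$ provides in \cref{fig:mutation-equivalence} for $P_s$. Hence $R_D(s,i)$ is, after conjugating by $\mu_{D\to Q}$, literally of the form $R(s,j)$ for the cycle $P_s$ of length $2(n+1-s)$ inside $\widetilde{Q}_h(A_n)$, with the cyclic index $i$ matched to $j$ through the identifications $v^s_{n+1-s+i}=u^s_i$. I would write this matching out carefully once, since all subsequent claims hinge on it.

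Granting this identification, part (1) is immediate: Proposition \ref{prop:RonQ} (extended to the decorated quiver via \cref{prop:tildeaction}) gives $R(s,j)(\widetilde{Q}_h(A_n))=\widetilde{Q}_h(A_n)$, and conjugating back by $\mu_{D\to Q}$ yields $R_D(s,i)(D(A_n))=D(A_n)$; independence of $i$ follows from \cref{lem:tropRonX} combined with \cref{thm:periodicity}, exactly as in the proof of \cref{thm:Weyl-R}(1), transported along $\mu_{D\to Q}$. For part (1) it also remains to check that the two ``extra'' arrows that collapse in $\rho_n$ (the length-two cycle) cause no trouble — this is the same degenerate case as $m=2$ in Remark \ref{rem:closed-path}, where $P_s=\{v^s_1,v^s_2\}$, and the argument of \cref{thm:Weyl-R} already covers it.

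For part (2), I would argue in two steps. First, transporting \cref{thm:Weyl-R}(2) and \cref{thm:injectivity} along $\mu_{D\to Q}$ shows that the operators $R_D(s)^\ast$ satisfy the braid relations $(R_D(s)R_D(t))^{m_{st}}=1$ of $W(A_n)$ and generate a faithful $W(A_n)$-action on the seed, hence on $\A_{D(A_n)}$ and $\X_{D(A_n)}$. Second — and this is where the geometry enters — I would identify $\X_{D(A_n)}$ (resp.\ $\A_{D(A_n)}$) with the moduli space $\X_{PSL_{n+1},\mathbb{D}^1_2}$ (resp.\ $\A_{SL_{n+1},\mathbb{D}^1_2}$) via the cluster structure of \cref{c:clusterstr_surface} and Proposition \ref{p:quiver-comparison} (with $k=2$, $h=n+1$, noting that $kh/2=h=n+1$ so $\widetilde{Q}_h(A_n)$ is the relevant quiver), and then invoke Theorem \ref{thm:chamber ansatz}: the cluster operator $R_D(s)$ acts on the cluster $\A$-coordinates exactly as $\mathcal{W}_{a,s^\ast}$ dictates, which by Proposition \ref{p:simple_refl} is precisely the geometric action of the generator $r_{s}^{(a)}$ of the Weyl group at the puncture. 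This closes the loop: the cluster-combinatorial operators $R_D(s)$ and the geometric Weyl generators coincide as transformations of the moduli spaces.

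The main obstacle I anticipate is the bookkeeping in the first step — verifying that $\mu_{D\to Q}$ really does send each $\rho_s$ to $P_s$ \emph{with the arrows between distinct cycles also correctly matched}, not just the cycles themselves. The delicate point is that $D(A_n)$ is built by amalgamating $\widetilde{\bJ}(\bi_D(n))$ with its mirror $\widetilde{\bJ}(\bar\bi_D(n))$, while $\widetilde{Q}_h(A_n)$ is built by amalgamating $\widetilde{\bJ}(\bi_Q(n))$ with $\widetilde{\bJ}(\bi^\ast_Q(n))$; the intertwining of the mutation sequence $M_{D\to Q}$ (from \cref{lem:JDtoQ}) with $T(1)\cdots T(n-1)$ (from \cref{lem:Q-Q'}) needs to be tracked so that the cyclic orientation and the inter-cycle arrows emerge correctly, and in particular so that the length-two degeneracy at $s=n$ on both sides is consistent. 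Once that combinatorial dictionary is pinned down (it is essentially the content of the proof of Proposition \ref{prop:QandD} in \S~\ref{subsec:proof-QandD}), everything else is a transport of already-proved statements, and I would present it as such rather than re-deriving the formulas.
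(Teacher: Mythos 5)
The statement you are proving is not actually proved in the paper: it is quoted from Goncharov--Shen \cite[\S~7 and \S~8]{GS16}, where it is established by direct computations with configurations of decorated flags for $SL_{n+1}$, independently of anything in the present paper. The entire purpose of the surrounding subsection of \cref{sec:equivalence} is to use this \emph{external} input, together with the Proposition that follows it (identifying $R(s,i)$ with $\mu_{D \to Q} R_D(s,j) (\mu_{D \to Q})^{-1}$), to give a \emph{second}, independent proof of \cref{introthm:geometric action} and of \cref{introconj:geometric action} for type $A_n$. Your plan runs this logic backwards: you derive \cref{thm:GS16} from \cref{thm:chamber ansatz} and \cref{thm:general surface} via the mutation equivalence. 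For the $\A$-space half of part (2) this is logically coherent (though it would render the ``alternative proof'' circular rather than alternative), but for the $\X$-space half it fails outright: the paper proves nowhere that the cluster action coincides with the geometric action on $\X_{G',\Sigma}$ --- that is precisely \cref{introconj:geometric action}, which is left open in general and which, for type $A_n$, is known only \emph{because of} the Goncharov--Shen computation you are trying to reprove. So the assertion about $\mathcal{X}_{PSL_{n+1},\mathbb{D}_2^1}$ in part (2) cannot be obtained by transporting any result of this paper; it genuinely requires the independent argument of \cite{GS16}.

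A secondary issue concerns part (1). The identification ``$R_D(s,i)$ is, after conjugating by $\mu_{D\to Q}$, literally of the form $R(s,j)$'' is exactly the Proposition the paper proves \emph{after} \cref{thm:GS16}, and that proof uses \cite[Theorem 7.7]{GS16} both for the base case of its induction (namely that $R_D(s,i)$ preserves $D(A_n)$ --- part (1) of the very theorem you are proving) and at each inductive step. To avoid circularity you would have to establish quiver-preservation for the cycles $\rho_s$ of $D(A_n)$ from scratch; \cref{prop:RonQ} does not apply directly, since the cycle lengths $2(n+1-s)$ vary with $s$ and the inter-cycle arrows of $D(A_n)$ are not of the $Q_m(\mathfrak{g})$ form, so one really needs the more general statement of \cite[Theorem 7.7]{GS16}. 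The honest treatment of this theorem is as an imported result, not as a corollary of the paper's own machinery.
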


Then, \cref{introthm:geometric action} (\cref{thm:general surface}) and \cref{introconj:geometric action} for $A_n$ type follow from the next proposition.

\begin{prop}
The element $R(s,i)$ coincides with $R_D(s,i)$ in
$\Gamma_{|\widetilde{Q}_{n+1}(A_n)|} = \Gamma_{|D(A_n)|}$.
\end{prop}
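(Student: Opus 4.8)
The plan is to reduce the asserted equality of two elements of the cluster modular group to a verification at the tropical level, where both sides are computable by the machinery already set up in \S~\ref{subsec:Rproofs}. First I would note that $R_D(s,i)$ of \eqref{eq:R_D} has literally the same combinatorial shape as $R(s,i)$ of \eqref{eq:R-mu}: it is the conjugate, by the mutation sequence $N^s_i$ (resp. $M^s_i$), of the involution $(v,v')\circ\mu\mu$ coming from the $D$-type mutation-equivalence of the oriented cycle $\rho_s$ (resp. $P_s$), the only difference being that $\rho_s$ has length $2(n+1-s)$ whereas $P_s$ has length $n+1$. Moreover the local configuration of $D(A_n)$ near $\rho_s$ — an oriented cycle, the arrows to and from the neighbouring cycles $\rho_{s\pm1}$, and the attached frozen vertices $y_i$ — is exactly of the kind treated in \cref{lem:Qk=Qbark}, the proof of \cref{thm:Weyl-R} and \cref{lem:tropRonX}. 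Hence those arguments apply \emph{verbatim} to $R_D(s,i)$ and yield: $R_D(s,i)$ preserves $D(A_n)$, its action on seeds is independent of $i$ (write $R_D(s)$), it lies in the peripheral subgroup, and $R_D(s)^\ast$ acts on $\X$-variables by the formula \eqref{eq:RonX} with $m$ replaced by $2(n+1-s)$ and with the $F$-polynomial and $c$-vectors along $\rho_s$ as in \cref{lem:tropRonX} (and the analogue of \eqref{eq:fa-tilde} for the frozen vertices), and likewise on $\A$-variables by \eqref{A-transf}.

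Next, by \cref{thm:Nakanishi}(1) the action of $\Gamma_{|D(A_n)|}$ on $\X_{|D(A_n)|}$ is faithful, so $R(s,i)$ and $R_D(s,i)$ represent the same element of $\Gamma_{|D(A_n)|}=\Gamma_{|\widetilde Q_{n+1}(A_n)|}$ precisely when $\mu_{D\to Q}\circ R_D(s,i)\circ\mu_{D\to Q}^{-1}$ and $R(s,i)$ induce the same transformation of the $\X$-seed of $\widetilde Q_{n+1}(A_n)$; by \cref{thm:periodicity} it is enough to check this on the principal coefficient $\xi_0$, i.e. to show that $\mu_{D\to Q}^{\trop}$ intertwines $R^{\trop}_D(s)$ with $R^{\trop}(s)$ at $\xi_0$. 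Both tropical actions are explicit by \cref{lem:tropRonX} and its analogue for $D(A_n)$, since $\xi_0$ lies in the positive region of the relevant cycle, where the simple formula \eqref{eq:tropRonX} holds.

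The remaining, and main, step is the explicit analysis of $\mu_{D\to Q}$. For $\mathfrak g=A_n$ the equivalence $M_{D\to Q}$ of \cref{lem:JDtoQ} is trivial, so $\mu_{D\to Q}$ is (essentially) the sequence $T(1)T(2)\cdots T(n-1)$ of \cref{lem:Q-Q'}, applied to the copy $\widetilde{\bJ}(\overline{\bi}_D(n))=\widetilde{\bJ}(\bi_Q(n))$ in $D(A_n)$ in order to convert it to $\widetilde{\bJ}(\bi_Q^\ast(n))$. I would track how this sequence acts on $\rho_s$: the portion of $\rho_s$ in the first copy $\widetilde{\bJ}(\bi_D(n))$ is untouched, while the portion in the second copy (the vertices $u^s_i$, $i=1,\dots,n+2-s$) is restructured into the shorter $\bi_Q^\ast(n)$-pattern (the $u^s_i$, $i=1,\dots,s+1$), so that the cycle of length $2(n+1-s)$ becomes the cycle $P_s$ of length $n+1$; simultaneously one checks that the multiplier $F$-polynomial (equivalently the $\A$-multiplier) along $\rho_s$ is carried by $\mu_{D\to Q}^\ast$ to the one along $P_s$. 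Granting the explicit description of $T(1)\cdots T(n-1)$, this becomes a finite diagram-chase: one commutes the mutations of $R_D(s,i)$ past those of $\mu_{D\to Q}$ using distant commutativity together with the braid/$D$-type relations recorded in \eqref{quiver:m=3}, \eqref{quiver:m=4} and \cref{fig:mutation-equivalence}, reducing $\mu_{D\to Q}\,R_D(s,i)\,\mu_{D\to Q}^{-1}$ to $R(s,i)$. This bookkeeping — in particular handling the cycle-length mismatch for small $s$ (e.g. $s=n$, where $\rho_n$ has only two vertices while $P_n$ has $n+1$) — is the real content of the argument; it is a combinatorial generalization of the computation in \cite[\S~8]{GS16}, and is most cleanly organized by induction on $n$ along the inductive structure of $\bi_D(n)$ and of $T(1)\cdots T(n-1)$. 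The hard part is exactly this last step: controlling the long mutation sequence $\mu_{D\to Q}$ finely enough to see that it unfolds $\rho_s$ into $P_s$ and conjugates one operator into the other.
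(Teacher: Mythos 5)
Your top-level strategy---identify the two elements of $\Gamma_{|D(A_n)|}$ by showing that conjugation by $\mu_{D\to Q}$ carries $R_D(s,\cdot)$ to $R(s,\cdot)$, after noting that both operators have the same combinatorial shape \eqref{eq:R-mu}, \eqref{eq:R_D} on their respective cycles---is also the paper's strategy, and the reduction via \cref{thm:Nakanishi} and \cref{thm:periodicity} to a comparison of induced seed actions is legitimate in principle. But the step you defer as ``a finite diagram-chase'' is the entire content of the proof, and the tool you propose for it (commuting the mutations of $R_D(s,i)$ past those of $\mu_{D\to Q}$ using distant commutativity and the braid moves \eqref{quiver:m=3}, \eqref{quiver:m=4}) cannot work as stated: $R_D(s,i)$ and $R(s,i)$ are words of \emph{different lengths} in the mutation generators, supported on cycles of different lengths ($2(n+1-s)$ versus $n+1$), so no sequence of letter-by-letter commutations and braid substitutions transforms one into a conjugate of the other. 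What is needed is a relation between whole cycle-operators, not between individual mutations, and your proposal does not supply it. (Your fallback in the second paragraph---checking the intertwining tropically at $\xi_0$---has the same problem in disguise: $\mu_{D\to Q}^{-1}(\xi_0)$ need not lie in the region where the closed formula of \cref{lem:tropRonX} applies, so one still has to control the long sequence $\mu_{D\to Q}$ step by step.)

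The missing idea is a one-mutation-at-a-time induction along $\mu_{D\to Q}=\mu[N]\cdots\mu[1]$. Introduce, for an arbitrary oriented cycle $\rho$ with marked vertex $v$, the operator $R(\rho,v)$ generalizing \eqref{eq:R-mu}, and observe that each mutation $\mu[k]=\mu^s_i$ occurring in $\mu_{D\to Q}$ performs the local move \eqref{eq:QandQ'}: it deletes $v^s_i$ from the cycle $\rho_s[k]$ and inserts it into the adjacent cycle $\rho_{s+1}[k]$. The induction step then rests on the two conjugation identities
\[
\mu^s_i\,R(\rho_s[k],v^s_i)\,\mu^s_i=R(\rho_s[k+1],v^s_{i+1}),
\qquad
\mu^s_i\,R(\rho_{s+1}[k],v^{s+1}_{j+1})\,\mu^s_i=R(\rho_{s+1}[k+1],v^s_i),
\]
which hold precisely because the mutated vertex is the basepoint of the first operator and the insertion point of the second, together with the basepoint-independence of $R(\rho,v)$ (via \cite[Theorem 7.7]{GS16}) needed to re-base before the next mutation of $\mu_{D\to Q}$; this simultaneously propagates the invariance statements \eqref{eq:Q-at-k}--\eqref{eq:Q-at-k+1} through every intermediate quiver $Q[k]$. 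Iterating over all $N$ mutations converts $R_D(s,j)=R(\rho_s[1],\cdot)$ into $R(P_s,\cdot)=R(s,\cdot)$ and in particular handles the cycle-length mismatch automatically, since each step changes the lengths of exactly two adjacent cycles by one. Without these identities your argument stops exactly where you acknowledge the hard part begins.
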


\begin{proof}
First we prepare a notation.
For an oriented circle $\rho$ consisting of $m$ vertices as $v_1 \to v_2 \to \cdots \to v_m \to v_1$, let $R(\rho,v_i)$ be the operator given by 
$$
  R(\rho,v_i) := N(\rho,v_i)^{-1} (v_{i+m-1},v_{i+m-2}) 
  \mu_{v_{i+m-1}} \mu_{v_{i+m-2}} N(\rho,v_i),
$$
with $N(\rho,v_i) :=  \mu_{v_{i+m-3}} \mu_{v_{i+m-4}} \cdots \mu_{v_i}$,
in a similar way as \eqref{eq:R_D}.
Here we assume $v_i = v_{i+m}$ as before.

Due to Proposition \ref{prop:QandD}, the quiver $D(A_n)$ is transformed to $\widetilde{Q}_{n+1}(A_n)$ by the sequence of mutations $\mu_{D \to Q}$ given by \eqref{eq:mutation-DtoQ} with \eqref{eq:T-An}. For simplicity we write $\mu_{D \to Q} = \mu[N] \mu[N-1] \cdots \mu[1]$ where each $\mu[i]$ denotes the $i$-th mutation appearing in \eqref{eq:mutation-DtoQ}, and $N$ is the total number of mutations in $\mu_{D \to Q}$.
Define a sequence of quivers:
$$
  D(A_n) = Q[1] \stackrel{\mu[1]}{\longmapsto} Q[2] 
  \stackrel{\mu[2]}{\longmapsto}
  \cdots \stackrel{\mu[N-1]}{\longmapsto} Q[N] \stackrel{\mu[N]}{\longmapsto}
  Q[N+1] = \widetilde{Q}_{n+1}(A_n).
$$    
Write $\rho_s[k] ~(s \in S)$ for the oriented circles in $Q[k]$.
As seen in the proof of Proposition \ref{prop:QandD},
when $\mu[k] = \mu^s_i$, it changes the circles $\rho^s[k]$ and $\rho^{s+1}[k]$ in $Q[k]$ as follows due to \eqref{quiver:m=3}.
\begin{align}\label{eq:QandQ'}
\begin{tikzpicture}
\begin{scope}[>=latex]
\draw (-0.5,-0.5) coordinate(A) node[left]{$\rho_{s}[k]$};
\draw (1,0) circle(2pt) coordinate(B) node[below]{$v^{s}_{i-1}$};
\draw (3,0.2) circle(2pt) coordinate(C) node[below]{$v^s_{i}$};
\draw (5,0) circle(2pt) coordinate(F) node[below]{$v^s_{i+1}$};
\draw (6.5,-0.5) coordinate(F1); 
\draw (0.5,0.5) coordinate(D1) node[left]{$\rho_{s+1}[k]$};
\draw (2,1) circle(2pt) coordinate(D) node[above]{$v^{s+1}_{j}$};
\draw (4,1) circle(2pt) coordinate(E) node[above]{$v^{s+1}_{j+1}$};
\draw (5.5,0.5) coordinate(E1); 
\qarrow{A}{B}
\qarrow{B}{C}
\qarrow{C}{D}
\qarrow{C}{F}
\qarrow{F}{F1}
\qarrow{D1}{D}
\qarrow{D}{E}
\qarrow{E}{E1}
\qarrow{E}{C}
\qarrow{D}{B}
\qarrow{F}{E}
\coordinate (P1) at (3,-0.8);
\coordinate (P2) at (3,-1.8);
\draw[->] (P1) -- (P2);
\draw (3.3,-1.3) node[right]{$\mu[k]=\mu^s_i$}; 
\end{scope}
\begin{scope}[>=latex,yshift=-100pt]
\draw (-0.5,0.3) coordinate (A) node[left]{$\rho_{s+1}[k+1]$};
\draw (1,0.8) circle(2pt) coordinate(B) node[above]{$v^{s+1}_{j}$};
\draw (3,1) circle(2pt) coordinate(C) node[above]{$v^s_i$};
\draw (5,0.8) circle(2pt) coordinate(D) node[above]{$v^{s+1}_{j+1}$};
\draw (6.5,0.3) coordinate (D1);
\draw (0.5,-0.5) coordinate (E1) node[left]{$\rho_{s}[k+1]$};
\draw (2,0) circle(2pt) coordinate(E) node[below]{$v^s_{i-1}$};
\draw (4,0) circle(2pt) coordinate(F) node[below]{$v^s_{i+1}$};
\draw (5.5,-0.5) coordinate (F1);
\qarrow{A}{B}
\qarrow{B}{C}
\qarrow{C}{D}
\qarrow{D}{D1}
\qarrow{E1}{E}
\qarrow{E}{F}
\qarrow{F}{F1}
\qarrow{F}{C}
\qarrow{C}{E}
\qarrow{E}{B}
\qarrow{D}{F}
\end{scope}
\end{tikzpicture}
\end{align}
Here we assume that $\rho_s[k]$ has $m$ vertices 
as $v^s_1 \to v^s_2 \to \cdots \to v^s_m \to v^s_1$,
and that $\rho_{s+1}[k]$ has $\ell$ vertices 
as $v^{s+1}_1 \to v^{s+1}_2 \to \cdots \to v^{s+1}_{\ell} \to v^{s+1}_1$.
Note that the other circles in $Q[k]$ are not changed.

In this setting, our claim is as follows: 
It holds that 
\begin{align}\label{eq:Q-at-k}
R(\rho_s[k],v^s_i) Q[k] = Q[k], 
\qquad 
R(\rho_{s+1}[k],v^{s+1}_{j+1}) Q[k] = Q[k],
\end{align}
and that  
\begin{align}\label{eq:Q-at-k+1}
R(\rho_s[k+1],v^s_{i+1})Q[k+1] = Q[k+1],
\qquad 
R(\rho_{s+1}[k+1],v^s_{i})Q[k+1] = Q[k+1],
\end{align}
for $k=1,2,\ldots,N$.
Here $\rho_s[k+1]$ is the circle in $Q[k+1]$ obtained by `removing' $v_i^s$ 
from $\rho_s[k]$, and 
$\rho_{s+1}[k+1]$ is the circle in $Q[k+1]$ as 
$v^{s+1}_1 \to \cdots v^{s+1}_j \to v^s_i \to v^{s+1}_{j+1} \to \cdots v^{s+1}_{\ell} \to v^u_1$, obtained by `inserting' $v_i^s$ into $\rho_{s+1}[k]$. 
 
We prove this claim by induction on $k$. 
Assume \eqref{eq:Q-at-k}. Then on the circle $\rho_s[k]$ we obtain  
$$
  \mu^s_i \circ R(\rho_s[k],v^s_i) (Q[k]) = (\mu^s_i R(\rho_s[k],v^s_i) \mu^s_i) \circ \mu^s_i (Q[k]) = Q[k+1],
$$
and 
$$
  \mu^s_i R(\rho_s[k],v^s_i) \mu^s_i  = R(\rho_s[k+1],v^s_{i+1}).
$$ 
Thus the first formula in \eqref{eq:Q-at-k+1} is obtained.
We also have 
$$  
  \mu^s_i \circ R(\rho_{s+1}[k],v^{s+1}_{j+1}) (Q[k]) 
  = (\mu^s_i R(\rho_{s+1}[k],v^{s+1}_{j+1}) \mu^s_i) \circ \mu^s_i (Q[k])
  = Q[k+1],  
$$
and 
$$
  \mu^s_i R(\rho_{s+1}[k],v^{s+1}_{j+1}) \mu^s_i = R(\rho_{s+1}[k+1],v^s_i),
$$  
due to the change of the circles by inserting $v^s_i$. 
Hence the second formula in \eqref{eq:Q-at-k+1} follows.
We renumber the vertices on the circles $\rho^s[k+1]$ (resp. $\rho^{s+1}[k+1]$)
 by $v^s_i ~(i \in \Z_{m-1})$ (resp. $v^{s+1}_i ~(i \in \Z_{\ell+1})$).
Applying \cite[Theorem 7.7]{GS16}, we see that the action of $R(\rho_u[k+1],v^{u}_i)$ does not depend on $i$ for all $u \in S$, and 
\eqref{eq:Q-at-k} of the next step $k+1$ holds.
Therefore the claim follows,
since we know \eqref{eq:Q-at-k} holds for all $s \in S$ when $k=1$.
 
Finally we see that 
$R_D(s,i)$ for the circle $\rho_s$ in $D(A_n)$ turns out to be $R(s,j)$ \eqref{eq:R-mu} for $P_s$ in $\widetilde{Q}_{n+1}(A_n)$ for some $j$, i.e.,  
$R(s,i) = \mu_{D \to Q} R_D(s,j) (\mu_{D \to Q})^{-1}$.  
\end{proof}

\subsection{Proof of Proposition \ref{prop:QandD}}
\label{subsec:proof-QandD}

First we show the case of $\mathfrak{g} =A_n$, which 
is shown via Figure \ref{fig:2triangles-A} as follows. 
The pair of triangles at each step are glued along $BC$ and $A' C'$, and $BA$ and $A'B'$ as same as those in Figure \ref{fig:2triangles}.
The dashed arrows in triangles denote the lines of constant Dynkin index (see \S~  \ref{subsubsec:cluster-A-charts}) of each quiver. 
Due to the fact that $\bi_Q(n) = \bi_D(n)$ and the $\Z_3$-symmetry of $\widetilde{\bJ}(\bi_Q(n))$, the equality of the first and the second pairs follows.
The last pair is obtained applying Lemma \ref{lem:Q-Q'} to the left part, 
which is nothing but $\widetilde{Q}_{n+1}(A_n)$.
Remark that the mutation sequence $T:=T(1)\cdots T(n-1)$ does not affect the 
left and right boundaries of $\widetilde{\bJ}(\bi_Q(n))$,
but it moves the location of frozen vertices $y_s$ as indicated by $y$ in the figure.
For the case of $n=3$, this change of quivers can be seen by comparing  
Figure \ref{fig:tildeQ-A3} and Figure \ref{fig:D-A3}.

Let us show the cases of the other $\mathfrak{g}$,
where the way of transformation is shown in Figure \ref{fig:2triangles-BCD}.
The first pair is transformed to the second one 
by applying the mutation sequences $M_{D \to Q}$ (Lemma \ref{lem:JDtoQ}) to the right part, and $M_{\overline{D} \to Q}$ to the left part. 
Here the mutation sequence  
$M_{\overline{D} \to Q}$ is a composition of $M_{D \to Q}$ and $M_{\overline{D} \to D}$;
there exists a mutation sequence $M_{\overline{D} \to D}$ which 
transforms $\widetilde{\bJ}(\bar{\bi}_D)$ into 
$\widetilde{\bJ}({\bi}_D)$ (see Remark \ref{rem:JforA}). 
Since the quiver $\widetilde{\bJ}(\bi_Q(n))$ does not have the 
$\Z_3$-symmetry anymore, we rotate the left and right parts in the second pair 
by $-2/3 \pi$ and $2/3 \pi$ respectively, 
and obtain $\widetilde{Q}_h(\mathfrak{g})$.
These rotations are realized by mutation sequences, as a composition of the `first transposition' $r_1$ and the `second transposition' $r_2$ presented in \cite{Le16};
the rotations by $-2/3 \pi$ and $2/3 \pi$ are respectively given  
by $r(-2/3 \pi) := r_2 \circ r_1$ and $r(2/3 \pi) := r_1 \circ r_2$.
As these transpositions are very complicated and we do not need their explicit forms,
for simplicity we only refer their equation numbers in \cite{Le16}: 

\begin{table}[ht]
  \begin{tabular}{|c||c|c|c|} \hline
    $\mathfrak{g}$ & $B_n$ & $C_n$ & $D_n$ 
    \\ \hline 
    $r_1$ & (3.3) & (4.2) & (5.4)   
    \\ \hline
    $r_2$ & (3.4) & (4.3) & (5.6)
    \\ \hline
  \end{tabular}
\end{table}

\noindent
Then the proof is completed.

\begin{figure}[ht]
\scalebox{0.85}
{
\begin{tikzpicture}
\begin{scope}[>=latex]
\fill (1,6) circle(3pt) coordinate(C) node[left]{$C'$}; 
\fill (3.6,4.5) circle(3pt) coordinate(A) node[right]{$A'$};  
\fill (1,3) circle(3pt) coordinate(B) node[left]{$B'$}; 
\draw (A) -- (C);
\draw (A) -- (B);
\draw (B) -- (C);
\draw[<-, dashed] (1,5)--(1.87,5.5);
\draw[<-, dashed] (1,4)--(2.72,5);
\draw (2.2,4.5) node{$\widetilde{\bJ}({\bi}_D(n))$};
\fill (8,6) circle(3pt) coordinate(C1) node[right]{$C$}; 
\fill (5.4,4.5) circle(3pt) coordinate(B1) node[left]{$B$};  
\fill (8,3) circle(3pt) coordinate(A1) node[right]{$A$}; 
\draw (A1) -- (C1);
\draw (A1) -- (B1);
\draw (B1) -- (C1);
\draw[<-, dashed] (6.27,5)--(8,4);
\draw[<-, dashed] (7.12,5.5)--(8,5);
\draw (6.8,4.5) node{$\widetilde{\bJ}(\bi_D(n))$};
\draw[thick] (4.5,3) -- (4.5,2.5);
\draw[thick] (4.4,3) -- (4.4,2.5);
\end{scope}
\begin{scope}[>=latex,yshift=-110]
\fill (1,6) circle(3pt) coordinate(C) node[left]{$C'$}; 
\fill (3.6,4.5) circle(3pt) coordinate(A) node[right]{$A'$};  
\fill (1,3) circle(3pt) coordinate(B) node[left]{$B'$}; 
\draw (A) -- (C);
\draw (A) -- (B);
\draw (B) -- (C);
\draw (1,4.5) node[left]{$y$};
\draw[<-, dashed] (1.87,5.5)--(1.87,3.5);
\draw[<-, dashed] (2.72,5)--(2.72,4);
\draw (2.2,4.5) node{$\widetilde{\bJ}(\bi_Q(n))$};
\fill (8,6) circle(3pt) coordinate(C1) node[right]{$C$}; 
\fill (5.4,4.5) circle(3pt) coordinate(B1) node[left]{$B$};  
\fill (8,3) circle(3pt) coordinate(A1) node[right]{$A$}; 
\draw (A1) -- (C1);
\draw (A1) -- (B1);
\draw (B1) -- (C1);
\draw[<-, dashed] (6.27,4)--(6.27,5);
\draw[<-, dashed] (7.12,3.5)--(7.12,5.5);
\draw (6.8,4.5) node{$\widetilde{\bJ}(\bi_Q(n))$};
\draw[->,thick] (4.5,3) -- (4.5,2);
\draw (4.5,2.5) node[right=4pt]{id.}
node[left=4pt]{$T$};
\end{scope}
\begin{scope}[>=latex,yshift=-230]
\fill (3.6,6) circle(3pt) coordinate(C) node[right]{$A'$}; 
\fill (1,4.5) circle(3pt) coordinate(A) node[above left]{$C'$} node[below left]{$B'$};  
\fill (3.6,3) circle(3pt) coordinate(B) node[right]{$A'$}; 
\draw (A) -- (C);
\draw (A) -- (B);
\draw (B) -- (C);
\draw (3.6,4.5) node[right]{$y$};
\draw[<-, dashed] (2.72,5.5)--(2.72,3.5);
\draw[<-, dashed] (1.87,5)--(1.87,4);
\draw (2.4,4.5) node{$\widetilde{\bJ}(\bi_Q^\ast(n))$};
\fill (8,6) circle(3pt) coordinate(C1) node[right]{$C$}; 
\fill (5.4,4.5) circle(3pt) coordinate(B1) node[left]{$B$};  
\fill (8,3) circle(3pt) coordinate(A1) node[right]{$A$}; 
\draw (A1) -- (C1);
\draw (A1) -- (B1);
\draw (B1) -- (C1);
\draw[<-, dashed] (6.27,4)--(6.27,5);
\draw[<-, dashed] (7.12,3.5)--(7.12,5.5);
\draw (6.8,4.5) node{$\widetilde{\bJ}(\bi_Q(n))$};
\end{scope}
\end{tikzpicture}
}
\caption{From $D(\mathfrak{g})$ to $\widetilde{Q}_h(\mathfrak{g})$ in the case of $\mathfrak{g}=A_n$}
\label{fig:2triangles-A}
\end{figure}
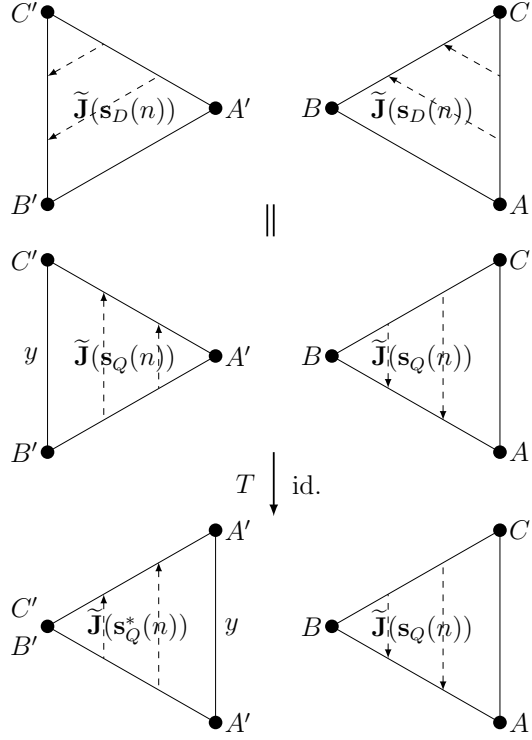

\bigskip

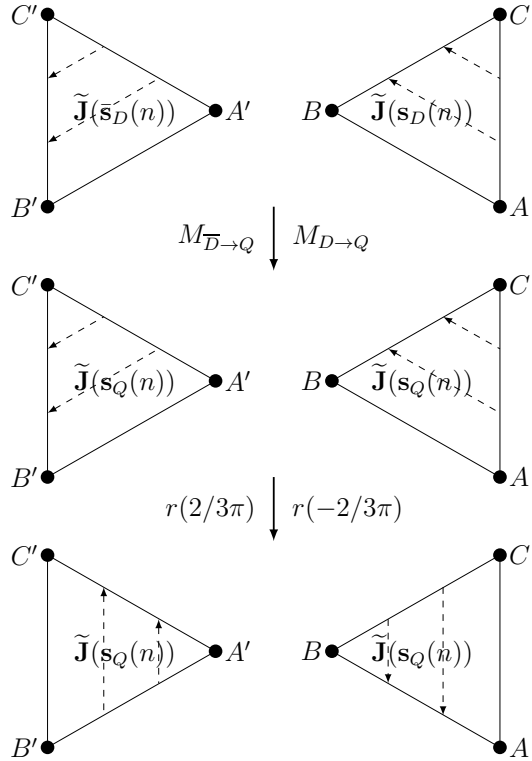
\begin{figure}[ht]
\scalebox{0.85}
{
\begin{tikzpicture}
\begin{scope}[>=latex]
\fill (1,6) circle(3pt) coordinate(C) node[left]{$C'$}; 
\fill (3.6,4.5) circle(3pt) coordinate(A) node[right]{$A'$};  
\fill (1,3) circle(3pt) coordinate(B) node[left]{$B'$}; 
\draw (A) -- (C);
\draw (A) -- (B);
\draw (B) -- (C);
\draw[<-, dashed] (1,5)--(1.87,5.5);
\draw[<-, dashed] (1,4)--(2.72,5);
\draw (2.2,4.5) node{$\widetilde{\bJ}(\bar{\bi}_D(n))$};
\fill (8,6) circle(3pt) coordinate(C1) node[right]{$C$}; 
\fill (5.4,4.5) circle(3pt) coordinate(B1) node[left]{$B$};  
\fill (8,3) circle(3pt) coordinate(A1) node[right]{$A$}; 
\draw (A1) -- (C1);
\draw (A1) -- (B1);
\draw (B1) -- (C1);
\draw[<-, dashed] (6.27,5)--(8,4);
\draw[<-, dashed] (7.12,5.5)--(8,5);
\draw (6.8,4.5) node{$\widetilde{\bJ}(\bi_D(n))$};
\draw[->,thick] (4.5,3) -- (4.5,2);
\draw (4.5,2.5) node[right=4pt]{$M_{D \to Q}$}
node[left=4pt]{$M_{\overline{D} \to Q}$};
\end{scope}
\begin{scope}[>=latex,yshift=-120]
\fill (1,6) circle(3pt) coordinate(C) node[left]{$C'$}; 
\fill (3.6,4.5) circle(3pt) coordinate(A) node[right]{$A'$};  
\fill (1,3) circle(3pt) coordinate(B) node[left]{$B'$}; 
\draw (A) -- (C);
\draw (A) -- (B);
\draw (B) -- (C);
\draw[<-, dashed] (1,5)--(1.87,5.5);
\draw[<-, dashed] (1,4)--(2.72,5);
\draw (2.2,4.5) node{$\widetilde{\bJ}(\bi_Q(n))$};
\fill (8,6) circle(3pt) coordinate(C1) node[right]{$C$}; 
\fill (5.4,4.5) circle(3pt) coordinate(B1) node[left]{$B$};  
\fill (8,3) circle(3pt) coordinate(A1) node[right]{$A$}; 
\draw (A1) -- (C1);
\draw (A1) -- (B1);
\draw (B1) -- (C1);
\draw[<-, dashed] (6.27,5)--(8,4);
\draw[<-, dashed] (7.12,5.5)--(8,5);
\draw (6.8,4.5) node{$\widetilde{\bJ}(\bi_Q(n))$};
\draw[->,thick] (4.5,3) -- (4.5,2);
\draw (4.5,2.5) node[right=4pt]{$r(-2/3\pi)$}
node[left=4pt]{$r(2/3\pi)$};
\end{scope}
\begin{scope}[>=latex,yshift=-240]
\fill (1,6) circle(3pt) coordinate(C) node[left]{$C'$}; 
\fill (3.6,4.5) circle(3pt) coordinate(A) node[right]{$A'$};  
\fill (1,3) circle(3pt) coordinate(B) node[left]{$B'$}; 
\draw (A) -- (C);
\draw (A) -- (B);
\draw (B) -- (C);
\draw[<-, dashed] (1.87,5.5)--(1.87,3.5);
\draw[<-, dashed] (2.72,5)--(2.72,4);
\draw (2.2,4.5) node{$\widetilde{\bJ}(\bi_Q(n))$};
\fill (8,6) circle(3pt) coordinate(C1) node[right]{$C$}; 
\fill (5.4,4.5) circle(3pt) coordinate(B1) node[left]{$B$};  
\fill (8,3) circle(3pt) coordinate(A1) node[right]{$A$}; 
\draw (A1) -- (C1);
\draw (A1) -- (B1);
\draw (B1) -- (C1);
\draw[->, dashed] (6.27,5)--(6.27,4);
\draw[->, dashed] (7.12,5.5)--(7.12,3.5);
\draw (6.8,4.5) node{$\widetilde{\bJ}(\bi_Q(n))$};
\end{scope}
\end{tikzpicture}
}
\caption{From $D(\mathfrak{g})$ to $\widetilde{Q}_h(\mathfrak{g})$ in the cases of $\mathfrak{g}=B_n,C_n$ and $D_n$}
\label{fig:2triangles-BCD}
\end{figure}

\begin{example}
In the case of $\mathfrak{g}=C_3$, we start with $D(C_3)$, 
the amalgamation of $\widetilde{\bJ}(\bi_D)$ and $\widetilde{\bJ}(\overline{\bi}_D)$ in Figure \ref{fig:tJD-C3}.
We have mutation sequences $M_{D \to Q} = \mu^2_4$, $M_{\overline{D} \to D}$ consisting of $13$ mutations, $r_1 = \mu^3_3 \mu^2_2 \mu^2_3 \mu^3_3 \mu^3_2$, and 
$r_2 = (v^2_3, v^3_2) \circ \mu^1_3 \mu^2_2 \mu^1_2 \mu^1_3$.
By applying these to $D(C_3)$ following Figure \ref{fig:2triangles-BCD},
we obtain $\widetilde{Q}_6(C_3)$ as the amalgamation of two copies of $\widetilde{\bJ}(\bi_Q)$ in Figure \ref{fig:tJ-C3}. 
\end{example}

\appendix

\section{Description of functions on $\Conf_3\mathcal{A}_G$}\label{app:conf3}
In this appendix, we describe the composite morphism 
\[
\widetilde{\beta}\colon H\times H\times U^-_{\ast}\overset{\beta}{\to} \Conf_3^{\ast}\A_G\hookrightarrow \Conf_3\A_G
\]
in terms of the regular functions on these spaces. We can use this observation to derive the description of cluster $\A$-coordinates on the configuration space $\Conf_3 \A_G$ (in particular, the data $\mu_{s, i}$) in Theorem \ref{t:clusterstr} from Le's paper \cite{Le16} (See Remark \ref{r:musi}). 

Let $\mathbb{C}[G]$ be the coordinate algebra of the semisimple simply-connected algebraic group $G$ over $\mathbb{C}$. Then $\mathbb{C}[G]$ is considered as a $G\times G$-module by 
\[
((g_1, g_2).F)(g):=F(g_1^Tgg_2)
\]
for $g, g_1, g_2\in G$, $F\in \mathbb{C}[G]$. Note that, for $f\in V^{\ast}$, $u\in V$ and $g_1, g_2\in G$, 
\[
(g_1, g_2). c_{f, u}^V=c_{g_1.f, g_2.u}^V. 
\]
\begin{definition}
	For $\lambda\in P_+$, set 
	\[
	V^-(\lambda):=\{c_{f,v_{w_0\lambda}}^{\lambda}\mid f\in V(\lambda)^{\ast}\}. 
	\]
	Then $V(\lambda)\to V^-(\lambda), u\mapsto c_{u^{\vee},v_{w_0\lambda}}^{\lambda}$ gives an isomorphism of $G(\simeq G\times 1)$-modules. Set  
	\[
	\mathbb{C}[\A_G]:=\mathbb{C}[G]^{1\times U^-}=\bigoplus_{\lambda\in P_{+}}V^-(\lambda). 
	\]
	Then $\mathbb{C}[\A_G]$ is a $\mathbb{C}$-subalgebra of $\mathbb{C}[G]$ and the elements of $\mathbb{C}[\A_G]$ determine well-defined functions on $\A_G$. Hence the elements of 
	\[
	(\mathbb{C}[G]^{1\times U_-})^{\otimes 3}=\bigoplus_{\lambda, \mu, \nu\in P_{+}}V^-(\lambda)\otimes V^-(\mu)\otimes V^-(\nu)
	\]
	give well-defined functions on $\mathcal{A}_G\times \mathcal{A}_G\times \mathcal{A}_G$. Set 
	\[
	\mathbb{C}[\Conf_3\mathcal{A}_G]:=\bigoplus_{\lambda, \mu, \nu\in P_{+}}(V^-(\lambda)\otimes V^-(\mu)\otimes V^-(\nu))^{\Delta G}, 
	\]
	here $\Delta G$ is the diagonal subgroup of $G\times G\times G$, which is isomorphic to $G$. Then $\mathbb{C}[\Conf_3\mathcal{A}_G]$ is a $\mathbb{C}$-subalgebra of $(\mathbb{C}[G]^{1\times U^-})^{\otimes 3}$ and the elements of $\mathbb{C}[\Conf_3\mathcal{A}_G]$ determine well-defined functions on $\Conf_3\mathcal{A}_G$.
\end{definition}
Now the morphism $\widetilde{\beta} \colon H\times H\times U^-_{\ast}\to \Conf_3\mathcal{A}_G$ induces a $\mathbb{C}$-algebra homomorphism  
\[
\widetilde{\beta}^{\ast}\colon \mathbb{C}[\Conf_3(\mathcal{A}_G)]\to \mathbb{C}[H\times H\times U^-_{\ast}]
\]
which is given by pull-back. We consider a description of the images of the elements of $(V^-(\lambda)\otimes V^-(\mu)\otimes V^-(\nu))^{\Delta G}$ under $\widetilde{\beta}^{\ast}$ :
\begin{thm}\label{t:coord}
	Let $\lambda, \mu, \nu\in P_{+}$ and $F\in (V^-(\lambda)\otimes V^-(\mu)\otimes V^-(\nu))^{\Delta G}$. Then 
	\begin{align}
	(\widetilde{\beta}^{\ast}(F))(h_1, h_2, u_-)=h_1^{\mu}s_G^{\mu}h_2^{\nu}s_G^{\nu}\langle f,u_-.v_{\nu}\rangle\label{eq:pullback}
	\end{align}
	for $(h_1, h_2, u_-)\in H\times H\times U_-^{\ast}$, here $f\in V(\nu)^{\ast}$ is uniquely determined by the expression   
\begin{align}
	F=\Delta_{w_0\lambda, w_0\lambda}\otimes \Delta_{\mu, w_0\mu}\otimes c^{\nu}_{f,v_{w_0\nu}} + \sum_{i, j, k}c^{\lambda}_{f_{i},v_{w_0\lambda}}\otimes c^{\mu}_{f_{j},v_{w_0\mu}}\otimes c^{\nu}_{f_k,v_{w_0\nu}},\label{eq:expansion}
\end{align}
where $f_i$, $f_j$ in the second term of the right-hand side are weight vectors of $V(\lambda)^{\ast}$, $V(\mu)^{\ast}$, respectively, such that $f_i\otimes f_j\not\in V(\lambda)_{w_0\lambda}^{\ast}\otimes V(\mu)_{\mu}^{\ast}$. In particular, 
\begin{equation*}
(\widetilde{\beta}^{\ast}(F))(h_1, h_2, u_-)=ah_1^{\mu}s_G^{\mu}h_2^{\nu}s_G^{\nu}\Delta_{-w_0\lambda-\mu, \nu}(u_-)
\end{equation*}
if $-w_0\lambda-\mu\in W(\mathfrak{g})\cdot \nu$, here $f=af_{-w_0\lambda-\mu}\in V(\nu)^{\ast}$ with $a\in\mathbb{C}$.
\end{thm}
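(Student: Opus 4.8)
The plan is to compute the pull-back $\widetilde{\beta}^{\ast}(F)$ directly from the definition of $\beta$, namely
\[
\beta(h_1, h_2, u_-)=[U^-, h_1\overline{w_0}U^-, u_-h_2\overline{w_0}U^-],
\]
and exploit the $\Delta G$-invariance of $F$ to reduce everything to a single matrix coefficient in the third tensor factor. First I would recall that, by definition, an element of the form $c^{\lambda}_{f_1, v_{w_0\lambda}}\otimes c^{\mu}_{f_2, v_{w_0\mu}}\otimes c^{\nu}_{f_3, v_{w_0\nu}}$ evaluated at the triple $(g_1U^-, g_2U^-, g_3U^-)$ gives $\langle f_1, g_1.v_{w_0\lambda}\rangle\langle f_2, g_2.v_{w_0\mu}\rangle\langle f_3, g_3.v_{w_0\nu}\rangle$; this is well-defined on $\A_G$ precisely because the lowest weight vectors $v_{w_0\lambda}$ are $U^-$-invariant up to scalar — more precisely they are genuinely $U^-$-fixed, so the expression does not depend on the coset representatives. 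Substituting $g_1=1$, $g_2=h_1\overline{w_0}$, $g_3=u_-h_2\overline{w_0}$ and using $\langle f_1, v_{w_0\lambda}\rangle$, which vanishes unless $f_1\in V(\lambda)^{\ast}_{w_0\lambda}$, I get that only the terms of \eqref{eq:expansion} with $f_i\otimes f_j\in V(\lambda)^{\ast}_{w_0\lambda}\otimes V(\mu)^{\ast}_{\mu}$ survive — but the sum in \eqref{eq:expansion} is arranged so that the \emph{only} such term is the leading one $\Delta_{w_0\lambda, w_0\lambda}\otimes\Delta_{\mu, w_0\mu}\otimes c^{\nu}_{f, v_{w_0\nu}}$. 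The key point to check here is the \emph{existence and uniqueness} of the expansion \eqref{eq:expansion}: this follows from the fact that $V(\lambda)^{\ast}_{w_0\lambda}$ and $V(\mu)^{\ast}_{\mu}$ are one-dimensional, so an element of the $\Delta G$-invariants, once we fix the highest/lowest weight components in the first two factors to be the canonical vectors $f_{w_0\lambda}$ and $f_{\mu}$, is uniquely pinned down by its third factor $f\in V(\nu)^{\ast}$.

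Next I would evaluate the surviving leading term. We have $\Delta_{w_0\lambda, w_0\lambda}(1\cdot U^-\text{-rep})$: since $\Delta_{w_0\lambda,w_0\lambda}=c^{\lambda}_{f_{w_0\lambda}, v_{w_0\lambda}}$ and $g_1=1$, this contributes $\langle f_{w_0\lambda}, v_{w_0\lambda}\rangle=1$ after the normalization $(v_{w\nu},v_{w\nu})_{\nu}=1$. For the second factor, $\Delta_{\mu, w_0\mu}(h_1\overline{w_0})=\langle f_{\mu}, h_1\overline{w_0}.v_{w_0\mu}\rangle$; now $\overline{w_0}.v_{w_0\mu}$ is proportional to $v_{\mu}$ — in fact by the conventions set up in the excerpt ($v_{-w\lambda}:=(\overline{w^{-1}})^{-1}.v_{-\lambda}$ together with $s_G=\overline{w_0}^2$) one has $\overline{w_0}.v_{w_0\mu}=s_G^{\mu}v_{\mu}$ or an analogous scalar, and $h_1.v_{\mu}=h_1^{\mu}v_{\mu}$, so this factor yields $h_1^{\mu}s_G^{\mu}$. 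The scalar $s_G^{\mu}$ is exactly the source of the correction factors in \eqref{eq:pullback}, and I would track it carefully using $s_G\in Z(G)$ and $s_G^2=1$. For the third factor, $c^{\nu}_{f, v_{w_0\nu}}(u_-h_2\overline{w_0})=\langle f, u_-h_2\overline{w_0}.v_{w_0\nu}\rangle=\langle f, u_- h_2 s_G^{\nu} v_{\nu}\rangle = h_2^{\nu}s_G^{\nu}\langle f, u_-.v_{\nu}\rangle$, using that $h_2$ acts on $v_{\nu}$ by $h_2^{\nu}$ and commuting the scalar past $u_-$. Multiplying the three contributions gives precisely \eqref{eq:pullback}.

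For the final ``in particular'' clause, I would specialize to the case $f=af_{-w_0\lambda-\mu}$. By definition of generalized minors, $\langle f_{-w_0\lambda-\mu}, u_-.v_{\nu}\rangle$ equals $c^{\nu}_{f_{-w_0\lambda-\mu}, v_{\nu}}(u_-)$; when the weight $-w_0\lambda-\mu$ lies in the Weyl orbit $W(\mathfrak{g})\cdot\nu$, say $-w_0\lambda-\mu = w\nu$, the vector $f_{w\nu}=v_{w\nu}^{\vee}$ is (up to the normalization $(v_{w\nu},v_{w\nu})_{\nu}=1$) the canonical extremal weight covector, and so $\langle f_{-w_0\lambda-\mu}, u_-.v_{\nu}\rangle=\Delta_{-w_0\lambda-\mu, \nu}(u_-)$ by the very definition of $\Delta_{w\nu, w'\nu}=c^{\nu}_{f_{w\nu}, v_{w'\nu}}$ with $w'=e$. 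Substituting back into \eqref{eq:pullback} produces $ah_1^{\mu}s_G^{\mu}h_2^{\nu}s_G^{\nu}\Delta_{-w_0\lambda-\mu,\nu}(u_-)$, as claimed.

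The main obstacle I anticipate is not any single computation but rather the bookkeeping of the various scalar factors — the powers of $s_G$ coming from $\overline{w_0}.v_{w_0\mu}$ and $\overline{w_0}.v_{w_0\nu}$, and keeping the normalizations $(v_{w\nu}, v_{w\nu})_{\nu}=1$ consistent across the three tensor factors. One must be careful that the conventions for $v_{w\lambda}$, $f_{w\lambda}$, the transpose $g\mapsto g^T$, and the lift $\overline{w_0}$ interact correctly; a sign or scalar error here would propagate into \eqref{eq:pullback}. The structural input — that the leading term of \eqref{eq:expansion} is the unique one supported on the extremal weight spaces of the first two factors, and that this term is what survives evaluation at $\beta(h_1,h_2,u_-)$ — is the conceptual heart, and it rests only on the one-dimensionality of extremal weight spaces together with the $U^-$-invariance of lowest weight vectors.
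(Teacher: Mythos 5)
Your proposal is correct and follows essentially the same route as the paper's proof: evaluate $F$ at the explicit representatives $(U^-, h_1\overline{w_0}U^-, u_-h_2\overline{w_0}U^-)$, observe that every term of \eqref{eq:expansion} other than the leading one dies because its first two factors pair to zero against $v_{w_0\lambda}$ and $\overline{w_0}.v_{w_0\mu}=s_G^{\mu}v_{\mu}$, and compute the surviving term using $\overline{w_0}^2=s_G$ and the normalizations $(v_{w\nu},v_{w\nu})_{\nu}=1$. Your additional remarks on the uniqueness of the expansion and on identifying $\langle f_{-w_0\lambda-\mu},u_-.v_{\nu}\rangle$ with the generalized minor are accurate and only make explicit what the paper leaves implicit.
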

\begin{proof}
	We can regard $F$ as a function on $\mathcal{A}_G\times \mathcal{A}_G\times \mathcal{A}_G$, denoted by $\widehat{F}$, because $(V^-(\lambda)\otimes V^-(\mu)\otimes V^-(\nu))^{\Delta G}$ is a subspace of $V^-(\lambda)\otimes V^-(\mu)\otimes V^-(\nu)$. 
	Let $\widehat{\beta}$ be a morphism 
	\[
	H\times H\times U_-^{\ast}\to \mathcal{A}_G\times \mathcal{A}_G\times \mathcal{A}_G, (h_1, h_2, u_-)\mapsto (U^-, h_1\overline{w_0}U^-, u_-h_2\overline{w_0} U^-).
	\]

	Then we have 
	\[
	\widehat{\beta}^{\ast}(\widehat{F})=\widetilde{\beta}^{\ast}(F).
	\]
We calculate the pull-back of the first term of \eqref{eq:expansion} by $\widehat{\beta}$ and that of the second term separately. First, 
	\begin{align*}
		&\left(\widehat{\beta}^{\ast}\left(\Delta_{w_0\lambda, w_0\lambda}\otimes \Delta_{\mu, w_0\mu}\otimes c^{\nu}_{f,v_{w_0\nu}}\right)\right)(h_1, h_2, u_-)\\
		&=\left(\Delta_{w_0\lambda, w_0\lambda}\otimes \Delta_{\mu, w_0\mu}\otimes c^{\nu}_{f,v_{w_0\nu}}\right)(U^-, h_1\overline{w_0} U^-, u_-h_2\overline{w_0} U^-)\\
		&=\langle f_{w_0\lambda}, v_{w_0\lambda}\rangle \langle f_{\mu}, h_1\overline{w_0}. v_{w_0\mu}\rangle \langle f, u_-h_2\overline{w_0}. v_{w_0\nu}\rangle\\
		&=s_G^{\mu}s_G^{\nu}\langle f_{\mu}, h_1. v_{\mu}\rangle \langle f, u_-h_2. v_{\nu}\rangle\\
		&=h_1^{\mu}s_G^{\mu}h_2^{\nu}s_G^{\nu}\langle f,u_-.v_{\nu}\rangle. 
	\end{align*}
Next, we have
	\begin{align*}
		&\left(\widehat{\beta}^{\ast}\left(\sum_{i, j, k}c^{\lambda}_{f_{i},v_{w_0\lambda}}\otimes c^{\mu}_{f_{j},v_{w_0\mu}}\otimes c^{\nu}_{f_k,v_{w_0\nu}}\right)\right)(h_1, h_2, u_-)\\
		&=\left(\sum_{i, j, k}c^{\lambda}_{f_{i},v_{w_0\lambda}}\otimes c^{\mu}_{f_{j},v_{w_0\mu}}\otimes c^{\nu}_{f_k,v_{w_0\nu}}\right)(U^-, h_1\overline{w_0} U^-, u_-h_2\overline{w_0} U^-)\\
		&=\sum_{i, j, k}\langle f_{i},v_{w_0\lambda}\rangle\langle f_{j},h_1\overline{w_0}. v_{w_0\mu}\rangle \langle f_k, u_- h_2\overline{w_0}.v_{w_0\nu}\rangle\\
		&=\sum_{i, j, k}h_1^{\mu}s_G^{\mu}\langle f_{i},v_{w_0\lambda}\rangle\langle f_{j},v_{\mu}\rangle \langle f_k, u_- h_2\overline{w_0}.v_{w_0\nu}\rangle\\
		&=0, 
	\end{align*}
here the last equality follows from our assumption on $f_{i}$ and $f_{j}$. These calculations complete the proof of \eqref{eq:pullback}. The second statement in the theorem follows from \eqref{eq:pullback} and the fact that $f\in V(\nu)_{-w_0\lambda-\mu}^{\ast}$. 
\end{proof}
\begin{remark}
	If one of $\lambda, \mu$ or $\nu$ is equal to $0$, then $\tau_1+w_0\tau_2=0$, where $\{\tau_1, \tau_2\}=\{\lambda, \mu,\nu\}\setminus\{0\}$ as far as $(V^-(\lambda)\otimes V^-(\mu)\otimes V^-(\nu))^{\Delta G}\neq 0$. Note that, in this case, the condition $-w_0\lambda-\mu\in W(\mathfrak{g})\cdot \nu$ is satisfied. 
\end{remark}
\begin{remark}\label{r:musi}
The case-by-case checking shows that Le's cluster $\A$-coordinates on the configuration space $\Conf_3 \A_G$ are chosen from the space of the form $(V^-(\lambda)\otimes V^-(\mu)\otimes V^-(\nu))^{\Delta G}$ whose weight datum $(\lambda, \mu, \nu)$ satisfies the condition $-w_0\lambda-\mu\in W(\mathfrak{g})\cdot \nu$. Hence, by Theorem \ref{t:coord}, the images of the cluster $\A$-coordinates on $\Conf_3 \A_G$ under $\widetilde{\beta}^{\ast}$ are determined up to constant multiple only from the corresponding weight data $(\lambda, \mu, \nu)$. 

Therefore, to obtain the description in Theorem \ref{t:clusterstr}, we only have to know the weight data $(\lambda, \mu, \nu)$ for the cluster $\A$-coordinates on $\Conf_3 \A_G$ and their values at $\widetilde{\beta}(1, 1, u_-)$ for some $u_-\in U^{\ast}_-$. The former can be read from \cite[p.33  (1)--(4), p.85--86 (1)--(5), p.119--120 (1)--(4), p.121 (1)--(5), p.121--122 (1)--(5)]{Le16} (see also \cite[Observations 3.5 and 5.3]{Le16}), and the latter can be derived from \cite[Propositions 3.1, 4.7 and 5.7]{Le16}. 
\end{remark}
\begin{remark}
In fact, the set of elements of $V(\nu)^{\ast}$ which can appear as $f$ in \eqref{eq:expansion} is exactly equal to
\[
\left\{f\in V(\nu)_{-w_0\lambda-\mu}^{\ast}\middle|\begin{array}{l}f_s^{(k)}.f=0 \text{\ for\ all\ }k>\langle \alpha_s^{\vee}, -w_0\lambda\rangle,  s\in S, \text{\ and}\\
e_s^{(k)}.f=0 \text{\ for\ all\ }k>\langle \alpha_s^{\vee}, \mu\rangle, s\in S\end{array}\right\}. 
\] 
We omit the proof of this fact since we do not use it in this paper. See, for example, \cite[Proposition 31.2.6]{Lus:intro}. 
\end{remark}



\begin{thebibliography}{GHKK14}
\def\cprime{$'$}

\bibitem[ASS12]{ASS}
I. Assem, R. Schiffler, and V. Shramchenko, 
\emph{Cluster automorphisms}, 
\newblock Proc. Lond. Math. Soc. (3) \textbf{104} (2012), no.~6,
  1271--1302.

\bibitem[B\'ed99]{Bed}
R. B\'edard, 
\newblock {\em On commutation classes of reduced words in {W}eyl groups},
\newblock European J. Combin. \textbf{20} (1999), no.~6, 483--505.

\bibitem[Buc16]{Bu14}
E. Bucher,  
{\em Maximal green sequences for cluster algebras associated to orientable surfaces with empty boundary}, Arnold Math. J. \textbf{2} (2016), no.4, 487--510.

\bibitem[BDP14]{BDP}
T. Br\"ustle, G. Dupont, and M. P\'erotin,
\newblock {\em On maximal green sequences},
\newblock Int. Math. Res. Not. IMRN (2014), no. 16, 4547--4586.

\bibitem[BFZ96]{BFZ96}
A. Berenstein, S. Fomin, and A. Zelevinsky,
{\em Parametrizations of canonical bases and totally positive matrices},
Adv. Math. \textbf{122} (1996), no.1, 49--149.

\bibitem[BFZ05]{BFZ05}
A. Berenstein, S. Fomin, and A. Zelevinsky,
{\em Cluster algebras. III. Upper bounds and double Bruhat cells},
Duke Math. J. \textbf{126} (2005), no. 1, 1--52.

\bibitem[BGM18]{BGM}
M. Bershtein, P. Gavrylenko, and A. Marshakov.
\newblock {\em Cluster integrable systems, {$q$}-{P}ainlev\'{e} equations and their quantization},
\newblock J. High Energy Phys. (2018), no. 2, 077, front matter +33 pp.

\bibitem[BGP73]{BGP}
J.~Bern\v ste\u\i n, I.~Gel\cprime fand, and V.~Ponomarev, 
\newblock {\em Coxeter functors, and {G}abriel's theorem}, 
\newblock Uspehi Mat. Nauk \textbf{28} (1973), no.~2, 19--33.

\bibitem[BZ97]{BZ97}
A. Berenstein and A. Zelevinsky,
{\em Total positivity in Schubert varieties},
Comment. Math. Helv. \textbf{72} (1997), no.1, 128--166.


\bibitem[CHL20]{CHL17}
P. Cao, M. Huang, and F. Li,
\newblock {\em A conjecture on $C$-matrices of cluster algebras},
Nagoya Math. J. \textbf{238} (2020), 37--46.


\bibitem[FG06a]{FG03} 
V. V. Fock and A. B. Goncharov, 
{\em Moduli spaces of local systems and higher Teichm\"uller theory},
Publ. Math. Inst. Hautes \'Etudes Sci., No. 103 (2006), 1--211.

\bibitem[FG06b]{FG06} 
V. V. Fock and A. B. Goncharov, 
 {\em Cluster $\mathcal{X}$-varieties, amalgamation and Poisson-Lie groups},
Algebraic geometry and number theory, volume 253 of Progr. Math., 
PP 27--68, Birkh\"auser Boston, Boston, MA, 2006.

\bibitem[FG09a]{FG09a}
V. V. Fock and A. B. Goncharov,
\newblock  {\em Cluster ensembles, quantization and the dilogarithm},
\newblock Ann. Sci. \'Ec. Norm. Sup\'er. (4) (2009), \textbf{42} (6):865--930.

\bibitem[FG09b]{FG09} 
V. V. Fock and A. B. Goncharov, 
 {\em The quantum dilogarithm and representations of quantum cluster varieties},
Invent. Math. \textbf{175} (2009), no. 2, 223--286.

\bibitem[FG16]{FG16}
V. V. Fock and A. B. Goncharov, \emph{Cluster {P}oisson varieties at infinity}, Selecta Math. (N.S.)
  \textbf{22} (2016), no.~4, 2569--2589.
  
\bibitem[FO20]{FO}
N. Fujita and H. Oya
\newblock {\em Newton-Okounkov polytopes of Schubert varieties arising from cluster structures},
\newblock arXiv:2002.09912.

\bibitem[FST08]{FST}
S. Fomin, M. Shapiro, and D. Thurston,
{\em Cluster algebras and triangulated surfaces. {I}. {C}luster complexes},
Acta Math. \textbf{201} (2008), no.1, 83--146.

\bibitem[FZ99]{FZ-Double}
S. Fomin and A. Zelevinsky, 
{\em Double Bruhat cells and total positivity}, 
J. Amer. Math. Soc. \textbf{12} (1999), no. 2, 335--380. 

\bibitem[FZ02]{FZ-CA1}
S. Fomin and A. Zelevinsky,  
{\em Cluster algebras. I. Foundations}, 
J. Amer. Math. Soc. \textbf{15} (2002), no. 2, 497--529. 


\bibitem[FZ07]{FZ-CA4}
S. Fomin and A. Zelevinsky,  
{\em Cluster algebras. IV. Coefficients}, 
Compos. Math. \textbf{143} (2007), no. 1, 112--164. 

\bibitem[GHKK18]{GHKK14}
M. Gross, P. Hacking, S. Keel, and M. Kontsevich,
 {\em Canonical bases for cluster algebras},
J. Amer. Math. Soc. \textbf{31} (2018), no. 2, 497--608. 

\bibitem[GK13]{GK}
A. B. Goncharov and R. Kenyon,
{\em Dimers and cluster integrable systems},
Ann. Sci. \'{E}c. Norm. Sup\'{e}r. (4) \textbf{46} (2013), no. 5, 747--813. 


\bibitem[GS15]{GS14}
A. B. Goncharov and L. Shen,
{\em Geometry of canonical bases and mirror symmetry},
Invent. Math. \textbf{202} (2015), no. 2, 487--633. 

\bibitem[GS18]{GS16} 
A. B. Goncharov and L. Shen,
 {\em Donaldson-Thomas transformations of moduli spaces of $G$-local systems},
Adv. Math. \textbf{327} (2018), 225--348. 

\bibitem[GS19]{GS19} 
A. B. Goncharov and L. Shen,
{\em Quantum geometry of moduli spaces of local systems and representation theory}, 
arXiv:1904.1049.

\bibitem[Hum]{Humphreys90}
J. E. Humphreys,
\newblock {\em Reflection groups and {C}oxeter groups}, {Cambridge Studies in Advanced Mathematics, 29}.
\newblock Cambridge University Press, Cambridge, 1990. xii+204 pp.

\bibitem[HI15]{HI15}
K. Hikami and R. Inoue,
{\em Braids, complex volume, and cluster algebra}, 
Algebraic and Geometric Topology, \textbf{15} (2015), 2175--2194.



\bibitem[Ip18]{Ip16}
I. C. H. Ip,
 {\em Cluster realization of $U_q(\mathfrak{g})$ and factorizations of the 
universal $R$-matrix},
Selecta Math. (N.S.) \textbf{24} (2018), no. 5, 4461--4553.

\bibitem[IIKKN13]{IIKKN}
R. Inoue, O. Iyama, B. Keller, A. Kuniba, and T. Nakanishi, 
\newblock{\em Periodicities of {T}-systems and {Y}-systems, dilogarithm identities, and cluster algebras {I}: type $B_r$},
\newblock Publ. Res. Inst. Math. Sci. \textbf{49} (2013), no. 1, 1--42.



\bibitem[ILP19]{ILP16}
R. Inoue, T. Lam, and P. Pylyavskyy,
 {\em On the cluster nature and quantization of geometric $R$-matrices}, Publ. Res. Inst. Math. Sci. \textbf{55} (2019), no. 1, 25--78. 

 
\bibitem[Jan]{Jan}
J. C. Jantzen, 
{\em Representations of {A}lgebraic {G}roups}, Second edition. Mathematical Surveys and Monographs, \textbf{107}. American Mathematical Society, Providence, RI, 2003. xiv+576 pp.

\bibitem[Kac]{Kac}
V. G. Kac,
{\em Infinite-dimensional {L}ie {A}lgebras}, Third edition.
Cambridge University Press, Cambridge, 1990. xxii+400 pp.

\bibitem[Kel11]{Keller11}
B. Keller, {\em On cluster theory and quantum dilogarithm identities}, 
Representations of algebras and related topics, 85--116, EMS Ser. Congr. Rep., 
Eur. Math. Soc., Z\"urich, 2011. 
  
\bibitem[Le19a]{Le16}
I. Le,
 {\em Cluster structure on higher {T}eichm\"uller spaces for classical groups}, Forum Math. Sigma, \textbf{7} (2019), e13, 165pp.

\bibitem[Le19b]{Le16b}
I. Le,
 {\em An approach to higher {T}eichm\"uller spaces for general groups},
Int. Math. Res. Not. IMRN 2019, no.16, 4899--4949.

\bibitem[Lus90]{Lus:can1}
G. Lusztig,  
\newblock {\em Canonical bases arising from quantized enveloping algebras}, 
\newblock J. Amer. Math. Soc. \textbf{3} (1990), no.~2, 447--498.

\bibitem[Lus]{Lus:intro}
G. Lusztig, {\em Introduction to {Q}uantum {G}roups}, Reprint of the 1994 edition, Modern Birkh\"auser Classics, Birkh\"auser/Springer, New York, 2010. xiv+346 pp.



\bibitem[Mul16]{Muller16}
G. Muller, 
\newblock {\em The existence of a maximal green sequence is not invariant under quiver mutation},
\newblock Electron. J. Combin. \textbf{23} (2016), no. 2, Paper 2.47, 23 pp.

\bibitem[Nak21]{Nakanishi19}
T. Nakanishi,
\newblock {\em Synchronicity phenomenon in cluster patterns},
\newblock  J. London Math. Soc. (2021) \url{https://doi.org/10.1112/jlms.12402}.

\bibitem[NZ12]{NZ}
T. Nakanishi and A. Zelevinsky, 
\newblock {\em On tropical dualities in cluster algebras},
\newblock Algebraic groups and quantum groups, 217--226, Contemp. Math., \textbf{565}, Amer. Math. Soc., Providence, RI, 2012. 

\bibitem[OS20]{OS18}
N. Okubo and T. Suzuki,
{\em Generalized $q$-Painlev\'e VI systems of type $(A_{2n+1}+A_1+A_1)^{(1)}$ arising from cluster algebra}, Int. Math. Res. Not. IMRN 2020,  \url{https://doi.org/10.1093/imrn/rnaa283}.

\bibitem[SS19]{SS16}
G. Schrader and A. Shapiro,
 {\em A cluster realization of $U_q(\mathfrak{sl}_n)$ from quantum character 
varieties}, Invent. Math. \textbf{216} (2019), no.3, 799--846.

\end{thebibliography}
\end{document}